\documentclass[reqno]{amsart}

\usepackage[english]{babel}

\usepackage{standalone,amsmath,amssymb,amsthm,amsfonts,enumerate,mathtools,verbatim,comment,pdflscape,makecell,rotating}
\usepackage{csquotes,tikz-cd,booktabs}

\usepackage[shortlabels]{enumitem}
    \setenumerate[0]{label={\rm (\roman*)}, leftmargin=*}
\usepackage[T1]{fontenc}
\setlist{nolistsep}

\usepackage[textsize=footnotesize,textwidth=20ex,colorinlistoftodos]{todonotes}

\usetikzlibrary{decorations.markings}
\tikzset{%
	myvertex/.style = {circle, draw, fill, very thick, outer sep=3.5pt, inner sep=0pt, minimum size=#1},
	myvertex/.default = 3.5pt,
	myblob/.style = {myvertex, fill=white, minimum size=#1},
	myedge/.style = {draw=black, very thick, line cap=round, line join=round},
	mysquare/.style = {rectangle, draw, fill, thick, outer sep=3.5pt, inner sep=3.5pt, fill=white, minimum size=#1},
}

\definecolor{ugentblue}{RGB}{30,100,200}
\definecolor{ugentyellow}{RGB}{120,190,0}
\definecolor{ugentred}{RGB}{220,78,40}

\usepackage{hyperref}
\hypersetup{
    colorlinks=true,
    linkcolor=ugentblue,
    filecolor=magenta,
    citecolor=ugentblue,
    urlcolor=cyan,
}

\usepackage[capitalize]{cleveref}

\newtheorem{proposition}{Proposition}[section]
\newtheorem{lemma}[proposition]{Lemma}
\newtheorem{corollary}[proposition]{Corollary}
\newtheorem{theorem}[proposition]{Theorem}
\newtheorem{maintheorem}{Theorem}

\theoremstyle{definition}
\newtheorem{definition}[proposition]{Definition}
\newtheorem{notation}[proposition]{Notation}
\newtheorem{example}[proposition]{Example}
\newtheorem{convention}[proposition]{Convention}
\newtheorem{remark}[proposition]{Remark}

\AddToHook{env/lemma/begin}{\crefalias{proposition}{lemma}}
\AddToHook{env/corollary/begin}{\crefalias{proposition}{corollary}}
\AddToHook{env/theorem/begin}{\crefalias{proposition}{theorem}}
\AddToHook{env/maintheorem/begin}{\crefalias{proposition}{maintheorem}}
\AddToHook{env/definition/begin}{\crefalias{proposition}{definition}}
\AddToHook{env/notation/begin}{\crefalias{proposition}{notation}}
\AddToHook{env/example/begin}{\crefalias{proposition}{example}}
\AddToHook{env/convention/begin}{\crefalias{proposition}{convention}}
\AddToHook{env/remark/begin}{\crefalias{proposition}{remark}}
\AddToHook{env/construction/begin}{\crefalias{proposition}{construction}}
\AddToHook{env/assumption/begin}{\crefalias{proposition}{assumption}}
\AddToHook{env/motivation/begin}{\crefalias{proposition}{motivation}}

\crefname{enumi}{}{}
\Crefname{enumi}{Item}{Items}
\crefname{equation}{}{}
\Crefname{equation}{Equation}{Equations}
\crefname{notation}{Notation}{Notations}
\crefname{maintheorem}{Theorem}{Theorems}

\newlength{\fixmidfigure}
\newcommand{\premidfigure}{\setlength{\fixmidfigure}{\lastskip}\addvspace{-\lastskip}}
\newcommand{\postmidfigure}{\addvspace{\fixmidfigure}}

\DeclareMathOperator{\Der}{Der}
\DeclareMathOperator{\Aut}{Aut}

\DeclareMathOperator{\TKK}{TKK}
\DeclareMathOperator{\id}{id}
\DeclareMathOperator{\im}{im}
\DeclareMathOperator{\ad}{ad}

\DeclareMathOperator{\Mat}{Mat}

\DeclareMathOperator{\Jac}{Jac}
\DeclareMathOperator{\Her}{Her}
\DeclareMathOperator{\Rad}{Rad}
\DeclareMathOperator{\Weyl}{Weyl}

\DeclarePairedDelimiterX{\brackets}[1]{(}{)}{#1}
\DeclarePairedDelimiterX{\sqbrackets}[1]{[}{]}{#1}
\DeclarePairedDelimiterX{\braces}[1]{ \{ }{ \} }{#1}

\newcommand{\Z}{\mathbb{Z}}
\newcommand{\gl}{\mathfrak{gl}}
\newcommand{\dd}{\mathbf{d}}

\newcommand{\coloneq}{\coloneqq}

\makeatletter
\DeclareRobustCommand{\iotainv}{{\mathord{\mathpalette\iotamirror@{}}}}
\newcommand{\iotamirror@}[2]{%
  \begingroup
    \setbox0=\hbox{$\m@th#1\iota$}%
    \reflectbox{\box0}%
  \endgroup
}
\makeatother

\newcommand{\backslasharrow}{{\ensuremath\operatorname{\rotatebox{120}{\!$_\leftrightarrow$}}}}
\newcommand{\slasharrow}{{\ensuremath\operatorname{\rotatebox{45}{\!$_\leftrightarrow$}}}}
\newcommand{\phihor}{\varphi_\leftrightarrow}
\newcommand{\phisl}{\varphi_\slasharrow}
\newcommand{\phiver}{\varphi_\updownarrow}
\newcommand{\phibs}{\varphi_\backslasharrow}
\newcommand{\psibs}{\psi_\backslasharrow}

\newcommand{\tmin}{\mathrm{min}}
\newcommand{\tmax}{\mathrm{max}}

\DeclareMathOperator{\End}{End}

\newcommand{\cartan}[2]{\langle #1 \vert #2 \rangle}
\newcommand{\rootproj}{\pi} 
\newcommand{\rootint}[2]{\mathord{]#1, #2[}}
\newcommand{\refl}[1]{\sigma_{#1}}
\newcommand{\reflbr}[1]{\sigma(#1)}
\DeclareMathOperator{\rootht}{ht}

\newcommand{\overbar}[1]{\mkern 1.5mu\overline{\mkern-1.5mu#1\mkern-1.5mu}\mkern 1.5mu} 
\newcommand{\conconj}[1]{\overbar{#1}} 
\newcommand{\contr}{t}
\newcommand{\connorm}{n}
\newcommand{\cubel}[2]{#1[#2]}

\newcommand{\cbas}[1]{x_{#1}} 
\newcommand{\pargrp}[1]{P_{#1}}
\newcommand{\rhomcub}[1]{\vartheta_{#1}^+} 
\newcommand{\rhomcubop}[1]{\vartheta_{#1}^-} 
\newcommand{\rhomcubvar}[2]{\vartheta_{#1}^{#2}}
\newcommand{\rhombr}[1]{\vartheta_{#1}} 
\newcommand{\rhomlien}[1]{\bar{\vartheta}_{#1}} 
\newcommand{\rhomlie}[1]{\vartheta_{#1}} 
\newcommand{\rhomlieG}[1]{\vartheta_{#1}} 
\newcommand{\rhomgrp}[1]{\theta_{#1}} 

\newcommand{\etac}{\eta^{\text{c}}}
\newcommand{\etae}{\eta^{\text{e}}}

\newcommand{\dash}{\nobreakdash-\hspace{0pt}}

\allowdisplaybreaks

\begin{document}

\title[From cubic norm pairs to $G_2$- and $F_4$-graded groups and Lie algebras]{From cubic norm pairs to $\mathbf{G_2}$- and $\mathbf{F_4}$-graded groups and Lie algebras}

\author{Tom De Medts}
\author{Torben Wiedemann}


\begin{abstract}
	We construct Lie algebras arising from cubic norm pairs over arbitrary commutative base rings.
	Such Lie algebras admit a grading by a root system of type $G_2$, and when the cubic norm pair is a cubic Jordan matrix algebra, the $G_2$-grading can be further refined to an $F_4$-grading.
	We then use these Lie algebras and their gradings to construct corresponding root graded groups.
	Along the way, we produce many results providing detailed information about the structure of these Lie algebras and groups.
\end{abstract}

\maketitle

\section*{Introduction}

The study of Lie algebras graded by root systems goes back to the influential work of Stephen Berman and Robert V.~Moody from 1992 \cite{BM92}.
More precisely, Berman and Moody classified all Lie algebras over a base field $k$ of characteristic~$0$ graded by simply-laced finite root systems of rank $\geq 2$, up to central isogeny.
Over the following decade, this result was extended to the types $A_1$, $B_n$, $C_n$, $ BC_r $, $F_4$ and $G_2$ for $ n \ge 2 $, $ r \ge 1 $ and strengthened to a classification up to isomorphism by the combined work of Bruce Allison, Georgia Benkart, Yun Gao, Oleg Smirnov and Efim Zel'manov \cite{BZ96,ABG00,ABG02,BS03}. A different, uniform approach to the classification for $3$-graded root systems (an assumption which excludes exactly the types $E_8$, $F_4$ and $G_2$) which avoids case-by-case considerations was given by Erhard Neher \cite{Neher96}.
 
A different approach for $G_2$-graded Lie algebras, now over arbitrary fields $k$ (including fields of characteristic $2$ and $3$) was taken in \cite{DMM25}, based on the theory of extremal elements in Lie algebras and intersecting two different $5$-gradings on the Lie algebra.

\medskip

The goals of the current paper are the following.
\begin{enumerate}[label={\rm (\arabic*)}]
	\item Construct and investigate $G_2$- and $F_4$-graded Lie algebras over arbitrary commutative base rings $k$.
	\item Develop the theory of \emph{cubic norm pairs} over arbitrary base rings $k$, required for (1).
	\item Construct and investigate $G_2$- and $F_4$-graded groups.
\end{enumerate}
Our description of the Lie algebras is similar as in \cite{DMM25} in the sense that we coordinatize  each ``piece'' of the grading separately and explicitly describe the Lie bracket in terms of this coordinatization. 

For $G_2$-graded Lie algebras, we encounter the same sort of algebraic structures as the ones that play a key role in \cite{DMM25}, namely \emph{cubic norm pairs}, a twin version of the more common cubic norm structures.
When $k$ is a field, such a cubic norm pair either has trivial norm or it arises from a genuine cubic norm structure, but this dichotomy fails to hold over base rings. At the same time, the occurrence of such a twin structure is no surprise and is simply the correct analogue of a cubic norm structure, very much like Jordan pairs form a twin version of Jordan algebras.
Nevertheless, not much has been written in the existing literature about these cubic norm pairs, and our only traces back are John Faulkner's paper \cite{Faulkner2001} and Michiel Smet's recent preprint \cite{Smet25}.
We use the opportunity to spell out the basic theory of cubic norm pairs. We follow a similar setup as in the recent excellent monograph \cite{GPR24}, which develops the theory of cubic norm \emph{structures} over arbitrary commutative rings.
The results from \cite{GPR24} will play a crucial role throughout the whole paper.

A purely combinatorial construction, which relies on a certain Tits index and an associated surjection $ F_4 \to G_2 \cup \{0\} $ of root systems, makes it possible to regard any $F_4$-graded Lie algebra as a $G_2$-graded Lie algebra. For $G_2$\dash graded Lie algebras defined by a specific kind of cubic norm pairs, namely \emph{cubic Jordan matrix algebras}, we can reverse this procedure: that is, we refine the $G_2$-grading to obtain an $F_4$-graded Lie algebra.
Again, we will be able to give a precise description of each ``piece'' of the grading.
This turns out to be rather delicate for the ``middle piece'' of the original $G_2$-grading.

\medskip

In the next step, we pass from root graded Lie algebras to root graded groups. These are groups equipped with a family of subgroups $(U_\alpha)_{\alpha \in \Phi}$ for a finite root system $\Phi$ such that certain \emph{commutator relations} $[U_\alpha, U_\beta] \subseteq U_{\rootint{\alpha}{\beta}}$ for non-proportional $\alpha, \beta \in \Phi$ are satisfied and such that so-called \emph{Weyl elements}, whose conjugation action permutes the root groups, exist for all roots.
The terminology of groups graded by root systems appears for the first time in \cite{Shi93}, where Zhiyong Shi proves coordinatization results for root graded groups of simply-laced type with rank at least~3. These results are similar in spirit to the recognition theorems for root graded Lie algebras. Partial progress towards a comparable result for type $C_n$ with $n \ge 3$ was made by Zezhou Zhang in \cite{ZhangPhD}. Complete coordinatization results for root graded groups of the remaining types $B_n$, $C_n$, $F_4$ and $BC_n$ for $n \ge 3$ were proven in \cite{WiedemannPhD}, and they were extended to the types $H_3$ and $H_4$ in \cite{BW25} (with a slightly different axiomatic setup to account for the non-crystallographicness of these root systems). We will follow the terminology introduced in \cite{WiedemannPhD} for root graded groups (of crystallographic types), and point to its preface for further historical details.


It should also be noted that Jacques Tits' notion of RGD-systems, introduced in \cite{Tits92}, is a special case of root graded groups. For these objects, a geometrical interpretation in terms of thick spherical Moufang buildings is available. In particular, Tits' classification of thick spherical Moufang buildings of rank at least~3 \cite{Tits74} provides a classification of RGD-systems of rank at least~3. This classification was extended to the rank~2 case---the so-called Moufang polygons---by Jacques Tits and Richard Weiss \cite{TW02}.

The main structure theorem for $F_4$-graded groups is \cite[10.7.8]{WiedemannPhD}: Every $F_4$\dash graded group is parametrized by what is called in \cite{GPR24} a \emph{multiplicative conic alternative algebra} $C$ over a commutative ring $k$, which means that the structure of the group is largely determined by $C$ and $k$ (though not up to isomorphism). Multiplicative conic alternative algebras should be thought of as a suitable generalization of composition algebras---such as octonions or quaternions---over rings, and they are precisely the kind of algebras over which the aforementioned cubic Jordan matrix algebras are defined. Our main goal in the setting of root graded groups---which was, in fact, our original motivation to undertake this project---is to prove a converse existence result: For every multiplicative conic alternative algebra~$C$ over any commutative ring $k$, there actually exists an $F_4$-graded group which is coordinatized by $C$. Along we way, we will also show that there exists a $G_2$-graded group for every cubic norm \emph{structure} (while the group defined by a cubic norm \emph{pair} may lack Weyl elements for the short roots). We remark that a similar existence question is answered positively in \cite{Zhang14}, where Zezhou Zhang constructs a $C_3$-graded group that is coordinatized by an arbitrary alternative ring with a nuclear involution.

We construct the desired $G_2$-graded group as a subgroup of the automorphism group of our $G_2$-graded Lie algebra. In principle, this amounts to ``exponentiating'' the appropriate adjoint maps in the Lie algebra, but since we are dealing with arbitrary commutative base rings (where $2$ and $3$ are not necessarily invertible), this is not straightforward at all, and simply defining the exponential maps along with the required verifications will occupy us for some time.
This can be compared with the definition of exponential maps over arbitrary base \emph{fields} introduced in \cite[Theorem A]{DMM25}, but we cannot simply extend the ideas of \emph{loc.\@ cit.} to commutative base rings.
One new technical tool that we use is the explicit formulation of what we call ``higher Leibniz rules''.


The exponential maps in the $F_4$-graded group may be defined by specializing the exponential maps in the $G_2$-grading, but only for those roots that do not \enquote{map to zero under the surjection $F_4 \to G_2 \cup \{0\}$}. The exponential maps defined in this way allow us to define automorphisms of the Lie algebra that will later be proven to be Weyl elements, and the exponential maps for the \enquote{problematic} roots in $F_4$ may be defined as conjugates of other exponential maps by a suitable choice of these automorphisms. The Weyl elements in the $G_2$-grading can be defined in a more straightforward way as certain \emph{reflections in the Lie algebra}, which are given by explicit formulas inspired by \cite{DMM25}.

Once we have the exponential maps at our disposal, we can define root groups as $U_\alpha \coloneq \exp_\alpha(L_\alpha)$. It then remains to show that these satisfy the desired commutator relations and that our candidates for the Weyl elements permute the root groups, which is a straightforward but long computation. For the $F_4$-grading in particular, we have to show that each Weyl element permutes the $48$ root groups in the desired way, for which we use computer assistance.


\subsection*{Outline of the paper.}

We start by recalling the required background on root graded Lie algebras and root graded groups in \cref{sec:root graded}.
In \cref{sec:CNP}, we develop the theory of cubic norm pairs, following a similar approach as in \cite{GPR24}. In the process, we collect many identities that we will need later.

We start the construction of the $G_2$-graded Lie algebra from a cubic norm pair in \cref{sec:TKK} by first producing the ``classical'' Tits--Kantor--Koecher Lie algebra of the cubic norm pair. In \cref{sec:lie const}, we make this TKK Lie algebra into a larger, $G_2$-graded, Lie algebra. This involves an important subtlety where we first have to modify the $0$-graded part of the TKK construction.
It requires some computational effort to explicitly write down the Lie bracket and verify that it satisfies the Jacobi identity, but this pays off, since we will later have to refine the construction to produce an $F_4$-graded Lie algebra.

We take an intermezzo in \cref{sec:reflections,sec:simple}.
First, in \cref{sec:reflections}, we show that surjective homotopies between cubic norm pairs give rise to surjective homomorphism between the corresponding Lie algebras, and we introduce reflections $\phihor$, $\phisl$, $\phibs$ and $\phiver$ of the Lie algebra.
Then, in \cref{sec:simple}, we investigate the ideal structure of the Lie algebra. The main results up to this point can be summarized as follows:

\begin{maintheorem}[\ref{thm:G2 Lie}, \ref{le:reflections auto}, \ref{thm:L simple}]\label{main:A}
	Let $(J,J')$ be a cubic norm pair over a commutative ring $k$. Then there exists a $G_2$-graded Lie algebra $L=L(J,J')$ whose root spaces $L_\alpha$ for $\alpha \in G_2$ are isomorphic to $k$, $J$ or $J'$. Further, the construction of $ L $ is functorial in $(J,J')$ (with respect to surjective homotopies), and $L$ is simple if and only if $k$ is a field and the trace of $(J,J')$ is non-degenerate.
\end{maintheorem}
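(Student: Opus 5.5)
Since \cref{main:A} collects three results proved later (\ref{thm:G2 Lie}, \ref{le:reflections auto}, \ref{thm:L simple}), I plan to prove it in three stages, following the outline.

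\textbf{Construction.} I start from the Tits--Kantor--Koecher algebra of $(J,J')$, which is $5$-graded:
\[
k \oplus J \oplus \mathfrak{g}_0 \oplus J' \oplus k .
\]
The outer pieces come from the cubic norm pair acting as a structurable-type pair, and $\mathfrak{g}_0$ is spanned by inner derivations $D_{x,x'}$ together with grading elements. Over a general ring, the $0$-part generated by the $D_{x,x'}$ is too small to carry a second grading. I will therefore replace it by $k\oplus k\oplus J\oplus J'$, or more precisely by a subalgebra of $\End$ that is abstractly isomorphic to this sum. This yields a module
\[
L=\bigoplus_{\alpha\in G_2} L_\alpha \oplus (k\oplus k),
\]
with six long root spaces $\cong k$ and six short root spaces $\cong J$ or $J'$. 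I then write the bracket explicitly on each pair of root spaces, using the operations $\sharp$, $\times$, $T$, $N$ of the pair. The Jacobi identity has to be checked on triples of homogeneous elements. Using the reflections described below, I reduce this to orbit representatives of triples of roots. Each remaining case becomes an identity for cubic norm pairs collected in \cref{sec:CNP}, such as $x^{\sharp\sharp}=N(x)x$ and the linearized adjoint identities.

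\textbf{Functoriality and reflections.} Let $(\varphi,\varphi')$ be a surjective homotopy. I define a map $L(J,J')\to L(\tilde J,\tilde J')$ by letting $\varphi$ or $\varphi'$ act on the short root spaces and suitable scalars act on the long ones. Preservation of the bracket is checked summand by summand using the homotopy axioms. Surjectivity is needed because the $0$-part is spanned by brackets, so the map is well defined there only once it is known on generators. The reflections $\phihor,\phisl,\phibs,\phiver$ are given by explicit formulas, for instance swapping $J$ and $J'$ via the twin structure. That they are automorphisms is again a finite check, which is why I use them to cut down the Jacobi verification.

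\textbf{Simplicity.} I first treat the case where $k$ is not a field. Then $k$ has a proper nonzero ideal $\mathfrak a$, and $\mathfrak aL$ is a proper nonzero ideal because every root space contains a free $k$-summand $k$. I also need $\mathfrak aL\neq L$, which holds since the long root spaces are $\cong k$. Next, suppose $k$ is a field and the trace is degenerate. I take $I$ to be generated by $\Rad T$ in the short spaces, together with the corresponding $0$-part contributions, and show it is a proper ideal.

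Conversely, suppose $k$ is a field and $T$ is nondegenerate, and let $I$ be a nonzero ideal. I apply $\ad$ of grading elements. Here I must be careful in characteristic $2,3$, where toral elements may not separate weights, so I would instead use brackets with long root elements to push a nonzero element into a single root space. Nondegeneracy of $T$ then lets me hit a long root space $L_\alpha\cong k$, and the Weyl-type reflections move it to all long roots. Brackets of long roots generate everything, so $I=L$.

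The main obstacle will be the Jacobi identity, especially the triples involving the modified $0$-part. I also expect the small-characteristic isolation step in the simplicity proof to be delicate.
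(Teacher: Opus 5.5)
Your construction does not work as described, because you have misidentified the zero-weight space.

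\textbf{The construction.} The TKK algebra of $(J,J')$ is only $3$-graded, $J'\oplus\TKK_0(J,J')\oplus J$. Moreover, every $5$-grading of $L$ induced by the $G_2$-grading has $L_{\pm1}\cong k\oplus J\oplus J'\oplus k$, not $J$ or $J'$.

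More seriously, $L$ cannot be $\bigoplus_{\alpha\in G_2}L_\alpha\oplus(k\oplus k)$. Take $c\in J=L_{0,1}$, $c'\in J'=L_{0,-1}$ and $z=(0,b,0,0)_+\in L_{1,0}$. The Jacobi identity forces
\[ [[c',c],z]=[c',[c,z]]-[c,[c',z]]=\bigl(0,\,D_{c,c'}(b)-T(c,c')b,\,0,\,0\bigr)_+ . \]
So $[c',c]\in L_{0,0}$ must act on the single root space $L_{1,0}\cong J$ by $D_{c,c'}-T(c,c')\id_J$. This operator is not scalar in general: for $J=\Her_3(C,\Gamma)$, the operator $D_{e_1,e_1'}-\id_J$ is $1$ on $ke_1$ and $0$ on $J_{12}$ by \cref{le:D-formulas}. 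These operators also span far more than two dimensions.

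The paper instead takes $L_0=J'\oplus Z\oplus J$ inside $M=\bigoplus_{i\neq0}\gl(L_i)$, with $Z=\langle\dd_{a,a'}\rangle+k\zeta+k\xi$ and $\dd_{a,a'}=[\ad_{a'},\ad_a]$. This $Z$ is a central extension of $\TKK_0(J,J')$, not a copy of it: $\dd_{a,a^\sharp}=N(a)(2\zeta-\xi)$, whereas $\delta_{a,a^\sharp}=2N(a)\gamma$. This choice is exactly what makes the critical Jacobi case $(0,1,-1)$ hold, via \cref{le:triple}. That case needs only the linear identities (7), (25), (27), (31); $N$ and $\sharp$ cannot enter a bilinear bracket, and $x^{\sharp\sharp}=N(x)x$ is not used.

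The same choice explains why surjectivity matters for functoriality. Elements of $Z$ are operators determined by their action (\cref{le:lin comb}), so a relation among the $\dd_{a,a'}$ transports to $L(J_2,J_2')$ only if $\iota$ and $\iotainv$ are onto.

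\textbf{The simplicity argument.} First, you cannot move elements of an ideal $I$ with $\phihor,\phisl,\dots$. These are automorphisms, and an automorphism need not map $I$ into itself. You need inner operations: compositions $\ad(a_1)\circ\cdots\circ\ad(a_n)$ of root elements. These induce the natural isomorphisms between root spaces in one orbit (\cref{le:ad rootspace isom}) and realize the projections onto long root spaces (\cref{le:ad long rootspace proj}).

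Second, with the correct, large $L_{0,0}$, the isolation step is not merely a small-characteristic nuisance. The $L_{0,0}$-component of an element cannot be annihilated by such chains, so projections onto short root spaces are not $\ad$-chains. The paper proves every ideal is graded (\cref{pr:L ideal graded}) as follows:
\begin{enumerate}
\item Use $\ad_x$ and \cref{le:ad rootspace isom} to find a multiple of $\zeta$ in $I$.
\item Subtract it, so that $\ad_y$ kills the $L_{0,0}$-component.
\item Apply $\ad_{-x}\circ\ad_y$, which restricts to the identity on $L_{-1}$.
\end{enumerate}
It then rules out nonzero ideals inside $L_{0,0}$ via $L(0,0,0)_{\tmax}=0$. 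This holds because elements of $Z\le M$ are determined by their action on $\bigoplus_{i\neq0}L_i$. Your sketch never addresses this case.

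\textbf{Minor points.} Short root spaces need not contain a free summand, although the long ones suffice for your $\mathfrak aL$ argument. Also, the long root spaces do not generate $L$ as a subalgebra, since the long roots of $G_2$ form a closed $A_2$; only the ideal they generate is $L$.
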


We are then ready to pass from Lie algebras to groups.
In \cref{sec:G2 exp}, we introduce the notion of $\alpha$-parametrizations and $\alpha$-exponential automorphisms (where $\alpha$ is a root).
We then proceed to explicitly define such maps for the $G_2$-graded Lie algebras arising from any cubic norm pair.
The precise verification that these maps define automorphisms of the Lie algebra is rather computational, and we have decided to extract it from this section into a separate Appendix~\ref{sec:leibniz}.
In addition, some verifications involving Weyl elements have been moved into a second Appendix~\ref{sec:weyl G2}.
As a final result of \cref{sec:G2 exp}, we obtain:

\begin{maintheorem}[\ref{thm:G2 graded group}]\label{main:B}
	Let $J$ be a cubic norm structure over a commutative ring~$k$ and let $L \coloneq L(J, J)$ be as in \cref{main:A}. Then $\Aut(L)$ contains a $G_2$-graded group $G(J)$ with root groups $U_\alpha \cong L_\alpha$ for all $\alpha \in G_2$.
\end{maintheorem}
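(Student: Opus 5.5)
We will realise $G(J)$ inside $\Aut(L)$ by exponentiating the root spaces of the $G_2$-grading of $L = L(J,J)$ from \cref{main:A}. For each root $\alpha \in G_2$ we fix the coordinatisation $L_\alpha \cong k$ (long roots) or $L_\alpha \cong J$ (short roots). For each $x \in L_\alpha$ we then write down an explicit candidate
\[
  \exp_\alpha(x) = \sum_{i \ge 0} e_i(x),
\]
where $e_i(x)$ is homogeneous of degree $i\alpha$. Only $i \le 2$ (long roots) or $i \le 3$ (short roots) can contribute, since $L$ lives in degrees $G_2 \cup \{0\}$. We would like $e_i(x) = \ad_x^i/i!$, but $2$ and $3$ need not be invertible, so each $e_i(x)$ has to be given by a divided-power formula. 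For example, on a short root space the term $\ad_x^2/2$ should be defined using the adjoint $x^\sharp$, and the term $\ad_x^3/6$ using the norm $N(x)$ of the cubic norm structure.

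The plan has four steps.
\begin{enumerate}[label={\rm (\arabic*)}]
  \item Define $\exp_\alpha$ explicitly in the coordinatisation and show it is an $\alpha$-exponential automorphism. Multiplicativity $\exp_\alpha(x)[a,b] = [\exp_\alpha(x)a, \exp_\alpha(x)b]$ reduces degree by degree to the \emph{higher Leibniz rules}
  \[
    e_n(x)[a,b] = \sum_{i+j=n} [e_i(x)a, e_j(x)b].
  \]
  We verify these case by case on pairs of root spaces, using the identities for cubic norm pairs collected in \cref{sec:CNP}. This is the content deferred to Appendix~\ref{sec:leibniz}.
  \item Show that $\exp_\alpha$ is a homomorphism $L_\alpha \to \Aut(L)$. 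For short roots, $e_2(x+y)$ and $e_3(x+y)$ pick up the cross terms $x \times y$ and $T(x^\sharp,y)$, and these must combine correctly. Setting $U_\alpha = \exp_\alpha(L_\alpha)$, injectivity follows by evaluating $\exp_\alpha(x)$ on a suitable element of a root space of degree $\beta$ with $\alpha + \beta$ a root and reading off the degree-$(\alpha+\beta)$ component. This gives $U_\alpha \cong L_\alpha$.
  \item Establish the commutator relations $[U_\alpha, U_\beta] \subseteq U_{\rootint{\alpha}{\beta}}$. By symmetry, i.e.\ conjugating by the Weyl elements from step (4), it suffices to check one representative pair for each $W$-orbit of angles. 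In each case we compute the commutator of two explicit automorphisms on generators of $L$ and match the result with a product of exponentials in the intermediate roots. The coefficients will be expressions such as $t\,x$, $x^\sharp$ and $T(x,y)$.
  \item Produce Weyl elements. For long roots we use the reflections $\phihor$, $\phisl$, $\phibs$, $\phiver$ from \cref{le:reflections auto} and check that each can be written as $\exp_{-\alpha}(a)\exp_\alpha(b)\exp_{-\alpha}(c)$. For short roots we take $w = \exp_{-\alpha}(x')\exp_\alpha(x)\exp_{-\alpha}(x'')$ with $x = 1_J$, the base point of the cubic norm structure. Here we need $J' = J$ with a unit, which is exactly why the statement assumes a cubic norm structure and not just a pair. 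We then check that conjugation by $w$ maps $L_\beta$ onto $L_{\sigma_\alpha(\beta)}$. Since $\exp_\beta$ is functorial under automorphisms, $w U_\beta w^{-1} = \exp_{\sigma_\alpha\beta}(w L_\beta)$, and so $w$ permutes the root groups (Appendix~\ref{sec:weyl G2}).
\end{enumerate}

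Finally, the root groups must generate a group in which the positive root groups intersect the negative system trivially, as required by the definition of a root graded group in \cref{sec:root graded}. This follows from the grading: a nontrivial product of elements of positive root groups moves some degree-zero element into positive degree.

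The main obstacle is step (1) for short roots, together with step (2). Finding divided-power expressions for $\ad_x^2/2$ and $\ad_x^3/6$ that make sense over any $k$ and satisfy all higher Leibniz rules requires many cubic norm identities. My first attempt would be to define $e_2$ and $e_3$ by the formulas one gets over $\mathbb{Q}$ and simplify them until no denominators remain. I would check the Leibniz rules by scalar extension to a flat ring in which $6$ is invertible where possible. Since the formulas are polynomial in $x$, they can then be verified generically as polynomial identities over $\Z$, and so hold for all $k$.
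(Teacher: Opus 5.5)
Your overall architecture matches the paper's: explicit divided-power exponentials, higher Leibniz rules, additivity, commutator relations, Weyl elements from the reflections and the base point, and nondegeneracy from the grading. However, step~(4) relies on a claim that is not available. The claim is ``$\exp_\beta$ is functorial under automorphisms, so $wU_\beta w^{-1}=\exp_{\sigma_\alpha\beta}(wL_\beta)$''. When $2$ and $3$ are not invertible, the terms $\theta^{[i]}_a$ with $i\ge 2$ are not functions of $\ad_a$. They are ad hoc formulas in $\sharp$, $N$ and $U$, written in the chosen coordinates. The naturality in \cref{rem:exp CNP notation} only covers maps $L(\iota,\iotainv)$ induced by homomorphisms of cubic norm pairs, and $\phihor$, $\phibs$ are not of that form.

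What you get for free is \cref{le:conj by refl}: $a\mapsto w\exp_\beta(w^{-1}a)w^{-1}$ is \emph{some} $\sigma_\alpha\beta$-parametrization. As remarked after \cref{def:param}, parametrizations are not unique in general, so its image need not be $U_{\sigma_\alpha\beta}$. For a long target root you can repair this by rigidity. By \cref{le:leibniz endo converse}, two parametrizations differ by a derivation sending $L_{-\gamma}$ to $L_\gamma$ and killing all other root spaces, and an easy argument shows such a derivation is zero. For short target roots the ambiguity lives in degrees $2\gamma$ and $3\gamma$ on several root spaces, and no such shortcut is apparent.

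Closing this gap is the actual content of \cref{sec:weyl G2}. The paper first writes $\phihor=e_ye_xe_y$ and $\phibs^{-1}=e_{(0,0,-b',0)_+}e_{(0,b,0,0)_-}e_{(0,0,-b',0)_+}$ (\cref{le:phihor factor,le:phibs prod}). It then computes $\phihor^{-1}e_a\phihor$ and $\phibs^{-1}e_a\phibs$ root by root (\cref{le:exp y conj,le:exp y conj inv,le:exp JJ' phihor,le:exp L-1 conj,le:exp L-1 conj inv,le:phibs short 1,le:phibs short 2,le:phibs short 3,le:phibs long 1,le:phibs long 2,le:phibs long 3}). It identifies these conjugates with $e_{\pm a}$, $e_{\pm a^\iota}$, $e_{t\lambda x}$, \dots, using cubic norm identities and the $t$-homotopy property of $(\iota,\iotainv)$. \Cref{le:exp parity conj} halves the work, since $\phihor^2$ and $\phibs^2$ are parity automorphisms. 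Your step~(3), which reduces the commutator relations by conjugating with these Weyl elements, inherits the gap until this is done. Also note that $\phibs$ has the short type $(-1,-1)$, not a long one. With $b=1_J$, $\phibs^{-1}$ is exactly of the form you propose for short roots, and one long and one short Weyl element suffice by \cref{rem:weyl basic}.

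Two smaller points. First, your fallback of checking the higher Leibniz rules over a flat ring with $6$ invertible and concluding ``generically over $\Z$'' would need a $\Z$-torsion-free universal cubic norm structure specializing to every $J$ over every $k$, and none is available. The paper remarks in \cref{sec:leibniz} that no such principle seems available and checks the rules by hand. It does so only for $e_{\lambda x}$ and $e_{(0,b,0,0)_+}$, and obtains every other $e_a$, together with its additivity, as a conjugate by $\phihor$, $\phisl$ or $\psibs$ (\cref{pr:exp conj aut summary}). This spares most of your step~(1). Second, for nondegeneracy use the unitriangularity argument of \cref{pr:G2 nondeg}. For a height-compatible ordering of $G_2^0$, elements of $U_+$ are $\id$ plus a strictly height-raising map, and elements of $U_-$ are $\id$ plus a strictly height-lowering one, so $U_+\cap U_-=1$. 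Your claim that every nontrivial product in $U_+$ moves a degree-zero element into positive degree is neither needed nor immediate.
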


We then move on to $F_4$-gradings. We begin by laying out the combinatorics of the Tits index that relates $ F_4 $-gradings to $ G_2 $-gradings in \cref{sec:index}.
In a short \cref{sec:CJMA}, we collect the required background on cubic Jordan matrix algebras, following \cite{GPR24}, expressed in our setting of cubic norm pairs (rather than cubic norm structures).
Next, in \cref{sec:lie F4}, we show that for this specific class of cubic norm pairs, the $G_2$-grading can be refined to an $F_4$-grading, which we obtain by introducing three new $5$-gradings that we then intersect.
Finally, in \cref{sec:refining aut}, we show that also the $G_2$-grading in the automorphism group of the Lie algebra can be refined to an $F_4$-grading, leading to our final result.

\begin{maintheorem}[\ref{thm:lie F4 grading}, \ref{thm:F4 graded group}]
	Let $C$ be a multiplicative conic alternative algebra (e.g., a composition algebra) over a commutative ring $k$, let $\Gamma = \operatorname{diag}(\gamma_1, \gamma_2, \gamma_3) \in \Mat_3(k)$ be invertible and let $J = \Her(C, \Gamma)$ be the associated cubic Jordan matrix algebra.
	Let $L \coloneq L(J,J)$ be as in \cref{main:A} and $G \coloneq G(J)$ be as in \cref{main:B}.
	
	Then $L$ is $F_4$-graded with root spaces $L_\alpha \cong k$ for all long $\alpha \in F_4$ and $L_\alpha \cong C$ for all short $\alpha \in F_4$. Further, $G$ is an $F_4$-graded group with root groups $U_\alpha \cong L_\alpha$ for all $\alpha \in F_4$.
\end{maintheorem}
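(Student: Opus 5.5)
The plan splits into a Lie-algebra part (first assertion) and a group part (second assertion). The $F_4$-grading of $L$ will be obtained as a common refinement of four $5$-gradings, and the group structure by exponentiating root spaces inside $\Aut(L)$.

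\textbf{Lie algebra.} The $G_2$-grading of $L=L(J,J)$ from \cref{main:A} is induced by two independent $5$-gradings, coming from eigenvalues of two grading derivations. Using the surjection $F_4 \to G_2\cup\{0\}$ from the Tits index, I will add further $5$-gradings, three of them, defined by derivations $\dd_1,\dd_2,\dd_3$. Each $\dd_i$ acts diagonally on the root spaces $L_\beta$, $\beta\in G_2$, by decomposing $J=\Her(C,\Gamma)$ into diagonal entries $k e_{ii}$ and off-diagonal entries $C[jl]$. Each $\dd_i$ is a derivation because the bracket on $L$ is written explicitly in terms of the norm, adjoint, trace and $T$-maps. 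These maps respect the Peirce-type decomposition of $\Her(C,\Gamma)$ from \cref{sec:CJMA}; for instance $e_{ii}\times e_{jj}=e_{ll}$ and $C[ij]\times C[jl]\subseteq C[il]$. So checking the derivation property reduces to bookkeeping of weights.

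Intersecting the eigenspaces yields a decomposition $L=\bigoplus_{\alpha\in F_4\cup\{0\}}L_\alpha$ with $[L_\alpha,L_\beta]\subseteq L_{\alpha+\beta}$. For roots $\alpha$ not mapping to $0$, one reads off $L_\alpha\cong k$ or $C$ directly. The delicate piece is the middle $G_2$ piece $L_0$, which contains the modified $0$-part of the TKK construction. Here I must show that $L_0$ splits as a zero-weight part plus twelve further root spaces: long ones $\cong k$ and short ones $\cong C$, sitting inside the span of the operators $x\otimes y$ and the grading derivations. I expect to identify them by exhibiting explicit elements, such as the images of $e_{ii}\otimes c[jl]$-type expressions, and comparing $k$-ranks. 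When $C$ is not free, this must be done by showing the sum is direct and exhausts $L_0$ via the explicit description of $L_0$ from \cref{sec:lie const}. A further check, again by weights, is that the grading satisfies the axioms of a root grading, namely that $L$ is generated appropriately.

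\textbf{Group.} For roots $\alpha\in F_4$ whose image in $G_2$ is nonzero, $L_\alpha\subseteq L_{\bar\alpha}$. I will define $\exp_\alpha$ as the restriction of the $G_2$-exponential map $\exp_{\bar\alpha}$ from \cref{sec:G2 exp}; restricted to $L_\alpha$, it is an $\alpha$-exponential automorphism because it preserves the finer weights. Weyl elements for these roots come from $\exp_\alpha(a)\exp_{-\alpha}(b)\exp_\alpha(c)$, or from the reflections $\phihor,\dots$ combined with the $\exp$'s. For the remaining roots, mapping to $0$, I define $\exp_\alpha \coloneq w\exp_{\beta}w^{-1}$ for a Weyl element $w$ with $w\beta=\alpha$. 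I must then check that this is independent of choices, or at least that it yields an $\alpha$-exponential map. Then I set $U_\alpha\coloneq\exp_\alpha(L_\alpha)$.

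The main obstacle is verifying the commutator relations $[U_\alpha,U_\beta]\subseteq U_{\rootint{\alpha}{\beta}}$ for all $48$ roots, together with the requirement that each Weyl element permutes all root groups. I would reduce this using the $W(F_4)$-action. First I show that the Weyl elements for a set of simple roots permute the root groups; then the commutator relations only need checking for one representative pair per orbit type. Those relations follow either from the $G_2$ relations already established in \cref{thm:G2 graded group} or from direct computation with the higher Leibniz rules. The permutation check for each Weyl element on each of the $48$ root groups is a finite but large verification. I would organize it systematically, computer-assisted, by checking $w\exp_\alpha(L_\alpha)w^{-1}$ on generators of $L$.
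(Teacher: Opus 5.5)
Your plan has a genuine gap at its foundation. You obtain the three new $5$-gradings as eigenspace decompositions of derivations $\dd_1,\dd_2,\dd_3$, and then read off $[L_\alpha,L_\beta]\subseteq L_{\alpha+\beta}$ from the derivation property. Over an arbitrary commutative ring this inference fails. Suppose $D$ acts by $p$ on a summand $L^{(i)}_p$ and is a derivation, and take $a\in L^{(i)}_p$, $b\in L^{(i)}_q$. Then for the components $c_r$ of $[a,b]$ you only learn $(r-p-q)c_r=0$. This forces nothing unless $r-p-q$ acts invertibly; for instance, a component in degree $p+q\pm2$ is not excluded when $2=0$ in $k$, and the weights $-2,\dots,2$ need not even be distinct there. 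The paper makes exactly this remark for $\xi$ and $\zeta$ in \cref{def:second grading}. That is why even the $G_2$-grading is defined by explicit submodules and not by eigenvalues. For the same reason, ``comparing $k$-ranks'' is unavailable for an arbitrary $C$. The working route is the paper's: define the summands $L^{(i)}_p$ explicitly (\cref{def:3 new gradings}). Then check $[L^{(i)}_p,L^{(i)}_q]\subseteq L^{(i)}_{p+q}$ case by case with the Peirce rules (\cref{le:peirce,le:times blocks,le:T formulas}), as in \cref{lem:igrading:-11,lem:igrading:01,lem:igrading:00,pr:igrading}.

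The second missing piece is the middle component, and your description of it is wrong. $Z=L_{0,0}$ contains exactly six nonzero $F_4$-root spaces, all \emph{short}, forming the $A_2$-subsystem $\pi^{-1}(0)\setminus\{0\}$; no long roots occur there. Since $Z$ is only a non-direct sum of spans of the $\dd_{a,a'}$ together with $k\zeta+k\xi$, two things must be proved. First, with $Z_{i\to j}=\sum_l Z(J_{il},J'_{lj})$, show $Z=Z^{(i)}_{-1}\oplus Z^{(i)}_0\oplus Z^{(i)}_1$ via the faithful action of $Z\le M$ on $L_{\pm1}$ and the Peirce shifts (\cref{pr:Z-decomp}). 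Second, show $Z_{i\to j}\cong C$ for $i\ne j$, which is the real content. It needs the relations of \cref{le:dd rel}, such as $\dd(\cubel{a}{jl},\cubel{b}{li}^\iota)=\gamma_l\,\dd(e_j,\cubel{ab}{ji}^\iota)$ and $\dd(e_j,\cubel{a}{ji}^\iota)=\dd(\cubel{a}{ji},e_i')$. These are specializations of \cref{le:triple}, where the correction term $2\zeta-\xi$ enters, and they collapse the three summands of $Z_{i\to j}$ onto the image of $a\mapsto\dd_{e_i,\cubel{a}{ij}^\iota}$. Injectivity then comes from $D_{e_i,\cubel{a}{ij}^\iota}(e_j)=\cubel{a}{ij}$ (\cref{pr:Zitoj-param}).

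Your group-theoretic outline does match the paper, but three points are missing. Conjugating $\exp_\beta$ by $w$ yields an $\alpha$-parametrization only if $w$ is a reflection of the $F_4$-graded Lie algebra in the sense of \cref{def:refl abstr} (\cref{le:conj by refl}); the paper verifies this for $w_{\delta_1},w_{\delta_4}$ in \cref{le:weyl F4 refl}. To have concrete Weyl-element candidates at all, you must fix the $\gamma$- and sign-normalization of the root homomorphisms as in \cref{def:roothom lie}. Finally, you never address Axiom~\ref{def:rgg}\cref{def:rgg:nondeg}, which the paper obtains from the triangularity argument of \cref{pr:G2 nondeg}.
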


\subsection*{Acknowledgments.}

We are grateful to Holger Petersson for pointing out the reference \cite{Faulkner2001} to us and to Bernhard Mühlherr for several insightful discussions about Tits indices. The work of the second author was partially supported by a DAAD postdoctoral research scholarship and by the Deutsche Forschungsgemeinschaft (DFG, German Research Foundation), Project-ID 286237555, TRR 195.

\section{Root graded Lie algebras and groups}\label{sec:root graded}

\begin{convention}
	Throughout this paper, we will use the terminology of \cite{HumphreysLie} for root systems. In other words, all root systems are finite, reduced and crystallographic. Further, for any root system $ \Phi $, we put $ \Phi^0 \coloneq \Phi \cup \{0\} $.
\end{convention}

\subsection{Root gradings}

We first introduce the central objects that we intend to construct in this paper: Root gradings of Lie algebras and groups.

\begin{definition}
	Let $ k $ be a commutative ring and let $ L $ be a Lie algebra over $ k $. For an abelian group $ (A,+) $, an \emph{$ A $-grading of $ L $} is a family $ (L_a)_{a \in A} $ of $ k $-submodules of $ L $ such that
	\[ L = \bigoplus_{a \in A} L_a \]
	and $ [L_a, L_b] \le L_{a+b} $ for all $ a,b \in A $. For $ n \in \mathbb{N} $, a \emph{$ (2n+1) $-grading of $ L $} is a family $ (L_i)_{-n \le i \le n} $ such that $ (L_i)_{i \in \Z} $ is a $ \Z $-grading of $ L $ where $ L_i \coloneq 0 $ for $ \lvert i \rvert > n $. For a root system $ \Phi $, a \emph{$ \Phi $-grading of $ L $} is a family $ (L_\alpha)_{\alpha \in \Phi^0} $ such that $ (L_\alpha)_{\alpha \in \langle \Phi \rangle_\Z} $ is a $ \langle \Phi \rangle_\Z $-grading where $ L_\alpha \coloneq 0 $ for all $ \alpha \in \langle \Phi \rangle_\Z \setminus \Phi^0 $.
\end{definition}

\begin{remark}\label{rem:stronger grading}
	Our notion of root graded Lie algebras is more general than the one in \cite{BM92}. A $\Phi$-graded Lie algebra $L = \bigoplus_{\alpha \in \Phi^0} L_\alpha$ in our sense is $\Phi$-graded in the sense of \cite[1.1]{BM92} if it contains a copy $\bar{L}$ of the split Lie algebra of type $\Phi$ such that the root spaces $(\bar{L})_{\alpha \in \Phi^0}$ of $\bar{L}$ with respect to some split Cartan subalgebra satisfy $\bar{L}_\alpha \subseteq L_\alpha$ for all $\alpha \in \Phi^0$. (Actually, root graded Lie algebras in the sense of \cite{BM92} are usually only considered over base fields and not rings, but since the split Chevalley Lie algebras are defined over $\Z$ (see \cite[Section~25]{HumphreysLie}), the notion makes sense over rings as well.) The Lie algebras that we construct in this paper are often root graded in this stronger sense, and they are \enquote{close to being so} in some other cases. We will discuss this in more detail in \cref{rem:is stronger grading}.
\end{remark}

\begin{convention}
	For $ x,y $ in an arbitrary group $ G $, we define their commutator to be $ [x,y] \coloneq x^{-1} y^{-1} xy $.
\end{convention}

\begin{definition}[{\cite[2.5.2]{WiedemannPhD}}]\label{def:rgg}
	Let $ \Phi $ be a root system and let $ G $ be a group. A \emph{$ \Phi $-grading of $ G $} is a family $ (U_\alpha)_{\alpha \in \Phi} $ of subgroups of $ G $ with the following properties, where we write $ U_A \coloneq \langle U_\alpha \mid \alpha \in A \rangle $ for any subset $ A $ of $ \Phi $.
	\begin{enumerate}
		\item \label{def:rgg:gen}$ G $ is generated by $ (U_{\alpha})_{\alpha \in \Phi} $ and $ U_\alpha \ne 1 $ for all $ \alpha \in \Phi $.
		\item \label{def:rgg:comm}For all non-proportional $ \alpha, \beta \in \Phi $, we have $ [U_\alpha, U_\beta] \subseteq U_{\rootint{\alpha}{\beta}} $ where $ \rootint{\alpha}{\beta} \coloneq \Phi \cap \{i\alpha + j\beta \mid i,j \in \Z_{>0}\} $.
		\item \label{def:rgg:weyl}For all $ \alpha \in \Phi $, the set of so-called \emph{$ \alpha $-Weyl elements}
		\[ M_\alpha \coloneq \{w \in U_{-\alpha} U_\alpha U_{-\alpha} \mid U_\beta^w = U_{\beta^{\reflbr{\alpha}}} \text{ for all } \beta \in \Phi\} \]
		is not empty. Here $ \refl{\alpha} \colon \Phi \to \Phi $ denotes the reflection along $ \alpha^\perp $, which we let act on $\Phi$ from the right-hand side.
		\item \label{def:rgg:nondeg}If $ \Pi $ is a positive system in $ \Phi $, then $ U_{-\alpha} \cap U_\Pi = 1 $ for all $ \alpha \in \Pi $.
	\end{enumerate}
\end{definition}

\begin{remark}
	What we call a $ \Phi $-grading here is actually called a \emph{crystallographic $ \Phi $-grading} in \cite{WiedemannPhD}. However, in the context of $\Phi$-graded groups arising from $\Phi$-graded Lie algebras, only crystallographic $\Phi$-graded groups are of interest, and hence we will always drop the word \enquote{crystallographic} in this paper.
\end{remark}

\begin{remark}\label{rem:rgg abelian}
	It is a non-trivial but elementary observation that root groups in $A_2$-graded groups are necessarily abelian (see \cite[5.4.9]{WiedemannPhD}). Since $G_2$ and $F_4$ have the property that every root is contained in a subsystem of type $A_2$, it follows that all root groups in root graded groups of these types are abelian as well.
\end{remark}

\begin{remark}[{\cite[2.2.6]{WiedemannPhD}}]\label{rem:weyl basic}
	Let $ \Phi $ be a root system, let $ G $ be a group with a family of subgroups $ (U_\alpha)_{\alpha \in \Phi} $ and let $ \alpha, \beta \in \Phi $. Then $ M_\alpha = M_\alpha^{-1} = M_{-\alpha} $ and $ M_\alpha^{w_\beta} = M_{\alpha^{\reflbr{\beta}}} $ for all $ w_\beta \in M_\beta $. It follows from the second property that if $ M_\delta \ne \emptyset $ for all $ \delta $ in a system of simple roots of $ \Phi $, then $ M_\alpha \ne \emptyset $ for all $ \alpha \in \Phi $.
\end{remark}

\subsection{\texorpdfstring{$F_4$}{F4}-graded groups}

We now turn to the special case of $F_4$-graded groups. One of the main results of \cite{WiedemannPhD} is that every such group is \enquote{coordinatized} by what is called a multiplicative conic alternative algebra (\cref{thm:rgg-param}). One of our main results in this paper is that for every multiplicative conic alternative algebra, there exists an $F_4$-graded groups coordinatized by this algebra (\cref{thm:F4 graded group}). In the following, we develop the necessary terminology to make these two statements precise. For this we will freely use the terminology of multiplicative conic alternative algebras that we will only later introduce in \cref{sec:CJMA}. For the moment, it suffices to know that a multiplicative conic alternative algebra is an algebra $C$ over some commutative ring $k$ which is equipped with a conjugation $\conconj{\:\cdot\:} \colon C \to C$, a norm $n \colon C \to k$ (which has a linearization $n \colon C \times C \to k$) and a trace $\contr \colon C \to k$. However, the reader who wishes to do so may safely skip ahead to \cref{sec:CJMA} to read the precise definitions.

\begin{definition}\label{def:F4 basis order}
	A \emph{standard system of simple roots in $ F_4 $} is a tuple $ (\delta_1, \delta_2, \delta_3, \delta_4) $ such that $ \{\delta_1, \delta_2, \delta_3, \delta_4\} $ is a simple system of roots, $ \delta_1, \delta_2 $ are longer that $ \delta_3, \delta_4 $ and the subsystems spanned by $ \{\delta_1, \delta_2\} $, $ \{\delta_2, \delta_3\} $, $ \{\delta_3, \delta_4\} $ are of type $ A_2 $, $ B_2 $, $ A_2 $, respectively.
\end{definition}

\begin{definition}[{\cite[4.3.2, 10.1.17]{WiedemannPhD}}]\label{def:twistgrp}
	Let $C$ be a multiplicative conic alternative algebra over a commutative ring $k$. The \emph{twisting group (of $ (k,C) $)} is the group $ T \coloneq \{\pm 1\} \times \{\pm 1\} $ together with the actions on (the sets) $ k $ and $ C $ defined by
	\begin{align*}
		(-1,1).t &\coloneq -t, \quad (1,-1).t \coloneq t, \quad (-1,1).c \coloneq -c, \quad (1,-1).c \coloneq \conconj{c}
	\end{align*}
	for all $ t \in k $, $ c \in C $.
\end{definition}

\begin{definition}[{\cite[4.2.4]{WiedemannPhD}}]\label{def:parmap}
	Let $\Delta$ be a system of simple roots in $F_4$ and let $T$ be any abelian group. A \emph{$\Delta$-parity map with values in $T$} is any map $\eta \colon F_4 \times \Delta \to T \colon (\alpha, \delta) \mapsto \eta_{\alpha,\delta}$. For any sequence $\delta_1, \ldots, \delta_\ell \in \Delta$ with $\ell \ge 2$, we put
	\[ \eta_{\alpha,\delta_1 \cdots \delta_\ell} \coloneq \prod_{i=1}^\ell \eta_{\alpha^{\reflbr{\delta_1} \cdots \reflbr{\delta_{i-1}}}, \delta_i}. \]
\end{definition}

\begin{definition}[{\cite[4.3.4]{WiedemannPhD}}]\label{def:rgg-param}
	Let $C$ be a multiplicative conic alternative algebra over a commutative ring $k$, let $T$ be the twisting group of $(k,C)$, let $\Delta$ be a system of simple roots in $F_4$ and let $\eta$ be a $\Delta$-parity map with values in $T$. Further, let $G$ be a group with an $F_4$-grading $(U_\alpha)_{\alpha \in F_4}$ and let $w_\delta$ be a $\delta$-Weyl element for all $\delta \in \Delta$. Put $P_\alpha \coloneq k$ for all long $\alpha \in F_4$ and $P_\alpha \coloneq C$ for all short $\alpha \in F_4$. Then a \emph{parametrization of $(G, (U_\alpha)_{\alpha \in F_4})$ by $(k,C)$ with respect to $(w_\delta)_{\delta \in \Delta}$ and $\eta$} is a family of isomorphisms $(\theta_\alpha \colon P_\alpha \to U_\alpha)_{\alpha \in F_4}$ such that
	\[ \theta_\alpha(p)^{w_\delta} = \theta_{\alpha^{\reflbr{\delta}}}(\eta_{\alpha,\delta}.p) \quad \text{for all } p \in P_\alpha. \]
\end{definition}

\begin{remark}\label{rem:parmap prod}
	A parametrization automatically satisfies
	\[ \theta_\alpha(p)^{w_{\delta_1} \cdots w_{\delta_\ell}} = \theta_{\alpha^{\reflbr{\delta_1} \cdots \reflbr{\delta_\ell}}}(\eta_{\alpha,\delta}.p) \quad \text{for all } p \in P_\alpha. \]
\end{remark}

\begin{remark}\label{rem:coordinatization parmap}
	Let $\Delta$ be a system of simple roots in $F_4$. Not every parity map (with values in the twisting group of some pair $ (k,C) $) is reasonable in the sense that it may arise in a parametrization of an $ F_4 $-graded group: For example, it is known that for many possible choices of $ \alpha \in F_4 $, $ \delta \in \Delta $, the action of the square of any $ \delta $-Weyl element on $ U_\alpha $ must be the inversion map (see \cite[10.5.5]{WiedemannPhD}), and hence any reasonable parity map $ \eta $ should satisfy $ \eta_{\alpha, \delta \delta} = (-1,1) $. An example of a reasonable parity map $\etac \colon F_4 \times \Delta \to \{\pm 1\}^2$ is given in \cite[Figure~10.4]{WiedemannPhD}. Here the letter \enquote{c} stands for the fact that this is the parity map that is used in the Coordinatization Theorem~\ref{thm:rgg-param} for $ F_4 $-graded groups.
\end{remark}

\begin{theorem}[{\cite[10.7.8]{WiedemannPhD}}]\label{thm:rgg-param}
	Let $ G $ be a group with an $ F_4 $-grading $ (U_\alpha)_{\alpha \in F_4} $. Choose a standard system $ \Delta = (\delta_1, \delta_2, \delta_3, \delta_4) $ of simple roots in $ F_4 $ and a $ \delta $-Weyl element $ w_\delta $ for all $ \delta \in \Delta $. Then there exist a commutative ring $ k $, a multiplicative conic alternative algebra $ C $ over $ k $ and a parametrization $(\theta_\alpha)_{\alpha \in F_4}$ of $ (G, (U_\alpha)_{\alpha \in F_4}) $ by $ (k,C) $ with respect to $(w_\delta)_{\delta \in \Delta}$ and the standard parity map $\etac$ from \cref{rem:coordinatization parmap} (whose codomain we identify with the twisting group $T$ of $(k,C)$) such that the following commutator relations hold for all $ s,t \in k $ and all $ c,d \in C $:
	\begin{align*}
		[\theta_{\delta_1}(s), \theta_{\delta_2}(t)] &= \theta_{\delta_1 + \delta_2}(-st), \\
		[\theta_{\delta_2}(s), \theta_{\delta_3}(c)] &= \theta_{\delta_2 + \delta_3}(-sc) \theta_{\delta_2 + 2\delta_3}\brackets[\big]{-s \connorm(c)}, \\
		[\theta_{\delta_2 + \delta_3}(c), \theta_{\delta_3}(d)] &= \theta_{\delta_2+2\delta_3}\brackets[\big]{\connorm(c,d)} = \theta_{\delta_2+2\delta_3}\brackets[\big]{\contr(c \conconj{d})}, \\
		[\theta_{\delta_4}(c), \theta_{\delta_3}(d)] &= \theta_{\delta_3+\delta_4}(cd).
	\end{align*}
\end{theorem}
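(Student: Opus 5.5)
This theorem is quoted from \cite[10.7.8]{WiedemannPhD}, so in the paper it is invoked rather than reproved. Still, here is how I would reconstruct its proof.

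The strategy is to coordinatize the rank-2 subsystems first and then glue them together using the Weyl elements $w_\delta$. Recall from \cref{rem:rgg abelian} that all root groups are abelian.

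\textbf{Step 1: the ring $k$ and the algebra $C$ as sets.} Fix the standard system $\Delta$ and the Weyl elements $w_\delta$. For every root $\alpha$ there is a word in $\Delta$ such that the product $w = w_{\delta_{i_1}}\cdots w_{\delta_{i_\ell}}$ conjugates $U_{\delta_1}$ onto $U_\alpha$ (if $\alpha$ is long) or $U_{\delta_4}$ onto $U_\alpha$ (if $\alpha$ is short). I set $k \coloneq U_{\delta_1}$ and $C \coloneq U_{\delta_4}$ as abelian groups. Then I define $\theta_\alpha$ by transport along such a word, twisted by the sign or conjugation prescribed by $\etac$.

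The first task is well-definedness: different words must give the same $\theta_\alpha$. This reduces to two facts.
\begin{itemize}
\item Stabilizers of roots in the Weyl group are generated by braid-type relations.
\item The action of $w_\delta^2$ and of braid relators on root groups is given by the twisting group $T$. This is where the values $\eta_{\alpha,\delta\delta}=(-1,1)$ from \cref{rem:coordinatization parmap} come in, proved via rank-2 computations.
\end{itemize}

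\textbf{Step 2: the multiplication.} I would define the multiplications so that the displayed commutator relations hold by fiat.
\begin{itemize}
\item In the $A_2$-subsystem spanned by $\{\delta_1,\delta_2\}$, set $st \coloneq -\theta^{-1}_{\delta_1+\delta_2}[\theta_{\delta_1}(s),\theta_{\delta_2}(t)]$.
\item In the $A_2$-subsystem spanned by $\{\delta_3,\delta_4\}$, set $cd \coloneq \theta_{\delta_3+\delta_4}^{-1}[\theta_{\delta_4}(c),\theta_{\delta_3}(d)]$.
\end{itemize}
Standard $A_2$ (Moufang plane) arguments then show the following.
\begin{itemize}
\item $k$ is a ring that is associative, since $U_{\delta_1}$ sits in an $A_2$ inside a $C_3$ or $B_3$.
\item $k$ is commutative, via the $B_2$ relations together with Weyl conjugation.
\item $C$ is alternative.
\end{itemize}
The $B_2$-subsystem spanned by $\{\delta_2,\delta_3\}$ supplies the $k$-action on $C$ and the quadratic map $n$, through the relation $[\theta_{\delta_2}(s),\theta_{\delta_3}(c)]$. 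Its partial linearization gives $n(c,d)=t(c\conconj d)$.

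\textbf{Step 3: the axioms of a multiplicative conic alternative algebra.} I would verify the remaining axioms from commutator identities inside the $B_3$ and $C_3$ subsystems: $c^2 - t(c)c + n(c)1=0$, multiplicativity $n(cd)=n(c)n(d)$, and the fact that conjugation is the action of $(1,-1)$. The conjugation action comes from $w_{\delta_3}$ acting on short roots.

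\textbf{Main obstacle.} I expect the main difficulty to be the consistency in Step 1. One has to determine exactly which sign or conjugation appears when each $w_\delta$ acts on each of the 48 root groups, and show that the result is a single global parity map $\etac$ compatible with all the rank-2 relations. My approach would be to treat the $B_3$ and $C_3$ parabolic subsystems separately, using the known coordinatization theorems for $B_n$ and $C_n$ that are also proved in \cite{WiedemannPhD}, and then glue them along their common $B_2$.
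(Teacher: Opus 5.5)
The stated commutator relations are a reasonable way to define the operations from rank-two data. The gap is in Step~1: the well-definedness argument for $\theta_\alpha$ does not go through.

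Two words carrying your base root to $\alpha$ differ by a product $g$ of the $w_\delta^{\pm1}$. The image of $g$ in $W$ lies in $\mathrm{Stab}_W(\alpha)$, not necessarily in $\{1\}$. Braid relations and the action of the squares $w_\delta^2$ only control words that represent the \emph{same} element of $W$. They say nothing about lifts of nontrivial elements of $\mathrm{Stab}_W(\alpha)$, which is the nontrivial reflection subgroup generated by the $\refl{\beta}$ with $\beta\perp\alpha$. Your claim that stabilizers are ``generated by braid-type relations'' conflates this stabilizer with the kernel of $\langle w_\delta \mid \delta\in\Delta\rangle\to W$.

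Such loops already occur for single letters. If $\delta\in\Delta$ is orthogonal to $\alpha$, the parametrization axiom demands $\theta_\alpha(p)^{w_\delta}=\theta_\alpha(\etac_{\alpha,\delta}.p)$.
\begin{itemize}
\item For long $\alpha$ this is harmless. Whenever $\beta\perp\alpha$ we have $\rootint{\alpha}{\pm\beta}=\emptyset$, so $w_\beta$ centralizes $U_\alpha$.
\item For short $\alpha$ and short $\beta\perp\alpha$ it is not. The roots $\alpha\pm\beta$ are long, $[U_\alpha,U_{\pm\beta}]$ is in general nontrivial, and $w_\beta$ induces on $U_\alpha$ an automorphism about which nothing is known a priori. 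In the groups of \cref{thm:F4 graded group} it is $c\mapsto-\conconj{c}$; for instance $\etae(\delta_2+2\delta_3+\delta_4,\delta_4)=(-1,-1)$ in \cref{ta:existence parmap}.
\end{itemize}
These stabilizer actions are exactly where the conjugation part of $T$ comes from. You must prove that they are all given by $\pm\id$ composed with one fixed involution $\tau$ of $U_{\delta_4}$, in the pattern dictated by $\etac$. Otherwise $\theta_\alpha$ is not well defined.

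This also exposes a circularity. When you define $\theta_\alpha$ ``twisted by the conjugation prescribed by $\etac$'', $C=U_{\delta_4}$ is still a bare abelian group, so there is no $T$-action on it yet. That action has to be defined from the Weyl action itself, as $(1,-1).c\coloneq\tau(c)$. Only after the multiplication, the unit, $\connorm$ and $\contr$ have been constructed can one prove $\tau(c)=\contr(c)1_C-c$, i.e.\ that $\tau$ really is the conjugation of a conic algebra. Your Step~3 asserts this identification but gives no argument, even though it is one of the key identities to establish.

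Similarly, you never produce the units $1_k$ and $1_C$. Their existence has to be derived, naturally from the chosen $w_\delta\in U_{-\delta}U_\delta U_{-\delta}$. You need $1_k$ already to extract $\connorm$ from the relation for $[\theta_{\delta_2}(s),\theta_{\delta_3}(c)]$, and $1_C$ even to state $c^2-\contr(c)c+\connorm(c)1_C=0$.
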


\begin{remark}\label{rem:rgg F4 comrel}
	Keep the notation of \cref{thm:rgg-param} and put
	\[ X \coloneq \{(\delta_1, \delta_2), (\delta_2, \delta_3), (\delta_2 + \delta_3, \delta_3), (\delta_2, \delta_2 + 2\delta_3), (\delta_3, \delta_4)\}. \]
	For all $ (\alpha, \beta) \in X $, \cref{thm:rgg-param} provides an explicit commutator formula for $ [U_\alpha, U_\beta] $. Now let $ \alpha, \beta \in F_4 $ be arbitrary non-proportional roots. Then by the transitivity properties of the Weyl group, there exists $ u $ in the Weyl group of $ F_4 $ such that $ (\alpha^u, \beta^u) \in X $. Thus we have a commutator formula for $ [U_{\alpha^u}, U_{\beta^u}] $. Conjugating this formula by a sequence $ w_{\delta_{i_\ell}}, \ldots, w_{\delta_{i_1}} $ of Weyl elements where $ u = \refl{\delta_{i_1}} \cdots \refl{\delta_{i_\ell}} $, we obtain a commutator formula for $ [U_\alpha, U_\beta] $. For example, it follows from the formula $ [\theta_{\delta_3}(c), \theta_{\delta_3}(d)] = \theta_{\delta_3+\delta_4}(cd) $ that
	\[ [\theta_{\delta_4^{\reflbr{\delta}}}(\etac_{\delta_4,\delta}.c), \theta_{\delta_3^{\reflbr{\delta}}}(\etac_{\delta_3,\delta}.d)] = \theta_{\delta_3^{\reflbr{\delta}}+\delta_4^{\reflbr{\delta}}}(\etac_{\delta_3+\delta_4,\delta}.cd) \]
	for all $ \delta \in \Delta $ and all $ c,d \in C $. Thus explicit knowledge of $\etac$ grants explicit knowledge of all commutator relations.
\end{remark}

\begin{remark}\label{rem:different signs}
	We refer to the family $(\rhomgrp{\alpha})_{\alpha \in F_4}$ in \cref{thm:rgg-param} as a \emph{coordinatization}, meaning that it is a parametrization for which explicit commutator formulas (involving the structural maps of $(k,C)$) are known. In \cite[10.4.14, 10.4.15]{WiedemannPhD}, the parity map $\etac$ and $(\rhomgrp{\alpha})_{\alpha \in F_4}$ are called the \enquote{standard parity map} and a \enquote{coordinatization with standard signs}, respectively. However, we avoid the terminology of \enquote{standard signs} in this paper because there is nothing canonical about the parity map $\etac$. For example, the $ F_4 $-graded group in our Existence Theorem~\ref{thm:F4 graded group} will, a priori, be coordinatized with respect to a different parity map $ \etae $, and the commutator relations in this group (see \cref{pr:F4 comm}) will be the same as in \cref{thm:rgg-param} only \enquote{up to twisting}, that is, \enquote{up to signs and conjugation}. However, we will show in \cref{rem:sign twist} how a coordinatization of one type can be modified to obtain a coordinatization of the other type, which illustrates that this difference in the choice of signs is not meaningful.
\end{remark}

\section{Cubic norm pairs}\label{sec:CNP}

\subsection{Definition and identities}

Let $k$ be an arbitrary commutative ring. To define cubic norm pairs, we first need to clarify the notion of a cubic form.

\begin{definition}[{\cite{Faulkner2000}, \cite[12.43]{GPR24}}]\label{def:cubic map}
	Let $M,N$ be $k$-modules. A \emph{cubic map from $M$ to $N$ (over $k$)} is a pair $(f,g)$ of maps
	\[ f \colon M \to N \quad \text{and} \quad g \colon M \times M \to N \]
	with the following properties.
	\begin{enumerate}
		\item $f(\lambda x) = \lambda^3 f(x)$ for all $x \in M$, $\lambda \in k$.
		
		\item $g$ is quadratic in the first component and linear in the second component.
		
		\item $f(x+y) = f(x) + g(x,y) + g(y,x) + f(y)$ for all $x,y \in M$.
		
		\item \emph{Euler's differential equation:} $g(x,x) = 3f(x)$ for all $x \in M$.
	\end{enumerate}
	We will also write $ (f,g) \colon M \to N $. If $N=k$, we also call $(f,g)$ a \emph{cubic form on $M$}.
\end{definition}

The reason to include the \enquote{$ (2,1) $-linearization $ g $ of $ f $} in the definition of a cubic map is to ensure the existence and uniqueness of scalar extensions in the following sense.

\begin{lemma}[{\cite[Theorem~35]{Faulkner2000}, \cite[12.43]{GPR24}}]\label{le:cubic ext}
	Let $M,N$ be $k$-modules, let $ R $ be a $ k $-algebra and let $ (f,g) \colon M \to N $ be a cubic map. Then there exists a unique cubic map $ (f,g)_R = (f_R, g_R) \colon M_R = M \otimes_k R \to N_R = N \otimes R $, called the \emph{$ R $-cubic extension of $ (f,g) $}, such that the following diagrams commute:
	\[ \begin{tikzcd}
		M \arrow[r, "f"] \arrow[d, "m \mapsto m \otimes 1"'] & N \arrow[d, "n \mapsto n \otimes 1"] \\
		M_R \arrow[r, "f_R"] & N_R,
	\end{tikzcd} \qquad \begin{tikzcd}
		M \times M \arrow[r, "g"] \arrow[d, "{(u,v) \mapsto (u \otimes 1, v \otimes 1)}"'] & N \arrow[d, "n \mapsto n \times 1"] \\
		M_R \times N_R \arrow[r, "g_R"] & N_R.
	\end{tikzcd} \]
\end{lemma}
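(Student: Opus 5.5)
Uniqueness comes first. The module $M_R$ is spanned over $R$ by pure tensors $m\otimes r$. Axiom (iii) of \cref{def:cubic map} expands $f_R$ of a finite sum $\sum_i m_i\otimes r_i$ as
\[ \sum_i f_R(m_i\otimes r_i) + \sum_{i\ne j} g_R(m_i\otimes r_i, m_j\otimes r_j), \]
where the second sum comes from the pairwise terms, applied inductively. Axioms (i) and (ii) give
\[ f_R(m\otimes r)=r^3 f_R(m\otimes 1), \qquad g_R(m\otimes r, m'\otimes r') = r^2r'\,g_R(m\otimes 1,m'\otimes 1). \]
For $g_R$, linearity in the second variable and quadraticity in the first (with polar bilinear part $g_R(x+y,z)-g_R(x,z)-g_R(y,z)$) reduce everything to values on pure tensors with coefficients $1$. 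We must then check that this polar part is determined by $f_R,g_R$ on $M$-images. Linearizing (iii) shows that the polar bilinear part of $g$ in the first variable is the totally symmetric trilinear form
\[ t(x,y,z)=g(x+y,z)-g(x,z)-g(y,z), \]
and $t(x,y,z)$ is also expressible via $f$. So the commuting diagrams force all values, and uniqueness follows.

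For existence, the plan is to build $f_R$ and $g_R$ from polynomial-law data. Step one: extract from $(f,g)$ three maps, namely $f\colon M\to N$, the $(2,1)$-form $g$, and the symmetric trilinear form $t\colon M^3\to N$ defined above. We check that $t$ is trilinear and symmetric, with $t(x,x,y)=2g(x,y)$ and $t(x,y,x)=2g(x,y)$; these follow from (ii), (iii) and (iv) by standard polarization. Step two: $t$ extends uniquely to an $R$-trilinear form $t_R$ on $M_R$ by the tensor universal property. The map $y\mapsto g(x,y)$ is linear, and $x\mapsto g(x,\cdot)$ is a quadratic map $M\to\operatorname{Hom}(M,N)$ with polar form $t$. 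This lets us define $g_R$ by the formula forced in the uniqueness step:
\[ g_R\Big(\sum_i m_i\otimes r_i,\, y\Big)=\sum_i r_i^2\, g(m_i,\cdot)_R(y)+\sum_{i<j} r_ir_j\, t_R(m_i\otimes1,m_j\otimes1,y). \]
Similarly,
\[ f_R\Big(\sum_i m_i\otimes r_i\Big)=\sum_i r_i^3 f(m_i)+\sum_{i\ne j} r_i^2r_j\, g(m_i,m_j)+\sum_{i<j<l} r_ir_jr_l\, t(m_i,m_j,m_l), \]
with the images taken in $N_R$.

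The main obstacle is well-definedness: the value must not depend on the representation of an element of $M\otimes R$. The standard route is to present $M_R$ as a quotient of the free module on $M\times R$, or equivalently to reduce to checking invariance under the generating relations:
\[ (m+m')\otimes r \sim m\otimes r+m'\otimes r,\qquad (\lambda m)\otimes r\sim m\otimes \lambda r,\qquad m\otimes(r+r')\sim m\otimes r+m\otimes r'. \]
For each relation we substitute into the formula and use (i) to (iv) together with the trilinearity of $t$. The third relation is the delicate one, because the expansion of $(r+r')^3$ produces the cross terms $3r^2r'\,f(m)$. These must match $r^2r'\,g(m,m)$ from the split representation, and this matching is exactly Euler's equation (iv). The same term $r^2r'$ appears symmetrically on both sides, together with $t(m,m,m)=6f(m)$ in the triple sums. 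It is cleaner to cite the theory of polynomial laws, as Faulkner and \cite[12.43]{GPR24} do: a cubic map is the same as a homogeneous degree-3 polynomial law, via Roby's correspondence, and polynomial laws extend to $R$ by definition.

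Finally, we verify (i) to (iv) for $(f_R,g_R)$ by direct expansion; (iv) holds by construction. Commutativity of the diagrams is the case of a single term with $r=1$.
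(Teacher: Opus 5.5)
Your route is correct in outline but differs from the paper's. The paper gives no argument of its own: it cites Faulkner's theorem \cite[Theorem~35]{Faulkner2000} (cf.\ \cite[12.43]{GPR24}), which identifies cubic maps with homogeneous cubic polynomial laws, and for those the base change to $R$ exists and is unique by definition. You instead build $(f_R,g_R)$ explicitly from the triple $(f,g,t)$, where $t$ is the third difference of $f$; this is why $t$ is totally symmetric, and it uses only (ii) and (iii).

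Your approach is transparent. It shows exactly why $g$ has to be part of the data and why Euler's equation is imposed: both are needed for $f_R$ to respect $m\otimes(r+r')=m\otimes r+m\otimes r'$. Its cost is the well-definedness check for a non-additive formula, and your sketch compresses this step. You must check invariance when a relation is applied to one summand inside an arbitrary sum; the cross terms with the remaining summands are where trilinearity of $t$ and $t(x,x,y)=2g(x,y)$ enter. You also need to know that such local moves connect any two representations of the same tensor.

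A cleaner way is to write $M=F/K$ with $F$ free. On $F_R$ your formula is well defined by uniqueness of coordinates. Adding an element of the image of $K\otimes R$ changes nothing, because the lifts of $f$, $g$ and $t$ to $F$ vanish as soon as one argument lies in $K$.

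Your fallback ``via Roby's correspondence'' does not finish the job on its own. You still need that pairs $(f,g)$ satisfying (i)--(iv) correspond exactly to cubic laws, and that is the content of the theorem the paper cites.

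There are two smaller slips.
\begin{itemize}
\item In the uniqueness step, your expansion of $f_R(\sum_i x_i)$ is missing the term $\sum_{i<j<l}t_R(x_i,x_j,x_l)$. Since $g_R$ is not additive in its first argument, induction on (iii) produces it. This is harmless for the conclusion, because $t_R$ on pure tensors is also forced by $g_R$ and the diagrams, and your existence formula contains the term. But the displayed formula is wrong as stated.
\item Axiom (iv) for $(f_R,g_R)$ does not hold ``by construction''. It needs a short coefficient count. Each $r_i^2r_j\,g(m_i,m_j)$ with $i\ne j$ arises once from the first sum of $g_R$ and twice from $t(m_i,m_j,m_i)=2g(m_i,m_j)$. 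Each triple term arises three times. The diagonal terms use $g(m_i,m_i)=3f(m_i)$.
\end{itemize}
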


We are now ready to define cubic norm pairs.

\begin{definition}\label{def:CNP new}
	A \emph{cubic norm pair (over $k$)} is a pair $(J,J')$ of $k$-modules which are equipped with maps
	\begin{align*}
		\times &\colon J \times J \to J', &
		\times' &\colon J' \times J' \to J, \\
		T &\colon J \times J' \to k, &
		T' &\colon J' \times J \to k, \\
		N &\colon J \to k, &
		N' &\colon J' \to k, \\
		\sharp &\colon J \to J', &
		\sharp' &\colon J' \to J
	\end{align*}
	such that the following conditions are satisfied.
	\begin{enumerate}
		\item \label{eq:CNP bilinear}$\times$ and $T$ are $k$-bilinear maps, $\times$ is symmetric, and $T'(a',a) = T(a,a')$ for all $a \in J$, $a' \in J'$.
		
		\item $\sharp$ is a quadratic map with linearization $\times$. That is, $(\lambda a)^\sharp = \lambda^2 a^\sharp$ and $(a+b)^\sharp = a^\sharp + b^\sharp + a \times b$ for all $a,b \in J$ and $\lambda \in k$.
		
		\item \label{eq:CNP cubic}Put $ g \colon J \times J \to k \colon (a,b) \mapsto T'(a^\sharp,b)$. Then $(N,g)$ is a cubic form on $M$. Explicitly, $N(\lambda a) = \lambda^3 N(a)$, $T'(a^\sharp,a) = 3N(a)$ and the \emph{gradient identity}
		\[ N(a+b) = N(a) + T'(a^\sharp, b) + T'(b^\sharp, a) + N(b) \]
		hold for all $a,b \in J$ and $\lambda \in k$.
		
		\item \label{eq:sharp twice}The \emph{adjoint identity} $(a^\sharp)^{\sharp'} = N(a)a$ holds strictly. That is, $(a^\sharp)^{\sharp'} = \tilde{N}_R(a)a$ for all $k$-algebras $R$ and all $a \in J_R$.
		
		\item \label{eq:CNP 18}$N'(a^\sharp) = N(a)^2$ holds strictly. That is, $N_R'(a^\sharp) = N_R(a)^2$ for all $k$-algebras $R$ and all $a \in J_R$ where $ N_R $, $ N_R' $ are as in \cref{le:cubic ext}.
		
		\item \label{eq:CNP 19}$a \times (a^\sharp \times' b') = N(a)b' + T(a,b') a^\sharp$ holds strictly. That is, $a \times (a^\sharp \times' b') = \tilde{N}_R(a)b' + T(a,b') a^\sharp$ for all $k$-algebras $R$ and all $a \in J_R$, $b' \in J'_R$.
		
		\item All the previous conditions are still satisfied if we interchange the roles of $(J,\times, T, N, \sharp)$ and $(J', \times', T', N', \sharp')$.
	\end{enumerate}
	We will usually simply write $ \times $, $ T $, $ N $, $ \sharp $ for the maps $ \times' $, $ T' $, $ N' $, $ \sharp' $. We will refer to $T$ as the \emph{(bilinear) trace} and to $N$ as the \emph{norm}.
\end{definition}

\begin{definition}[{\cite[33.4]{GPR24}}]\label{def:CNS}
	A \emph{cubic norm structure} is a $k$-module $J$ which is equipped with maps 
	\begin{align*}
		\times &\colon J \times J \to J, \quad T \colon J \times J \to k, \quad N \colon J \to k, \quad \sharp \colon J \to J
	\end{align*}
	and a distinguished element $1_J \in J$, called the \emph{base point}, such that the following conditions are satisfied.
	\begin{enumerate}
		\item $(J,J)$ is a cubic norm pair with structural maps $ \times $, $ T $, $ N $, $ \sharp $ and $\times' \coloneq \times$, $T' \coloneq T$, $N' \coloneq N$, $\sharp' \coloneq \sharp$. It is called the \emph{cubic norm pair associated to (or induced by) $J$}.
		
		\item \label{def:CNS:1}The base point 
		satisfies $1_J^\sharp = 1_J$, $N(1_J) = 1_k$ and $1_J \times a = T(1_J, a) 1_J - a$ for all $a \in J$.
	\end{enumerate}
	We will also sometimes refer to the cubic norm pair $ (J,J) $ induced by $ J $ as a cubic norm structure.
\end{definition}

\begin{remark}\label{rem:CNP axioms}
	For the curious reader, we compare our definition of cubic norm pairs to some related ones that appear in the literature, and comment on the necessity of Axioms~\cref{eq:CNP 18,eq:CNP 19} in \cref{def:CNP new}. These facts will not be used in the sequel.
	\begin{enumerate}[(1)]
		\item \label{rem:CNP axioms:inv free}If there exist $a,b \in J$ such that $N(a)$ is invertible and $kb$ is free, then Axioms \cref{eq:CNP 18,eq:CNP 19} follow from the remaining ones. Namely, by similar arguments as in \cite[33.7]{GPR24}, the existence of $b$ implies that $kc$ is free for all $c \in J$ for which $N(c)$ is invertible, and with this observation, \cref{eq:CNP 18,eq:CNP 19} can be derived as in \cite[(33a.18), (33a.19)]{GPR24}.
		
		\item \label{rem:CNP axioms:unimod}The condition in~\ref{rem:CNP axioms:inv free} on the existence of $a,b$ is satisfied if there exists a base point $ 1_J $ that is \emph{unimodular}, meaning that $ k 1_J $ is a free direct summand of $ J $. The unimodularity of the base point is required in the definition of cubic norm structures in \cite[33.1, 33.4]{GPR24}, and hence Axioms \cref{eq:CNP 18,eq:CNP 19} are absent from \emph{loc.\@ cit.}. The condition in~\ref{rem:CNP axioms:inv free} is also satisfied if $k$ is a field and $N \ne 0$, which is how \cref{eq:CNP 18} is proven in \cite[(ADJ3')]{Faulkner2001}.
		
		\item Axiom \cref{eq:CNP 19} does not appear in \cite{Faulkner2001}, but follows from \cref{eq:CNP 18} if $k$ is a field. Further, it is shown in \cite[p.~4644]{Faulkner2001} that \cref{eq:CNP 18} holds if $T$ is non-degenerate in the sense that $T(a,J') = 0$ implies $a = 0_J$.
		
		\item In \cite[2.1.5]{Smet25}, the identity in \cref{le:CNP identities}\cref{le:CNP identities:21} is required as an axiom in place of our Axiom~\cref{eq:CNP 18}. In particular, it follows from \cref{le:CNP identities}\cref{le:CNP identities:21} that a cubic norm pair in our sense is a cubic norm pair in the sense of \cite[2.1.5]{Smet25}.
	\end{enumerate}
\end{remark}

\begin{remark}[Linearizations]\label{rem:CNP def lin}
	By \cite[12.10, 12.11]{GPR24}, Axiom~\ref{def:CNP new}\cref{eq:sharp twice} is equivalent to the requirement that the identity $(a^\sharp)^\sharp = N(a)a$ and its formal linearizations
	\begin{align}
		a^\sharp \times (a \times b) &= N(a)b + T(a^\sharp, b) a \quad \text{and} \label{eq:sharp twice lin 1} \\
		\quad a^\sharp \times b^\sharp + (a \times b)^\sharp &= T(a^\sharp, b)b + T(a, b^\sharp) a \label{eq:sharp twice lin 2}
	\end{align}
	hold for all $a,b \in J$. (See also \cite[33.14]{GPR24}.) Further, by linearizing with respect to $b$, it follows from~\eqref{eq:sharp twice lin 2} that
	\[ a^\sharp \times (b \times c) + (a \times b) \times (a \times c) = T(a^\sharp, b) c + T(a^\sharp, c)b + T(a,b \times c)a \]
	for all $a,b,c \in J$, which is also the linearization of~\eqref{eq:sharp twice lin 1} with respect to $a$.
	
	Similarly, Axiom~\ref{def:CNP new}\cref{eq:CNP 19} is equivalent to the requirement that $a \times (a^\sharp \times b') = N(a)b' + T(a,b') a^\sharp$ and the formal linearization
	\begin{align*}
		a \times \brackets[\big]{(a \times c) \times b'} + c \times (a^\sharp \times b') &= T(a^\sharp, c)b + T(a,b') a \times c + T(c,b') a^\sharp
	\end{align*}
	hold for all $a,c \in J$, $b' \in J'$.
	
	Finally, the formal linearizations of Axiom~\ref{def:CNP new}\cref{eq:CNP 18} of respective multi-degrees $(5,1)$, $(4,2)$, $(3,3)$ in $(a,b) \in J^2$ are the identities
	\begin{align*}
		T\brackets[\big]{(a^\sharp)^\sharp, a \times b} &= 2 T(a^\sharp, b) N(a), \\
		T\brackets[\big]{(a^\sharp)^\sharp, b^\sharp} + T\brackets[\big]{a^\sharp, (a \times b)^\sharp} &= 2N(a) T(a,b^\sharp) + T(a^\sharp, b)^2, \\
		N(a) T(b,b^\sharp) + T(a^\sharp, b) T(a,b^\sharp) + N(a \times b) &= 2N(a) N(b) + 2T(a^\sharp, b) T(a, b^\sharp).
	\end{align*}
	These formulas are obtained by computing $D^i(N \circ \sharp) $ and $ D^i(N \cdot N)$ for $i \in \{1,2,3\}$ with the chain and product rules \cite[12.17, 12.42]{GPR24}, and equating the resulting expressions. The first two of these identities are consequences of \cref{le:CNP identities}\cref{le:CNP identities:7,le:CNP identities:10}, whose proof is independent of Axiom~\ref{def:CNP new}\cref{eq:CNP 18}, while the third one is equivalent to
	\begin{equation}\label{eq:CNP 18:lin 33}
		N(a \times b) = T(a^\sharp, b) T(a, b^\sharp) - N(a) N(b)
	\end{equation}
	because $T(b,b^\sharp) = 3N(b)$. Further, the formal linearization of \eqref{eq:CNP 18:lin 33} of multi-degree $(3,2,1)$ in $(a,b,c) \in J^3$ is
	\begin{align*}
		T\brackets[\big]{(a \times b)^\sharp, a \times c} &= T(a^\sharp, b) T(a, b \times c) - N(a) T(b^\sharp, c) + T(a^\sharp, c) T(a, b^\sharp),
	\end{align*}
	which is also a consequence of \cref{le:CNP identities}\cref{le:CNP identities:10}. We conclude that we could replace Axiom~\ref{def:CNP new}\cref{eq:CNP 18} by the requirement that $N(a^\sharp) = N(a)^2$ and \eqref{eq:CNP 18:lin 33} hold for all $a,b \in J$.
\end{remark}

For the rest of this section, we assume that $(J,J')$ is a cubic norm pair.

\begin{definition}\label{def:U-operators}
	The \emph{U-operators} $U_a \colon J' \to J$ are given by
	\[ U_a (a') \coloneq T(a,a')a - a^\sharp \times a' , \]
	for all $a \in J$ and $a' \in J'$, with linearization
	\[ U_{a,b} (a') = T(a,a')b + T(b,a')a - (a \times b) \times a' \]
	for all $a,b \in J$ and $a' \in J'$.
	We also write
	\[ D_{a,a'}(b) := \{ a, a', b \} \coloneq U_{a,b} (a') . \]
	In particular, for each $a \in J$ and $a' \in J'$, this defines an endomorphism $D_{a,a'}$ of~$J$.
	Similarly, we have endomorphisms $D_{a',a}$ of $J'$.
	(There is no danger of confusion in notation, because $a \in J$ and $a' \in J'$.)
	We will see in \cref{le:CNP is JP} that the $U$-operators equip $(J,J')$ with the structure of a Jordan pair. We will sometimes write $ D(a,a') $ for $ D_{a,a'} $.
\end{definition}

The following lemma says that all identities which are proven in \cite[33.8]{GPR24} for cubic norm structures and which do not refer to the distinguished base point remain valid. We keep the same numbering as in \emph{loc.\@ cit.}.

\begin{lemma}\label{le:CNP identities}
	The following identities hold for all $ a,b,c,d \in J $, $ b',c',d' \in J' $ and also for all $ a,b,c,d \in J' $, $ b',c',d' \in J $:
	\begin{enumerate}[(1)]
		\setcounter{enumi}{6}
		
		\item \label{le:CNP identities:7}$ T(a \times b, c) = T(a, b \times c) $.
		\item \label{le:CNP identities:8}$ a^\sharp \times (a \times b) = N(a)b + T(a^\sharp, b) a $.
		\item \label{le:CNP identities:9}$ (a \times b) \times (a \times c) + a^\sharp \times (b \times c) = T(a^\sharp, b)c + T(a^\sharp, c)b + T(a \times b, c)a $.
		\item \label{le:CNP identities:10}$ a^\sharp \times b^\sharp + (a \times b)^\sharp = T(a^\sharp, b)b + T(a, b^\sharp) a $.
		\setcounter{enumi}{15}
		\item \label{le:CNP identities:16}$ U_a(a \times b) = T(a^\sharp, b)a - N(a)b $.
		\item \label{le:CNP identities:17}$ \{a, a^\sharp, b\} = 2N(a)b $.
		\setcounter{enumi}{19}
		\item \label{le:CNP identities:20}$ (U_a b')^\sharp = U_{a^\sharp}((b')^\sharp) $.
		\item \label{le:CNP identities:21}$ N(U_a b') = N(a)^2 N(b') $.
	
		\setcounter{enumi}{24}
		\item \label{le:CNP identities:25}$ \begin{aligned}[t]
			&(a \times b) \times (c \times d) + (b \times c) \times (a \times d) + (c \times a) \times (b \times d) \\
			&\qquad = T(a \times b, c)d + T(b \times c, d) a + T(c \times d, a) b + T(d \times a, b)c.
		\end{aligned} $
		\item \label{le:CNP identities:26}$ \begin{aligned}[t]
			&a \times (b' \times (a \times c)) = T(a,b')a \times c + T(a^\sharp, c)b' + T(b',c)a^\sharp - (a^\sharp \times b') \times c \\
			&\qquad = (U_a b') \times c + T(a^\sharp, c)b' + T(b',c) a^\sharp
		\end{aligned} $
		\item \label{le:CNP identities:27}$ \begin{aligned}[t]
			&a \times (b' \times (c \times d)) + c \times (b' \times (d \times a)) + d \times (b' \times (a \times c)) \\
			&\qquad = T(a,b')c \times d + T(c,b') d \times a + T(d,b') a \times c + T(a \times c, d)b'.
		\end{aligned} $
		\setcounter{enumi}{28}
		\item \label{le:CNP identities:29}$ N(a \times b) = T(a^\sharp, b) T(a, b^\sharp) - N(a) N(b) $.
		\item \label{le:CNP identities:30}$ T(U_a b', c') = T(b', U_a c') $.
		\item \label{le:CNP identities:31}$ T(\{a,b',c\}, d') = T(c, \{b',a,d'\}) $.
	\end{enumerate}
\end{lemma}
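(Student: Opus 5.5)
Since the structural maps of the pair $(J,J')$ are symmetric under interchanging $J$ and $J'$, it is enough to prove each identity for $a,b,c,d \in J$ and $b',c',d' \in J'$; the primed version follows by the same argument with the roles swapped. The guiding principle is that the proofs in \cite[33.8]{GPR24} for these items never use the base point. They use only bilinearity and symmetry, the adjoint identity together with its linearizations from \cref{rem:CNP def lin}, the gradient identity and Euler's equation, and the strict identities. All of these are available here through Axioms \cref{eq:CNP cubic}--\cref{eq:CNP 19} of \cref{def:CNP new}. The one thing to watch is that every element must sit in the correct module $J$ or $J'$, so the plan is to re-run the GPR arguments while checking the typing at each step.

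The first block, \cref{le:CNP identities:7}--\cref{le:CNP identities:10}, consists essentially of linearizations. For \cref{le:CNP identities:7}, write $N(a+b+c)$ via the gradient identity and extract the part that is trilinear in $(a,b,c)$; it equals $T(a\times b,c)$ for each ordering, which gives the symmetry. Identities \cref{le:CNP identities:8}, \cref{le:CNP identities:9} and \cref{le:CNP identities:10} are exactly the linearizations of Axiom \cref{eq:sharp twice} stated in \cref{rem:CNP def lin}. Identity \cref{le:CNP identities:25} is the full linearization of \cref{le:CNP identities:9}: replace $a$ by $a+d$, take the part of degree $(1,1,1,1)$, and simplify with \cref{le:CNP identities:7}. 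Identity \cref{le:CNP identities:29} is \eqref{eq:CNP 18:lin 33}, which \cref{rem:CNP def lin} derives from Axiom \cref{eq:CNP 18}.

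Next come the $U$-operator identities. For \cref{le:CNP identities:16}, expand $U_a(a\times b)=T(a,a\times b)a-a^\sharp\times(a\times b)$ and use $T(a,a\times b)=2T(a^\sharp,b)$ (from Euler and \cref{le:CNP identities:7}) together with \cref{le:CNP identities:8}. For \cref{le:CNP identities:17}, take $\{a,a^\sharp,b\}=T(a,a^\sharp)b+T(b,a^\sharp)a-(a\times b)\times a^\sharp$ and apply \cref{le:CNP identities:8}. The mixed identities \cref{le:CNP identities:26} and \cref{le:CNP identities:27} are where Axiom \cref{eq:CNP 19} enters. Identity \cref{le:CNP identities:26} is its linearization in \cref{rem:CNP def lin}, rewritten using $U_ab'=T(a,b')a-a^\sharp\times b'$. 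Identity \cref{le:CNP identities:27} is obtained by fully linearizing \cref{le:CNP identities:26} in $a$, i.e.\ by replacing $a$ with $a+d$ and comparing terms. The symmetries \cref{le:CNP identities:30} and \cref{le:CNP identities:31} then follow directly from \cref{le:CNP identities:7} and the symmetry of $T$: expand $T(a^\sharp\times b',c')=T(a^\sharp,b'\times c')$ in $J'$, and linearize to get \cref{le:CNP identities:31}.

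The main obstacle is \cref{le:CNP identities:20} and \cref{le:CNP identities:21}. In GPR these are proven by generic arguments, and here the unit is unavailable. For \cref{le:CNP identities:20}, expand
\[(T(a,b')a-a^\sharp\times b')^\sharp = T(a,b')^2a^\sharp - T(a,b')\,a\times(a^\sharp\times b') + (a^\sharp\times b')^\sharp.\]
The middle term simplifies by Axiom \cref{eq:CNP 19}. For the last term, apply \cref{le:CNP identities:10} in $J'$ to $a^\sharp$ and $b'$, then use the adjoint identity $(a^\sharp)^\sharp=N(a)a$. The result should match $U_{a^\sharp}(b'^\sharp)=T(a^\sharp,b'^\sharp)a^\sharp-N(a)a\times b'^\sharp$. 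For \cref{le:CNP identities:21}, compute $N(U_ab')$ with the gradient identity, the sharp just obtained, \cref{le:CNP identities:29} and Axiom \cref{eq:CNP 18}. If the bookkeeping becomes unwieldy, the fallback is a strictness argument: pass to $R=k[t]$ or to polynomial rings, and use Axioms \cref{eq:sharp twice}--\cref{eq:CNP 19} holding strictly to reduce to comparing polynomial identities coefficientwise.
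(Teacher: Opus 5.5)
Your proposal is correct and follows the paper's route. Like the paper, you re-run the base-point-free computations of \cite[33.8]{GPR24}, read off (8)--(10), (26) and (29) from \cref{rem:CNP def lin}, and handle (20)--(21) by direct expansion (gradient identity, (10) and (29) in $J'$, and $N'(a^\sharp)=N(a)^2$) in place of the Zariski-density argument that is unavailable here.

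One small correction: linearizing (30) in $a$ gives $T(\{a,b',c\},d')=T(b',\{a,d',c\})$, not (31). Instead, obtain (31) by expanding both triple products; the only terms that do not match at once are $T((a\times c)\times b',d')$ and $T(c,(b'\times d')\times a)$, and these agree by (7) in $J'$ and in $J$.
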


\begin{proof}
	The same arguments and computations as in \cite[33.8]{GPR24} for cubic norm structures work in our more general setting. Note that, unlike in \cite[33.8]{GPR24}, we cannot use the Zariski-density argument \cite[12.24]{GPR24} to restrict our attention to elements with invertible norm because it is not guaranteed that the set
	\[ \mathbf{D}(N)(k) \coloneq \{a \in J \mid N(a) \text{ invertible}\} \]
	is non-empty. Note also that we have already seen~\cref{le:CNP identities:8,le:CNP identities:9,le:CNP identities:10,le:CNP identities:26,le:CNP identities:29} in \cref{rem:CNP def lin}.
\end{proof}

\begin{lemma}[{\cite[Proposition~1]{Faulkner2001}}]\label{le:CNP is JP}
	The $U$-operators from \cref{def:U-operators} equip $(J,J')$ with the structure of a Jordan pair. In other words, the identities
	\begin{gather*}
		\{a, b', U_a c'\} = U_a(\{b', a, c'\}), \\
		\{U_a b', b', c\} = \{a, U_{b'} a, c\} \quad \text{and} \quad U_{U_a b'} c' = U_a U_{b'} U_a c'
	\end{gather*}
	hold for all $a,c \in J$, $b', c' \in J'$.
\end{lemma}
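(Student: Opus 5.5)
We verify the three Jordan pair identities directly from the identities in \cref{le:CNP identities}, working with an arbitrary $a \in J$ and $b',c' \in J'$. The symmetric statements with the roles of $J$ and $J'$ exchanged then hold automatically, since all axioms and all identities in \cref{le:CNP identities} are symmetric under this exchange. Every identity is polynomial, so it is enough to prove it strictly over all $k$-algebras $R$, or equivalently to prove it together with its linearizations. We never divide by $N(a)$.

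\emph{The first identity.} Expand $U_a c' = T(a,c')a - a^\sharp \times c'$. Then
\[ \{a,b',U_ac'\} = T(a,b')U_ac' + T(U_ac',b')a - (a \times U_a c') \times b'. \]
For the term $a \times U_ac' = T(a,c')a^\sharp\cdot 2 - a\times(a^\sharp\times c')$, we use Axiom~\ref{def:CNP new}\cref{eq:CNP 19}. It rewrites $a\times(a^\sharp\times c')$ as $N(a)c' + T(a,c')a^\sharp$, so $a\times U_ac' = T(a,c')a^\sharp - N(a)c'$. On the other side, $\{b',a,c'\}=T(a,b')c'+T(a,c')b'-(b'\times c')\times a$. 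We apply $U_a$ to it, using \cref{le:CNP identities:16} for $U_a(a\times(b'\times c'))$ and \cref{le:CNP identities:30} for the trace terms. Comparing the two sides then reduces to the formula $(a^\sharp\times b')$-terms $=$ $(a^\sharp\times(b'\times c'))$-terms. This follows from \cref{le:CNP identities:26}, applied in the mixed version with $J'$ and $J$ interchanged.

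\emph{The second identity.} We view both sides as linear in $c$ and pair them with an arbitrary $d' \in J'$ via $T$. By \cref{le:CNP identities:31}, $T(\{x,b',c\},d') = T(c,\{b',x,d'\})$. So we may equivalently show $\{b',U_ab',d'\} = \{b',a,\{a,b',d'\}\}$ type relations, or compute directly. The direct route is to expand $\{U_ab',b',c\}$ using $U_ab' = T(a,b')a - a^\sharp\times b'$. This produces terms of the form $((a^\sharp\times b')\times c)\times b'$. These are then simplified with \cref{le:CNP identities:26,le:CNP identities:27}, while $U_{b'}a = T(a,b')b' - b'^\sharp\times a$ handles the right-hand side. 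Since both sides have degree $2$ in $a$ and $2$ in $b'$, we may first reduce to a linearized statement and then apply \cref{le:CNP identities:25,le:CNP identities:27} to match terms.

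\emph{The fundamental formula.} This is the main obstacle, because the expressions have degree $4$ in $a$. We first prove the auxiliary identity $U_{U_ab'} = U_aU_{b'}U_a$ on elements of the form $c'=(U_ab')^\sharp$-adjacent terms, using \cref{le:CNP identities:20,le:CNP identities:21}. The cleaner approach, following Faulkner and the standard proof in \cite{GPR24} for cubic norm structures, is to derive the fundamental formula from the first two identities. Over $\Z$-torsion-free situations this is Macdonald/McCrimmon's argument. To avoid any division, we instead expand $U_{U_ab'}c' = T(U_ab',c')U_ab' - (U_ab')^\sharp\times c'$. We replace $(U_ab')^\sharp$ by $U_{a^\sharp}(b'^\sharp)$ via \cref{le:CNP identities:20}. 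We then reduce $U_{a^\sharp}(b'^\sharp)\times c'$ using the adjoint identity $(a^\sharp)^\sharp=N(a)a$ together with \cref{le:CNP identities:26} twice. The right-hand side $U_a(T(b',U_ac')b' - b'^\sharp\times U_ac')$ is expanded with \cref{le:CNP identities:16} and Axiom~\ref{def:CNP new}\cref{eq:CNP 19}. The bookkeeping of the terms carrying $N(a)$ and $T(a,\cdot)$ is where we expect the computation to be long. If it becomes unwieldy, we fall back on citing the verification in \cite[Proposition~1]{Faulkner2001}, noting that it uses only the axioms listed in \cref{def:CNP new} together with \cref{le:CNP identities}.
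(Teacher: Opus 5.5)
Your route is in substance the paper's. Its proof is only a citation of Faulkner's computation, with the remark that every identity used there is in \cref{le:CNP identities} (bare numbers below refer to that lemma), and your fallback for the fundamental formula is that same citation. Your check of the first identity is sound, but its last step is misattributed. After Axiom~(vi) of \cref{def:CNP new} on the left, (16) on the right, and rewriting $T(a,c')U_ab'$ via the definition of $U_a$, the two sides differ by $\bigl(T(a,b')T(a,c') - T(a^\sharp,b'\times c') - T(U_ac',b')\bigr)a$. This coefficient vanishes by the definition of $U_ac'$ and (7); (26) is not needed.

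The actual gap is that the second identity and the fundamental formula are only announced, and one announced reduction fails. Pairing with $d'$ via $T$ only shows that the difference of the two sides lies in the radical of $T$. Since $T$ may be degenerate here (the paper explicitly allows, e.g., $T=0$), this is not an equivalent reformulation. The direct expansions you describe do close, and quickly. For the second identity, the second form of (26) and its $J'$-version applied to $(b',a,a\times c)$ give
\begin{gather*}
(U_ab')\times c = a\times\bigl(b'\times(a\times c)\bigr) - T(a^\sharp,c)b' - T(b',c)a^\sharp, \\
(U_{b'}a)\times(a\times c) = b'\times\bigl(a\times(b'\times(a\times c))\bigr) - T\bigl((b')^\sharp,a\times c\bigr)a - 2T(a^\sharp,c)(b')^\sharp .
\end{gather*}
Inserting these, and using (7), $u\times u = 2u^\sharp$ and the symmetry of $\times$, turns both $\{U_ab',b',c\}$ and $\{a,U_{b'}a,c\}$ into
\[ \bigl(T(a,b')^2 - 2T(a^\sharp,(b')^\sharp)\bigr)c + T(a,b')T(b',c)\,a + 2T(a^\sharp,c)(b')^\sharp - b'\times\bigl(a\times(b'\times(a\times c))\bigr). \]
For the fundamental formula, (20) and the adjoint identity give $(U_ab')^\sharp = T(a^\sharp,(b')^\sharp)a^\sharp - N(a)\,a\times(b')^\sharp$. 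Then (30) matches the terms $T(U_ab',c')U_ab'$ on the two sides, and it remains to show
\[ a^\sharp\times\bigl((b')^\sharp\times U_ac'\bigr) - T\bigl(a,(b')^\sharp\times U_ac'\bigr)a = N(a)\bigl(a\times(b')^\sharp\bigr)\times c' - T\bigl(a^\sharp,(b')^\sharp\bigr)\,a^\sharp\times c'. \]
Substitute $U_ac' = T(a,c')a - a^\sharp\times c'$. Then use (8) for $a^\sharp\times\bigl(a\times(b')^\sharp\bigr)$, and the $J'$-version of (26) together with $(a^\sharp)^\sharp = N(a)a$ for $a^\sharp\times\bigl((b')^\sharp\times(a^\sharp\times c')\bigr)$. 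Finally, (7) and (8) give the coefficient $T\bigl(a,(b')^\sharp\times U_ac'\bigr) = T(a,c')T\bigl(a^\sharp,(b')^\sharp\bigr) - N(a)T\bigl((b')^\sharp,c'\bigr)$, and all terms cancel. So neither (21), nor a detour through the first two identities, nor the citation is needed.
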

\begin{proof}
	This holds by the computations in \cite[Proposition~1]{Faulkner2001}. Alternatively, we may cite \cite[1.2.9]{Smet25}. The first and third identity also follow from the computations in \cite[33.9]{GPR24}. All identities required in the proof hold by \cref{le:CNP identities}.
\end{proof}

We briefly summarize the relationship between cubic norm pairs and cubic norm structures, which naturally leads to the notion of homotopies.

\begin{definition}\label{def:CNP hom}
	Let $(J_1, J_1')$, $(J_2, J_2')$ be two cubic norm pairs, let $\varphi \colon J_1 \to J_2 $, $\varphi' \colon J_1' \to J_2' $ be two homomorphisms of $k$-modules and let $ t \in k $ be invertible. Then the pair $(\varphi,\varphi')$ is called a \emph{$ t $-homotopy of cubic norm pairs} if
	\begin{align*}
		N_2\bigl(\varphi(a)\bigr) &= tN_1(a), & \varphi(a)^{\sharp_2} &= t \varphi'(a^{\sharp_1}), & T_2\brackets[\big]{\varphi(a), \varphi'(a')} &= T_1(a,a'), \\
		N_2\bigl(\varphi'(a')\bigr) &= t^{-1} N_1(a'), & \varphi'(a')^{\sharp_2} &= t^{-1} \varphi\bigl((a')^{\sharp_1}\bigr)
	\end{align*}
	for all $a \in J_1$, $a' \in J_1'$. A \emph{homotopy} is a $ t $-homotopy for some invertible $ t \in k $, and a \emph{homomorphism} is a $ 1 $-homotopy. We say that $(\varphi,\varphi')$ is surjective, injective or bijective if both $\varphi$ and $\varphi'$ have the respective property. Bijective homotopies are also called isotopies and bijective homomorphisms are also called isomorphisms.
\end{definition}

\begin{remark}\label{rem:homotopy times}
	A $ t $-homotopy $(\varphi, \varphi') \colon (J_1, J_1') \to (J_2, J_2')$ of cubic norm pairs automatically satisfies
	\begin{align*}
		\varphi'(a \times_1 b) &= \varphi'\brackets[\big]{(a+b)^\sharp - a^\sharp - b^\sharp} = t^{-1} \brackets[\big]{(\varphi(a) + \varphi(b))^\sharp - \varphi(a)^\sharp - \varphi(b)^\sharp} \\
		&= t^{-1}\varphi(a) \times_2 \varphi(a)
	\end{align*}
	and similarly $	\varphi(a' \times_1 b') = t \varphi'(a') \times_2 \varphi'(b') $
	for all $a,b \in J_1$, $ a',b' \in J_1' $. Further, if $ (\varphi, \varphi') $ is a bijective $ t $-homotopy, then $ (\varphi^{-1}, (\varphi')^{-1}) $ is a bijective $ t^{-1} $-homotopy, and the composition of a $ t_1 $-homotopy with a $ t_2 $-homotopy is a $ t_1 t_2 $-homotopy.
\end{remark}

\begin{example}
	Let $t,s \in k$ be invertible. Then $(t \id_J, t^{-1} \id_{J'}) \colon (J,J') \to (J,J')$ is a $t^3$-isotopy. It follows that for any $s$-isotopy $(\phi, \phi')$, the pair $(t\phi, t^{-1}\phi') = (\phi, \phi') \circ (t \id_J, t^{-1}\id_{J'})$ is a $t^3s$-isotopy.
\end{example}

\begin{remark}\label{rem:CNP hom jordan}
	Any homomotopy $(\varphi, \varphi') \colon (J_1,J_1') \to (J_2, J_2')$ of cubic norm pairs is also a homomorphism of Jordan pairs: If $ (\varphi, \varphi') $ is a $ t $-homotopy for $ t \in k $ invertible, we have
	\begin{align*}
		\varphi'\brackets[\big]{U_a (a')} &=T(a,a') \varphi(a) - \varphi(a^\sharp \times a') = T\brackets[\big]{\varphi(a), \varphi'(a')} \varphi(a) - t \varphi'(a^\sharp) \times \varphi'(a') \\
		&= T\brackets[\big]{\varphi(a), \varphi'(a')} \varphi(a) - \varphi(a)^\sharp \times \varphi'(a') = U_{\varphi(a)}\brackets[\big]{\varphi'(a')}
	\end{align*}
	for all $ a \in J_1 $, $ a' \in J_1' $, and similarly, $ \varphi(U_{a'}(a)) = U_{\varphi'(a')} \varphi(a) $. It follows that $ \varphi(\{a, a', b\}) = \{\varphi(a), \varphi'(a'), \varphi(b)\} $ and $ \varphi'(\{a', a, b'\}) = \{\varphi'(a'), \varphi(a), \varphi'(b')\} $ for all $ a,b \in J_1 $, $ a',b' \in J_1' $. We conclude that we have an implication chain \enquote{homomorphism of cubic norm pairs} $ \implies $ \enquote{homotopy of cubic norm pairs} $ \implies $ \enquote{homomorphism of Jordan pairs}.
\end{remark}

\begin{remark}\label{rem:CNS homotopy switch}
	Let $(\varphi, \varphi') \colon (J_1,J_1) \to (J_2, J_2')$ be a $t$-homotopy of cubic norm pairs (for some invertible $t \in k$) where $(J_1,J_1)$ is the cubic norm pair induced by a cubic norm structure $J_1$. Then $(\varphi', \varphi) \colon (J_1, J_1) \to (J_2', J_2)$ is a $t^{-1}$-homotopy.
\end{remark}

\begin{definition}
	Let $t \in k$ be invertible. A \emph{$t$-involution} of a cubic norm pair $(J,J')$ is a $t$-isotopy $(\varphi, \varphi') \colon (J,J') \to (J', J)$ of cubic norm pairs such that $\varphi' = \varphi^{-1}$. An \emph{involution} is a $1$-involution.
\end{definition}

\begin{remark}
	Any cubic norm pair that is induced by a cubic norm structure has an involution. However, a cubic norm pair with an involution is not necessarily induced by a cubic norm structure because it need not have a base point in the sense of \cref{def:CNS}.
\end{remark}

\begin{remark}\label{rem:isotopes}
	Assume that there exists $p \in J$ such that $N(p)$ is invertible, and put $p' \coloneq N(p)^{-1}p^\sharp$. Then by \cite[1.1.10]{Smet25} or by the same computations as in \cite[33.10]{GPR24}, $p$ is invertible in the sense of Jordan pairs with inverse $p'$ (meaning that $U_p \colon J' \to J$ is invertible with inverse $U_{p'}$). Moreover, we may define a cubic norm structure with base point $p$ on the $k$-module $J^{(p)} \coloneq J$ by the formulas in \cite[1.1.10]{Smet25} or \cite[33.11]{GPR24}. It is called the \emph{$p$-isotope of $(J,J')$} and its structural maps are given by
	\begin{align*}
		N^{(p)} \colon J \to k &\colon a \mapsto N(p') N(a) = N(p')^{-1} N(U_{p'} a), \\
		\sharp^{(p)} \colon J \to J &\colon a \mapsto N(p')^{-1} U_{p'}(a)^\sharp = N(p') U_{p'}^{-1}(a^\sharp), \\
		T^{(p)} \colon J \times J \to k &\colon (a,b) \mapsto T\brackets[\big]{U_{p'} a, b} = T\brackets[\big]{a, U_{p'} b}, \\
		\times^{(p)} \colon J \times J \to k &\colon (a,b) \mapsto N(p') U_{p'}^{-1} (a \times b) = N(p')^{-1} \brackets[\big]{U_{p'}(a) \times U_{p'}(b)}.
	\end{align*}
	In this setup, a straightforward computation shows that the pair
	\[ (\id_J, U_p^{-1}) = (\id_J, U_{p'}) \colon (J^{(p)}, J^{(p)}) \to (J,J') \]
	is an $N(p)$-isotopy. By \cref{rem:CNS homotopy switch}, this implies that
	\[ (U_p^{-1}, \id_J) = (U_{p'}, \id_J) \colon (J^{(p)}, J^{(p)}) \to (J',J) \]
	is an $N(p)^{-1}$-isotopy. In particular, the composition
	\[ (U_p^{-1}, U_p) = (U_p^{-1}, \id_J) \circ (\id_J, U_p^{-1})^{-1} \colon (J,J') \to (J', J) \]
	is an $N(p)^{-2}$-involution, and hence
	\[ (N(p)U_p^{-1}, N(p)^{-1}U_p) \colon (J,J') \to (J',J) \]
	is an $N(p)$-involution.
	
	We conclude that a cubic norm pair $ (J,J') $ is isotopic to the cubic norm pair induced by a cubic norm structure if and only if there exists $ p \in J $ such that $ N(p) $ is invertible. Further, for a second cubic norm pair $(J_2, J_2')$ and a pair $(\varphi, \varphi') \colon (J_2, J_2') \to (J,J')$ of $k$-linear maps, another straightforward computation shows that $(\varphi, \varphi')$ is an $N(p)$-homomoty if and only if the composition $ (\id_J, U_{p}) \circ (\varphi, \varphi') $ is a homomorphism of cubic norm pairs (from $ (J_2, J_2') $ to $ (J^{(p)}, J^{(p)}) $). Thus the notion \enquote{$t$-homotopy to $(J,J')$ for some invertible $t \in k \cap \im(N)$} coincides with the notion \enquote{homomorphism to an isotope of $(J,J')$}. The second notion is precisely the definition of homotopies in the theory of Jordan algebras (see \cite[31.17]{GPR24}).
\end{remark}

We conclude this section with some identities that will be needed later.

\begin{lemma}\label{le:triple D}
	For all $ a,b,c \in J $, the following identities hold.
	\begin{enumerate}
		\item \label{le:triple D:1} $ D_{a,a^\sharp} = 2N(a) \id_J $.
		\item \label{le:triple D:2} $ D_{a^\sharp, a} = 2N(a) \id_{J'} $.
		\item \label{le:triple D:3} $ D_{a,b^\sharp} + D_{b, a \times b} = 2T(a,b^\sharp) \id_J $.
		\item \label{le:triple D:4} $ D_{a^\sharp, b} + D_{a \times b, a} = 2T(a^\sharp, b) \id_{J'} $.
		\item \label{le:triple D:r} $D_{a, b \times c} + D_{b, c \times a} + D_{c, a \times b} = 2 T(a, b \times c) \id_{J}$.
		\item \label{le:triple D:l} $D_{a \times b, c} + D_{b\times c, a} + D_{c \times a, b} = 2 T(a \times b, c) \id_{J'}$.
	\end{enumerate}
	The same identities hold for all $ a,b,c \in J' $ if the roles of $ \id_J $ and $ \id_{J'} $ are interchanged.
\end{lemma}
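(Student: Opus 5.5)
Item (i) is \cref{le:CNP identities}\cref{le:CNP identities:17}, read as an identity of operators in $b \in J$, and item (ii) is the same statement for the cubic norm pair $(J',J)$, obtained by interchanging the roles of $J$ and $J'$. To see this, recall from \cref{def:U-operators} that $D_{a,a'}(b) = \{a,a',b\} = U_{a,b}(a')$. For $a' = a^\sharp$ we get
\[ D_{a,a^\sharp}(b) = \{a, a^\sharp, b\} = 2N(a)b. \]
For (ii), I read $D_{a^\sharp,a}$ as the operator $b' \mapsto \{a^\sharp, a, b'\}$ on $J'$. Writing $x \coloneq a^\sharp \in J'$ is not directly useful, since (ii) involves $\{a^\sharp,a,\cdot\}$ with $a$ in the middle slot. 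So I compute directly instead:
\[ \{a^\sharp, a, b'\} = T(a,a^\sharp)b' + T(a,b')a^\sharp - (a^\sharp \times b') \times a = 3N(a)b' + T(a,b')a^\sharp - a\times(a^\sharp\times b'). \]
By Axiom~\ref{def:CNP new}\cref{eq:CNP 19}, $a \times (a^\sharp \times b') = N(a)b' + T(a,b')a^\sharp$, so the right-hand side equals $2N(a)b'$. This proves (ii). The same computation with primes swapped gives (i) directly from the $J'$-version of \cref{eq:CNP 19}, and this also handles the final sentence of the lemma.

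Items (iii) and (iv) follow by linearizing (i) and (ii) in $a$. Both sides of (i) are cubic in $a$: the left side through $a$ and $a^\sharp$, the right side through $N$. I replace $a$ by $a+\lambda b$ and take the coefficient of $\lambda^2$. The gradient identity gives the right side's coefficient as $2T(a,b^\sharp)$. On the left, $D_{u,v}$ is bilinear in $(u,v)$ and $(a+\lambda b)^\sharp = a^\sharp + \lambda\, a\times b + \lambda^2 b^\sharp$, so the $\lambda^2$-coefficient is $D_{a,b^\sharp} + D_{b,a\times b}$. This gives (iii), and (iv) follows the same way from (ii).

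One caution is needed: extracting coefficients of $\lambda$ is only valid if the identity holds after scalar extension to $k[\lambda]$. The axioms used (\cref{eq:CNP 19}, the definition of $D$, and the identities of \cref{le:CNP identities}) are strict polynomial identities, so I will either state this or avoid it. To avoid it, I expand $(a+b)$ and $(a-b)$ in (i) and use the gradient identity. That only isolates the odd and even parts, which leaves a factor $2$ ambiguity. So I prefer the direct route: verify (iii) and (iv) by expanding with \cref{le:CNP identities}\cref{le:CNP identities:26}, the linearized form of \cref{eq:CNP 19} from \cref{rem:CNP def lin}, and the relation $T(a\times b,b) = 2T(a,b^\sharp)$. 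The last relation holds because $T(b^\sharp,b)=3N(b)$ linearizes to $T(a\times b, b) + T(b^\sharp,a) = 3T(b^\sharp,a)$.

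Finally, (v) and (vi) are the full trilinearizations. I linearize (iii) in $b$, replacing $b$ by $b+c$ and taking the mixed part, which gives
\[ D_{a,b\times c} + D_{b,a\times c} + D_{c,a\times b} = 2T(a,b\times c)\id_J, \]
matching (v) by symmetry of $\times$. (vi) follows in the same way from (iv). To avoid the polynomial-extension issue, I will check (v) directly: expanding each $D$ gives $T$-terms, which combine using \cref{le:CNP identities}\cref{le:CNP identities:7} to give $3T(a,b\times c)d$ plus cross terms, and the $\times$-terms are handled by \cref{le:CNP identities}\cref{le:CNP identities:27}. The main bookkeeping obstacle is exactly this cancellation, namely matching the $T(\cdot,b')$-weighted terms of \cref{le:CNP identities:27} against the $T$-terms of the three $D$'s. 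Since only bilinear substitutions are involved here, plain additive linearization over $k$ suffices.
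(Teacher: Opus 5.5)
\paragraph{Gap in (iii).} Your argument for (iii) fails on the route you finally choose. Evaluating on $c\in J$ gives
\begin{align*}
D_{a,b^\sharp}(c) + D_{b,a\times b}(c) &= \bigl(T(a,b^\sharp) + T(b,a\times b)\bigr)c + T(c,b^\sharp)a + T(c,a\times b)b \\
&\qquad{} - \bigl[(a\times c)\times b^\sharp + (b\times a)\times(b\times c)\bigr].
\end{align*}
The bracket is a product of two elements of $J'$, so it lies in $J$. \Cref{le:CNP identities}\cref{le:CNP identities:26} and the linearization of Axiom~\ref{def:CNP new}\cref{eq:CNP 19} in \cref{rem:CNP def lin} are one and the same identity. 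It is an identity in $J'$, built from terms of the shape $J\times(J'\times J')$, so it cannot evaluate this bracket.

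These identities are exactly right for (iv). There the bracket is $(a^\sharp\times c')\times b+\bigl((a\times b)\times c'\bigr)\times a$, which is precisely their left-hand side.

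For (iii) the missing input is the linearized adjoint identity \cref{le:CNP identities}\cref{le:CNP identities:9}, with $a$ and $b$ interchanged. It turns the bracket into $T(b^\sharp,a)c+T(b^\sharp,c)a+T(a\times b,c)b$. Combined with $T(b,a\times b)=2T(a,b^\sharp)$, this gives (iii); that is the paper's proof.

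The same mix-up occurs in your optional direct check of (v). Its cross terms lie in $J$ and are controlled by \cref{le:CNP identities}\cref{le:CNP identities:25}. \Cref{le:CNP identities}\cref{le:CNP identities:27} is what a direct check of (vi) needs. Your main argument for (v) and (vi) is fine and coincides with the paper's: additive linearization of (iii) in $b$, and of (iv) in its quadratic variable $a$.

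\paragraph{Smaller remarks.} The $k[\lambda]$-linearization you set aside does work, and would also repair (iii). Items (i) and (ii) rest on Axioms~\ref{def:CNP new}\cref{eq:sharp twice} and~\cref{eq:CNP 19}, which are imposed strictly. Hence they remain valid for the scalar extension to $k[\lambda]$, and comparing coefficients is legitimate. You are right that the $(a\pm b)$ trick alone only gives the identities up to $2$-torsion.

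Your proofs of (i) and (ii) are correct: (i) via \cref{le:CNP identities}\cref{le:CNP identities:17}, and (ii) by direct computation with Axiom~\ref{def:CNP new}\cref{eq:CNP 19}, which is genuinely the input (ii) needs. Two of your side claims, however, are off:
\begin{itemize}
\item \cref{le:CNP identities}\cref{le:CNP identities:17} for $(J',J)$ is the primed version of (i), not (ii);
\item swapping primes in your computation for (ii) yields the primed version of (ii), not (i).
\end{itemize}
Neither affects the argument.
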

\begin{proof}
	Let $ a,b,c,d \in J $.
	By \cref{def:CNP new}\cref{eq:CNP cubic} and \cref{le:CNP identities}\cref{le:CNP identities:8}, we have
	\[ D_{a,a^\sharp}(c) = T(a,a^\sharp)c + T(c,a^\sharp)a - (a \times c) \times a^\sharp = 2N(a) c , \]
	proving \cref{le:triple D:1}.
	Next, by \cref{le:CNP identities}\cref{le:CNP identities:7,le:CNP identities:9}, we have
	\begin{align*}
		D_{a,b^\sharp}(c) + D_{b, a \times b}(c)
		&= T(a,b^\sharp)c + T(c,b^\sharp)a - (a \times c) \times b^\sharp \\
		&\hspace*{8ex} + T(b,a \times b)c + T(c,a \times b)b - (b \times a) \times (b \times c) \\
		&= 2 T(a, b^\sharp) c ,
	\end{align*}
	proving \cref{le:triple D:3}.
	Now \cref{le:triple D:r} follows immediately from \cref{le:triple D:3} by linearizing in $b$.
%
	
	The proofs of \cref{le:triple D:2}, \cref{le:triple D:4} and \cref{le:triple D:l} are completely similar.
\end{proof}

\begin{lemma}\label{le:CNP new}
	For all $ a,b \in J $ and $ c' \in J' $, the following identities hold.
	\begin{enumerate}
		\item \label{eq:new 4} $U_a(a^\sharp) = N(a) a$.
		\item \label{eq:new 5} $T(U_a(c'), a^\sharp) = N(a) T(a, c')$.
		\item \label{eq:new 6} $U_a(c') \times a = T(a, c') a^\sharp - N(a) c'$.
		\item \label{eq:new 1} $T\bigl((a \times b) \times c', a^\sharp \bigr) = N(a) T(b, c') + T(b, a^\sharp) T(a, c')$.
		\item \label{eq:new 2} $T\bigl((a \times b)^\sharp, a^\sharp \bigr) = N(a) T(a, b^\sharp) + T(b, a^\sharp)^2$.
		\item \label{eq:new 3} $(a \times b)^\sharp \times a = T(a, b^\sharp) a^\sharp + T(b, a^\sharp) a \times b - N(a) b^\sharp$.
	\end{enumerate}
\end{lemma}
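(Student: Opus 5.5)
Each identity will be derived directly from the definition of the $U$-operator, $U_a(c') = T(a,c')a - a^\sharp \times c'$, together with the identities of \cref{le:CNP identities} and the axioms in \cref{def:CNP new}. The items will be proved in order, since the later ones reuse the earlier ones.

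For \cref{eq:new 4}, I expand $U_a(a^\sharp) = T(a,a^\sharp)a - a^\sharp \times a^\sharp$. By \cref{def:CNP new}\cref{eq:CNP cubic}, $T(a,a^\sharp) = 3N(a)$. By the adjoint identity, $a^\sharp \times a^\sharp = 2(a^\sharp)^\sharp = 2N(a)a$. Together these give $N(a)a$.

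For \cref{eq:new 5}, I use \cref{le:CNP identities}\cref{le:CNP identities:30} to rewrite $T(U_a c', a^\sharp)$ as $T(c', U_a a^\sharp)$. Then \cref{eq:new 4} gives $N(a)T(a,c')$.

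For \cref{eq:new 6}, I write $U_a(c')\times a = T(a,c')a\times a - a \times (a^\sharp \times c')$. The first term is $2T(a,c')a^\sharp$. For the second term I apply axiom \cref{def:CNP new}\cref{eq:CNP 19}, which says $a\times(a^\sharp\times c') = N(a)c' + T(a,c')a^\sharp$. Subtracting gives the claim.

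For \cref{eq:new 1}, I use \cref{le:CNP identities}\cref{le:CNP identities:7} to move the cross product: $T((a\times b)\times c', a^\sharp) = T(c', (a\times b)\times a^\sharp)$. Then \cref{le:CNP identities}\cref{le:CNP identities:8} gives $a^\sharp \times (a\times b) = N(a)b + T(a^\sharp,b)a$, and pairing with $c'$ yields the result.

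For \cref{eq:new 2}, I see two routes.
\begin{enumerate}
\item It is exactly the $(4,2)$-linearization of axiom \cref{eq:CNP 18} recorded in \cref{rem:CNP def lin}, once $T((a^\sharp)^\sharp,b^\sharp) = N(a)T(a,b^\sharp)$ is substituted.
\item More directly, write $(a\times b)^\sharp = T(a^\sharp,b)b + T(a,b^\sharp)a - a^\sharp\times b^\sharp$ by \cref{le:CNP identities}\cref{le:CNP identities:10}, and pair each term with $a^\sharp$. This gives $T(a^\sharp,b)^2 + 3N(a)T(a,b^\sharp) - T(a^\sharp\times b^\sharp, a^\sharp)$. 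The last term equals $T(b^\sharp, a^\sharp\times a^\sharp) = 2N(a)T(a,b^\sharp)$, by \cref{le:CNP identities}\cref{le:CNP identities:7} and the adjoint identity.
\end{enumerate}
I will use the second route, since it uses only results already established.

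For \cref{eq:new 3}, I again substitute \cref{le:CNP identities}\cref{le:CNP identities:10} and cross with $a$. This gives $(a\times b)^\sharp\times a = T(a^\sharp,b)\,a\times b + 2T(a,b^\sharp)a^\sharp - a\times(a^\sharp\times b^\sharp)$. The last term is handled by axiom \cref{eq:CNP 19} with $c' = b^\sharp$, giving $N(a)b^\sharp + T(a,b^\sharp)a^\sharp$. Combining the terms yields the claim.

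The main risk is the bookkeeping in \cref{eq:new 3} and \cref{eq:new 2}, in particular tracking which of $J$ and $J'$ each element lives in. Here $a^\sharp, b^\sharp, a\times b \in J'$, so $(a\times b)^\sharp \in J$ and the cross products in \cref{eq:new 3} land in $J'$. I also need to make sure \cref{eq:CNP 19} is applied only in the form stated in the axiom, with $a\in J$ and the second argument in $J'$. No scalar-extension argument is needed, since every step is a formal identity.
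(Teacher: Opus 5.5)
Your proposal is correct and follows the paper's proof essentially item by item, with the same use of the adjoint identity, Axiom~\cref{eq:CNP 19} and \cref{le:CNP identities}\cref{le:CNP identities:7,le:CNP identities:8,le:CNP identities:10}. The only deviation is in \cref{eq:new 5}. There you use the symmetry \cref{le:CNP identities}\cref{le:CNP identities:30} together with \cref{eq:new 4}, whereas the paper expands $U_a(c')$ directly and uses $T(a^\sharp\times c', a^\sharp) = T(a^\sharp\times a^\sharp, c') = 2N(a)T(a,c')$. Both routes are valid.
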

\begin{proof}
	Let $a,b \in J$ and $c' \in J'$.
	\begin{enumerate}
		\item
		Using \cref{def:CNP new}\cref{eq:CNP cubic,eq:sharp twice}, we have
		$U_a(a^\sharp) = T(a, a^\sharp) a - a^\sharp \times a^\sharp = 3N(a)a - 2 (a^\sharp)^\sharp = N(a)a$.
		\item
		Again using \cref{def:CNP new}\cref{eq:CNP cubic,eq:sharp twice}, we get
		\begin{align*}
			T(U_a(c'), a^\sharp)
			&= T(a, c') T(a, a^\sharp) - T(a^\sharp \times c', a^\sharp) \\
			&= 3N(a) T(a, c') - T(a^\sharp \times a^\sharp, c') = N(a) T(a, c') .
		\end{align*}
		\item
		By \cref{def:CNP new}\cref{eq:CNP 19}, we have
		$U_a(c') \times a = T(a, c') a \times a - (a^\sharp \times c') \times a = T(a, c') a^\sharp - N(a) c'$.
		\item
		Using \cref{le:CNP identities}\cref{le:CNP identities:8}, we get
		$T\bigl((a \times b) \times c', a^\sharp \bigr) = T\bigl((a \times b) \times a^\sharp, c' \bigr) = N(a) T(b, c') + T(b, a^\sharp) T(a, c')$.
		\item
		Using \cref{def:CNP new}\cref{eq:CNP cubic,eq:sharp twice} and \cref{le:CNP identities}\cref{le:CNP identities:10}, we get
		\begin{align*}
			T\bigl((a \times b)^\sharp, a^\sharp \bigr)
			&= T(b, a^\sharp) T(b, a^\sharp) + T(a, b^\sharp) T(a, a^\sharp) - T(a^\sharp \times b^\sharp, a^\sharp) \\
			&= T(b, a^\sharp)^2 + 3 N(a) T(a, b^\sharp) - T(a^\sharp \times a^\sharp, b^\sharp) \\
			&= N(a) T(a, b^\sharp) + T(b, a^\sharp)^2 .
		\end{align*}
		\item
		Using \cref{def:CNP new}\cref{eq:CNP 19} and \cref{le:CNP identities}\cref{le:CNP identities:10}, we get
		\begin{align*}
			(a \times b)^\sharp \times a
			&= T(b, a^\sharp) b \times a + T(a, b^\sharp) a \times a - (a^\sharp \times b^\sharp) \times a \\ 
			&= T(b, a^\sharp) a \times b + 2 T(a, b^\sharp) a^\sharp - N(a) b^\sharp - T(a, b^\sharp) a^\sharp \\ 
			&= T(a, b^\sharp) a^\sharp + T(b, a^\sharp) a \times b - N(a) b^\sharp .
		\qedhere
		\end{align*}
	\end{enumerate}
\end{proof}

\subsection{Linear cubic norm pairs}\label{subsec:linear CNP}

One of our main goals, which will be accomplished in \cref{sec:lie const}, is to construct a $ G_2 $-graded Lie algebra $ L=L(J,J') $ from an arbitrary cubic norm pair $ (J,J') $. Since a Lie algebra is a \enquote{linear object}, it is not surprising that $ L $ depends only on the ring $ k $, the modules $ J $, $ J' $ and the bilinear maps $ \times $ and $ T $. In other words, the maps $ N $ and $ \sharp $ of higher degree will appear neither in the definition of the module $ L $ nor in its multiplication table (\cref{ta:Lie}). This observation, which may seem like a mere curiosity at first glance, will be needed explicitly in \cref{sec:simple} (specifically, in \cref{rem:ideal quot}). The purpose of this section is to make the aforementioned observation more precise by introducing the notion of \emph{linear cubic norm pairs}, which capture the linear structure of (and hence generalize) cubic norm pairs. It is only this linear structure that will be needed in \cref{sec:TKK,sec:lie const,sec:reflections,sec:simple}. In contrast, the definition of exponential maps on the Lie algebra $ L $ in \cref{sec:G2 exp} involves the maps $ \sharp $ and $ N $. Hence our construction of $ G_2 $-graded groups works only for cubic norm pairs but not for arbitrary linear cubic norm pairs.

The reader who is enthusiastic to get to our main results may want to skip this somewhat technical section until later.

\begin{definition}\label{def:linear CNP}
	A \emph{linear cubic norm pair (over $k$)} is a pair $(J,J')$ of $k$-modules which are equipped with maps
	\begin{align*}
		\times &\colon J \times J \to J', &
		\times' &\colon J' \times J' \to J, \\
		T &\colon J \times J' \to k, &
		T' &\colon J' \times J \to k
	\end{align*}
	such that the following conditions are satisfied.
	\begin{enumerate}
		\item $\times$ and $T$ are $k$-bilinear maps, $\times$ is symmetric, and $T'(a',a) = T(a,a')$ for all $a \in J$, $a' \in J'$.
		
		\item The identities in \cref{le:CNP identities}\cref{le:CNP identities:7,le:CNP identities:25,le:CNP identities:27,le:CNP identities:31} hold:
		\begin{align*}
			&T'(a \times b, c) = T(a, b \times c), \\
			&a \times (b' \times' (a \times c)) = T(a,b')a \times c + T'(a^\sharp, c)b' + T'(b',c)a^\sharp - (a^\sharp \times' b') \times c \\
			&\qquad = (U_a b') \times c + T'(a^\sharp, c)b' + T'(b',c) a^\sharp, \\
			&a \times (b' \times' (c \times d)) + c \times (b' \times' (d \times a)) + d \times (b' \times' (a \times c)) \\
			&\qquad = T(a,b')c \times d + T(c,b') d \times a + T(d,b') a \times c + T'(a \times c, d)b', \\
			&T(\{a,b',c\}, d') = T(c, \{b',a,d'\})
		\end{align*}
		for all $ a,b,c \in J $ and $ b',d' \in J' $.		
		\item All the previous conditions are still satisfied if we interchange the roles of $(J,\times, T)$ and $(J', \times', T')$.
	\end{enumerate}
	We will usually simply write $ \times $, $ T $, $ N $, $ \sharp $ for the maps $ \times' $, $ T' $, $ N' $, $ \sharp' $. We will sometimes refer to cubic norm pairs as \emph{proper} cubic norm pairs to emphasize the distinction from linear cubic norm pairs.
\end{definition}

\begin{example}
	Any proper cubic norm pair $ (J,J') $ with maps $ \times $, $ \times' $, $ T $, $ T' $, $ N $, $ N' $, $ \sharp $, $ \sharp' $ induces a linear cubic norm pair $ (J,J') $ with maps $ \times $, $ \times' $, $ T $, $ T' $. Certain quotients of cubic norm pairs, which we will encounter in \cref{rem:ideal quot}, provide examples of linear cubic norm pairs that are not (proven to be) proper cubic norm pairs.
\end{example}

For the rest of this section, $ (J,J') $ denotes a linear cubic norm pair over $ k $.

\begin{definition}
	For all $ a,b \in J $, we define a \emph{$ U $-operator} $ U_{a,b} \colon J' \to J $ by 
	\[ U_{a,b} (a') \coloneq T(a,a')b + T(b,a')a - (a \times b) \times a' \]
	for all $a' \in J'$.
	We also write
	\[ D_{a,a'}(b) := \{ a, a', b \} \coloneq U_{a,b} (a') . \]
\end{definition}

\begin{remark}
	The set of axioms in \cref{def:linear CNP} is adequate for our purposes, but it might not be sufficient to build a satisfying theory of linear cubic norm pairs (which we have no interest in doing). For example, we do not claim that the $ U $-operators equip $ (J,J') $ with the structure of a linear Jordan pair, and this seems cumbersome to prove (if it is even true).
\end{remark}

\begin{definition}\label{def:linear CNP homotopy}
	Let $(J_1, J_1')$, $(J_2, J_2')$ be two linear cubic norm pairs, let $\varphi \colon J_1 \to J_2 $, $\varphi' \colon J_1' \to J_2' $ be two homomorphisms of $k$-modules and let $ t \in k $ be invertible. Then the pair $(\varphi,\varphi')$ is called a \emph{$ t $-homotopy of linear cubic norm pairs} if
	\begin{gather*}
		T_2\brackets[\big]{\varphi(a), \varphi'(a')} = T_1(a,a'), \\
		\varphi'(a \times_1 b) = t^{-1}\varphi(a) \times_2 \varphi(a), \quad \varphi(a' \times_1 b') = t \varphi'(a') \times_2 \varphi'(b')
	\end{gather*}
	for all $a,b \in J_1$, $a',b' \in J_1'$. A \emph{homotopy} is a $ t $-homotopy for some invertible $ t \in k $, and a \emph{homomorphism} is a $ 1 $-homotopy. We say that $(\varphi,\varphi')$ is surjective, injective or bijective if both $\varphi$ and $\varphi'$ have the respective property. Bijective homotopies are also called isotopies and bijective homomorphisms are also called isomorphisms.
\end{definition}

\begin{remark}
	Any homotopy of linear cubic norm pairs satisfies
	\[ \varphi(\{a, a', b\}) = \{\varphi(a), \varphi'(a'), \varphi(b)\} \quad \text{and} \quad \varphi'(\{a', a, b'\}) = \{\varphi'(a'), \varphi(a), \varphi'(b')\} \]
	for all $ a,b \in J_1 $, $ a',b' \in J_1' $.
\end{remark}

\begin{remark}
	The identities
	\begin{align*}
		D_{a, b \times c} + D_{b, c \times a} + D_{c, a \times b} &= 2 T(a, b \times c) \id_{J}, \\
		D_{a \times b, c} + D_{b\times c, a} + D_{c \times a, b} &= 2 T(a \times b, c) \id_{J'}
	\end{align*}
	from \cref{le:triple D} remain valid for linear cubic norm pairs because they can be proven using only \cref{le:CNP identities}\cref{le:CNP identities:7,le:CNP identities:25,le:CNP identities:27}.
\end{remark}

\section{The Tits--Kantor--Koecher Lie algebra for cubic norm pairs}\label{sec:TKK}

Throughout this section, we assume that $(J, J')$ is a cubic norm pair (or, more generally, a linear cubic norm pair) over the commutative ring $ k $. We first introduce derivations and inner derivations.
(We have not seen this notion before in the literature.)
\begin{definition}\label{def:der}
	\begin{enumerate}
		\item
			A \emph{derivation} of $(J, J')$ is a pair of $k$-linear maps $\delta = (\delta_+, \delta_-)$ where $\delta_+ \colon J \to J$ and $\delta_- \colon J' \to J'$ such that there is a constant $\lambda \in k$ for which
			\begin{alignat}{3}				
				\delta_-(a \times b) &= \delta_+(a) \times b &&+ a \times \delta_+(b) &&- \lambda(a \times b) , \label{eq:der times} \\
				\delta_+(a' \times b') &= \delta_-(a') \times b' &&+ a' \times \delta_-(b') &&+ \lambda(a' \times b') , \label{eq:der times'} \\
				0 &= T(\delta_+(a), a') &&+ T(a, \delta_-(a')) &&, \label{eq:der T}
			\end{alignat}
			for all $a,b \in J$ and $a',b' \in J'$.
		\item
			The collection of all derivations of $(J, J')$ forms a Lie algebra that we denote by $\Der(J, J')$.
		\item
			Let $a \in J$ and $a' \in J'$. Then the pair
			\[ \delta_{a,a'} := (D_{a,a'}, -D_{a',a}) \]
			defines an \emph{inner derivation} of the cubic norm pair.
			We will show in \cref{le:inner} below that $\delta_{a,a'}$ is indeed a derivation of $(J, J')$, with $\lambda = 2T(a, a')$.
		\item\label{def:der:gamma}
			We define the \emph{grading derivation}
			\[ \gamma := (\id_J, -\id_{J'}) . \]
			This is indeed a derivation of $(J, J')$, with $\lambda = 3$.
	\end{enumerate}
\end{definition}
\begin{remark}\label{rem:JP}
	If $\delta = (\delta_+, \delta_-)$ is a derivation, then
	\[ \delta_+ \bigl( (a \times b) \times a' \bigr)
		= \bigl( \delta_+(a) \times b \bigr) \times a' + \bigl( a \times \delta_+(b) \bigr) \times a' + (a \times b) \times \delta_-(a') \]
	for all $a,b \in J$ and $a' \in J'$.
	Together with \cref{eq:der T}, it follows that
	\[ \delta_+ \{ a, a', b \} = \{ \delta_+(a), a', b \} + \{ a, \delta_-(a'), b \} + \{ a, a', \delta_+(b) \} \]
	for all $a,b \in J$ and $a' \in J'$.
	In the language of \emph{Jordan pairs}, this tells exactly that $\delta = (\delta_+, \delta_-)$ is a derivation of the Jordan pair $(J, J')$.
	See, for instance, \cite[\S 6.8]{LoosNeher}.
\end{remark}
\begin{lemma}\label{le:inner}
	The pair $\delta_{a,a'} := (D_{a,a'}, -D_{a',a})$ is a derivation of $(J, J')$. More precisely, it satisfies \cref{eq:der times,eq:der times',eq:der T} with $\lambda = 2 T(a, a')$.
\end{lemma}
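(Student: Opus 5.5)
We have to verify the three identities \cref{eq:der times}, \cref{eq:der times'} and \cref{eq:der T} for $\delta_+ = D_{a,a'}$, $\delta_- = -D_{a',a}$ with $\lambda = 2T(a,a')$. The plan is to use only the axioms of a linear cubic norm pair, so that the lemma holds in that generality.

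We start with \cref{eq:der T}, which is immediate. It asks that $T(\{a,a',b\}, b') = T(b, \{a',a,b'\})$ for all $b \in J$, $b' \in J'$. This is exactly \cref{le:CNP identities}\cref{le:CNP identities:31}, which is an axiom in \cref{def:linear CNP}.

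For \cref{eq:der times} we must show, for $b,c \in J$,
\[ -D_{a',a}(b \times c) = D_{a,a'}(b) \times c + b \times D_{a,a'}(c) - 2T(a,a')\, b \times c. \]
Expanding with \cref{def:U-operators}, the left side is
\[ -T(b\times c,a)a' - T(a',b\times c)\, \ldots \]
More precisely, it equals
\[ -T(a', a)\, b\times c - T(b\times c, a)a' + (a' \times (b \times c)) \times a. \]
The right side equals
\[ T(a,a')\, b\times c + T(b,a')\, a\times c - ((a\times b)\times a')\times c + T(a,a')\, b\times c + T(c,a')\, a\times b - ((a\times c)\times a')\times b - 2T(a,a')\, b\times c. \]
The $T(a,a')\,b\times c$ terms then combine as required: both sides carry $-T(a,a')\,b\times c$ after cancellation, up to checking the count. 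What remains is the identity
\[ a\times(a'\times(b\times c)) + b\times(a'\times(c\times a)) + c\times(a'\times(a\times b)) = T(a,a')\, b\times c + T(b,a')\, c\times a + T(c,a')\, a\times b + T(b\times c, a)a'. \]
This is precisely \cref{le:CNP identities}\cref{le:CNP identities:27}, with the roles $(a,c,d) \to (a,b,c)$ and $b' \to a'$, and using the symmetry of $\times$.

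So the main bookkeeping step is to match the three ``double-cross'' terms and the scalar multiples of $b\times c$ exactly. I expect the obstacle to be only sign and coefficient tracking, in particular confirming that the coefficient of $T(a,a')\,b\times c$ works out with $\lambda = 2T(a,a')$. I would check this against the specialization $b = c$, where the claim becomes $-D_{a',a}(b^\sharp) = D_{a,a'}(b) \times b - 2T(a,a')b^\sharp$, which is the linear form of the relation in \cref{le:CNP identities}\cref{le:CNP identities:26}.

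Finally, \cref{eq:der times'} is the same computation with the roles of $J$ and $J'$ interchanged. It uses the $J'$-version of \cref{le:CNP identities:27}, which holds by axiom (iii) of \cref{def:linear CNP}. The sign of $\lambda$ flips because now $\delta_+$ is $+D$ and $\delta_-$ is $-D$; equivalently, one applies the previous argument to the pair $(J',J)$ and the derivation $-\delta_{a,a'}$ with $\lambda' = -\lambda$.
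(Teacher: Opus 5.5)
Your proposal is correct and is essentially the paper's proof: \cref{eq:der T} is \cref{le:CNP identities}\cref{le:CNP identities:31}, \cref{eq:der times} reduces (after expanding the $D$-operators) to \cref{le:CNP identities}\cref{le:CNP identities:27} with $b'=a'$, and \cref{eq:der times'} is the mirror argument for $(J',J)$. There are two small bookkeeping slips. First, once $\lambda=2T(a,a')$ cancels the two $T(a,a')\,b\times c$ terms on the right, the right side has no such term at all; the $-T(a,a')\,b\times c$ on the left simply moves across. Second, matching $T(a\times b,c)a'$ from \cref{le:CNP identities}\cref{le:CNP identities:27} with your $T(b\times c,a)a'$ uses \cref{le:CNP identities}\cref{le:CNP identities:7}, not merely the symmetry of $\times$.
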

\begin{proof}
	By \cref{le:CNP identities}\cref{le:CNP identities:27} (or \cref{def:linear CNP}),
	\begin{multline*}
		\bigl( (a \times b) \times a' \bigr) \times c 
		+ \bigl( (b \times c) \times a' \bigr) \times a 
		+ \bigl( (c \times a) \times a' \bigr) \times b \\
		= T(a, b \times c)a' + T(a, b') b \times c + T(b, b') c \times a + T(c, b') a \times b
	\end{multline*}
	for all $a,b,c \in J$ and $a' \in J'$.
%
	Using this identity, it is easy to verify that
	\begin{equation}\label{eq:D is der}
		D_{a',a}(b \times c) + D_{a,a'}(b) \times c + b \times D_{a,a'}(c) = 2 T(a, a') b \times c,
	\end{equation}
	proving \cref{eq:der times} with $\lambda = 2 T(a, a')$.
	The proof of \cref{eq:der times'} is similar, and \cref{eq:der T} holds by \cref{le:CNP identities}\cref{le:CNP identities:31} (or \cref{def:linear CNP}).
\end{proof}

We can now define the Tits--Kantor--Koecher Lie algebra $\TKK(J,J')$ of the cubic norm pair $(J, J')$.
First, let
\[ I \coloneq \TKK_0(J,J') \coloneq \langle \delta_{a,a'} \mid a \in J, a' \in J' \rangle + k \gamma \leq \Der(J, J') . \]
We then define the \emph{TKK Lie algebra}
\[ \TKK(J, J') := J' \oplus I \oplus J , \]
equipped with the usual Lie bracket on $I$ inherited from $\Der(J, J')$ extended by the rules
\begin{equation}\label{eq:TKK rules}
	[J,J] = 0, \quad [J', J'] = 0, \quad [\delta, a] = \delta_+(a), \quad [\delta, a'] = \delta_-(a'), \quad [a',a] = \delta_{a,a'}
\end{equation}
for all $a \in J$, $a' \in J'$ and $\delta \in I$.

To show that this defines a $3$\dash graded Lie algebra, we could make a detour via Jordan pairs (see \cite{Neher94}), but we believe that it is interesting to provide a (short and easy) direct proof of this fact.
\begin{proposition}
	The algebra $\TKK(J, J') = J' \oplus I \oplus J$ defined above is a $3$-graded Lie algebra, with $J'$ of degree $-1$, $I$ of degree $0$ and $J$ of degree $1$.
\end{proposition}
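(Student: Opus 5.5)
The plan is a direct check, in three parts: the bracket is well defined, it is alternating, and it satisfies the Jacobi identity.

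\emph{Bracket and grading.} The bracket is $k$-bilinear. For alternation on all of $\TKK(J,J')$ we extend the rules \eqref{eq:TKK rules} antisymmetrically, putting $[a,\delta] \coloneq -\delta_+(a)$, $[a',\delta] \coloneq -\delta_-(a')$ and $[a,a'] \coloneq -\delta_{a,a'}$. Then $[x,x]=0$ holds on each homogeneous piece: on $J$ and $J'$ because the brackets vanish, and on $I$ because it is a Lie subalgebra of $\Der(J,J')$. The one thing to check here is that $I$ is closed under the bracket. This follows once we know that $[\delta,\delta_{a,a'}] = \delta_{\delta_+(a),a'} + \delta_{a,\delta_-(a')}$ for every derivation $\delta$ (shown below), together with $[\gamma,\delta_{a,a'}]=0$. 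The grading $[L_i,L_j]\subseteq L_{i+j}$, with $L_i=0$ for $|i|>1$, is then immediate from the definitions.

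\emph{Jacobi identity.} Since the Jacobiator is trilinear and alternating, it suffices to check it on homogeneous triples up to permutation. Triples of total degree $\pm 3$ or $\pm 2$, such as $(J,J,J)$ or $(J,J,I)$, are trivial: either every bracket vanishes, or the identity reduces to $[\delta_+(a),b]+[a,\delta_+(b)]=0$, which holds because both brackets lie in $[J,J]=0$. The triple $(I,I,I)$ is the Jacobi identity in $\Der(J,J')$. The triples $(I,I,J)$ and $(I,I,J')$ say that $[\delta,\epsilon]$ acts on $J$ as the commutator $\delta_+\epsilon_+-\epsilon_+\delta_+$, which is the definition of the bracket in $\Der(J,J')$. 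Only two cases remain.
\begin{enumerate}
	\item $(I,J',J)$: for $\delta\in I$ we need $[\delta,\delta_{a,a'}] = \delta_{\delta_+(a),a'} + \delta_{a,\delta_-(a')}$ as elements of $\Der(J,J')$. The $+$-component is $\delta_+ D_{a,a'} - D_{a,a'}\delta_+ = D_{\delta_+ a,a'} + D_{a,\delta_- a'}$, which is exactly \cref{rem:JP} evaluated at $b$. The $-$-component is the analogous statement $\delta_-\{a',a,b'\} = \{\delta_-a',a,b'\}+\{a',\delta_+a,b'\}+\{a',a,\delta_-b'\}$. This has the same proof as \cref{rem:JP}, using \eqref{eq:der times'} and \eqref{eq:der T}, where the $\lambda$-terms cancel with opposite signs.
	\item $(J',J,J)$ and $(J,J',J')$: the Jacobi identity for $a'\in J'$ and $a,b\in J$ reduces to $D_{a,a'}(b) = D_{b,a'}(a)$, which is the symmetry $U_{a,b}=U_{b,a}$ from the definition of $U_{a,b}$ and the symmetry of $\times$. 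The case $(J,J',J')$ is symmetric.
\end{enumerate}

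\emph{Main obstacle.} The only delicate point is the sign bookkeeping with $\lambda$ in case (i). In \cref{rem:JP} we must make sure that the term $-\lambda(a\times b)$ from \eqref{eq:der times} and the term $+\lambda(\,\cdot\,)$ from \eqref{eq:der times'} cancel in the expansion of $\delta_+\bigl((a\times b)\times a'\bigr)$, and that the primed version works out the same way. The second thing to verify is $[\gamma,\delta_{a,a'}]=0$, which holds because $\gamma$ acts by scalars on $J$ and on $J'$ and so commutes with $D_{a,a'}$ and $D_{a',a}$.
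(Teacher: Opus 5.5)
Your proposal is correct and follows essentially the same route as the paper: you establish $[\delta,\delta_{a,a'}]=\delta_{\delta_+(a),a'}+\delta_{a,\delta_-(a')}$ via \cref{rem:JP}, which together with $[\gamma,\delta_{a,a'}]=0$ makes $I$ closed (the paper notes it is even an ideal of $\Der(J,J')$). You then use the $\Der(J,J')$-module structure on $J$ and $J'$, and reduce the remaining Jacobi cases to the symmetry $U_{a,b}=U_{b,a}$ and to that bracket formula. Your explicit check of the $J'$-component of the formula, where the $\lambda$-terms from \eqref{eq:der times} and \eqref{eq:der times'} cancel, fills in a step the paper leaves implicit.
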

\begin{proof}
	We first observe that $I$ is, in fact, an ideal of $\Der(J, J')$.
	Indeed, let $\delta = (\delta_+, \delta_-) \in \Der(J, J')$ and let $a \in J$ and $a' \in J'$.
	Then using \cref{eq:der T,rem:JP}, we compute that
	\begin{equation}\label{eq:delta bracket}
		[\delta, \delta_{a,a'}] = \delta_{\delta_+(a), a'} + \delta_{a, \delta_-(a')} .
	\end{equation}
	Moreover, $\gamma$ is central in $\Der(J, J')$, so indeed $I = \langle \delta_{a,a'} \mid a \in J, a' \in J' \rangle + k \gamma$ is an ideal of $\Der(J, J')$. In particular, $I$ is itself a Lie algebra.
	
	Next, we claim that the identities \cref{eq:TKK rules} make $J$ (and similarly, $J'$) into a module for the Lie algebra~$I$.
	In fact, if we allow $\delta \in \Der(J, J')$ in these identities (rather than $\delta \in I$), then $J$ becomes a module for the full Lie algebra $\Der(J, J')$. Indeed, let $\delta = (\delta_+, \delta_-)$ and $\epsilon = (\epsilon_+, \epsilon_-)$ be two elements of $\Der(J, J')$ and $a \in J$, then we have to verify that $[[\delta, \epsilon], a] = [\delta, [\epsilon, a]] - [\epsilon, [\delta, a]]$, but since $[\delta, \epsilon] = (\delta_+ \epsilon_+ - \epsilon_+ \delta_+, \delta_- \epsilon_- - \epsilon_- \delta_-)$, this is obvious from the identities \cref{eq:TKK rules}.
	
	Finally, we have to check that the Jacobi identity holds.
	There are only two remaining non-trivial cases to consider.
	We first consider triples of the form $(a, b, a') \in J \times J \times J'$. Then $[a,b] = 0$ and
	\begin{align*}
		&[[a', a], b] = [\delta_{a,a'}, b] = D_{a,a'}(b) \\
		&[[a', b], a] = [\delta_{b,a'}, a] = D_{b,a'}(a),
	\end{align*}
	and by definition, both expressions are equal to $U_{a,b}(a')$.
	The case of triples in $J' \times J' \times J$ is, of course, completely similar.
	
	Next, we consider triples of the form $(a, a', \delta) \in J \times J' \times \Der(J, J')$.
	By \cref{eq:delta bracket}, we have
	\[ [\delta, [a', a]] = [\delta, \delta_{a, a'}] = \delta_{\delta_+(a), a'} + \delta_{a, \delta_-(a')} . \]
	On the other hand, we have
	\begin{align*}
		&[[\delta, a'], a] = [\delta_-(a'), a] = \delta_{a, \delta_-(a')} \quad \text{and} \\
		&[[\delta, a], a'] = [\delta_+(a), a'] = - \delta_{\delta_+(a), a'} ,
	\end{align*}
	and we see that indeed the Jacobi identity holds also in this case.
\end{proof}
\begin{remark}
	In fact, we have shown that also the larger algebra
	\[ \widetilde\TKK(J, J') := J' \oplus \Der(J, J') \oplus J \]
	is a $3$-graded Lie algebra, but we will not need this.
\end{remark}

\section{Constructing the Lie algebra}\label{sec:lie const}

\subsection{Constructing the bracket}

We continue to assume that $(J, J')$ is a cubic norm pair (or, more generally, a linear cubic norm pair) over the commutative ring~$ k $. Our aim is to extend $\widetilde L_0 := \TKK(J, J')$ to a larger $5$-graded Lie algebra
\[ L_{-2} \oplus L_{-1} \oplus \widetilde L_0 \oplus L_1 \oplus L_2 \]
with
\[ L_{-2} \cong L_2 \cong k \quad \text{and} \quad L_{-1} \cong L_1 \cong k \oplus J \oplus J' \oplus k, \]
with a finer $5 \times 5$-grading as in \cref{fig:grading}. This Lie algebra will be denoted by $ L $, or (in later sections) by $ L(J,J') $ to emphasize its dependence on $ (J,J') $.

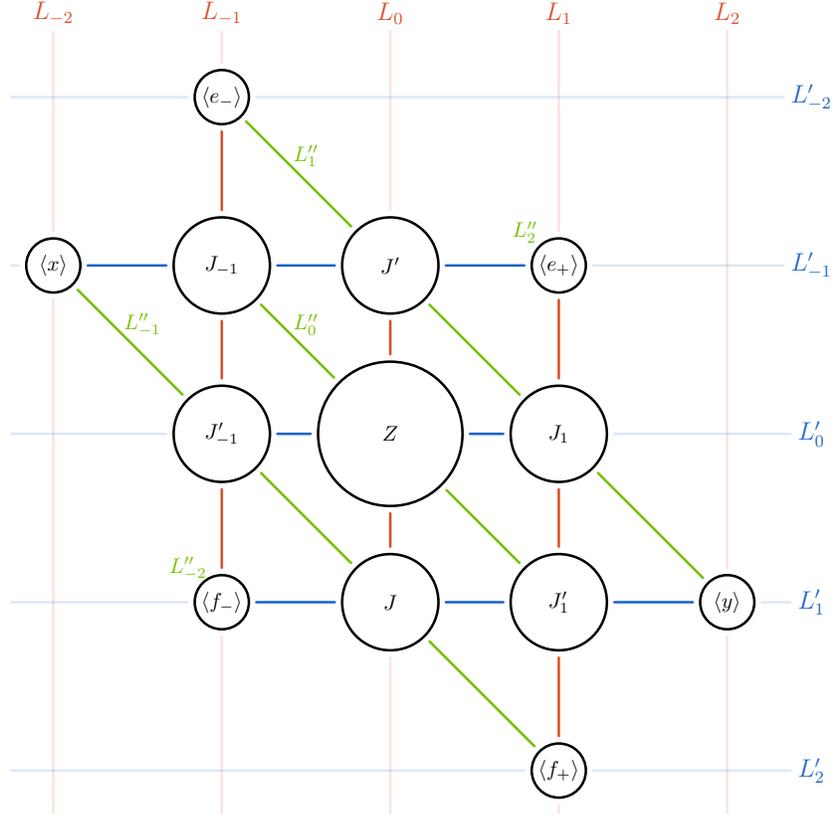
\begin{figure}[ht!]
\[
\scalebox{.8}{%
	\begin{tikzpicture}[x=28mm, y=28mm, 	label distance=-3pt]
		\node[myblob=9mm] at (0,3) (N03) {$\langle x\rangle$};
		\node[myblob=9mm] at (1,1) (N11) {$\langle f_- \rangle$};
		\node[myblob=16mm] at (1,2) (N12) {$J'_{-1}$};
		\node[myblob=16mm] at (1,3) (N13) {$J_{-1}$};
		\node[myblob=9mm] at (1,4) (N14) {$\langle e_- \rangle$};
		\node[myblob=16mm] at (2,1) (N21) {$J$};
		\node[myblob=24mm] at (2,2) (N22) {$Z$};
		\node[myblob=16mm] at (2,3) (N23) {$J'$};
		\node[myblob=9mm] at (3,0) (N30) {$\langle f_+ \rangle$};
		\node[myblob=16mm] at (3,1) (N31) {$J'_1$};
		\node[myblob=16mm] at (3,2) (N32) {$J_1$};
		\node[myblob=9mm] at (3,3) (N33) {$\langle e_+ \rangle$};
		\node[myblob=9mm] at (4,1) (N41) {$\langle y\rangle$};
		\path[ugentred]
			(0,4.5) node (C0) {\Large $L_{-2}$}
			(1,4.5) node (C1) {\Large $L_{-1}$}
			(2,4.5) node (C2) {\Large $L_{0}$}
			(3,4.5) node (C3) {\Large $L_{1}$}
			(4,4.5) node (C4) {\Large $L_{2}$};
		\path[ugentblue]
			(4.5,0) node (R0) {\Large $L'_{2}$}
			(4.5,1) node (R1) {\Large $L'_{1}$}
			(4.5,2) node (R2) {\Large $L'_{0}$}
			(4.5,3) node (R3) {\Large $L'_{-1}$}
			(4.5,4) node (R4) {\Large $L'_{-2}$};
		\path[ugentyellow]
			(2.8,3.2) node (H0) {$L''_{2}$}
			(1.5,3.65) node (H1) {$L''_{1}$}
			(1.5,2.65) node (H2) {$L''_{0}$}
			(.53,2.65) node (H3) {$L''_{-1}$}
			(.8,1.2) node (H4) {$L''_{-2}$};
		\draw[myedge,ugentblue]
			(N11) -- (N21) -- (N31) -- (N41)
			(N12) -- (N22) -- (N32)
			(N03) -- (N13) -- (N23) -- (N33);
		\draw[myedge,ugentblue,opacity=.15]
			(-.25,0) -- (N30) -- (R0)
			(-.25,1) -- (N11) (N41) -- (R1)
			(-.25,2) -- (N12) (N32) -- (R2)
			(-.25,3) -- (N03) (N33) -- (R3)
			(-.25,4) -- (N14) -- (R4);
		\draw[myedge,ugentred]
			(N11) -- (N12) -- (N13) -- (N14)
			(N21) -- (N22) -- (N23)
			(N30) -- (N31) -- (N32) -- (N33);
		\draw[myedge,ugentred,opacity=.15]
			(0,-.25) -- (N03) -- (C0)
			(1,-.25) -- (N11) (N14) -- (C1)
			(2,-.25) -- (N21) (N23) -- (C2)
			(3,-.25) -- (N30) (N33) -- (C3)
			(4,-.25) -- (N41) -- (C4);
		\draw[myedge,ugentyellow]
			(N03) --  (N12) -- (N21) -- (N30)
			(N13) -- (N22) -- (N31)
			(N14) --  (N23) -- (N32) -- (N41);
	\end{tikzpicture}
}
\]
\caption{Construction of a $5 \times 5$-graded Lie algebra}\label{fig:grading}
\end{figure}

This involves an important subtlety, as we now explain, and in particular, we will only be able to construct such a Lie algebra after modifying $\widetilde L_0$
(which is also why the middle part in \cref{fig:grading} is called $Z$ rather than $I$).

We will first need to ensure the presence of a \emph{grading derivation} $\xi$ for this new grading, which we can do, in principle, by defining
$\overline L_0 := \TKK(J, J') \oplus k \xi$, a central extension of $\TKK(J, J')$ by a free $k$-module spanned by a new element $\xi$ of degree~$0$.
However, in the resulting Lie algebra that we will construct, the element $\xi$ is not necessarily linearly independent from $\TKK(J, J')$, so we will replace $\overline L_0$ by the Lie algebra $L_0$ consisting of the corresponding inner derivations acting on $\bigoplus_{i \neq 0} L_i$, following a typical idea that has been worked out in detail in \cite[Construction 4.1.2]{BDMS19}.
In addition, this will also solve any problems that we might have if we would try to directly define an action of $\TKK(J, J')$ on the $L_i$.
(See also \cref{rem:TKK no action} below.)

We start by setting up the notation for the $k$-modules $L_i$ for $i \neq 0$.

\begin{definition}\label{def:L0 bar}
	\begin{enumerate}
		\item 
			We define four $k$-modules
			\begin{align*}
				L_{-2} &:= kx, \\
				L_{-1} &:= \{ (\lambda, b, b', \mu)_- \mid \lambda,\mu \in k, b \in J, b' \in J' \}, \\
				L_{1} &:= \{ (\lambda, b, b', \mu)_+ \mid \lambda,\mu \in k, b \in J, b' \in J' \}, \\
				L_{2} &:= ky.
			\end{align*}
		\item
			We will further decompose
			\begin{alignat*}{4}
				L_{-1} &= L_{-1,-2} &&\oplus L_{-1,-1} &&\oplus L_{-1,0} &&\oplus L_{-1,1}, \\
				L_{1} &= L_{1,-1} &&\oplus L_{1,0} &&\oplus L_{1,1} &&\oplus L_{1,2},
			\end{alignat*}
			where
			\begin{align*}
				L_{-1,-2} &:= \{ (\lambda, 0, 0, 0)_- \mid \lambda \in k \}, &
				L_{1,-1} &:= \{ (\lambda, 0, 0, 0)_+ \mid \lambda \in k \}, \\
				L_{-1,-1} &:= \{ (0, b, 0, 0)_- \mid b \in J \}, &
				L_{1,0} &:= \{ (0, b, 0, 0)_+ \mid b \in J \}, \\
				L_{-1,0} &:= \{ (0, 0, b', 0)_- \mid b' \in J' \}, &
				L_{1,1} &:= \{ (0, 0, b', 0)_+ \mid b' \in J' \}, \\
				L_{-1,1} &:= \{ (0, 0, 0, \mu)_- \mid \mu \in k \}, &
				L_{1,2} &:= \{ (0, 0, 0, \mu)_+ \mid \mu \in k \}.
			\end{align*}
			We also write $L_{-2,-1} := L_{-2}$ and $L_{2,1} := L_2$.
			We call elements of $L_{i,j}$, whenever $L_{i,j}$ is defined, \emph{homogeneous of $\xi$-degree $i$ and $\zeta$-degree $j$}.
		\item
			We define the Lie algebra $M := \bigoplus_{i \neq 0} \gl(L_i)$.
			In \cref{def:dd and L0}, we will be able to define the Lie algebra $L_0$ as a subalgebra of $M$, an idea that also appears in \cite[Construction~4.1.2]{BDMS19}.
			Notice that trivially, each $L_i$ is a module for the Lie algebra $M$.
		\item\label{item:zeta and xi}
			We define two specific elements $\zeta$ and $\xi$ in $M$ by setting
			\[ \xi(l) = il \quad \text{and} \quad \zeta(l) = jl \quad \text{whenever } l \in L_{i,j} . \]
			Explicitly,
			\begin{align*}
				\zeta(x) &:= -x , \\
				\zeta\bigl((\lambda, b, b', \mu)_- \bigr) &:= (-2 \lambda, -b, 0, \mu)_- , \\
				\zeta\bigl((\lambda, b, b', \mu)_+ \bigr) &:= (-\lambda, 0, b', 2 \mu)_+ , \\
				\zeta(y) &:= y , \\
				\xi(l_i) &:= i l_i \quad \text{for all } l_i \in L_i, i \neq 0 .
			\end{align*}
		\item\label{item:zeta-degree in M}
			Let $\varphi = (\varphi_i)_{i \neq 0} \in M$.
			We say that $\varphi$ \emph{has $\zeta$-degree $j$} if each $\varphi_i$ maps all elements of $\zeta$-degree $i$ to elements of $\zeta$-degree $i+j$.
			(Visually, this means that $\varphi$ shifts the parts in \cref{fig:grading} down by $j$.)
			In particular, $\zeta$ and $\xi$ have $\zeta$-degree~$0$.
		\item
			Since our goal is to construct a Lie algebra $L_{-2} \oplus L_{-1} \oplus L_0 \oplus L_1 \oplus L_2$ with $L_0 \leq M$, we will already denote actions of elements of $M$ on each $L_i$ using a Lie bracket, i.e., we will write%
			\footnote{In principle, each $\varphi \in M$ is a $4$-tuple $\varphi = (\varphi_i)_{i \neq 0}$, so $[\varphi, l_i] := \varphi_i(l_i)$, but we will omit the subscript $i$ and simply write $\varphi(l_i)$ without danger of confusion.}
			$[\varphi, l_i] := \varphi(l_i)$ for all $\varphi \in M$ and $l_i \in L_i$, $i \neq 0$.
	\end{enumerate}
\end{definition}

\begin{remark}\label{rem:skew-symmetric}
	Whenever we define a Lie bracket between certain elements, it is silently understood that it is skew-symmetric.
	In other words, once we define an element $[l,m]$, we silently assume that we also define $[m,l] := - [l,m]$.
\end{remark}

\begin{remark}\label{rem:all gradings}
	A graphical depiction of the subspaces of $ L $ defined in \cref{def:L0 bar} is given in \cref{fig:grading} (where $ Z $ is a certain subspace of $ M $ that will be defined in \cref{def:dd and L0}). Our goal is to define a Lie bracket on $ L $ such that these subspaces constitute a $ 5 \times 5 $-grading on $ L $, meaning that $ [L_{ij}, L_{pq}] \subseteq L_{i+p,j+q} $ for all $ i,j,p,q \in \Z $ where $ L_{st} \coloneq 0 $ for all $ s,t \in \Z $ for which no such space $ L_{st} $ has been defined previously. In particular, the \enquote{vertical lines} $ (L_i)_{-2 \le i \le 2} $, the \enquote{horizontal lines} $ (L_i')_{-2 \le i \le 2} $ and the \enquote{diagonal lines} $ (L_i'')_{-2 \le i \le 2} $ that can be read off from \cref{fig:grading} will each constitute a 5-grading of $ L $. When $k$ is a field, these 5-gradings correspond to the so-called \enquote{hyperbolic pairs} $(x,y)$, $(e_-, f_+)$ and $(-f_-, e_+)$ in $L$, respectively; see \cite[\S 4, page~28]{DMM25}. Note that $ L_{i,j} = L_i \cap L_j' $ for all $ i,j \in \{-2, \ldots, 2\} $. Further, we have a $ 7 $-grading $(L^\backslasharrow_i)_{-3 \le i \le 3}$ of $ L $ defined by
	\[
		L^{\backslasharrow}_{i} \coloneq \sum_{p \in \mathbb{Z}} L_{p,i-p}
	\]
	for all $i \in \{-3, \ldots, 3\}$. (This grading, however, does not correspond to a hyperbolic pair.)
\end{remark}

\begin{remark}\label{rem:G2 grid}
	Let $\{ \gamma,\delta \}$ be a choice of simple roots for the root system $G_2$, where $\gamma$ is long and $\delta$ is short.
	Then there is a unique group homomorphism $\omega$ from $\Z G_2$ to $\Z^2$ mapping $\gamma$ to $(-2, -1)$ and $\delta$ to $(1, 0)$.
	Now a straightforward computation shows that $ \omega(G_2) $ is precisely the set of indices $ (i,j) \in \Z^2 $ for which we defined a module $ L_{i,j} $ in \cref{def:L0 bar}.
	Hence $ \omega(G_2^0) $ is precisely the set of indices for which a non-trivial submodule of $ L $ is depicted in \cref{fig:grading}.
	We conclude that any $ 5 \times 5 $-grading of the form in \cref{fig:grading} may equivalently be regarded as a $ G_2 $-grading.
	For this reason, we will sometimes identify $ G_2^0 $ with the set $ \omega(G_2^0) \subseteq \Z^2 $, so that we have a decomposition
	\[ L = \bigoplus_{\alpha \in G_2^0} L_\alpha. \]
	(This means that we implictly choose a system of simple roots $ \{\gamma, \delta\} $ in $ G_2 $ in such situations.)
\end{remark}

\begin{remark}\label{rem:G2 orbits}
	Let $ W $ be the subgroup of the Weyl group of $ G_2 $ which is generated by the reflections corresponding to long roots. Then there are three orbits of $ G_2 $ under $ W $: One orbit consisting of all six long roots and two orbits consisting of three short roots in each. The first one corresponds to the root spaces of $ L $ parametrized by $ k $ while the other two correspond to the root spaces canonically parametrized by $ J $ or $ J' $, respectively. (For the last statement to be true, we have to consider $J$ and $J'$ as distinct parametrizing spaces even if $J=J'$, which we underline by saying \enquote{canonically parametrized}.)
\end{remark}

\begin{remark}\label{rem:2zetaxi formula}
	The element $2\zeta - \xi$ will appear at several points. It is the unique element $ \varphi \in L_0 $ that satisfies
	\[ \varphi(L_{\pm 2}) = 0 \quad \text{and} \quad \varphi\brackets[\big]{(\lambda, b, b', \mu)_\pm} = (-3\lambda, -b, b', 3\mu)_\pm \]
	for all $ (\lambda, b, b', \mu)_\pm \in L_{\pm 1} $.
\end{remark}

Intuitively, our next goal is to make each of the four parts $L_{-2}$, $L_{-1}$, $L_1$ and~$L_2$ into a module for the Lie algebra $\TKK(J, J')$.
This is not quite possible, so instead, we first make them into a module for the abelian Lie algebras $J'$ and $J$.

\begin{definition}\label{def:Li as J-module}
	\begin{enumerate}
		\item 
			We define $L_{-2}$ and $L_2$ as trivial $J$- and $J'$-modules, i.e.,
			we set $[a,x] = [a,y] = 0$ and $[a',x] = [a',y] = 0$ for all $a \in J$, $a' \in J'$.
		\item 
			To define the action of $J$ and $J'$ on $L_{-1}$ and $L_1$, we set
			\begin{align*}
				\bigl[ a, (\lambda, b, b', \mu)_\pm \bigr] &:= \bigl( 0, \ \lambda a, \ a \times b, \ T(a, b') \bigr)_\pm \ , \\
				\bigl[ a', (\lambda, b, b', \mu)_\pm \bigr] &:= - \bigl( T(b, a'), \ a' \times b', \ \mu a', \ 0 \bigr)_\pm \ ,
			\end{align*}
			for all $a,b \in J$, $a',b' \in J'$ and $\lambda,\mu \in k$.
		\item\label{item:ad}
			Together, this associates to each $a \in J$ and to each $a' \in J'$ an element of $M$, which we will denote by $\ad_a$ and $\ad_{a'}$, respectively. Since the map $J \to M \colon a \mapsto \ad_a$ is clearly injective, we can safely identify $a$ and $\ad_a$ (and similarly for $J'$).
			Notice that the elements $\ad_{a'}$ with $a' \in J'$ have $\zeta$-degree $-1$ and that the elements $\ad_{a}$ with $a \in J$ have $\zeta$-degree $1$.
	\end{enumerate}
\end{definition}
\begin{proposition}\label{pr:J-module}
	\Cref{def:Li as J-module} makes each $L_i \ (i \neq 0)$ into a module for the abelian Lie algebras $J$ and $J'$.
\end{proposition}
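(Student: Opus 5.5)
Since $J$ and $J'$ are abelian Lie algebras, being a module for $J$ means exactly that the operators $\ad_a$ and $\ad_b$ commute on each $L_i$ for all $a,b \in J$, and likewise for $J'$. (The map $a \mapsto \ad_a$ is already $k$-linear, since the formulas in \cref{def:Li as J-module} are linear in $a$ and in $a'$.) On $L_{\pm 2}$ every operator acts as zero, so nothing needs to be checked there. It therefore remains to show that $[\ad_a, \ad_b] = 0$ and $[\ad_{a'}, \ad_{b'}] = 0$ on $L_{\pm 1}$. Both $L_1$ and $L_{-1}$ use the same formulas, so a single computation handles both.

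For $a, c \in J$ we apply $\ad_c$ and then $\ad_a$ to $(\lambda, b, b', \mu)_\pm$:
\[ \bigl[a, [c, (\lambda,b,b',\mu)_\pm]\bigr] = \bigl[a, (0, \lambda c, c \times b, T(c,b'))_\pm\bigr] = \bigl(0, 0, \lambda\, a \times c, T(a, c \times b)\bigr)_\pm . \]
We need this expression to be symmetric in $(a,c)$. The $J'$-component $\lambda\, a \times c$ is symmetric because $\times$ is symmetric. The last component is $T(a, c \times b)$. By \cref{le:CNP identities}\cref{le:CNP identities:7} (an axiom of linear cubic norm pairs) together with the symmetry of $\times$, we have $T(a, c\times b) = T(a \times c, b)$, which is symmetric in $a$ and $c$. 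Hence $\ad_a \ad_c = \ad_c \ad_a$.

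Dually, for $a', c' \in J'$ we compute
\[ \bigl[a', [c', (\lambda,b,b',\mu)_\pm]\bigr] = -\bigl[a', (T(b,c'), c' \times b', \mu c', 0)_\pm\bigr] = \bigl(T(c' \times b', a'), \mu\, a' \times c', 0, 0\bigr)_\pm . \]
The second component is symmetric in $(a',c')$ because $\times'$ is symmetric. For the first component, the primed version of identity \cref{le:CNP identities:7} gives $T'(c' \times b', a') = T'(b', c' \times a')$, which is symmetric in $a'$ and $c'$.

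The argument uses only symmetry of $\times$ and identity \cref{le:CNP identities:7}, so it is valid for linear cubic norm pairs as well. The only step needing care is tracking the sign and the order of arguments of $T$ versus $T'$ when checking the scalar components; there is no real obstacle.
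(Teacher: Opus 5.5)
Your proposal is correct and is essentially the paper's proof: $L_{\pm 2}$ is trivial, and on $L_{\pm 1}$ you compute $[a,[c,z]] = (0,0,\lambda\, a\times c, T(a, c\times b))_\pm$, which is symmetric in $a,c$ by the symmetry of $\times$ and \cref{le:CNP identities}\cref{le:CNP identities:7}. The only difference is that you also write out the $J'$-computation, which the paper dismisses as completely similar.
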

\begin{proof}
	This is obvious for the trivial modules $L_{-2}$ and $L_2$.
	We will now verify that $L_1$ is a $J$-module; the remaining three cases are completely similar.
	So let $a,c \in J$ and $z = (\lambda, b, b', \mu)_+ \in L_1$. We compute
	\begin{align*}
		[a,[c,z]]
		&= \bigl[ a, \bigl( 0, \ \lambda c, \ c \times b, \ T(c, b') \bigr)_+ \bigr] \\
		&= \bigl( 0, \ 0, \ \lambda a \times c, \ T(a, c \times b) \bigr)_+ ,
	\end{align*}
	and since $T(a, c \times b) = T(c, a \times b)$ by \cref{le:CNP identities}\cref{le:CNP identities:7}, we get $[a, [c,z]] = [c, [a,z]]$ as required.
\end{proof}
We can rephrase \cref{pr:J-module} as the fact that the maps $J \to M \colon a \mapsto \ad_a$ and $J' \to M \colon a' \mapsto \ad_{a'}$ are homomorphisms of Lie algebras.
(As in \cref{def:Li as J-module}\cref{item:ad}, this now implies that we may view $J$ and $J'$ as Lie subalgebras of $M$.)
Interestingly, this does \emph{not} extend to a Lie algebra homomorphism $\TKK(J, J') \to M$. Instead, we do the following.
\begin{definition}\label{def:dd and L0}
	For each $a \in J$ and $a' \in J'$, we define
	\[ \dd(a,a') \coloneq \dd_{a,a'} \coloneq [\ad_{a'}, \ad_{a}] \in M . \]
	We also set
	\[ Z := L_{0,0} := \langle \dd_{a,a'} \mid a \in J, a' \in J' \rangle + k\zeta + k\xi \leq M \]
	and finally also
	\[ L_0 := J' \oplus Z \oplus J \leq M. \]
	Further, we denote by $ L_{0,-1} $, $ L_{0,0} $ and $ L_{0,1} $ the subspaces $ J' $, $ Z $ and $ J $ of $ L_0 $, respectively.
\end{definition}
\begin{remark}
	Notice that the sum in the definition of $L_0$ is indeed a direct sum.
	Indeed, as we observed in \cref{def:Li as J-module}\cref{item:ad}, the elements of $J'$ have $\zeta$-degree $-1$ and the elements of $J$ have $\zeta$-degree $1$.
	On the other hand, the elements of $Z$ all have $\zeta$-degree $0$.
	However, the sum
	\[ \langle \dd_{a,a'} \mid a \in J, a' \in J' \rangle + k\zeta + k\xi \]
	is never a direct sum (cf. \cref{le:lin comb}).
\end{remark}
\begin{remark}\label{rem:xi zeta formulas}
	Observe that $ \xi $ centralizes $ M $ and that $ \zeta $ centralizes $ Z $. Further, since elements of $ J $ have $ \zeta $-degree $ 1 $, we have $ [\zeta, \ad_a] = \ad_a $ for all $ a \in J $. Similarly, $ [\zeta, \ad_{a'}] = -\ad_{a'} $ for all $ a' \in J' $.
\end{remark}
We first explicitly compute the action of $\dd_{a,a'}$ on each $L_i$.
\begin{proposition}\label{pr:delta z}
	Let $a \in J$ and $a' \in J'$. Then $\dd_{a,a'}(L_{\pm 2}) = 0$ and
	\begin{multline*}
		\bigl[ \dd_{a, a'}, (\lambda, b, b', \mu)_\pm \bigr] \\
		= \bigl( - \lambda T(a, a'), \ D_{a,a'}(b) - T(a, a')b, \ - D_{a',a}(b') + T(a, a')b', \ \mu T(a, a') \bigr)_\pm		
	\end{multline*}
	for all $\lambda,\mu \in k$, $b \in J$, $b' \in J'$.
\end{proposition}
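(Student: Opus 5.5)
The plan is a direct computation from the definitions. Since $\dd_{a,a'} = [\ad_{a'}, \ad_a]$ in $M = \bigoplus_{i\neq 0}\gl(L_i)$, its action on $z \in L_i$ is
\[ \dd_{a,a'}(z) = \ad_{a'}(\ad_a(z)) - \ad_a(\ad_{a'}(z)). \]
On $L_{\pm 2}$ both $\ad_a$ and $\ad_{a'}$ act trivially by \cref{def:Li as J-module}, so $\dd_{a,a'}(L_{\pm2}) = 0$ immediately. On $L_{\pm1}$ the formulas in \cref{def:Li as J-module} do not depend on the sign $\pm$. It therefore suffices to treat one sign, say $z = (\lambda, b, b', \mu)_+$.

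First I compute the two composites. Applying $\ad_a$ gives $(0, \lambda a, a\times b, T(a,b'))_+$. Applying $\ad_{a'}$ to this gives
\[ -\bigl(T(\lambda a, a'),\ a'\times(a\times b),\ T(a,b')a',\ 0\bigr)_+. \]
In the other order, $\ad_{a'}(z) = -(T(b,a'), a'\times b', \mu a', 0)_+$. Applying $\ad_a$ gives
\[ -\bigl(0,\ T(b,a')a,\ a\times(a'\times b'),\ \mu T(a,a')\bigr)_+. \]
Subtracting the second result from the first, the outer components become $-\lambda T(a,a')$ and $\mu T(a,a')$, as claimed.

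The $J$-component is $T(b,a')a - a'\times(a\times b)$. By \cref{def:U-operators}, $D_{a,a'}(b) = T(a,a')b + T(b,a')a - (a\times b)\times a'$. Using the symmetry of $\times$, the $J$-component equals $D_{a,a'}(b) - T(a,a')b$. The $J'$-component is $a\times(a'\times b') - T(a,b')a'$. The analogous operator on $J'$ is $D_{a',a}(b') = T(a,a')b' + T(a,b')a' - (a'\times b')\times a$. Hence the $J'$-component equals $-D_{a',a}(b') + T(a,a')b'$.

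The only point needing care is the bookkeeping. One must check that the linearized $U$-operator on the $J'$ side is $U_{a',b'}(a) = T(a,a')b' + T(a,b')a' - (a'\times b')\times a$, which uses $T'(a',a) = T(a,a')$. One must also track the global minus signs in $\ad_{a'}$. No cubic norm pair identities beyond bilinearity and symmetry are needed, so the result holds for linear cubic norm pairs as well.
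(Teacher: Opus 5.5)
Your proposal is correct and is essentially the paper's proof: expand $\dd_{a,a'} = \ad_{a'}\ad_a - \ad_a\ad_{a'}$ on $(\lambda,b,b',\mu)_\pm$ using the module action of $J$ and $J'$, then read off $D_{a,a'}(b)-T(a,a')b$ and $-D_{a',a}(b')+T(a,a')b'$ from the definitions together with the symmetry of $\times$. Your explicit handling of $L_{\pm 2}$ and your remark that only the linear structure is used are both accurate, and the paper leaves them implicit.
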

\begin{proof}
	Let $z = (\lambda, b, b', \mu)_\pm$.
	Since $\dd_{a,a'} = \ad_{a'} \ad_{a} - \ad_{a} \ad_{a'}$, we can use \cref{def:Li as J-module} to get
	\begin{align*}
		[\dd_{a,a'}, z]
		&= [a', [a,z]] - [a, [a',z]] \\
		&= \begin{multlined}[t][.8\textwidth]
			- \bigl( \lambda T(a, a'), \ a' \times (a \times b), \ T(a, b')a', \ 0 \bigr)_\pm \\
			+ \bigl( 0,\ T(b, a')a, \ a \times (a' \times b'), \ \mu T(a, a') \bigr)_\pm ,
		\end{multlined}
	\end{align*}
	and using the definition of $D_{a,a'}$ and $D_{a',a}$, we get the required formula.
\end{proof}
\begin{corollary}\label{cor:Z is sublie}
	Let $a,b \in J$ and $a',b' \in J'$.
	Then
	\begin{equation}\label{eq:d bracket}
		[\dd_{a,a'}, \dd_{b,b'}] = \dd_{D_{a,a'}(b), b'} - \dd_{b, D_{a',a}(b')} .
	\end{equation}
	In particular, $Z$ is a Lie subalgebra of $M$.
\end{corollary}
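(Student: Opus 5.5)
We prove \cref{eq:d bracket} by evaluating both sides on every $L_i$ with $i \neq 0$; this suffices because $M = \bigoplus_{i \neq 0} \gl(L_i)$ and elements of $M$ are determined by their action on these modules. On $L_{\pm 2}$ both sides vanish, since every $\dd$ annihilates $L_{\pm 2}$ by \cref{pr:delta z}. So it remains to compare the actions on $z = (\lambda, c, c', \mu)_\pm \in L_{\pm 1}$.

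By \cref{pr:delta z}, $\dd_{a,a'}$ acts diagonally with respect to the four components of $L_{\pm1}$. Writing $t = T(a,a')$, the four component operators are $-t$, $D_{a,a'} - t\id_J$, $-D_{a',a} + t\id_{J'}$ and $t$. Consequently the commutator $[\dd_{a,a'}, \dd_{b,b'}]$ also acts componentwise. On the two scalar components it acts as $0$, because scalars commute. On the $J$-component it acts as $[D_{a,a'}, D_{b,b'}]$, since the scalar shifts drop out of the commutator. On the $J'$-component it acts as $[D_{a',a}, D_{b',b}]$.

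Now compute the right-hand side with \cref{pr:delta z}. Put $b_1 := D_{a,a'}(b)$ and $b_1' := D_{a',a}(b')$. The scalar components of $\dd_{b_1,b'} - \dd_{b,b_1'}$ are multiples of $T(b_1, b') - T(b, b_1')$. This difference vanishes by \cref{le:CNP identities}\cref{le:CNP identities:31}, which gives $T(\{a,a',b\}, b') = T(b, \{a',a,b'\})$; this is the identity assumed in \cref{def:linear CNP}, so the argument also works for linear cubic norm pairs. The same cancellation removes the $T$-shift terms in the $J$- and $J'$-components. It therefore remains to show the operator identities
\[ [D_{a,a'}, D_{b,b'}] = D_{D_{a,a'}b,\, b'} - D_{b,\, D_{a',a}b'} \]
on $J$, and the analogous identity on $J'$, namely $[D_{a',a}, D_{b',b}] = D_{D_{a',a}b',\, b} - D_{b',\, D_{a,a'}b}$, which matches the sign pattern after negation.

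Both operator identities follow from \cref{le:inner} together with \cref{rem:JP}: the pair $\delta_{a,a'} = (D_{a,a'}, -D_{a',a})$ is a derivation of the Jordan triple product. Applying $D_{a,a'}$ to $\{b, b', c\}$ gives
\[ D_{a,a'}\{b,b',c\} = \{D_{a,a'}b, b', c\} - \{b, D_{a',a}b', c\} + \{b, b', D_{a,a'}c\}, \]
and rearranging yields exactly the identity on $J$. The $J'$ identity comes from the $\delta_-$-part of the same derivation property, or equivalently from \cref{eq:delta bracket}. The main care needed is bookkeeping of signs and checking that the scalar shifts cancel; the cancellation uses only \cref{le:CNP identities}\cref{le:CNP identities:31}.

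Finally, $Z$ is a Lie subalgebra. The commutators among the generators $\dd_{a,a'}$ lie in their span by \cref{eq:d bracket}, and $\zeta$ and $\xi$ commute with every $\dd$: $\xi$ is central in $M$, and $\zeta$ acts by scalars on each $L_{i,j}$ while every $\dd$ has $\zeta$-degree $0$.
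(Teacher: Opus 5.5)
Your proof is correct and follows the paper's own argument. You compare the two sides on $L_{\pm1}$ via \cref{pr:delta z} and cancel the trace shifts using $T(D_{a,a'}b,b')=T(b,D_{a',a}b')$. You then reduce to the operator identity that expresses $\delta_{a,a'}$ being a derivation: the paper gets it from \cref{eq:delta bracket}, you get it directly from \cref{rem:JP}, and the closing step for $\xi$ and $\zeta$ is exactly \cref{rem:xi zeta formulas}.
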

\begin{proof}
	First observe that by \cref{eq:delta bracket}, we have
	\[ [\delta_{a,a'}, \delta_{b,b'}] = \delta_{D_{a,a'}(b), b'} - \delta_{b, D_{a',a}(b')} , \]
	so in particular,
	\[ [D_{a,a'}, D_{b,b'}] = D_{D_{a,a'}(b), b'} - D_{b, D_{a',a}(b')} . \]
	By \cref{pr:delta z}, on the other hand, we have
	\[ \bigl[ [\dd_{a,a'}, \dd_{b,b'}], (\lambda, c, c', \mu)_\pm \bigr] = \bigl( 0, \ [D_{a,a'}, D_{b,b'}](c), \ - [D_{a',a}, D_{b',b}](c'), \ 0 \bigr)_\pm . \]
	Since
	\begin{equation}\label{eq:der T explicit}
		T(D_{a,a'}(b), b') - T(b, D_{a',a}(b')) = 0
	\end{equation}
	by \cref{eq:der T}, we conclude that both sides of \cref{eq:d bracket} have the same action on $L_{\pm 1}$.
	Since they both act trivially on $L_{\pm 2}$, they have the same action on each $L_i$ ($i \neq 0$), and hence they represent the same element of $M$.
	This proves \cref{eq:d bracket} and shows, in particular, that $\langle \dd_{a,a'} \mid a \in J, a' \in J' \rangle$ is a Lie subalgebra of $M$.
	Together with \cref{rem:xi zeta formulas}, it follows that $Z$ is indeed a Lie subalgebra of $M$.
\end{proof}

\begin{proposition}\label{pr:Jd}
	Let $ a,c \in J $ and $ a',c' \in J' $. Then
	\begin{equation*}
		\sqbrackets{\dd_{a,a'}, \ad_c} = \ad_{D_{a,a'}(c)} \quad \text{and} \quad \sqbrackets{\dd_{a,a'}, \ad_{c'}} = -\ad_{D_{a',a}(c')}.
	\end{equation*}
\end{proposition}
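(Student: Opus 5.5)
Both sides are elements of $M = \bigoplus_{i \neq 0} \gl(L_i)$, so I will show that they act identically on each $L_i$, $i \neq 0$. I treat the first identity in detail; the second is the mirror image, obtained by swapping the roles of $(J,\times,T)$ and $(J',\times',T')$.

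On $L_{\pm 2}$ both sides vanish. Indeed, $\ad_c$ and $\dd_{a,a'}$ act trivially on $L_{\pm 2}$ by \cref{def:Li as J-module} and \cref{pr:delta z}, and $\ad_{D_{a,a'}(c)}$ does too. The real work is on $L_{\pm 1}$. Take $z = (\lambda, b, b', \mu)_\pm$ and write $t \coloneq T(a,a')$. I will compute $[\dd_{a,a'}, [c,z]] - [c, [\dd_{a,a'}, z]]$ component by component, using \cref{def:Li as J-module} for the action of $c$ and \cref{pr:delta z} for the action of $\dd_{a,a'}$.

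First I handle the trivial components. The first component is $0$ on both sides, since $\ad_c$ kills the first coordinate. For the second component, $[c,z]$ has second entry $\lambda c$, so the commutator gives
\[ D_{a,a'}(\lambda c) - t\lambda c - \lambda c \cdot(-1)\text{-correction}, \]
which reduces to $\lambda(D_{a,a'}(c) - tc) + t\lambda c = \lambda D_{a,a'}(c)$. This matches the second entry of $\ad_{D_{a,a'}(c)}(z)$.

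Next, the third component must equal $D_{a,a'}(c) \times b$. The commutator gives
\[ -D_{a',a}(c \times b) + t\, c\times b - c \times \bigl(D_{a,a'}(b) - tb\bigr). \]
By \eqref{eq:D is der}, $D_{a',a}(c\times b) = 2t\, c\times b - D_{a,a'}(c)\times b - c \times D_{a,a'}(b)$. Substituting this, the $t$-terms cancel and only $D_{a,a'}(c)\times b$ remains.

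The fourth component must equal $T(D_{a,a'}(c), b')$. The commutator gives
\[ t\,T(c,b') - T\bigl(c, -D_{a',a}(b') + tb'\bigr) = T\bigl(c, D_{a',a}(b')\bigr). \]
This equals $T(D_{a,a'}(c), b')$ by \eqref{eq:der T explicit}.

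The main obstacle is the third component, which genuinely needs the derivation identity \eqref{eq:D is der} with the correct constant $\lambda = 2T(a,a')$; the other components follow from bookkeeping and the trace symmetry.

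For the second identity I run the same computation with $\ad_{c'}$. The sign $-\ad_{D_{a',a}(c')}$ arises because $\dd_{a,a'}$ acts on $J'$-coordinates through $-D_{a',a}$. Here I will use the $J'$-version of \eqref{eq:D is der}, coming from \cref{eq:der times'} in \cref{le:inner}, together with \eqref{eq:der T explicit}.
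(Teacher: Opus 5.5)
Your proof is correct and is essentially the paper's argument: you compare both sides coordinate by coordinate on $L_{\pm 1}$, using \eqref{eq:D is der} for the $J'$-coordinate and \eqref{eq:der T explicit} for the last coordinate, note that everything vanishes on $L_{\pm 2}$, and treat the second identity the same way via \eqref{eq:der times'}. The only blemish is the garbled ``$(-1)$-correction'' phrase for the $J$-coordinate, which should simply read $\lambda\bigl(D_{a,a'}(c) - T(a,a')c\bigr) - \bigl(-\lambda T(a,a')c\bigr) = \lambda D_{a,a'}(c)$.
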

\begin{proof}
	We compute the action of $\sqbrackets{\dd_{a,a'}, \ad_c}$ on an arbitrary element $ (\lambda, b, b', \mu)_\pm \in L_{\pm 1} $:
	\begin{align*}
		&\sqbrackets[\big]{\dd_{a,a'}, \sqbrackets{c, \brackets{\lambda, b, b', \mu}_\pm}} \\
		&= \sqbrackets[\big]{\dd_{a,a'}, \brackets[\big]{0, \lambda c, c \times b, T(c,b')}_\pm} \\
		&= \brackets[\big]{0, \lambda \brackets[\big]{D_{a,a'}(c) - T(a,a') c}, -D_{a',a}(c \times b) + T(a,a')(c \times b), T(c,b') T(a,a')}_\pm, \\[1.5ex]
		&\sqbrackets[\big]{c, \sqbrackets{\dd_{a,a'}, \brackets{\lambda, b, b', \mu}_\pm}} \\
		&= \sqbrackets[\big]{c, \brackets[\big]{-\lambda T(a,a'), D_{a,a'}(b) - T(a,a') b, -D_{a',a}(b') + T(a,a')b', \mu T(a,a')}_\pm} \\
		&= \brackets[\big]{0, -\lambda T(a,a')c, c \times \brackets[\big]{D_{a,a'}(b) - T(a,a') b}, T\brackets[\big]{c, -D_{a',a}(b') + T(a,a') b'}}_\pm.
	\end{align*}
	Using \cref{eq:D is der,eq:der T explicit}, we conclude that
	\begin{align*}
		\sqbrackets[\big]{\sqbrackets{\dd_{a,a'}, \ad_c},\ (\lambda, b, b', \mu)_\pm}
			&= \brackets[\big]{0,\ \lambda D_{a,a'}(c),\ D_{a,a'}(c) \times b,\ T\brackets{D_{a, a'}(c), b'}} \\
			&= \sqbrackets[\big]{D_{a,a'}(c), \brackets{\lambda, b, b', \mu}_\pm} .
	\end{align*}
	This shows that $\sqbrackets{\dd_{a,a'}, \ad_c}$ and $\ad_{D_{a,a'}(c)}$ have the same action on $L_1$ and $L_{-1}$.
	Since both elements also act trivially on $L_2$ and $L_{-2}$, they represent the same element of $M$, i.e., they coincide. The proof of the second assertion is completely similar.
\end{proof}

\begin{proposition}\label{pr:L0 is sublie}
	$L_0$ is a Lie subalgebra of $M$.
\end{proposition}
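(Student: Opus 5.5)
We must show $L_0 = J' \oplus Z \oplus J$ is closed under the commutator in $M$. Since the bracket is bilinear and skew-symmetric, it suffices to check the brackets between generators of the three summands, namely the six types $[J,J]$, $[J',J']$, $[J',J]$, $[Z,Z]$, $[Z,J]$ and $[Z,J']$.

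First, $[\ad_a,\ad_c]=0$ and $[\ad_{a'},\ad_{c'}]=0$ for $a,c\in J$ and $a',c'\in J'$ by \cref{pr:J-module}. Next, $[\ad_{a'},\ad_a]=\dd_{a,a'}\in Z$ by \cref{def:dd and L0}. For $[Z,Z]\subseteq Z$ we use \cref{cor:Z is sublie}.

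It remains to treat brackets of $Z$ with $J$ and $J'$. Since $Z$ is spanned by the $\dd_{a,a'}$ together with $\zeta$ and $\xi$, we treat these generators separately.
\begin{itemize}
\item For $\dd_{a,a'}$, \cref{pr:Jd} gives $[\dd_{a,a'},\ad_c]=\ad_{D_{a,a'}(c)}\in J$ and $[\dd_{a,a'},\ad_{c'}]=-\ad_{D_{a',a}(c')}\in J'$.
\item For $\zeta$, \cref{rem:xi zeta formulas} gives $[\zeta,\ad_a]=\ad_a$ and $[\zeta,\ad_{a'}]=-\ad_{a'}$.
\item For $\xi$, we note that $\xi$ is central in $M$ by the same remark, so $[\xi,\ad_a]=[\xi,\ad_{a'}]=0$.
\end{itemize}

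The only point that needs care is the claim in \cref{rem:xi zeta formulas}, used for the $\zeta$ and $\xi$ cases. We verify it directly from \cref{def:L0 bar}\cref{item:zeta and xi}.
\begin{itemize}
\item Each $\ad_a$ preserves every $L_i$. Since $\xi$ acts on $L_i$ as the scalar $i$, $\xi$ commutes with $\ad_a$, and likewise with every element of $M$.
\item The element $\ad_a$ has $\zeta$-degree $1$: it maps $L_{i,j}$ into $L_{i,j+1}$. Hence for $l\in L_{i,j}$ we get
\[ [\zeta,\ad_a](l)=(j+1)\,\ad_a(l)-j\,\ad_a(l)=\ad_a(l). \]
\item The same computation with $\zeta$-degree $-1$ gives $[\zeta,\ad_{a'}]=-\ad_{a'}$.
\end{itemize}

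All six types of brackets therefore land in $L_0$, so $L_0$ is a Lie subalgebra of $M$. No real obstacle arises: the substantive computations were already carried out in \cref{cor:Z is sublie} and \cref{pr:Jd}.
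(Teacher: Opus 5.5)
Your proof is correct and follows the paper's argument: reduce to spanning elements, then invoke \cref{cor:Z is sublie}, \cref{rem:xi zeta formulas} and \cref{pr:Jd}. You are slightly more explicit than the paper in recording $[J,J]=[J',J']=0$ via \cref{pr:J-module} and $[J',J]\subseteq Z$ via the definition of $\dd_{a,a'}$, and in re-deriving the $\xi$- and $\zeta$-brackets from the $\zeta$-degree; the paper leaves these steps implicit.
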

\begin{proof}
	By \cref{cor:Z is sublie}, we already know that $Z$ is a Lie subalgebra of $M$.
	Further, $ [J, k\zeta + k \xi] \subseteq L_0 $ and $ [J', k \zeta + k\xi] \subseteq L_0 $ by \cref{rem:xi zeta formulas}.
	Finally, by \cref{pr:Jd}, $ [\dd_{a,a'}, J] \subseteq L_0 $ and $ [\dd_{a,a'}, J'] \subseteq L_0 $ for all $a \in J$, $a' \in J'$.
\end{proof}
\begin{remark}\label{rem:Li is L0-module}
	Notice that since each $L_i$ is an $M$-module, it is automatically also an $L_0$-module, now that we know that $L_0$ is a Lie subalgebra of $M$.
\end{remark}

We will need the following lemma in the proof of \cref{pr:L is Lie algebra} below.
At the same time, it may serve as a good example to explain the subtle difference between the $\delta_{a,a'} \in \TKK(J, J')$ and the $\dd_{a,a'} \in M$ that we have just defined.
\begin{lemma}\label{le:triple}
	Let $a,b,c \in J$ and $ a',b',c' \in J' $.
	Then the following hold.
	\begin{enumerate}
		\item \label{le:triple:aasharp}$\dd_{a, a^\sharp} = N(a) \cdot (2 \zeta - \xi)$.
		\item $\dd_{(a')^\sharp, a} = N(a') \cdot (2 \zeta - \xi)$.
		\item \label{le:triple:norm r}$\dd_{a, b^\sharp} + \dd_{b, a \times b} = T(a, b^\sharp) \cdot (2 \zeta - \xi)$.
		\item $\dd_{(a')^\sharp, b} + \dd_{a' \times b', a'} = T((a')^\sharp, b) \cdot (2 \zeta - \xi)$
		\item \label{le:triple:r}$\dd_{a, b \times c} + \dd_{b, c \times a} + \dd_{c, a \times b} = T(c, a \times b) \cdot (2 \zeta - \xi)$.
		\item \label{le:triple:l}$\dd_{a' \times b', c'} + \dd_{b' \times c',a'} + \dd_{c' \times a', b'} = T(c', a' \times b') \cdot (2\zeta - \xi)$.
	\end{enumerate}
\end{lemma}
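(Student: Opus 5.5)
Both sides are elements of $M = \bigoplus_{i \neq 0} \gl(L_i)$, so we compare their actions on each $L_i$ with $i \neq 0$, as in the proofs of \cref{cor:Z is sublie} and \cref{pr:Jd}. On $L_{\pm 2}$ both sides vanish: every $\dd_{a,a'}$ kills $L_{\pm 2}$ by \cref{pr:delta z}, and $2\zeta - \xi$ kills $L_{\pm 2}$ by \cref{rem:2zetaxi formula}. So it suffices to compare the actions on $L_{\pm 1}$.

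On $L_{\pm 1}$, \cref{pr:delta z} gives the action of a sum $\sum_k \dd_{a_k,a_k'}$ on $(\lambda, b, b', \mu)_\pm$ as
\[ \Bigl( -\lambda t, \ \bigl(\textstyle\sum_k D_{a_k,a_k'}\bigr)(b) - t b, \ -\bigl(\textstyle\sum_k D_{a_k',a_k}\bigr)(b') + t b', \ \mu t \Bigr)_\pm , \qquad t \coloneq \textstyle\sum_k T(a_k,a_k'). \]
We then compute $t$ and the two operator sums using \cref{le:triple D}.

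For \cref{le:triple:aasharp}, $t = T(a,a^\sharp) = 3N(a)$. By \cref{le:triple D}\cref{le:triple D:1,le:triple D:2} we have $D_{a,a^\sharp} = 2N(a)\id_J$ and $D_{a^\sharp,a} = 2N(a)\id_{J'}$. The action is therefore $(-3N(a)\lambda, -N(a)b, N(a)b', 3N(a)\mu)_\pm$, which is $N(a)$ times the action of $2\zeta - \xi$ from \cref{rem:2zetaxi formula}.

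For \cref{le:triple:norm r}, $t = T(a,b^\sharp) + T(b,a\times b) = 3T(a,b^\sharp)$, using \cref{le:CNP identities}\cref{le:CNP identities:7} and $T(b, a \times b) = T(a, b \times b) = 2T(a,b^\sharp)$. The operator sums are $2T(a,b^\sharp)\,\id$ on $J$ by \cref{le:triple D}\cref{le:triple D:3}, and likewise on $J'$ by \cref{le:triple D}\cref{le:triple D:4} applied with the roles of $a,b$ as needed. Part \cref{le:triple:r} is handled the same way, with $t = 3T(a, b \times c)$ and \cref{le:triple D}\cref{le:triple D:r,le:triple D:l}. Alternatively, \cref{le:triple:r} follows by linearizing \cref{le:triple:norm r} in $b$, since $\dd$ is bilinear. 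The primed statements follow by swapping $J$ and $J'$, which also exchanges the first and last components and flips signs consistently.

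The main obstacle is checking that the $J'$-component identity is really available in the needed form. We need $D_{b^\sharp,a} + D_{a\times b, b} = 2T(a,b^\sharp)\id_{J'}$, and this is \cref{le:triple D}\cref{le:triple D:4} with $a$ and $b$ interchanged. In the linear cubic norm pair setting, where $\sharp$ is unavailable, only \cref{le:triple:r,le:triple:l} are meaningful. There we use the remark that the cyclic $D$-identities hold from \cref{le:CNP identities}\cref{le:CNP identities:7,le:CNP identities:25,le:CNP identities:27}.
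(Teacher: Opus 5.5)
Your proof is correct and is essentially the paper's argument. You reduce to the action on $L_{\pm 1}$ (both sides kill $L_{\pm 2}$), compute that action via \cref{pr:delta z}, read off the trace term and the two operator sums from \cref{le:triple D} (with \cref{le:CNP identities}\cref{le:CNP identities:7} for the trace), and compare with \cref{rem:2zetaxi formula}. The paper writes out only part (1) and calls the rest similar, so your explicit handling of the $J'$-components (e.g.\ \cref{le:triple D}\cref{le:triple D:4} with $a,b$ interchanged) simply supplies the details it omits.
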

\begin{proof}
	Notice that these are identities in $L_0$, and by construction, two such elements coincide if and only if they have the same action on $L_i$ for all $i \neq 0$.
	Since each $\dd_{a,a'}$ acts trivially on $L_2$ and $L_{-2}$ and the grading derivation $2\zeta - \xi \in M$ kills both $L_2$ and $L_{-2}$, it suffices to verify this for $i = \pm 1$.
	
	Recall from \cref{le:triple D}\cref{le:triple D:1} that
	\[
		D_{a, a^\sharp}(d) = 2 N(a) d .
	\]
	Together with \cref{pr:delta z}, this yields, using $T(a, a^\sharp) = 3N(a)$, that
	\[
		[\dd_{a, a^\sharp}, \ (\lambda, d, d', \mu)_+]
		= \bigl( -3 \lambda N(a), \ - N(a) d, \ N(a) d', \ 3 \mu N(a) \bigr)_+.
	\]
	Using \cref{rem:2zetaxi formula}, we conclude that both sides of \cref{le:triple:aasharp} have the same action on $L_1$. The verification for~$L_{-1}$ is similar, so \cref{le:triple:aasharp} holds.
	
	The proofs of the remaining identities follow in a completely similar way from the other identities in \cref{le:triple D}.
%
\end{proof}

\begin{remark}\label{rem:TKK no action}
	In contrast, notice that in $\TKK(J, J')$, we have
	\[ \delta_{a, a^\sharp} = 2 N(a) \cdot \gamma , \]
	by \cref{le:triple D}\cref{le:triple D:1,le:triple D:2}.
	(See \cref{def:der}\cref{def:der:gamma} and \cref{le:inner}.)
	Comparing this with \cref{le:triple} shows, in particular, that the action introduced in \cref{def:Li as J-module} does not extend to an action of $\TKK(J, J')$ on $L_i$, as we mentioned just before \cref{def:dd and L0}.
	See, however, \cref{co:central ext} below.
\end{remark}

The appearance of a multiple of $2 \zeta - \xi$ in \cref{le:triple} is not a coincidence.
In the following lemma and its proof, all summations are understood to run over $a \in J$ and $a' \in J'$.

\begin{lemma}\label{le:lin comb}
	Let $f \in L_{0,0}$ be arbitrary and write
	\begin{equation}\label{eq:lin comb}
		f=\sum \lambda_{a,a'} \dd_{a,a'} + \lambda_\zeta \zeta + \lambda_\xi \xi
	\end{equation}
	for certain scalars $\lambda_{a,a'}, \lambda_\zeta, \lambda_\xi \in k$. Then $f=0$ if and only if the following assertions are satisfied.
	\begin{enumerate}
		\item $\lambda_\zeta + 2\lambda_\xi = 0$.
		
		\item $\textstyle\sum \lambda_{a,a'} T(a,a') = 3 \lambda_\xi $.
		
		\item $\textstyle\sum \lambda_{a,a'} D_{a,a'} = 2\lambda_\xi \id_J $ and $ \textstyle\sum \lambda_{a,a'} D_{a',a} = 2\lambda_\xi \id_{J'}$.
	\end{enumerate}
	In this situation, we have
	\[ \sum \lambda_{a,a'} \dd_{a,a'} = \lambda_\xi \cdot (2 \zeta - \xi) \quad \text{and} \quad \sum \lambda_{a,a'} \delta_{a,a'} = \lambda_\xi \cdot 2 \gamma. \]
\end{lemma}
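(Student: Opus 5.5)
Since $f \in M$ is determined by its action on each $L_i$ with $i \neq 0$, we will compute the action of the right-hand side of \eqref{eq:lin comb} on $L_{\pm 2}$ and on each of the four components of $L_{\pm 1}$, and compare with zero.

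On $L_{\pm 2}$, every $\dd_{a,a'}$ acts trivially by \cref{pr:delta z}. Moreover, $\zeta$ acts by $\mp 1$ on $x,y$ (precisely, $\zeta(x) = -x$ and $\zeta(y) = y$), while $\xi$ acts by $\mp 2$. So $f(y) = (\lambda_\zeta + 2\lambda_\xi) y$, and vanishing on $L_{\pm 2}$ is equivalent to~(i).

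On $L_{\pm 1}$ we use \cref{pr:delta z} together with the explicit formulas for $\zeta$ and $\xi$ from \cref{def:L0 bar}\cref{item:zeta and xi}. Here $\xi$ acts by $\pm 1$ on $L_{\pm 1}$, and $\zeta$ acts on $(\lambda,b,b',\mu)_-$ by $(-2,-1,0,1)$ and on $(\lambda,b,b',\mu)_+$ by $(-1,0,1,2)$ componentwise. Writing $S \coloneq \sum \lambda_{a,a'} T(a,a')$, vanishing of the $\lambda$- and $\mu$-components gives the four scalar equations
\begin{gather*}
	-S - 2\lambda_\zeta - \lambda_\xi = 0, \quad S + \lambda_\zeta - \lambda_\xi = 0 \quad (\text{on } L_{-1}), \\
	-S - \lambda_\zeta + \lambda_\xi = 0, \quad S + 2\lambda_\zeta + \lambda_\xi = 0 \quad (\text{on } L_{1}).
\end{gather*}
Similarly, vanishing of the $J$- and $J'$-components gives
\begin{gather*}
	\textstyle\sum \lambda_{a,a'} D_{a,a'} - S\,\id_J + (-\lambda_\zeta \mp \lambda_\xi)\id_J \ \text{or}\ (\lambda_\xi)\id_J = 0, \\
	\textstyle -\sum \lambda_{a,a'} D_{a',a} + S\,\id_{J'} + (\ldots)\,\id_{J'} = 0,
\end{gather*}
where the remaining coefficient in each line is the corresponding $\zeta,\xi$ contribution on that component. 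Given~(i), i.e.\ $\lambda_\zeta = -2\lambda_\xi$, all four scalar equations reduce to $S = 3\lambda_\xi$, which is~(ii). Substituting (i) and (ii), the $J$-component equations on $L_-$ and on $L_+$ both become $\sum \lambda_{a,a'} D_{a,a'} = (S - \lambda_\xi)\id_J = 2\lambda_\xi \id_J$. The $J'$-equations likewise become $\sum \lambda_{a,a'} D_{a',a} = 2\lambda_\xi \id_{J'}$. This gives~(iii). Conversely, (i)--(iii) make every component vanish, so $f = 0$. This bookkeeping of signs across the eight components is the only real work, and it is the main (mild) obstacle. Note that here $D_{a,a'}$ is applied with the natural convention on $\id$; no cubic identities are needed.

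For the final assertions, assume $f = 0$. Then $\sum \lambda_{a,a'} \dd_{a,a'} = -\lambda_\zeta \zeta - \lambda_\xi \xi = 2\lambda_\xi \zeta - \lambda_\xi \xi = \lambda_\xi(2\zeta - \xi)$ by~(i). For the TKK statement, recall that $\delta_{a,a'} = (D_{a,a'}, -D_{a',a})$. Hence by~(iii),
\[ \textstyle\sum \lambda_{a,a'} \delta_{a,a'} = (2\lambda_\xi \id_J, -2\lambda_\xi \id_{J'}) = 2\lambda_\xi \gamma. \]
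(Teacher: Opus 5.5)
Your proposal is correct and is essentially the paper's argument: you evaluate $f$ on $x$, $y$ and on the four coordinate pieces of $L_{\pm 1}$ using \cref{pr:delta z} and the definitions of $\zeta$ and $\xi$, then read off (i) from $L_{\pm 2}$, (ii) from the scalar coordinates and (iii) from the $J$- and $J'$-coordinates. Your sign bookkeeping checks out. In the $J'$-line you left as ``$(\ldots)$'', the coefficients are $-\lambda_\xi$ on $L_{-1}$ and $\lambda_\zeta+\lambda_\xi$ on $L_1$, and under (i) and (ii) both lines indeed reduce to $\sum \lambda_{a,a'} D_{a',a} = 2\lambda_\xi \id_{J'}$.
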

\begin{proof}
	Let $t := \textstyle\sum \lambda_{a,a'} T(a,a')$. Then
	\begin{align*}
		[f, (1,0,0,0)_+] &= (-t + \lambda_\xi - \lambda_\zeta, 0, 0, 0)_+, \\
		[f, (1,0,0,0)_-] &= (-t -\lambda_\xi -2\lambda_\zeta), \\
		[f, (0,b,0,0)_+] &= \brackets*{0, \textstyle\sum \lambda_{a,a'} D_{a,a'}(b) + (-t + \lambda_\xi) b, 0, 0}_+, \\
		[f, (0,b,0,0)_-] &= \brackets*{0, \textstyle\sum \lambda_{a,a'} D_{a,a'}(b) + (-t - \lambda_\xi - \lambda_\zeta)b}_-, \\
		[f, (0,0,b',0)_+] &= \brackets*{- \textstyle\sum \lambda_{a,a'} D_{a',a}(b') + (t + \lambda_\xi + \lambda_\zeta) b', 0}_+, \\
		[f, (0,0,b',0)_-] &= \brackets*{- \textstyle\sum \lambda_{a,a'} D_{a',a}(b') + (t - \lambda_\xi)b', 0}_-, \\
		[f, (0,0,0,1)_+] &= (0, 0, 0, t + \lambda_\xi + 2\lambda_\zeta), \\
		[f, (0,0,0,1)_-] &= (0,0,0, t - \lambda_\xi + \lambda_\zeta), \\
		[f, x] &= (-2\lambda_\xi - \lambda_\zeta)x, \\
		[f, y] &= (2\lambda_\xi + \lambda_\zeta)y.
	\end{align*}
	Since $f=0$ if and only if all these expressions are zero, the equivalence in the assertion follows. The remaining equations are then a straightforward consequence, using that $\gamma = (\id_J, -\id_{J'})$ and $\delta_{a,a'} = (D_{a,a'}, -D_{a',a})$.
\end{proof}

\begin{corollary}\label{co:central ext}
	The linear map 
	\[ \pi \colon Z \to \TKK_0(J,J') \colon \sum \lambda_{a,a'} \dd_{a,a'} + \lambda_\zeta \zeta + \lambda_\xi \xi \mapsto \sum \lambda_{a,a'} \delta_{a,a'} + \lambda_\zeta \gamma  \]
	is well-defined. It is an epimorphism of Lie algebras, which extends to an epimorphism $L_0 \to \TKK(J, J')$.
	In particular, $L_0$ is a central extension of $\TKK(J, J')$.
\end{corollary}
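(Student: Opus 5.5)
Well-definedness comes first, and \cref{le:lin comb} does the work. Suppose an element of $Z$ has two expressions of the form \eqref{eq:lin comb}. Their difference is an expression $\sum \lambda_{a,a'} \dd_{a,a'} + \lambda_\zeta \zeta + \lambda_\xi \xi$ representing $0$, so it suffices to show that its image $\sum \lambda_{a,a'} \delta_{a,a'} + \lambda_\zeta \gamma$ vanishes in $\TKK_0(J,J')$. By \cref{le:lin comb}, vanishing of the element gives $\lambda_\zeta = -2\lambda_\xi$ and $\sum \lambda_{a,a'} \delta_{a,a'} = 2\lambda_\xi \gamma$. Hence the image is $2\lambda_\xi\gamma - 2\lambda_\xi\gamma = 0$. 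Linearity is then clear, and surjectivity holds because the generators $\delta_{a,a'}$ and $\gamma$ of $\TKK_0(J,J')$ are the images of $\dd_{a,a'}$ and $\zeta$.

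Next, $\pi$ is a Lie algebra homomorphism on $Z$. Compare \eqref{eq:d bracket} with the corresponding identity $[\delta_{a,a'}, \delta_{b,b'}] = \delta_{D_{a,a'}(b), b'} - \delta_{b, D_{a',a}(b')}$ from the proof of \cref{cor:Z is sublie}; this gives $\pi([\dd_{a,a'}, \dd_{b,b'}]) = [\pi(\dd_{a,a'}), \pi(\dd_{b,b'})]$. The remaining brackets involve $\zeta$ or $\xi$. Both $\zeta$ and $\xi$ centralize $Z$ by \cref{rem:xi zeta formulas}, and $\gamma$ is central in $\Der(J,J')$, so both sides of these brackets are zero.

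Now extend $\pi$ to $\hat\pi \colon L_0 = J' \oplus Z \oplus J \to \TKK(J,J') = J' \oplus I \oplus J$ by the identity on $J$ and on $J'$. Checking that $\hat\pi$ preserves brackets reduces to the following mixed cases.
\begin{itemize}
\item $[\ad_c, \ad_{c'}]$: since $[\ad_{a'},\ad_a] = \dd_{a,a'}$ by definition, this maps to $[a',a] = \delta_{a,a'}$, matching \eqref{eq:TKK rules}.
\item $[\dd_{a,a'}, \ad_c]$ and $[\dd_{a,a'}, \ad_{c'}]$: these are \cref{pr:Jd}, compared against $[\delta_{a,a'}, c] = D_{a,a'}(c)$ and $[\delta_{a,a'}, c'] = -D_{a',a}(c')$.
\item $[\zeta, \ad_a] = \ad_a$ and $[\zeta, \ad_{a'}] = -\ad_{a'}$: these correspond to $[\gamma, a] = a$ and $[\gamma, a'] = -a'$.
\item $\xi$ centralizes $M$, which matches $[0, \cdot] = 0$ on the TKK side.
\end{itemize}
Surjectivity of $\hat\pi$ is immediate.

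For centrality, let $f$ lie in the kernel of $\hat\pi$. Then $f \in Z$ with $\pi(f) = 0$. Write $f$ as in \eqref{eq:lin comb}. Since $\pi(f) = 0$ means $\sum \lambda_{a,a'} \delta_{a,a'} = -\lambda_\zeta \gamma$, the componentwise conditions give $\sum \lambda_{a,a'} D_{a,a'} = -\lambda_\zeta \id_J$ and $\sum \lambda_{a,a'} D_{a',a} = -\lambda_\zeta \id_{J'}$.

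We must show that $f$ commutes with $J$, $J'$ and $Z$. Commutation with $Z$ follows from \eqref{eq:d bracket}, because brackets of $f$ with $\dd_{b,b'}$ only involve $\sum \lambda_{a,a'} D_{a,a'}(b)$ and $\sum \lambda_{a,a'} D_{a',a}(b')$, which are multiples $-\lambda_\zeta b$ and $-\lambda_\zeta b'$. The resulting expression is $-\lambda_\zeta(\dd_{b,b'} - \dd_{b,b'}) = 0$. For $J$, \cref{pr:Jd} and \cref{rem:xi zeta formulas} give $[f, \ad_c] = \ad_{\sum \lambda D(c)} + \lambda_\zeta \ad_c = 0$, and the case of $J'$ is analogous.

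The main obstacle is this last kernel check, where the scalar bookkeeping must come out exactly right. The well-definedness step is the conceptual crux, but \cref{le:lin comb} already handles it.
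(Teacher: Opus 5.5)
Your proposal is correct and follows essentially the same route as the paper. The steps match: well-definedness via \cref{le:lin comb}; the homomorphism property by comparing \eqref{eq:d bracket} with \eqref{eq:delta bracket} and using that $\xi$, $\zeta$ and $\gamma$ are central; the extension by the identity on $J$ and $J'$, checked against \cref{pr:Jd} and \eqref{eq:TKK rules}; and centrality of the kernel from $\sum \lambda_{a,a'} D_{a,a'} = -\lambda_\zeta \id_J$ and $\sum \lambda_{a,a'} D_{a',a} = -\lambda_\zeta \id_{J'}$. Your explicit checks that the kernel commutes with $J$ and $J'$, and of the $\zeta$, $\xi$ and $[J',J]$ cases in the extension, fill in details the paper leaves implicit; the former is in fact automatic, since the kernel is an ideal contained in $Z$ while $[Z,J] \subseteq J$ and $[Z,J'] \subseteq J'$, and the extension is injective on $J$ and $J'$.
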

\begin{proof}
	If $\sum \lambda_{a,a'} \dd_{a,a'} + \lambda_\zeta \zeta + \lambda_\xi \xi = 0$, then also $\sum \lambda_{a,a'} \delta_{a,a'} + \lambda_\zeta \gamma = 0$ by \cref{le:lin comb}, so $\pi$ is well-defined. Further, since $\xi, \zeta$ are central in $Z$ and $\gamma$ is central in $\TKK_0(J,J')$, it follows from the comparison of \cref{eq:delta bracket,cor:Z is sublie} that $\pi$ is a homomorphism of Lie algebras. Clearly, $\pi$ is surjective. Since $L_0 = J \oplus Z \oplus J'$ and $\TKK(J,J') = J \oplus \TKK_0(J,J') \oplus J'$, we can extend $\pi$ to an epimorphism $L_0 \to \TKK(J,J')$ of $k$-modules in a natural way. This defines an epimorphism of Lie algebras because for all $a,b \in J$, $a',b' \in J'$, we have $[\dd_{a,a'}, b] = D_{a,a'}(b)$ and $[\dd_{a,a'},b'] = -D_{a',a}(b')$ in $L_0$ by \cref{pr:Jd} and $[\delta_{a,a'},b] = (\delta_{a,a'})_+(b) = D_{a,a'}(b)$ and $[\delta_{a,a'}, b'] = (\delta_{a,a'})_-(b') = -D_{a',a}(b')$ in $\TKK(J,J')$ by \cref{eq:TKK rules}.
	
	It remains to show that the kernel of $\pi$ is central. Let
	\[ f = \sum \lambda_{a,a'} \dd_{a,a'} + \lambda_\xi \xi + \lambda_\zeta \zeta \]
	denote an arbitrary element of the kernel and let $c \in J$, $c' \in J'$ be arbitrary. Then
	\begin{align*}
		[f, \dd_{c,c'}] &= \sqbrackets*{\sum \lambda_{a,a'} \dd_{a,a'}, \dd_{c,c'}} = \sum \lambda_{a,a'} \brackets[\big]{\dd(D_{a,a'}(c), c') - \dd(c, D_{a',a}(c'))} \\
		&= \dd\brackets[\big]{\sum \lambda_{a,a'} D_{a,a'}(c), c'} - \dd\brackets[\big]{c, \sum \lambda_{a,a'} D_{a',a}(c')}.
	\end{align*}
	Since $f$ lies in the kernel of $\pi$, we have $\sum \lambda_{a,a'} \delta_{a,a'} + \lambda_\zeta \gamma = 0$ or, in other words,
	\[ \sum \lambda_{a,a'} D_{a,a'} = -\lambda_\zeta \id_J \quad \text{and} \quad \sum \lambda_{a,a'} D_{a',a} = -\lambda_\zeta \id_{J'}. \]
	Hence
	\begin{align*}
		[f, \dd_{c,c'}] = \dd(-\lambda_\zeta c, c') - \dd(c, -\lambda_\zeta c') = 0.
	\end{align*}
	Further, $[f,\xi] = 0 = [f, \zeta]$. Thus the kernel of $\pi$ is indeed central.
\end{proof}

We are now ready to make the $k$-module
\begin{equation}\label{eq:def L}
	L := L_{-2} \oplus L_{-1} \oplus L_0 \oplus L_1 \oplus L_2
\end{equation}
into a Lie algebra.
It remains to describe the Lie bracket between elements of $L_i$ and $L_j$ for $i,j \neq 0$, followed by a somewhat lengthy verification that this does indeed define a Lie algebra.
Before we proceed, we also fix the second grading on $L$ that was long visible on \cref{fig:grading}.
\begin{definition}\label{def:second grading}
	For each $j \in \Z$, we define
	\[ L'_j := \bigoplus_i L_{i,j} , \]
	where the $L_{i,j}$ are as in \cref{def:L0 bar,def:dd and L0}, with the convention that $L_{i,j} = 0$ if it has not been defined.
	In particular, $L$ now admits two decompositions
	\begin{align*}
		L &= L_{-2} \oplus L_{-1} \oplus L_0 \oplus L_1 \oplus L_2 \quad \text{and} \\
		L &= L'_{-2} \oplus L'_{-1} \oplus L'_0 \oplus L'_1 \oplus L'_2 .
	\end{align*}
	(Observe that both $L_i = 0$ and $L'_i = 0$ whenever $|i| > 2$.)
	
	Notice that $[\xi, l] = il$ for all $l \in L_i$ and $[\zeta, l] = jl$ for all $l \in L'_j$.
	However, we cannot use these identities to \emph{define} $L_i$ and $L'_j$ in general because our base ring $k$ is arbitrary (so e.g. we might have $2 = 0$ in $k$).
\end{definition}

\begin{definition}\label{def:L bracket}
	Let
	\begin{align*}
		[x, (\lambda, b, b', \mu)_+] &:= -(\lambda, b, b', \mu)_-\,, \\
		[y, (\lambda, b, b', \mu)_-] &:= (\lambda, b, b', \mu)_+\,, \\
		[x, y] &:= \xi , \\
		\bigl[ (\lambda, b, b', \mu)_-, \ (\nu, c, c', \rho)_- \bigr] &:= \bigl( T(b, c') - T(c, b') + \mu \nu - \lambda \rho \bigr) x, \\
		\bigl[ (\lambda, b, b', \mu)_+, \ (\nu, c, c', \rho)_+ \bigr] &:= \bigl( T(b, c') - T(c, b') + \mu \nu - \lambda \rho \bigr) y,
	\end{align*}
	for all $\lambda,\mu,\nu,\rho \in k$, $a,b \in J$ and $a',b' \in J'$.
	Of course, we define $[L_i, L_j] := 0$ whenever $|i+j| > 2$.
	It only remains to define $[L_{-1}, L_1]$, which is the most complicated case:
	\begin{multline}\label{eq:L-1 L1}
		\bigl[ (\lambda, b, b', \mu)_-, \ (\nu, c, c', \rho)_+ \bigr] \\*
		\shoveleft \quad := \bigl( - \lambda c' + b \times c - \nu b' \bigr) \\
		+ \bigl( (\lambda \rho - T(b, c')) \cdot \zeta + (T(c, b') - \mu \nu) \cdot (\xi - \zeta) + \dd_{b, c'} + \dd_{c, b'}  \bigr) \\
		+ \bigl( - \rho b + b' \times c' - \mu c \bigr) .
	\end{multline}
\end{definition}

We first observe that all these definitions respect both gradings.
\begin{lemma}\label{le:gradings are preserved}
	Let $l \in L_i \cap L'_{i'}$ and $m \in L_j \cap L'_{j'}$.
	Then $[l,m] \in L_{i+j} \cap L'_{i' + j'}$.
\end{lemma}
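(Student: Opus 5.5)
By bilinearity of the bracket, it suffices to take $l$ and $m$ homogeneous in the finer sense, that is $l \in L_{i,i'}$ and $m \in L_{j,j'}$, with each of them either one of the basic generators or one of the coordinate pieces from \cref{def:L0 bar,def:dd and L0}. We then go through the defining rules of the bracket one at a time and compare bidegrees. Throughout we use the convention that $L_{s,t} = 0$ whenever no module with index $(s,t)$ has been defined. With this convention, any rule whose output lies in $L_{i+j}$ but has the wrong $\zeta$-degree would have to vanish, and we check that this does not happen.

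First consider brackets involving $L_0 = J' \oplus Z \oplus J \le M$, acting on $L_p$ for $p \ne 0$. The $\xi$-degree is automatically preserved, since every element of $M$ maps $L_p$ to $L_p$. For the $\zeta$-degree, \cref{def:Li as J-module}\cref{item:ad} already records that $\ad_a$ has $\zeta$-degree $1$ and $\ad_{a'}$ has $\zeta$-degree $-1$. The elements $\zeta$ and $\xi$ have $\zeta$-degree $0$, and each $\dd_{a,a'}$ does too, by the explicit formula in \cref{pr:delta z}, which acts diagonally on the coordinates $(\lambda,b,b',\mu)$. Brackets inside $L_0$ are handled by \cref{pr:Jd}, \cref{cor:Z is sublie}, \cref{rem:xi zeta formulas} and the definition $\dd_{a,a'} = [\ad_{a'},\ad_a]$. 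Together these show that $[L_{0,s}, L_{0,t}] \subseteq L_{0,s+t}$; in particular $[J,J] = 0 = [J',J']$ by \cref{pr:J-module}.

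Next come the brackets among $L_{\pm1}$ and $L_{\pm2}$ from \cref{def:L bracket}. The elements $x \in L_{-2,-1}$ and $y \in L_{2,1}$ act by a shift $(\cdot)_+ \leftrightarrow (\cdot)_-$ that keeps the coordinates. Comparing the $\zeta$-degree lists $(-1,0,1,2)$ for $L_1$ and $(-2,-1,0,1)$ for $L_{-1}$, this shift changes the $\zeta$-degree by exactly $\mp1$, as required. We also have $[x,y] = \xi \in L_{0,0}$. For $[L_{-1},L_{-1}] \subseteq L_{-2,-1}$, only pairs of coordinates whose $\zeta$-degrees sum to $-1$ may contribute. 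These are $(b,c')$ with degrees $(-1,0)$ and $(\mu,\nu)$ with degrees $(1,-2)$, and these are exactly the terms appearing in the formula. The case $[L_1,L_1] \subseteq L_{2,1}$ is symmetric.

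The main bookkeeping step is \cref{eq:L-1 L1}, which we check term by term. In $L_{-1}$ the coordinates $\lambda, b, b', \mu$ have $\zeta$-degrees $-2,-1,0,1$, and in $L_1$ the coordinates $\nu, c, c', \rho$ have $\zeta$-degrees $-1,0,1,2$. The $J'$-component of the output, in degree $-1$, is built from $\lambda c'$, $b\times c$ and $\nu b'$, whose degree sums are $-2+1$, $-1+0$ and $-1+0$. The $Z$-component, in degree $0$, is built from $\lambda\rho$, $(b,c')$, $(c,b')$ and $\mu\nu$, all with degree sum $0$. The $J$-component, in degree $1$, is built from $\rho b$, $b'\times c'$ and $\mu c$, with degree sums $2-1$, $0+1$ and $1+0$. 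Every term therefore lands in $L_{0,i'+j'}$. Finally, brackets with $|i+j|>2$ are zero by definition, so they trivially satisfy the claim. This completes the case analysis, and no step presents a real difficulty beyond careful bookkeeping.
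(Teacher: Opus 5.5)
Your proposal is correct and follows the same route as the paper. The paper simply asserts that the lemma is a straightforward case-by-case verification from \cref{def:Li as J-module,def:dd and L0,def:L bracket}, and your bidegree bookkeeping is exactly that verification written out: the $\zeta$-degrees of the coordinates of $L_{\pm1}$, the $\zeta$-degree $\pm1$ actions of $J$ and $J'$, $\zeta$-degree $0$ for $Z$, and the term-by-term check of \cref{eq:L-1 L1}.
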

\begin{proof}
	This is a straightforward verification, going through the different cases, using \cref{def:Li as J-module,def:dd and L0,def:L bracket}.
\end{proof}

We have summarized the complete definition of the Lie bracket on $L = L_{-2} \oplus L_{-1} \oplus L_0 \oplus L_1 \oplus L_2$ in \cref{ta:Lie}.

\begin{sidewaystable}
\vspace{70ex}
\resizebox{\textwidth}{!}{
	
\renewcommand{\arraystretch}{2.5}

$
\begin{array}{c||c|c|ccccc|c|c|}
	[ \,\downarrow\,, \rightarrow\,] & x & (\nu, c, c', \rho)_- & c' & \dd_{e,e'} & \xi & \zeta & c & (\nu, c, c', \rho)_+ & y \\
	\hline \hline
	x & 0 & 0 & 0 & 0 & 2x & x & 0 & -(\nu, c, c', \rho)_- & \xi \\
	\hline
	(\lambda, b, b', \mu)_- & 0 & \bigl( T(b, c') - T(c, b') + \mu \nu - \lambda \rho \bigr) x & \bigl( T(b, c'), \ c' \times b', \ \mu c', \ 0 \bigr)_- &
		\makecell*[l]{\bigl( \lambda T(e, e'), \\[.3ex] \quad - D_{e,e'}(b) + T(e, e')b, \\[.3ex] \quad  D_{e',e}(b') - T(e, e')b', \\[.3ex] \hspace*{12ex} -\mu T(e, e') \bigr)_-}
		& (\lambda, b, b', \mu)_- & (2\lambda, b, 0, -\mu)_- & -\bigl( 0, \ \lambda c, \ c \times b, \ T(c, b') \bigr)_- &
		\makecell*[l]{\ \bigl( - \lambda c' + b \times c - \nu b' \bigr) \\[.8ex]
		\quad {} + (\lambda \rho - T(b, c')) \cdot \zeta \\
		\quad {} + (T(c, b') - \mu \nu) \cdot (\xi - \zeta) \\
		\quad {} + \dd_{b, c'} + \dd_{c, b'} \\[.6ex]
		\hspace*{9ex} {} + \bigl( - \rho b + b' \times c' - \mu c \bigr) }
		& -(\lambda, b, b', \mu)_+  \\
	\hline
	b' & 0 & - \bigl( T(c, b'), \ b' \times c', \ \rho b', \ 0 \bigr)_- & 0 & D_{e',e}(b') & 0 & b' & \dd_{c, b'} & - \bigl( T(c, b'), \ b' \times c', \ \rho b', \ 0 \bigr)_+ & 0\\
	\dd_{a,a'} & 0 &
			\makecell*[l]{\bigl( - \nu T(a, a'), \\[.3ex] \quad D_{a,a'}(c) - T(a, a')c, \\[.3ex] \quad - D_{a',a}(c') + T(a, a')c', \\[.3ex] \hspace*{15ex} \rho T(a, a') \bigr)_-}
		& -D_{a', a}(c') & \dd_{D_{a,a'}(e), e'} - \dd_{e, D_{a',a}(e')} & 0 & 0 & D_{a,a'}(c) &
			\makecell*[l]{\bigl( - \nu T(a, a'), \\[.3ex] \quad D_{a,a'}(c) - T(a, a')c, \\[.3ex] \quad - D_{a',a}(c') + T(a, a')c', \\[.3ex] \hspace*{15ex} \rho T(a, a') \bigr)_+}
		& 0 \\
	\xi & -2x & -(\nu, c, c', \rho)_- & 0 & 0 & 0 & 0 & 0 & (\nu, c, c', \rho)_+ & 2y \\
	\zeta & -x & (-2 \nu, - c, 0, \rho)_- & -c' & 0 & 0 & 0 & c & (-\nu, 0, c', 2 \rho)_+ & y \\
	b & 0 & \bigl( 0, \ \nu b, \ b \times c, \ T(b, c') \bigr)_- & - \dd_{b, c'} & -D_{e,e'}(b) & 0 & -b & 0 & \bigl( 0, \ \nu b, \ b \times c, \ T(b, c') \bigr)_+ & 0\\
	\hline
	(\lambda, b, b', \mu)_+ & (\lambda, b, b', \mu)_- &
		\makecell*[l]{\ \bigl( \lambda c' - b \times c + \nu b' \bigr) \\[.8ex]
		\quad {} + (T(c, b') - \mu \nu) \cdot \zeta \\
		\quad {} + (\lambda \rho - T(b, c')) \cdot (\xi - \zeta) \\
		\quad {} - \dd_{b, c'} - \dd_{c, b'} \\[.6ex]
		\hspace*{9ex} {}+ \bigl( \rho b - b' \times c' + \mu c \bigr) }
		& \bigl( T(b, c'), \ c' \times b', \ \mu c', \ 0 \bigr)_+ &
		\makecell*[l]{\bigl( \lambda T(e, e'), \\[.3ex] \quad - D_{e,e'}(b) + T(e, e')b, \\[.3ex] \quad D_{e',e}(b') - T(e, e')b', \\[.3ex] \hspace*{12ex} -\mu T(e, e') \bigr)_+}

		& -(\lambda, b, b', \mu)_+ & (\lambda, 0, - b', -2 \mu)_+ & -\bigl( 0, \ \lambda c, \ c \times b, \ T(c, b') \bigr)_+ & \bigl( T(b, c') - T(c, b') + \mu \nu - \lambda \rho \bigr) y & 0 \\
	\hline
	y & -\xi & (\nu, c, c', \rho)_+ & 0 & 0 & -2y & -y & 0 & 0 & 0 \\
	\hline
\end{array}
$

}
\bigskip
\caption{The Lie bracket on $L = L_{-2} \oplus L_{-1} \oplus L_0 \oplus L_1 \oplus L_2$}\label{ta:Lie}
\end{sidewaystable}

\subsection{Verification of the Jacobi identity}

To verify the Jacobi identity on $L$, we use a little trick to simplify some computations.
The following auxiliary definition is a useful shorthand.
\begin{definition}
	Let $a,b,c \in L$, where $L$ is a $k$-module equipped with a $k$-bilinear skew-symmetric operation $[\cdot, \cdot]$.
	We write $\Jac(a,b,c)$ if the Jacobi identity holds for the triple $(a,b,c)$, i.e.,
	\[ [[a,b], c] + [[b,c], a] + [[c,a], b] = 0 . \]
	Notice that by the skew-symmetry, $\Jac(a,b,c)$ implies $\Jac(x,y,z)$ for any permutation $(x,y,z)$ of $(a,b,c)$.
	Similarly, if $A$, $B$ and $C$ are subsets (usually subspaces) of $L$, we write $\Jac(A,B,C)$ if $\Jac(a,b,c)$ for all $a \in A$, $b \in B$ and $c \in C$. We also write, for example, $\Jac(a,B,C)$ as a shorthand for $\Jac(\{ a \}, B, C)$ when $a \in L$ and $B$ and $C$ are subsets of $L$.
\end{definition}
\begin{lemma}\label{le:Jacobi trick}
	Let $L$ be a $k$-algebra with a $\Z$-grading $L = \oplus_{i \in \Z} L_i$, denote the multiplication in $L$ by $[ \cdot, \cdot ]$
	and assume that it is skew-symmetric.
	Fix $i,j \in \Z$ and $l,m \in L_0$.
	Assume that $\Jac(l, L_i, L_j)$, $\Jac(m, L_i, L_j)$ and $\Jac(l, m, L_i \cup L_j \cup L_{i+j})$.
	
	Then also $\Jac([l,m], L_i, L_j)$.
\end{lemma}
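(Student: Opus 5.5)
The idea is to encode the Jacobi identity as the statement that $\ad$ acts as a derivation, and then use the classical fact that the commutator of two derivations is again a derivation. To avoid assuming the full Jacobi identity, we only use it on the specified graded pieces. For $u \in L$ and $p,q \in L$, put
\[ \Delta_u(p,q) \coloneq [u,[p,q]] - [[u,p],q] - [p,[u,q]]. \]
Using skew-symmetry, $\Jac(u,p,q)$ holds if and only if $\Delta_u(p,q) = 0$. So the hypotheses say that $\Delta_l$ and $\Delta_m$ vanish on $L_i \times L_j$. The hypothesis $\Jac(l,m,L_i \cup L_j \cup L_{i+j})$ says that
\[ [[l,m],z] = [l,[m,z]] - [m,[l,z]] \]
for all $z \in L_i \cup L_j \cup L_{i+j}$.

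Now fix $p \in L_i$ and $q \in L_j$. Since $l,m \in L_0$, we have $[p,q] \in L_{i+j}$, $[m,p] \in L_i$, $[m,q] \in L_j$, and similarly for $l$. Apply the third hypothesis with $z = [p,q]$ to get
\[ [[l,m],[p,q]] = [l,[m,[p,q]]] - [m,[l,[p,q]]]. \]
Next expand $[m,[p,q]] = [[m,p],q] + [p,[m,q]]$ using $\Delta_m(p,q) = 0$. Then apply $\Delta_l = 0$ to the pairs $([m,p],q)$ and $(p,[m,q])$; both pairs lie in $L_i \times L_j$, which is exactly where the grading is used. The same is done symmetrically with $l$ and $m$ interchanged. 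This writes $[[l,m],[p,q]]$ as a sum of eight terms:
\begin{align*}
	&[[l,[m,p]],q] + [[m,p],[l,q]] + [[l,p],[m,q]] + [p,[l,[m,q]]] \\
	&\quad - [[m,[l,p]],q] - [[l,p],[m,q]] - [[m,p],[l,q]] - [p,[m,[l,q]]].
\end{align*}

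The four mixed terms cancel in pairs. What remains is
\[ [[l,[m,p]] - [m,[l,p]],\, q] + [p,\, [l,[m,q]] - [m,[l,q]]]. \]
Applying the third hypothesis again, now with $z = p \in L_i$ and $z = q \in L_j$, this equals
\[ [[[l,m],p],q] + [p,[[l,m],q]]. \]
Hence $\Delta_{[l,m]}(p,q) = 0$, that is, $\Jac([l,m],p,q)$.

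The only real obstacle is sign bookkeeping under skew-symmetry. In particular one must check that $\Jac(u,p,q)$ is genuinely equivalent to $\Delta_u(p,q) = 0$; this follows by rewriting $[[q,u],p] = [p,[u,q]]$ and $[[p,q],u] = -[u,[p,q]]$. No further structure of $L$ is needed.
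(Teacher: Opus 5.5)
Your proof is correct and is essentially the paper's argument. The paper simply calls it a straightforward verification, using that $[l,z],[m,z]\in L_i$ whenever $z\in L_i$ (and likewise for $L_j$); that is exactly the step where you apply the hypotheses on $l$ and $m$ to the shifted pairs $([m,p],q)$ and $(p,[m,q])$, and your $\Delta_u$ bookkeeping, including the sign checks, just makes this verification explicit.
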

\begin{proof}
	This is a straighforward verification, using the fact that when $z \in L_i$, then also $[l,z], [m,z] \in L_i$, because of the grading.
\end{proof}

\begin{remark}\label{rem:grading}
	By \cref{def:second grading}, we have
	\[ [\xi, \ell] = i\ell \quad \text{and} \quad [\zeta, \ell] = j\ell \quad \text{whenever } \ell \in L_i \cap L'_j . \]
	As a consequence, the Jacobi identity will be automatically satisfied for triples involving $\xi$ or $\zeta$.
		More precisely, if $\ell \in L_{i} \cap L'_{j}$ and $\ell' \in L_{i'} \cap L'_{j'}$, then by \cref{le:gradings are preserved}, we have $[\ell, \ell'] \in L_{i + i'} \cap L'_{j + j'}$, so we have
		\begin{align*}
			[[\zeta, \ell], \ell'] + [[\ell, \ell'], \zeta] + [[\ell', \zeta], \ell]
			&= [j\ell, \ell'] - [\zeta, [\ell, \ell']] + [\ell, j'\ell'] \\
			&= j [\ell, \ell'] - (j + j') [\ell, \ell'] + j' [\ell, \ell'] = 0 ,
		\end{align*}
		and similarly for $\xi$.	
\end{remark}

\begin{proposition}\label{pr:L is Lie algebra}
	The algebra $L = L_{-2} \oplus L_{-1} \oplus L_0 \oplus L_1 \oplus L_2$ equipped with the Lie brackets from \cref{def:Li as J-module,def:dd and L0,def:L bracket} is a (5-graded) Lie algebra.
\end{proposition}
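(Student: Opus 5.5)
We have to check that the bracket is skew-symmetric and satisfies the Jacobi identity. Skew-symmetry holds by \cref{rem:skew-symmetric}, plus the observation that the formulas for $[L_{-1},L_{-1}]$ and $[L_1,L_1]$ in \cref{def:L bracket} are alternating. For the Jacobi identity, we split a triple of homogeneous elements according to their $\xi$-degrees $(i,j,l)$. We may assume $i+j+l\in\{-2,\dots,2\}$, since otherwise every term vanishes for degree reasons by \cref{le:gradings are preserved}. By the symmetry $(i,j,l)\mapsto(-i,-j,-l)$ the cases come in dual pairs, and we treat only one case from each pair.

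\textbf{Step 1: triples containing an element of $L_0$.}
\begin{itemize}
\item The case $\Jac(L_0,L_0,L_0)$ holds because $L_0$ is a Lie subalgebra of $M$ by \cref{pr:L0 is sublie}.
\item The case $\Jac(L_0,L_0,L_i)$ for $i\neq 0$ holds because each $L_i$ is an $L_0$-module (\cref{rem:Li is L0-module}); the bracket on $L_0$ is the commutator in $M$.
\item For $\Jac(L_0,L_i,L_j)$ with $i,j\neq0$, we use \cref{le:Jacobi trick}. Since $L_0$ is generated as a Lie algebra by $J$, $J'$, $\xi$ and $\zeta$, it suffices to check the identity for $l\in J\cup J'\cup\{\xi,\zeta\}$. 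The triple conditions $\Jac(l,m,L_r)$ are already covered by the previous bullet. By \cref{rem:grading}, the cases $l=\xi$ and $l=\zeta$ are automatic.
\end{itemize}
It therefore remains to verify $\Jac(a,L_i,L_j)$ for $a\in J$, together with the analogous statements for $a'\in J'$, with $i+j\in\{0,\pm1,\pm2\}$. The subcases are:
\begin{itemize}
\item $(i,j)=(-1,1)$, with result in $L_0$;
\item $(1,1)$ and $(-1,-1)$, with results in $L_{\pm2}$;
\item $(\pm2,\mp1)$ and $(\pm2,\mp2)$.
\end{itemize}
Each subcase is a direct comparison using \cref{ta:Lie}, \cref{pr:delta z} and \cref{pr:Jd}. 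The $L_{\pm2}$ subcases reduce to $T(a\times b,c)=T(b,a\times c)$, that is, \cref{le:CNP identities}\cref{le:CNP identities:7}.

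\textbf{Step 2: triples in $L_{\neq0}$.} The remaining $\xi$-degree patterns, up to sign and permutation, are
\[ (2,-2,\pm1),\quad (2,-1,-1),\quad (1,1,-1),\quad (2,-1,1),\quad (2,-2,0),\quad (2,1,-1). \]
Triples involving $x$ or $y$ should be quick. The bracket with $x$ or $y$ simply moves an element between $L_{-1}$ and $L_1$, and $[x,y]=\xi$ acts by the grading. So these cases reduce to comparing the $[L_{-1},L_{-1}]$ formula with the $L_0$-part of $[L_{-1},L_1]$, followed by an application of $\ad$ to $L_{\pm1}$. The substantive case is $\Jac(L_{-1},L_1,L_1)$, and dually $\Jac(L_{-1},L_{-1},L_1)$. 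Here we expand all three terms using \eqref{eq:L-1 L1}, \cref{pr:delta z} and \cref{def:Li as J-module}, which gives an element of $L_1=k\oplus J\oplus J'\oplus k$. By multilinearity we may take each of the three arguments to be a pure component: $(\lambda,0,0,0)$, $(0,b,0,0)$, $(0,0,b',0)$ or $(0,0,0,\mu)$. Then we check each output coordinate separately. Components that are forced to vanish by the $\zeta$-grading (\cref{le:gradings are preserved}) can be discarded, which cuts down the number of cases substantially.

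\textbf{Main obstacle.} The hardest subcases are those in which all three inputs lie in the $J$/$J'$ coordinates, for instance $(0,b,0,0)_-$, $(0,c,0,0)_+$, $(0,0,d',0)_+$. In these cases the $J$- and $J'$-coordinates of the result involve nested expressions such as $b\times(d'\times c)$ together with $D$-operators. The required cancellation should come from the defining identities of a linear cubic norm pair (\cref{def:linear CNP}), namely \cref{le:CNP identities}\cref{le:CNP identities:26,le:CNP identities:27}, combined with \eqref{eq:D is der}. The three-$J$ case $(0,b,0,0)_\pm$ should similarly use \cref{le:triple}\cref{le:triple:r}, which is where the specific combination $2\zeta-\xi$ in \eqref{eq:L-1 L1} is essential. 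I would organise these verifications in a table indexed by the $\zeta$-degrees of the inputs and check each entry against one of these identities.
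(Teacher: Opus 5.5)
Your overall architecture is the paper's: homogeneous triples, the symmetry $i\mapsto -i$, triples with two entries in $L_0$ handled via $L_0\le M$, \cref{le:Jacobi trick} and \cref{rem:grading} to reduce the remaining $L_0$-entry to $a\in J\cup J'$, then a case list. However, the one non-formal step sits in a different case from where you put it, so Step~1 as written does not close. Take $a\in J$, $m=(\lambda,b,b',\mu)_-$ and $\ell=(\nu,c,c',\rho)_+$; this is the case the paper singles out as the most interesting one. Consider $[[a,m],\ell]+[[\ell,a],m]+[[m,\ell],a]$. Its $J$- and $J'$-components cancel formally, and so do the terms of its $L_{0,0}$-component involving $\lambda$ or $\nu$. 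What remains of the $L_{0,0}$-component, after using $T(c,a\times b)=T(b,a\times c)=T(a,b\times c)$, is
\[ \dd_{a,b\times c}+\dd_{b,c\times a}+\dd_{c,a\times b}-T(a,b\times c)\,(2\zeta-\xi). \]
\cref{ta:Lie}, \cref{pr:delta z} and \cref{pr:Jd} alone do not make this vanish. It vanishes by \cref{le:triple}\cref{le:triple:r}, which rests on \cref{le:triple D}\cref{le:triple D:r,le:triple D:l}, i.e.\ on \cref{le:CNP identities}\cref{le:CNP identities:7,le:CNP identities:25,le:CNP identities:27}. This is exactly where the choice of $\zeta$- and $(\xi-\zeta)$-coefficients in \eqref{eq:L-1 L1} is tested, so \cref{le:triple}\cref{le:triple:r} has to be invoked here, in Step~1.

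Conversely, $\Jac(L_{-1},L_1,L_1)$, your ``main obstacle'', is long but purely formal. Put $\ell_r=(\nu_r,c_r,c_r',\rho_r)_+$ and write, for example, the $J$-coordinate of $[\dd_{b,c_1'},\ell_2]$ as $T(c_2,c_1')b-(b\times c_2)\times c_1'$. Then every term of $[[m,\ell_1],\ell_2]$ is matched by a term of $[[m,\ell_2],\ell_1]$ using only the symmetry of $\times$ and of the trilinear forms $T(u,v\times w)$ on $J$ and $J'$. The difference is
\[ \bigl(T(c_2,c_1')-T(c_1,c_2')+\nu_1\rho_2-\nu_2\rho_1\bigr)(\lambda,b,b',\mu)_+=[m,[\ell_1,\ell_2]], \]
with no use of \cref{le:CNP identities}\cref{le:CNP identities:26,le:CNP identities:27}, \eqref{eq:D is der} or \cref{le:triple}. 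In particular, your three-$J$ example $(0,b,0,0)_-,(0,c,0,0)_+,(0,d,0,0)_+$ reduces to $T(d,b\times c)=T(c,b\times d)$, because $[(0,b,0,0)_-,(0,c,0,0)_+]=b\times c$ has no $L_{0,0}$-part. Your Step~2 list also needs fixing: $(2,-2,0)$ belongs to Step~1, $(2,-1,1)$ and $(2,1,-1)$ are the same pattern, and $(2,2,-2)$ is missing (it is trivial since $L_2=ky$).
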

\begin{proof}
	We have to check the Jacobi identity for all triples of elements in $L$.
	By linearity, we may of course assume that each of these elements is homogeneous, i.e., belongs to some $L_i$.
	It is clear from our definitions that the proof for triples in $L_i$, $L_{i'}$, $L_{i''}$ can be translated \emph{mutatis mutandis} to a proof for triples in $L_{-i}$, $L_{-i'}$, $L_{-i''}$.
	In addition, the Jacobi identity is trivially satisfied if $|i + i' + i''| > 2$, and by \cref{rem:Li is L0-module}, we already know that it is satisfied if at least two of the three indices are $0$.
	Recall also that by \cref{rem:skew-symmetric}, the Lie bracket is skew-symmetric.
	That leaves us with the following nine cases for $(i, i', i'')$:
	\begin{align*}
		&&&(0, 1, 1), && (0, 1, -1), && (0, 1, -2), && (0, 2, -2), &&\\
		&&& (1, 1, -1), && (1, 1, -2), &&(1, 2, -1), && (1, 2, -2), \\
		&&&(2, 2, -2).
	\end{align*}
	Unavoidably, we will deal with these different cases one by one.
	In each case, we will assume that $\ell \in L_i$, $\ell' \in L_{i'}$ and $\ell'' \in L_{i''}$ and our goal is to show the Jacobi identity for the triple $(\ell, \ell', \ell'')$.
	
	Before we proceed with our case-by-case analysis, we point out that whenever we want to check the Jacobi identity for all triples $(\ell, \ell', \ell'')$ with $\ell \in L_0$, it suffices to consider $\ell \in J \cup J'$. Indeed, the case $\ell = \dd_{a,a'}$ then follows from \cref{le:Jacobi trick}, and the cases $\ell = \zeta$ and $\ell = \xi$ are automatically satisfied by \cref{rem:grading}.
		In addition, since the cases $\ell \in J$ and $\ell \in J'$ are completely similar in each case, we will always assume that $\ell = a \in J$ whenever $i=0$.
	
	\begin{description}\setlength{\itemsep}{1.5ex}
        \item[Case $(0,1,1)$]
        	Write $\ell = a$, $\ell' = (\lambda, b, b', \mu)_+$ and $\ell'' = (\nu, c, c', \rho)_+$.
        	Notice that $[\ell', \ell''] \in L_2$,
        	so $[a, [\ell', \ell'']] = 0$.
        	We have to show that $[[a, \ell'], \ell''] = [[a, \ell''], \ell']$.
        	We compute
        	\begin{align*}
        		[[a, \ell'], \ell'']
        		&= \bigl[ [a, (\lambda, b, b', \mu)_+], (\nu, c, c', \rho)_+ \bigr] \\
        		&= \bigl[ \bigl( 0, \ \lambda a, \ a \times b, \ T(a, b') \bigr)_+, (\nu, c, c', \rho)_+ \bigr] \\
        		&= \bigl( \lambda T(a, c') - T(c, a\times b) + \nu T(a, b') \bigr) \cdot y,
        	\end{align*}
        	and since $T(c, a\times b) = T(b, a \times c)$, this is indeed symmetric in $\ell'$ and $\ell''$.
        \item[Case $(0,1,-1)$]
        	This is the most interesting case.
        	Write $\ell = a$, $\ell' = (\nu, c, c', \rho)_+$ and $\ell'' = (\lambda, b, b', \mu)_-$.
        	We compute
        	\begin{align*}
        		&[[a, \ell''], \ell'] = \bigl[ [a, (\lambda, b, b', \mu)_-], (\nu, c, c', \rho)_+ \bigr] \\
        		&= \bigl[ \bigl( 0, \ \lambda a, \ a \times b, \ T(a, b') \bigr)_-, (\nu, c, c', \rho)_+ \bigr] \\
        		&= (\lambda a \times c - \nu a \times b) \\
	        		&\hspace*{6ex} + \bigl( - \lambda T(a, c') \cdot \zeta + (T(c, a \times b) - \nu T(a, b')) \cdot (\xi - \zeta) + \lambda \dd_{a,c'} + \dd_{c, a \times b} \bigr) \\
	        		&\hspace*{6ex} + \bigl( - \lambda \rho a + (a \times b) \times c' - T(a,b')c \bigr) ; \\[1.5ex]
        		&[[\ell', a], \ell''] = [\ell'', [a, \ell']] = \bigl[ (\lambda, b, b', \mu)_-, [a, (\nu, c, c', \rho)_+] \bigr] \\
        		&= \bigl[ (\lambda, b, b', \mu)_-, \bigl( 0, \ \nu a, \ a \times c, \ T(a, c') \bigr)_+ \bigr] \\
        		&= (-\lambda a \times c + \nu a \times b) \\
	        		&\hspace*{6ex} + \bigl( (\lambda T(a, c') - T(b, a \times c)) \cdot \zeta + \nu T(a, b') \cdot (\xi - \zeta) + \dd_{b,a \times c} + \nu \dd_{a, b'} \bigr) \\
	        		&\hspace*{6ex} + \bigl( - T(a, c') b + (a \times c) \times b' - \mu \nu a \bigr) ; \\[1ex]
	        	&[[\ell'', \ell'], a] = \Bigl[
	        		\bigl( - \lambda c' + b \times c - \nu b' \bigr) \\
					&\hspace*{16ex}+ \bigl( (\lambda \rho - T(b, c')) \cdot \zeta + (T(c, b') - \mu \nu) \cdot (\xi - \zeta) + \dd_{b, c'} + \dd_{c, b'}  \bigr) \\
					&\hspace*{16ex}+ \bigl( - \rho b + b' \times c' - \mu c \bigr), \ a
					\Bigr] \\
				&= (-\lambda \dd_{a, c'} + \dd_{a, b \times c} - \nu \dd_{a, b'}) \\
				&\hspace*{6ex} + \bigl( ( \lambda \rho + \mu \nu - T(b, c') - T(c, b')) a + D_{b, c'}(a) + D_{c, b'}(a)  \bigr) \\
				&= (-\lambda \dd_{a, c'} + \dd_{a, b \times c} - \nu \dd_{a, b'}) \\
				&\hspace*{6ex} + \bigl( (\lambda \rho + \mu \nu) a +  T(a, c')b + T(a, b')c - (a \times b) \times c' - (a \times c) \times b' \bigr) . \\[-2ex]
        	\end{align*}
        	The Jacobi identity $[[a, \ell''], \ell'] + [[\ell', a], \ell''] + [[\ell'', \ell'], a] = 0$ now follows from \cref{le:triple}.
        \item[Case $(0,1,-2)$]
        	Write $\ell = a$, $\ell' = (\lambda, b, b', \mu)_+$. Without loss of generality,  $\ell'' = x$.
        	Notice that $[a, x] = 0$, and we quickly verify that
        	\begin{align*}
				&[a, [x, (\lambda, b, b', \mu)_+]] - [x, [a, (\lambda, b, b', \mu)_+]] \\
				&= - [a, (\lambda, b, b', \mu)_-] - [x, \bigl( 0, \ \lambda a, \ a \times b, \ T(a, b') \bigr)_+] \\
				&= - \bigl( 0, \ \lambda a, \ a \times b, \ T(a, b') \bigr)_- + \bigl( 0, \ \lambda a, \ a \times b, \ T(a, b') \bigr)_- = 0.
        	\end{align*}
        \item[Case $(0,2,-2)$]
        	Write $\ell = a$. Without loss of generality, $\ell' = y$ and $\ell'' = x$.
        	We have $[a, x] = 0$ and $[a, y] = 0$. Moreover, $[[x,y], a] = [\xi, a] = 0$.
        \item[Case $(1,1,-1)$]
        	This is the lengthiest computation, but there are no surprises.
        	Write $\ell = (\nu_1, c_1, c_1', \rho_1)_+$, $\ell' = (\nu_2, c_2, c_2', \rho_2)_+$ and $\ell'' = (\lambda, b, b', \mu)_-$.
        	First, we have
        	\begin{align*}
        		[\ell'', [\ell, \ell']]
        		&= \bigl[ (\lambda, b, b', \mu)_-, \bigl( T(c_1, c_2') - T(c_2, c_1') + \rho_1 \nu_2 - \nu_1 \rho_2 \bigr) \cdot y \bigr] \\
        		&= \bigl( - T(c_1, c_2') + T(c_2, c_1') - \rho_1 \nu_2 + \nu_1 \rho_2 \bigr) \cdot (\lambda, b, b', \mu)_+ .
        	\end{align*}
        	To compute $[[\ell'', \ell], \ell']$, we use \cref{eq:L-1 L1,def:Li as J-module}, and after working out all the different terms, we get
        	\begin{align*}
        		&[[\ell'', \ell], \ell'] \\*
        		&= \Bigl(
        		\begin{multlined}[t][.94\textwidth]
	        		\lambda T(c_2, c_1') - \lambda T(c_2, b \times c_1) + \nu_1 T(c_2, b') - \lambda \rho_1 \nu_2 + \nu_2 T(c_1, b') - 2 \mu \nu_1 \nu_2, \\[1ex]
	        		\shoveleft{\lambda c_1' \times c_2' - (b \times c_1) \times c_2' + \nu_1 b' \times c_2' - \nu_2 \rho_1 b + \nu_2 b' \times c_1' - \nu_2 \mu c_1 + T(c_1, b') c_2} \\
	        		\shoveright{- \mu \nu_1 c_2 - c_1' \times (b \times c_2) + T(c_2, c_1')b - (c_1 \times c_2) \times b' + T(c_2, b') c_1,} \\[1ex]
	        		\shoveleft{\lambda \rho_2 c_1' - \rho_2 b \times c_1 + \nu_1 \rho_2 b' - \rho_1 b \times c_2 + (b' \times c_1') \times c_2 - \mu c_1 \times c_2 + \lambda \rho_1 c_2'} \\
	        		\shoveright{- T(b, c_1') c_2' - T(b, c_2') c_1' + b \times (c_1' \times c_2') - T(c_1, c_2') b' + c_1 \times (b' \times c_2'),} \\[0ex]
	        		-\rho_1 T(b, c_2') + T(b' \times c_1', c_2') - \mu T(c_1, c_2') + 2 \rho_1 \rho_2 - T(b, c_1') \rho_2 + \mu \nu_1 \rho_2
        		\Bigr)_+ \, .
        		\end{multlined}
        	\end{align*}
        	The expression for $[[\ell'', \ell'], \ell]$ is, of course, exactly the same but with all subscripts $1$ and $2$ interchanged.
        	It is now pleasant to observe that when we compute the difference $[[\ell'', \ell], \ell'] - [[\ell'', \ell'], \ell]$, many terms cancel out, and what is left is precisely the expression for $[\ell'', [\ell, \ell']]$ that we have computed above.
        \item[Case $(1,1,-2)$]
        	Write $\ell = (\lambda, b, b', \mu)_+$ and $\ell' = (\nu, c, c', \rho)_+$, and assume without loss of generality that $\ell'' = x$.
        	We have
        	\begin{align*}
        		[[x, \ell], \ell']
        		&= \bigl[ [x, (\lambda, b, b', \mu)_+], (\nu, c, c', \rho)_+ \bigr] \\
        		&= - \bigl[ (\lambda, b, b', \mu)_-, (\nu, c, c', \rho)_+ \bigr] ,
        	\end{align*}
        	so we deduce from \cref{eq:L-1 L1} that
        	\begin{align*}
        		&[[x, \ell], \ell'] - [[x, \ell'], \ell] \\
        		&= \bigl[ (\nu, c, c', \rho)_-, (\lambda, b, b', \mu)_+ \bigr] - \bigl[ (\lambda, b, b', \mu)_-, (\nu, c, c', \rho)_+ \bigr] \\
        		&= \bigl( T(b, c') - T(c, b') + \mu \nu - \lambda \rho \bigr) \cdot \xi .
        	\end{align*}
        	On the other hand,
        	\begin{align*}
        		[x, [\ell, \ell']]
        		&= \bigl[ x, \bigl( T(b, c') - T(c, b') + \mu \nu - \lambda \rho \bigr) y \bigr] \\
        		&= \bigl( T(b, c') - T(c, b') + \mu \nu - \lambda \rho \bigr) \cdot [x,y] ,
        	\end{align*}
        	and we conclude that indeed $[x, [\ell, \ell']] = [[x, \ell], \ell'] - [[x, \ell'], \ell]$.
        \item[Case $(1,2,-1)$]
        	Write $\ell = (\nu, c, c', \rho)_+$ and $\ell'' = (\lambda, b, b', \mu)_-$, and assume without loss of generality that $\ell' = y$.
        	Notice that $[\ell, y] = 0$.
        	Since $[y, J] = [y, J'] = [y, [J', J]] = 0$, we have
        	\begin{align*}
        		[y, [\ell'', \ell]]
        		&= \bigl[ y, (\lambda \rho - T(b, c')) \cdot \zeta + (T(c, b') - \mu \nu) \cdot (\xi - \zeta) \bigr] \\
        		&= \bigl( - \lambda \rho + T(b, c') - T(c, b') + \mu \nu \bigr) \cdot y .
        	\end{align*}
        	On the other hand, we also have
        	\[
        		[[y, \ell''], \ell] = \bigl[ (\lambda, b, b', \mu)_+, (\nu, c, c', \rho)_+ \bigr] = \bigl( T(b, c') - T(c, b') + \mu \nu - \lambda \rho \bigr) y .
        	\]
        \item[Case $(1,2,-2)$]
        	Write $\ell = (\lambda, b, b', \mu)_+$ and assume without loss of generality that $\ell' = y$ and $\ell'' = x$.
        	Notice that $[\ell, y] = 0$ and that $[x, y] = \xi$. The result now follows from the facts that $[\xi, \ell] = \ell$ and $[y, [x, \ell]] = - \ell$.
        \item[Case $(2,2,-2)$]
        	Without loss of generality, $\ell = \ell' = y$ and $\ell'' = x$, so the Jacobi identity is trivially satisfied.
        \qedhere
	\end{description}
\end{proof}

%

\begin{proposition}\label{pr:G2-graded}
	The decomposition
	\[ L = \bigoplus_{\alpha \in G_2^0} L_\alpha \]
	from \cref{rem:G2 grid} is a $G_2$-grading of the Lie algebra $ L $.
\end{proposition}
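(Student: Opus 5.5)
This is essentially a bookkeeping consequence of the results already established. By \cref{pr:L is Lie algebra}, $L$ is a Lie algebra. Throughout, $L_{i,j}$ denotes the $k$-submodules from \cref{def:L0 bar,def:dd and L0}, with $L_{i,j} \coloneq 0$ whenever it has not been defined. We must check two things. First, the family $(L_{\omega^{-1}(i,j)})$, indexed via the embedding $\omega$ of \cref{rem:G2 grid}, is a direct sum decomposition of $L$. Second, it satisfies $[L_\alpha, L_\beta] \subseteq L_{\alpha+\beta}$ for all $\alpha,\beta$ in the root lattice, with $L_\alpha \coloneq 0$ outside $G_2^0$.

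For the direct sum, I would argue as follows. $L_{\pm 1}$ and $L_{\pm 2}$ decompose directly by their coordinate description. $L_0 = J' \oplus Z \oplus J$ is direct by the remark following \cref{def:dd and L0}, which uses the $\zeta$-degrees $-1, 0, 1$ of elements of $M$. Since $\omega$ is an injective group homomorphism $\Z G_2 \to \Z^2$, the identification of indices is a bijection of $G_2^0$ onto the index set of the nonzero pieces. Hence $L = \bigoplus_{\alpha \in G_2^0} L_\alpha$.

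For the grading condition, let $\alpha, \beta \in \Z G_2$. By bilinearity, it suffices to take homogeneous $l \in L_{i,i'}$ and $m \in L_{j,j'}$, where $(i,i') = \omega(\alpha)$ and $(j,j') = \omega(\beta)$. \Cref{le:gradings are preserved} gives $[l,m] \in L_{i+j} \cap L'_{i'+j'}$. We need the identity $L_{p} \cap L'_{q} = L_{p,q}$. This holds because both gradings are refinements of the same direct sum decomposition into the pieces $L_{s,t}$, so the intersection is exactly the common summand $L_{p,q}$, which is zero when undefined. Since $\omega$ is additive, $\omega(\alpha+\beta) = (i+j, i'+j')$. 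Therefore $[L_\alpha, L_\beta] \subseteq L_{\alpha+\beta}$. This includes the case $\alpha+\beta \notin G_2^0$: there $(i+j,i'+j')$ is not in $\omega(G_2^0)$, by injectivity and the description in \cref{rem:G2 grid}, so the bracket lands in $0$.

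The only real content is \cref{le:gradings are preserved}, whose case check runs through \cref{ta:Lie}. If that lemma were not available, this would be the main obstacle. The check then means verifying, entry by entry in the table, that each bracket shifts the $\xi$-degree and the $\zeta$-degree additively. For the $L_{-1} \times L_1$ entry \eqref{eq:L-1 L1}, one reads off that $-\lambda c'$ has $\zeta$-degree $-2+1=-1$, that $\dd_{b,c'}$ has degree $0$, and so on. The remaining subtle point is the nondirectness of the sum defining $Z$. This does not affect gradedness, since all of $Z$ sits in degree $(0,0)$.
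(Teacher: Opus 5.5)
Your argument is correct and is essentially the paper's: the paper's proof just reads $[L_\alpha, L_\beta] \le L_{\alpha+\beta}$ off \cref{ta:Lie}, which is exactly what \cref{le:gradings are preserved} packages, combined with $L_{i,j} = L_i \cap L'_j$ (already noted in \cref{rem:all gradings}), and it treats the direct-sum decomposition as built into the construction. One small wording point: the pieces $L_{s,t}$ \emph{refine} both $5$-gradings (the $5$-gradings are coarsenings of that decomposition), not the other way round.
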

\begin{proof}
	We can read off from \cref{ta:Lie} that $ [L_\alpha, L_\beta] \le L_{\alpha+\beta} $ for all $ \alpha, \beta \in G_2^0 $.
\end{proof}

\begin{remark}
	In the case that $ J = 0 = J' $, there are only six non-trivial root spaces in $ L $, and they constitute an $ A_2 $-grading of $ L $.
\end{remark}

\begin{theorem}\label{thm:G2 Lie}
	Let $(J,J')$ be a cubic norm pair over a commutative ring $k$. Then there exists a $G_2$-graded Lie algebra $L$ such that $L_\alpha \cong (k,+)$ for all long roots $\alpha \in G_2$, $L_\beta \cong (J,+)$ for three short roots $\beta \in G_2$ which lie in the same of the three orbits of $G_2$ in \cref{rem:G2 orbits} and $L_\gamma \cong (J',+)$ for the remaining three short roots $\gamma$.
\end{theorem}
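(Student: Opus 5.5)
The plan is to take $L = L(J,J')$ to be the Lie algebra of \eqref{eq:def L}, with the bracket of \cref{def:Li as J-module,def:dd and L0,def:L bracket}. The proof then assembles results already established, plus one combinatorial identification of root spaces. By \cref{pr:L is Lie algebra}, $L$ is a Lie algebra. By \cref{pr:G2-graded}, the decomposition $L = \bigoplus_{\alpha \in G_2^0} L_\alpha$ of \cref{rem:G2 grid} is a $G_2$-grading, where $L_\alpha \coloneq L_{\omega(\alpha)}$ via the homomorphism $\omega \colon \Z G_2 \to \Z^2$ with $\gamma \mapsto (-2,-1)$ and $\delta \mapsto (1,0)$. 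It remains to identify the twelve nonzero root spaces $L_{i,j}$ with $(i,j) \ne (0,0)$.

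First, I would list $\omega(G_2)$ explicitly. The long roots are $\pm\gamma$, $\pm(\gamma+3\delta)$ and $\pm(2\gamma+3\delta)$, which map to $\pm(-2,-1)$, $\pm(1,-1)$ and $\pm(-1,-2)$. The short roots are $\pm\delta$, $\pm(\gamma+\delta)$ and $\pm(\gamma+2\delta)$, which map to $\pm(1,0)$, $\pm(-1,-1)$ and $\pm(0,-1)$. Comparing with \cref{def:L0 bar,def:dd and L0}, the long roots give $L_{-2,-1}=kx$, $L_{2,1}=ky$, $L_{1,-1}$, $L_{-1,1}$, $L_{-1,-2}$ and $L_{1,2}$. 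Each of these is a copy of $k$: spanned by $x$ or $y$, or consisting of the tuples $(\lambda,0,0,0)_\pm$ or $(0,0,0,\mu)_\pm$, and these are free of rank one by construction. The short roots give $L_{1,0}$, $L_{-1,-1}$ and $L_{0,1}=J$, each isomorphic to $J$, and $L_{-1,0}$, $L_{1,1}$ and $L_{0,-1}=J'$, each isomorphic to $J'$.

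For the root spaces $L_{0,\pm1}$, which sit inside $M$, one needs that $a \mapsto \ad_a$ and $a' \mapsto \ad_{a'}$ are injective. This holds because $[a,(1,0,0,0)_+] = (0,a,0,0)_+$ and $[a',(0,0,0,1)_+] = -(0,0,a',0)_+$, and it was already noted in \cref{def:Li as J-module}\cref{item:ad}.

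Finally, I would check the orbit statement of \cref{rem:G2 orbits}. The $J$-roots correspond to $(1,0)$, $(-1,-1)$ and $(0,1)$, which are $\delta$, $\gamma+\delta$ and $-(\gamma+2\delta)$. These sum to zero, and it remains to see that they form one orbit under the subgroup $W$ generated by the long reflections. The long roots form an $A_2$ subsystem, so $W$ is its Weyl group $S_3$. Each $W$-orbit of short roots then has size $3$ and consists of three short roots summing to zero, and the $J'$-roots are the negatives of the $J$-roots. One verifies this directly by computing, say, $\sigma_\gamma(\delta) = \delta - \langle \delta,\gamma^\vee\rangle\gamma = \gamma+\delta$, using $\langle\delta,\gamma^\vee\rangle = -1$, and similarly for the other elements.

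None of these steps is a real obstacle, since the heavy lifting (the Jacobi identity) is already done in \cref{pr:L is Lie algebra}. The only point needing care is this last orbit computation together with the sign convention for $\omega$.
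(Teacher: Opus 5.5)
Your proposal is correct and follows the same route as the paper: take $L = L(J,J')$, cite \cref{pr:L is Lie algebra,pr:G2-graded} for the Lie algebra structure and the $G_2$-grading, and read off the twelve nonzero root spaces. Your explicit listing of $\omega(G_2)$, the injectivity of $a \mapsto \ad_a$ and $a' \mapsto \ad_{a'}$, and the orbit computation (including $\langle \delta, \gamma^\vee \rangle = -1$) simply spell out what the paper leaves to \cref{rem:G2 grid,rem:G2 orbits}.
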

\begin{proof}
	As a module, $L$ was constructed in \cref{def:L0 bar,def:dd and L0,eq:def L}, and its root spaces have the desired isomorphism type. The Lie bracket was constructed in \cref{def:L0 bar,def:Li as J-module,def:L bracket} and is summarized in \cref{ta:Lie}. We showed in \cref{pr:L is Lie algebra,pr:G2-graded} that $L$ is a $G_2$-graded Lie algebra.
\end{proof}

\begin{remark}
	In \cite{Faulkner1971}, John Faulkner constructs Lie algebras from ``ternary algebras'' over arbitrary commutative base rings.
	Cubic norm structures provide examples of those, and the resulting Lie algebras are probably isomorphic to the Lie algebras that we have constructed (but we have not explicitly carried out this verification).
	
	In any case, we need our more ``refined'' description to obtain the $G_2$-grading (and we want to refine it even further to an $F_4$-grading later).
	In fact, our approach is more direct for cubic norm structures, because Faulkner uses the triple product---see \cite[(1.6) on p.\@~399]{Faulkner1971}---which in itself is already quite complicated. On the other hand, we recognize his bilinear form (1.5) directly in our formulas in \cref{def:L bracket}.
\end{remark}

\section{Functoriality and reflections}\label{sec:reflections}

In the previous section, we have constructed a Lie algebra $ L=L(J,J') $ for every cubic norm pair $ (J,J') $ (in fact, for every linear cubic norm pair). The current section is devoted to the construction of several interesting Lie algebra homomorphisms that are defined on $ L(J,J') $. We begin by establishing that $ L(J,J') $ is functorial in $ (J,J') $: For any surjective homomorphism $ (\iota, \iotainv) \colon (J,J') \to (J_2, J_2') $ of cubic norm pairs, we construct a surjective homomorphism $ L(\iota, \iotainv) \colon L(J,J') \to L(J_2, J_2') $ of Lie algebras that is bijective if and only if $ (\iota, \iotainv) $ is (\cref{le:reflections auto}). Here the surjectivity of $ (\iota, \iotainv) $ is a subtle requirement which ensures that $ L(\iota, \iotainv) $ is well-defined (see \cref{rem:refl wd}), and which is also present in the construction of (the functorial homomorphisms defined on) the Tits-Kantor-Koecher algebra of a Jordan pair (see \cite[\S 7.1]{LoosNeher}). We can even phrase our construction in a slightly more general way: Every surjective $ t $-homotopy $ (\iota, \iotainv) $ between cubic norm pairs induces a surjective homomorphism $ L(\iota, \iotainv, t) $ of Lie algebras. A corollary of this observation is that the isomorphism type of $ L(J,J') $ depends only on the isotopy type of $ (J,J') $. Another application of the functoriality of $ L(J,J') $ will be given in \cref{sec:simple} (in particular, \cref{rem:ideal quot}), where we investigate quotients of $ L(J,J') $ that are induced by \enquote{ideals of $ (k,J,J') $}.

We also define automorphisms $ \phihor $, $ \phiver $, $ \phisl $ of $ L $ that we call \enquote{reflections} because they permute the root spaces of $ L(J,J') $ in a way that is prescribed by a reflection of the root system $ G_2 $. Each of these automorphisms corresponds to one of the $ 5 $-gradings of $ L(J,J') $ described in \cref{rem:all gradings} in the sense that it reverses the grading: It maps the $ i $-graded part to the $ (-i) $-graded part. We can also define a fourth reflection $ \phibs $ which reverses the $ 7 $-grading of $ L(J,J') $, but only under the assumption that we are given an isomorphism (or, more generally, a $ t $-isotopy) $ (\iota, \iotainv) \colon (J,J') \to (J',J) $. This assumption is certainly necessary because a reflection that reverses the $ 7 $-grading necessarily maps root spaces parametrized by $ J $ isomorphically to root spaces parametrized by $ J' $, and vice versa. In the general situation, we can only construct an isomorphism $ \psibs \colon L(J,J') \to L(J',J) $, and $ \phibs $ is simply the composition of $ \psibs $ with $ L(\iota, \iotainv, t) $.

In \cref{sec:G2 exp}, we will construct root groups $ (U_\alpha)_{\alpha \in G_2} $ in $ \Aut(L) $. This would, if done naively, involve a large number of lengthy computations. The reflections described in the previous paragraph allow us to reduce the number of computations significantly by exploiting the symmetries of $ L $. For details, see \cref{pr:exp conj aut summary}. Moreover, we will show in \cref{le:phihor weyl,le:phibs weyl} that $ \phihor $ and $ \phibs $ are Weyl elements with respect to $ (U_\alpha)_{\alpha \in G_2} $ (under suitable assumptions in the case of $ \phibs $). This is one important step in proving that $ (U_\alpha)_{\alpha \in G_2} $ is a $ G_2 $-grading of a subgroup of $ \Aut(L) $.

\begin{notation}
	For any linear cubic norm pair $(J,J')$, we denote by $L(J,J')$ the corresponding Lie algebra constructed in \cref{pr:L is Lie algebra}. If no linear cubic norm pair is specified, we assume that $(J,J')$ is a fixed linear cubic norm pair and that $L \coloneq L(J,J')$ is the corresponding Lie algebra. Further, we will occasionally use the convention from \cref{rem:G2 grid} to identify $ G_2^0 $ with a subset of $ \Z^2 $.
\end{notation}

\begin{convention}
	We will always let automorphism of $ L $ act on $ L $ from the left.
\end{convention}

\begin{remark}\label{rem:refl wd}
	We will define all homomorphisms on $L$ by describing their action on all components. Since it is in general not true that the sum
	\[ \langle \dd_{a,a'} \mid a  \in J, a' \in J' \rangle + k\zeta + k\xi \]
	is direct, it is a priori not clear that these maps are well-defined on $ L_{0,0} $. This will be verified in \cref{le:reflections well-defined}.
\end{remark}

We begin with the functoriality of $ L(J,J') $ with respect to surjective homotopies.

\begin{definition}\label{def:L functorial}
	Let $(J_1, J_1')$ and $(J_2, J_2')$ be two linear cubic norm pairs (over the same ring $k$), let $ t \in k $ be invertible and let $(\iota, \iotainv) \colon (J_1, J_1') \to (J_2, J_2')$ be a surjective $ t $-homotopy of linear cubic norm pairs. Then we define a map
	\[ L(\iota,\iotainv, t) \colon L(J_1, J_1') \to L(J_2, J_2'), \]
	denoted by $\psi$ in the following set of equations, by setting
	\begin{align*}
		\psi(x) &\coloneq t^{-1}x, \\
		\psi((\nu, c, c', \rho)_-) &\coloneq (t^{-1}\nu, t^{-1}c^\iota, (c')^\iotainv, \rho)_-, \\
		\psi(c') &\coloneq (c')^\iotainv, \\
		\psi(\dd_{c,c'}) &\coloneq \dd_{c^\iota, (c')^\iotainv}, \quad \psi(\xi) \coloneq \xi, \quad \psi(\zeta) \coloneq \zeta, \\
		\psi(c) &\coloneq c^\iota, \\
		\psi((\nu, c, c', \rho)_+) &\coloneq (\nu, c^\iota, t(c')^\iotainv, t\rho)_+ \\
		\psi(y) &\coloneq ty
	\end{align*}
	for all $\nu, \rho \in k$, $c \in J$, $c' \in J'$. If $(\iota, \iotainv)$ is a homomorphism (i.e., if $ t=1 $), we simply put $L(\iota, \iotainv) \coloneq L(\iota, \iotainv, 1)$.
\end{definition}

\begin{remark}\label{rem:functor inverse}
	If $(\iota, \iotainv) \colon (J,J') \to (J', J)$ is a $t$-involution for some invertible $t \in k$, then $L(\iota, \iotainv, t)^{-1} = L(\iotainv, \iota, t^{-1})$.
\end{remark}

\begin{remark}
	The definition of $L(\iota, \iotainv, t)$ is not canonical: We made a choice \enquote{where to put $t$ and $t^{-1}$}. Our choice corresponds to the $5$-grading $(L''_i)_{-2 \le i \le 2}$ from \cref{rem:all gradings} in the sense that exactly the blocks of type $J$, $J'$ that lie in $L_{0}''$ and the blocks of type $k$ that lie in $L_{-1}'' + L_1''$ are multiplied by $t$ or $t^{-1}$. In a similar way, we could for example define a map $\psi \coloneq L'(\iota, \iotainv, t) \colon L(J_1, J_1') \to L(J_2, J_2')$ that corresponds to the $5$-grading $(L_i)_{-2 \le i \le 2}$ by
	\begin{align*}
		\psi(x) &\coloneq x, \\
		\psi((\nu, c, c', \rho)_\pm) &\coloneq (t\nu, c^\iota, (c')^\iotainv, t^{-1}\rho)_\pm, \\
		\psi(c') &\coloneq t(c')^\iotainv, \\
		\psi(\dd_{c,c'}) &\coloneq \dd_{c^\iota, (c')^\iotainv}, \quad \psi(\xi) \coloneq \xi, \quad \psi(\zeta) \coloneq \zeta, \\
		\psi(c) &\coloneq t^{-1}c^\iota, \\
		\psi(y) &\coloneq y
	\end{align*}
	for all $\nu, \rho \in k$, $c \in J$, $c' \in J'$. Note that $L(\iota,\iotainv,t) = L'(\iota,\iotainv, t)$ if $t=1$, that is, if $(\iota, \iotainv)$ is a homomorphism. Further, the two maps agree on $L_{0,0}$ for arbitrary $t$. The latter fact implies that the proof of well-definedness in \cref{le:reflections well-defined} is also valid for $L'(\iota, \iotainv, t)$.
\end{remark}

Before we define the automorphism $ \phihor $, $ \phiver $, $ \phisl $, $ \phibs $, we formalize what we mean by \enquote{reflections}.

\begin{definition}\label{def:refl abstr}
	Let $ \Phi $ be a root system, let $ \alpha \in \Phi $ and let
	\[ \bar{L} = \bigoplus_{\beta \in \Phi^0} \bar{L}_\beta \quad \text{and} \quad \bar{L}' = \bigoplus_{\beta \in \Phi^0} \bar{L}_\beta' \]
	be two $ \Phi $-graded Lie algebras. An \emph{$ \alpha $-reflection from $ \bar{L} $ to $ \bar{L}' $} is an isomorphism $ \varphi \colon \bar{L} \to \bar{L}' $ such that $ \varphi(\bar{L}_\beta) = \bar{L}'_{\beta^{\reflbr{\alpha}}} $ for all $ \beta \in \Phi^0 $ where $ \refl{\alpha} \colon \Phi^0 \to \Phi^0 $ is the reflection along $ \alpha^\perp $. The root $ \alpha $ will also sometimes be called the \emph{type of $ \varphi $}. If $ \bar{L}' = \bar{L} $, we will simply call $ \varphi $ a \emph{$ \alpha $-reflection on $ \bar{L} $}.
\end{definition}

The defining formulas for the following reflections on $ L $ have been obtained by applying the formulas in \cite[Lemma 2.8]{DMM25} to an \enquote{extremal element} with respect to the grading that should be reversed.

\begin{definition}\label{def:reflections}
	We define endomorphisms $\phihor$ and $\phisl$ of the $k$-module $L$ by
	\begin{align*}
		\phihor(x) &\coloneq y , \\
		\phihor((\nu, c, c', \rho)_-) &\coloneq (\nu, c, c', \rho)_+ , \\
		\phihor(c') &\coloneq c' , \\
		\phihor(\dd_{c,c'}) &\coloneq \dd_{c,c'} , \quad
		\phihor(\xi) \coloneq -\xi , \quad
		\phihor(\zeta) \coloneq \zeta - \xi , \\
		\phihor(c) &\coloneq c , \\
		\phihor((\nu, c, c', \rho)_+) &\coloneq -(\nu, c, c', \rho)_- , \\
		\phihor(y) &\coloneq x ,
	\end{align*}
	and
	\begin{align*}
		\phisl(x) &\coloneq (1,0,0,0)_- , \\
		\phisl((\nu, c, c', \rho)_-) &\coloneq -\nu x + (0,c,0,0)_- + c' - (\rho,0,0,0)_+ , \\
		\phisl(c') &\coloneq -(0,0,c',0)_- , \\
		\phisl(\dd_{c,c'}) &\coloneq \dd_{c,c'} + T(c,c') (\xi - \zeta) , \quad
		\phisl(\xi) \coloneq \zeta , \quad
		\phisl(\zeta) \coloneq \xi , \\
		\phisl(c) &\coloneq -(0,c,0,0)_+ , \\
		\phisl((\nu, c, c', \rho)_+) &\coloneq -(0,0,0,\nu)_- + c + (0,0,c',0)_+ - \rho y , \\
		\phisl(y) &\coloneq (0,0,0,1)_+ .
	\end{align*}
	for all $\nu, \rho \in k$, $c \in J$ and $c' \in J'$.
\end{definition}

\begin{remark}\label{rem:reflection bs}
	In a similar way, we can define a third map $\phiver$ by
	\begin{align*}
		\phiver(x) &\coloneq (0,0,0,1)_- , \\
		\phiver((\nu, c, c', \rho)_-) &\coloneq -\rho x + (0,0,c',0)_- + c + (0,0,0,\nu)_+ , \\
		\phiver(c') &\coloneq (0,0,c',0)_+ , \\
		\phiver(\dd_{c,c'}) &\coloneq \dd_{c,c'} - T(c,c') \zeta , \quad
		\phiver(\xi) \coloneq \xi - \zeta , \quad
		\phiver(\zeta) \coloneq - \zeta , \\
		\phiver(c) &\coloneq -(0,c,0,0)_- , \\
		\phiver((\nu, c, c', \rho)_+) &\coloneq (0,0,0,\rho)_- - c' + (0,c,0,0)_+ + \nu y , \\
		\phiver(y) &\coloneq (-1,0,0,0)_+ .
	\end{align*}
	We introduce it here for completeness, but we will never use it in our arguments in a meaningful way. For this reason, we will not verify in detail that it is indeed compatible with the Lie bracket, as we do for $\phihor$ and $\phisl$ in \cref{le:reflections auto}.
\end{remark}

We will also need the following reflection from $ L(J,J') $ to $ L(J',J) $ in \cref{def:phibs,le:exp J' psi conj}.

\begin{definition}\label{def:psibs}
	We define a map $ \psibs \coloneq \psibs^{(J,J')} \colon L(J, J') \to L(J', J)$ by setting
	\begin{align*}
		\psibs(x) &\coloneq (0,0,0,1)_+, \\
		\psibs((\nu, c, c', \rho)_-) &\coloneq -\nu y + (0, 0, -c, 0)_+ + c' + (0,0,0,\rho)_-, \\
		\psibs(c') &\coloneq \brackets[\big]{0, -c', 0, 0}_+, \\
		\psibs(\dd_{c,c'}) &\coloneq -\dd_{c',c} + T(c,c') (\zeta - \xi), \quad \psibs(\xi) \coloneq -\zeta, \quad \psibs(\zeta) \coloneq -\xi, \\
		\psibs(c) &\coloneq (0,0,-c, 0)_-, \\
		(\nu, c, c', \rho)_+ &\coloneq (\nu, 0, 0, 0)_+ + c + \brackets[\big]{0, -c', 0, 0}_- - \rho x, \\
		\psibs(y) &\coloneq (1,0,0,0)_-
	\end{align*}
	for all $\nu, \rho \in k$, $c \in J$, $c' \in J'$.
\end{definition}

\begin{remark}\label{rem:reflections on subspaces}
	It is immediate from \cref{def:psibs,def:reflections,rem:reflection bs} that
	\begin{align*}
		\phihor(L_\alpha) = L_{\alpha^{\reflbr{(2,1)}}}, \quad \phiver(L_\alpha) &= L_{\alpha^{\reflbr{(1,2)}}}, \quad \phisl(L_\alpha) = L_{\alpha^{\reflbr{(-1,1)}}}, \\
		\psibs^{(J,J')}\brackets[\big]{L(J,J')_\alpha} &= L(J',J)_{\alpha^{\reflbr{(1,1)}}}
	\end{align*}
	for all $ \alpha \in G_2^0 $. Thus to show that these maps are reflections, it only remains to show that they are isomorphisms of Lie algebras.
\end{remark}

\begin{remark}\label{rem:reflection grading}
	It follows from \cref{rem:reflections on subspaces} that
	\[ \phihor(L_i) = L_{-i}, \qquad \phiver(L_i') = L_{-i}', \qquad \phisl(L_i'') = L_{-i}'' \]
	for all $ i \in \{-2, \ldots, 2\} $. This means that the reflections $\phihor$, $\phiver$ and $\phisl$ correspond to the $ 5 $-gradings $ (L_i)_{-2 \le i \le 2} $, $ (L_i')_{-2 \le i \le 2} $ and $ (L_i'')_{-2 \le i \le 2} $
	 defined in \cref{rem:all gradings}, respectively. Similarly, for the $ 7 $-gradings $(L(J,J')^\backslasharrow_i)_{-3 \le i \le 3}$ of $ L(J,J') $ and $(L(J',J)^\backslasharrow_i)_{-3 \le i \le 3}$ of $ L(J',J) $ from \cref{rem:all gradings}, we have
	\[ \psibs^{(J,J')}\brackets[\big]{L(J,J')^\backslasharrow_i} = L(J',J)^\backslasharrow_{-i} \]
	for all $i \in \{-3, \ldots, 3\}$.
\end{remark}

\begin{remark}\label{rem:reflection parity}
	For all $ \varphi \in \{\phihor, \phiver, \phisl, \psibs\} $, the map $\varphi^2$ is the parity automorphism of the grading to which $ \varphi $ corresponds. In particular, $ \phihor, \phiver, \phisl $ have order~$4$ if $2 \neq 0$ in $k$ and order~$2$ if $2 = 0$ in $k$.
\end{remark}

We can now prove that all maps above are actually well-defined and homomorphisms of Lie algebras.

\begin{lemma}\label{le:reflections well-defined}
	\begin{enumerate}
		\item For any linear cubic norm pair $(J,J')$, the reflections $\phihor$, $\phisl$, $\phiver$ on $L(J,J')$ and the map $\psibs \colon L(J,J') \to L(J',J)$ are well-defined.
		
		\item For any invertible $ t \in k $ and any surjective $ t $-homotopy $(\iota, \iotainv) \colon (J,J') \to (J_2,J_2')$ of linear cubic norm pairs, the map $L(\iota,\iotainv, t) \colon L(J,J') \to L(J_2,J_2')$ is well-defined. 
	\end{enumerate}
\end{lemma}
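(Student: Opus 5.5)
The only issue is well-definedness on $L_{0,0} = Z$. On every other component ($L_{\pm 2}$, $L_{\pm 1}$, $J$, $J'$) the maps are given on a direct sum of free or canonically parametrized pieces, so they are obviously well-defined and $k$-linear. On $Z$ the maps are specified on the spanning set $\{\dd_{a,a'}\} \cup \{\zeta, \xi\}$, but this spanning set is not a basis. By \cref{le:lin comb}, a linear combination $f = \sum \lambda_{a,a'} \dd_{a,a'} + \lambda_\zeta \zeta + \lambda_\xi \xi$ vanishes if and only if conditions (i)--(iii) of that lemma hold. So for each map $\varphi$ it suffices to show that whenever (i)--(iii) hold, the prescribed image $\sum \lambda_{a,a'} \varphi(\dd_{a,a'}) + \lambda_\zeta \varphi(\zeta) + \lambda_\xi \varphi(\xi)$ is zero. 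We then define $\varphi$ on $Z$ by linear extension.

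For each map I would compute the prescribed image and reduce it using the last formula of \cref{le:lin comb}, namely $\sum \lambda_{a,a'} \dd_{a,a'} = \lambda_\xi (2\zeta - \xi)$, together with $\sum \lambda_{a,a'} T(a,a') = 3\lambda_\xi$ and $\lambda_\zeta = -2\lambda_\xi$.
\begin{itemize}
\item For $\phihor$, the image is $\lambda_\xi(2\zeta - \xi) + \lambda_\zeta(\zeta - \xi) - \lambda_\xi \xi$. Substituting $\lambda_\zeta = -2\lambda_\xi$ gives $\lambda_\xi(2\zeta - \xi - 2\zeta + 2\xi - \xi) = 0$.
\item For $\phisl$, the image is $\lambda_\xi(2\zeta - \xi) + 3\lambda_\xi(\xi - \zeta) + \lambda_\zeta \xi + \lambda_\xi \zeta$, which equals $\lambda_\xi(2\zeta - \xi + 3\xi - 3\zeta - 2\xi + \zeta) = 0$.
\item For $\phiver$, the image is $\lambda_\xi(2\zeta - \xi) - 3\lambda_\xi \zeta - \lambda_\zeta \zeta + \lambda_\xi(\xi - \zeta)$, which again vanishes.
\end{itemize}

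For $\psibs$ the target is $L(J',J)$, so the image involves $-\sum \lambda_{a,a'} \dd_{a',a}$, computed in $L(J',J)$. Here I would observe that conditions (i)--(iii) of \cref{le:lin comb} are symmetric under interchanging the roles of $J$ and $J'$. Condition (iii) consists of exactly the pair of identities for $D_{a,a'}$ and $D_{a',a}$, and (ii) uses $T(a',a) = T(a,a')$. Applying \cref{le:lin comb} in $L(J',J)$ to the vanishing combination $\sum \lambda_{a,a'} \dd_{a',a} + \lambda'_\zeta \zeta + \lambda_\xi \xi$ therefore gives $\sum \lambda_{a,a'} \dd_{a',a} = \lambda_\xi(2\zeta - \xi)$ in $L(J',J)$. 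The required check is then $-\lambda_\xi(2\zeta - \xi) + 3\lambda_\xi(\zeta - \xi) - \lambda_\zeta \xi - \lambda_\xi \zeta = \lambda_\xi(-2\zeta + \xi + 3\zeta - 3\xi + 2\xi - \zeta) = 0$.

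For $L(\iota, \iotainv, t)$, it suffices to show that the pushed-forward coefficients $\lambda_{a,a'}$ on the pairs $(a^\iota, (a')^\iotainv)$, together with $\lambda_\zeta, \lambda_\xi$, again satisfy (i)--(iii) in $L(J_2, J_2')$. Condition (i) is unchanged. Condition (ii) follows from $T_2(a^\iota, (a')^\iotainv) = T(a,a')$. For (iii), the homotopy preserves triple products by the remark after \cref{def:linear CNP homotopy}, so $\sum \lambda_{a,a'} D_{a^\iota, (a')^\iotainv}(b^\iota) = (2\lambda_\xi b)^\iota$, and similarly on $J_2'$. Since $\iota$ and $\iotainv$ are surjective, every element of $J_2$ has the form $b^\iota$, so the operator identity holds on all of $J_2$ and $J_2'$. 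This is the step where surjectivity is genuinely needed, and it is the only mildly subtle point. Everything else is bookkeeping.
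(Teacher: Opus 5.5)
Your proposal is correct and follows essentially the same route as the paper. You reduce to $L_{0,0}$ and use \cref{le:lin comb} to get $\sum \lambda_{a,a'}\dd_{a,a'} = \lambda_\xi(2\zeta-\xi)$, $\sum\lambda_{a,a'}T(a,a') = 3\lambda_\xi$ and the $D$-identities; you then check each assignment, and use surjectivity of $(\iota,\iotainv)$ to extend the $D$-identities to all of $J_2$ and $J_2'$. The only cosmetic difference is that, for $\psibs$ and $L(\iota,\iotainv,t)$, you invoke the converse direction of \cref{le:lin comb} in the target algebra, whereas the paper redoes the underlying computation directly via \cref{pr:delta z} and \cref{rem:2zetaxi formula}.
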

\begin{proof}
	In this proof, all summations are understood to run over $a \in J$ and $a' \in J'$. Suppose that
	\[ \sum \lambda_{a,a'} \dd_{a,a'} + \lambda_\zeta \zeta + \lambda_\xi \xi = 0 \]
	for certain, finitely many scalars $\lambda_{a,a'}, \lambda_\zeta, \lambda_\xi \in k$. We have to show this sum evaluates to zero under the respective assignments in \cref{def:L functorial,def:psibs,def:reflections,rem:reflection bs}. We know from \cref{le:lin comb} that
	\begin{align}
		\sum \lambda_{a,a'} \dd_{a,a'} &= \lambda_\xi \cdot (2\zeta - \xi), \label{eq:refl wd lincomb} \\
		\sum \lambda_{a,a'} T(a,a') &= 3\lambda_\xi, \label{eq:refl wd sum Taa} \\
		\sum \lambda_{a,a'} D_{a,a'} &= 2\lambda_\xi \id_J, \qquad \sum \lambda_{a,a'} D_{a',a} = 2\lambda_\xi \id_{J'}. \label{eq:refl wd sum Daa}
	\end{align}
	Both sides of \eqref{eq:refl wd lincomb} are fixed by the assignments defining $\phihor$, so $\phihor$ is well-defined. Under the assignments defining $\phisl$, they evaluate to
	\begin{gather*}
		\sum \lambda_{a,a'} \dd_{a,a'} + \sum \lambda_{a,a'} T(a,a') (\xi - \zeta)  \quad \text{and} \\
		\lambda_\xi \cdot (2\xi - \zeta) = \lambda_\xi (2\zeta - \xi) + 3\lambda_\xi (\xi-\zeta).
	\end{gather*}
	By~\eqref{eq:refl wd sum Taa}, we infer that $\phisl$ is well-defined. Similarly, the two sides of~\eqref{eq:refl wd lincomb} are mapped to
	\begin{gather*}
		\sum \lambda_{a,a'} \dd_{a,a'} - \sum \lambda_{a,a'} T(a,a') \zeta = \sum \lambda_{a,a'} \dd_{a,a'} - 3\lambda_\xi \zeta \quad \text{and} \\
		\lambda_\xi \cdot (-2\zeta - \xi + \zeta) = \lambda_\xi \cdot (2\zeta-\xi) - 3\lambda_\xi \zeta
	\end{gather*}
	under the assignments defining $\phiver$, respectively. We infer that $\phiver$ is also well-defined.
	
	For the proof that $\psibs$ is well-defined, we have to distinguish between the elements $\xi,\zeta, \dd_{a,a'}$ of $L(J,J')$ and the elements $\xi', \zeta', \dd_{a',a}$ of $L(J',J)$. The two sides of~\eqref{eq:refl wd lincomb} are mapped to
	\begin{gather*}
		-\sum \lambda_{a,a'} \dd_{a', a} + \sum \lambda_{a,a'} T(a,a') (\zeta'-\xi') = -\sum \lambda_{a,a'} \dd_{a', a} + 3\lambda_\xi (\zeta'-\xi'), \\
		\lambda_\xi \cdot (-2\xi' + \zeta') = -\lambda_\xi \cdot (2\zeta'-\xi') + 3\lambda_\xi (\zeta'-\xi')
	\end{gather*}
	under the assignments defining $\psibs$. By \cref{pr:delta z}, \eqref{eq:refl wd sum Taa} and~\eqref{eq:refl wd sum Daa}, we have
	\[ \sum \lambda_{a,a'} [\dd_{a',a}, (\lambda, b', b, \mu)_\pm] = \lambda_\xi (-3\lambda, -b', b, 3\mu)_\pm \]
	for all $(\lambda, b', b, \mu)_\pm \in L(J',J)_{\pm 1}$ and $\dd_{a',a}(L(J',J)_{\pm 2}) = 0$, so it follows from \cref{rem:2zetaxi formula} that $\sum \lambda_{a,a'} \dd_{a',a} = \lambda_\xi (2\zeta' - \xi')$. We infer that $\psibs$ is well-defined.
	
	Now assume that $(\iota, \iotainv) \colon (J,J') \to (J_2,J_2')$ is a surjective $ t $-homotopy of linear cubic norm pairs for some invertible $ t \in k $. Similarly as in the previous paragraph, we have to distinguish between the elements $\xi,\zeta, \dd_{a,a'}$ of $L(J,J')$ and the elements $\xi_2, \zeta_2, \dd_{a^\iota, (a')^{\iotainv}}$ of $L(J_2,J_2')$. Now the two sides of~\eqref{eq:refl wd lincomb} are mapped to
	\begin{gather*}
		\sum \lambda_{a,a'} \dd_{a^\iota, (a')^{\iotainv}} \quad \text{and} \quad \lambda_\xi (2\zeta_2 - \xi_2)
	\end{gather*}
	under the assignments defining $L(\iota, \iotainv, t)$. It follows from \eqref{eq:refl wd sum Daa} and \cref{rem:CNP hom jordan} that
	\begin{align*}
		\brackets*{\sum \lambda_{a,a'} D_{a^\iota, (a')^\iotainv}}(b^\iota) &= \brackets*{\sum \lambda_{a,a'} D_{a,a'}(b)}^\iota = 2\lambda_\xi b^\iota
	\end{align*}
	for all $b \in J$. Since $\iota$ is surjective, we infer that $\sum \lambda_{a,a'} D_{a^\iota, (a')^{\iotainv}} = 2\lambda_\xi \id_{J_2}$. By a similar computation, $\sum \lambda_{a,a'} D_{(a')^\iotainv, a^\iota} = 2\lambda_\xi \id_{J_2'}$. Further, \eqref{eq:refl wd sum Taa} implies that $\sum \lambda_{a,a'} T(a^\iota, (a')^\iotainv) = 3\lambda_\xi$. Thus by similar arguments as in the previous paragraph, it follows from \cref{pr:delta z,rem:2zetaxi formula} that $\sum \lambda_{a,a'} \dd_{a^\iota, (a')^\iotainv} = \lambda_\xi (2\zeta_2 - \xi_2)$. Hence $L(\iota, \iotainv, t)$ is well-defined.
%
\end{proof}

\begin{lemma}\label{le:reflections auto}
	\begin{enumerate}
		\item \label{le:reflections auto:refl}For any linear cubic norm pair $(J,J')$, the maps $\phihor$, $\phiver$, $\phisl$ on $L(J,J')$ and the map $\psibs \colon L(J,J') \to L(J',J)$ are isomorphisms of Lie algebras. In particular, $\phihor$, $\phiver$ and $\phisl$ are reflections of type $ (-2,-1) $, $ (-1, -2) $ and $ (-1, 1) $, respectively.
		
		\item \label{le:reflections auto:homotopy}For any invertible $ t \in k $ and any surjective $ t $-homotopy $(\iota, \iotainv) \colon (J,J') \to (J_2,J_2')$ of linear cubic norm pairs, the map $L(\iota,\iotainv,t) \colon L(J,J') \to L(J_2,J_2')$ is a surjective homomorphism of Lie algebras. It is bijective if and only if $(\iota, \iotainv)$ is bijective.
	\end{enumerate}
\end{lemma}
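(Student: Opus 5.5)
Well-definedness is already settled by \cref{le:reflections well-defined}, and \cref{rem:reflections on subspaces} shows that each map permutes the root spaces correctly. So it remains to show two things: each map preserves the bracket of \cref{ta:Lie}, and we have the bijectivity statements. The plan is to reduce the bracket check to a small set of generators and then compare with the table entry by entry.

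\textbf{Reduction to generators.} First I would note that $L$ is generated as a Lie algebra by $L_{\pm 1}$, $L_{\pm 2}$, $J$ and $J'$, together with $\xi$ and $\zeta$. Indeed, $\dd_{a,a'}=[a',a]$, and both $\xi=[x,y]$ and $\zeta$ are explicit. Since each map $\varphi$ is linear and well defined, it suffices to verify $\varphi([u,v])=[\varphi(u),\varphi(v)]$ for $u,v$ running over homogeneous spanning elements of each $L_{i,j}$ with $(i,j)\in\omega(G_2^0)$. A cleaner route for pairs involving $Z$ is available. The pairs $(\dd_{a,a'},v)$ follow from the pairs $(a',\cdot)$ and $(a,\cdot)$ by the Jacobi identity, since $\dd_{a,a'}=[a',a]$ and $\varphi(\dd_{a,a'})=[\varphi(a'),\varphi(a)]$. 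The latter equality is itself the $(J',J)$ case, which, in the $\phisl$ case, encodes the correction $T(c,c')(\xi-\zeta)$. Pairs involving $\xi,\zeta$ follow from the gradings: for example, $\phisl$ swaps $\xi,\zeta$, and $\phisl$ maps $L_{i,j}$ to the piece whose $\xi$- and $\zeta$-degrees are exchanged, which is exactly the statement of \cref{rem:reflections on subspaces}. So the real work is the brackets among the elements $x$, $y$, $(\nu,c,c',\rho)_\pm$, $c$ and $c'$.

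\textbf{Checking the maps.} For $\phihor$ this check is nearly immediate from the symmetry of \cref{ta:Lie} under swapping the $\pm$ rows (with the sign in $\phihor$ on $L_1$). For $\phisl$ and $\psibs$ I would split each $(\nu,c,c',\rho)_\pm$ into its four components and check each pair of basic pieces $L_{i,j}\times L_{p,q}$. Only finitely many types arise (roughly 30 up to symmetry), and each check uses only \cref{le:CNP identities}\cref{le:CNP identities:7}, the symmetry of $\times$, and the definition of $D$. The $\zeta,\xi$ coefficients in \cref{eq:L-1 L1} have to be matched against the images $\phisl(\zeta)=\xi$ and $\phisl(\xi-\zeta)=\zeta-\xi$. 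I expect the main obstacle to be the brackets landing in $L_{0,0}$, such as $[c',c]$ versus the $\dd$ and $\zeta$ parts of \cref{eq:L-1 L1}. Here equality in $Z$ must be checked by acting on $L_{\pm1}$, using \cref{pr:delta z} and \cref{le:triple} to absorb terms like $T(c,c')(2\zeta-\xi)$. The map $\phiver$ is then obtained without new computation, for instance by checking that $\phiver$ agrees with a conjugate of $\phisl$ by $\phihor$ (up to a parity automorphism). If that identification fails, I would repeat the same table check.

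\textbf{The map $\psi=L(\iota,\iotainv,t)$.} For the second statement, the bracket check is a direct scaling check using the $t$-homotopy identities. Each entry of \cref{ta:Lie} gets a power of $t$ that is consistent with the $L''$-grading. For example, $\varphi'(a\times b)=t^{-1}\varphi(a)\times\varphi(b)$ handles $[c,(\nu,b,b',\rho)_\pm]$, and the $D$-terms are handled as in \cref{rem:CNP hom jordan}. Surjectivity follows because $\iota$ and $\iotainv$ are surjective, the image contains all generators, and $t$ is invertible. For bijectivity: if $(\iota,\iotainv)$ is bijective, then its inverse is a $t^{-1}$-homotopy and $L(\iota^{-1},\iotainv^{-1},t^{-1})$ is inverse to $\psi$. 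Conversely, injectivity of $\psi$ on $L_{0,1}=J$ and $L_{0,-1}=J'$ forces $\iota$ and $\iotainv$ to be injective. The isomorphism claim in the first statement is similar: one exhibits inverses, for instance $\phihor^{3}$ and $\phisl^{3}$ via \cref{rem:reflection parity}, and $\psibs^{(J',J)}$ composed with a parity automorphism for $\psibs$.
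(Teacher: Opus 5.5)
The strategy is the paper's, namely checking $\varphi([u,v])=[\varphi(u),\varphi(v)]$ on a $k$-module spanning set against \cref{ta:Lie}. Your homotopy and bijectivity arguments are fine. However, the shortcut you propose for pairs involving $Z$ is circular exactly where it is needed, so as written there is a gap.

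The Jacobi argument for $([a',a],v)$ requires the pairs $(a',a)$, $(a,v)$, $(a',v)$, $(a',[a,v])$ and $(a,[a',v])$. These all avoid $Z$ only for $v\in L_{\pm1}\cup L_{\pm2}$.
\begin{itemize}
\item For $v=c\in J$ you get $[\dd_{a,a'},c]=-[a,\dd_{c,a'}]$, which is again a $(J,Z)$ pair.
\item For $v=c'\in J'$ you get $-[a',\dd_{a,c'}]$, a $(J',Z)$ pair.
\item For $v=\dd_{e,e'}$, the required pairs $(a,v)$ and $(a',v)$ are themselves $(J,Z)$ and $(J',Z)$ pairs.
\end{itemize}
So your list of ``real work'' (brackets among $x,y,(\nu,c,c',\rho)_\pm,c,c'$) leaves the pairs $(Z,J)$, $(Z,J')$ and $(Z,Z)$ unproved.

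The fix is easy but must be done by hand. Compute $(\dd_{a,a'},c)$ and $(\dd_{a,a'},c')$ directly with \cref{pr:Jd} and \cref{pr:delta z}. For $\phisl$, the summand $T(a,a')(\xi-\zeta)$ of $\phisl(\dd_{a,a'})$ cancels the $-T(a,a')c$ produced by \cref{pr:delta z}, and analogously for $\psibs$. Once these pairs are in hand, your Jacobi argument does yield $(Z,Z)$. Alternatively, check $(Z,Z)$ directly as the paper does: the coefficient $T(D_{a,a'}(e),e')-T(e,D_{a',a}(e'))$ of $\xi-\zeta$ vanishes by \cref{eq:der T}. This is the one use of \cref{le:inner} that the paper points out.

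Your treatment of $\phiver$ is a genuine improvement over the paper, which omits the verification (\cref{rem:reflection bs}). On the standard spanning set, $\phihor^{-1}\circ\phisl\circ\phihor$ agrees with the displayed $\phiver$ on $x$, $y$, $c$, $c'$, $\dd_{c,c'}$, $\xi$, $\zeta$ and $(\nu,c,c',\rho)_-$, with no parity twist needed. On $(\nu,c,c',\rho)_+$ it gives $\nu y+(0,c,0,0)_+-c'+(\rho,0,0,0)_-$, whereas the paper prints $(0,0,0,\rho)_-$ in place of $(\rho,0,0,0)_-$.

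The printed version cannot be right. It sends both $x$ and $(0,0,0,1)_+$ to $(0,0,0,1)_-$, so it is not injective. It also contradicts \cref{rem:reflections on subspaces}, since $L_{1,2}$ must go to $L_{-1,-2}$. With that correction, your conjugation argument settles $\phiver$ immediately. Make sure the equality is checked on a $k$-module spanning set rather than merely on Lie generators, since $\phiver$ is not yet known to be a homomorphism.
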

\begin{proof}
	Let $\varphi \in \{\phihor, \phisl, \phiver, \psibs\}$ or $\varphi = L(\iota, \iotainv,t)$ for a surjective $ t $-homotopy $(\iota, \iotainv) \colon (J,J') \to (J_2,J_2')$ of linear cubic norm pairs and some invertible $ t \in k $. The assertion about the surjectivity or bijectivity of $\varphi$ are clear (see also \cref{rem:reflections on subspaces}), so it remains to show that it is also compatible with the Lie bracket. For this it suffices to show that $\varphi([a,b]) = [\varphi(a), \varphi(b)]$ for all $a,b$ in a generating set of the $k$-module $L$. By the anti-symmetry of the Lie bracket, we only have to check this for all $a<b$ where $<$ is an arbitrarily chosen total order on the generating set. The results of these computations for the \enquote{standard} generating set can be found in \cref{ta:phihor auto} for $\varphi = \phihor$, in \cref{ta:phisl auto} for $\varphi = \phisl$ and in \cref{ta:psibs auto} for $\varphi = \psibs$. The majority of computations are straightforward applications of the formulas in \cref{ta:Lie}, but we also have to use \cref{le:inner} to prove the relation $\varphi([\dd_{a,a'}, \dd_{e,e'}]) = [\varphi(\dd_{a,a'}), \varphi(\dd_{e,e'})]$ for $\varphi \in \{\phisl, \phibs\}$. A similar computation can be done for $\phiver$, but we omit the details (see \cref{rem:reflection bs}). For $\varphi = L(\iota, \iotainv,t)$, it is a straightforward verification that it is compatible with the Lie bracket.
	
	The final assertion in \cref{le:reflections auto:refl} follows from \cref{rem:reflection grading}.
\end{proof}

As announced, the functoriality of our construction has the following first application.

\begin{proposition}\label{pr:lie isotopy}
	The isomorphism type of $L(J,J')$ depends only on the isotopy type of $(J,J')$.
\end{proposition}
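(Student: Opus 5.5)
The plan is to apply the functoriality statement \cref{le:reflections auto}\cref{le:reflections auto:homotopy} directly. Suppose $(J,J')$ and $(J_2,J_2')$ are isotopic cubic norm pairs, so that there is an invertible $t \in k$ and a bijective $t$-homotopy $(\iota,\iotainv) \colon (J,J') \to (J_2,J_2')$. A bijective homotopy is in particular surjective. Hence \cref{le:reflections well-defined} shows that $L(\iota,\iotainv,t) \colon L(J,J') \to L(J_2,J_2')$ is well-defined, and \cref{le:reflections auto}\cref{le:reflections auto:homotopy} shows that it is a surjective homomorphism of Lie algebras. The same part of that lemma also says that $L(\iota,\iotainv,t)$ is bijective exactly when $(\iota,\iotainv)$ is, so it is an isomorphism $L(J,J') \cong L(J_2,J_2')$.

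The one point to check is that homotopies of cubic norm pairs fit the hypotheses of \cref{le:reflections auto}, which is phrased for linear cubic norm pairs. A $t$-homotopy of cubic norm pairs in the sense of \cref{def:CNP hom} preserves $T$ by definition. By \cref{rem:homotopy times} it also satisfies $\iotainv(a\times b) = t^{-1}\iota(a)\times\iota(b)$ and $\iota(a'\times b') = t\,\iotainv(a')\times\iotainv(b')$. Since $L(J,J')$ depends only on the underlying linear cubic norm pair, it is therefore a $t$-homotopy of the induced linear cubic norm pairs in the sense of \cref{def:linear CNP homotopy}. For the bijectivity I would also sketch why it holds rather than rely only on the lemma. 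With $\psi = L(\iota,\iotainv,t)$, the map $\psi$ acts on each root space $L_\alpha$ with $\alpha \neq 0$ by $\iota$, $\iotainv$ or a unit scalar, so it is bijective there. Alternatively, \cref{rem:homotopy times} shows that $(\iota^{-1},\iotainv^{-1})$ is a bijective $t^{-1}$-homotopy. One can then check on generators that $L(\iota^{-1},\iotainv^{-1},t^{-1})$ is a two-sided inverse of $\psi$: the scalars $t^{\pm1}$ in \cref{def:L functorial} cancel, and $\dd_{c,c'} \mapsto \dd_{c^\iota,(c')^\iotainv} \mapsto \dd_{c,c'}$.

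No real obstacle is expected; the content is entirely in \cref{le:reflections well-defined} and \cref{le:reflections auto}. If anything is delicate, it is the well-definedness on $L_{0,0}$, where the sum spanned by the $\dd_{a,a'}$, $\zeta$ and $\xi$ is not direct. That is exactly where surjectivity of the homotopy is used, and \cref{le:reflections well-defined} has already handled it.
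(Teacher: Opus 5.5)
Your proposal is correct and follows the paper's proof: the paper also takes a $t$-isotopy $(\iota,\iotainv)$ and concludes from \cref{le:reflections auto}\cref{le:reflections auto:homotopy} that $L(\iota,\iotainv,t)$ is an isomorphism of Lie algebras. Your extra checks are sound but go beyond what the paper needs: that homotopies of cubic norm pairs are homotopies of the underlying linear cubic norm pairs (via \cref{rem:homotopy times}), and that $L(\iota^{-1},\iotainv^{-1},t^{-1})$ is an explicit inverse.
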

\begin{proof}
	Let $ (\iota, \iotainv) \colon (J,J') \to (J_2, J_2') $ be a $ t $-isotopy of linear cubic norm pairs for some invertible $ t \in k $. Then the map $ L(\iota, \iotainv, t) \colon L(J,J') \to L(J_2, J_2') $ is an isomorphism of Lie algebras by \cref{le:reflections auto}\cref{le:reflections auto:homotopy}.
\end{proof}

\begin{remark}\label{rem:reflections inv}
	We state explicit formulas for the inverses of $\phihor$ and $\phisl$ for future reference: We have
	\begin{align*}
		\phihor^{-1}(x) &= y , \\
		\phihor^{-1}((\nu, c, c', \rho)_-) &= -(\nu, c, c', \rho)_+ , \\
		\phihor^{-1}(c') &= c' , \\
		\phihor^{-1}(\dd_{c,c'}) &= \dd_{c,c'} , \quad
		\phihor^{-1}(\xi) = -\xi , \quad
		\phihor^{-1}(\zeta) = \zeta - \xi , \\
		\phihor^{-1}(c) &= c , \\
		\phihor^{-1}((\nu, c, c', \rho)_+) &= (\nu, c, c', \rho)_- , \\
		\phihor^{-1}(y) &= x,
	\end{align*}
	and
	\begin{align*}
		\phisl^{-1}(x) &= (-1,0,0,0)_- , \\
		\phisl^{-1}((\nu, c, c', \rho)_-) &= \nu x + (0,c,0,0)_- - c' - (\rho,0,0,0)_+ , \\
		\phisl^{-1}(c') &= (0,0,c',0)_- , \\
		\phisl^{-1}(\dd_{c,c'}) &= \dd_{c,c'} + T(c,c') (\xi - \zeta) , \quad
		\phisl^{-1}(\xi) = \zeta , \quad
		\phisl^{-1}(\zeta) = \xi , \\
		\phisl^{-1}(c) &= (0,c,0,0)_+ , \\
		\phisl^{-1}((\nu, c, c', \rho)_+) &= (0,0,0,-\nu)_- - c + (0,0,c',0)_+ + \rho y , \\
		\phisl^{-1}(y) &= (0,0,0,-1)_+ .
	\end{align*}
	for all $\nu, \rho \in k$, $c \in J$ and $c' \in J'$.
\end{remark}

We now define the final reflection $ \phibs $. Since it is a composition of maps defined earlier, we see immediately that it is a reflection.

\begin{definition}\label{def:phibs}
	Let $t \in k$ be invertible and assume that $(\iota, \iotainv) \colon (J, J') \to (J', J)$ is a $t$-isotopy of linear cubic norm pairs.
	Then we define a fourth reflection $\phibs^{(\iota, \iotainv,t)} \coloneq \psibs^{(J',J)} \circ L(\iota, \iotainv,t)$, which is a $ (-1,-1) $-reflection on $ L(J,J') $.
\end{definition}

\begin{remark}\label{rem:phibs formulas}
	Let $t \in k$ be invertible, let $(\iota, \iotainv) \colon (J, J') \to (J', J)$ be a $t$-isotopy of linear cubic norm pairs and put $ \phibs \coloneq \phibs^{(\iota, \iotainv,t)} $. Explicitly, we have
	\begin{align*}
		\phibs(x) &= (0,0,0,t^{-1})_+, \\
		\phibs((\nu, c, c', \rho)_-) &= -\nu t^{-1} y + (0, 0, -t^{-1}c^\iota, 0)_+ + (c')^\iotainv + (0,0,0,\rho)_-, \\
		\phibs(c') &= \brackets[\big]{0, -(c')^\iotainv, 0, 0}_+, \\
		\phibs(\dd_{c,c'}) &= -\dd_{(c')^\iotainv,c^\iota} + T(c,c') (\zeta - \xi), \quad \phibs(\xi) = -\zeta, \quad \phibs(\zeta) = -\xi, \\
		\phibs(c) &= (0,0,-c^\iota, 0)_-, \\
		\phibs(\nu, c, c', \rho)_+ &= (\nu, 0, 0, 0)_+ + c^\iota + \brackets[\big]{0, -t(c')^\iotainv, 0, 0}_- - t\rho x, \\
		\phibs(y) &= (t,0,0,0)_-
	\end{align*}
	for all $\nu, \rho \in k$, $c \in J$, $c' \in J'$. Now assume in addition that $ \iota^{-1} = \iotainv $. Then the inverse of $\phibs$ is given by
	\begin{align*}
		\phibs^{-1}(x) &= (0,0,0,-t^{-1})_+, \\
		\phibs^{-1}\brackets[\big]{(\nu,c,c',\rho)_-} &= t^{-1}\nu y + (0,0, -t^{-1}c^\iota,0)_+ - (c')^\iotainv + (0,0,0,\rho)_-, \\
		\phibs^{-1}(c') &= \brackets[\big]{0, (c')^\iotainv, 0, 0}_+, \\
		\phibs^{-1}(\dd_{c,c'}) &= -\dd_{(c')^\iotainv, c^\iota} + T(c,c')(\zeta - \xi), \quad \phibs^{-1}(\xi) = -\zeta, \quad \phibs^{-1}(\zeta) = -\xi, \\
		\phibs^{-1}(c) &= (0,0,c^\iota, 0)_-, \\
		\phibs^{-1}\brackets[\big]{(\nu, c, c',\rho)_+} &= (\nu, 0, 0, 0)_+ - c^\iota + \brackets[\big]{0, -t(c')^\iotainv, 0, 0}_- +t \rho x, \\
		\phibs^{-1}(y) &= (-t, 0, 0,0)_-
	\end{align*}
	for all $\nu, \rho \in k$, $c \in J$, $c' \in J'$. Further, $\phibs^2$ is the parity automorphism of the $7$-grading of $L$ defined in \cref{rem:reflection grading}, which coincides with the parity automorphism $\phisl^2$ of the $5$-grading $(L_i'')_{-2 \le i \le 2}$.
\end{remark}

\section{The ideal structure of \texorpdfstring{$ L $}{L}}\label{sec:simple}

In this section, $ (J,J') $ denotes an arbitrary cubic norm pair (or, more generally, a linear cubic norm pair) over some commutative ring $ k $ and $ L \coloneq L(J,J') $ denotes the corresponding Lie algebra. Our goal is to classify the ideals of $ L $ (\cref{pr:ideal classify,le:ideal ex}). Namely, we will show that modulo $L_{0,0}$, ideals of $L$ correspond bijectively to what we call ideals of the triple $(k,J,J')$. In particular, we will see that $ L $ is simple if and only if $k$ is a field and $T$ is non-degenerate (\cref{thm:L simple}). This section is independent of the following sections.

As a first step, we show that every ideal of $L$ is graded (\cref{pr:L ideal graded}).

\begin{notation}
	In this section, we denote by $ \pi_\alpha \colon L \to L_\alpha $ the natural projection map for all $ \alpha \in G_2^0 $. Further, we will write $ \alpha \sim \beta $ for $ \alpha, \beta \in G_2 $ if they lie in the same of the three orbits in $G_2$ described in \cref{rem:G2 orbits}.
\end{notation}

\begin{definition}
	An ideal $ I $ of $ L $ is called \emph{graded} if $ I = \bigoplus_{\alpha \in G_2^0} (I \cap L_\alpha) $, or equivalently, if $ \pi_\alpha(I) \subseteq I $ for all $ \alpha \in G_2^0 $.
\end{definition}

\begin{convention}
	In this section, we will always distinguish between $k$, $J$ and $J'$ even though they could coincide in certain special cases. With this convention, we have $a \sim \beta$ (for $\alpha, \beta \in G_2$) if and only if $L_\alpha$ and $L_\beta$ are (canonically) parametrized by the same of the three structures $k$, $J$, $J'$.
\end{convention}

The crucial ingredient in the proof of \cref{pr:L ideal graded} will be \cref{le:ad rootspace isom}, which says that the Lie bracket induces \enquote{natural} isomorphisms between root spaces of $L$. We first need to make this notion precise.

\begin{definition}\label{def:lie G2 natural rhom}
	Let $ \omega \colon G_2^0 \to \Z^2 $ be as in \cref{rem:G2 grid}. Then for all $ (i,j) \in \omega(G_2) $, we define $ P_{(i,j)} = P_{ij} \in \{k,J,J'\} $ to be the additive group to which $ L_{i,j} $ (as defined in \cref{def:L0 bar,def:dd and L0}) is naturally isomorphic (as can be read off from \cref{fig:grading}). Further, the \emph{root homomorphism with respect to $ (i,j) $} is the \enquote{canonical} isomorphism $ \rhomlieG{i,j} \colon P_{i,j} \to L_{i,j} $. For example,
	\begin{align*}
		\rhomlieG{-2,-1}(t) &= tx, & \rhomlieG{-1,-2}(t) &= (t,0,0,0)_-, & \rhomlieG{1,2}(t) &= (0,0,0,t)_+, & \rhomlieG{2,1}(t) &= ty, \\
		\rhomlieG{0,-1}(b') &= b', &\rhomlieG{-1,-1}(b) &= (0,b,0,0)_-, & \rhomlieG{1,1}(b') &= (0,0,b',0)_+, & \rhomlieG{0,1}(b) &= b
	\end{align*}
	for all $ t \in k $, $ b \in J $ and $ b' \in J' $. We will in this section also refer to the isomorphisms $ k \to k \rho \colon \lambda \to \lambda \rho $ for $ \rho \in \{\xi, \zeta\} $ as \enquote{canonical}.
\end{definition}

\begin{definition}
	Let $ \alpha, \beta \in G_2 $ such that $ \alpha \sim \beta $. An isomorphism $ \varphi \colon L_\alpha \to L_\beta $ of $ k $-modules is called \emph{natural} if it is compatible with the root homomorphisms $ \rhomlieG{\alpha} $ and $ \rhomlieG{\beta} $ from \cref{def:lie G2 natural rhom}, meaning that the diagram
	\[ \begin{tikzcd}
		L_\alpha \arrow[rr, "\varphi"] && L_\beta \\
		& P_\alpha = P_\beta \arrow[ul, "\rhomlieG{\alpha}"] \arrow[ur, "\rhomlieG{\beta}"'] &
	\end{tikzcd} \]
	commutes. Similarly, an isomorphism between $ L_\gamma $ for $ \gamma \in G_2 $ long and $ k \rho $ for $ \rho \in \{\xi, \zeta\} $ is called \emph{natural} if it is compatible with $ \rhomlieG{\gamma} $ and the \enquote{canonical} map $ k \to k \rho \colon \lambda \mapsto \lambda \rho $, and an isomorphism between $ k \xi $ and $ k \zeta $ is called \emph{natural} if it interchanges $ \xi $ and $ \zeta $.
\end{definition}

\begin{remark}\label{rem:natural auto}
	Note that there exists exactly one natural isomorphism from $ L_\alpha $ to $ L_\beta $, namely $\rhomlieG{\beta} \circ \rhomlieG{\alpha}^{-1}$, and it is the identity map if $ \alpha=\beta $. However, we will be interested in $ k $-module endomorphisms of $ L $ that restrict to a natural isomorphism on certain root subspaces, and there may be multiple ways to extend a natural isomorphism between two root spaces to such an endomorphism.
\end{remark}

In the following, we will use the convention from \cref{rem:G2 grid} to identify $ G_2^0 $ with a subset of $\Z^ 2$, so that we have root subspaces $ (L_\alpha)_{\alpha \in G_2^0} $ and corresponding parametrising spaces $ (P_\alpha)_{\alpha \in G_2^0} $.

\begin{lemma}\label{le:ad rootspace isom}
	Let $ \alpha, \beta \in G_2 $ such that $ \alpha \sim \beta $. Then there exist $ \gamma_1, \ldots, \gamma_n \in G_2 $ and $ a_1 \in L_{\gamma_1}, \ldots, a_n \in L_{\gamma_n} $ such that $ \ad(a_1) \circ \cdots \circ \ad(a_n) $ induces a natural isomorphism from $ L_\alpha $ to $ L_\beta $. Further, if $ \alpha $ is long and $ \rho \in \{\xi, \zeta\} $, then there exist $ b_1, \ldots, b_m \in L $ and $ c_1, \ldots, c_p \in L $ such that
	\[ \ad(b_1) \circ \cdots \circ \ad(b_m) \quad \text{and} \quad \ad(c_1) \circ \cdots \circ \ad(c_p) \]
	induce natural isomorphisms from $ L_\alpha $ to $ k\rho $ and from $ k\rho $ to $ L_\alpha $, respectively.
\end{lemma}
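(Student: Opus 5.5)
The plan is to reduce everything to a small set of explicit ``elementary moves'' read off from \cref{ta:Lie}, and then compose them. Because natural isomorphisms compose to natural isomorphisms, and each orbit under $\sim$ is a single orbit of the subgroup generated by long-root reflections (\cref{rem:G2 orbits}), it suffices to check two things. First, for a fixed set of generating pairs inside each orbit, some single $\ad(a)$ with $a$ homogeneous induces a natural isomorphism between the two root spaces, up to a sign. Second, the graph of such moves on each orbit is connected. Signs cost nothing: $\ad(-a)=-\ad(a)$, so we may replace $a$ by $-a$ whenever needed.

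For the long roots, table entries with scalar parameters give the moves. For instance:
\begin{itemize}
\item $[y,(t,0,0,0)_-]=(t,0,0,0)_+$ moves $L_{-1,-2}\to L_{1,-1}$, and similarly $(0,0,0,t)_-\mapsto(0,0,0,t)_+$;
\item $\ad(x)$ reverses these two moves up to sign;
\item $[(0,0,0,1)_+,(t,0,0,0)_+]=ty$ moves $L_{1,-1}\to L_{2,1}$, and likewise with $(1,0,0,0)_\mp$ on the other side;
\item $[(1,0,0,0)_-,(0,0,0,t)_-]=-tx$ connects $L_{-1,1}$ to $L_{-2,-1}$.
\end{itemize}
These connect all six long roots. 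For $k\rho$ we use two further entries. Bracketing $tx$ with $y$ gives $t\xi$. Bracketing $(0,0,0,1)_-$ with $(t,0,0,0)_+$ gives $-t\mu\nu(\xi-\zeta)\cdots$; more precisely, \eqref{eq:L-1 L1} with only $\mu,\nu$ nonzero gives $-\mu\nu(\xi-\zeta)$, and with only $\lambda,\rho$ nonzero gives $\lambda\rho\zeta$. So $\ad((1,0,0,0)_-)$ maps $(0,0,0,t)_+$ to $t\zeta$. Conversely, $[\xi,y]=2y$ is not an isomorphism when $2$ is not invertible, so here we must pick elements carefully. Since $[\zeta,(0,0,0,1)_+]=2(0,0,0,1)_+$, we use instead $[\zeta,y]=y$, which is natural from $k\zeta$ if we apply $\ad(y)$ to $t\zeta$ ($[y,t\zeta]=-ty$). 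Likewise $[\xi-\zeta,\cdot]$ or $[\zeta,(1,0,0,0)_+]=-(1,0,0,0)_+$ shows that $\ad((1,0,0,0)_+)$ sends $t\zeta$ to a natural element. For $\xi$, $[x,\xi]=2x$ is the obstruction. I would instead use $(2\zeta-\xi)$-type combinations, namely $[\xi,(1,0,0,0)_+]=(1,0,0,0)_+$, so $\ad((1,0,0,0)_+)(t\xi)=-t(1,0,0,0)_+$.

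For the short roots in the $J$-orbit, the moves are:
\begin{itemize}
\item $\ad(y)\colon(0,b,0,0)_-\mapsto(0,b,0,0)_+$;
\item $\ad((1,0,0,0)_\pm)$ via $[a,(\lambda,\dots)]$, i.e.\ $[(0,b,0,0)_-, (1,0,0,0)_+]$ has $J$-component $\rho b$, which requires the right choice; alternatively $[b,(1,0,0,0)_\pm]=(0,b,0,0)_\pm$ read as $\ad((1,0,0,0)_\pm)(b)=-(0,b,0,0)_\pm$;
\item $\ad(x)$ for the remaining link.
\end{itemize}
The $J'$-orbit is handled symmetrically, using $(0,0,0,1)_\pm$ and $[b',(0,0,0,\mu)]$.

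Each step is a one-line table lookup. The main obstacle is precisely the $k\rho$ part, where naive choices introduce factors of $2$ or $3$, as in $\ad(\xi)$ on $L_{\pm2}$ or $\ad(\zeta)$ on $L_{1,2}$. I would resolve it by always routing through the spaces on which $\xi$ or $\zeta$ acts with eigenvalue $\pm1$ (e.g.\ $L_{1,-1}$ for both), and only then moving to the desired long root via the connected long-root graph.
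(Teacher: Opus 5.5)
Your strategy is the paper's: compose single $\ad$-steps read off from \cref{ta:Lie}, and enter or leave $k\xi$, $k\zeta$ only through root spaces on which $\xi$, $\zeta$ act by $\pm 1$. Your $k\rho$ steps are correct. You enter $k\zeta$ from $L_{1,2}$ via $\ad((1,0,0,0)_-)$, where the paper enters from $L_{-1,-2}$ via $\ad((0,0,0,-1)_+)$; either works.

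The gap is in the connectivity step. A composite $\ad(a_1)\circ\cdots\circ\ad(a_n)$ follows a \emph{directed} path of moves. Knowing that $\ad(a)$ maps $L_\alpha$ naturally onto $L_\beta$ gives you nothing from $L_\beta$ back to $L_\alpha$, so each orbit must be strongly connected, and your lists do not achieve this.
\begin{itemize}
\item In the $J$-orbit, every move you actually establish either starts at $L_{0,1}$ or runs between $L_{-1,-1}$ and $L_{1,0}$; none ends in $L_{0,1}$. The one you attempt, bracketing $(0,b,0,0)_-$ with $(1,0,0,0)_+$, is identically zero, since $(-1,-1)+(1,-1)=(0,-2)\notin G_2^0$. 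Concretely, in \eqref{eq:L-1 L1} the $J$-component is $-\rho b$, and here $\rho=0$.
\item By your symmetry, the same problem occurs for $L_{0,-1}$ in the $J'$-orbit: you only give moves out of it.
\item Among the long roots, you give moves into $L_{2,1}$ (from $L_{1,-1}$, $L_{1,2}$ and $k\zeta$) but none out of it.
\end{itemize}
So, for instance, no natural isomorphism $L_{1,0}\to L_{0,1}$ or $L_{2,1}\to L_{1,-1}$ has been produced.

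Each missing move is a one-line table lookup:
\begin{itemize}
\item $\ad((0,0,0,1)_+)$ sends $(0,b,0,0)_-$ to $b$. This is the $\rho b$ term you noticed, now with $\rho=1$.
\item $\ad((1,0,0,0)_+)$ sends $(0,0,b',0)_-$ to $b'$, from the $-\nu b'$ term.
\item $\ad((0,0,0,1)_-)$ sends $ty$ to $-t(0,0,0,1)_+$.
\item Optionally, to avoid the detour through $k\xi$, $\ad((1,0,0,0)_+)$ sends $tx$ to $t(1,0,0,0)_-$.
\end{itemize}
With these added, each orbit becomes strongly connected. In fact you recover the paper's directed cycles, for example $L_{-1,-1}\to L_{0,1}\to L_{1,0}\to L_{-1,-1}$ via $(0,0,0,1)_+$, $(1,0,0,0)_+$ and $x$, up to signs. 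Composing along these cycles, together with your $k\rho$ detours, then proves the lemma.
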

\begin{proof}
	In the following, an arrow labeled by an element $ a \in L $ stands for the $ k $-linear homomorphism $ \ad_a \colon L \to L \colon b \mapsto [a,b] $. On the root spaces (canonically) parametrized by $ J $, we have a chain of isomorphisms
	\[ \begin{tikzcd}[column sep=huge]
		L_{-1,-1} \arrow[r, "{(0,0,0,1)_+}"] & L_{0,1} \arrow[r, "{(-1,0,0,0)_+}"] & L_{1,0} \arrow[r, "-x"] & L_{-1,-1}
	\end{tikzcd} \]
	that are compatible with the root homomorphisms. On the root spaces (canonically) parametrized by $ J' $, we have a similar chain
	\[ \begin{tikzcd}[column sep=huge]
		L_{-1,0} \arrow[r, "{(1,0,0,0)_+}"] & L_{0,-1} \arrow[r, "{(1,0,0,0)_+}"] & L_{1,1} \arrow[r, "-x"] & L_{-1,0}. 
	\end{tikzcd} \]
	On the long root spaces, the isomorphisms
	\[ \begin{tikzcd}[column sep=huge, row sep=small]
		L_{-2,-1} \arrow[r, "{(1,0,0,0)_+}"] & L_{-1,-2} \arrow[r, "y"] & L_{1,-1} \arrow[r, "{(0,0,0,1)_+}"] & L_{2,-1} \arrow[dl, "{(0,0,0,-1)_-}"] \\
		& L_{-1, 1} \arrow[ul, "{(-1,0,0,0)_-}"] & L_{1,2} \arrow[l, "-x"] &
	\end{tikzcd} \]
	are compatible with the root homomorphisms. Finally, the isomorphisms
	\[ \begin{tikzcd}[column sep=huge, row sep = tiny]
		L_{-2,-1} \arrow[r, "-y"] & k \xi \arrow[r, "{(-1,0,0,0)_+}"] & L_{1,-1}, \\
		L_{-1,-2} \arrow[r, "{(0,0,0,-1)_+}"] & k \zeta \arrow[r, "-y"] & L_{2,1}
	\end{tikzcd} \]
	are compatible with the root homomorphisms and with the \enquote{canonical} isomorphisms $ \lambda \mapsto \lambda \xi $, $ \lambda \mapsto \lambda \zeta $. By composing a suitable sequence of the previous isomorphisms, the assertion follows.
\end{proof}

\begin{notation}
	For all $ \alpha, \beta \in G_2 $ such that $ \alpha \sim \beta $, we fix a $ k $-module endomorphism $ \varphi_{\alpha \to \beta} \colon L \to L $ that induces a natural isomorphism $ L_\alpha \to L_\beta $ and that is of the form $ \varphi_{\alpha \to \beta} = \ad(a_1) \circ \cdots \circ \ad(a_n) $ for suitable chosen root space elements $ a_1, \ldots, a_n \in L $. Note that $ \varphi_{\alpha \to \beta} $ leaves all ideals of $ L $ invariant and satisfies $ \varphi_{\alpha \to \beta}(L_\gamma) \subseteq L_{\gamma + \beta - \alpha} $ for all $ \gamma \in G_2^0 $. We also choose maps $ \varphi_{\alpha \to \rho} $, $ \varphi_{\rho \to \alpha} $ for $ \alpha \in G_2 $ long and $ \rho \in \{\xi, \zeta\} $ and maps $ \varphi_{\xi \to \zeta} $, $ \varphi_{\zeta \to \xi} $ in a similar way.
\end{notation}

\begin{remark}\label{rem:proj shift}
	Let $ \gamma \in G_2^0 $ and let $ \psi_\gamma $ be a $ k $-module endomorphism of $ L $ such that $ \psi_\gamma(L_\alpha) \subseteq L_{\alpha+\gamma} $ for all $ \alpha \in G_2^0 $. Then $ \pi_\alpha \circ \psi_\gamma = \psi_\gamma \circ \pi_{\alpha-\gamma} $ for all and $ \alpha \in G_2^0 $. In particular, $ \pi_\alpha \circ \varphi_{\beta \to \alpha} = \varphi_{\beta \to \alpha} \circ \pi_\beta $ and hence $ \varphi_{\alpha \to \beta} \circ \pi_\alpha \circ \varphi_{\beta \to \alpha} = \varphi_{\alpha \to \beta} \circ \varphi_{\beta \to \alpha} \pi_\beta = \pi_\beta $ for all $ \alpha, \beta \in G_2^0 $.
\end{remark}

\begin{proposition}\label{le:ad long rootspace proj}
	Let $ \alpha $ be a long root in $ G_2 $. Then there exist $ a_1, \ldots, a_n \in L $ such that $ \ad(a_1) \circ \cdots \circ \ad(a_n) = \pi_\alpha $.
\end{proposition}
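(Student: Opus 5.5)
The plan is to prove the claim for one convenient long root, transport it to the others using the maps $\varphi_{\alpha\to\beta}$, and then handle the chosen root by a degree argument.

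\emph{Reduction to one root.} Suppose $\pi_\beta$ is a composition of adjoint maps for one long root $\beta$, and let $\alpha$ be any other long root, so $\alpha\sim\beta$. By \cref{rem:proj shift} we have $\pi_\beta\circ\varphi_{\alpha\to\beta}=\varphi_{\alpha\to\beta}\circ\pi_\alpha$. Hence
\[ \varphi_{\beta\to\alpha}\circ\pi_\beta\circ\varphi_{\alpha\to\beta}=\varphi_{\beta\to\alpha}\circ\varphi_{\alpha\to\beta}\circ\pi_\alpha . \]
The composite $\varphi_{\beta\to\alpha}\circ\varphi_{\alpha\to\beta}$ restricts to a natural isomorphism $L_\alpha\to L_\alpha$, and by \cref{rem:natural auto} this is the identity. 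So the right-hand side equals $\pi_\alpha$. Since $\varphi_{\alpha\to\beta}$ and $\varphi_{\beta\to\alpha}$ are themselves compositions of adjoint maps, $\pi_\alpha$ is one too. I would take $\beta=(2,1)$, so $L_\beta=ky$.

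\emph{Construction for $\beta$.} I will build $\psi=\ad(a_1)\circ\cdots\circ\ad(a_n)$ with each $a_i$ in a long root space $L_{\gamma_i}$, such that $\sum_i\gamma_i=0$ and $\psi$ acts as the identity on $L_\beta$. Then $\psi(L_\gamma)\subseteq L_\gamma$ for every $\gamma\in G_2^0$. The candidate for $\psi$ is the six-step cycle of long root spaces from the proof of \cref{le:ad rootspace isom}, rotated so that it starts and ends at $L_{2,1}$. It is a natural isomorphism $L_{2,1}\to L_{2,1}$, hence the identity there.

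\emph{Killing the other components.} To see that $\psi$ annihilates $L_\gamma$ for $\gamma\neq\beta$, I will follow the partial degrees $\gamma+\gamma_n+\gamma_{n-1}+\cdots+\gamma_i$. If one of these lies outside $G_2^0$, the corresponding partial composite is already zero on $L_\gamma$. The proposed check is a finite case analysis: list the $13$ positions in $\omega(G_2^0)$ from \cref{fig:grading} and record which ones survive each step of the path.

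\emph{Main obstacle: the degree-$0$ part and the short roots.} The delicate case is $L_{0,0}=Z$ together with the short root spaces near the centre. A path that only walks around the hexagon of long roots keeps those partial degrees inside $G_2^0$, so the degree argument does not kill them. To handle this I will insert extra factors at the beginning (applied first) that push degrees far out, for example $\ad(y)$ twice. Then only components of degree $\gamma$ with $\gamma+(4,2)\in G_2^0$ survive, i.e. essentially $\gamma\in\{(-2,-1)\}$ and a few neighbours, rather than $\gamma=\beta$. So the real approach is: first apply shifts sending $L_\beta$ toward the far side, where it is the unique surviving root space, then return along a path of long roots to $L_\beta$. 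Concretely, I would try $\psi=\ad(-y)\circ\cdots\circ\ad(x)\circ\ad(x)$, adjusted so that on $L_\beta$ it reproduces a natural chain from \cref{le:ad rootspace isom}.

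If there is trouble, it will be in checking that this chain does not introduce factors of $2$. For example $[y,[x,y]]=-2y$, so a naive back-and-forth through $\xi$ fails. For that reason the return path must go through distinct long roots, as in the lemma, and not back through $L_0$.
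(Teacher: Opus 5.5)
Your reduction to the single root $\beta=(2,1)$ is fine; it is just the identity in \cref{rem:proj shift}. Your verbal plan in the last paragraph is also the right one: first move $L_\beta$ to the far side, where it is the only surviving root space, then come back along natural maps. But the construction you actually commit to fails.

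Its first two factors are $\ad(x)\circ\ad(x)$. On $L_\beta=ky$ these give $y\mapsto[x,y]=\xi\mapsto[x,\xi]=2x$. So with any natural return chain $L_{-\beta}\to L_\beta$ you get $\psi=2\pi_\beta$, not $\pi_\beta$. If $2=0$ in $k$, then $\ad(x)\circ\ad(x)$ already annihilates $L_\beta$, and no choice of the remaining factors can produce $\pi_\beta$. This is the factor-of-$2$ trap you warn about, but it sits on the outbound leg, not the return leg. Passing through $L_0$ is not the problem in itself; the chain $L_{-1,-2}\to k\zeta\to L_{2,1}$ in \cref{le:ad rootspace isom} is natural. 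The factor comes specifically from $[x,\xi]=2x$.

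The missing observation is that no extra factors are needed, because your reason for discarding the six-step hexagon cycle is false. Start at $L_{2,1}$ and take the first three steps of that cycle, via $L_{1,2}$ and $L_{-1,1}$ to $L_{-2,-1}$. Their cumulative degree is $(-1,1)+(-2,-1)+(-1,-2)=-2\beta$. Moreover $\gamma-2\beta\notin G_2^0$ for every $\gamma\in G_2^0\setminus\{\beta\}$. To see this, compare lengths: $\lVert 2\beta-\gamma\rVert>\lVert\beta\rVert$ for all such $\gamma$, and $\beta$ is long. For instance, the partial degree of $L_{0,0}$ is already $(-3,0)\notin G_2^0$ after two steps.

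So the half-cycle kills $L_{0,0}$ and all short root spaces at once. The full cycle is a natural map $L_\beta\to L_\beta$, hence the identity there, so the six-step cycle is already $\pi_\beta$.

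This is exactly the paper's proof, written uniformly for any long $\alpha$ as $\pi_\alpha=\varphi_{-\alpha\to\alpha}\circ\varphi_{\alpha\to-\alpha}$. Any natural chain $L_\alpha\to L_{-\alpha}$ has total degree $-2\alpha$ and so vanishes on every other root space. That makes both your reduction step and the thirteen-case degree analysis unnecessary.
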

\begin{proof}
	Since $ \gamma -2\alpha \notin G_2^0 $ for all $ \gamma \in G_2^0 \setminus \{\alpha\} $ (as can be seen in \cref{fig:grading}), $ \varphi_{\alpha \to -\alpha} $ maps all root spaces except for $ L_\alpha $ to zero. Hence $ \varphi_{-\alpha \to \alpha} \circ \varphi_{\alpha \to -\alpha} $ maps all root spaces except for $ L_\alpha $ to zero, and it is the identity map on $ L_\alpha $ (by \cref{rem:natural auto}). Thus $ \varphi_{-\alpha \to \alpha} \circ \varphi_{\alpha \to -\alpha} = \pi_\alpha $.
\end{proof}

It is not possible to realize the projections to short root spaces as in \cref{le:ad long rootspace proj}: We only obtain the following result because we cannot annihilate the $(0,0)$-part of $L$ in this way.

\begin{lemma}\label{le:ideal graded lem}
	Let $ I $ be an ideal of $ L $ and let $ b \in I $ with $ \pi_{(0,0)}(b) = 0 $. Then $ \pi_\alpha(b) \in I $ for all short roots $ \alpha \in G_2 $.
\end{lemma}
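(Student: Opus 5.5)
The plan is to isolate each short root component of $b$ by repeatedly applying the ideal-preserving operators $\ad(a)$ and the long-root projections from \cref{le:ad long rootspace proj}. Write $b = \sum_{\gamma \in G_2^0} \pi_\gamma(b)$, where by hypothesis the $(0,0)$-component vanishes.

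First, by \cref{le:ad long rootspace proj} we have $\pi_\gamma(b) \in I$ for every long root $\gamma$, since $\pi_\gamma$ is a composition of maps $\ad(a_i)$ and these leave $I$ invariant. Subtracting these components, we may assume that $b$ is a sum of short root components only.

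Now fix a short root $\alpha$ and choose a long root $\beta$ such that $\alpha + \beta$ is a long root. We then apply $\psi \coloneq \varphi_{\ldots}$, or more simply $\ad(e)$ for a suitable element $e$ of $L_\beta$ followed by $\pi_{\alpha+\beta}$. This gives $\pi_{\alpha+\beta}(\ad(e) b) = \ad(e)\pi_\alpha(b) \in I$ by \cref{rem:proj shift}, because among the short roots $\gamma$, only $\gamma = \alpha$ satisfies $\gamma + \beta = \alpha + \beta$. It is exactly here that the missing $(0,0)$-component matters: otherwise $(0,0) + \beta$ could also land on a long root. To recover $\pi_\alpha(b)$ itself, we then bring the element back with a map that lands exactly in $L_\alpha$.

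A cleaner variant avoids this recovery step. Use the natural-isomorphism operators $\varphi_{\alpha \to \alpha'}$ of \cref{le:ad rootspace isom}, with $\alpha' $ chosen in the same orbit so that the shift $\alpha' - \alpha$ is nonzero. Take $c \coloneq \varphi_{\alpha\to\alpha'}(b) \in I$. Its component in $L_{\alpha'}$ is the natural image of $\pi_\alpha(b)$, while the other components come from the remaining short roots shifted by $\alpha'-\alpha$. The aim is to pick the chain so that the total shift $\alpha' - \alpha$ is twice a long root direction or otherwise such that $\gamma + (\alpha'-\alpha)$ is not in $G_2^0$ for every other short $\gamma$. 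We also need that no short root is shifted to $(0,0)$, since a component there could not be removed by projections.

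The main obstacle is this combinatorial choice. In $G_2$ one cannot in general kill all other short roots in a single shift, so I would proceed in two steps. In the first step, shift by a long root $\beta$ with $\alpha+\beta$ long and project onto $L_{\alpha+\beta}$ using \cref{le:ad long rootspace proj}; this isolates the image of $\pi_\alpha(b)$ inside a long root space, which lies in $I$. In the second step, map back using an element of $L_{-\beta}$. Concretely, $[L_{-\beta},[L_\beta, L_\alpha]]$ returns to $L_\alpha$, so we can read off, from \cref{ta:Lie}, explicit elements realizing a natural isomorphism $L_\alpha \to L_{\alpha+\beta} \to L_\alpha$, with the middle projection inserted. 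The remaining check is that the composite restricted to $L_\alpha$ is a natural isomorphism (up to sign) on $P_\alpha \in \{J,J'\}$. I would verify this on one representative $\alpha$ in each of the two short orbits, for instance $\alpha=(0,1)$ with $\beta=(1,-1)$, and transport to the others via the reflections of \cref{le:reflections auto}, which preserve ideals up to applying the automorphism.
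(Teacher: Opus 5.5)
Your first step, removing the long components via \cref{le:ad long rootspace proj}, is fine. The isolation step, however, has a genuine gap.

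In $G_2$ a short root plus a long root is never a long root. The long roots form a closed subsystem of type $A_2$, and the difference of two long roots is never short. Your own example shows this, since $(0,1)+(1,-1)=(1,0)$ is short. So there is no long $\beta$ with $\alpha+\beta$ long, and the step ``shift into a long root space and project with \cref{le:ad long rootspace proj}'' cannot be carried out.

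Using a short $\beta$ to reach a long root space does not repair it either. Long root spaces are isomorphic to $k$, and the bracket $L_\alpha \to L_{\alpha+\beta}$ is then a linear form on $J$ or $J'$, of the type $c' \mapsto T(e,c')$. Hence no composite $L_\alpha \to L_{\alpha+\beta} \to L_\alpha$ can restrict to a natural isomorphism of $L_\alpha$: the information in $\pi_\alpha(b)$ is destroyed. The two-step plan therefore fails, and so does the final check on representatives.

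What you need is your ``cleaner variant'', iterated rather than abandoned. For short $\alpha \sim \beta$, the map $\varphi_{\alpha\to\beta}$ shifts degrees by the long root $\beta-\alpha$. It therefore sends short root spaces into short root spaces or to $0$, never into long root spaces or $L_{0,0}$.

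Let $\alpha,\beta,\gamma$ be the three short roots of the orbit and put $\varphi = \varphi_{\gamma\to\alpha}\circ\varphi_{\beta\to\gamma}\circ\varphi_{\alpha\to\beta}$. This composite has total shift $0$. It is the identity on $L_\alpha$ by \cref{rem:natural auto}, and it kills every other short root space. For instance, take $\alpha=(1,0)$, $\beta=(0,1)$, $\gamma=(-1,-1)$. The first shift $(-1,1)$ kills every short root space except $L_\alpha$ and $L_{(0,-1)}$, which goes to $L_{(-1,0)}$. The second shift $(-1,-2)$ then kills that component.

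After your first step $b$ has only short components, so $\varphi(b)=\pi_\alpha(b)$. This lies in $I$ because $\varphi$ is a composite of maps $\ad(a_i)$. This is exactly the paper's proof, and no transport by reflections is needed.
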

\begin{proof}
	For all long roots $ \delta \in G_2 $, we have $ \pi_\delta(b) \in I $ by \cref{le:ad long rootspace proj}, and hence we may assume that $ \pi_\delta(b) = 0 $.
	Let $ \alpha \in G_2 $ be short and let $ \beta, \gamma \in G_2 $ be the two short roots such that $ \alpha \sim \beta \sim \gamma $ and $ \alpha,\beta, \gamma $ are pairwise distinct.
	Then
	\[ \varphi \coloneq \varphi_{\gamma \to \alpha} \circ \varphi_{\beta \to \gamma} \circ \varphi_{\alpha \to \beta} \]
	is the identity map on $ L_{\alpha} $ and, because each factor shifts the grading, sends all other root spaces to zero except (possibly) for $ L_{0,0} $ and some long root spaces. Hence $ \pi_\alpha(b) $ is the only non-zero component of $ b $ on which $ \varphi $ is non-zero, so
	\begin{align*}
		\varphi(b) &= \varphi\brackets[\big]{\pi_{\alpha}(b)} = \pi_{\alpha}(b).
	\end{align*}
	Since $ \varphi(b) \in I $ by construction of $ \varphi $, the assertion follows.
\end{proof}

\begin{proposition}\label{pr:L ideal graded}
	Every ideal of $ L $ is graded.
\end{proposition}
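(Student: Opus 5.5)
Let $I$ be an ideal of $L$ and $b \in I$. We must show $\pi_\alpha(b) \in I$ for all $\alpha \in G_2^0$. Since $b = \sum_{\alpha \in G_2^0} \pi_\alpha(b)$, it suffices to treat the nonzero roots. Then $\pi_{(0,0)}(b) = b - \sum_{\alpha \neq 0} \pi_\alpha(b)$ will lie in $I$ automatically.

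For long roots $\alpha$, \cref{le:ad long rootspace proj} gives $\pi_\alpha = \ad(a_1)\circ\cdots\circ\ad(a_n)$, so $\pi_\alpha(b) \in I$ immediately. Subtracting these components, we may assume $b$ has no long components.

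The short roots are the real issue, because \cref{le:ideal graded lem} requires $\pi_{(0,0)}(b) = 0$. My plan is to kill the $L_{0,0}$-component by bracketing with a suitable long root element, without losing information about the short components.

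Fix a short root $\alpha$. Choose a long root $\delta$ such that $\alpha+\delta$ is a short root; in $G_2$ every short root admits such a $\delta$. For instance, in the coordinates of \cref{rem:G2 grid}, take $\alpha = (0,1)$ and $\delta = (1,-1)$, so $\alpha+\delta = (1,0)$. Also choose an element $e \in L_\delta$, such as one of the elements appearing in the chains of \cref{le:ad rootspace isom}, for which $\ad(e)$ restricts to a natural isomorphism $L_\alpha \to L_{\alpha+\delta}$.

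Then $c \coloneq [e,b] \in I$ has $(0,0)$-component $[e, \pi_{-\delta}(b)] = 0$, since $b$ has no long components. Lemma~\ref{le:ideal graded lem} then yields $\pi_{\alpha+\delta}(c) = [e,\pi_\alpha(b)] \in I$. Applying a chain $\varphi_{\alpha+\delta \to \alpha}$ of $\ad$'s, which preserves $I$, returns the natural preimage, so $\pi_\alpha(b) \in I$.

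This step requires that $\ad(e)$ be injective on $L_\alpha$ and compatible with the chains of \cref{le:ad rootspace isom}. I would take $\delta$ and $e$ directly from the arrows listed there: for example, $(-1,0,0,0)_+ \colon L_{0,1} \to L_{1,0}$ for $J$, and the analogous arrow for $J'$. Their composites with $\varphi$'s are natural, hence bijective. The main obstacle is checking, for each of the six short roots, that the relevant arrow is labeled by a long root element and lands in a short root space. Since the listed chains cycle through all three short roots of each orbit using long root labels ($(0,0,0,1)_+$, $(-1,0,0,0)_+$, $-x$, and so on), every short root $\alpha$ is the source of one such arrow.

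Once all short and long components are shown to lie in $I$, we get $\pi_{(0,0)}(b) \in I$, and hence $I$ is graded.
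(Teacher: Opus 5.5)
Your argument is correct, but it handles the $L_{0,0}$-component differently from the paper.

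\textbf{The paper's route.} It uses the degree-preserving operator $\ad_{-x}\circ\ad_y$. Writing $\pi_{(0,0)}(a)=a_0'+\lambda\xi+\mu\zeta$, it uses $[x,\xi]=2x$, $[x,\zeta]=x$ and \cref{le:ad rootspace isom} to show $(2\lambda+\mu)\zeta\in I$. Subtracting this makes $\ad_y$ annihilate the $(0,0)$-component, so \cref{le:ideal graded lem} applies to $\ad_{-x}\ad_y(a)$. Since this operator fixes $L_{-1}$ pointwise, one gets the two short roots in $L_{-1}$ (one per orbit) at once, and the remaining short roots follow by transport via \cref{rem:proj shift}.

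\textbf{Your route.} You shift by a long root instead. For $e\in L_\delta$, the $(0,0)$-component of $[e,b]$ is $[e,\pi_{-\delta}(b)]$, which is already zero after removing the long components. So you never need to analyse the central part of $L_{0,0}$, and the normalization step disappears. What you do need is $\langle\alpha,\delta^\vee\rangle=-1$, so that $\alpha+\delta$ is the image of $\alpha$ under the long reflection $\sigma_\delta$. Then $\alpha+\delta\sim\alpha$ and $\varphi_{\alpha+\delta\to\alpha}$ is defined, and you can reach $\alpha$ directly without the paper's final transport step.

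\textbf{A caution about the arrows you rely on.} In the $J'$-chain of \cref{le:ad rootspace isom}, the middle arrow $L_{0,-1}\to L_{1,1}$ is mislabeled. Its label $(1,0,0,0)_+$ lies in $L_{1,-1}$, and $[L_{1,-1},L_{0,-1}]\subseteq L_{1,-2}=0$. The correct label is $(0,0,0,1)_+\in L_{1,2}$, which gives $[(0,0,0,1)_+,c']=(0,0,c',0)_+$. You can fix this in either of two ways:
\begin{itemize}
\item use $(0,0,0,1)_+$ as your $e$ for $\alpha=(0,-1)$;
\item run your argument only for $\alpha=(0,1)$ with $e=(-1,0,0,0)_+$ and for $\alpha=(-1,0)$ with $e=(1,0,0,0)_+$, then reach the other short roots via $\pi_\beta=\varphi_{\alpha\to\beta}\circ\pi_\alpha\circ\varphi_{\beta\to\alpha}$, as the paper does.
\end{itemize}
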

\begin{proof}
	Let $ I $ be an ideal of $ L $ and let $ a \in I $. We have to show that $ \pi_\alpha(a) \in I $ for all $ \alpha \in G_2^0 $. For all long $ \alpha \in G_2 $, this holds by \cref{le:ad long rootspace proj}, so we may assume that $ \pi_\alpha(a) = 0 $ for such $ \alpha $.
	Put $ a_0 \coloneq \pi_{(0,0)}(a) \in L_{0,0} $ and write $ a_0 = a_0' + \lambda \xi + \mu \zeta $ where $ \lambda, \mu \in k $ and $ a_0' \in \langle \dd_{b,b'} \mid b \in J, b' \in J' \rangle $. (Note that this expression is, in general, not unique.) Since the root $ (2,1) $ is long and $ \ad_x(a) \in I $, we have $ \pi_{(2,1)}(\ad_x(a)) \in I $. Here, by \cref{rem:proj shift},
	\[ \pi_{(2,1)}\brackets[\big]{\ad_x(a)} = \ad_x\brackets[\big]{\pi_{(0,0)}(a)} = \ad_x(a_0' + \lambda \xi + \mu \zeta) = (2\lambda + \mu)x. \]
	Thus $ (2\lambda + \mu)x \in I $ and hence, by \cref{le:ad rootspace isom}, $ (2\lambda + \mu) \zeta \in I $. By subtracting this element from $ a $, we may assume without loss of generality that $ \mu = -2\lambda $, so that $ a_0 = a_0' + \lambda (\xi - 2\zeta) $. Thus $ \ad_y(a_0) = 0 $. This implies, by \cref{rem:proj shift}, that
	\begin{align*}
		\pi_{(0,0)}\brackets[\big]{\ad_{-x} \circ \ad_y(a)} &= \ad_{-x} \circ \ad_y \brackets[\big]{\pi_{(0,0)}(a)} = \ad_{-x}\brackets[\big]{\ad_y(a_0)} = 0.
	\end{align*}
	Hence $ b \coloneq \ad_{-x} \circ \ad_y(a) $ satisfies the requirements of \cref{le:ideal graded lem}. Thus for all short $ \alpha \in G_2 $, we have
	\begin{align*}
		\ad_{-x} \circ \ad_y\brackets[\big]{\pi_\alpha(a)} &= \pi_\alpha\brackets[\big]{\ad_{-x} \circ \ad_y(a)} = \pi_\alpha(b) \in I.
	\end{align*}
	Since $ \ad_{-x} \circ \ad_y $ is the identity map on $ L_1 $, we infer that $ \pi_\alpha(a) \in I $ for $ \alpha \in \{(1,0), (1,1)\} $. In other words, we have proven that $ \pi_\alpha(I) \subseteq I $ for $ \alpha \in \{(1,0), (1,1)\} $.
	
	Now let $ \beta \in G_2 $ be an arbitrary short root. Then $ \beta \sim \alpha $ for some $ \alpha \in \{(1,0), (1,1)\} $ and hence, by \cref{rem:proj shift},
	\[ \pi_\beta(I) = \varphi_{\alpha \to \beta} \circ \pi_\alpha \circ \varphi_{\beta \to \alpha}(I) \subseteq I. \]
	Altogether, we have shown that $ \pi_\gamma(I) \subseteq I $ for all $ \gamma \in G_2 $. Since
	\[ \pi_{(0,0)} = \id_L - \sum_{\gamma \in G_2} \pi_\gamma, \]
	it follows that $ \pi_{(0,0)}(I) \subseteq I $ as well. Therefore, $ I $ is graded.
\end{proof}

With \cref{pr:L ideal graded} proven, we can now begin to associate ideals of $L$ to substructures of $(k,J,J')$. The closure condition of ideals together with \cref{ta:Lie} motivates the following definition.

\begin{definition}\label{def:ideal}
	An \emph{ideal of $ (k,J,J') $} is a triple $ (\ell, U, U') $ with the following properties.
	\begin{enumerate}
		\item $ \ell $ is an ideal of the ring $ k $ and $ U $, $ U' $ are $ k $-submodules of $ J $, $ J' $, respectively.
		
		\item $ \ell J \subseteq U $ and $ \ell J' \subseteq U' $.
		
		\item $ T(u,b') \in \ell$ for all $ u \in U $, $ b' \in J' $ and $ T(b, u') \in \ell $ for all $ u' \in U' $, $ b \in J $.
		
		\item $ u \times b \in U' $ for all $ u \in U $, $ b \in J $ and $ u' \times b' \in U $ for all $ u' \in U' $, $ b' \in J' $.
	\end{enumerate}
\end{definition}

\begin{example}\label{ex:ideal ring CNP}
	Let $ \ell $ be an ideal of $ k $. Then $ (\ell, \ell J, \ell J') $ is an ideal of $ (k, J, J') $. In particular, $ (0,0,0) $ and $ (k, J, J') $ are ideals of $ (k,J,J') $, called the \emph{trivial ideals}.
\end{example}

\begin{example}\label{ex:CNP radical}
	Let
	\begin{align*}
		R &\coloneq \{u \in J \mid T(u,a') = 0 \text{ for all } a' \in J'\} \quad \text{and} \\
		R' &\coloneq \{u' \in J' \mid T(a,u') = 0 \text{ for all } a \in J\}
	\end{align*}
	denote the \emph{left radical of $ T $} and the \emph{right radical of $ T $}, respectively.
	Then for any ideal $ (\ell, U, U') $ of $ (k, J, J') $, it follows from \cref{le:CNP identities}\cref{le:CNP identities:7} that $ (\ell, U+R, U'+R') $ is an ideal as well. In particular, $ (\{0\}, R, R') $ is an ideal of $ (k, J, J') $. Further, if $ T=0 $, then $ (\ell, J, J') $ is an ideal of $ (k,J,J') $ for all ideals $ \ell $ of $ k $ because then $ R=J $ and $ R'=J' $.
\end{example}

\begin{remark}\label{rem:ideal D}
	Let $ (\ell, U, U') $ be an ideal of $ (k,J,J') $. Then by \cref{def:U-operators},
	\[ D_{b,b'}(u), D_{u,b'}(b), D_{b,u'}(c) \in U \quad \text{and} \quad D_{b',b}(u'), D_{u',b}(b'), D_{b',u}(c') \in U \]
	for all $ b,c \in J $, $ b',c' \in J' $, $ u \in U $ and $ u' \in U' $.
\end{remark}

\begin{remark}\label{rem:ideal quot}
	Let $ (\ell, U, U') $ be an ideal of $ (k,J,J') $. Then $ (J/U, J'/U') $ is a linear cubic norm pair over the ring $ k/\ell $ with induced natural maps $ \bar{T} $ and $ \bar{\times} $.\footnote{Note that, even if $ (J,J') $ is a proper cubic norm pair, then $ (J/U, J'/U') $ need not have the structure of a proper cubic norm pair because the definition of ideals of $ (k,J,J') $ does not refer to $ N $ and $ \sharp $ at all. For this reason, it might be more appropriate to refer to $ (\ell, U, U') $ as a \emph{linear ideal of $ (k,J,J') $}, but we will not do so.} Hence we have an associated Lie algebra $L(J/U, J'/U')$ over $k/\ell$. Note that $L(J/U, J'/U')$ may also be regarded as a Lie algebra over $k$ whereas $(J/U, J'/U')$ cannot be regarded as a linear cubic norm pair over $k$: It is in general not possible to lift the trace $\bar{T} \colon J/U \times J'/U' \to \ell$ to a map with image in $k$ such that the defining identities of linear cubic norm pairs are satisfied. Further, the triple
	\[ (\pi_k, \pi_J, \pi_{J'}) \colon (k, J, J') \to (k/\ell, J/U, J'/U') \]
	is a \emph{homomorphism of linear cubic norm pairs with base ring} in the sense that
	\[ \bar{T}\brackets[\big]{\pi_J(a), \pi_{J'}(a')} = \pi_k\brackets[\big]{T(a,a')} \]
	for all $a \in J$, $a' \in J'$ and $\pi_J$, $\pi_{J'}$ are compatible with $\times$, $\bar{\times}$ (as in \cref{def:linear CNP homotopy} with $ t=1 $). This triple induces a homomorphism
	\[ L(\pi_k, \pi_J, \pi_{J'}) \colon L(J,J') \to L(J/U, J'/U') \]
	of Lie algebras over $k$ that is defined just as in \cref{def:L functorial} (with $ (\iota, \iotainv, t) = (\pi_J, \pi_{J'}, 1) $), except that $(\nu, 0, 0, \rho)_\pm $ is mapped to $ (\pi_k(\nu), c, c', \pi_k(\rho))$ for $\nu, \rho \in k$, $ c \in J $, $ c' \in J' $. This homomorphism of Lie algebras is surjective by construction. In the following, when the ideal $(\ell, U, U')$ is clear from the context, we will denote $ L(\pi_k, \pi_J, \pi_{J'}) $ as well as the epimorphisms $\pi_k \colon k \to k/\ell$, $\pi_J \colon J \to J/U$ and $\pi_{J'} \colon J' \to J'/U'$ by $a \mapsto \bar{a}$.
\end{remark}

We would like to associate to each ideal $(\ell, U, U')$ of $(k, J, J')$ an ideal $L(\ell, U, U')$ of $L$. While $(\ell, U, U')$ canonically prescribes what $L(\ell, U, U') \cap L_\alpha$ for $\alpha \in G_2$ should be, the same is not clear for $\alpha = (0,0)$. Indeed, we will define a minimal and a maximal ideal corresponding to $(\ell, U, U')$ and show that every ideal of $ L $ is sandwiched between such a pair.

The following definition is motivated by \cref{le:lin comb}.

\begin{definition}
	Let $(\ell, U, U')$ be an ideal of $(k,J,J')$. The \emph{radical of $(\ell, U, U')$} is the $k$-submodule $\Rad(\ell, U, U')$ of $L_{0,0}$ consisting of all elements of the form
	\[ \sum \lambda_{a,a'} \dd_{a,a'} + \lambda_\xi \xi + \lambda_\zeta \zeta \]
	where the sum runs over finitely many $a \in J$, $a' \in J'$ and $\lambda_{a,a'}, \lambda \in k$, such that the following properties are satisfied.
	\begin{enumerate}
		\item $\lambda_\zeta + 2\lambda_\xi \in \ell$.
		\item $\sum \lambda_{a,a'} T(a,a') \equiv 3\lambda_\xi$ modulo $\ell$.
		
		\item $\sum \lambda_{a,a'} D_{a,a'} \equiv 2\lambda_\xi \id_J $ modulo $U$. That is, the image of $\sum \lambda_{a,a'} D_{a,a'} - 2\lambda_\xi \id_J$ is contained in $U$.
		
		\item $\sum \lambda_{a,a'} D_{a',a} \equiv 2\lambda_\xi \id_{J'}$ modulo $U'$. That is, the image of $\sum \lambda_{a,a'} D_{a',a} - 2\lambda_\xi \id_{J'}$ is contained in $U'$.
	\end{enumerate}
\end{definition}

\begin{definition}
	Let $ (\ell, U, U') $ be an ideal of $ (k, J, J') $. Then we define the following subsets of $ L $:
	\begin{align*}
		L(\ell, U, U')_{\tmin} &\coloneq \bigoplus_{\alpha \in G_2 \text{ long}} \rhomlieG{\alpha}(\ell) \oplus \bigoplus_{\alpha \sim (1,0)} \rhomlieG{\alpha}(U) \oplus \bigoplus_{\alpha \sim (1,1)} \rhomlieG{\alpha}(U') \\*
		&\qquad{} \oplus \brackets[\big]{\ell \xi + \ell \zeta + \langle \dd_{a,a'} \mid a \in U, a' \in J' \text{ or } a \in J, a' \in U' \rangle}, \\
		L(\ell, U, U')_{\tmax} &\coloneq \bigoplus_{\alpha \in G_2 \text{ long}} \rhomlieG{\alpha}(\ell) \oplus \bigoplus_{\alpha \sim (1,0)} \rhomlieG{\alpha}(U) \oplus \bigoplus_{\alpha \sim (1,1)} \rhomlieG{\alpha}(U') \\*
		&\qquad{}\oplus \Rad(\ell, U, U').
	\end{align*}
	Further, we put $L_i(\ell, U, U')_\sigma \coloneq L(\ell, U, U')_\sigma \cap L_i$ for all $i \in \{-2, \ldots, 2\}$ and $\sigma \in \{\tmin, \tmax\}$.
\end{definition}

\begin{remark}
	It is straightforward to check that $L(\ell, U, U')_{\tmin}$ is contained in $L(\ell, U, U')_{\tmax}$.
\end{remark}

Before we can turn to the classification of ideals of $ L $, we need two alternative characterizations of the radical.

\begin{lemma}\label{le:max is ker}
	Let $ (\ell, U, U') $ be an ideal of $ (k, J, J') $. Then the kernel of the epimorphism $\pi \colon L(J,J') \to L(J/U, J'/U')$ from \cref{rem:ideal quot} is precisely $L(\ell, U, U')_{\tmax}$. In particular, $L(\ell, U, U')_{\tmax}$ is an ideal of $L$ and $L(\{0\},\{0\},\{0\})_{\tmax} = 0$.
\end{lemma}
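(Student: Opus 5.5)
The epimorphism $\pi = L(\pi_k,\pi_J,\pi_{J'})$ from \cref{rem:ideal quot} respects the $G_2$-gradings of $L(J,J')$ and $L(J/U,J'/U')$; this can be read off from \cref{def:L functorial}. So its kernel is graded, and I will compute it one root space at a time. On a long root space $L_\alpha$, the map $\pi$ is $\rhomlieG{\alpha}(\lambda)\mapsto\rhomlieG{\alpha}(\bar\lambda)$, so the kernel there is $\rhomlieG{\alpha}(\ell)$. On a root space parametrized by $J$ (resp.\ $J'$), the map $\pi$ is $\rhomlieG{\alpha}(b)\mapsto\rhomlieG{\alpha}(\bar b)$, so the kernel there is $\rhomlieG{\alpha}(U)$ (resp.\ $\rhomlieG{\alpha}(U')$). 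These agree with the non-central summands in the definition of $L(\ell,U,U')_{\tmax}$.

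The remaining step is the $(0,0)$-component. Take
\[ f=\sum\lambda_{a,a'}\dd_{a,a'}+\lambda_\zeta\zeta+\lambda_\xi\xi \in L_{0,0}. \]
Its image is $\pi(f)=\sum\bar\lambda_{a,a'}\dd_{\bar a,\bar a'}+\bar\lambda_\zeta\bar\zeta+\bar\lambda_\xi\bar\xi$ in $L(J/U,J'/U')_{0,0}$. This is a Lie algebra over $k/\ell$ built from the linear cubic norm pair $(J/U,J'/U')$. I will apply \cref{le:lin comb} to this quotient pair over $k/\ell$. That lemma uses only the actions computed in \cref{pr:delta z}, so it holds for linear cubic norm pairs. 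It says $\pi(f)=0$ if and only if three conditions hold:
\begin{enumerate}
\item $\bar\lambda_\zeta+2\bar\lambda_\xi=0$ in $k/\ell$;
\item $\sum\bar\lambda_{a,a'}\bar T(\bar a,\bar a')=3\bar\lambda_\xi$;
\item $\sum\bar\lambda_{a,a'}D_{\bar a,\bar a'}=2\bar\lambda_\xi\id_{J/U}$, together with the analogous identity on $J'/U'$.
\end{enumerate}
Since $\pi_J$ and $\pi_{J'}$ are compatible with $\times$ and $T$, we have $D_{\bar a,\bar a'}(\bar b)=\overline{D_{a,a'}(b)}$ and $\bar T(\bar a,\bar a')=\overline{T(a,a')}$. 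Hence these three conditions translate exactly into the four defining conditions of $\Rad(\ell,U,U')$. One point needs care: whether $f$ lies in $\Rad(\ell,U,U')$ must not depend on the chosen expression of $f$. This follows from the equivalence itself, because it identifies membership with the expression-independent condition $\pi(f)=0$. So $\ker\pi\cap L_{0,0}=\Rad(\ell,U,U')$.

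Putting the components together gives $\ker\pi=L(\ell,U,U')_{\tmax}$. Being a kernel of a Lie algebra homomorphism, it is an ideal. For the trivial ideal $(\{0\},\{0\},\{0\})$, the map $\pi$ is the identity of $L$ (via \cref{def:L functorial} with identity maps), so its kernel is $0$.

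I expect the main obstacle to be the $(0,0)$-component. One has to apply \cref{le:lin comb} over the quotient ring $k/\ell$ to a merely linear cubic norm pair, and make sure the computation there really uses only $T$ and $\times$. One also has to check that the reduction mod $(\ell,U,U')$ of the relevant expressions matches the radical conditions term by term.
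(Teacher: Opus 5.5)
Your proposal is correct and follows essentially the same route as the paper. You compute $\ker\pi$ root space by root space, and you handle the $(0,0)$-component by applying \cref{le:lin comb} to the linear cubic norm pair $(J/U,J'/U')$ over $k/\ell$. The one minor difference is that you get gradedness of $\ker\pi$ directly from $\pi$ preserving the $G_2$-grading, whereas the paper cites \cref{pr:L ideal graded}; your explicit check that membership in $\Rad(\ell,U,U')$ does not depend on the chosen expression of $f$ spells out a point the paper leaves implicit.
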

\begin{proof}
	Let $K$ denote the kernel of $\pi$. By \cref{pr:L ideal graded}, $K = \bigoplus_{\alpha \in G_2^0} \pi_\alpha(K)$. By the definition of $\pi$, we have $\pi_\alpha(K) = \pi_\alpha(L(\ell, U, U')_{\tmax})$ for all $\alpha \in G_2$. Now let $f \in L_{0,0}$ be arbitrary and write
	\[ f=\sum \lambda_{a,a'} \dd_{a,a'} + \lambda_\xi \xi + \lambda_\zeta \zeta \]
	where the sum runs over finitely many $a \in J$, $a' \in J'$ and $\lambda_{a,a'}, \lambda \in k$. Then
	\[ \bar{f} = \sum \overline{\lambda_{a,a'}} \dd_{\bar{a},\overline{a'}} + \overline{\lambda_\xi} \bar{\xi} + \overline{\lambda_\zeta} \bar{\zeta} \in L(J/U, J'/U'). \]
	By \cref{le:lin comb}, we have $\bar{f}=0$ if and only if $f \in \Rad(\ell,U,U')$. The assertion follows.
\end{proof}

\begin{lemma}\label{le:rad char}
	Let $(\ell, U, U')$ be an ideal of $(k,J,J')$ and let $\sigma \in \{\tmin, \tmax\}$. Then
	\begin{align*}
		\Rad(\ell, U, U') &= \{f \in L_{0,0} \mid [f,L] \subseteq L(\ell, U, U')_{\sigma}\} \\
		&= \{f \in L_{0,0} \mid [f,L_i] \subseteq L_i(\ell, U, U')_\sigma \text{ for } i \in \{-2,-1,1,1\}\}.
	\end{align*}
\end{lemma}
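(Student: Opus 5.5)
We will prove the chain of inclusions
\[ \Rad(\ell,U,U') \subseteq A_\tmin \subseteq A_\tmax \subseteq A'_\tmax \subseteq \Rad(\ell,U,U'), \qquad A_\tmin \subseteq A'_\tmin \subseteq A'_\tmax, \]
where $A_\sigma \coloneq \{f \in L_{0,0} \mid [f,L] \subseteq L(\ell,U,U')_\sigma\}$ and $A'_\sigma$ is the analogous set defined by the condition on $[f,L_i]$ for $i \neq 0$. The inclusions $A_\tmin \subseteq A_\tmax$, $A'_\tmin \subseteq A'_\tmax$ and $A_\sigma \subseteq A'_\sigma$ are trivial, since $L(\ell,U,U')_\tmin \subseteq L(\ell,U,U')_\tmax$. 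So two substantive steps remain: $\Rad \subseteq A_\tmin$ and $A'_\tmax \subseteq \Rad$.

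For $A'_\tmax \subseteq \Rad$, let $f = \sum \lambda_{a,a'}\dd_{a,a'} + \lambda_\xi\xi + \lambda_\zeta\zeta$. We then read off the explicit formulas for $[f,\cdot]$ on the generators of $L_{\pm1}$ and $L_{\pm2}$ computed in the proof of \cref{le:lin comb}. The condition $[f,L_i] \subseteq L_i(\ell,U,U')_\tmax$ for $i \neq 0$ says exactly the following. First, the coefficients $-2\lambda_\xi-\lambda_\zeta$ on $x$ and the various scalar coefficients on the $(\lambda,0,0,0)_\pm$, $(0,0,0,\mu)_\pm$ lie in $\ell$. Second, $\sum\lambda_{a,a'}D_{a,a'}(b) + (\ldots)b \in U$ and the analogous expression lies in $U'$. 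Taking suitable differences of the scalar conditions, exactly as in \cref{le:lin comb}, gives $\lambda_\zeta + 2\lambda_\xi \in \ell$ and $t \equiv 3\lambda_\xi \pmod{\ell}$, where $t = \sum\lambda_{a,a'}T(a,a')$. Substituting these into the $J$- and $J'$-components and using $\ell J \subseteq U$, $\ell J' \subseteq U'$ gives $\sum\lambda_{a,a'}D_{a,a'} \equiv 2\lambda_\xi \id_J$ modulo $U$, and similarly for $J'$. This is just \cref{le:lin comb} carried out modulo $(\ell,U,U')$; alternatively, it amounts to saying that $\bar f$ acts trivially on $L(J/U,J'/U')_{\pm1,\pm2}$, hence $\bar f = 0$, hence $f \in \Rad$ by \cref{le:max is ker}.

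For $\Rad \subseteq A_\tmin$, let $f \in \Rad$. The same explicit formulas, combined with the defining congruences of $\Rad$ and with $\ell J \subseteq U$, show that $[f,L_i] \subseteq L_i(\ell,U,U')_\tmin$ for $i \neq 0$; note that $L_i(\cdot)_\tmin = L_i(\cdot)_\tmax$ for $i \neq 0$. It remains to control $[f,L_0]$. Since $f \in L_{0,0}$ and $\xi,\zeta$ are central in $Z$, we get $[f,\xi] = [f,\zeta] = 0$. For $c \in J$, \cref{pr:Jd} and \cref{rem:xi zeta formulas} give
\[ [f,c] = \sum\lambda_{a,a'}D_{a,a'}(c) + \lambda_\zeta c \equiv (2\lambda_\xi + \lambda_\zeta)c \equiv 0 \pmod U, \]
using $\ell J \subseteq U$; the case $c' \in J'$ is similar. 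Finally, by \cref{cor:Z is sublie},
\[ [f,\dd_{c,c'}] = \dd\Bigl(\sum\lambda_{a,a'}D_{a,a'}(c),\,c'\Bigr) - \dd\Bigl(c,\,\sum\lambda_{a,a'}D_{a',a}(c')\Bigr). \]
Writing $\sum\lambda_{a,a'}D_{a,a'}(c) = 2\lambda_\xi c + u$ with $u \in U$, and the second sum as $2\lambda_\xi c' + u'$ with $u' \in U'$, the $2\lambda_\xi$-terms cancel. What is left is $\dd_{u,c'} - \dd_{c,u'}$, which lies in the generating set of the $(0,0)$-part of $L(\ell,U,U')_\tmin$.

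The main obstacle is bookkeeping: we must check that the scalar conditions extracted from the actions on the eight generators of $L_{\pm1}$ and on $x,y$ really combine, modulo $\ell$, into precisely the two scalar conditions of $\Rad$ and nothing stronger. This works exactly as the equivalence proof in \cref{le:lin comb}, so we expect no genuine difficulty.
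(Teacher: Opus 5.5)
Your proof is correct and takes essentially the paper's route. The paper obtains $\Rad(\ell,U,U') = A'_\tmax$ from \cref{le:max is ker} by noting that $f \in \Rad(\ell,U,U')$ iff $\bar f = 0$ iff $[f,L_i] \subseteq \ker(\pi) = L(\ell,U,U')_\tmax$ for $i \neq 0$, which is your stated alternative, and it then handles $[f,\dd_{c,c'}]$ by exactly your computation with \cref{cor:Z is sublie}. The only difference is that the paper disposes of all components $[f,L_\alpha]$ with $\alpha \in G_2$, including $[f,J]$ and $[f,J']$, in one step: $f$ lies in the ideal $L(\ell,U,U')_\tmax$, and $\tmin$ and $\tmax$ agree on these root spaces. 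You instead check these components by hand from the explicit formulas in \cref{le:lin comb} and \cref{pr:Jd}.
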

\begin{proof}
	Let
	\[ f = \sum \lambda_{a,a'} \dd_{a,a'} + \lambda_\xi \xi + \lambda_\zeta \zeta \]
	denote an arbitrary element of $L_{0,0}$. We have straightforward implications
	\[ \begin{tikzcd}[row sep=3ex]
		{[f,L] \subseteq L(\ell, U, U')_{\tmin}} \arrow[r, Rightarrow] \arrow[d, Rightarrow] & {[f,L_i] \subseteq L_i(\ell, U, U')_{\tmin} \text{ for } i \in \{-2,-1,1,1\}} \arrow[d, Leftrightarrow] \\
		 {[f,L] \subseteq L(\ell, U, U')_{\tmax}} \arrow[r, Rightarrow] & {[f,L_i] \subseteq L_i(\ell, U, U')_{\tmax} \text{ for } i \in \{-2,-1,1,1\}}
	\end{tikzcd} \]
	Further, by \cref{le:max is ker}, we have $f \in \Rad(\ell, U, U')$ if and only if $\bar{f} = 0$. By the definition of the 0-part of the Lie algebra, this is equivalent to the condition $[\bar{f}, L_i(J/U, J'/U')] = 0$ for all $i \in \{-2,-1,1,2\}$. Here
	\begin{align*}
		[\bar{f}, L_i(J/U, J'/U')] &= [\bar{f}, \overline{L_i}] = \overline{[f, L_i]}.
	\end{align*}
	Thus $f \in \Rad(\ell, U, U')$ if and only if $[f,L_i] \subseteq \ker(\pi) = L(\ell, U, U')_{\tmax}$ for all $i \in \{-2,-1,1,2\}$. Hence
	\[ \Rad(\ell, U, U') = \{f \in L_{0,0} \mid [f,L_i] \subseteq L_i(\ell, U, U')_{\tmax} \text{ for } i \in \{-2,-1,1,1\}\}. \]
	
	Now assume that $f \in \Rad(\ell, U, U')$. We must show that $[f,L] \subseteq L(\ell, U, U')_{\tmin}$. Since $L(\ell, U, U')_{\tmin} \cap L_\alpha = L(\ell, U, U')_{\tmax} \cap L_\alpha$ for all $\alpha \in G_2$, it remains to show that $[f,L_{0,0}] \subseteq L(\ell, U, U')_{\tmin}$. Let $c \in J$, $c' \in J'$ be arbitrary. Then, by the same computation as in \cref{cor:Z is sublie},
	\begin{align*}
		[f, \dd_{c,c'}] &= \dd\brackets[\big]{\sum \lambda_{a,a'} D_{a,a'}(c), c'} - \dd\brackets[\big]{c, \sum \lambda_{a,a'} D_{a',a}(c')}.
	\end{align*}
	Here, by the definition of the radical,
	\[ \sum \lambda_{a,a'} D_{a,a'}(c) \in 2c + U \quad \text{and} \quad \sum \lambda_{a,a'} D_{a',a}(c') \in 2c' + U'. \]
	Hence $[f, \dd_{c,c'}] \in \dd(U,c') + \dd(c, U') \subseteq L(\ell, U, U')_{\tmin}$. Since also $[f,\xi] = 0 = [f, \zeta]$, we infer that $[f, L_{0,0}] \subseteq L(\ell, U, U')_{\tmin}$.
\end{proof}

\begin{lemma}\label{le:ideal ex}
	Let $ (\ell, U, U') $ be an ideal of $ (k, J, J') $ and let $V$ be a $k$-submodule of $\Rad(\ell, U, U')$. Then $ L(\ell, U, U')_{\tmin} + V $ is an ideal of $ L $.
\end{lemma}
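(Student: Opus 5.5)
We must show that $I \coloneq L(\ell, U, U')_{\tmin} + V$ is a $k$-submodule closed under brackets with all of $L$. It is clearly a $k$-submodule, so it suffices to check $[L_\beta, I] \subseteq I$ for $\beta$ running over root spaces together with generators of $L_{0,0}$. Since $I$ is sandwiched as $L(\ell,U,U')_{\tmin} \subseteq I \subseteq L(\ell,U,U')_{\tmax}$, and both agree on every $L_\alpha$ with $\alpha \in G_2$, the only delicate part is the $L_{0,0}$-component.

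First, the part $V$: by \cref{le:rad char} with $\sigma = \tmin$, every $f \in V \subseteq \Rad(\ell,U,U')$ satisfies $[f, L] \subseteq L(\ell,U,U')_{\tmin} \subseteq I$. So it remains to show that $L(\ell,U,U')_{\tmin}$ is itself an ideal. For this we bracket its homogeneous generators with the homogeneous generators of $L$ (elements of $L_\alpha$ for $\alpha \in G_2$, and $\dd_{c,c'}$, $\xi$, $\zeta$), reading the results off \cref{ta:Lie}.

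For a generator $u$ of $L(\ell,U,U')_{\tmin} \cap L_\alpha$ with $\alpha \in G_2$, the bracket $[u, m]$ lies either in some $L_\gamma$ with $\gamma \in G_2$ or in $L_{0,0}$. In the first case, the entries of \cref{ta:Lie} together with the axioms of \cref{def:ideal} and \cref{rem:ideal D} show that the coordinates lie in $\ell$, $U$ or $U'$. Examples are $\lambda a$ with $\lambda \in \ell$, $u \times b \in U'$, $T(u,b') \in \ell$, $D_{e,e'}(u) \in U$, and $T(e,e')\lambda \in \ell$. In the second case, $\alpha + \beta = (0,0)$, and the result is one of the following:
\begin{itemize}
\item a multiple $\lambda \xi$ with $\lambda \in \ell$, from $[x,y]$-type brackets;
\item an expression from \eqref{eq:L-1 L1} with one argument having coordinates in $\ell, U, U'$, namely $(\lambda\rho - T(b,c'))\zeta + (T(c,b') - \mu\nu)(\xi-\zeta) + \dd_{b,c'} + \dd_{c,b'}$;
\item an element $\dd_{u,c'}$ or $\dd_{c,u'}$ from $[J, J']$.
\end{itemize}
In the middle case, if say $(\lambda,b,b',\mu)$ has entries in $\ell, U, U', \ell$, then the scalar coefficients lie in $\ell$, and $\dd_{b,c'}, \dd_{c,b'}$ lie among the generators $\dd(U,J')$, $\dd(J,U')$. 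Hence all these results lie in $\ell\xi + \ell\zeta + \langle \dd(U,J'), \dd(J,U')\rangle$. Note that a pure $L_{-1,-2} \times L_{1,2}$ bracket gives only scalar multiples of $\zeta$, $\xi - \zeta$ with coefficients in $\ell$, which is fine.

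Finally, we take generators of the $L_{0,0}$-part: $\lambda\xi$, $\lambda\zeta$ with $\lambda \in \ell$, and $\dd_{u,a'}$ or $\dd_{a,u'}$.
\begin{itemize}
\item Bracketing $\lambda\xi$ or $\lambda\zeta$ with anything yields $\lambda$ times a homogeneous element, and $\ell L_\alpha \subseteq L(\ell,U,U')_{\tmin}$ because $\ell J \subseteq U$ and $\ell J' \subseteq U'$.
\item For $\dd_{u,a'}$ acting on $L_{\pm1}$, \cref{pr:delta z} gives coordinates $\nu T(u,a') \in \ell$, $D_{u,a'}(c) - T(u,a')c \in U$, and similarly in $U'$, using \cref{rem:ideal D}. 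On $J$ and $J'$, \cref{pr:Jd} gives $D_{u,a'}(c) \in U$ and $D_{a',u}(c') \in U'$.
\item On $\dd_{e,e'}$, \eqref{eq:d bracket} gives $\dd_{D_{u,a'}(e),e'} - \dd_{e,D_{a',u}(e')}$, which lies in $\dd(U,J') + \dd(J,U')$ again by \cref{rem:ideal D}.
\end{itemize}

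The main (though mild) obstacle is precisely these $L_{0,0}$ computations. One has to ensure that every $D$-expression with one argument in $U$ or $U'$ lands in $U$ or $U'$, which is where \cref{rem:ideal D} is used. One also has to handle the non-uniqueness of expressions in $L_{0,0}$, which is harmless here because we only ever map generators to elements.
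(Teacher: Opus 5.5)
Your proposal is correct and follows essentially the same route as the paper. You use \cref{le:rad char} with $\sigma = \tmin$ to absorb $V$, then verify that $L(\ell,U,U')_{\tmin}$ is an ideal by inspecting \cref{ta:Lie} together with \cref{def:ideal} and \cref{rem:ideal D}; your case-by-case check just spells out the inspection the paper leaves to the reader.
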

\begin{proof}
	By \cref{le:rad char}, it suffices to show that $L(\ell, U, U')_{\tmin}$ is an ideal of $L$. This follows from an inspection of \cref{ta:Lie}, using the properties of ideals in \cref{def:ideal,rem:ideal D}.
\end{proof}

\begin{proposition}\label{pr:ideal classify}
	Let $I$ be an ideal of $L$. Then there is a unique ideal $ (\ell, U, U') $ of $ (k, J, J') $ such that
	\[ L(\ell, U, U')_{\tmin} \subseteq I \subseteq L(\ell, U, U')_{\tmax}. \]
	Further, there exists a unique $k$-submodule $V$ of $L_{0,0}$ with
	\[ \brackets[\big]{\ell \xi + \ell \zeta + \langle \dd_{a,a'} \mid a \in U, a' \in J' \text{ or } a \in J, a' \in U' \rangle} \subseteq V \subseteq \Rad(\ell, U, U') \]
	such that $I = L(\ell, U, U')_{\tmin} + V$.
\end{proposition}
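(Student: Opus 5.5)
By \cref{pr:L ideal graded}, $I$ is graded, so $I = \bigoplus_{\alpha \in G_2^0} (I \cap L_\alpha)$. The plan is to read off $(\ell, U, U')$ from the graded pieces and then check the defining properties and the sandwich inclusions. Define $\ell \coloneq \rhomlieG{(2,1)}^{-1}(I \cap L_{2,1})$, $U \coloneq \rhomlieG{(1,0)}^{-1}(I \cap L_{1,0})$ and $U' \coloneq \rhomlieG{(1,1)}^{-1}(I \cap L_{1,1})$.

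The first step is independence of the chosen representative root. The maps $\varphi_{\alpha \to \beta}$ of \cref{le:ad rootspace isom} are natural isomorphisms built from $\ad$'s, so they preserve $I$. Hence $\rhomlieG{\alpha}^{-1}(I \cap L_\alpha)$ is the same for all $\alpha$ in a given $\sim$-class. In particular, $I \cap L_\alpha = \rhomlieG{\alpha}(\ell)$, $\rhomlieG{\alpha}(U)$ or $\rhomlieG{\alpha}(U')$ according to the class of $\alpha$.

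Next I verify that $(\ell, U, U')$ is an ideal of $(k,J,J')$ using brackets from \cref{ta:Lie}. Clearly $\ell$ is an ideal of $k$, and $U$, $U'$ are submodules.
\begin{enumerate}
	\item For $\ell J \subseteq U$: bracket $(\lambda,0,0,0)_+ \in I$, $\lambda \in \ell$, with $b \in L_{0,1}$; this gives $-(0,\lambda b,0,0)_+ \in I$. The inclusion $\ell J' \subseteq U'$ is similar.
	\item For $T(U,J') \subseteq \ell$: bracket $(0,u,0,0)_+ \in I$ with $(0,0,b',0)_+$; this gives $T(u,b')y \in I$.
	\item For $U \times J \subseteq U'$: bracket $b \in L_{0,1}$ with $(0,u,0,0)_+$; this gives $(0,0,b\times u,0)_+ \in I$.
\end{enumerate}
The remaining conditions are symmetric.

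Given this, $I \cap L_\alpha = L(\ell,U,U')_{\tmin} \cap L_\alpha$ for all $\alpha \in G_2$. For the $(0,0)$-component, brackets of root elements land in $I$. Specifically, $[x,\lambda y] = \lambda\xi$, the element $\lambda\zeta$ is obtained via \cref{le:ad rootspace isom}, and $\dd_{u,a'}$, $\dd_{a,u'}$ arise as $[a',u]$ and $[u',a]$ (elements of $L_0$ with $u \in L_{0,1}$). Hence $L(\ell,U,U')_{\tmin} \subseteq I$.

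Put $V \coloneq I \cap L_{0,0}$. Since $I$ is an ideal, $[V,L] \subseteq I$. Moreover $[V, L_i] \subseteq I \cap L_i = L_i(\ell,U,U')_{\tmin}$ for $i \neq 0$, because the root components are already identified. By \cref{le:rad char}, $V \subseteq \Rad(\ell,U,U')$. Gradedness then gives $I = L(\ell,U,U')_{\tmin} + V$, with $V$ containing the stated lower bound, and $I \subseteq L(\ell,U,U')_{\tmax}$.

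For uniqueness, any other ideal $(\ell_2,U_2,U_2')$ sandwiching $I$ must satisfy $I \cap L_\alpha = \rhomlieG{\alpha}(\ell_2)$, and so on. This holds because the min and max ideals agree on every $L_\alpha$ with $\alpha \neq 0$. Thus the triple is forced, and $V$ is forced as $I \cap L_{0,0}$.

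The main obstacle I expect is the bookkeeping needed to confirm the ideal axioms with correct signs and root positions. Among these, the inclusion $\dd_{u,a'} \in I$ needs care: the bracket $[a',u]$ must be taken within $L_0$, using $U \subseteq J \cong L_{0,1}$.
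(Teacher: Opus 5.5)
Your proposal is correct and follows essentially the same route as the paper: you read off $(\ell,U,U')$ from the graded pieces of $I$ (via \cref{pr:L ideal graded} and the natural isomorphisms of \cref{le:ad rootspace isom}), obtain $L(\ell,U,U')_{\tmin}\subseteq I$ from $\xi$, $\zeta$ and $\dd_{a,a'}=[a',a]$, bound $V=I\cap L_{0,0}$ by \cref{le:rad char}, and get uniqueness because the minimal and maximal ideals agree off $L_{0,0}$. You also check explicitly, with the brackets in \cref{ta:Lie}, that $(\ell,U,U')$ satisfies the axioms of \cref{def:ideal}; the paper's proof leaves this step implicit, although it is needed before \cref{le:rad char} and the ideals $L(\ell,U,U')_{\tmin}$, $L(\ell,U,U')_{\tmax}$ can be used.
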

\begin{proof}
	Choose a long root $ \alpha \in G_2 $ and short roots $ \beta, \gamma \in G_2 $ with $ \beta \sim (1,0) $ and $ \gamma \sim (1,1) $. We define subgroups $ \ell $, $ U $, $ U' $ of $ k $, $ J $, $ J' $, respectively, by the property that
	\begin{equation}\label{eq:ideal param}
		\rhomlieG{\alpha}(\ell) = \pi_\alpha(I), \quad \rhomlieG{\beta}(U) = \pi_\beta(U), \quad \rhomlieG{\gamma}(U') = \pi_\gamma(U').
	\end{equation}
	By \cref{le:ad rootspace isom} and because $I$ is an ideal, $(\ell, U, U')$ does not depend on the choice of $\alpha$, $\beta$, $\gamma$. Hence
	\[ I \cap L_\alpha = L(\ell, U, U')_{\tmin} \cap L_\alpha = L(\ell, U, U')_{\tmax} \cap L_\alpha \]
	for all $\alpha \in G_2$. Further, $\ell \xi = \varphi_{\alpha \to \xi}(\pi_\alpha(I)) \subseteq I$ and $\ell \zeta = \varphi_{\alpha \to \zeta}(\pi_\alpha(I)) \subseteq I$. Now let $a \in U$ and $a' \in J'$. Then $\dd_{a,a'} = [a', a] \in [L,I] \subseteq I$. Similarly, if $a \in J$ and $a' \in U'$, then also $\dd_{a,a'} \in I$. We conclude that $L(\ell, U, U')_{\tmin} \subseteq I$. Since
	\begin{align*}
		[V,L_i] \subseteq [I \cap L_0, L_i] \subseteq I \cap L_i = L_i(\ell, U, U')
	\end{align*}
	for all $ i \in \{-2,-1,1,2\} $, it follows from \cref{le:rad char} that $V \subseteq L(\ell, U, U')_{\tmax}$ and hence
	\[ L(\ell, U, U')_{\tmin} \subseteq I \subseteq L(\ell, U, U')_{\tmax}. \]
	Note that $(\ell, U, U')$ is uniquely determined by this property because it must satisfy \cref{eq:ideal param}. Further, since $L(\ell, U, U')_{\tmin}$ and $L(\ell, U, U')_{\tmax}$ differ only in the $(0,0)$-component, there exists $V$ as in the assertion such that $I = L(\ell, U, U')_{\tmin} + V$, and it is uniquely determined by $V = \pi_{(0,0)}(I)$.
\end{proof}

\Cref{pr:ideal classify} has some straightforward consequences.

\begin{theorem}\label{thm:L simple}
	Let $(J,J')$ be a cubic norm pair (or, more generally, a linear cubic norm pair) over a commutative ring $k$.
	Then the following assertions are equivalent.
	\begin{enumerate}
		\item $(k,J,J')$ is simple in the sense that it has only the trivial ideals $(0,0,0)$ and $(k,J,J')$.
		\item $L$ is simple in the sense that it has only the trivial ideals $\{0\}$ and $L$.
		\item $k$ is simple (and hence a field) and $T$ is non-degenerate (in the sense that the left and right radicals defined in \cref{ex:CNP radical} are zero).
	\end{enumerate}
\end{theorem}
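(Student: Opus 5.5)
We prove (i)$\Leftrightarrow$(ii) and (i)$\Leftrightarrow$(iii) separately, with \cref{pr:ideal classify}, \cref{le:ideal ex} and \cref{le:max is ker} doing most of the work.

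For (i)$\Rightarrow$(ii), let $I$ be an ideal of $L$. By \cref{pr:ideal classify} there is an ideal $(\ell,U,U')$ of $(k,J,J')$ with $L(\ell,U,U')_{\tmin} \subseteq I \subseteq L(\ell,U,U')_{\tmax}$. By (i) this ideal is trivial, so two cases arise.
\begin{itemize}
	\item If $(\ell,U,U') = (0,0,0)$, then $I \subseteq L(0,0,0)_{\tmax} = 0$ by \cref{le:max is ker}.
	\item If $(\ell,U,U') = (k,J,J')$, then $L(k,J,J')_{\tmin}$ contains every root space $L_\alpha$ for $\alpha \in G_2$. It also contains $k\xi + k\zeta$ and all $\dd_{a,a'}$, hence all of $L_{0,0}$. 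Thus $I = L$.
\end{itemize}

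For (ii)$\Rightarrow$(i), let $(\ell,U,U')$ be an ideal of $(k,J,J')$. By \cref{le:ideal ex}, $L(\ell,U,U')_{\tmin}$ is an ideal of $L$, so by (ii) it is $0$ or $L$. Its intersections with the root spaces are $\rhomlieG{\alpha}(\ell)$, $\rhomlieG{\beta}(U)$ and $\rhomlieG{\gamma}(U')$. If it is $0$, then $\ell = 0$, $U = 0$ and $U' = 0$. If it is $L$, then $\ell = k$, $U = J$ and $U' = J'$. So the only ideals are trivial.

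For (i)$\Rightarrow$(iii), suppose first that $\ell$ is a proper nonzero ideal of $k$. By \cref{ex:ideal ring CNP}, $(\ell, \ell J, \ell J')$ is an ideal of $(k,J,J')$. It is nontrivial because its first component is neither $0$ nor $k$. Hence $k$ is simple, i.e.\ a field. Next, \cref{ex:CNP radical} gives the ideal $(0,R,R')$. Its first component is $0 \ne k$ (assuming $k \neq 0$, which simplicity forces), so by (i) it must equal $(0,0,0)$. Hence $R = R' = 0$, i.e.\ $T$ is non-degenerate.

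For (iii)$\Rightarrow$(i), let $(\ell,U,U')$ be an ideal with $k$ a field and $T$ non-degenerate. Then $\ell \in \{0,k\}$.
\begin{itemize}
	\item If $\ell = k$, condition (ii) of \cref{def:ideal} gives $J = \ell J \subseteq U$ and $J' \subseteq U'$, so the ideal is trivial.
	\item If $\ell = 0$, condition (iii) gives $T(u, J') = 0$ for all $u \in U$. So $U \subseteq R = 0$, and likewise $U' \subseteq R' = 0$.
\end{itemize}

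The main delicate point is the step $I = L$ in (i)$\Rightarrow$(ii), namely checking that $L(k,J,J')_{\tmin}$ exhausts $L_{0,0}$. This holds because $L_{0,0} = Z$ is by definition spanned by the $\dd_{a,a'}$ together with $\zeta$ and $\xi$. Everything else reduces directly to the earlier results.
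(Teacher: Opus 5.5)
Your proposal is correct and follows the paper's proof essentially step for step. You get (i)$\Leftrightarrow$(ii) via \cref{pr:ideal classify}, $L(0,0,0)_{\tmax}=0$, $L(k,J,J')_{\tmin}=L$ and the fact that $L(\ell,U,U')_{\tmin}$ is an ideal, and (i)$\Leftrightarrow$(iii) via the ideals $(\ell,\ell J,\ell J')$ and $(0,R,R')$, just as the paper does. Your parenthetical about $k\neq 0$ is unnecessary, since $k=0$ forces $J=J'=0$ and hence $R=R'=0$ anyway.
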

\begin{proof}
	First note that $L(0,0,0)_{\tmax} = 0$ by \cref{le:max is ker} and $L(k,J,J')_{\tmin} = L$. Thus it follows from \cref{pr:ideal classify} that if $(k,J,J')$ is simple, then $L$ is simple. Conversely, if $(k, J, J')$ has a non-trivial ideal $(\ell, U, U')$, then $L(\ell, U, U')_{\tmin}$ and $L(\ell, U, U')_{\tmax}$ are non-trivial ideals of $L$. This proves the equivalence of the first two statements.
	
	Assume that $k$ is simple (and hence, being commutative, a field) and $T$ is non-degenerate and let $(\ell, U, U')$ be an ideal of $(k,J,J')$. If $\ell=0$, then $T(u,b') = 0$ for all $u \in U$, $b' \in J'$, so $U = 0$ because $T$ is non-degenerate. Similarly, $U'=0$ in this case. If, however, $\ell = k$, then $U=J$ and $U'=J'$ because $\ell J \subseteq U$ and $\ell J' \subseteq U'$.
	
	Now assume that $(k,J,J')$ is simple. Since any ideal $\ell$ of $k$ induces an ideal $(\ell, \ell J, \ell J')$ of $(k, J, J')$ by \cref{ex:ideal ring CNP}, $k$ must be simple. Further, since the radicals $R$, $R'$ of $T$ induce an ideal $(0, R, R')$ of $(k, J, J')$ by \cref{ex:CNP radical}, $T$ must be non-degenerate.
\end{proof}

\begin{remark}
	In \cref{sec:CJMA}, we will see that cubic Jordan matrix algebras over composition algebras over fields provide examples of pairs $(k,J,J')$ which satisfy the properties in \cref{thm:L simple}. See \cref{rem:CJMA nondeg} for details. Further, we remark that ideals of cubic Jordan matrix algebras (but not of the corresponding triples $ (k,J,J) $) are classified in \cite[37.26]{GPR24}.
\end{remark}

\begin{lemma}
	The center of $L$ is trivial.
\end{lemma}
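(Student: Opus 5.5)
The center $C$ of $L$ is an ideal, so by \cref{pr:L ideal graded} it is graded: $C = \bigoplus_{\alpha \in G_2^0} (C \cap L_\alpha)$. The plan is to show first that $C \cap L_\alpha = 0$ for every root $\alpha \in G_2$, and then that $C \cap L_{0,0} = 0$.

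For the root components, let $z \in C \cap L_\alpha$. By \cref{le:ad rootspace isom}, there is a composition $\ad(a_1) \circ \cdots \circ \ad(a_n)$ with $n \ge 1$ that restricts to a natural isomorphism $L_\alpha \to L_\beta$ for suitable $\beta$. Since $\ad(a_n)(z) = -[z,a_n] = 0$, the image of $z$ under this composition is $0$. Injectivity of the natural isomorphism then forces $z = 0$.

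For the $(0,0)$-component, let $f \in C \cap L_{0,0}$. By definition, $L_0$ is a subalgebra of $M = \bigoplus_{i \neq 0} \gl(L_i)$, and the bracket $[f,l]$ for $l \in L_i$, $i \neq 0$, is exactly the action of $f$ on $L_i$. Centrality gives $[f,L_i] = 0$ for all $i \neq 0$, so $f$ acts trivially on every $L_i$. Hence $f = 0$ as an element of $M$.

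Combining the two parts, $C = 0$. Neither step is a real obstacle. The only point needing care is to use the faithful realisation of $L_{0,0}$ inside $M$ from \cref{def:dd and L0}, rather than trying to read off coefficients of $f$ in terms of $\dd_{a,a'}$, $\xi$ and $\zeta$. That expression is not unique, although one could alternatively argue via \cref{le:lin comb}.
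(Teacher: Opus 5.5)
Your proof is correct, but it takes a more direct route than the paper. The paper obtains the lemma from the ideal classification. By \cref{pr:ideal classify}, the center is sandwiched as $L(\ell,U,U')_{\tmin} \subseteq Z(L) \subseteq L(\ell,U,U')_{\tmax}$. Then \cref{le:ad rootspace isom} kills the root-space parts, so $(\ell,U,U')=(0,0,0)$, and $L(0,0,0)_{\tmax}=0$ from \cref{le:max is ker} finishes.

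You instead treat the $(0,0)$-component by hand. You use that $L_{0,0}=Z$ is, by \cref{def:dd and L0}, a subspace of $M=\bigoplus_{i\neq 0}\gl(L_i)$, and that its bracket with $L_i$ is literally its action. A central element therefore acts as zero on every $L_i$ with $i \neq 0$, and so is zero. This bypasses the radical and the classification entirely. The paper's fact $L(0,0,0)_{\tmax}=0$ rests on the same faithfulness (through \cref{le:lin comb}), so you are isolating exactly the ingredient that matters. Your argument also shows slightly more: every element of $L_{0,0}$ centralizing $\bigoplus_{i\neq 0}L_i$ vanishes.

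You do not even need \cref{pr:L ideal graded}. Since the bracket respects the $G_2$-grading and the decomposition is direct, each homogeneous component of a central element is itself central. The paper's route, by contrast, reduces the lemma to a one-line consequence of machinery it has already built for other purposes.
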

\begin{proof}
	Since the center $Z(L)$ is an ideal, there exists an ideal $(\ell, U, U')$ of $(k, J, J')$ such that
	\[ L(\ell, U, U')_{\tmin} \subseteq Z(L) \subseteq L(\ell, U, U')_{\tmax}. \]
	However, it follows from \cref{le:ad rootspace isom} that $Z(L) \cap L_\alpha = 0$ for all $\alpha \in G_2$. Thus $(\ell, U, U') = (0,0,0)$. Since $L(0,0,0)_{\tmax}=0$ by \cref{le:max is ker}, it follows that $Z(L) = 0$.
\end{proof}

\section{Exponential maps for the \texorpdfstring{$G_2$}{G2}-grading}\label{sec:G2 exp}

We are now ready to pass from root graded Lie algebras to root graded groups by \enquote{exponentiating} the appropriate adjoint maps in the Lie algebra. However, since $2$ and $3$ are not necessarily invertible in the base ring $k$, we cannot define the exponential maps by the usual formula $\exp(a) \coloneq \sum_{i=0}^4 \frac{1}{i!} \ad_a^i$ (where it is easy to see from the grading that $\ad_a^i=0$ for all $i \ge 5$). Instead, we will replace each term $\frac{1}{i!} \ad_a^i$ by an appropriate endomorphism $\theta^{[i]}_a$ of the Lie algebra, which coincides with $\frac{1}{i!} \ad_a^i$ if $2$ and $3$ happen to be invertible in $k$. The definition of $\theta^{[i]}_a$ involves the higher degree maps $N$ and $\sharp$, so from now on and for the rest of this paper, we have to assume that $(J,J')$ is a cubic norm pair in the proper sense and not merely a linear cubic norm pair.

Since the exponential maps are defined by distinct, explicit formulas for each root $\alpha \in G_2$, we have to prove many basic properties \enquote{by hand} that would, over base rings in which $2$ and $3$ are invertible, follow from the expression $\exp(a) = \sum_{i=0}^\infty \frac{1}{i!} \ad_a^i$. Most importantly, this concerns the property that exponential automorphisms are indeed compatible with the Lie bracket, which we prove in \cref{sec:leibniz} using what we call \enquote{higher Leibniz rules}. Still, some properties can be established in a more abstract way, using the framework of $\alpha$-parametrizations that we introduce and investigate in \cref{subsec:G2 exp:general}. In \cref{subsec:G2 exp:formulas}, we construct the specific $\alpha$-parametrizations in $L(J,J')$ that we will be interested in, and we verify that they satisfy certain commutator formulas in \cref{subsec:G2 exp:rgg}. Further, we prove by a long but straightforward computation in \cref{sec:weyl G2} that the reflections $\phihor$ and $\phibs$ from \cref{sec:reflections} are Weyl elements if $J$ contains an element with invertible norm, so we conclude that for every cubic norm structure $J$ we have a $G_2$-graded group in $\Aut(L(J,J))$.

\subsection{General properties of exponential maps}\label{subsec:G2 exp:general}

In this subsection, we denote by $\Phi$ an arbitrary (finite reduced crystallographic) root system and by $ \bar{L} = \bigoplus_{\beta \in \Phi^0} \bar{L}_\beta $ a $\Phi$-graded Lie algebra over some commutative base ring $k$.

\begin{definition}\label{def:param}
	Let $\alpha \in \Phi$.
	An \emph{$\alpha$-parametrization} is a map
	\[ \exp_\alpha \colon \bar{L}_\alpha \to \Aut(\bar{L}) \colon a \mapsto e_a \]
	of the form
	\[ e_a = \sum_{i \geq 0} \theta^{[i]}_a , \]
	where each $\theta^{[i]}_a$ is a $k$-module endomorphism of $\bar{L}$, satisfying:
	\begin{enumerate}[(a)]
		\item\label{item:param a} $\theta^{[i]}_a(\bar{L}_\beta) \leq \bar{L}_{\beta + i \alpha}$ for all $\beta \in \Phi^0$,
		\item\label{item:param b} $\theta^{[0]}_a = \id$ and $\theta^{[1]}_a = \ad_a$,
		\item\label{item:param c} $\theta^{[i]}_{\lambda a} = \lambda^i \theta^{[i]}_a$,
		\item\label{item:param sum} $e_\alpha(a + b) = e_\alpha(a) e_\alpha(b)$,
	\end{enumerate}
	for all $a,b \in L_\alpha$ and all $\lambda \in k$.
	The automorphisms in the image of $e_\alpha$ are called \emph{$\alpha$-exponential automorphisms}.
	By \cref{item:param sum}, the image $e_\alpha(\bar{L}_\alpha)$ forms an abelian group, which we will call a \emph{root group} (corresponding to the root $\alpha$).
\end{definition}

\begin{remark}
\begin{enumerate}
	\item Contrary to what our notation might suggest, an $\alpha$\dash parametrization is typically not unique, even when $k$ is, for example, a field of characteristic $0$. See also \cite[Theorem 3.13(v)]{DMM25}.
	\item If $ \exp_\alpha \colon \bar{L}_\alpha \to \Aut(\bar{L}) \colon a \mapsto e_a $ is an $ \alpha $-parametrization, then the $ k $-linear endomorphisms $\theta^{[i]}_a$ of $ \bar{L} $ satisfying $ e_a = \sum_{i \geq 0} \theta^{[i]}_a $ for all $ a \in \bar{L}_\alpha $ are uniquely determined by $ \exp_\alpha $.
	\item Let $ \exp_\alpha $ be an $ \alpha $-parametrization, let $ 1 \ne t \in k $ be invertible and let $ \mu_t \colon \bar{L}_\alpha \to \bar{L}_\alpha \colon a \mapsto ta $. Then $ \mathord{\exp_\alpha} \circ \mu_t $ satisfies all axioms of an $ \alpha $\dash parametrization except that $ \theta_a^{[1]} = \ad_a $.
\end{enumerate}
\end{remark}

We first show that parametrizations behave well with respect to reflections.

\begin{lemma}\label{le:conj by refl}
	Let $\bar{L}' = \bigoplus_{\beta \in \Phi^0} \bar{L}_\beta'$ be a second $ \Phi $-graded Lie algebra, let $ \alpha, \beta \in \Phi $, let $ \varphi \colon \bar{L} \to \bar{L}' $ be a $ \beta $-reflection (in the sense of \cref{def:refl abstr}) and let $ \exp_\alpha' \colon \bar{L}'_\alpha \to \Aut(\bar{L}') $ be an $ \alpha $-parametrization (in $ \bar{L}' $). Then the map
	\[ \exp_{\alpha^{\reflbr{\beta}}} \colon \bar{L}_{\alpha^{\reflbr{\beta}}} \to \Aut(\bar{L}) \colon a \mapsto \varphi^{-1} \circ \exp_\alpha'\brackets[\big]{\varphi(a)} \circ \varphi \]
	is an $ \alpha^{\reflbr{\beta}} $-parametrization (in $ \bar{L} $).
\end{lemma}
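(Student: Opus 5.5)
The plan is to define the candidate components by conjugation and then check the four axioms of \cref{def:param} one at a time. Write $\gamma \coloneq \alpha^{\reflbr{\beta}}$. Since $\refl{\beta}$ is an involution, $\gamma^{\reflbr{\beta}} = \alpha$, and so $\varphi(\bar{L}_\gamma) = \bar{L}'_\alpha$. The map is therefore well-defined. For $a \in \bar{L}_\gamma$ we write $\exp_\alpha'(\varphi(a)) = \sum_{i \ge 0} \theta'^{[i]}_{\varphi(a)}$ and set
\[ \theta^{[i]}_a \coloneq \varphi^{-1} \circ \theta'^{[i]}_{\varphi(a)} \circ \varphi . \]
Each of these is a $k$-module endomorphism of $\bar{L}$, and their sum is $\varphi^{-1} \circ \exp'_\alpha(\varphi(a)) \circ \varphi$. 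This sum lies in $\Aut(\bar{L})$ because $\varphi$ is an isomorphism of Lie algebras and $\exp'_\alpha(\varphi(a))$ is an automorphism of $\bar{L}'$.

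Axiom \ref{item:param a} comes from the grading behaviour. For $\delta \in \Phi^0$ we have
\[ \theta^{[i]}_a(\bar{L}_\delta) = \varphi^{-1}\bigl(\theta'^{[i]}_{\varphi(a)}(\bar{L}'_{\delta^{\reflbr{\beta}}})\bigr) \subseteq \varphi^{-1}\bigl(\bar{L}'_{\delta^{\reflbr{\beta}} + i\alpha}\bigr). \]
The reflection $\refl{\beta}$ is linear, so $\delta^{\reflbr{\beta}} + i\alpha = (\delta + i\gamma)^{\reflbr{\beta}}$. The only point needing care is the case $\delta + i\gamma \notin \Phi^0$. Then $(\delta+i\gamma)^{\reflbr{\beta}} \notin \Phi^0$ as well, because $\refl{\beta}$ permutes $\Phi^0$ and is a bijection of the lattice. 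In that case the target space is $0$ and the inclusion holds trivially. In all other cases the image is contained in $\bar{L}_{\delta + i\gamma}$, as required.

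Axioms \ref{item:param b}, \ref{item:param c} and \ref{item:param sum} are formal consequences of the same conjugation.
\begin{enumerate}
\item For \ref{item:param b}: $\theta^{[0]}_a = \varphi^{-1}\varphi = \id$. Moreover, $\theta^{[1]}_a = \varphi^{-1}\circ\ad_{\varphi(a)}\circ\varphi = \ad_a$, since $\varphi$ is a homomorphism: $\varphi^{-1}[\varphi(a),\varphi(b)] = [a,b]$.
\item For \ref{item:param c}: $k$-linearity of $\varphi$ gives $\varphi(\lambda a) = \lambda\varphi(a)$. Hence $\theta^{[i]}_{\lambda a} = \lambda^i \theta^{[i]}_a$.
\item For \ref{item:param sum}: additivity of $\varphi$ and of $\exp'_\alpha$ gives
\[ \varphi^{-1}\exp'_\alpha(\varphi(a))\exp'_\alpha(\varphi(b))\varphi = \bigl(\varphi^{-1}\exp'_\alpha(\varphi(a))\varphi\bigr)\bigl(\varphi^{-1}\exp'_\alpha(\varphi(b))\varphi\bigr). \]
\end{enumerate}

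I expect no real obstacle in this argument. The only step requiring attention is the bookkeeping in axiom \ref{item:param a}: one has to use linearity of $\refl{\beta}$ on the root lattice, rather than only on $\Phi^0$, to handle the degrees that fall outside $\Phi^0$.
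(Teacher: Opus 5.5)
Your proof is correct and takes the same approach as the paper: you conjugate the components $\theta'^{[i]}_{\varphi(a)}$ by $\varphi$ and verify the axioms of \cref{def:param} directly, using the linearity of $\refl{\beta}$ on the root lattice for \cref{item:param a}. The paper only says this verification is straightforward; your write-up supplies exactly those details.
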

\begin{proof}
	The axioms in \cref{def:param} can be verified in a straightforward manner, using the linearity of $ \refl{\beta} $ for \cref{item:param a}.
\end{proof}

Next we show that parametrizations corresponding to adjacent roots commute, which allows us to omit a few cases in the computation of commutator relations in \cref{pr:G2 comm}.

\begin{definition}
	Let $ \Phi $ be a root system. A pair of roots $ \alpha, \beta \in \Phi $ is called \emph{adjacent} if they are non-proportional and the root interval $ \rootint{\alpha}{\beta} $ from \cref{def:rgg}\cref{def:rgg:comm} is empty.
\end{definition}

\begin{lemma}\label{le:adj lincomb}
	Let $ \Phi $ be a root system and let $ \alpha, \beta \in \Phi $ be adjacent. Then for all $ \gamma \in \Phi $ and all $ i,j \in \mathbb{N}_{\ge 2} $, we have $ \gamma + i\alpha + j\beta \not\in \Phi $.
\end{lemma}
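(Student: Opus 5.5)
We argue by contradiction via a rank-two reduction. Suppose $\gamma + i\alpha + j\beta \in \Phi$ for some $\gamma \in \Phi$ and $i,j \ge 2$. Since $\alpha, \beta$ are non-proportional, they span a rank-two subsystem $\Phi' \coloneq \Phi \cap (\mathbb{R}\alpha + \mathbb{R}\beta)$. Adjacency says that $\alpha+\beta \notin \Phi$. The standard fact that $\cartan{\alpha}{\beta} < 0$ forces $\alpha+\beta \in \Phi$ then gives $\cartan{\alpha}{\beta} \ge 0$.

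If $\cartan{\alpha}{\beta} > 0$, then $\alpha - \beta \in \Phi$. The plan is to exploit the angle between $\alpha$ and $\beta$ (at most $60^\circ$, or at most $45^\circ$ or $30^\circ$ in the non-simply-laced cases) together with the $\alpha$-string and $\beta$-string through $\delta \coloneq \gamma + i\alpha + j\beta$.

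The key step is the following inner product computation, where $\gamma = \delta - i\alpha - j\beta$ is also a root. Consider
\[ (\delta - \gamma, \delta - \gamma) = (i\alpha + j\beta, i\alpha + j\beta) = i^2(\alpha,\alpha) + 2ij(\alpha,\beta) + j^2(\beta,\beta). \]
With $(\alpha,\beta) \ge 0$ and $i,j \ge 2$, this is at least $4(\alpha,\alpha) + 4(\beta,\beta) \ge 8 m$, where $m$ is the minimal squared root length occurring among $\alpha,\beta$. On the other hand, $\delta$ and $\gamma$ are roots, so
\[ (\delta - \gamma, \delta - \gamma) \le (\lvert\delta\rvert + \lvert\gamma\rvert)^2 \le 4M, \]
where $M$ is the maximal squared root length in $\Phi$. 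In a reduced crystallographic root system we have $M/m \le 3$, but this alone is not sharp enough: it would require $8m > 4M$, i.e.\ $M < 2m$. I would therefore refine the estimate case by case. If $\alpha,\beta$ are both long, then $4(\alpha,\alpha) + 4(\beta,\beta) = 8M > 4M$, and we are done. Otherwise, let one of them be short of squared length $m$. I will use the additional term $2ij(\alpha,\beta) \ge 8(\alpha,\beta)$, and handle the case $(\alpha,\beta) = 0$ separately.

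The main obstacle is exactly this mixed-length case. My fallback is to work in the rank-two (or rank-three) subsystem spanned by $\alpha, \beta, \gamma$, using root strings. Set $\delta_0 = \gamma + 2\alpha + 2\beta$, which is a root: root strings are unbroken, so starting from $\delta$ and subtracting $\alpha$ and $\beta$ we stay inside $\Phi$ after suitably reordering. This unbrokenness step needs care, and should use the fact that $(\delta,\alpha) > 0$ implies $\delta - \alpha \in \Phi$. Then $\gamma$ and $\gamma + 2\alpha + 2\beta$ are both roots. Since the string lengths satisfy $\le 3$ (with $\le 2$ when the root being added is long), this forces $\alpha,\beta$ short and $\gamma$ long with a very constrained configuration. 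Such a configuration can be ruled out by a direct check in $B_2$, $G_2$ and $B_3/C_3/F_4$-type subsystems. For these the statement can simply be verified from the list of roots, which is finite.
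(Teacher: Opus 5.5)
Your first route is the right one and is essentially the paper's. Once $(\alpha,\beta)\ge 0$ is known, compare $\lvert i\alpha+j\beta\rvert^2$ with $(\delta-\gamma,\delta-\gamma)\le 4M$. Your derivation of $(\alpha,\beta)\ge 0$, from the fact that $(\alpha,\beta)<0$ forces $\alpha+\beta\in\Phi$, is cleaner than the paper's inspection of rank-two systems.

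But you abandon this route exactly where one more idea is needed. As a result, the case where $\alpha,\beta$ are not both long is never proved, and this is a genuine gap. You have also misplaced the difficulty. If one of $\alpha,\beta$ is long, your own bound already gives $4(\alpha,\alpha)+4(\beta,\beta)>4M$, so the mixed-length case is immediate. Only two short roots need care, and the missing idea is the equality case of your estimate: $(\delta-\gamma,\delta-\gamma)\le(\lvert\delta\rvert+\lvert\gamma\rvert)^2\le 4M$ is an equality only if $\delta=-\gamma$ is long.
\begin{itemize}
\item If $M=2m$, then $\lvert i\alpha+j\beta\rvert^2\ge 8m+8(\alpha,\beta)$, which exceeds $4M=8m$ unless $(\alpha,\beta)=0$. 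In that case equality must hold throughout, forcing $i=j=2$ and $\delta=-\gamma$. Hence $2(\alpha+\beta)=2\delta$, i.e.\ $\alpha+\beta=\delta\in\Phi$, contradicting adjacency.
\item If $M=3m$ (type $G_2$), two non-proportional short roots with $(\alpha,\beta)\ge 0$ make an angle of $60^\circ$. Their sum is a long root, so they are never adjacent.
\item In the simply-laced case all roots are long, so nothing more is needed.
\end{itemize}
This is essentially how the paper concludes.

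Your fallback does not fill the gap. The claim that $\gamma+2\alpha+2\beta\in\Phi$ does not follow from unbrokenness of root strings, because you know no two roots on a common $\alpha$- or $\beta$-string. It can be justified as follows. Since Cartan integers are at least $-3$, for $i\ge 3$ one has $\langle\delta\vert\alpha\rangle=\langle\gamma\vert\alpha\rangle+2i+j\langle\beta\vert\alpha\rangle\ge 2i-3>0$. Moreover $\delta\ne\alpha$, since otherwise $-\gamma=(i-1)\alpha+j\beta$ would be a root, contradicting adjacency. Hence $\delta-\alpha\in\Phi$, and symmetrically for $j$. You do not give this argument, however. The remaining steps (\enquote{this forces $\alpha,\beta$ short and $\gamma$ long with a very constrained configuration}, followed by an unspecified check of finitely many root lists) are a plan rather than a proof. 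Note also that $\alpha,\beta,\gamma$ span a subsystem of rank at most $3$, so $F_4$ cannot occur among the cases to be checked.
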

\begin{proof}
	We normalize the root system $\Phi$ in Euclidean space such that the short roots have Euclidean length $1$.
	Assume, by contradiction, that $ i,j \in \mathbb{N}_{\ge 2} $ are such that $i\alpha + j\beta$ is the sum of two roots $\gamma,\delta \in \Phi$,
	so we have $i\alpha + j\beta = \gamma + \delta$.

	We first observe that the angle between the adjacent roots $\alpha$ and $\beta$ is at most~$90^\circ$.
	Indeed, $\alpha$ and $\beta$ are contained in a common root subsystem of rank $2$, so it suffices to examine each of the possibilities $A_1 \times A_1$, $A_2$, $B_2 = C_2$ and $G_2$ to see that a pair of roots at an angle in the interval $\rootint{90^\circ}{180^\circ}$ can never be adjacent.
	It now follows that the Euclidean length $\lVert i \alpha + j \beta \rVert$ takes its minimum for $i = j = 2$.
	In other words, it suffices to show that
	\begin{equation}\label{eq:adjroots}
		2 \lVert \alpha + \beta \rVert > \lVert \gamma + \delta \rVert \quad \text{ for all } \gamma,\delta \in \Phi .
	\end{equation}
	
	When $\Phi = G_2$ there are only $3$ possible configurations for adjacent roots (up to symmetry), namely either both $\alpha$ and $\beta$ are long and make an angle of $60^\circ$, or one is short and the other is long, making an angle of either $30^\circ$ or $90^\circ$.
	In any case, we have $\lVert \alpha + \beta \rVert > 2$. Since roots have length at most $\sqrt{3}$, we have $\lVert \gamma + \delta \rVert \leq 2 \sqrt{3}$, and \eqref{eq:adjroots} follows.

	Now assume $\Phi \neq G_2$, so that all roots have length at most $\sqrt{2}$. In particular,
	\begin{equation}\label{eq:adjroots2}
		\lVert \gamma + \delta \rVert \leq 2 \sqrt{2} .
	\end{equation}
	On the other hand, since $\alpha$ and $\beta$ make an angle of at most $90^\circ$, we have
	\begin{equation}\label{eq:adjroots3}
		2 \lVert \alpha + \beta \rVert \geq 2 \sqrt{\lVert \alpha \rVert^2 + \lVert \beta \rVert^2} .
	\end{equation}
	Assume now that \eqref{eq:adjroots} is false. Then $\sqrt{\lVert \alpha \rVert^2 + \lVert \beta \rVert^2} \leq \sqrt{2}$, from which we get $\lVert \alpha \rVert = \lVert \beta \rVert = 1$, i.e., both $\alpha$ and $\beta$ are short roots; moreover, both inequalities \eqref{eq:adjroots2} and \eqref{eq:adjroots3} are equalities. This implies that on the one hand, $\gamma = \delta$ is long, and on the other hand, $\alpha$ and $\beta$ make an angle of $90^\circ$.
	The fact that we have \eqref{eq:adjroots} with an equality rather than an inequality also implies that $i = j = 2$, so $2\alpha + 2\beta = \gamma + \delta$. However, since $\gamma = \delta$, we now see that $\alpha + \beta = \gamma$, contradicting the fact that $\alpha$ and $\beta$ are adjacent.
\end{proof}

\begin{lemma}\label{le:trivial comrels}
	Let $ \alpha, \beta \in \Phi $ be adjacent and let
	\[ \exp_\alpha \colon \bar{L}_\alpha \to \Aut(\bar{L}) \colon a \mapsto \sum_{i \ge 0} \theta^{[i,\alpha]}_a \quad \text{and} \quad \exp_\beta \colon \bar{L}_\beta \to \Aut(\bar{L}) \colon b \mapsto \sum_{i \ge 0} \theta^{[i,\beta]}_b \]
	be an $ \alpha $-parametrization and a $ \beta $-parametrization, respectively. Then for all $ x_\alpha \in \bar{L}_\alpha $ and $ x_\beta \in \bar{L}_\beta $, the automorphisms $ \exp_\alpha(x_\alpha) $ and $ \exp_\beta(x_\beta) $ commute.
\end{lemma}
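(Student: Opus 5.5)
The idea is to compare the two products term by term in the expansions $\exp_\alpha(x_\alpha) = \sum_i \theta^{[i,\alpha]}_{x_\alpha}$ and $\exp_\beta(x_\beta) = \sum_j \theta^{[j,\beta]}_{x_\beta}$. We write $a \coloneq x_\alpha$, $b \coloneq x_\beta$, and abbreviate $\theta^{[i]} \coloneq \theta^{[i,\alpha]}_a$ and $\vartheta^{[j]} \coloneq \theta^{[j,\beta]}_b$. Then
\[ \exp_\alpha(a)\exp_\beta(b) - \exp_\beta(b)\exp_\alpha(a) = \sum_{i,j \ge 0} \brackets[\big]{\theta^{[i]}\vartheta^{[j]} - \vartheta^{[j]}\theta^{[i]}}, \]
and by \cref{def:param}\cref{item:param a} each summand maps $\bar{L}_\gamma$ into $\bar{L}_{\gamma + i\alpha + j\beta}$. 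It therefore suffices to show that every summand vanishes on every $\bar{L}_\gamma$, $\gamma \in \Phi^0$. We split into cases according to $(i,j)$.

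\begin{enumerate}
\item \emph{Either index is $0$.} If $i=0$ or $j=0$, the summand is a commutator with $\id$ (\cref{def:param}\cref{item:param b}), so it is zero.

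\item \emph{Both indices are at least $2$.} For $i,j \ge 2$, \cref{le:adj lincomb} gives $\gamma + i\alpha + j\beta \notin \Phi$ for all $\gamma \in \Phi$. We also need $\gamma + i\alpha + j\beta \neq 0$. This holds: if $\gamma = -(i\alpha + j\beta)$, then $\gamma$ would be a root of length at least $2\lVert \alpha+\beta\rVert$, which the estimate in the proof of \cref{le:adj lincomb} already excludes. For $\gamma = 0$ we need $i\alpha+j\beta \notin \Phi^0$, which is the same lemma applied with $\gamma$ replaced by a root, or the same length estimate directly. In every case the target space is $0$, so both compositions vanish on $\bar{L}_\gamma$.

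\item \emph{One index equals $1$.} This is the remaining case, say $i=1$, so $\theta^{[1]} = \ad_a$. Since $\alpha+\beta \notin \Phi^0$ (adjacency, and $\alpha \neq -\beta$ by non-proportionality), we have $[a,b] = 0$. Hence $\ad_a$ and $\ad_b$ commute by the Jacobi identity, which settles $(1,1)$.
\end{enumerate}

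For $(1,j)$ with $j \ge 2$ we must show that $\ad_a$ commutes with $\vartheta^{[j]}$. Here I would use that $\exp_\beta(b)$ is an automorphism, so that
\[ \exp_\beta(b)\, \ad_a\, \exp_\beta(b)^{-1} = \ad_{\exp_\beta(b)(a)}. \]
Moreover $\exp_\beta(b)(a) = \sum_j \vartheta^{[j]}(a)$ with $\vartheta^{[j]}(a) \in \bar{L}_{\alpha+j\beta}$. For $j \ge 1$ this space is zero: $\alpha+\beta \notin \Phi^0$ by adjacency, and for $j \ge 2$ we have $\alpha + j\beta \notin \Phi^0$ because the root string through $\alpha$ in direction $\beta$ is unbroken. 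Thus $\exp_\beta(b)(a) = a$, so $\exp_\beta(b)$ commutes with $\ad_a$.

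Expanding the commutation $\ad_a \exp_\beta(b) = \exp_\beta(b)\ad_a$ into homogeneous components, we note that $\ad_a\vartheta^{[j]}$ and $\vartheta^{[j]}\ad_a$ shift degrees by $\alpha + j\beta$. These shifts are distinct for distinct $j$, so comparing components on each $\bar{L}_\gamma$ gives $\ad_a\vartheta^{[j]} = \vartheta^{[j]}\ad_a$ for every $j$. The case $j=1$, $i \ge 2$ is symmetric.

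Altogether all summands vanish, and the claim follows. The main care point is this last step: the degree-separation argument must correctly isolate the individual $\vartheta^{[j]}$, and we must check that no $\alpha + j\beta$ lies in $\Phi^0$, which again reduces to adjacency together with the unbroken root strings.
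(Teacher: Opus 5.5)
Your proof is correct and follows the paper's argument: the same term-by-term reduction, the same case split, and \cref{le:adj lincomb} for $i,j \ge 2$. In the case $i=1$ the paper invokes the higher Leibniz rules of \cref{le:leibniz endo converse}, which is exactly your degree-separation argument applied to $\exp_\beta(b)\circ\ad_a = \ad_{\exp_\beta(b)(a)}\circ\exp_\beta(b)$. Your extra care about $\bar{L}_0$ is justified, since the paper's proof only treats $\gamma \in \Phi$. It also follows more directly from the definition of adjacency: $\rootint{\alpha}{\beta} = \emptyset$ already rules out $i\alpha + j\beta \in \Phi$ for all $i,j > 0$, so you do not need the length estimate.
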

\begin{proof}
	Let $ x_\alpha \in \bar{L}_\alpha $ and $ x_\beta \in \bar{L}_\beta $. For each $ i \in \mathbb{N} $ and $ \zeta \in \{\alpha, \beta\} $ and $y \coloneq x_\zeta$, put $ \delta^i_\zeta \coloneq \theta^{[i,\zeta]}_{y} $. Since
	\[ \exp_\alpha(x_\alpha) \circ \exp_\beta(x_\beta) = \brackets*{\sum_{i\ge 0} \delta^i_\alpha} \circ \brackets*{\sum_{j \ge 0} \delta^j_\beta} = \sum_{i \ge 0} \sum_{j \ge 0} \delta_\alpha^i \circ \delta_\beta^j, \]
	it suffices to show that $ \delta_\alpha^i $ and $ \delta_\beta^j $ commute for all $ i,j \ge 0 $. If $ i,j \ge 2 $, then it follows from \cref{le:adj lincomb} that $ \delta_\alpha^i \circ \delta_\beta^j $ and $ \delta_\beta^j \circ \delta_\alpha^i $ are both zero on $ L_\gamma $ for all $ \gamma \in \Phi $. If $ i = 0 $ or $ j=0 $, then either $ \delta_\alpha^i $ or $ \delta_\beta^j $ is the identity map. It remains to consider the case that $ i=1 $ or $ j=1 $, say $ i=1 $. Then $ \delta_\alpha^i = \ad_{x_\alpha} $. By \cref{le:leibniz endo converse}, the endomorphisms $ \delta_\beta^0, \delta_\beta^1, \ldots $ satisfy the higher Leibniz rules~\ref{eq:star l}. Hence for all $ \gamma \in \Phi $, all $ j \ge 0 $ and all $ x_\gamma \in L_\gamma $, we have
	\begin{align*}
		\delta_\beta^j \circ \delta_\alpha^i(x_\gamma) &= \delta_\beta^j([x_\alpha, x_\gamma]) = \sum_{\ell = 0}^j [\delta_\beta^\ell(x_\alpha), \delta_{\beta}^{j-\ell}(x_\gamma)].
	\end{align*}
	Since $ \delta_\beta^\ell(x_\alpha) \in L_{\alpha + \ell \beta} $ for all $ \ell \ge 0 $ and $ \alpha $, $ \beta $ are adjacent, all summands are zero except for $ \ell = 0 $. Hence $ \delta_\beta^j \circ \delta_\alpha^i(x_\gamma) = [x_\alpha, \delta_\beta^j(x_\gamma)] = \delta_\alpha^i \circ \delta_\beta^j(x_\gamma) $. We conclude that $ \delta_\alpha^i $ and $ \delta_\beta^j $ commute in all cases, and hence $ \exp_\alpha(x_\alpha) $ and $ \exp_\beta(x_\beta) $ commute.
\end{proof}

In \cref{sec:weyl G2}, we will show that the reflections $\phihor$ and $\phibs$ from \cref{def:reflections,def:phibs} are Weyl elements. For this we have to check that their conjugation action permutes the root groups accordingly. Since $\phihor^2$ and $\phibs^2$ are the parity automorphisms of certain gradings by \cref{rem:reflection parity,rem:phibs formulas}, the following result allows us to deduce from $U_\alpha^\varphi = U_{\sigma(\alpha)}$ that $U_{\sigma(\alpha)}^\varphi = U_\alpha^{\varphi^2} = U_\alpha$, where $\alpha \in G_2$, $\varphi \in \{\phihor, \phibs\}$ and $\sigma \colon G_2 \to G_2$ the reflection induced by $\varphi$. This cuts the amount of necessary computations by half. Explicitly, see \cref{le:exp y conj inv,le:exp L-1 conj inv,le:phibs short 1,le:phibs short 2,le:phibs short 3,le:phibs long 2,le:phibs long 3}.

\begin{lemma}\label{le:exp parity conj}
	Let $p \in \mathbb{N}$ and assume that $\Phi$ has a $(2p+1)$-grading, that is, a decomposition $\Phi = \bigsqcup_{i=-p}^p \Phi_i$ such that $(\Phi_i + \Phi_j) \cap \Phi \subseteq \Phi_{i+j}$ for all $i,j \in \Z$ (where $\Phi_i \coloneq \emptyset$ if $\lvert i \rvert > p$) and $\Phi_{-i} = -\Phi_i$ for all $i \in \mathbb{Z}$. Put $\bar{L}_i \coloneq \bigoplus_{\alpha \in \Phi_i} \bar{L}_\alpha$ for all $i \in \mathbb{Z}$, so that $(\bar{L}_i)_{-p \le i \le p}$ is a $(2p+1)$-grading of $\bar{L}$. Let $\exp_\alpha \colon L_\alpha \to \Aut(\bar{L})$ be an $\alpha$-parametrization for some $\alpha \in \Phi$ and let $\varphi \colon \bar{L} \to \bar{L}$ denote the parity automorphism of $L$ with respect to the $(2p+1)$-grading, so that $\varphi(x_i) = (-1)^i x_i$ for all $i \in \mathbb{Z}$ and $x_i \in \bar{L}_i$. Then for all $a \in \bar{L}_\alpha$, we have
	\[ \varphi^{-1} \circ \exp_\alpha(a) \circ \varphi = \exp_\alpha\brackets[\big]{\varphi(a)}. \]
\end{lemma}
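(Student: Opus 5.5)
The plan is to reduce everything to the degree behaviour of the homogeneous pieces $\theta^{[i]}_a$. Write $\exp_\alpha(a) = \sum_{i \ge 0} \theta^{[i]}_a$ as in \cref{def:param}, and let $m \in \{-p, \ldots, p\}$ be the index with $\alpha \in \Phi_m$. Since $\varphi$ is an involution ($\varphi^2 = \id$), we have $\varphi^{-1} = \varphi$. The claim is then
\[ \varphi \circ \exp_\alpha(a) \circ \varphi = \exp_\alpha\brackets[\big]{\varphi(a)} = \exp_\alpha\brackets[\big]{(-1)^m a} \quad \text{for all } a \in \bar{L}_\alpha. \]
Here we used that $a \in \bar{L}_\alpha \subseteq \bar{L}_m$, so $\varphi(a) = (-1)^m a$.

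First I handle the right-hand side. By \cref{def:param}\cref{item:param c} with $\lambda = (-1)^m$,
\[ \exp_\alpha\brackets[\big]{(-1)^m a} = \sum_{i \ge 0} (-1)^{mi} \theta^{[i]}_a. \]

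Next I compute the left-hand side piece by piece, by evaluating $\varphi \circ \theta^{[i]}_a \circ \varphi$ on a homogeneous element $x_\beta \in \bar{L}_\beta$ with $\beta \in \Phi^0$.
\begin{itemize}
	\item If $\beta \in \Phi_j$, then $\varphi(x_\beta) = (-1)^j x_\beta$.
	\item By \cref{def:param}\cref{item:param a}, $\theta^{[i]}_a(x_\beta) \in \bar{L}_{\beta + i\alpha}$. This is zero unless $\beta + i\alpha \in \Phi^0$. If it is nonzero and $\beta + i\alpha \in \Phi$, then by the grading property of the decomposition (applied iteratively, or directly to the sum) it lies in $\Phi_{j+im}$.
	\item For $\beta = 0$, set $j = 0$; the zero space sits in $\bar{L}_0$.
	\item If $\beta + i\alpha = 0$, then $\Phi_{j+im}$ should be read as degree $0$. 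Consistency holds because $j = -im$ follows from $\beta = -i\alpha \in \Phi_{-im}$ by $\Phi_{-i} = -\Phi_i$ and additivity.
\end{itemize}
Combining these, $\varphi\theta^{[i]}_a\varphi(x_\beta) = (-1)^{j}(-1)^{j+im}\theta^{[i]}_a(x_\beta) = (-1)^{im}\theta^{[i]}_a(x_\beta)$. Summing over $i$ gives exactly the right-hand side. Since $\bar{L} = \bigoplus_{\beta \in \Phi^0} \bar{L}_\beta$, the identity holds everywhere.

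The only delicate step is the degree bookkeeping: I must justify that $\bar{L}_{\beta + i\alpha}$ lies in $\bar{L}_{j+im}$ whenever it is nonzero. The hypothesis only gives additivity for sums of two roots that are again roots. A naive iteration through $\beta + \alpha, \beta + 2\alpha, \ldots$ might pass through non-roots.

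To avoid this, I would extend the grading to a homomorphism. A $(2p+1)$-grading of $\Phi$ compatible with addition and negation should come from a linear functional $\deg \colon \langle \Phi \rangle_\Z \to \Z$; at least, this is how all gradings in the paper arise, for instance via $\omega$ in \cref{rem:G2 grid}. With such a functional the degree of $\beta + i\alpha$ is simply $j + im$.

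If I cannot derive linearity from the stated axioms, I would note that in all applications the grading is induced by a linear map on $\Z^2$, and argue under that reading. Everything else is routine.
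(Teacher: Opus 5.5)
Your overall computation is the same as the paper's. You evaluate $\varphi\circ\theta^{[i]}_a\circ\varphi$ on a homogeneous $x_\beta$, use $\theta^{[i]}_{\lambda a}=\lambda^i\theta^{[i]}_a$ on the right-hand side, and compare the sign $(-1)^{j}(-1)^{\deg(\beta+i\alpha)}$ with $(-1)^{im}$. However, the one step you flag as delicate is never proved: that a nonzero $\bar L_{\beta+i\alpha}$ lies in degree $j+im$, at least modulo $2$. You assert that the grading ``should come from a linear functional''. Failing that, you propose to argue only for the gradings that occur in the applications. That proves a weaker statement than the lemma, which is formulated for an arbitrary $(2p+1)$-grading satisfying only two-root additivity and $\Phi_{-i}=-\Phi_i$. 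So the proof has a genuine gap exactly at the bookkeeping step.

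The missing ingredient is that root strings are unbroken, and this is how the paper closes the gap. Your worry that iterating $\beta,\beta+\alpha,\ldots,\beta+i\alpha$ might pass through non-roots is unfounded. If $\beta$ is not proportional to $\alpha$ and $\beta+i\alpha\in\Phi$, then $\beta+j\alpha\in\Phi$ for all $0\le j\le i$. Each step is then a sum of two roots that is again a root, so it raises the degree by exactly $m$. The remaining cases are $\beta\in\{0,\pm\alpha\}$, which in a reduced system are the only way $0$ can appear along the string. There everything stays in $\{0,\pm\alpha\}$, and these are checked directly using $\Phi_{-m}=-\Phi_m$ and degree $0$ for the zero weight space. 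The paper phrases this with signs $\epsilon_\beta=(-1)^{\deg\beta}$ and $\epsilon_0=1$, obtaining $\epsilon_{\beta+i\alpha}=\epsilon_\beta\epsilon_\alpha^i$.

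Alternatively, your linearity claim is true and easy to prove. Every positive non-simple root $\gamma$ admits a simple root $\delta$ with $\gamma-\delta\in\Phi^+$. Two-root additivity applied to $(\gamma-\delta)+\delta$ and induction on height then show that the degree function agrees on $\Phi^+$ with the linear functional determined by its values on a base. Oddness, from $\Phi_{-i}=-\Phi_i$, extends this to all of $\Phi$. Either argument completes your proof, and the rest of your proposal is fine.
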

\begin{proof}
	For all $i \in \mathbb{Z}$ and $\beta \in \Phi_i$, we put $\epsilon_\beta \coloneq (-1)^i$, and also $\epsilon_0 \coloneq 1$. Then for all $\beta,\gamma \in \Phi$ with $\beta+\gamma \in \Phi$, we have $\epsilon_{\beta+\gamma} = \epsilon_\beta \epsilon_\gamma$. Since root strings in $\Phi$ are unbroken, it follows that $\epsilon_{\beta + i\gamma} = \epsilon_\beta \epsilon_\gamma^i$ for all $\beta,\gamma \in \Phi^0$ and $i \in \mathbb{Z}$ with $\beta+i\gamma \in \Phi^0$. Now let $\beta \in \Phi^0$ and $b \in \bar{L}_\beta$ be arbitrary. Then
	\begin{align*}
		\varphi^{-1} \circ \exp_\alpha(a) \circ \varphi(b) &= \epsilon_\beta \sum_{i \ge 0} \epsilon_{\beta + i\alpha} \theta_{a}^{[i]}(b) \quad \text{and} \\
		\exp_\alpha\brackets[\big]{\varphi(a)}(b) &= \exp_\alpha(\epsilon_\alpha a)(b) = \sum_{i \ge 0} \theta_{\epsilon_\alpha a}^{[i]}(b) = \sum_{i \ge 0} \epsilon_\alpha^i \theta_{a}^{[i]}(b).
	\end{align*}
	Since $\epsilon_\beta\epsilon_{\beta+i\alpha} = \epsilon_\beta^2 \epsilon_\alpha^i = \epsilon_\alpha^i$ for all $i \in \mathbb{Z}$, the assertion follows.
\end{proof}

The following general result will allow us to establish Axiom~\ref{def:rgg}\cref{def:rgg:nondeg} of $\Phi$-graded groups for both $\Phi = G_2$ and $\Phi=F_4$. The argument is the same as in the context of Chevalley groups (see, for instance, Corollary~3 of Lemma~18 in \cite{Steinberg-ChevGroups})

\begin{proposition}\label{pr:G2 nondeg}
	For each $ \alpha \in \Phi $, let $ \exp_\alpha \colon \bar{L}_\alpha \to \Aut(\bar{L}) $ be an $ \alpha $-parametrisa\-tion and let $ U_\alpha $ denote the corresponding root group. Further, let $ \Pi $ be any positive system of roots in $ \Phi $ and denote by $ U_+ $ and $ U_- $ the groups generated by $ (U_\alpha)_{\alpha \in \Pi} $ and $ (U_\alpha)_{\alpha \in -\Pi} $, respectively. Then $ U_+ \cap U_- = 1 $. In particular, the root groups $ (U_\alpha)_{\alpha \in \Phi} $ satisfy Axiom~\ref{def:rgg}\cref{def:rgg:nondeg}.
\end{proposition}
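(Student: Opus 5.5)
The idea is the standard Chevalley-group argument: compare how elements of $U_+$ and $U_-$ act on a suitable graded piece of $\bar{L}$. First fix a linear functional $h$ on the span of $\Phi$ such that $h(\alpha) > 0$ for all $\alpha \in \Pi$. Such a functional exists because $\Pi$ is a positive system. Using $h$, define a filtration of $\bar{L}$ by
\[ \bar{L}_{\ge r} \coloneq \bigoplus_{\beta \in \Phi^0,\, h(\beta) \ge r} \bar{L}_\beta . \]
By \cref{def:param}\cref{item:param a}, every $\theta^{[i]}_a$ with $a \in \bar{L}_\alpha$ maps $\bar{L}_\beta$ into $\bar{L}_{\beta + i\alpha}$.

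For $\alpha \in \Pi$ this means the following: $e_a$ acts on $\bar{L}_\beta$ as the identity plus terms in strictly higher $h$-levels. Indeed, $\theta^{[0]}_a = \id$ by \cref{item:param b}, and $h(\beta + i\alpha) > h(\beta)$ for $i \ge 1$. Hence every element $g \in U_+$ is \enquote{unipotent upper triangular} with respect to the decomposition by $h$-levels. Concretely, for $b \in \bar{L}_\beta$ we get
\[ g(b) - b \in \bigoplus_{\gamma:\, h(\gamma) > h(\beta)} \bar{L}_\gamma . \]
This property is preserved under composition and inverses. For inverses, note that $e_a^{-1} = e_{-a}$ by \cref{item:param sum}, together with $e_0 = \id$, which follows from \cref{item:param c} since $\theta^{[i]}_0 = 0$ for $i \ge 1$. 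Symmetrically, every element $g \in U_-$ satisfies
\[ g(b) - b \in \bigoplus_{h(\gamma) < h(\beta)} \bar{L}_\gamma \quad \text{for } b \in \bar{L}_\beta . \]

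Now take $g \in U_+ \cap U_-$ and $b \in \bar{L}_\beta$. Then $g(b) - b$ lies both in the sum of root spaces of $h$-level $> h(\beta)$ and in the sum of root spaces of $h$-level $< h(\beta)$. These two sums intersect trivially because the sum $\bar{L} = \bigoplus \bar{L}_\gamma$ is direct, so $g(b) = b$. Since the $\bar{L}_\beta$ span $\bar{L}$, we conclude $g = \id$.

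For Axiom~\ref{def:rgg}\cref{def:rgg:nondeg}, observe that $U_{-\alpha} \subseteq U_-$ for $\alpha \in \Pi$ and $U_\Pi = U_+$, so $U_{-\alpha} \cap U_\Pi \subseteq U_+ \cap U_- = 1$.

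The only step needing care is the closure of the triangularity property under products. I would state it as the following claim: if $g_1, g_2$ each map $b \in \bar{L}_\beta$ to $b$ plus higher-level terms, then so does $g_1 g_2$. This holds because $g_1$ also sends higher-level components to themselves plus even higher-level terms. The rest is routine.
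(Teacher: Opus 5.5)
Your proof is correct and is essentially the paper's argument. The paper takes the height function with respect to $\Pi$ (one particular choice of your functional $h$) and orders $\Phi^0$ compatibly with it. It then shows, exactly as you do, that elements of $U_+$ and of $U_-$ are unipotent triangular in opposite directions with respect to $\bar{L} = \bigoplus \bar{L}_\beta$, so $U_+ \cap U_- = 1$ because this sum is direct.
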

\begin{proof}
	Denote by $ \rootht \colon \Phi \to \Z $ the height function with respect to $ \Pi $, which we extend to a function on $ \Phi^0 \coloneq \Phi \cup \{0\} $ by setting $ \rootht(0) \coloneq 0 $. Put $ \ell \coloneq \lvert \Phi^0 \rvert $ and choose an ordering $ (\alpha_1, \ldots, \alpha_{\ell}) $ of the elements of $ \Phi^0 $ such $ \rootht(\alpha_i) < \rootht(\alpha_j) $ implies $ i<j $ for all $ i,j \in \{1,\ldots, \ell\} $.
	For all $ \alpha \in \Pi $ and all $ \phi \in U_{\alpha} $, we have the following property because $ \exp_\alpha $ is an $ \alpha $-parametrization and $ \rootht(\alpha) > 0 $: There exists a $ k $-linear map $ \phi' \colon \bar{L} \to \bar{L} $ such that $ \phi = \id_{\bar{L}} + \phi' $ and $ \phi'(\bar{L}_{\alpha_m}) \subseteq \sum_{m < j \le \ell} \bar{L}_{\alpha_j} $ for all $ m \in \{1,\ldots, \ell\} $. Since this holds for all $ \alpha \in \Pi $, we infer that all $ \phi \in U_+ $ have the same property. By similar arguments, all $ \phi \in U_- $ can be written as $ \phi = \id_{\bar{L}} + \phi' $ for some $ \phi' \in \End(\bar{L}) $ with $ \phi'(\bar{L}_{\alpha_m}) \subseteq \sum_{1 \le j < m} \bar{L}_{\alpha_j} $ for all $ m \in \{1,\ldots, \ell\} $. Since the sum $ \bigoplus_{j=1}^{\ell} \bar{L}_{\alpha_j} $ is direct, it follows that $ U_+ \cap U_- = \{\id_{\bar{L}}\} $.
\end{proof}

\begin{remark}
	Let $ (\alpha_1, \ldots, \alpha_{\ell}) $ be as in the proof of \cref{pr:G2 nondeg}. Then any element of $ \Aut(L) $ has a \enquote{matrix} with respect to the direct sum decomposition $ L = \bigoplus_{j=1}^{\ell} L_{\alpha_j} $. The argument in the proof above says precisely that this matrix is lower triangular unipotent for all elements of $ U_+ $ and upper triangular unipotent for all elements of $ U_- $.
\end{remark}

\subsection{Definition of exponential maps in \texorpdfstring{$L$}{L}}\label{subsec:G2 exp:formulas}

For the rest of this section, we assume that $ (J,J') $ is a cubic norm pair over $ k $ and let $ L = L(J,J') $ denote the $ G_2 $-graded Lie algebra from \cref{sec:lie const}. Our next goal is to construct an $\alpha$-parametrization $L_\alpha \to \Aut(L) \colon a \mapsto e_a$ in $ L $ for each root $\alpha \in G_2$. We do so by giving explicit formulas involving the maps $N$ and $\sharp$, so our construction does not work for the more general linear cubic norm pairs introduced in \cref{subsec:linear CNP}. As for the reflections defined in \cref{sec:reflections}, the well-definedness of these maps is not evident and will be checked in \cref{le:exp well-defined} (see also \cref{rem:refl wd}).

\begin{remark}\label{rem:param domain}
	For the scope of this remark, we identify $ G_2^0 $ with a subset of $ \Z^2 $ as in \cref{rem:G2 grid}. Let $ \alpha \in G_2 $ and fix an isomorphism of $ k $-modules $ \vartheta \colon P_\alpha \cong L_\alpha $ (not necessarily the one from \cref{def:lie G2 natural rhom}). A map $ \exp_\alpha \colon P_\alpha \to \Aut(L) $ may be called an \emph{$ \alpha $-parametrization with respect to $ \vartheta $} if $ \mathord{\exp_\alpha} \circ \vartheta^{-1} $ is an $ \alpha $-parametrization. Note that properties \cref{item:param a,item:param c,item:param sum} in \cref{def:param} are independent of the choice of $ \vartheta $, but \cref{item:param b} is not. In our study of the $ G_2 $-grading of $ L $, we will always (implicitly) choose the \enquote{natural} isomorphisms from \cref{def:lie G2 natural rhom}, so the distinction between parametrizations as maps on $ P_\alpha $ and parametrizations as maps on $ L_\alpha $ is barely visible. When we refine the $ G_2 $-grading to an $ F_4 $-grading, however, it will be more natural to choose non-trivial isomorphisms between $ L_\beta $ (for $ \beta \in F_4 $) and the corresponding parametrising structure $ P_\beta $, and then the distinction between $ L_\beta $ and $ P_\beta $ will become crucial. For details, see \cref{subsec:chev}.
\end{remark}

\begin{definition}\label{def:exp xy}
	Let $\lambda \in k$. We define endomorphisms $e_{\lambda x}$ and $e_{\lambda y}$ of the $k$-module $L$ by setting
	\begin{align*}
		e_{\lambda x}(x) &\coloneq x, \\
		e_{\lambda x}\brackets[\big]{(\nu, c, c', \rho)_-} &\coloneq (\nu, c, c', \rho)_-, \\
		e_{\lambda x}(c') &\coloneq c', \\
		e_{\lambda x}(\dd_{c,c'}) &\coloneq \dd_{c,c'}, \quad
		e_{\lambda x}(\xi) \coloneq \xi + 2\lambda x, \quad
		e_{\lambda x}(\zeta) \coloneq \zeta + \lambda x, \\
		e_{\lambda x}(c) &\coloneq c, \\
		e_{\lambda x}\brackets[\big]{(\nu, c, c', \rho)_+} &\coloneq (\nu, c, c', \rho)_+ - \lambda (\nu, c, c', \rho)_-, \\
		e_{\lambda x}(y) &\coloneq y + \lambda \xi + \lambda^2 x
	\end{align*}
	and
	\begin{align*}
		e_{\lambda y}(x) &\coloneq x - \lambda \xi + \lambda^2 y, \\
		e_{\lambda y}\brackets[\big]{(\nu, c, c', \rho)_-} &\coloneq (\nu, c, c', \rho)_- + \lambda (\nu, c, c', \rho)_+, \\
		e_{\lambda y}(c') &\coloneq c', \\
		e_{\lambda y}(\dd_{c,c'}) &\coloneq \dd_{c,c'}, \quad
		e_{\lambda y}(\xi) \coloneq \xi - 2\lambda y, \quad
		e_{\lambda y}(\zeta) \coloneq \zeta - \lambda y, \\
		e_{\lambda y}(c) &\coloneq c, \\
		e_{\lambda y}\brackets[\big]{(\nu, c, c', \rho)_+} &\coloneq(\nu, c, c', \rho)_+, \\
		e_{\lambda y}(y) &\coloneq y
	\end{align*}
	for all $\nu, \rho \in k$, $c \in J$, $c' \in J'$.
\end{definition}

\begin{definition}\label{def:exp L1}
	Let $\lambda, \mu \in k$, $b \in J$, $b' \in J'$, put $a \coloneq (\lambda, b,b',\mu)_+ \in L_1$ and assume that at most one of the elements $\lambda,b,b',\mu$ is non-zero. We define an endomorphism $e_a$ of the $k$-module $L$ by setting
	\begin{align*}
		e_a(x) &\coloneq x + (\lambda, b, b', \mu)_- - b^\sharp - (b')^\sharp + \bigl(-N(b), 0, 0, N(b')\bigr)_+, \\
		e_a\brackets[\big]{(\nu, c, c', \rho)_-} &\coloneq (\nu, c, c', \rho)_- \\
		&\qquad {} + \lambda c' - c \times b + \nu b' \\
		&\qquad {} - \dd_{b,c'} - \dd_{c,b'} \\
		&\qquad {} + \brackets[\big]{\rho \lambda - T(b,c')}(\xi - \zeta) + \brackets[\big]{T(c,b') - \nu\mu} \zeta \\
		&\qquad {} + \rho b - c' \times b' + \mu c \\
		&\qquad {} + \bigl(-\rho \lambda^2 - T(b^\sharp, c), U_b(c') + \nu (b')^\sharp, \\
		&\hspace{3cm} -\rho b^\sharp - U_{b'}(c), T(c', (b')^\sharp) + \nu\mu^2\bigr)_+ \\
		&\qquad {} +\brackets[\big]{-\rho N(b) - \nu N(b')}y, \\
		e_a(c') &\coloneq c' + \brackets[\big]{T(b,c'), c' \times b', \mu c', 0}_+ - T((b')^\sharp, c') y, \\
		e_a(\dd_{c,c'}) &\coloneq \dd_{c,c'} + \bigl(\lambda T(c,c'), -D_{c,c'}(b) + T(c,c') b, \\
		& \hspace{4cm}D_{c',c}(b') - T(c,c')b', -\mu T(c,c')\bigr)_+, \\
		e_a(\xi) &\coloneq \xi - a, \\
		e_a(\zeta) &\coloneq \zeta + (\lambda, 0, -b', -2\mu)_+ , \\
		e_a(c) &\coloneq c - \brackets[\big]{0, \lambda c, c \times b, T(c,b')}_+- T(c, b^\sharp) y, \\
		e_a\brackets[\big]{(\nu, c, c', \rho)_+} &\coloneq (\nu, c, c', \rho)_+ + \brackets[\big]{T(b,c') - T(c,b') + \mu \nu - \lambda \rho} y, \\
		e_a(y) &\coloneq y
	\end{align*}
	for all $\nu, \rho \in k$, $c \in J$, $c' \in J'$.
\end{definition}

\begin{definition}\label{def:exp L-1}
	Let $\lambda, \mu \in k$, $b \in J$, $b' \in J'$, put $a \coloneq (\lambda, b,b',\mu)_- \in L_{-1}$ and assume that at most one of the elements $\lambda,b,b',\mu$ is non-zero. We define an endomorphism $e_a$ of the $k$-module $L$ by setting
	\begin{align*}
		e_a(x) &\coloneq x, \\
		e_a\brackets[\big]{(\nu, c, c', \rho)_-} &\coloneq (\nu, c, c', \rho)_- + \brackets[\big]{T(b,c') - T(c,b') + \mu \nu - \lambda \rho} x, \\
		e_a(c') &\coloneq c' + \brackets[\big]{T(b,c'), c' \times b', \mu c', 0}_- - T((b')^\sharp, c') x, \\
		e_a(\dd_{c,c'}) &\coloneq \dd_{c,c'} + \bigl(\lambda T(c,c'), -D_{c,c'}(b) + T(c,c')b, \\
		& \hspace{4cm} D_{c',c}(b') - T(c,c') b', -\mu T(c,c')\bigr)_-, \\
		e_a(\xi) &\coloneq \xi + (\lambda, b, b', \mu)_-, \\
		e_a(\zeta) &\coloneq \zeta + (2\lambda, b, 0, -\mu)_-, \\
		e_a(c) &\coloneq c - \brackets[\big]{0, \lambda c, c \times b, T(c,b')}_- - T(c, b^\sharp) x, \\
		e_a\brackets[\big]{(\nu, c, c', \rho)_+} &= (\nu, c, c', \rho)_+ \\
		&\qquad {} - \lambda c' + b \times c - \nu b' \\
		&\qquad {} + \dd_{b,c'} + \dd_{c,b'} \\
		&\qquad {} + \brackets[\big]{\lambda\rho - T(b,c')} \cdot \zeta + \brackets[\big]{T(c,b') - \mu\nu} \cdot (\xi-\zeta) \\
		&\qquad {} - \rho b + b' \times c' - \mu c \\
		&\qquad {} + \bigl( \rho\lambda^2 + T(c,b^\sharp), -U_b(c') - \nu (b')^\sharp, \\
		&\hspace{3cm} \rho b^\sharp + U_{b'}(c), -T((b')^\sharp , c') -\mu^2\nu \bigr)_- \\
		&\qquad {} + \brackets[\big]{\rho N(b) + \nu N(b')}x, \\
		e_a(y) &\coloneq y - (\lambda, b, b', \mu)_+ - b^\sharp - (b')^\sharp  + \brackets[\big]{-N(b), 0, 0, N(b')}_-
	\end{align*}
	for all $\nu, \rho \in k$, $c \in J$, $c' \in J'$.
\end{definition}

\begin{definition}\label{def:exp JJ'}
	Let $ b \in J \subseteq L_0 $, $b' \in J' \subseteq L_0$. We define endomorphisms $e_b$ and $e_{b'}$ of the $k$-module $L$ by setting
	\begin{align*}
		e_b(y) &\coloneq y, \\
		e_b\brackets[\big]{(\nu, c, c', \rho)_\pm} &\coloneq \brackets[\big]{\nu, c+\nu b, c' + b \times c + \nu b^\sharp, \rho + T(b,c') + T(c,b^\sharp) + \nu N(b)}_\pm, \\
		e_b(c') &\coloneq c' - \dd_{b,c'} + U_b(c'), \\
		e_b(\dd_{c,c'}) &\coloneq \dd_{c,c'} - D_{c,c'}(b), \quad e_b(\xi) \coloneq \xi, \quad e_b(\zeta) \coloneq \zeta - b, \\
		e_b(c) &\coloneq c, \\
		e_b(x) &\coloneq x,
	\end{align*}
	and
	\begin{align*}
		e_{b'}(y) &\coloneq y, \\
		e_{b'}\brackets[\big]{(\nu, c, c', \rho)_\pm} &\coloneq \bigl(\nu - T(c,b') + T((b')^\sharp, c') - \rho N(b'), \\
		& \qquad\qquad\qquad\qquad c-b' \times c' + \rho(b')^\sharp, c' - \rho b', \rho\bigr)_\pm, \\
		e_{b'}(c') &\coloneq c', \\
		e_{b'}(\dd_{c,c'}) &\coloneq \dd_{c,c'} + D_{c',c}(b'), \quad e_{b'}(\xi) \coloneq \xi, \quad e_{b'}(\zeta) \coloneq \zeta + b', \\
		e_{b'}(c) &\coloneq c + \dd_{c,b'} + U_{b'}(c), \\
		e_{b'}(x) &\coloneq x,
	\end{align*}
	for all $ \nu, \rho \in k $, $ c \in J $ and $ c' \in J' $.
\end{definition}

\begin{remark}\label{rem:param}
	The formulas in \cref{def:exp xy,def:exp L1,def:exp L-1,def:exp JJ'} have been obtained by applying the usual definition of an exponential map
	\[ e_a = \id + \ad_a + \tfrac{1}{2} \ad_a^2 + \tfrac{1}{6} \ad_a^3 + \tfrac{1}{24} \ad_a^4, \]
	simplifying the result using the facts $b \times b = 2 b^\sharp$, $D_{b,b'}(b) = 2 U_b(b')$ and $T(b, b \times b) = 6 N(b)$ for all $b \in J$, $b' \in J'$ and interpreting the resulting formula over the integers.
	In particular, it is obvious from this procedure that the properties \cref{item:param a,item:param b,item:param c} in \cref{def:param} are satisfied. To show that the maps $ a \mapsto e_a $ are parametrizations, it remains to show that property \cref{item:param sum} is satisfied (which implies that $ e_a $ is invertible with inverse $ e_{-a} $) and that, most importantly, all maps of the form $ e_a $ lie in $ \End(L) $ (that is, are compatible with the Lie bracket).
\end{remark}

\begin{remark}\label{rem:exp CNP notation}
	If necessary, we will write $e_a^{(J,J')}$ (where $a \in L(J,J')_{ij}$ for some $(i,j) \ne (0,0)$) to clarify that we consider the map $e_a$ defined on the Lie algebra $L(J,J')$. We will use a similar notation for the reflections defined in \cref{sec:reflections}. Note that if $(\iota, \iotainv) \colon (J_1, J_1') \to (J_2, J_2')$ is a surjective homomorphism of cubic norm pairs, then
	\[ L(\iota, \iotainv) \circ e_{a_1}^{(J_1, J_1')} = e_{a_2}^{(J_2, J_2')} \circ L(\iota, \iotainv) \]
	for all $a_1 \in L(J_1, J_1')_{ij}$ where $(i,j) \ne (0,0)$ and $a_2 \coloneq L(\iota, \iotainv)(a_1)$. In particular,
	\[ e_{a_1}^{(J_1, J_1)} = L(\iota, \iotainv)^{-1} \circ e_{a_2}^{(J_2, J_2')} \circ L(\iota, \iotainv) \]
	if $(\iota, \iotainv)$ is bijective.
\end{remark}

\begin{lemma}\label{le:exp well-defined}
	The maps $e_{(\lambda,b,b',\mu)_+}$, $e_{\lambda x}$, $e_{\lambda y}$, $e_b$ and $e_{b'}$ are well-defined for all $\lambda, \mu \in k$, $b \in J$, $b' \in J'$ such that at most one of $\lambda,b,b',\mu$ is non-zero, respectively.
\end{lemma}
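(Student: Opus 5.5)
The only place where well-definedness is in question is $L_{0,0}$. The definitions of $e_a$ prescribe values on the generators $\dd_{c,c'}$, $\xi$, $\zeta$. On the remaining components, namely $L_{\pm 2}$, $L_{\pm 1}$, $J$ and $J'$, the formulas are given on a free parametrization and are visibly additive and $k$-linear, so there is nothing to check there. I will follow the strategy of \cref{le:reflections well-defined}. Suppose
\[ f = \sum \lambda_{c,c'} \dd_{c,c'} + \lambda_\zeta \zeta + \lambda_\xi \xi = 0 . \]
By \cref{le:lin comb}, this means exactly that $\lambda_\zeta + 2\lambda_\xi = 0$, that $\sum \lambda_{c,c'} T(c,c') = 3\lambda_\xi$, and that $\sum \lambda_{c,c'} D_{c,c'} = 2\lambda_\xi \id_J$ and $\sum \lambda_{c,c'} D_{c',c} = 2\lambda_\xi \id_{J'}$. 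In each case I have to show that the prescribed image
\[ \sum \lambda_{c,c'} e_a(\dd_{c,c'}) + \lambda_\zeta e_a(\zeta) + \lambda_\xi e_a(\xi) \]
vanishes. Each $e_a$ acts on $L_{0,0}$ as the identity plus a correction term, so the identity part contributes $f = 0$. It therefore suffices to show that the sum of the correction terms is zero, using only the four relations above.

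I treat the cases separately. For $e_{\lambda x}$ the correction terms are $0$ on $\dd_{c,c'}$, $2\lambda x$ on $\xi$ and $\lambda x$ on $\zeta$. They sum to $(2\lambda_\xi + \lambda_\zeta)\lambda x = 0$. The case $e_{\lambda y}$ is the same with $y$ in place of $x$. For $e_b$ with $b \in J$, the corrections sum to
\[ -\sum \lambda_{c,c'} D_{c,c'}(b) - \lambda_\zeta b = -2\lambda_\xi b + 2\lambda_\xi b = 0 . \]
Similarly, for $e_{b'}$ they sum to $\sum \lambda_{c,c'} D_{c',c}(b') + \lambda_\zeta b' = 0$.

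For $e_a$ with $a = (\lambda,b,b',\mu)_+$ I compare the four coordinates of the $L_1$-valued correction separately.
\begin{itemize}
\item First coordinate: $\lambda \sum \lambda_{c,c'} T(c,c') - \lambda_\xi\lambda + \lambda_\zeta\lambda = 3\lambda_\xi\lambda - \lambda_\xi\lambda - 2\lambda_\xi\lambda = 0$.
\item $J$-coordinate: $-2\lambda_\xi b + 3\lambda_\xi b - \lambda_\xi b = 0$.
\item $J'$-coordinate: $2\lambda_\xi b' - 3\lambda_\xi b' - \lambda_\xi b' - \lambda_\zeta b' = -2\lambda_\xi b' + 2\lambda_\xi b' = 0$.
\item Last coordinate: $-3\lambda_\xi\mu - \lambda_\xi\mu - 2\mu\lambda_\zeta = -4\lambda_\xi\mu + 4\lambda_\xi\mu = 0$.
\end{itemize}
The case of $L_{-1}$ is the analogous computation with the corresponding signs.

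There is no real obstacle here. The only care needed is bookkeeping the sign and coefficient conventions for $\xi$ and $\zeta$ in each of the definitions; all the structural input is \cref{le:lin comb}.
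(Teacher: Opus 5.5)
Your proof is correct and follows essentially the same route as the paper. Both reduce well-definedness to the $L_{0,0}$-component and use \cref{le:lin comb} to show that every vanishing combination of $\dd_{c,c'}$, $\xi$, $\zeta$ is sent to zero; the paper phrases this as checking that both sides of $\sum \lambda_{a,a'}\dd_{a,a'} = \lambda_\xi(2\zeta-\xi)$ have the same image, which is equivalent to your use of $\lambda_\zeta = -2\lambda_\xi$. You also carry out explicitly the cases $e_{\lambda y}$, $e_b$ and $e_{b'}$, which the paper only describes as ``similar''.
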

\begin{proof}
	In this proof, all summations are understood to run over $a \in J$ and $a' \in J'$. We proceed as in the proof of \cref{le:reflections well-defined}. We have to show that if
	\[ \sum \lambda_{a,a'} \dd_{a,a'} = \lambda_\xi \cdot (2\zeta - \xi) \]
	for certain, finitely many scalars $\lambda_{a,a'}, \lambda_\zeta, \lambda_\xi \in k$, then both sides of this equation evaluate to the same term under the assignments defining $e_{(\lambda,b,b',\mu)_+}$ and $e_{\rho x}$. They are both fixed by the assignments defining $e_{\rho x}$, so $e_{\rho x}$ is well defined. Under the assignments defining $e_{(\lambda,b,b',\mu)_+}$, they evaluate to
	\begin{gather*}
		\sum \lambda_{a,a'} \dd_{a,a'} + \sum \lambda_{a,a'} t_{a,a'}  \quad \text{and} \\
		\lambda_\xi \cdot (2\zeta - \xi) + \lambda_\xi (3\lambda, b, -b', -3\mu)_+ ,
	\end{gather*}
	respectively, where
	\[ t_{a,a'} \coloneq \brackets[\big]{\lambda T(a,a'), -D_{a,a'}(b) + T(a,a') b, D_{a,a'}(b') - T(a,a')b', -\mu T(a,a')}. \]
	By \cref{le:lin comb}, we have
	\begin{align*}
		\sum \lambda_{a,a'} t_{a,a'} = \brackets[\big]{3\lambda\lambda_\xi, -2\lambda_\xi b + 3\lambda_\xi b, 2\lambda_\xi b' - 3\lambda_\xi b', -3\lambda_\xi \mu}.
	\end{align*}
	We conclude that also $ e_{(\lambda,b,b',\mu)_+}$ is well-defined. The well-definedness of the remaining maps can be verified in a similar way.
\end{proof}

\begin{remark}
	In fact, the well-definedness of the maps that we left out in the proof of \cref{le:exp well-defined} does not have to be verified explicitly: We will see in \cref{pr:exp conj aut summary} that all remaining maps are conjugate to maps for which we have verified the well-definedness in the proof of \cref{le:exp well-defined}, and hence they are well-defined themselves.
\end{remark}

\begin{remark}\label{rem:aut equality}
	Since $ [x,L_1] = L_{-1} $, $ [y,L_{-1}] = L_1 $, $ [L_i, L_i] = L_{2i} $ for $ i \in \{1,-1\} $ and $ [L_1, L_{-1}] = L_0 $, the sets  $\{x\} \cup L_1$ and $\{y\} \cup L_{-1}$ are generating sets of the Lie algebra $L$. For all $f,g \in \End(L)$, it follows that $f=g$ if and only if $f$ and $g$ agree on one of these sets.
\end{remark}

\begin{proposition}\label{pr:param}
	Identity $G_2$ with a subset of $\Z^2$ as in \cref{rem:G2 grid}, so that $L_\alpha$ is defined for all $\alpha \in G_2$. Then for all $ \alpha \in G_2 $, the map $ \exp_\alpha \colon L_\alpha \to \Aut(L) \colon a \mapsto e_a $ (with $ e_a $ as given by \cref{def:exp xy,def:exp L1,def:exp L-1,def:exp JJ'}) is a well-defined $ \alpha $-parametrization in $ L $.
\end{proposition}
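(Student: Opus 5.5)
Properties \cref{item:param a,item:param b,item:param c} of \cref{def:param} hold by construction for every $e_a$, as explained in \cref{rem:param}. The formulas are integral reinterpretations of the truncated exponential series, so the degree shifts and the homogeneity $\theta^{[i]}_{\lambda a} = \lambda^i \theta^{[i]}_a$ can be read off directly. What remains for each $\alpha \in G_2$ is threefold: well-definedness on $L_{0,0}$, the additivity property \cref{item:param sum}, and the fact that $e_a$ is compatible with the Lie bracket. The last point, together with additivity, gives $e_a \in \Aut(L)$ with inverse $e_{-a}$.

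To minimise work, I will only verify these properties directly for a small set of roots: $(2,1)$ (via $e_{\lambda y}$), the four roots in $L_1$ (via $e_{(\lambda,b,b',\mu)_+}$ with one nonzero entry), and $(0,\pm1)$ (via $e_b$, $e_{b'}$). The remaining roots I will transfer using \cref{le:conj by refl}. By \cref{le:reflections auto}, $\phihor$, $\phisl$ and $\psibs$ are reflections. Conjugating by $\phihor$ maps the $L_1$- and $y$-parametrizations to the $L_{-1}$- and $x$-parametrizations. To apply \cref{le:conj by refl}, I will check on the generating set $\{x\}\cup L_1$ of \cref{rem:aut equality} that $\phihor^{-1} e_{\phihor(a)}\phihor$ coincides with the explicit $e_a$ of \cref{def:exp L-1,def:exp xy}. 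Comparing two maps already known to be endomorphisms reduces to evaluating them on this generating set, which is cheap.

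For well-definedness, \cref{le:exp well-defined} already covers the cases above, and the remaining ones follow from the conjugation identities. For additivity, within a single root space I would check $e_a e_b = e_{a+b}$ again only on $\{x\} \cup L_1$ or $\{y\}\cup L_{-1}$. Both sides are endomorphisms once compatibility is known, so this suffices, and it reduces to identities such as $(b+c)^\sharp = b^\sharp + c^\sharp + b\times c$, the gradient identity for $N$, and the quadratic expansion $U_{b+c} = U_b + U_c + U_{b,c}$.

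The main obstacle is proving that each $e_a$ preserves the bracket. My plan is to reformulate this as the ``higher Leibniz rules''
\[ \theta^{[n]}_a([u,v]) = \sum_{i+j=n} [\theta^{[i]}_a(u), \theta^{[j]}_a(v)], \]
for all homogeneous $u,v$ (this is the content of Appendix~\ref{sec:leibniz} and \cref{le:leibniz endo converse}). Because of the grading, only $n \le 4$ matter, and the rule for $n=1$ is the Jacobi identity. The rules for $n=2,3,4$ I will verify case by case over pairs of root spaces, using \cref{ta:Lie}, \cref{le:CNP identities,le:CNP new,le:triple}. This is where the strict axioms \cref{eq:sharp twice,eq:CNP 18,eq:CNP 19} of \cref{def:CNP new} are genuinely needed, for example $(a^\sharp)^\sharp = N(a)a$ and $a\times(a^\sharp\times b') = N(a)b' + T(a,b')a^\sharp$. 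I expect the $L_1$-case with $b \in J$ or $b' \in J'$ to be the longest part, since there $\theta^{[4]}$ involves $N(b)$ and must be checked on the pair $(L_{-1}, L_{-1})\to L_{-2}$ and related components.
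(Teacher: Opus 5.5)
Your overall architecture (higher Leibniz rules for a few roots, transfer to the others by conjugating with reflections as in \cref{le:conj by refl}, additivity checked on a Lie generating set once compatibility is known) is the paper's. However, the transfer step as you state it is circular.

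For $a$ in a root space contained in $L_{-1} \cup L_{-2}$, the map $e_a$ of \cref{def:exp L-1,def:exp xy} is only a $k$-linear map prescribed on $k$-module generators. That it respects the bracket, and even that it is well defined on $L_{0,0}$, is exactly what the transfer is supposed to establish. So you cannot treat $e_a$ and $\phihor^{-1} \circ e_{\phihor(a)} \circ \phihor$ as ``two maps already known to be endomorphisms'' and compare them only on the Lie generating set $\{x\} \cup L_1$ of \cref{rem:aut equality}. Agreement there says nothing about the values of $e_a$ on $L_{-1}$, $L_0$ or $y$. (Defining $\exp_\alpha$ as the conjugate would give some parametrization, but the statement is about the explicit $e_a$.)

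You must compare the two maps on a spanning set of the $k$-module $L$, namely on $x$, $L_{\pm 1}$, $J$, $J'$, $\dd_{c,c'}$, $\xi$, $\zeta$ and $y$, as the paper does in \cref{le:exp y conj,le:exp L-1 conj}. Only this full comparison shows that the explicit assignments are the values of an honest automorphism. That gives well-definedness on $L_{0,0}$ and $e_a \in \Aut(L)$ at the same time. The extra evaluations are routine, so the gap is easy to close.

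With that repair your proof is correct, but it does considerably more hand computation than the paper. The paper proves the higher Leibniz rules only for $e_{\lambda x}$ and $e_{(0,b,0,0)_+}$ (Appendix~\ref{sec:leibniz}) and obtains every other root by conjugation (\cref{pr:exp conj aut summary}):
\begin{itemize}
\item $\phihor$ exchanges $L_{-1}$ with $L_1$ and $x$ with $y$;
\item $\phisl$ carries $e_{\lambda x}$, $e_{\lambda y}$, $e_{(0,a,0,0)_+}$ to $e_{(-\lambda,0,0,0)_-}$, $e_{(0,0,0,-\lambda)_+}$, $e_{-a}$ respectively;
\item the $J'$-parametrized roots are reduced, via $\psibs$ (\cref{le:exp J' psi conj}), to the root $(1,0)$ of the swapped pair $(J',J)$.
\end{itemize}
You instead redo the Leibniz case analysis for seven root spaces, including the long ones for $e_{(0,0,b',0)_+}$, $e_b$ and $e_{b'}$. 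This is valid, but it repeats the work of Appendix~\ref{sec:leibniz} several times over, and it leaves unused the $\phisl$ and $\psibs$ symmetries you yourself list.
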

\begin{proof}
	By \cref{rem:param}, we only have to show that for all $ \alpha \in G_2 $, the map $ \exp_\alpha $ is additive and has image in $ \Aut(L) $.
	In \cref{sec:leibniz}, we show that the maps $ e_{\rho x} $ for $ \rho \in k $ and $ e_{(0,b,0,0)_+} $ for $ b \in J $ are endomorphisms of the Lie algebra $ L $. We proceed to show that $ \exp_\alpha $ is additive for $ \alpha \in \{(-2,-1), (1,0)\} $. For all $ s,t \in k $, we have $ e_{(s+t)x} = e_{sx} \circ e_{tx} $ because both sides act in the same way on the generating set $ \{y\} \cup L_{-1} $ of $ L $. In particular, $ e_{st} $ is invertible for all $ s \in k $, so the image of $ \exp_{(-2,-1)} $ lies in $ \Aut(L) $. We conclude that the map $ L_{-2,-1} \to \Aut(L) $ is a $ (-2,-1) $-parametrization.
	
	Similarly, the identity $ e_{(0,b+c,0,0)_+} = e_{(0,b,0,0)_+} \circ e_{(0,c,0,0)_+} $ for $b,c \in J$ follows from the computations
	\begin{align*}
		(e_{(0,b,0,0)_+} \circ e_{(0,c,0,0)_+})(x) &=e_{(0,b,0,0)_+}\brackets[\big]{x + (0,c,0,0)_- - c^\sharp + (-N(c), 0, 0, 0)_+} \\
		&= x + (0,b,0,0)_- - b^\sharp + \brackets[\big]{-N(b), 0, 0, 0}_+ \\*
		&\qquad{} + (0,c,0,0)_- - b \times c + \brackets[\big]{-T(b^\sharp, c), 0, 0, 0}_+ \\*
		&\qquad{} - c^\sharp + \brackets[\big]{-T(b,c^\sharp), 0, 0, 0}_+ + \brackets[\big]{-N(c),0,0,0}_+ \\
		&= x + (0,b+c,0,0)_- - (b^\sharp + c^\sharp + b \times c) \\*
		&\qquad{}+ \brackets[\big]{-N(b) - N(c) - T(b^\sharp, c) - T(b, c^\sharp), 0, 0, 0}_+ \\
		&= x + (0,b+c,0,0)_- - (b+c)^\sharp \\*
		&\qquad{}+ \brackets[\big]{-N(b+c), 0, 0, 0}_+
	\end{align*}
	and
	\begin{align*}
		(e_{(0,b,0,0)_+} \circ e_{(0,c,0,0)_+})\brackets[\big]{(\lambda, a, a', \mu)_+} &= (\lambda, a, a', \mu) + T(c,a')y + T(b,a')y
	\end{align*}
	for $ (\lambda, a, a', \mu)_+ \in L_1 $. Hence the map $ L_{1,0} \to \Aut(L) $ is a $ (1,0) $-parametrization.
	For arbitrary $ \alpha \in G_2 $, we can now deduce from \cref{pr:exp conj aut summary} that $ \exp_\alpha $ is additive with image in $ \Aut(L) $, as desired.
\end{proof}

\begin{definition}\label{def:G2 rootgr}
	For all $ \alpha \in G_2 $, we will in the following always denote by
	\[ \exp_\alpha \colon L_\alpha \to \Aut(L) \colon a \mapsto e_a \]
	the $ \alpha $-parametrization from \cref{pr:param} and we denote by $ U_\alpha \coloneq \exp_\alpha(L_\alpha) $ the associated root group.
\end{definition}

\subsection{Constructing the \texorpdfstring{$G_2$}{G2}-graded group}\label{subsec:G2 exp:rgg}

We now show that the parametrizations previously defined satisfy some commutator relations. At first, we introduce a notation that is analogous to the one used in \cite[(16.8)]{TW02}.

\begin{notation}
	Identifying $G_2$ with a subset of $\Z^2$ as in \cref{rem:G2 grid}, we consider the following roots:
	\begin{align*}
		\alpha_1 &\coloneq \delta = (1,0), & \alpha_6 &\coloneq \gamma = (-2, -1), & \alpha_2 &\coloneq (1,-1) = \gamma + 3\delta, \\
		\alpha_3 &\coloneq (0,-1) = \gamma + 2\delta, & \alpha_4 &\coloneq (-1,-2) = 2\gamma +3\delta, & \alpha_5 &\coloneq (-1,-1) = \gamma+\delta.
	\end{align*}
	Note that $\{\alpha_1,\alpha_6\}$ is a system of simple roots with associated system $\{\alpha_1, \ldots, \alpha_6\}$ of positive roots. For all $i \in \{1, \ldots, 6\}$, we define a map
	\[ x_i \colon P_{\alpha_i} \to \Aut(L) \colon a \mapsto \exp_{\alpha_i}\brackets[\big]{\rhomlieG{\alpha_i}(a)} \]
	where $P_\alpha \in \{J,J',k\}$ is the space parametrising $L_\alpha$ and $\rhomlieG{\alpha} \colon P_\alpha \to L_\alpha$ is the \enquote{natural} isomorphism from \cref{def:lie G2 natural rhom}.
\end{notation}

\begin{proposition}\label{pr:G2 comm}
	For all $\lambda, \mu \in k$, $b, d \in J$ and $b' \in J'$, we have the following commutator relations between the positive root groups $U_\alpha$.
	\begin{align}
		&[x_1(b), x_3(b')] = x_2(T(b,b')), \label{G2 comm 1} \\
		&[x_3(b'), x_5(b)] = x_4(-T(b,b')), \label{G2 comm 2} \\
		&[x_2(\lambda), x_6(\mu)] = x_4(\lambda \mu), \label{G2 comm 3} \\
		&[x_1(b), x_5(d)] = x_2(T(d, b^\sharp)) \circ x_3(- b \times d) \circ x_4(T(b, d^\sharp)), \label{G2 comm 4} \\
		&[x_1(b), x_6(\lambda)] 	= x_2(-\lambda N(b)) \circ x_3(\lambda b^\sharp) \circ x_4(\lambda^2 N(b)) \circ x_5(\lambda b) . \label{G2 comm 5}
	\end{align}
	All other commutator relations are trivial.
\end{proposition}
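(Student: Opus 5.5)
Both sides of each relation are automorphisms of $L$. By \cref{rem:aut equality}, it therefore suffices to check that they agree on a generating set of $L$, for instance $\{y\} \cup L_{-1}$ or $\{x\} \cup L_1$. For each pair I will choose whichever set makes the root groups involved act most simply.

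First the trivial relations. Among the positive roots $\alpha_1,\dots,\alpha_6$, a pair with empty root interval is adjacent, and \cref{le:trivial comrels} shows directly that the corresponding exponential automorphisms commute. Checking the $G_2$ picture (\cref{fig:grading}), the pairs $(\alpha_i,\alpha_j)$ with $i<j$ that are not adjacent are exactly $(1,3),(3,5),(2,6),(1,5),(1,6)$ and $(1,4)$, the latter with root interval $\emptyset$? Indeed $\alpha_1+\alpha_4=(0,-2)\notin G_2$, so $(1,4)$ is adjacent as well. Hence only the five listed pairs remain.

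For these five, I rewrite each relation in the form
\[ x_i(a)\,x_j(b) = x_j(b)\,x_i(a)\,\cdot(\text{product}), \]
and evaluate both sides on generators using the explicit formulas of \cref{def:exp xy,def:exp L1,def:exp L-1,def:exp JJ'}. The two long–short relations \eqref{G2 comm 1} and \eqref{G2 comm 2}, and the long–long relation \eqref{G2 comm 3}, involve only low-degree terms. Here I expect the computation to reduce to bilinear identities, namely the symmetry of $T$ and \cref{le:CNP identities}\cref{le:CNP identities:7}.

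To cut down the work, I will use the reflections. Conjugating by $\phihor$, $\phisl$ or $\psibs$, using \cref{le:conj by refl} together with the conjugation formulas collected in \cref{pr:exp conj aut summary}, moves several pairs onto pairs whose root groups act on the generating set by the simplest formulas (those of $e_{\lambda x}$, $e_{\lambda y}$ and $e_b$). Where needed, I will also invoke the functoriality of \cref{rem:exp CNP notation}.

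The main obstacle is \eqref{G2 comm 5}, and to a lesser extent \eqref{G2 comm 4}. These involve short roots $\alpha_1$, $\alpha_5$ and the long root $\alpha_6$, where the full cubic terms ($N$, $\sharp$, $U_b$) appear. For \eqref{G2 comm 5} I plan to use the generating set $\{y\}\cup L_{-1}$, since $x_6(\lambda)=e_{\lambda x}$ fixes $L_{-1}$ and acts simply on $y$. Expanding then requires \cref{le:CNP new}, \cref{le:CNP identities}\cref{le:CNP identities:8,le:CNP identities:16}, and the adjoint identity $(b^\sharp)^\sharp=N(b)b$, to match the coefficients $-\lambda N(b)$, $\lambda b^\sharp$ and $\lambda^2N(b)$.

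I will derive \eqref{G2 comm 4} analogously, or by linearizing \eqref{G2 comm 5}. The idea is to compute $[x_1(b), x_6(\lambda)]$ and use additivity $x_1(b+d)=x_1(b)x_1(d)$ together with the already-established relations to isolate $[x_1(b),x_5(d)]$. This gives the terms $T(d,b^\sharp)$, $-b\times d$ and $T(b,d^\sharp)$ as the mixed-degree parts.
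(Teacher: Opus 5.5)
Your proposal is correct. For most of the proposition it follows the paper: the trivial relations come from \cref{le:trivial comrels}, and \eqref{G2 comm 1}, \eqref{G2 comm 2}, \eqref{G2 comm 3}, \eqref{G2 comm 5} are checked by evaluating both sides on $\{x\}\cup L_1$ or $\{y\}\cup L_{-1}$. For \eqref{G2 comm 5} the paper also uses $\{y\}\cup L_{-1}$.

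The genuine difference is \eqref{G2 comm 4}. The paper verifies it directly in $L$, and this is its hardest case: it has to rearrange the identity before evaluating on $L_{-1}$. It then uses \cref{le:CNP identities}\cref{le:CNP identities:7,le:CNP identities:10,le:CNP identities:16,le:CNP identities:26,le:CNP identities:29}, \cref{le:triple}\cref{le:triple:norm r} and three identities from \cref{le:CNP new}. Your linearization works and is much cheaper. Here is how it goes.

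Let $P(c)$ denote the right-hand side of \eqref{G2 comm 5} with $\lambda=1$. Apply $[uv,w]=v^{-1}[u,w]v\,[v,w]$ with $u=x_1(b)$, $v=x_1(d)$, $w=x_6(1)$; this gives $P(b+d)=x_1(d)^{-1}P(b)x_1(d)\,P(d)$. Two conjugation formulas then isolate $[x_1(d),x_5(b)]^{-1}$ as a product inside $\langle U_2,\dots,U_5\rangle$:
$x_1(d)^{-1}x_3(b^\sharp)x_1(d)=x_2(-T(d,b^\sharp))\,x_3(b^\sharp)$ by \eqref{G2 comm 1}, and $x_1(d)^{-1}x_5(b)x_1(d)=x_5(b)\,[x_1(d),x_5(b)]^{-1}$.
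In $\langle U_2,\dots,U_5\rangle$ the groups $U_2$ and $U_4$ are central, and $x_5(-b)\,x_3(e')\,x_5(b)=x_3(e')\,x_4(-T(b,e'))$ by \eqref{G2 comm 2}. Collect terms using the gradient identity, $(b+d)^\sharp=b^\sharp+d^\sharp+b\times d$ and $T(b,b\times d)=2T(b^\sharp,d)$. This gives
\[ [x_1(d),x_5(b)]^{-1}=x_2\bigl(-T(b,d^\sharp)\bigr)\,x_3(b\times d)\,x_4\bigl(-T(b^\sharp,d)\bigr), \]
which becomes \eqref{G2 comm 4} after inverting and swapping $b$ and $d$.

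So your route replaces the heaviest computation in $L$ by a few lines in a class-two nilpotent group. The price is that \eqref{G2 comm 4} now depends on \eqref{G2 comm 1}, \eqref{G2 comm 2} and \eqref{G2 comm 5}, whereas the paper verifies each relation independently. Your reflection shortcut is also sound, though the paper does not use it. For example, conjugate \eqref{G2 comm 1} by $\phihor$, using \cref{le:exp L-1 conj} and the fact that $\phihor$ commutes with $e_{b'}$. This gives \eqref{G2 comm 2} at once.

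A few small corrections:
\begin{itemize}
\item \eqref{G2 comm 1} and \eqref{G2 comm 2} are short--short relations with a long result, not long--short ones.
\item On $x$, or on $L_{-1}$, the element $x_1(b)$ produces $b^\sharp$ and $N(b)$ terms. These cancel only via $b\times b=2b^\sharp$ and $T(b,b^\sharp)=3N(b)$, so these checks are not purely bilinear.
\item For \eqref{G2 comm 5} you also need \cref{le:triple}\cref{le:triple:aasharp}, namely $\dd_{b,b^\sharp}=N(b)(2\zeta-\xi)$, to match the $L_{0,0}$-components. The left side produces a term $-\rho\lambda\,\dd_{b,b^\sharp}$, and the sum $\langle\dd_{a,a'}\rangle+k\xi+k\zeta$ is not direct.
\end{itemize}
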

\begin{proof}
	Observe first that the trivial commutator relations are all dealt with by \cref{le:trivial comrels}.
	For each of the remaining five cases, we will prove the equality using \cref{rem:aut equality}.
	
	To prove \eqref{G2 comm 1}, we have to show that
	\[ e_{(0,-b,0,0)_+} \circ e_{-b'} \circ e_{(0,b,0,0)_+} \circ e_{b'} = e_{(T(b,b'),0,0,0)_+} . \]
	When we apply both sides of this equation on $x$, we get the same result $x + \bigl( T(b, b'), 0, 0, 0 \bigr)_-$.
	By \cref{rem:aut equality}, it remains to show that both sides give the same result when applied on an element $(\nu, c, c', \rho)_+ \in L_1$.
	A short computation shows that we twice get the result $(\nu, c, c', \rho)_+ - \rho T(b, b') y$.
	
	To prove \eqref{G2 comm 2}, we have to show that
	\[ e_{-b'} \circ e_{(0,-b,0,0)_-} \circ e_{b'} \circ e_{(0,b,0,0)_-} = e_{(-T(b,b'),0,0,0)_-} . \]
	When we apply both sides of this equation on $y$, we get the same result $y + \bigl( T(b, b'), 0, 0, 0 \bigr)_+$.
	By \cref{rem:aut equality}, it remains to show that both sides give the same result when applied on an element $(\nu, c, c', \rho)_- \in L_{-1}$.
	A short computation shows that we twice get the result $(\nu, c, c', \rho)_- + \rho T(b, b') x$.

	To prove \eqref{G2 comm 3}, we have to show that
	\[ e_{(-\lambda,0,0,0)_+} \circ e_{-\mu x} \circ e_{(\lambda,0,0,0)_+} \circ e_{\mu x} = e_{(\lambda\mu,0,0,0)_-} . \]
	Applying both sides on $y$ gives the same result $y - (\lambda\mu, 0, 0, 0)_+$
	and applying both sides on $(\nu, c, c', \rho)_- \in L_{-1}$ gives the same result $(\nu, c, c', \rho)_- - \lambda\mu\rho x$.
	
	The remaining two commutator relations are considerably more difficult, but the method is similar.
	To prove \eqref{G2 comm 4}, we have to show that
	\begin{multline*}
		e_{(0,-b,0,0)_+} \circ e_{(0,-d,0,0)_-} \circ e_{(0,b,0,0)_+} \circ e_{(0,d,0,0)_-} \\*
		= e_{(T(d,b^\sharp),0,0,0)_+} \circ e_{-b \times d} \circ e_{(T(b,d^\sharp),0,0,0)_-} .
	\end{multline*}
	When we apply both sides of this equation on $y$, we get the same result $y - \bigl( T(b, d^\sharp), 0, 0, 0 \bigr)_+$.
	Again, by \cref{rem:aut equality}, it remains to show that both sides give the same result when applied on an element $(\nu, c, c', \rho)_- \in L_{-1}$. Directly computing the left hand side now turns out to be challenging, so instead, we verify that
	\begin{multline*}
		e_{(0,-d,0,0)_-} \circ e_{(0,b,0,0)_+} \circ e_{(0,d,0,0)_-}(\nu, c, c', \rho)_- \\
		= e_{(0,b,0,0)_+} \circ e_{(T(d,b^\sharp),0,0,0)_+} \circ e_{-b \times d} \circ e_{(T(b,d^\sharp),0,0,0)_-}(\nu, c, c', \rho)_- .
	\end{multline*}
	The left-hand side of this equation is equal to
	\begin{multline*}
		e_{(0,b,0,0)_+}(\nu, c, c', \rho)_- - \rho T(b, d^\sharp) x \\
		\begin{aligned}
			&+ \bigl( T(d, c \times b) + T(U_b(c'), d^\sharp) - \rho N(b) N(d) + T(d,c')T(d,b^\sharp) , \\
			&\hspace*{12ex} - D_{b, c'}(d) + T(b, c')d + T(d, c')b + \rho U_d(b^\sharp), \ \rho b \times d, \ 0 \bigr)_- \\
			&+ \bigl( -d \times U_b(c') + \rho N(b) d^\sharp - T(d, c') b^\sharp \bigr) + \rho \dd_{d, b^\sharp} - \rho T(d, b^\sharp) \zeta \\
			&+ \bigl( -T(d, c') N(b), \ - \rho N(b) d, \ 0, \ 0 \bigr)_+
		\end{aligned}
	\end{multline*}
	and the right-hand side is equal to
	\begin{multline*}
		e_{(0,b,0,0)_+}(\nu, c, c', \rho)_- - \rho T(b, d^\sharp) x \\
		\begin{aligned}
			&+ \bigl( - \rho T(b, d^\sharp) T(d, b^\sharp) + T(c, b \times d) + T((b \times d)^\sharp, c') + \rho N(b \times d) , \\
			&\hspace*{12ex} - \rho T(b, d^\sharp) b + (b \times d) \times c' + \rho (b \times d)^\sharp, \ \rho b \times d, \ 0 \bigr)_- \\
			&+ \bigl( T(d, b^\sharp) (c' + \rho b \times d) + \rho T(b, d^\sharp) b^\sharp - b \times (c' \times (b \times d)) - \rho (b \times d)^\sharp \times b \bigr) \\
			&\hspace*{12ex} + \rho T(d, b^\sharp) (\xi - \zeta) - \rho \dd_{b, b \times d} - \rho T(b, b \times d) (\xi - \zeta) \\
			&+ \bigl( T(d, b^\sharp) T(b, c' + \rho b \times d) - \rho T(d, b^\sharp)^2 + \rho T(b, d^\sharp) N(b) \\
			&\hspace*{12ex} - T((b \times d) \times c' + \rho (b \times d)^\sharp, b^\sharp), \ - \rho T(d, b^\sharp) b + \rho U_b(b \times d), \ 0, \ 0 \bigr)_+ .
		\end{aligned}
	\end{multline*}
	The equality of these two expressions now follows from \cref{def:U-operators}, identities \cref{le:CNP identities:7,le:CNP identities:10,le:CNP identities:16,le:CNP identities:26,le:CNP identities:29} in \cref{le:CNP identities}, \cref{le:triple}\cref{le:triple:norm r} and \cref{eq:new 1,eq:new 2,eq:new 3} in \cref{le:CNP new}.
	
	Finally, to prove \eqref{G2 comm 5}, we have to show that
	\begin{multline*}
		e_{(0,-b,0,0)_+} \circ e_{-\lambda x} \circ e_{(0,b,0,0)_+} \circ e_{\lambda x} \\*
		= e_{(-\lambda N(b),0,0,0)_+} \circ e_{\lambda b^\sharp} \circ e_{(\lambda^2 N(b),0,0,0)_-} \circ e_{(0, \lambda b, 0, 0)_-}.
	\end{multline*}
	When we apply both sides of this equation on $y$, we get the same result
	\[ y + \bigl( 2 \lambda^2 N(b), -\lambda b, 0, 0 \bigr)_+ - \lambda^2 b^\sharp + \bigl( -\lambda^3 N(b), 0, 0, 0 \bigr)_- , \]
	where we have used $T(b, b^\sharp) = 3 N(b)$, $N(\lambda b) = \lambda^3 N(b)$ and $(\lambda b)^\sharp = \lambda^2 b^\sharp$.
	Again, by \cref{rem:aut equality}, it remains to show that both sides also give the same result when applied on an element $(\nu, c, c', \rho)_- \in L_{-1}$.
	For the left-hand side, this gives
	\begin{multline*}
		(\nu, c, c', \rho)_- + \bigl( \lambda T(b, c') - \rho \lambda^2 N(b) \bigr) x \\
		\begin{aligned}
			&+ \bigl( -\lambda T(c, b^\sharp),\ - \lambda T(b, c') b + \rho \lambda^2 N(b) b + \lambda U_b(c'),\ - \rho \lambda b^\sharp,\ 0 \bigr)_- \\
			&+ \bigl( - \lambda T(b, c') b^\sharp + \rho \lambda^2 N(b) b^\sharp + \lambda U_b(c') \times b \bigr) \\
			&\hspace*{12ex} - \rho \lambda \dd_{b, b^\sharp} + \rho \lambda N(b) \xi - \rho \lambda T(b, b^\sharp) (\xi - \zeta) \\
			&+ \bigl( \lambda T(b, c') N(b) - \rho \lambda^2 N(b)^2 - \lambda T(U_b(c'), b^\sharp),\ \rho \lambda N(b) b - \rho \lambda U_b(b^\sharp) ,\ 0,\ 0 \bigr)_+
		\end{aligned}
	\end{multline*}
	while the right-hand side results in
	\begin{multline*}
		(\nu, c, c', \rho)_- + \bigl( \lambda T(b, c') - \rho \lambda^2 N(b) \bigr) x \\
		\begin{aligned}
			&+ \bigl( -\lambda T(c, b^\sharp),\ - \lambda b^\sharp \times c' + \rho \lambda^2 N(b) b,\ - \rho \lambda b^\sharp,\ 0 \bigr)_- \hspace*{20ex} \\
			&+ \bigl( -\lambda N(b) c' + \rho \lambda^2 N(b) b^\sharp \bigr) - \rho \lambda N(b) (\xi - \zeta) \\
			&+ \bigl( - \rho \lambda^2 N(b)^2,\ 0 ,\ 0,\ 0 \bigr)_+ .
		\end{aligned}
	\end{multline*}
	The equality of these two expressions now follows from \cref{def:U-operators}, \cref{le:triple}\cref{le:triple:aasharp} and \cref{eq:new 4,eq:new 5,eq:new 6} in \cref{le:CNP new}.
\end{proof}

\begin{theorem}\label{thm:G2 graded group}
	Let $ (J,J') $ be a cubic norm pair over the commutative ring $ k $. Then the family $ (U_\alpha)_{\alpha \in G_2} $ from \cref{def:G2 rootgr} satisfies Axioms~\ref{def:rgg}\cref{def:rgg:comm,def:rgg:nondeg}. If there exists $ p \in J $ such that $ N(p) $ is invertible (e.g., if $ (J,J') $ is a cubic norm structure), then it is a $ G_2 $-grading of the subgroup of $ \Aut(L(J,J')) $ that it generates.
\end{theorem}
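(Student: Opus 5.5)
We need to verify the four axioms of \cref{def:rgg} for the subgroup $G \le \Aut(L)$ generated by $(U_\alpha)_{\alpha \in G_2}$. We treat the two unconditional axioms first and then the ones that need an element of invertible norm.

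\textbf{Axiom \cref{def:rgg:nondeg}.} This is immediate from \cref{pr:G2 nondeg}. Each $\exp_\alpha$ is an $\alpha$-parametrization by \cref{pr:param}, so for every positive system $\Pi$ we get $U_+ \cap U_- = 1$. In particular $U_{-\alpha} \cap U_\Pi = 1$ for $\alpha \in \Pi$.

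\textbf{Axiom \cref{def:rgg:comm}.} For the positive system $\{\alpha_1,\dots,\alpha_6\}$, \cref{pr:G2 comm} gives every commutator $[U_\alpha,U_\beta]$ explicitly as a product of elements of $U_{\rootint{\alpha}{\beta}}$. The pairs not listed there are adjacent and commute by \cref{le:trivial comrels}. To pass to an arbitrary pair of non-proportional roots, we use that every such pair lies in some positive system, and the Weyl group acts transitively on positive systems. We conjugate by the reflections $\phihor$, $\phisl$, $\phiver$ and $\psibs$. By \cref{pr:exp conj aut summary} together with \cref{le:conj by refl}, these reflections transport the parametrizations $\exp_\alpha$ to the parametrizations $\exp_{\alpha^{\sigma}}$ up to the natural identifications.

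There is one subtlety. The map $\psibs$ goes from $L(J,J')$ to $L(J',J)$ rather than to $L(J,J')$ itself. We therefore apply \cref{pr:G2 comm} to both cubic norm pairs $(J,J')$ and $(J',J)$, and pull the relations back along the isomorphism $\psibs$. Since $\phihor$, $\phisl$ and $\psibs$ generate all the reflections of $G_2$ (reading the grading in both $L(J,J')$ and $L(J',J)$), every non-proportional pair is covered. This step needs no invertibility hypothesis.

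\textbf{Axioms \cref{def:rgg:gen,def:rgg:weyl}.} Now assume some $p \in J$ has $N(p)$ invertible. Generation holds by definition of $G$, and $U_\alpha \ne 1$ because $\exp_\alpha$ is injective: its degree-one part is $\ad_a$, and $\ad_a \ne 0$ for $a \ne 0$ since the center of $L$ is trivial. For Weyl elements, \cref{rem:weyl basic} reduces the problem to the simple roots $\alpha_1$ and $\alpha_6$ (one short, one long).

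For the long simple root, take $\phihor$. By \cref{le:phihor weyl} it conjugates $U_\beta$ onto $U_{\beta^{\sigma}}$. It remains to show that $\phihor$ lies in $U_{-\alpha}U_\alpha U_{-\alpha}$. We check this by evaluating $e_{x}\,e_{y}\,e_{x}$ (with suitable signs) on the generating set $\{y\}\cup L_{-1}$ of \cref{rem:aut equality}.

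For the short root, \cref{rem:isotopes} provides the $N(p)$-involution $(N(p)U_p^{-1}, N(p)^{-1}U_p)$, which in turn gives a reflection $\phibs$ as in \cref{def:phibs}. By \cref{le:phibs weyl} it permutes the root groups correctly. What remains is to exhibit it as a product $e_{a}e_{b}e_{a}$ with $a$ in $L_{-1,-1}$ and $b$ in $L_{1,1}$ built from $p$ and $p'$.

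\textbf{Main obstacle.} I expect this last factorization to be the hardest step. It is a long explicit check on generators, using the identities in \cref{le:CNP identities}, \cref{le:CNP new}, and $U_pU_{p'}=\id$. If it gets unwieldy, I would first treat the isotope cubic norm structure with $p = 1_J$ and then transport the result along the isotopy using \cref{rem:exp CNP notation}.
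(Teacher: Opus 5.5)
Your handling of Axioms (i), (iii) and (iv) matches the paper. Axiom (iv) is \cref{pr:G2 nondeg}, and your argument for $U_\alpha\neq 1$ via the trivial center is fine. The Weyl elements are $\phihor=e_y e_x e_y$ (\cref{le:phihor factor}) and $\phibs$. The factorization $\phibs^{-1}=e_{(0,0,-b',0)_+}\circ e_{(0,b,0,0)_-}\circ e_{(0,0,-b',0)_+}$ with $b'=N(b)^{-1}b^\sharp$ is \cref{le:phibs prod}, proved by a direct check on generators. So your \enquote{main obstacle} is already settled in the paper, and \cref{le:phihor weyl,le:phibs weyl} already include these factorizations. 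Your isotope fallback would in any case need more than \cref{rem:exp CNP notation}, which covers only homomorphisms and not $N(p)$-isotopies. Also, the simple system you actually realize is $\{(2,1),(-1,-1)\}=\{-\alpha_6,\alpha_5\}$, not $\{\alpha_1,\alpha_6\}$.

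The genuine gap is in Axiom (ii). To transport the relations of \cref{pr:G2 comm} along a reflection $\varphi$, you need $\varphi^{-1}U_\alpha\varphi=U_{\alpha^\sigma}$ for \emph{every} root $\alpha$. The results you cite do not give this for $\phisl$ or $\psibs$.
\begin{itemize}
\item \cref{le:conj by refl} only says that $a\mapsto\varphi^{-1}\exp_\alpha(\varphi(a))\varphi$ is \emph{some} $\alpha^\sigma$-parametrization. Parametrizations are not unique, so nothing identifies its image with the root group $U_{\alpha^\sigma}$ of \cref{def:G2 rootgr}.
\item \cref{pr:exp conj aut summary} only exhibits each root group as \emph{a} conjugate of $U_{(-2,-1)}$ or $U_{(1,0)}$ (or, through $\psibs$, of a root group of $L(J',J)$). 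It does not yield, for example, $\phisl^{-1}U_{(1,1)}\phisl=U_{(1,1)}$ or $\phisl^{-1}U_{(1,-1)}\phisl=U_{(-1,1)}$.
\item For $\psibs$ the paper proves only \cref{le:exp J' psi conj}.
\end{itemize}
When some $N(p)$ is invertible you can bypass this, as the paper's citation of \cref{pr:G2 comm,pr:G2 weyl} implicitly does. There $\phihor$ and $\phibs$ are proven Weyl elements for a simple system, so their products realize the whole Weyl group while permuting all root groups. Every non-proportional pair lies in a Weyl conjugate of the standard positive system, so the relations transport.

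For the unconditional claim, your instinct is sharper than the paper's one-line proof, which only cites \cref{pr:G2 comm} and so covers only pairs inside one positive system (plus adjacent pairs). The long-root reflections generate only the index-$2$ subgroup $W$ of \cref{rem:G2 orbits}, which never maps the standard positive system to its negative. Hence a pair such as $\bigl((-1,0),(2,1)\bigr)$, which lies in no other positive system, is unreachable. A $J\leftrightarrow J'$ symmetry such as $\psibs$, combined with \cref{pr:G2 comm} for $(J',J)$, is therefore really needed. To close the argument, you must check directly that conjugation by $\psibs$ carries every root group of $L(J',J)$ onto the corresponding root group of $L(J,J')$; computations like those in \cref{le:phibs short 1} and the subsequent lemmas would do this. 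Together with $\phihor$, which is a Weyl element in both $L(J,J')$ and $L(J',J)$ without any invertibility hypothesis, this suffices, because $(2,1)$ and $(-1,-1)$ form a simple system. With that, $\phisl$ can be dropped.
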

\begin{proof}
	This follows from \cref{pr:G2 comm,pr:G2 nondeg,pr:G2 weyl}.
\end{proof}

\section{The Tits index \texorpdfstring{$F_4 \to G_2$}{F4→G2}}\label{sec:index}

In this section, we describe how to obtain $G_2$\dash gradings of groups and Lie algebras from $F_4$-gradings of the same objects (\cref{pr:lie index,pr:rgg index}). This purely combinatorial construction is similar to that of twisted Chevalley groups (see \cite[Chapter~13]{Carter72}) and also to the construction of $ H_\ell $-graded groups for $ \ell \in \{2,3,4\} $ in \cite[Section~4]{BW25}. It relies on a certain surjective map $F_4^0 \to G_2^0$ (\cref{le:index char}) which is closely related to the Tits index that realises the Weyl group of $G_2$ as a subgroup of the Weyl group of $ F_4 $. In the formal setup of \cite[20.1]{MPW15}, this Tits index is the tuple $ (F_4, \{\id\}, \{3,4\}) $, which is also encoded by \cref{fig:tits-index} (following the conventions of \cite{Tits65}, see also \cite[34.2]{MPW15}). We will informally refer to $ (F_4, \{\id\}, \{3,4\}) $ by the name \enquote{$ F_4 \to G_2 $}. While this Tits index and the associated embedding of Weyl groups are known, we are not aware of any reference that describes the associated map $ F_4^0 \to G_2^0 $ of root systems. 

In \cref{sec:lie F4,sec:refining aut}, we will reverse the construction in this section by showing that certain (but, of course, not all) $G_2$-gradings can be refined to obtain $F_4$-gradings.

\begin{figure}
	\centering\begin{tikzpicture}[scale=1.5]
		\draw[opacity=0.7, rounded corners] (0.85, 0.1) rectangle (1.15, -0.1);
		\draw[opacity=0.7, rounded corners] (1.85, 0.1) rectangle (2.15, -0.1);
		\draw (1,0) -- (2, 0) (3,0) -- (4,0);
		\draw (2, 0.03) -- (3, 0.03) (2, -0.03) -- (3, -0.03);
		\draw (2.4, 0.1) -- (2.7, 0) -- (2.4, -0.1);
		\foreach \x in {1, ..., 4}{
			\node[draw, fill, circle, inner sep=0pt, minimum size=1.5mm] at (\x, 0){};
			\node at (\x, -0.25){\scriptsize\x};
		}
	\end{tikzpicture}
	\caption{The Tits index $ (F_4, \{\id\}, \{3,4\}) $, which we refer to as \enquote{$ F_4 \to G_2 $}.}
	\label{fig:tits-index}
\end{figure}
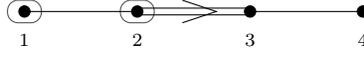

\begin{notation}
	Throughout this section, we denote by $\Delta_F= (f_1, f_2, f_3, f_4)$ a standard system of simple roots in $F_4$ (in the sense of \cref{def:F4 basis order}) and by $\Delta_G = (g_1, g_2)$ a system of simple roots in $G_2$ such that $g_1$ is short and $g_2$ is long. Further, we denote by $\Pi_F$, $\Pi_G$ the corresponding positive systems of roots.
\end{notation}

\begin{remark}\label{rem:F4 coeff list}
	For some of our arguments, it will be helpful to have a list of the roots in $ F_4 $ expressed as linear combinations of the vectors in $ \Delta_F $ at hand. Such a list can be read of from the first column of \cref{ta:existence parmap} on page~\pageref{ta:existence parmap}. Further,
	\[ G_2 = \{ \pm (a g_1 + b g_2) \mid (a,b) \in \{(0,1), (1,0), (1,1), (2,1), (3,1), (3,2)\}\}. \]
\end{remark}

\begin{remark}\label{rem:F4 inner product}
	If the roots of $ G_2 $ and $ F_4 $ are normalized so that short roots have length $ \sqrt{2} $, then the inner products are given as follows:
	\begin{gather*}
		f_1 \cdot f_2 = -2 = f_2 \cdot f_3, \quad f_3 \cdot f_4 = -1, \quad f_1 \cdot f_1 = 4 = f_2 \cdot f_2, \quad f_3 \cdot f_3 = 2 = f_4 \cdot f_4, \\
		g_1 \cdot g_2 = -3, \qquad g_1 \cdot g_1 = 2, \qquad g_2 \cdot g_2 = 6.
	\end{gather*}
\end{remark}

We begin with a characterization of the map $ \pi \colon F_4^0 \to G_2^0 $, which depends on the choices of $ \Delta_F $ and $ \Delta_G $.

\begin{lemma}\label{le:index char}
	\begin{enumerate}
		\item \label{le:index char:ex}There exists a unique map $ \pi \colon F_4^0 \to G_2^0 $ such that
		\[ \pi(f_1) = g_2 , \quad \pi(f_2) = g_1 , \quad \pi(f_3) = 0 = \pi(f_4) \]
		and such that $\pi$ is induced by a linear map of the vector spaces surrounding $F_4$ and $G_2$. In particular, for all $ \alpha, \beta \in F_4 $ with $ \alpha+\beta \in F_4 $, we have $ \pi(\alpha) + \pi(\beta) = \pi(\alpha+\beta) $ (and hence $ \pi(\alpha) + \pi(\beta) \in G_2^0 $).
		
		\item \label{le:index char:surj}The map $ \pi $ is surjective and maps $\Pi_F$ onto $\Pi_G \cup \{0\}$.

	\end{enumerate}
\end{lemma}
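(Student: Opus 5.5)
The plan is to construct $\pi$ as the restriction of an explicit linear map and then check both parts by direct inspection of the finitely many roots. Since $\Delta_F$ is a basis of the real vector space $V_F$ spanned by $F_4$, there is exactly one linear map $p \colon V_F \to V_G$ with $p(f_1) = g_2$, $p(f_2) = g_1$ and $p(f_3) = p(f_4) = 0$. Any $\pi$ induced by a linear map with the prescribed values on $\Delta_F$ must agree with $p$ on $F_4^0$, which gives uniqueness at once. For existence, we only need to check that $p(F_4^0) \subseteq G_2^0$. The additivity statement $\pi(\alpha)+\pi(\beta) = \pi(\alpha+\beta)$ then follows from the linearity of $p$.

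To check $p(F_4^0) \subseteq G_2^0$, we write every root $\alpha = \sum_i c_i f_i$ in coordinates with respect to $\Delta_F$, using the list of \cref{rem:F4 coeff list}. Then $p(\alpha) = c_2 g_1 + c_1 g_2$. Because $p(-\alpha) = -p(\alpha)$ and $G_2^0 = -G_2^0$, it is enough to treat the $24$ positive roots. For these we must show that the pair $(c_2, c_1)$ is either $(0,0)$ or one of the six coefficient pairs $(a,b) \in \{(0,1), (1,0), (1,1), (2,1), (3,1), (3,2)\}$ listed in \cref{rem:F4 coeff list} for $\Pi_G$.

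It is cleaner to organise this check by $(c_1, c_2)$ than to treat each root separately. In the positive roots of $F_4$ with a standard simple system, $c_1 \le 2$ and $c_2 \le 3$. The pairs $(c_1,c_2)$ that actually occur are
\[ (0,0),\ (1,0),\ (0,1),\ (1,1),\ (1,2),\ (1,3),\ (2,3). \]
Reversed as $(c_2,c_1)$, these are exactly $(0,0)$ together with the six pairs for $\Pi_G$. This proves $p(\Pi_F) \subseteq \Pi_G \cup \{0\}$, and hence $p(F_4^0) \subseteq G_2^0$.

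The same computation gives part \cref{le:index char:surj}. Each of the six positive pairs is attained by some positive root of $F_4$, for example:
\begin{itemize}
\item $f_1 \mapsto g_2$ and $f_2 \mapsto g_1$;
\item $f_1 + f_2 \mapsto g_1 + g_2$;
\item $f_1 + 2f_2 + 2f_3 \mapsto 2g_1 + g_2$;
\item $f_1 + 3f_2 + 4f_3 + 2f_4 \mapsto 3g_1 + g_2$;
\item $2f_1 + 3f_2 + 4f_3 + 2f_4 \mapsto 3g_1 + 2g_2$.
\end{itemize}
Finally, $0$ is attained by $f_3$. Hence $p(\Pi_F) = \Pi_G \cup \{0\}$, and taking negatives shows that $\pi$ is surjective onto $G_2^0$.

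I expect the main obstacle to be bookkeeping rather than ideas. One has to make sure that the coefficient list used really corresponds to the ordering in \cref{def:F4 basis order}: $f_1, f_2$ long, with $\{f_2, f_3\}$ of type $B_2$. One also has to confirm that no positive root of $F_4$ has a pair $(c_1,c_2)$ outside the list above, such as $(2,2)$ or $(0,2)$. As a sanity check, I would verify directly that the roots with $c_1 = 2$ have $c_2 = 3$, using the highest root $2f_1 + 3f_2 + 4f_3 + 2f_4$ together with the fact that $c_2 \ge c_1$ follows from the Dynkin diagram structure.
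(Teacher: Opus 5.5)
Your proof is correct and is essentially the paper's argument. The paper packages the same linear map as the orthogonal projection onto $\langle f_3, f_4\rangle_{\mathbb{R}}^\perp$ and adds an inner-product check that the image is of type $G_2$, but the decisive step in both is reading off $(c_2,c_1)$ from the coefficient list of \cref{rem:F4 coeff list}, exactly as you do.

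One small slip is in your closing aside: $c_2 \ge c_1$ fails for positive roots in general (e.g.\ $f_1$ has $(c_1,c_2)=(1,0)$). This is harmless, because the list itself shows that $2f_1+3f_2+4f_3+2f_4$ is the only positive root with $c_1=2$.
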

\begin{proof}
	The uniqueness of $ \pi $ follows from the assumption that it is induced by a linear map. To prove existence, consider the 4-dimensional Euclidean space $V$ surrounding $F_4$ and the subspaces
	\[ V_1 \coloneq \langle f_1, 3f_2 + 4f_3 + 2f_4 \rangle_{\mathbb{R}} \quad \text{and} \quad V_2 \coloneq \langle f_3, f_4 \rangle_{\mathbb{R}}. \]
	Then $V = V_1 \oplus V_2$ and, by \cref{rem:F4 inner product}, this decomposition is orthogonal. Denote by $\tilde{\pi} \colon V \to V_1$ the orthogonal projection onto $V_1$ and put $ \bar{g}_1 \coloneq \tilde{\pi}(f_2) = f_2 + \frac{4}{3} f_3 + \frac{2}{3} f_4 $, $ \bar{g}_2 \coloneq \tilde{\pi}(f_1) = f_1 $. Using \cref{rem:F4 coeff list}, we see that
	\begin{align*}
		\tilde{\pi}(F_4) = \{0_V\} \cup \{ \pm (a \bar{g}_1 + b \bar{g}_2) \mid (a,b) \in \{(0,1), (1,0), (1,1), (2,1), (3,1), (3,2)\}\}.
	\end{align*}
	Further, by \cref{rem:F4 inner product},
	\[ \bar{g}_1 \cdot \bar{g}_1 = \frac{4}{3}, \qquad \bar{g}_2 \cdot \bar{g}_2 = 4, \qquad \bar{g}_1 \cdot \bar{g}_2 = -3. \]
	We conclude that $ \tilde{\pi}(F_4) \setminus \{0_V\} $ is a root system of type $ G_2 $. Thus by identifying $ (\bar{g}_1, \bar{g}_2) $ with $ (g_1, g_2) $, we obtain the desired map $ \pi \colon F_4^0 \to G_2^0 $. We also see from our construction that $ \pi $ is surjective and $ \pi(\Pi_F) = \Pi_G \cup \{0\} $.
%
\end{proof}

The Tits index $ F_4 \to G_2 $ mentioned above yields an embedding $ u \colon \Weyl(G_2) \to \Weyl(F_4) $. The following \cref{le:index weyl} characterizes this embedding and describes its connection to $ \pi $.

\begin{remark}\label{rem:longest element}
	For any subset $ I $ of $ \{1,2,3,4\} $, we denote by $ w_I $ the longest element in the Coxeter system spanned by $ \{\reflbr{f_i} \mid i \in I\} $. Then
	\begin{align*}
		w_{\{1\}} &= \refl{f_1}, \qquad w_{\{3,4\}} = \refl{f_3} \refl{f_4} \refl{f_3}, \qquad w_{\{1,3,4\}} = w_{\{1\}} w_{\{3,4\}}, \\
		w_{\{2,3,4\}} &= \refl{f_2} (\refl{f_3} \refl{f_2} \refl{f_3}) (\refl{f_4} \refl{f_3} \refl{f_2} \refl{f_3} \refl{f_4}).
	\end{align*}
	The last assertion can be verified, for example, by calling
	\begin{verbatim}
		julia> W = weyl_group(:C, 3); f2,f3,f4 = W[3],W[2],W[1];
		julia> longest_element(W) == f2*(f3*f2*f3)*(f4*f3*f2*f3*f4)
	\end{verbatim}
	in the computer algebra system OSCAR \cite{OSCAR,OSCAR-book}. Note that
	\begin{align*}
		w_{\{2,3,4\}} &= \refl{f_2} \refl{f_2}^{\reflbr{f_3}} \refl{f_2}^{\reflbr{f_3} \reflbr{f_4}} = \refl{f_2} \refl{f_2+2f_3} \refl{f_2+2f_3 +2f_4}.
	\end{align*}
\end{remark}

\begin{lemma}\label{le:index weyl}
	The following hold, where $U \coloneq \langle \reflbr{f_3}, \reflbr{f_4} \rangle \le \Weyl(F_4)$.
	\begin{enumerate}
		\item \label{le:index weyl:hom}There exists a unique homomorphism $u \colon \Weyl(G_2) \to \Weyl(F_4)$ such that, in the notation of \cref{rem:longest element},
		\[ u(\refl{g_1}) = w_{\{2,3,4\}} w_{\{3,4\}}, \qquad u(\refl{g_2}) = w_{\{1\}} = \reflbr{f_1}. \]
		Further, $u$ is injective and its image centralizes $ w_{\{3,4\}} $.
		
		\item \label{le:index weyl:act}For all $w \in \Weyl(G_2)$, all $\tilde{w} \in u(w) U$ and all $\gamma \in F_4$, we have
		\[ \pi\brackets[\big]{\gamma^{\tilde{w}}} = \pi(\gamma)^w. \]
	\end{enumerate}
\end{lemma}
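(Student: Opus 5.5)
The plan is to work throughout with the orthogonal decomposition $V = V_1 \oplus V_2$ from the proof of \cref{le:index char}. Here $V_2 = \langle f_3, f_4\rangle_{\mathbb{R}}$ and $\tilde\pi$ is the orthogonal projection onto $V_1$. The group $U = \langle \refl{f_3}, \refl{f_4}\rangle$ is the Weyl group of type $A_2$ on $V_2$. It fixes $V_1$ pointwise, since $V_1 = V_2^\perp$. The element $w_{\{3,4\}}$ acts as $-\id$ on $V_2$: the longest element of $A_2$ acts as minus the diagram automorphism, and that automorphism swaps $f_3$ and $f_4$, so the composite is $-\id$ on the span of $f_3,f_4$. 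In particular $w_{\{3,4\}}$ is central in $N(U)$. The normalizer $N(U) \le \Weyl(F_4)$ is exactly the stabilizer of $V_2$, equivalently of $V_1$, because a finite reflection group is determined by its fixed space. Thus every element of $N(U)$ preserves the decomposition and commutes with $\tilde\pi$. It therefore induces an orthogonal map on $V_1$ which preserves $\tilde\pi(F_4)\setminus\{0\} \cong G_2$.

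For \cref{le:index weyl:hom}, I would first check that $u(\refl{g_2}) = \refl{f_1}$ and $s \coloneq w_{\{2,3,4\}} w_{\{3,4\}}$ lie in $N(U)$.
\begin{itemize}
\item For $\refl{f_1}$ this is clear, since $f_1 \perp f_3, f_4$.
\item For $s$, note that $w_{\{2,3,4\}}$ is the longest element of a $C_3$ parabolic. It acts as $-\id$ on $\langle f_2,f_3,f_4\rangle$, because $C_3$ has $-1$ in its Weyl group. So $w_{\{2,3,4\}}$ normalizes $U$, and hence so does $s$.
\end{itemize}
Next I would compute the action on $V_1$.
\begin{itemize}
\item $\refl{f_1}$ restricts to the reflection in $\bar g_2 = f_1$.
\item $s$ is $-\id$ on $\langle f_2,f_3,f_4\rangle$ composed with $-\id$ on $V_2$. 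So $s$ is the identity on $V_2$ and $-\id$ on the line $\langle f_2,f_3,f_4\rangle \cap V_1 = \mathbb{R}\bar g_1$. It fixes $\langle f_2,f_3,f_4\rangle^\perp$, which lies in $V_1$ and is orthogonal to $\bar g_1$.
\end{itemize}
Hence $s$ restricts to the reflection in $\bar g_1$. The restriction map $\rho\colon N(U) \to O(V_1)$ therefore sends $\refl{f_1}, s$ to the simple reflections of $\Weyl(G_2)$, identified via $\bar g_i \leftrightarrow g_i$.

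To obtain the homomorphism $u$, I would verify the Coxeter relations $(\refl{f_1} s)^6 = 1$, $s^2 = 1$ and $\refl{f_1}^2 = 1$ in $\Weyl(F_4)$. The relation $s^2=1$ holds because both factors are involutions and commute, as $w_{\{3,4\}}$ is $-1$ on $V_2$ and trivial on $V_1$. For $(\refl{f_1}s)^6$, the element $\refl{f_1}s$ acts on $V_1$ as a rotation of order $6$ and on $V_2$ as the identity, since both factors fix $V_2$ pointwise. Then $(\refl{f_1}s)^6$ acts trivially on $V$, so it equals $1$. This gives $u$, and uniqueness holds because $\refl{g_1}, \refl{g_2}$ generate. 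Injectivity follows since $\rho \circ u$ is the identity under the identification. The centralizing claim holds because every $u(w)$ fixes $V_2$ pointwise and preserves $V_1$, so it commutes with $w_{\{3,4\}} = (\id_{V_1}, -\id_{V_2})$.

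For \cref{le:index weyl:act}, I would take $\tilde w = u(w)v$ with $v \in U$, keeping the right-action convention in mind. Since $v$ fixes $V_1$ and preserves $V_2$, we get $\tilde\pi(\gamma^v) = \tilde\pi(\gamma)$. Since $u(w)$ preserves both summands, $\tilde\pi(\gamma^{u(w)}) = \tilde\pi(\gamma)^{\rho(u(w))} = \pi(\gamma)^w$. The main obstacle I expect is bookkeeping rather than ideas. Two points need care. First, I must confirm that $w_{\{2,3,4\}}$ really is $-\id$ on the $C_3$ span, which holds since $C_3$ contains $-1$; the expression from \cref{rem:longest element} is just a check. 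Second, I must confirm that the identification $\bar g_i \mapsto g_i$ intertwines the reflections, which is immediate from linearity.
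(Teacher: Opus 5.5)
Your argument breaks at the claim that $w_{\{3,4\}}$ acts as $-\id$ on $V_2$. The Weyl group of type $A_2$ does not contain $-1$. Its longest element is $-\tau$, where $\tau$ is the diagram automorphism interchanging $f_3$ and $f_4$, and \enquote{minus the swap} is not \enquote{minus the identity}. Concretely, $w_{\{3,4\}}(f_3) = -f_4$. In fact $w_{\{3,4\}} = \refl{f_3+f_4}$, which is the identity on $V_1$ and $-\tau$ on $V_2$.

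Every step you base on the wrong description fails as stated:
\begin{itemize}
\item $w_{\{3,4\}}$ is not central in $N(U)$; it does not even commute with $\refl{f_3} \in U$.
\item $s = w_{\{2,3,4\}} w_{\{3,4\}}$ acts on $V_2$ as $(-\id)(-\tau) = \tau$. So it interchanges $f_3$ and $f_4$ rather than fixing $V_2$ pointwise.
\item Hence $\refl{f_1}s$ is not the identity on $V_2$, and the image of $u$ does not fix $V_2$ pointwise.
\item The formula $w_{\{3,4\}} = (\id_{V_1}, -\id_{V_2})$, on which your centralizer argument rests, is wrong.
\end{itemize}
This is not an artefact of the choice of lift. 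Suppose an element of $\Weyl(F_4)$ acts as $\refl{\bar g_1}$ on $V_1$ and trivially on $V_2$. Multiplying it by $w_{\{2,3,4\}}$ gives $(\id_{V_1}, -\id_{V_2})$. This fixes $V_1$ pointwise, so by Steinberg's theorem it would lie in $U \cong \Weyl(A_2)$, which is impossible. So your verifications of $(\refl{f_1}s)^6 = 1$, of $s^2 = 1$, and of the centralizing statement are not valid as written.

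The conclusions nevertheless survive, and the repair is local. Your computation on $V_1$ is right. With respect to $V = V_1 \oplus V_2$ one has $\refl{f_1} = (\refl{\bar g_2}, \id)$, $s = (\refl{\bar g_1}, \tau)$ and $w_{\{3,4\}} = (\id, -\tau)$. Then:
\begin{itemize}
\item Since $\tau^2 = \id$ and $2 \mid 6$, $(\refl{f_1}s)^6$ is trivial on both summands.
\item $s^2 = 1$ because $w_{\{2,3,4\}}$ acts by scalars on $\langle f_2, f_3, f_4\rangle_{\mathbb{R}}$ and on its orthogonal complement. Both are preserved by $w_{\{3,4\}}$, so the two involutions commute.
\item The image of $u$ acts on $V_2$ through $\{\id, \tau\}$, which commutes with $-\tau$. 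So the image centralizes $w_{\{3,4\}}$.
\end{itemize}
Injectivity via $\rho \circ u = \id$ and your proof of part (ii) only use that each $u(w)$ stabilizes $V_1$ and $V_2$ and induces $w$ on $V_1$. So they go through unchanged.

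Once corrected, your route is a self-contained geometric alternative to the paper's proof. The paper instead takes the Coxeter system and the commutation with $w_{\{3,4\}}$ from the theory of Tits indices, \cite[20.32]{MPW15} and \cite[20.11(i)]{MPW15}. It checks by hand that $w_{\{2,3,4\}} w_{\{3,4\}} w_{\{1\}}$ has order $6$. It proves part (ii) by reducing to $w \in \{\refl{g_1}, \refl{g_2}\}$, $\gamma \in \Delta_F$ and $\tilde w = u(w)$, followed by eight explicit checks.
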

\begin{proof}
	Put $ S \coloneq \{w_{\{2,3,4\}} w_{\{3,4\}}, w_{\{1\}}\} $. By \cite[20.32]{MPW15}, $ (\langle S \rangle, S) $ is a Coxeter system. Is is a straightforward computation to show that the order of the product $ w_{\{2,3,4\}} w_{\{3,4\}} w_{\{1\}} $ is $ 6 $, so this Coxeter system is of type $ G_2 $. Further, $ u(\refl{g_1}) $ and $ u(\refl{g_2}) $ commute with $ w_{\{3,4\}} $ by \cite[20.11(i)]{MPW15}. Assertion~\ref{le:index weyl:hom} follows.
	
	It suffices to prove~\ref{le:index weyl:act} for the case that $ w \in \{\reflbr{g_1}, \reflbr{g_2}\} $ and $ \gamma \in \Delta_F $. Further, for all $ \beta \in F_4 $ and $ u \in U $, we have $ \beta^u - \beta \in \langle f_3, f_4 \rangle_{\mathbb{R}} $ and hence $ \pi(\beta^u) = \pi(\beta) $. Thus we may also assume that $ \tilde{w} = u(w) $. We conclude that there are only eight cases to consider, and each of them is covered by a short computation.
\end{proof}

We can now describe the fibers of $ \pi $, which will be crucial for our construction of $ G_2 $-gradings from $ F_4 $-gradings, and give an alternative description of $ u $ in terms of these fibers. For $ \beta \in G_2^0 $, we will always write $ \pi^{-1}(\beta) $ for $ \{\alpha \in F_4^0 \mid \pi(\alpha) = \beta\} $.

\begin{lemma}\label{le:index fibers}
	The following hold.
	\begin{enumerate}
		\item \label{le:index fibers:conj}For all $ \beta \in G_2 $ and $ w \in \Weyl(G_2) $, we have $ \pi^{-1}(\beta^w) = \pi^{-1}(\beta)^{u(w)} $.
		
		\item \label{le:index fibers:0}$ \pi^{-1}(0) = F_4^0 \cap \langle f_3, f_4 \rangle_\Z = \{\pm f_3, \pm f_4, \pm (f_3+f_3)\} \cong A_2^0 $.
		
		\item \label{le:index fibers:long}For every long root $ \beta \in G_2 $, the fiber $ \pi^{-1}(\beta) $ contains exactly one element, and this element is long in $ F_4 $. 
		
		\item \label{le:index fibers:short}For every short root $ \beta \in G_2 $, the fiber $ \pi^{-1}(\beta) $ contains exactly three long roots in $ F_4 $ and three short roots in $ F_4 $, and the subsystem $\langle \pi^{-1}(\beta) \rangle_{\mathbb{R}} \cap F_4$ spanned by this fiber is of type $C_3$. If we denote the three long roots by $ 2e_1 $, $ 2e_2 $, $ 2e_3 $, then $ (e_1, e_2, e_3) $ is an orthogonal basis of $ \langle \pi^{-1}(\beta) \rangle_{\mathbb{R}} $ and the three short roots in $ \pi^{-1}(\beta) $ are $ e_1+e_2 $, $ e_2+e_3 $, $ e_1+e_3 $.
		
		\item \label{le:index fibers:weyl embed}Let $\beta \in G_2$ and denote by $L(\beta)$ the set of long roots in $\pi^{-1}(\beta)$. Then
		\[ u(\reflbr{\beta}) = \begin{cases}
			\prod_{\alpha \in L(\beta)} \refl{\alpha} & \text{if } \beta \text{ is long}, \\
			(\prod_{\alpha \in L(\beta)} \refl{\alpha}) \refl{f_3} \refl{f_4} \refl{f_3} & \text{if } \beta \text{ is short}
		\end{cases} \]
		where the product over $ L(\beta) $ may be taken in any order.
	\end{enumerate}
\end{lemma}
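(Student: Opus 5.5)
Most of these assertions reduce to finite root-system checks once two facts are in hand: the explicit coefficient list of $F_4$ (\cref{rem:F4 coeff list}) and the linear description of $\pi$ from \cref{le:index char}. For a root $\alpha = a_1f_1+a_2f_2+a_3f_3+a_4f_4$ we have $\pi(\alpha) = a_2 g_1 + a_1 g_2$. So each fiber $\pi^{-1}(a g_1 + b g_2)$ is the set of roots whose $(f_2,f_1)$-coefficients equal $(a,b)$. I would prove the items in the order \ref{le:index fibers:0}, \ref{le:index fibers:conj}, \ref{le:index fibers:long}, \ref{le:index fibers:short}, \ref{le:index fibers:weyl embed}, using \ref{le:index fibers:conj} to reduce the later items to one representative per $\Weyl(G_2)$-orbit.

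For \ref{le:index fibers:0}, a root lies in the kernel exactly when its $f_1$- and $f_2$-coefficients vanish. Hence $\pi^{-1}(0) = F_4^0 \cap \langle f_3,f_4\rangle_\Z = \{0,\pm f_3,\pm f_4,\pm(f_3+f_4)\}$, which is of type $A_2$ because $f_3,f_4$ span an $A_2$ subsystem. For \ref{le:index fibers:conj}, \cref{le:index weyl}\ref{le:index weyl:act} with $\tilde w = u(w)$ gives $\pi(\gamma^{u(w)}) = \pi(\gamma)^w$. This yields $\pi^{-1}(\beta)^{u(w)} \subseteq \pi^{-1}(\beta^w)$, and applying the same argument to $w^{-1}$ gives the reverse inclusion.

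Since $\Weyl(G_2)$ acts transitively on the long roots and on the short roots, \ref{le:index fibers:long} and \ref{le:index fibers:short} only need checking for $\beta = g_2$ and $\beta = g_1$. For $g_2$ the fiber consists of roots with coefficients $(a_1,a_2) = (1,0)$; inspecting the list shows this fiber is just $\{f_1\}$, which is long. For $g_1$ the fiber consists of roots with $(a_1,a_2) = (0,1)$, namely those of the $C_3$ subsystem $\langle f_2,f_3,f_4\rangle$ with $f_2$-coefficient $1$. These are $f_2$, $f_2+f_3$, $f_2+2f_3$, $f_2+f_3+f_4$, $f_2+2f_3+f_4$ and $f_2+2f_3+2f_4$. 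The long ones are $2e_1 \coloneq f_2$, $2e_2 \coloneq f_2+2f_3$ and $2e_3 \coloneq f_2+2f_3+2f_4$. Using \cref{rem:F4 inner product}, I would check that these are pairwise orthogonal and that the remaining three roots are $e_1+e_2$, $e_2+e_3$ and $e_1+e_3$. The span of the fiber is then $\langle f_2,f_3,f_4\rangle_{\mathbb{R}}$, and intersecting it with $F_4$ gives the $C_3$ subsystem.

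For \ref{le:index fibers:weyl embed}, the reflections in pairwise orthogonal roots commute, so the product over $L(\beta)$ is independent of the order. By \cref{rem:longest element}, $w_{\{3,4\}} = \refl{f_3}\refl{f_4}\refl{f_3}$ is an involution, and $w_{\{2,3,4\}} = \refl{f_2}\refl{f_2+2f_3}\refl{f_2+2f_3+2f_4}$. The formula therefore holds for $\beta = g_1$ by the definition of $u$, and for $\beta = g_2$ because $L(g_2) = \{f_1\}$.

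The general case is where I expect the most work, and I would transport the formula along $\Weyl(G_2)$. Take $\beta = g^w$ with $g \in \{g_1,g_2\}$. Then $u(\refl{\beta}) = u(w)^{-1}u(\refl{g})u(w)$. Conjugating $\prod_{\alpha\in L(g)}\refl{\alpha}$ by $u(w)$ gives $\prod_{\alpha \in L(g)} \refl{\alpha^{u(w)}}$. By \ref{le:index fibers:conj}, and since $u(w)$ preserves root lengths, this equals $\prod_{\alpha\in L(\beta)}\refl{\alpha}$. The factor $w_{\{3,4\}}$ is unchanged under this conjugation because the image of $u$ centralizes $w_{\{3,4\}}$ by \cref{le:index weyl}\ref{le:index weyl:hom}. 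The only point needing care is consistency of the right-action conventions for conjugation in this computation.
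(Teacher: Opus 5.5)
Your proposal is correct and follows essentially the same route as the paper: (i) and (ii) come from \cref{le:index weyl} and the linearity of $\pi$, the fibers over $g_1$ and $g_2$ are computed explicitly and transported to all of $G_2$ via (i), and (v) is checked for $g_1,g_2$ using \cref{rem:longest element} and then conjugated, using that the image of $u$ centralizes $w_{\{3,4\}}$. Beyond the paper, you also spell out the reverse inclusion in (i), the orthogonality and length checks, and why the order of the commuting reflections in $L(\beta)$ does not matter, all of which the paper leaves implicit.
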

\begin{proof}
	Assertions~\ref{le:index fibers:conj} and~\ref{le:index fibers:0} follow from \cref{le:index weyl}\cref{le:index weyl:act} and \cref{le:index char}\cref{le:index char:ex}, respectively. Using \cref{rem:F4 coeff list}, we can compute that
	\begin{align*}
		\pi^{-1}(g_1) &= \{f_2+f_3, f_2+f_3+f_4, f_2+2f_3+f_4, f_2, f_2+2f_3, f_2+2f_3+2f_4\}, \\
		\pi^{-1}(g_2) &= \{f_1\}.
	\end{align*}
	By a short computation, the long roots in $ \pi^{-1}(g_1) $ are $ f_2, f_2+2f_3, f_2+2f_3+2f_4 $ and their pairwise sums are precisely the remaining roots in $ \pi^{-1}(g_1) $ multiplied by $ 2 $. We infer that assertions~\ref{le:index fibers:long} and~\ref{le:index fibers:short} hold for $ \beta \in \{g_1, g_2\} $. By~\ref{le:index fibers:conj}, it follows that they hold for all $ \beta \in G_2 $.
	
	Assertion~\ref{le:index fibers:weyl embed} is satisfied for the long root $ \beta = g_2 $ because $ L(g_2) = \{f_1\} $. Since $ L(g_1) = \{f_2, f_2+2f_3, f_2+2f_3+2f_4\} $, it follows from \cref{rem:longest element} that $ w_{\{2,3,4\}} = \prod_{\alpha \in L(g_1)} \refl{\alpha} $ and hence~\ref{le:index fibers:weyl embed} holds for $ \beta = g_1 $ as well. Since all roots in $ G_2 $ are conjugate to $ g_1 $ or $ g_2 $ and since the image of $ u $ centralizes $ w_{\{3,4\}} = \refl{f_3} \refl{f_4} \refl{f_3} $ by \cref{le:index weyl}\cref{le:index weyl:hom}, the general assertion follows by an application of~\ref{le:index fibers:conj}.
\end{proof}

\begin{remark}\label{rem:3-graded rs}
	A \emph{$ 3 $-grading} of a root system $ \Phi $ is a decomposition $ \Phi = \Phi_{-1} \sqcup \Phi_0 \sqcup \Phi_1 $ such that $ (\Phi_i + \Phi_j) \cap \Phi \subseteq \Phi_{i+j} $ for all $ i,j \in \{-1,0,1\} $ and some other properties are satisfied (see \cite[14.1]{LoosNeher} for details). The root system $ C_3 $ has the following $ 3 $-grading \cite[14.5(c)]{LoosNeher}:
	\begin{align*}
		(C_3)_{\pm 1} &= \{ \pm 2e_i \mid i \in \{1,2,3\}\} \cup \{ \pm(e_i + e_j) \mid i \ne j \in \{1,2,3\}\}, \\
		(C_3)_0 &= \{e_i - e_j \mid i \ne j \in \{1,2,3\}\}.
	\end{align*}
	Note that the set $ \pi^{-1}(\beta) $ in \cref{le:index fibers}\cref{le:index fibers:short} is precisely the set $ (C_3)_1 $ of $ 1 $-graded roots while $ \pi^{-1}(-\beta) = -\pi^{-1}(\beta) $ is the set $ (C_3)_{-1} $ of $ (-1) $-graded roots.
\end{remark}

\begin{remark}\label{rem:index U}
	It follows from \cref{le:index fibers}\cref{le:index fibers:weyl embed} that $\prod_{\alpha \in L(\beta)} \refl{\alpha} \in u(\refl{\beta}) U$ for all $ \beta \in G_2 $ where $ U \coloneq \langle \refl{f_3}, \refl{f_4} \rangle $. Hence we may always choose $\tilde{w} = \prod_{\alpha \in L(\beta)} \refl{\alpha}$ for $w = \refl{\beta}$ in \cref{le:index weyl}\cref{le:index weyl:act}.
\end{remark}

We can now apply our observations to construct $G_2$-gradings. For Lie algebras, the distributive law makes this fairly straightforward.

\begin{proposition}\label{pr:lie index}
	Let $ L = \bigoplus_{\alpha \in F_4^0} L_\alpha $ be an $ F_4 $-graded Lie algebra over a commutative ring $ k $. For all $ \beta \in G_2^0 $, we define
	\[ L_\beta' \coloneq \langle L_\alpha \mid \alpha \in \pi^{-1}(\{\beta\}) \rangle_k. \]
	Then $ (L_\beta')_{\beta \in G_2^0} $ is a $ G_2 $-grading of $ L $.
\end{proposition}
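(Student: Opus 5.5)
The plan is to check the two defining properties of a $G_2$-grading, using only the combinatorics of the map $\pi \colon F_4^0 \to G_2^0$ from \cref{le:index char}.

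First, the decomposition. Since $L = \bigoplus_{\alpha \in F_4^0} L_\alpha$, and the fibers $\pi^{-1}(\beta)$ for $\beta \in G_2^0$ partition $F_4^0$, we can regroup the direct sum by fibers. This gives
\[ L = \bigoplus_{\beta \in G_2^0} \bigoplus_{\alpha \in \pi^{-1}(\beta)} L_\alpha = \bigoplus_{\beta \in G_2^0} L'_\beta. \]
Each inner sum is direct, so $L'_\beta$ is exactly $\bigoplus_{\alpha \in \pi^{-1}(\beta)} L_\alpha$.

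Second, the bracket condition. Let $\beta, \beta' \in G_2^0$. By bilinearity it suffices to show $[L_\alpha, L_{\alpha'}] \subseteq L'_{\beta+\beta'}$ for all $\alpha \in \pi^{-1}(\beta)$ and $\alpha' \in \pi^{-1}(\beta')$. Because $L$ is $F_4$-graded, $[L_\alpha, L_{\alpha'}] \subseteq L_{\alpha+\alpha'}$, and this space is zero unless $\alpha+\alpha' \in F_4^0$.

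Now suppose $\alpha+\alpha' \in F_4^0$. The map $\pi$ is the restriction of a linear map (\cref{le:index char}\cref{le:index char:ex}), so
\[ \pi(\alpha+\alpha') = \pi(\alpha) + \pi(\alpha') = \beta + \beta'. \]
This covers every case, including those where $\alpha$, $\alpha'$ or $\alpha+\alpha'$ is $0$. In particular $\beta+\beta'$ lies in $\pi(F_4^0) = G_2^0$, and $L_{\alpha+\alpha'} \subseteq L'_{\beta+\beta'}$.

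If $\beta+\beta' \notin G_2^0$, the previous step shows there is no pair $\alpha, \alpha'$ in the fibers with $\alpha+\alpha' \in F_4^0$. So the bracket vanishes, which is the required convention $L'_\gamma = 0$ for $\gamma \in \langle G_2 \rangle_\Z \setminus G_2^0$.

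The only point needing care is the linearity of $\pi$ when some of the roots involved are zero or not sums of roots. This is immediate from the construction of $\pi$ as an orthogonal projection in the proof of \cref{le:index char}, so I expect no real obstacle.
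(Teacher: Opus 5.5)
Your proof is correct and follows the same route as the paper. You regroup $\bigoplus_{\alpha \in F_4^0} L_\alpha$ along the fibers of $\pi$, then use additivity of $\pi$ from \cref{le:index char} to see that $[L_\alpha, L_{\alpha'}]$ either vanishes or lands in $L'_{\pi(\alpha)+\pi(\alpha')}$. Your direct appeal to linearity of $\pi$ also handles the cases $\alpha+\alpha' = 0$, $\alpha = 0$ or $\alpha' = 0$ more carefully than the paper's case split, which is phrased with $F_4$ rather than $F_4^0$.
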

\begin{proof}
	Since $ L_\beta' = \bigoplus_{\alpha \in \pi^{-1}(\beta)} L_\alpha $
	for all $ \beta \in G_2^0 $, we have $ L = \bigoplus_{\beta \in G_2^0} L_\beta' $.
	Now let $ \beta, \bar{\beta} \in G_2^0 $ and let $ x_\beta \in L_\beta' $, $ y_{\bar{\beta}} \in L_{\bar{\beta}}' $. Write
	\[ x_\beta = \sum_{\alpha \in \pi^{-1}(\beta)} x_\alpha \quad \text{and} \quad y_{\bar{\beta}} = \sum_{\bar{\alpha} \in \pi^{-1}(\bar{\beta})} y_{\bar{\alpha}} \]
	where $ x_\alpha \in L_\alpha $ and $ y_{\bar{\alpha}} \in L_{\bar{\alpha}} $. Thus
	\[ [x_\beta, y_{\bar{\beta}}] = \sum_{\alpha \in \pi^{-1}(\beta)} \sum_{\bar{\alpha} \in \pi^{-1}(\bar{\beta})} [x_\alpha, y_{\bar{\alpha}}]. \]
	For all $ \alpha \in \pi^{-1}(\beta) $, $ \bar{\alpha} \in \pi^{-1}(\bar{\beta}) $, there are two possibilities:
	\begin{enumerate}
		\item If $ \alpha + \bar{\alpha} \notin F_4 $, then $ [x_\alpha, y_{\bar{\alpha}}] = 0 $.
		
		\item If $ \alpha + \bar{\alpha} \in F_4 $, then $ \pi(\alpha+\bar{\alpha}) = \pi(\alpha) + \pi(\bar{\alpha}) = \beta + \bar{\beta} $ by \cref{le:index char}\cref{le:index char:ex}. Hence $ \beta + \bar{\beta} \in G_2^0 $ and $ \alpha + \bar{\alpha} \in \pi^{-1}(\beta + \bar{\beta}) $ in this case, which implies that $ [x_\alpha, y_{\bar{\alpha}}] \in L_{\alpha+\bar{\alpha}} \le L'_{\beta + \bar{\beta}} $.
	\end{enumerate}
	We infer that $ [x_\beta, y_{\bar{\beta}}] = 0 $ if $ \beta + \bar{\beta} \notin G_2^0 $ and $ [x_\beta, y_{\bar{\beta}}] \in L'_{\beta + \bar{\beta}} $ otherwise. It follows that $ (L_\beta')_{\beta \in G_2^0} $ is a $ G_2 $-grading of $ L $.
\end{proof}

The proof of $ G_2 $-commutator relations in $ F_4 $-graded groups is more difficult due to the non-linear nature of groups.
We can use the criterion \cite[2.1.29]{WiedemannPhD} (which is an application of \cite[3.9]{LoosNeher}), but for this we first need to establish \cref{le:index nonprop}.

\begin{lemma}\label{le:index possys}
	Let $\Delta_G' = (g_1', g_2')$ be a system of simple roots in $G_2$ such that $g_1$ is short and let $\Pi_G'$ be the corresponding positive system in $G_2$. Then there exists a standard system $\Delta_F' = (f_1', f_2', f_3', f_4')$ of simple roots in $F_4$ with corresponding positive system $\Pi_F'$ such that $\pi(\Pi_F') = \Pi_G' \cup \{0\}$ and $\pi(\Delta_F') = (g_2', g_1', 0, 0)$.
\end{lemma}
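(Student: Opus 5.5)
The plan is to reduce to the fixed pair $(\Delta_G, \Delta_F)$ by transitivity of the Weyl group of $G_2$ on simple systems, and transport the result along the embedding $u$ from \cref{le:index weyl}. Since $\Weyl(G_2)$ acts simply transitively on positive systems and preserves root lengths, there is a unique $w \in \Weyl(G_2)$ with $\Delta_G^w = (g_1^w, g_2^w) = (g_1', g_2')$ and $\Pi_G^w = \Pi_G'$. (Here we read the hypothesis as saying that $g_1'$ is short.) We then put $\tilde{w} \coloneq u(w) \in \Weyl(F_4)$, $f_i' \coloneq f_i^{\tilde{w}}$ and $\Pi_F' \coloneq \Pi_F^{\tilde{w}}$.

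First, $\Delta_F' = (f_1', \ldots, f_4')$ is again a standard system of simple roots in the sense of \cref{def:F4 basis order}. Indeed, $\tilde{w}$ is an orthogonal map preserving $F_4$, so it maps simple systems to simple systems together with their positive systems. It also preserves lengths and the types of the rank-$2$ subsystems spanned by pairs of simple roots. Moreover, $\Pi_F'$ is the positive system associated with $\Delta_F'$.

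Second, we compute the images under $\pi$. By \cref{le:index weyl}\cref{le:index weyl:act} with $\tilde{w} \in u(w)U$, we have $\pi(\gamma^{\tilde{w}}) = \pi(\gamma)^w$ for all $\gamma \in F_4$. This also holds for $\gamma = 0$, because the linear map inducing $\pi$ sends $0$ to $0$. Hence
\[ \pi(f_1') = g_2^w = g_2', \quad \pi(f_2') = g_1^w = g_1', \quad \pi(f_3') = 0^w = 0 = \pi(f_4'). \]
Furthermore, by \cref{le:index char}\cref{le:index char:surj},
\[ \pi(\Pi_F') = \pi(\Pi_F^{\tilde{w}}) = \pi(\Pi_F)^w = (\Pi_G \cup \{0\})^w = \Pi_G' \cup \{0\}. \]

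The only delicate point is the equivariance $\pi(\gamma^{\tilde{w}}) = \pi(\gamma)^w$. It is already provided by \cref{le:index weyl}\cref{le:index weyl:act}, so the main thing to check is that $u$ is a homomorphism, so that $u(w)$ makes sense for arbitrary $w$. The rest is bookkeeping about Weyl group actions preserving simple systems and positive systems.
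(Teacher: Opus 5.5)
Your proposal is correct and is essentially the paper's proof. You choose $w \in \Weyl(G_2)$ with $\Delta_G^w = \Delta_G'$, set $\Delta_F' \coloneq \Delta_F^{u(w)}$, and apply the equivariance $\pi(\gamma^{u(w)}) = \pi(\gamma)^w$ from \cref{le:index weyl}\cref{le:index weyl:act} together with \cref{le:index char}\cref{le:index char:surj}. You also spell out two points the paper leaves implicit: why $\Delta_F'$ is again standard, and why $w$ sends $g_1$ to the short root $g_1'$.
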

\begin{proof}
	If $ \Delta_G' = \Delta_G $, then we can choose $ \Delta_F \coloneq \Delta_F' $ by \cref{le:index char}. In the general case, there exists $ w \in \Weyl(G_2) $ such that $ \Delta_G' = \Delta_G^{w} $. Put $ \Delta_F' \coloneq \Delta_F^{u(w)} $. Then it follows from \cref{le:index weyl}\cref{le:index weyl:act} that
	\begin{align*}
		\pi(\Delta_F') &= \pi\brackets[\big]{\Delta_F^{u(w)}} = \pi(\Delta_F)^{w} = (g_2, g_1, 0, 0)^{w} = (g_2', g_1', 0, 0).
	\end{align*}
	A similar computation shows that $ \pi(\Pi_F') = \Pi_G' \cup \{0\} $.
\end{proof}

\begin{lemma}\label{le:index nonprop}
	For all non-proportional $ \alpha,\beta \in G_2 $, there exists a positive system $ \Pi_F' $ in $ F_4 $ which contains $ \pi^{-1}(\{\alpha, \beta\}) $.
\end{lemma}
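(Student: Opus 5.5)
The plan is to reduce the statement to \cref{le:index possys}. Since $\alpha$ and $\beta$ are non-proportional roots of $G_2$, they lie in a common positive system: a generic linear functional $f$ with $f(\alpha) > 0$ and $f(\beta) > 0$ exists because $\alpha, \beta$ are linearly independent, and $f$ may be taken non-zero on all roots. Let $\Pi_G' \coloneq \{\gamma \in G_2 \mid f(\gamma) > 0\}$. This is a positive system containing $\alpha$ and $\beta$. Let $\Delta_G' = (g_1', g_2')$ be its simple system, ordered so that $g_1'$ is short.

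Next, apply \cref{le:index possys} to obtain a standard system $\Delta_F'$ with positive system $\Pi_F'$ such that $\pi(\Pi_F') = \Pi_G' \cup \{0\}$. This only says the image of $\Pi_F'$ is $\Pi_G' \cup \{0\}$, so we still need every preimage of $\alpha$ and $\beta$ to lie in $\Pi_F'$. Let $\gamma \in F_4$ with $\pi(\gamma) \in \{\alpha, \beta\} \subseteq \Pi_G'$. Either $\gamma \in \Pi_F'$ or $-\gamma \in \Pi_F'$. In the second case, linearity of $\pi$ (\cref{le:index char}) gives $\pi(-\gamma) = -\pi(\gamma) \in -\Pi_G'$. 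But $\pi(-\gamma)$ must also lie in $\pi(\Pi_F') = \Pi_G' \cup \{0\}$, and $-\Pi_G'$ is disjoint from $\Pi_G' \cup \{0\}$. This is a contradiction, so $\gamma \in \Pi_F'$, and hence $\pi^{-1}(\{\alpha, \beta\}) \subseteq \Pi_F'$.

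The main point needing care is the standard fact that two non-proportional roots lie in a common positive system. It is established by the generic-functional argument in the first paragraph, perturbing $f$ slightly so that it vanishes on no root while keeping $f(\alpha), f(\beta) > 0$. After that the argument is formal.
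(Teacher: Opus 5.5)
Your proof is correct and is essentially the paper's argument: choose a positive system $\Pi_G'$ of $G_2$ containing $\alpha$ and $\beta$, lift it to $\Pi_F'$ via \cref{le:index possys}, and rule out preimages lying in $-\Pi_F'$ because $\pi(-\Pi_F') = -\Pi_G' \cup \{0\}$ contains neither $\alpha$ nor $\beta$. Your only addition is the generic-functional argument showing that two non-proportional roots lie in a common positive system, a fact the paper takes for granted.
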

\begin{proof}
	Choose a system $ \Delta_G'= \{g_1', g_2'\} $ of simple roots in $ G_2 $ such that $ g_1 $ is short and such that $ \alpha,\beta $ lie in the positive system $ \Pi_G $ corresponding to $ \Delta_G' $. Let $\Delta_F' = (f_1, f_2, f_3, f_4)$ be a standard system of simple roots in $F_4$ chosen as in \cref{le:index possys}. We show that the corresponding positive system $\Pi_F'$ contains $ \pi^{-1}(\{\alpha',\beta'\}) $. It suffices to show that $ \pi^{-1}(\{\alpha,\beta\}) $ contains no root in $ F_4 \setminus \Pi_F' = -\Pi_F' $. This is satisfied because $ \pi(-\Pi_F') = -\Pi_G' \cup \{0\} $, a set which contains neither $ \alpha $ nor $ \beta $.
\end{proof}

\begin{proposition}\label{pr:rgg index}
	Let $ G $ be a group with an $ F_4 $-grading $ (U_\alpha)_{\alpha \in F_4} $. For all $ \beta \in G_2^0 $, we define
	\[ U_\beta' \coloneq \langle U_\alpha \mid \alpha \in \pi^{-1}(\beta) \rangle. \]
	Then $ (G, (U_\beta')_{\beta \in G_2}) $ satisfies Axioms~\ref{def:rgg}\cref{def:rgg:gen,def:rgg:comm,def:rgg:weyl}, and hence it is a $ G_2 $-graded group if it satisfies Axiom~\ref{def:rgg}\cref{def:rgg:nondeg}.
\end{proposition}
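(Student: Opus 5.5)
We verify the three axioms one at a time, using only the combinatorics of $\pi$ from \cref{le:index char,le:index weyl,le:index fibers,le:index nonprop}.

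\emph{Axiom~\ref{def:rgg}\cref{def:rgg:gen}.} By \cref{le:index char}\cref{le:index char:surj}, every $\alpha \in F_4$ lies in some fiber $\pi^{-1}(\beta)$ with $\beta \in G_2^0$. Hence the groups $U'_\beta$, together with $U'_0 = \langle U_\alpha \mid \alpha \in \pi^{-1}(0)\rangle$, generate $G$. Here $\pi^{-1}(0)$ is the $A_2$ subsystem $\{\pm f_3, \pm f_4, \pm(f_3+f_4)\}$ of \cref{le:index fibers}\cref{le:index fibers:0}, and the claim only concerns the family indexed by $G_2$, so we must show that $U'_0$ lies in $\langle U'_\beta \mid \beta \in G_2\rangle$. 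For a root $\alpha \in \pi^{-1}(0)$, write $\alpha$ as a sum of two roots $\gamma,\delta$ with $\pi(\gamma) = -\pi(\delta) \ne 0$. For example $f_3 = (f_2+f_3) - f_2$, where both $f_2+f_3$ and $f_2$ lie in $\pi^{-1}(g_1)$; we then need $\gamma$ and $-\delta$ in opposite fibers. The $F_4$ commutator formula (\cref{thm:rgg-param}, transported as in \cref{rem:rgg F4 comrel}) shows that $[U_\gamma, U_\delta]$ covers $U_\alpha$ modulo groups for roots of larger height, and these are again handled by the same argument. Alternatively, conjugating $U_\gamma$ by a Weyl element of $U_\delta U_{-\delta} U_\delta$ with $\delta \notin \pi^{-1}(0)$ sends it onto $U_\alpha$ by \cref{rem:weyl basic}. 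Nontriviality of each $U'_\beta$ for $\beta \in G_2$ is immediate, since every fiber is nonempty.

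\emph{Axiom~\ref{def:rgg}\cref{def:rgg:comm}.} Let $\alpha,\beta \in G_2$ be non-proportional. By \cref{le:index nonprop}, there is a positive system $\Pi'_F$ containing $\pi^{-1}(\{\alpha,\beta\})$. We then apply \cite[2.1.29]{WiedemannPhD}, which rests on \cite[3.9]{LoosNeher}: for subsets $A, B$ of a positive system, $[U_A, U_B]$ lies in the group generated by the root groups $U_\gamma$ with $\gamma = \sum i a + \sum j b$ a positive combination involving roots from both $A$ and $B$. For such a $\gamma$, linearity of $\pi$ gives $\pi(\gamma) = i\alpha + j\beta$ with $i,j > 0$. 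It therefore remains to check that $\pi(\gamma) \ne 0$ and that $\pi(\gamma) \in G_2$. Since $\alpha$ and $\beta$ are both positive for the $G_2$ positive system attached to $\Pi'_F$, a positive combination of them is nonzero, and it lies in $G_2^0$ because $\pi(\gamma) \in \pi(F_4)$. Hence $\pi(\gamma) \in \rootint{\alpha}{\beta}$, and so $[U'_\alpha, U'_\beta] \subseteq U'_{\rootint{\alpha}{\beta}}$.

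\emph{Axiom~\ref{def:rgg}\cref{def:rgg:weyl}.} By \cref{rem:weyl basic}, it suffices to treat $\beta \in \{g_1, g_2\}$. Take $w = \prod_{\alpha \in L(\beta)} w_\alpha$, with $w_\alpha$ an $\alpha$-Weyl element in the $F_4$-graded group. The roots in $L(\beta)$ are pairwise orthogonal long roots; for $g_1$ these are $f_2$, $f_2+2f_3$, $f_2+2f_3+2f_4$, and their pairwise sums and differences are not roots because they equal twice short roots. Hence the root groups $U_{\pm\alpha}$ for $\alpha \in L(\beta)$ commute pairwise, and so $w \in U'_{-\beta} U'_\beta U'_{-\beta}$. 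By \cref{rem:index U} and \cref{le:index weyl}\cref{le:index weyl:act}, conjugation by $w$ maps $U_\gamma$ to $U_{\gamma^{\tilde w}}$, where $\pi(\gamma^{\tilde w}) = \pi(\gamma)^{\reflbr{\beta}}$. Consequently $(U'_\delta)^w = U'_{\delta^{\reflbr{\beta}}}$.

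The main obstacle is the generation claim, since it is the only place where the non-linear commutator relations of $F_4$ are actually needed.
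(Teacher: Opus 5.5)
Your proof is correct. For Axioms~\ref{def:rgg}\cref{def:rgg:comm} and~\ref{def:rgg}\cref{def:rgg:weyl} it is essentially the paper's argument:
\begin{itemize}
\item For the commutator relations, the paper simply cites \cite[2.1.29]{WiedemannPhD} together with \cref{le:index nonprop}. Your unpacking adds one useful point: it shows why the zero fiber never enters, since $\pi(\gamma) = i\alpha + j\beta \neq 0$.
\item For the Weyl elements, the paper builds the same product of Weyl elements over the long roots in $\pi^{-1}(\beta)$. It does this for every $\beta$ directly, rather than reducing to simple roots via \cref{rem:weyl basic}.
\end{itemize}

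The genuine difference is in Axiom~\ref{def:rgg}\cref{def:rgg:gen}. The paper writes $f_3 = (f_2+2f_3+f_4) - (f_2+f_3+f_4)$ and $f_4 = (f_1+f_2+f_3+f_4) - (f_1+f_2+f_3)$. In each case this is a sum of two short roots that span an $A_2$ and lie in opposite non-zero fibers. It then uses the elementary $A_2$ fact $U_{\alpha+\beta} = [U_\alpha, U_\beta]$ from \cite[5.4.7]{WiedemannPhD}, and finally gets $U_{\pm(f_3+f_4)} = [U_{\pm f_3}, U_{\pm f_4}]$.

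Your first variant instead invokes the full coordinatization \cref{thm:rgg-param}. That works, but it is far heavier. Moreover, your claim that $[U_\gamma, U_\delta]$ covers $U_\alpha$ modulo the other root groups only holds because $\alpha$ is short, so the pair is long$+$short or short$+$short. For a short$+$short$\to$long pair, the commutator would only hit the image of the trace form.

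Your second variant, conjugation by Weyl elements, is the most elementary of the three routes. It uses no commutator relations at all, so your closing remark that generation is where the non-linear relations are needed is not accurate. What it needs, and what you should state, is $\gamma^{\refl{\delta}} = \alpha$, i.e.\ $\cartan{\gamma}{\delta} = -1$, not merely $\alpha = \gamma+\delta$. The explicit choices are:
\begin{itemize}
\item $\refl{f_2}$ maps $\pm(f_2+f_3)$ and $\pm(f_2+f_3+f_4)$ to $\pm f_3$ and $\pm(f_3+f_4)$;
\item $\refl{f_2+2f_3}$ maps $\pm(f_2+2f_3+f_4)$ to $\pm f_4$.
\end{itemize}
All roots used here lie in $\pi^{-1}(\pm g_1)$, and in particular $\pi(f_2) = \pi(f_2+2f_3) = g_1$. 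So the corresponding Weyl elements lie in $U'_{-g_1}U'_{g_1}U'_{-g_1}$, and this covers all six roots of $\pi^{-1}(0)$.

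Two small slips. The fact that conjugation by a $\delta$-Weyl element maps $U_\gamma$ onto $U_{\gamma^{\refl{\delta}}}$ is Axiom~\ref{def:rgg}\cref{def:rgg:weyl} itself, not \cref{rem:weyl basic}. And in your example it is $\gamma$ and $\delta$, not $\gamma$ and $-\delta$, that lie in opposite fibers.
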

\begin{proof}
	For Axiom~\ref{def:rgg}\cref{def:rgg:gen}, we show that the group generated by $ (U_\alpha)_{\alpha \in \pi^{-1}(0)}$ is contained in the group generated by $(U_\beta')_{\beta \in G_2}$. Recall from \cref{le:index fibers}\cref{le:index fibers:0} that
	\[ \pi^{-1}(0) = \{\pm f_3, \pm f_4, \pm (f_3+f_3)\}. \]
	Observe that $f_3 = \delta_1 + \delta_2$ and $f_4 = \delta_3 + \delta_4$ where
	\begin{gather*}
		\delta_1 \coloneq f_2 + 2f_3 + f_4, \qquad \delta_2 \coloneq -(f_2 + f_3 + f_4), \\
		\delta_3 \coloneq f_1 + f_2 + f_3 + f_4, \qquad \delta_4 \coloneq -(f_1 + f_2 + f_3).
	\end{gather*}
	Here $\{\delta_1, \delta_2\}$ and $\{\delta_3, \delta_4\}$ span subsystems of type $A_2$. By \cite[5.4.7]{WiedemannPhD}, which is an elementary observation in the study of $A_2$-graded groups, it follows that $U_{\varepsilon f_3} = [U_{\varepsilon \delta_1}, U_{\varepsilon \delta_2}]$, $U_{\varepsilon f_4} = [U_{\varepsilon \delta_3}, U_{\varepsilon \delta_4}]$ and $U_{\varepsilon (f_3+f_4)} = [U_{\varepsilon f_3}, U_{\varepsilon f_4}]$ for all $\varepsilon \in \{\pm 1\}$.
	Hence the root groups $ (U_\alpha)_{\alpha \in \pi^{-1}(0)}$ are contained in the group generated by $ (U_{\delta_i})_{i \in \{1,2,3,4\}} $.
	Since $\delta_1, \delta_2, \delta_3, \delta_4$ lie in $\pi^{-1}(G_2)$, we infer that $G$ is generated by $(U_\beta')_{\beta \in G_2}$.
	
	For the existence of Weyl elements, let $\beta \in G_2$ be arbitrary and let $\alpha_1, \ldots, \alpha_m$ denote the pairwise distinct long roots in $\pi^{-1}(\beta)$. For each $i \in \{1, \ldots, m\}$, choose an $\alpha_i$-Weyl element $w_i = a_{-\alpha_i} b_{\alpha_i} c_{-\alpha_i}$ where $a_{-\alpha_i}, c_{-\alpha_i} \in U_{-\alpha_i}$ and $b_{\alpha_i} \in U_{\alpha_i}$. By \cref{le:index fibers}\cref{le:index fibers:short,le:index fibers:long}, we have $\rootint{\alpha_i}{\alpha_j} = \emptyset = \rootint{\alpha_i}{-\alpha_j}$ and hence $[U_{\alpha_i}, U_{\pm \alpha_j}]=1$ for all distinct $i,j \in \{1, \ldots, m\}$. Together with \cref{rem:rgg abelian}, this implies that $w_\beta \coloneq \prod_{i=1}^m w_{i}$ may be written as
	\[ w_\beta = \prod_{i=1}^m a_{-\alpha_i} \prod_{i=1}^m b_{\alpha_i} \prod_{i=1}^m c_{-\alpha_i} \in U_{-\beta}' U_\beta' U_{-\beta}'. \]
	Further, by \cref{le:index weyl}\cref{le:index weyl:act}, \cref{rem:index U} and because $ \refl{\alpha_1},\ldots, \refl{\alpha_m} $ commute pairwise, we have
	\[ U_\gamma^{w_\beta} = U_{\gamma^{\reflbr{\alpha_1} \cdots \reflbr{\alpha_m}}} \subseteq U_{\pi(\gamma)^{\reflbr{\beta}}}' \quad \text{and} \quad U_\gamma^{w_\beta^{-1}} = U_{\gamma^{\reflbr{\alpha_m} \cdots \reflbr{\alpha_1}}} \subseteq U_{\pi(\gamma)^{\reflbr{\beta}}} \]
	for all $\gamma \in F_4$. This implies that $(U_\delta')^{w_\beta} \subseteq U_{\delta^{\reflbr{\beta}}}' \subseteq (U_\delta')^{w_\beta}$ for all $\delta \in G_2$. We conclude that $w_\beta$ is a $\beta$-Weyl element.
	
	The commutator relations with respect to $ (U_\beta')_{\beta \in G_2} $ follow from \cite[2.1.29]{WiedemannPhD}, whose assumptions are satisfied by \cref{le:index nonprop}. (More precisely, \emph{loc.\@ cit.} is formulated only for maps $ \pi \colon \Phi \to \Phi' $ between root systems, but the same proof remains valid if the image of $ \pi $ lies in $ \Phi' \cup \{0\} $.)
\end{proof}

\section{Cubic Jordan matrix algebras}\label{sec:CJMA}

Now and for the rest of this paper, we move on to $F_4$-gradings. We do so by specializing to a specific kind of cubic norm pairs (in fact, cubic norm \emph{structures}), which are known as \emph{cubic Jordan matrix algebras}. In this section, we briefly introduce these objects and the so-called \emph{multiplicative conic alternative algebras} over which they are defined, and collect all the properties that we will need later. We closely follow the exposition in \cite[Sections~36, 37]{GPR24}.

\begin{convention}
	For a commutative ring $k$, all $k$-algebras are understood to be unital but not necessarily associative or commutative. We recall that a $k$-algebra $A$ is called \emph{alternative} if $(aa)b = a(ab)$ and $(ab)b = a(bb)$ for all $a,b \in A$, which by Artin's Theorem implies that the subalgebra generated by any two elements (and their inverses, if they exist) is associative (see, for example, \cite[14.5]{GPR24}).
\end{convention}

\subsection{Notation and setup}

\begin{notation}\label{not:conic}
	From now on (and for the rest of this paper), we denote by $k$ a commutative ring and by $(C, \connorm)$ a multiplicative conic alternative $k$-algebra in the sense of \cite[16.1,~17.1]{GPR24}. This means that $C$ is an alternative $k$-algebra equipped with a quadratic form $\connorm \colon C \to k$, called the \emph{norm of $ C $}, such that $\connorm(1_C) = 1_k$, $\connorm(ab) = \connorm(a) \connorm(b)$ for all $a,b \in C$ and
	\[ a^2 - \connorm(1_C,a)a + \connorm(a)1_C = 0_C \]
	for all $a \in C$. Here $\connorm(a,b) \coloneq \connorm(a+b) - \connorm(a) - \connorm(b)$ denotes the linearization of $\connorm$ for $a,b \in C$. Further, we denote by
	\begin{align*}
		\contr &\colon C \to k \colon a \mapsto \connorm(1_C,a), & \contr &\colon C \times C \to k \colon (a,b) \mapsto \contr(ab), \\
		\conconj{\:\cdot\:} &\colon C \to C \colon a \mapsto \conconj{a} \coloneq \contr(a)1_C - a
	\end{align*}
	the (linear) trace, the bilinear trace and the conjugation of $ C $, respectively.
	On a purely typographical level, we point out that the expression $ \conconj{ac} $ for $ a,c \in C $ will always stand for $ \conconj{(ac)} $ and not for $ \conconj{a} \conconj{c} $.
	We will usually refer to $ C $ as a multiplicative conic alternative algebra, dropping $ \connorm $ from the notation.
\end{notation}

\begin{example}
	Composition algebras over rings (in the sense of \cite[19.5]{GPR24}), such as octonion or quaternion algebras, are the most prominent examples of multiplicative conic alternative algebras. Further, by \cite[16.4]{GPR24} any $k$-algebra that is projective of rank~$2$ as a $k$-module is a multiplicative conic algebra that is commutative and associative. This provides examples of multiplicative conic alternative algebras that are not composition algebras, such as the dual numbers $k[\epsilon] \coloneq k[t]/(t^2)$. The crucial difference between composition algebras and multiplicative conic alternative algebras is that the latter need not be finitely generated projective as a $k$-module and that their norm need not satisfy any \enquote{regularity assumption} (such as non-degeneracy). For a detailed characterization of composition algebras in terms of conic alternative algebras, see \cite[19.13]{GPR24}. For a discussion of the subtleties involving different \enquote{regularity assumptions} that coincide for finite-dimensional vector spaces over fields of characteristic not~$2$ but not over base rings, see \cite[11.9, 11.11]{GPR24}.
\end{example}

\begin{remark}\label{rem:conic identities}
	For all $ a,b,c \in C $, the following identities hold.
	\begin{enumerate}
		\item \label{rem:conic identities:invo}The conjugation is an involution: $ \conconj{a+b} = \conconj{a} + \conconj{b} $, $ \conconj{ab} = \conconj{b} \conconj{a} $ and $ \conconj{\conconj{a}} = a $. \cite[17.2]{GPR24}
		\item \label{rem:conic identities:scalar}The conjugation involution is scalar: $ a \conconj{a} = \conconj{a}a = \connorm(a) 1_C$ and $ a + \conconj{a} = \contr(a)1_C $. \cite[(16.5.6)]{GPR24}
		\item \label{rem:conic identities:invar}$ \connorm $ and $ \contr $ are invariant under conjugation: $ \connorm(\conconj{a}) = \connorm(a) $ and $ \contr(\conconj{a}) = \contr(a) $. \cite[(16.5.7)]{GPR24}
		\item \label{rem:conic identities:kirmse} \emph{Kirmse's identities}: $ a(\conconj{a}b) = \connorm(a)b = (b \conconj{a})a $ and, thus, the linearization $ a (\conconj{c}b) + \conconj{a}(cb) = \connorm(a,c)b = (b \conconj{a})c + (b \conconj{c})a $. \cite[(17.4.1)]{GPR24}
		\item \label{rem:conic identities:tr-square}$ \contr(a^2) = \contr(a)^2 - 2\connorm(a) $. \cite[(16.5.8)]{GPR24}
		\item \label{rem:conic identities:norm-lin}$ \connorm(a, \conconj{b}) = \contr(a) \contr(b) - \connorm(a,b) $. \cite[(16.5.10)]{GPR24}
		\item \label{rem:conic identities:norm-assoc}$ C $ is norm-associative: $ \connorm(a,ba) = \contr(b) \connorm(a) $, $ \connorm(a,ab) = \contr(b) \connorm(a) $, $ \connorm(ab,c) = \connorm(a,c \conconj{b}) = \connorm(b, \conconj{a}c) $, $ \contr(a,b) = \connorm(a, \conconj{b}) $. \cite[16.12, 17.2]{GPR24} 
		\item \label{rem:conic identities:tr-assoc}$ \contr $ is an associative linear form (in the sense of \cite[7.9]{GPR24}): $ \contr(ab) = \contr(ba) $ and $ \contr((ab)c) = \contr(a(bc)) \eqcolon \contr(abc) $. \cite[(16.13.1)]{GPR24}
	\end{enumerate}
\end{remark}

\begin{notation}[{\cite[36.1]{GPR24}}]
	We will often say that \enquote{a sum is taken over all cyclic permutations $(i\,j\,l)$ of $(1\,2\,3)$} to mean that an expression $ \sum x_{ijl} $ stands for $ x_{123} + x_{231} + x_{312} $.
\end{notation}

\begin{notation}\label{not:CJMA def}
	We fix a (not necessarily invertible) diagonal matrix $\Gamma =(\gamma_1, \gamma_2, \gamma_3)$ in $\Mat_3(k)$ and we denote by $ J $ the \emph{cubic Jordan matrix algebra} $ \Her_3(C, \Gamma) $, as in \cite[36.4]{GPR24}. For the reader's convenience, we spell out what this means. As a $k$-module, $ J $ is isomorphic to $ k^3 \oplus C^3 $ and consists of all formal expressions
	\[ \sum \bigl( \xi_i e_{ii} + u_i [jl] \bigr) \]
	with $\xi_i \in k$, $u_i \in C$, where the sum is taken over all cyclic permutations $(i\,j\,l)$ of $(1\,2\,3)$.
	We will write
	\begin{gather*}
		e_i \coloneq e_{ii} \in J, \quad \cubel{u}{lj} \coloneq \cubel{\conconj{u}}{jl}, \quad 1_J \coloneq e_1+e_2+e_3, \\
		J_{ii} \coloneq k e_i, \quad J_{jl} \coloneq J_{lj} \coloneq C[jl]
	\end{gather*}
	for any cyclic permutation $(i\,j\,l)$ of $(1\,2\,3)$ and all $ \xi \in k $, $ u \in C $. Thus we have a decomposition
	\begin{align*}
		J &= J_{11} \oplus J_{22} \oplus J_{33} \oplus J_{23} \oplus J_{31} \oplus J_{12}.
	\end{align*}
	For arbitrary elements
	\[ x = \sum \brackets[\big]{\xi_i e_i + \cubel{u_i}{jl}} \quad \text{and} \quad y = \sum \brackets[\big]{\eta_i e_i + \cubel{v_i}{jl}} \]
	of $ J $, we define
	\begin{align*}
		x^\sharp &\coloneq \sum \brackets[\Big]{\brackets[\big]{\xi_j \xi_l - \gamma_j \gamma_l \connorm(u_i)} e_i + \cubel{(-\xi_i u_i + \gamma_i \conconj{(u_j u_l)})}{jl}}, \\
		N(x) &\coloneq \xi_1 \xi_2 \xi_3 - \sum \gamma_j \gamma_l \xi_i \connorm(u_i) + \gamma_1 \gamma_2 \gamma_3 \contr(u_1 u_2 u_3), \\
		x \times y &\coloneq \sum \Bigl(\brackets[\big]{\xi_j \eta_l + \eta_j \xi_l - \gamma_j \gamma_l \connorm(u_i, v_i)} e_i \\
		&\qquad \qquad + \cubel{\brackets[\big]{-\xi_i v_i - \eta_i u_i + \gamma_i \conconj{(u_j v_l + v_j u_l)}}}{jl}\Bigr), \\
		T(x,y) &\coloneq \sum \brackets[\big]{\xi_i \eta_i + \gamma_j \gamma_l \connorm(u_i, v_i)}
	\end{align*}
	where all sums are taken over all cyclic permutations $(i\,j\,l)$ of $(1\,2\,3)$. This defines maps $ \sharp $, $ N $, $ \times $, $ T $ that, by \cite[36.5]{GPR24}, equip $ J $ with the structure of a cubic norm structure with base point $ 1_J $.
\end{notation}

\begin{notation}\label{not:CJMA pair}
	Being a cubic norm structure, $ J $ induces a cubic norm pair that we denote by $ (J,J') $. Thus $J'$ is an isomorphic copy of $J$ and we let
	\[ \iota \colon J \to J' \colon a \mapsto a^\iota \]
	be a fixed isomorphism between $J$ and $J'$, with inverse
	\[ \iotainv \colon J' \to J \colon a \mapsto a^\iotainv. \]
	Hence $ (\iota, \iotainv) \colon (J,J') \to (J', J) $ is an involution of $ (J,J') $. Further, we denote by $ e_{ii}' = e_i' $, $ 1_{J'} $, $ J_{ii}' $ and $ J_{jl}' $ the images of the corresponding elements and subspaces of $ J $ under $ \iota $.
\end{notation}

\begin{remark}\label{rem:gamma independent}
	Assume that $ \gamma_1, \gamma_2, \gamma_3 $ are invertible, put $ p \coloneq \sum_{i=1}^3 \gamma_i e_i \in J $ and denote by $ \Her_3(C) $ the cubic Jordan matrix algebra defined with respect to the diagonal matrix $ \bar{\Gamma} = (1_k, 1_k, 1_k) $. Then by \cite[37.23]{GPR24}, the $ p $-isotope of $ J $ (in the sense of \cref{rem:isotopes}) is isomorphic to $ \Her_3(C) $. In particular, if we restrict to the case that $ \Gamma $ is invertible, then the isotopy type of $ (J,J') $ (and hence, by \cref{pr:lie isotopy}, the isomorphism type of the Lie algebra $ L(J,J') $) does not depend on the choice of~$ \Gamma $.
\end{remark}

\begin{remark}\label{rem:CJMA nondeg}
	It follows from the formula defining $T$ that $T$ is non-degenerate (in the sense that the radical defined in \cref{ex:CNP radical} is zero, i.e., $T(a,J) = \{0\}$ implies $a=0$) if and only if the linearization of $\connorm$ is non-degenerate (in a similar sense). Thus if $k$ is a field and the linearization of $n$ is non-degenerate, then $L(J,J')$ is simple by \cref{thm:L simple}. In particular, these conditions are satisfied if $k$ is field and $C$ is a composition algebra in the sense of \cite[19.5]{GPR24}. Under these assumptions, $J$ is a Freudenthal algebra by \cite[39.9(d)]{GPR24}. If, in addition, $C$ is an octonion algebra, then $J$ is an Albert algebra by \cite[39.19(a)]{GPR24}.
\end{remark}

\begin{remark}
	By \cite[37.8, 37.9]{GPR24}, the triple $ (e_1, e_2, e_3) $ is an elementary frame in the sense of \cite[37.5]{GPR24}, and it is connected if and only if $ \Gamma $ is invertible. With this setup, we may apply the results of \cite[Section~37]{GPR24} to $ J $ (with the additional assumption that $ \Gamma $ is invertible if a co-ordinate system is required).
\end{remark}


\subsection{Identities in cubic Jordan matrix algebras}

We begin by collecting some results which break down the long formulas in \cref{not:CJMA def} into more succinct identities. For the next lemma, recall the Jordan pair structure on $ (J,J') $ from \cref{def:U-operators}.

\begin{lemma}[Multiple Peirce decomposition]\label{le:peirce}
	The following assertions hold for all $ i,j,l,m,p,q \in \{1,2,3\} $.
	\begin{enumerate}
		\item $ \{ J_{ij}, J_{jl}', J_{lm} \} \subseteq J_{im} $ and $ U_{J_{ij}}(J_{ij}') \subseteq J_{ij} $.
		
		\item $\{ J_{ij}, J_{lm}', J_{pq} \} = 0$ and $ U_{J_{ij}}(J_{lm}') = 0 $ if the indices $i,j,l,m,p,q$ (respectively, $ i,j,l,m $) cannot be pairwise reordered in the above form. By this we mean that, in the sense of \cite[32.14]{GPR24}, the Peirce triple $ (ij,lm,pq) $ (respectively, $ (ij,lm,ij) $) is not connected.
	\end{enumerate}
	Similar statements hold if the roles of $ J $ and $ J' $ are interchanged.
\end{lemma}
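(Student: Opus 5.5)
Since $(J,J')$ is the cubic norm pair induced by the cubic norm structure $J=\Her_3(C,\Gamma)$, and $J'$ is identified with $J$ via $\iota$, the triple product $\{a,b',c\}=U_{a,c}(b')$ is the usual Jordan triple product of $J$. The statement is then a Peirce decomposition statement for the elementary frame $(e_1,e_2,e_3)$. My plan is to reduce to \cite[32.14]{GPR24}, or to the Peirce rules in \cite[Section~37]{GPR24}, and to verify directly whatever is not literally covered there. The point is that $J_{ii}=ke_i$ and $J_{jl}=C[jl]$ are exactly the Peirce spaces of that frame.

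Concretely, I would first write out $U_{a,c}(b')=T(a,b')c+T(c,b')a-(a\times c)\times b'$ for homogeneous $a\in J_{ij}$, $b'\in J'_{lm}$, $c\in J_{pq}$, using the explicit formulas for $\times$ and $T$ in \cref{not:CJMA def}. Two bookkeeping facts from those formulas are needed.
\begin{enumerate}
\item $T(J_{ij},J'_{lm})=0$ unless $\{i,j\}=\{l,m\}$. For $T$, the diagonal parts pair with each other and $C[jl]$ pairs only with $C[jl]$.
\item The product $\times$ respects the index pattern. Here $e_j\times e_l\in ke_i$ for $\{i,j,l\}=\{1,2,3\}$, and $e_i\times e_i=0$. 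Also $e_i\times C[jl]\subseteq C[jl]$ while $e_j\times C[jl]=0$. Further, $C[ij]\times C[ij]\subseteq ke_l$, and $C[ij]\times C[jl]\subseteq C[li]$.
\end{enumerate}
Both facts are read off immediately from the coordinate formulas.

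With these rules, each of the three terms of $U_{a,c}(b')$ lands in a computable Peirce space, which gives a finite case check. By the symmetric group action permuting indices, together with $U_{a,c}=U_{c,a}$ and the symmetry between $J$ and $J'$, the check reduces to a few index patterns: diagonal–diagonal–diagonal, diagonal–diagonal–offdiagonal, diagonal–offdiagonal–offdiagonal, and three off-diagonals. In the connected cases one checks that every term lies in $J_{im}$. In the disconnected cases all three terms must vanish, or cancel. The quadratic statement $U_{J_{ij}}(J'_{lm})$ then follows from the linear one. Indeed, $U_a(b')=T(a,b')a-a^\sharp\times b'$, and the same rules together with $a^\sharp\in J'$ for the relevant Peirce space of $a$ (for example $\cubel{u}{jl}^\sharp=-\gamma_j\gamma_l\connorm(u)e_i$) handle it directly. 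Alternatively, in the case $2U_a=\{a,\cdot,a\}$ one can argue over $\Z$-forms, but I would simply compute it directly.

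The main obstacle is the disconnected off-diagonal cases where cancellation rather than termwise vanishing is needed. A typical example is $\{C[12],J'_{12},C[12]\}$-type patterns mixed with diagonals, such as $(ij,ij,jj)$ versus $(ij,jj,ij)$. There one must correctly match the term $T(a,b')c$ against $(a\times c)\times b'$ using $\connorm(u,v)$ and Kirmse's identities from \cref{rem:conic identities}. I would handle these first, by explicit computation in the representative case $(i,j,l)=(1,2,3)$, before doing the routine cases. Where possible I would instead cite \cite[32.14, 37.x]{GPR24} directly, since that reference establishes these Peirce rules for cubic norm structures with an elementary frame.
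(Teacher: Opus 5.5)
Your plan works, but its main body follows a different route from the paper. The paper does no computation. It cites \cite[37.8]{GPR24} to identify $J_{ii}=ke_i$ and $J_{jl}=\cubel{C}{jl}$ as the Peirce spaces of the frame $(e_1,e_2,e_3)$ of the Jordan algebra of $J$. It then applies the Peirce multiplication rules \cite[32.15]{GPR24}, with $J'$ identified with $J$ via $\iota$.

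Your fallback citation is therefore the entire proof, and the coordinate case check is not needed. What the direct verification buys is self-containedness. It uses only the formulas for $\times$, $T$, $\sharp$ in \cref{not:CJMA def} and needs no matching of the coordinate spaces with abstract Peirce spaces. It also treats the quadratic part correctly through $U_a(b')=T(a,b')a-a^\sharp\times b'$, where $a^\sharp\in ke_l'$ or $a^\sharp=0$. Your remark that the quadratic statement ``follows from the linear one'' is false without $\tfrac12\in k$, but the explicit formula you then use makes it irrelevant. The cost is a long case analysis that the paper avoids entirely.

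Your stated ``main obstacle'' needs correcting. The triples $(ij,ij,jj)$ and $(ij,jj,ij)$ are connected: they read $(ji,ij,jj)$ and $(ij,jj,ji)$, and land in $J_{jj}$ and $J_{ii}$. For instance, $U_{\cubel{u}{ij}}(e_j')=\gamma_i\gamma_j\connorm(u)e_i$ is in general nonzero, so nothing has to vanish in these cases. The disconnected triples that genuinely require cancellation are $(ij,ij,ll)$, $(ii,ii,jj)$, $(ii,ii,jl)$ and their reversals, with $\{i,j,l\}=\{1,2,3\}$. In each, the single nonzero trace term $T(a,b')c$ or $T(c,b')a$ cancels against $(a\times c)\times b'$. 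The cancellation only requires matching $\gamma_i\gamma_j\connorm(u,v)$ and the signs in $\cubel{u}{ij}\times e_l=-\cubel{u}{ij}^\iota$ and $e_i\times e_j=e_l'$. Kirmse's identities are never needed for the inclusion statements.
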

\begin{proof}
	By \cite[37.8]{GPR24}, the spaces $ J_{ij} $ for $ i,j \in \{1,2,3\} $ are precisely the Peirce components of the cubic norm structure $ J $. Hence the assertion follows from the Peirce decomposition in Jordan algebras, see \cite[32.15]{GPR24}.
\end{proof}

\begin{lemma}\label{le:T formulas}
	Let $ a \in J $ and write
	\[ a = \xi_1 e_1 + \xi_2 e_2 + \xi_3 e_3 + \cubel{c_1}{23} + \cubel{c_2}{31} + \cubel{c_3}{12} \]
	with each $ \xi_i \in k $ and each $ c_{i} \in C $. Let $(i\, j\, l)$ be any (not necessarily cyclic) permutation of~$(1\, 2\, 3)$. Then the following formulas (and their analogues for $ T' $ in place of $ T $) hold.
	\begin{enumerate}
		\item \label{le:T formulas:ei}$ T(a,e_i') = \xi_i $.
		\item \label{le:T formulas:dij}$ T(a, \cubel{d_l}{ij}^\iota) = T(\cubel{c_l}{ij}, \cubel{d_l}{ij}^\iota) = \gamma_i \gamma_j \contr(\conconj{c_l} d_l) $ for all $ d_l \in C $.
		\item \label{le:T formulas:ortho}The Peirce decomposition of $ (J,J') $ is orthogonal with respect to $ T $ in the sense that $ T(b_{pq}, b'_{rs}) = 0 $ for all $ b_{pq} \in J_{pq} $, $ b'_{rs} \in J_{rs}' $ with $ \{p,q\} \ne \{r,s\} $.
	\end{enumerate}
\end{lemma}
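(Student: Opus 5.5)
The plan is to compute directly from the definition of $T$ in \cref{not:CJMA def}, using that $T'(a',b) = T(b,a')$ and that $T'$ on $J'$ is transported from $T$ on $J$ via $\iota$. Since $(J,J')$ is the cubic norm pair induced by the cubic norm structure $J$, we have $T(a, b^\iota) = T_J(a,b)$ for all $a,b \in J$, where $T_J$ is the trace of the cubic norm structure. The analogous statements for $T'$ then follow from the symmetry $T'(a',a) = T(a,a')$ together with the symmetry of $T_J$, which is visible from its defining formula because the linearization $\connorm(u,v)$ is symmetric. It therefore suffices to treat $T_J$.

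Write $b = \sum (\eta_p e_p + \cubel{v_p}{qr})$ in coordinates, with the sum over cyclic permutations $(p\,q\,r)$, so that $T_J(a,b) = \sum (\xi_p \eta_p + \gamma_q\gamma_r \connorm(c_p, v_p))$. The three assertions then follow from reading off coordinates.

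\begin{enumerate}
\item For $b = e_i$ we have $\eta_i = 1$ and all other coordinates zero, so $T_J(a,b) = \xi_i$.
\item For $b = \cubel{d_l}{ij}$, first rewrite $b$ in the cyclic convention. If $(i\,j\,l)$ is cyclic, then $b$ has coordinate $v_l = d_l$. If it is not cyclic, then $\cubel{d_l}{ij} = \cubel{\conconj{d_l}}{ji}$ with $(j\,i\,l)$ cyclic, so $v_l = \conconj{d_l}$, and likewise the $[ij]$-component of $a$ reads $\cubel{c_l}{ij}$ in the given notation. Only the $l$-th off-diagonal term survives, which gives $T_J(a,b) = T_J(\cubel{c_l}{ij}, b) = \gamma_i\gamma_j \connorm(c_l, d_l)$ in the cyclic case. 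Finally, $\connorm(c,d) = \contr(\conconj{c} d)$ by \cref{rem:conic identities}\cref{rem:conic identities:norm-assoc}, since $\contr(a,b) = \connorm(a,\conconj b)$ and the conjugation is an involution. In the non-cyclic case the conjugations must be tracked: here $c_l$ as written corresponds to the cyclic coordinate $\conconj{c_l}$, and $\connorm(\conconj{c_l}, \conconj{d_l}) = \connorm(c_l, d_l)$ because $\connorm$ is invariant under conjugation (\cref{rem:conic identities}\cref{rem:conic identities:invar}).
\item If $\{p,q\} \ne \{r,s\}$, then $b_{pq}$ and $b'_{rs}$ have disjoint coordinate supports. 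Every summand in the formula for $T_J$ pairs equal coordinates, so $T(b_{pq}, b'_{rs}) = 0$.
\end{enumerate}

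The only step needing care is the bookkeeping of conjugations in the non-cyclic case of (ii). This is handled by the identity $\cubel{u}{lj} = \cubel{\conconj u}{jl}$ together with the invariance of $\connorm$ and $\contr$ under conjugation; everything else is immediate.
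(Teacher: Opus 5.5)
Your route is the same as the paper's: every item is read off from the defining formula for $T$ in \cref{not:CJMA def} (transported to $J'$ via $\iota$), and the non-cyclic case of (ii) is handled with conjugation identities in $C$. Items (i) and (iii), the $T'$-analogues, and the cyclic case of (ii) are fine as you wrote them.

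The non-cyclic case of (ii), however, contains a step that is false as stated. The element $a$ is written in the cyclic convention $a=\sum_p \xi_p e_p+\cubel{c_1}{23}+\cubel{c_2}{31}+\cubel{c_3}{12}$. So for a non-cyclic $(i\,j\,l)$, the $J_{ij}$-component of $a$ is $\cubel{c_l}{ji}=\cubel{\conconj{c_l}}{ij}$, not $\cubel{c_l}{ij}$. Its cyclic coordinate is therefore $c_l$, and you cannot also declare it to be $\conconj{c_l}$.

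If you carry out your computation with the correct coordinate, you get $T(a,\cubel{d_l}{ij}^\iota)=\gamma_i\gamma_j\connorm(c_l,\conconj{d_l})=\gamma_i\gamma_j\contr(c_l d_l)$. This differs from $\gamma_i\gamma_j\contr(\conconj{c_l}d_l)$ in general. For example, take $k\neq 0$, $C=k\times k$ with the switch involution, $\gamma_i=\gamma_j=1$ and $c_l=d_l=(1,0)$: the two values are $1$ and $0$. So for non-cyclic permutations, the first equality of (ii) holds only if $\cubel{c_l}{ij}$ is read as ``the $J_{ij}$-component of $a$, written in the $[ij]$-orientation.''

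What your conjugation bookkeeping does prove correctly is the second equality, $T(\cubel{c}{ij},\cubel{d}{ij}^\iota)=\gamma_i\gamma_j\contr(\conconj{c}d)$, for arbitrary $c,d\in C$ and any ordered pair $i\neq j$. The paper's own reduction (via $\contr(\conconj{c_l}d_l)=\contr(c_l\conconj{d_l})$) makes the same tacit identification. This is therefore an imprecision in the statement rather than a flaw peculiar to your approach. Still, you should say explicitly which reading you are proving, instead of asserting that the component of $a$ ``reads $\cubel{c_l}{ij}$ in the given notation.''
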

\begin{proof}
	These are special cases of the formulas defining and $ T $ in \cref{not:CJMA def}. Note that we may assume that $ (i\, j\, l) $ is a cyclic permutation, the general case being a consequence of this case (because $ \contr(\conconj{c_l} d_l) = \contr(c_l \conconj{d_l}) $ by \cref{rem:conic identities}\cref{rem:conic identities:invar,rem:conic identities:tr-assoc}). Alternatively, we may cite \cite[(37.13.5)]{GPR24} for \cref{le:T formulas:ei} and the first equation in \cref{le:T formulas:dij} and \cite[37.4(c)]{GPR24} for \cref{le:T formulas:ortho}.
\end{proof}

\begin{lemma}\label{le:times formulas}
	Let $ a \in J $ and write
	\[ a = \xi_1 e_1 + \xi_2 e_2 + \xi_3 e_3 + \cubel{c_1}{23} + \cubel{c_2}{31} + \cubel{c_3}{12} \]
	with each $ \xi_i \in k $ and each $ c_{i} \in C $. Put $ c_{ij} \coloneq \cubel{c_l}{ij} $ for all cyclic permutations $(i\, j\, l)$ of~$(1\, 2\, 3)$. Let $(i\, j\, l)$ and $ (i'\, j'\, l') $ be permutations of~$(1\, 2\, 3)$ such that $ (i\,j\,l) $ is cyclic. Then the following formulas (and their analogues for $ T', \mathord{\times}' $ in place of $ T, \times $) hold.
	\begin{enumerate}
		\item \label{le:times formulas:ei}$ a \times e_i = \xi_l e_j' + \xi_j e_l' - \cubel{c_i}{jl}^\iota $. In particular, $ e_{i'} \times e_{j'} = e_{l'}' $ and $ \cubel{c_{i'}}{j'l'} \times e_{i'} = -\cubel{c_{i'}}{j'l'}^\iota $.
		\item \label{le:times formulas:ijl}$ \cubel{c_{l'}}{i'j'} \times \cubel{d_{i'}}{j'l'} = \cubel{\gamma_{j'} c_{l'} d_{i'}}{il} $ for all $ d_{i'} \in C $.
		\item \label{le:times formulas:dij}$ a \times d_{ij} = -T(c_{ij}, d_{ij}) e_l' - \xi_l d_{ij}^\iota + c_{li} \times d_{ij} + d_{ij} \times c_{jl} $ for all $ d_{ij} \in J_{ij} $.
	\end{enumerate}
\end{lemma}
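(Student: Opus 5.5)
The plan is to read all three formulas directly off the definition of $\times$ in \cref{not:CJMA def}, by substituting elements whose coordinates are mostly zero. The first step is to fix the coordinates. For a general $y=\sum(\eta_m e_m+\cubel{v_m}{pq})$, the $e_m$-coefficient of $a\times y$ is $\xi_p\eta_q+\eta_p\xi_q-\gamma_p\gamma_q\connorm(c_m,v_m)$. Its $[pq]$-coefficient is $-\xi_m v_m-\eta_m c_m+\gamma_m\conconj{(c_pv_q+v_pc_q)}$. Here $(m\,p\,q)$ runs over cyclic permutations, and the result is read in $J'$ via $\iota$.

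For \cref{le:times formulas:ei}, take $y=e_i$, so $\eta_i=1$ and all other coordinates vanish. Only two kinds of terms survive. The diagonal terms give $\xi_l e_j'+\xi_j e_l'$, coming from the positions $m=j$ and $m=l$. The off-diagonal term $-\eta_i c_i$ contributes only in the slot $[jl]$. The "in particular" statements are specialisations of this. For $a=e_{i'}$, the surviving diagonal term is $e_{l'}'$, whichever orientation $(i'\,j'\,l')$ has. For $a=\cubel{c_{i'}}{j'l'}$, we get $-\cubel{c_{i'}}{j'l'}^\iota$. If $(i'\,j'\,l')$ is not cyclic, we also use the convention $\cubel{u}{lj}=\cubel{\conconj u}{jl}$ to rewrite the result.

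For \cref{le:times formulas:ijl}, both arguments are off-diagonal with distinct index pairs. So the diagonal terms and the $\xi,\eta$-terms vanish, and only $\gamma_m\conconj{(c_pv_q+v_pc_q)}$ survives, in the slot complementary to the two given ones. This is the step I expect to be the main obstacle. When $(i'\,j'\,l')$ is not cyclic, the elements must first be rewritten using $\cubel{u}{lj}=\cubel{\conconj u}{jl}$ before the formula applies. The resulting element $\gamma\conconj{(\conconj{c}\,\conconj{d})}$ is then turned into $\gamma\,dc$ or $\gamma\,cd$ using \cref{rem:conic identities}\cref{rem:conic identities:invo}, and converted back to the orientation $[il]$. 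I would treat the cyclic case first and deduce the non-cyclic case by conjugation, checking carefully which $\gamma$ appears and which order of the product results.

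For \cref{le:times formulas:dij}, we have $d_{ij}=\cubel{d}{ij}$ with $d$ in the $l$-slot, and we expand $a\times d_{ij}$ bilinearly over the Peirce components of $a$. Using \cref{le:T formulas}\cref{le:T formulas:dij}, the diagonal component $e_l$ contributes $-\gamma_i\gamma_j\connorm(c_l,d)e_l'=-T(c_{ij},d_{ij})e_l'$; this needs $\connorm(c,d)=\contr(c\conconj d)$, which follows from \cref{rem:conic identities}. The term $-\xi_l d^\iota$ comes from \cref{le:times formulas:ei}. The cross terms with $c_{li}$ and $c_{jl}$ are exactly the remaining summands. All other components give zero, either by the explicit formula or by Peirce orthogonality (\cref{le:peirce}).
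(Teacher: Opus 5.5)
Your proposal is correct and follows essentially the paper's route. You specialize the defining formulas for $\times$ and $T$ in \cref{not:CJMA def} in the cyclic case, and reduce the non-cyclic cases to it via $\cubel{u}{lj}=\cubel{\conconj{u}}{jl}$, the symmetry of $\times$ and $\conconj{ab}=\conconj{b}\,\conconj{a}$. For the trace term in \cref{le:times formulas:dij} you can skip the detour through \cref{le:T formulas}, because the defining formula for $T$ gives $T(c_{ij},d_{ij})=\gamma_i\gamma_j\connorm(c_l,d)$ directly.
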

\begin{proof}
	These are special cases of the formulas defining $ \times $ and $ T $ in \cref{not:CJMA def}. Again, the general case follows from the cyclic one because
	\begin{align*}
		e_l \times e_j &= e_j \times e_l = e_i, \\
		\cubel{c_l}{ji} \times e_l &= \cubel{\conconj{c_l}}{ij} \times e_l = -\cubel{\conconj{c_l}}{ij}^\iota = -\cubel{c_l}{ji}^\iota \quad \text{and} \\
		\cubel{c_i}{lj} \times \cubel{d_l}{ji} &= \cubel{\conconj{d_l}}{ij} \times \cubel{\conconj{c_i}}{jl} = \cubel{\gamma_j \conconj{d_l} \conconj{c_i}}{il} = \cubel{\gamma_j c_i d_l}{li}.
	\end{align*}
	Alternatively, we may cite \cite[(37.13.4)]{GPR24} for \cref{le:times formulas:ei,le:times formulas:dij}.
\end{proof}

\begin{lemma}\label{le:times blocks}
	For all pairwise distinct $ i,j,l \in \{1,2,3\} $, the following hold:
	\begin{enumerate}
		\item \label{le:times blocks:ijl}$ J_{ij} \times J_{jl} \subseteq J_{il}' $.
		
		\item $ J_{ij} \times J_{ij} \subseteq J_{ll}' $ and $ J_{ij} \times J_{ll} \subseteq J_{ij}' $.
		
		\item $ J_{ij} \times J_{jj} = 0 = J_{jj} \times J_{jj} $.
	\end{enumerate}
\end{lemma}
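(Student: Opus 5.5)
The plan is to read all three assertions off \cref{le:times formulas} by bilinearity of $\times$, decomposing arbitrary elements of the Peirce components. Every element of $J_{ij}$ with $i \ne j$ has the form $\cubel{u}{ij}$ with $u \in C$. This holds also when $(i\,j)$ is not part of a cyclic order, since $\cubel{u}{lj} = \cubel{\conconj{u}}{jl}$. Every element of $J_{ll}$ has the form $\xi e_l$. So it suffices to compute products of such generators.

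For \cref{le:times blocks:ijl}, take $\cubel{c}{ij} \in J_{ij}$ and $\cubel{d}{jl} \in J_{jl}$. Put $(i',j',l') \coloneq (i,j,l)$ in \cref{le:times formulas}\cref{le:times formulas:ijl}, so that $\cubel{c}{i'j'}$ plays the role of $\cubel{c_{l'}}{i'j'}$ and $\cubel{d}{j'l'}$ the role of $\cubel{d_{i'}}{j'l'}$. The formula then gives $\cubel{\gamma_j cd}{il}$, read in $J'$, which lies in $J_{il}'$.

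For the second assertion, let $a \coloneq \cubel{c}{ij}$, viewed as an element of $J$ whose only nonzero entry is in the $(ij)$-slot. Apply \cref{le:times formulas}\cref{le:times formulas:dij} with $d_{ij} \coloneq \cubel{d}{ij}$. Here $\xi_l = 0$, and the components $c_{li}$ and $c_{jl}$ of $a$ vanish. The formula therefore reduces to $-T(\cubel{c}{ij}, \cubel{d}{ij})\, e_l' \in J_{ll}'$. If $(i\,j\,l)$ is not cyclic, we first rewrite using $\cubel{u}{ij} = \cubel{\conconj{u}}{ji}$ and symmetry of $\times$.

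For $J_{ij} \times J_{ll}$, use the special case $\cubel{c}{j'l'} \times e_{i'} = -\cubel{c}{j'l'}^\iota$ of \cref{le:times formulas}\cref{le:times formulas:ei}, with $i' = l$ and $\{j',l'\} = \{i,j\}$. This gives an element of $J_{ij}'$.

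For the third assertion, write $\cubel{c}{ij} \times e_j$ using the first formula of \cref{le:times formulas}\cref{le:times formulas:ei}, $a \times e_m = \xi_{\cdot} e_\cdot' + \xi_\cdot e_\cdot' - \cubel{c_m}{\cdots}^\iota$, with $m = j$. The diagonal coefficients of $a = \cubel{c}{ij}$ vanish, and its component in the slot opposite to $j$, namely the $(il)$-slot, is zero. So the product is $0$. The same formula with $a = e_j$ gives $e_j \times e_j = \xi_l e_{\cdot}' + \xi_{\cdot} e_l'$ with all relevant $\xi$ equal to zero except $\xi_j$, which does not occur; hence $e_j \times e_j = 0$.

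There is no genuine obstacle. The only point needing care is the bookkeeping of non-cyclic index orders. This is handled exactly as in the proof of \cref{le:times formulas}, via $\cubel{u}{lj} = \cubel{\conconj{u}}{jl}$ and the symmetry of $\times$. As an alternative, the statement follows directly from the Peirce rules for cubic norm structures with respect to the elementary frame $(e_1,e_2,e_3)$ in \cite[37.8, 37.13]{GPR24}.
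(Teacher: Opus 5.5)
Your proposal is correct and follows essentially the same route as the paper, which also reads all three inclusions off \cref{le:times formulas} in the cyclic case and reduces non-cyclic index orders to it via the symmetry of $\times$ and $J_{ij} = J_{ji}$; the paper additionally cites \cite[(37.13.1)]{GPR24} for the first inclusion. Your case-by-case matching of each inclusion to a specific formula of \cref{le:times formulas} is simply a more detailed write-up of that argument.
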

\begin{proof}
	If $ (i\, j \, l) $ is a cyclic permutation of $ (1 \, 2 \, 3) $, then by \cref{le:times formulas}, all assertions hold by \cref{le:times formulas}. The general case follows because $ \times $ is symmetric and $ J_{ij} = J_{ji} $. Further, \cref{le:times blocks:ijl} is \cite[(37.13.1)]{GPR24}.
\end{proof}

We can also compute explicit formulas for the operators $D_{a,a'} = D(a,a')$ from \cref{def:U-operators}, which together with \cref{le:peirce} characterize these operators. In practice, we will only need a few of these formulas, but we state them all for completeness. We can already see at this point that for many (though not all) possible choices of $i,j,p,q$, the operator $D(\cubel{a}{ij}, \cubel{b}{pq})$ depends only on the product $ab$. This will later lead to the similar statements in \cref{le:d-iji shift,le:dd rel} about $\dd(\cubel{a}{ij}, \cubel{b}{pq})$. However, only \cref{le:d-iji shift} will be proven by using the explicit formulas in \cref{le:D-formulas}, while we derive \cref{le:dd rel} from \cref{le:triple} by specializing to cubic Jordan matrix algebras.

\begin{lemma}\label{le:D-formulas}
	Let $i,j,l \in \{1,2,3\}$ be pairwise distinct, let $a_1, a_2, a_3 \in C$ and let $s_1, s_2, s_3 \in k$. Then the following hold:
	\begin{enumerate}
		\item\label{le:D-formulas:jli} The operator $D(\cubel{a_1}{jl}, \cubel{a_2}{li}^\iota)$ satisfies
		\begin{align*}
			\braces{\cubel{a_1}{jl}, \cubel{a_2}{li}^\iota, \cubel{a_3}{ij}} &= \gamma_i \gamma_j \gamma_l \contr(a_1 a_2 a_3) e_j, \\
			\braces{\cubel{a_1}{jl}, \cubel{a_2}{li}^\iota, \cubel{a_3}{il}} &= \cubel{\gamma_i \gamma_l (a_1 a_2) a_3}{jl}, \\
			\braces{\cubel{a_1}{jl}, \cubel{a_2}{li}^\iota, s_3 e_i} &= \cubel{\gamma_l s_3 a_1 a_2}{ji}.
		\end{align*}
		
		\item \label{le:D-formulas:jji}The operator $ D(s_1 e_j, \cubel{a_2}{ji}^\iota) $ satisfies
		\begin{align*}
			\braces{s_1 e_j, \cubel{a_2}{ji}^\iota, \cubel{a_3}{ij}} &= \gamma_i \gamma_j s_1 \contr(a_2 a_3) e_j, \\
			\braces{s_1 e_j, \cubel{a_2}{ji}^\iota, \cubel{a_3}{il}} &= \cubel{\gamma_i s_1 a_2 a_3}{jl} \\
			\braces{s_1 e_j, \cubel{a_2}{ji}^\iota, s_3 e_i} &= \cubel{s_1 s_3 a_2}{ji}.
		\end{align*}
		
		\item \label{le:D-formulas:jii}The operator $ D(\cubel{a_1}{ji}, s_2 e_i') $ satisfies
		\begin{align*}
			\braces{\cubel{a_1}{ji}, s_2 e_i', \cubel{a_3}{ij}} &= \gamma_i \gamma_j s_2 \contr(a_1 a_3) e_j, \\
			\braces{\cubel{a_1}{ji}, s_2 e_i', \cubel{a_3}{il}} &= \cubel{\gamma_i s_2 a_1 a_3}{jl}, \\
			\braces{\cubel{a_1}{ji}, s_2 e_i', s_3 e_i} &= \cubel{s_2 s_3 a_1}{ji}.
		\end{align*}
		
%
%
		
		\item \label{le:D-formulas:iii}The operator $D(s_1 e_i, s_2 e_i)$ satisfies
		\begin{align*}
			\braces{s_1 e_i, s_2 e_i', s_3 e_i} &= 2s_1 s_2 s_3 e_i, \\
			\braces{s_1 e_i, s_2 e_i', \cubel{a_3}{ij}} &= \cubel{s_1 s_2 a_3}{ij}.
		\end{align*}
		
		\item \label{le:D-formulas:iji}The operator $D(\cubel{a_1}{ij}, \cubel{a_2}{ji})$ satisfies
		\begin{align*}
			\braces{\cubel{a_1}{ij}, \cubel{a_2}{ji}^\iota, s_3 e_i} &= \gamma_i \gamma_j s_3 \contr(a_1 a_2) e_i, \\
			\braces{\cubel{a_1}{ij}, \cubel{a_2}{ji}^\iota, \cubel{a_3}{ij}} &= \cubel{\gamma_i \gamma_j \brackets[\big]{\contr(a_2 a_3) a_1 + \contr(a_1 a_2) a_3 - \contr(a_1 \conconj{a_3}) \conconj{a_2}}}{ij}, \\
			\braces{\cubel{a_1}{ij}, \cubel{a_2}{ji}^\iota, \cubel{a_3}{il}} &= \cubel{\gamma_i \gamma_j a_1(a_2 a_3)}{il}.
		\end{align*}
		
	\end{enumerate}
	Further, the same statements hold if all elements $ b \in J $ and $ c \in J' $ are replaced by $ b^\iota \in J' $ and $ c^\iotainv \in J $, respectively. 
\end{lemma}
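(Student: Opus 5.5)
The plan is to compute each of the listed triple products directly from the definition
\[ \{a, b', c\} = D_{a,b'}(c) = T(a,b')c + T(c,b')a - (a \times c) \times b' \]
from \cref{def:U-operators}. Each of the three summands is then evaluated with the block formulas already established: \cref{le:T formulas} for the traces, \cref{le:times formulas} and \cref{le:times blocks} for the products $\times$ and $\times'$. The Peirce orthogonality in \cref{le:T formulas}\cref{le:T formulas:ortho} will kill most trace terms immediately. For instance, in \ref{le:D-formulas:jli} we have $T(\cubel{a_1}{jl}, \cubel{a_2}{li}^\iota) = 0$ and $T(\cubel{a_3}{il}, \cubel{a_2}{li}^\iota)\cubel{a_1}{jl}$ is the only surviving trace term in the second line. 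In many cases, therefore, only the term $-(a\times c)\times b'$ remains. By \cref{le:times blocks} it lands in the correct Peirce space, consistent with \cref{le:peirce}, and we compute it with two applications of \cref{le:times formulas}\cref{le:times formulas:ijl} or \cref{le:times formulas:ei}.

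I will first treat the case where $(i\,j\,l)$ is a cyclic permutation of $(1\,2\,3)$. There the formulas of \cref{le:times formulas} apply literally, with $\cubel{u}{lj} = \cubel{\conconj{u}}{jl}$ used to rewrite entries in the orientation those formulas require. The non-cyclic case then follows by the same symmetry trick as in the proofs of \cref{le:T formulas,le:times formulas}: swapping $j$ and $l$ replaces each $a_k$ by its conjugate and reverses the order of products via $\conconj{ab} = \conconj{b}\,\conconj{a}$ (\cref{rem:conic identities}\cref{rem:conic identities:invo}). To confirm the claimed forms are invariant under this swap, I will use the trace identities $\contr(\conconj{a}) = \contr(a)$ and $\contr(abc) = \contr(bca)$ (\cref{rem:conic identities}\cref{rem:conic identities:invar,rem:conic identities:tr-assoc}). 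The scalar identities $e_i \times e_j = e_l'$ and $\{e_i, e_i', e_i\} = 2e_i$ handle \ref{le:D-formulas:iii}, together with $T(e_i,e_i') = 1$ and $e_i \times e_i = 0$.

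Along the way, the double products $(\cubel{a_1}{jl} \times \cubel{a_3}{il}) \times \cubel{a_2}{li}^\iota$ produce expressions such as $\gamma_i\gamma_l\,\conconj{\conconj{a_2}\,(\ldots)}$. I will simplify these to the stated products $(a_1a_2)a_3$ or $a_1(a_2a_3)$ using the involution properties. Where a scalar multiple of an idempotent appears, I will identify the coefficient as $\contr$ of a triple product via norm-associativity (\cref{rem:conic identities}\cref{rem:conic identities:norm-assoc}), in the form $\connorm(ab,\conconj{c}) = \contr(abc)$. The final sentence of the lemma, about interchanging $J$ and $J'$, needs no separate argument: $(\iota,\iotainv)$ is an involution of cubic norm pairs, so by \cref{rem:CNP hom jordan} it intertwines the triple products.

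The main obstacle is the second formula in \ref{le:D-formulas:iji}, where $a, b', c$ lie in the same or opposite Peirce spaces and all three summands survive. There $\cubel{a_1}{ij} \times \cubel{a_3}{ij}$ lies in $J_{ll}'$, with coefficient $-\gamma_i\gamma_j\connorm(a_1,a_3)$. Multiplying it with $\cubel{a_2}{ji}^\iota$ gives a term $\gamma_i\gamma_j\connorm(a_1,a_3)\cubel{\conconj{a_2}}{ij}$, while the two trace terms give $\contr(a_1a_2)a_3$ and $\contr(a_2a_3)a_1$ up to orientation. To reach the stated form, I expect to rewrite $\connorm(a_1,a_3) = \contr(a_1\conconj{a_3})$ via \cref{rem:conic identities}\cref{rem:conic identities:norm-assoc}, and to track the sign and the conjugations carefully under the orientation rule $\cubel{u}{ji} = \cubel{\conconj{u}}{ij}$. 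This requires the most care, and I will check it first.
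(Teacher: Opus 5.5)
Your route is the same direct computation the paper performs: expand $\braces{a,b',c} = T(a,b')c + T(c,b')a - (a\times c)\times b'$ and evaluate each summand with \cref{le:T formulas,le:times formulas,le:times blocks}. Most of your individual steps check out. The formula you flag as the main obstacle, the second one in \cref{le:D-formulas:iji}, is routine, and your treatment of it via $\connorm(a_1,a_3) = \contr(a_1\conconj{a_3})$ is correct.

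The gap is in the two formulas whose right-hand sides are nonassociative products: the second formula of \cref{le:D-formulas:jli} and the third formula of \cref{le:D-formulas:iji}. There, your plan to reach $(a_1a_2)a_3$ resp.\ $a_1(a_2a_3)$ ``using the involution properties'' fails. With \cref{le:times formulas}\cref{le:times formulas:ijl} one gets $(\cubel{a_1}{jl}\times\cubel{a_3}{il})\times\cubel{a_2}{li}^\iota = \cubel{\gamma_i\gamma_l(a_1\conconj{a_3})\conconj{a_2}}{jl}$, hence
\[ \braces{\cubel{a_1}{jl}, \cubel{a_2}{li}^\iota, \cubel{a_3}{il}} = \cubel{\gamma_i\gamma_l\brackets[\big]{\contr(a_2a_3)a_1 - (a_1\conconj{a_3})\conconj{a_2}}}{jl}, \]
which is exactly the intermediate formula in the paper's proof. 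Similarly, the third formula of \cref{le:D-formulas:iji} comes out as $\cubel{\gamma_i\gamma_j\brackets[\big]{\contr(a_1a_2)a_3 - \conconj{a_2}(\conconj{a_1}a_3)}}{il}$. The rules $\conconj{ab} = \conconj{b}\,\conconj{a}$ and $\conconj{\conconj{a}} = a$ only move between forms like $(a_1\conconj{a_3})\conconj{a_2}$ and $\conconj{a_2(a_3\conconj{a_1})}$; they never change which factors are multiplied first. Even adding $x + \conconj{x} = \contr(x)1_C$, you would have to reassociate $(a_1\conconj{a_3})\conconj{a_2}$ as $a_1(\conconj{a_3}\,\conconj{a_2})$, which is not allowed in a nonassociative $C$ such as an octonion algebra.

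The missing ingredient is a genuine identity of alternative conic algebras, namely Kirmse's identities from \cref{rem:conic identities}\cref{rem:conic identities:kirmse}.
\begin{itemize}
\item For the first formula, take the right-hand linearization $(b\conconj{a})c + (b\conconj{c})a = \connorm(a,c)b$ with $b = a_1$, $a = \conconj{a_2}$, $c = a_3$. Since $\connorm(\conconj{a_2},a_3) = \contr(a_2a_3)$, this yields $(a_1a_2)a_3 = \contr(a_2a_3)a_1 - (a_1\conconj{a_3})\conconj{a_2}$.
\item For the second, linearize $x(\conconj{x}z) = \connorm(x)z$ to $x(\conconj{y}z) + y(\conconj{x}z) = \connorm(x,y)z$ and take $x = a_1$, $y = \conconj{a_2}$, $z = a_3$. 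This yields $a_1(a_2a_3) = \contr(a_1a_2)a_3 - \conconj{a_2}(\conconj{a_1}a_3)$.
\end{itemize}
Equivalently, you can derive both from the linearized alternative laws together with $xy + yx = \contr(x)y + \contr(y)x - \connorm(x,y)1_C$. With this step added, your plan is complete.

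A minor point: the reduction to cyclic $(i\,j\,l)$ is unnecessary. \cref{le:T formulas}\cref{le:T formulas:dij} and \cref{le:times formulas}\cref{le:times formulas:ijl} already hold for arbitrary permutations, so the computation runs uniformly. Moreover, swapping $j$ and $l$ as you describe changes the Peirce spaces occurring in a given formula, so it would not map that formula to itself.
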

\begin{proof}
	This is a straightforward computation, see \cref{subsec:computer} for details. Note that, since $\braces{a,b',c} = \braces{c,b',a}$ for $a,c \in J$, $b' \in J'$ and $\cubel{a}{ij} = \cubel{\conconj{a}}{ji}$ for $a \in C$ and $i \ne j$, some of the formulas are redundant. For the second formula in \cref{le:D-formulas:jli}, we first prove by a straightforward computation that
	\[ \braces{\cubel{a_1}{jl}, \cubel{a_2}{li}^\iota, \cubel{a_3}{il}} = \gamma_i \gamma_l \brackets[\big]{\contr(a_2 a_3) a_1 - (a_1 \conconj{a_3}) \conconj{a_2}}, \]
	and then the desired formula follows by an application of \cref{rem:conic identities}\cref{rem:conic identities:kirmse}.
\end{proof}

\subsection{Computations in cubic Jordan matrix algebras}\label{subsec:computer}

The computations in $J$ that are required in the proof of \cref{le:D-formulas}, and similar computations in $L$ and $\Aut(L)$ that we will need later, boil down to the following two steps: First, we unravel the definition of all involved operators (such as $D$, $U$, $N$, $\sharp$, $\times$, $T$ and the Lie bracket) to see that the desired assertion is equivalent to a set of equations in $C$ and $k$. Second, we show that these equations follow from known identities on multiplicative conic alternative algebras, such as the ones in \cref{rem:conic identities}. While this procedure is straightforward in principle, the required computations can be rather long. In particular, the verification that certain elements in $\Aut(L)$ are Weyl elements (\cref{pr:F4 weyl}) seems too extensive to be manageable by hand. For this reason, we have performed these computations with the computer algebra system GAP \cite{GAP4}. Our code is available at \cite{Code}. In the following, we provide a few details on how these computations are performed.

The key problem consists in working with an \emph{arbitrary} multiplicative conic alternative algebra. In other words, we have to work in a \emph{free} multiplicative conic algebra, whose generators we may think of as indeterminates. Say that we want to prove identities involving at most $ p $ elements in the conic algebra and at most $ q $ elements in the commutative ring. We introduce corresponding indeterminates (that is, abstract symbols) $ a_1, \ldots, a_p $ and $ t_1, \ldots, t_q $. For each $ i \in \{1, \ldots, p\} $, we also introduce abstract symbols $ \conconj{a_i} $ and $ \connorm(a_i) = \connorm(\conconj{a_i}) $, which do not (yet) have any relation to the symbol $ a_i $. Similarly, we define an abstract symbol $ \contr(a) $ for each $ a $ in the free magma $ A^* $ generated by the symbols in $A \coloneq \{ a_1, \ldots, a_p, \conconj{a_1}, \ldots, \conconj{a_p} \}$.

Using all the symbols above, we define $\tilde{k}$ to be the polynomial ring over $ \Z $ in indeterminates $ t_1, \ldots, t_q, \connorm(a_1), \ldots, \connorm(a_p) $ and $ \contr(a) $ for $ a \in A $. Similarly, we define $ \tilde{C} $ to be the free nonassociative algebra over $ \tilde{k} $ generated by the symbols $ a_1, \ldots, a_p, \conconj{a_1}, \ldots, \conconj{a_p} $. We can now give meaning to the symbol names introduced earlier: Define $ \contr \colon \tilde{C} \to \tilde{k} $ to be the unique $ \tilde{k} $-linear map which sends $ a \in A^* \subseteq \tilde{C} $ to $ \contr(a) \in \tilde{k} $ and define $ \conconj{\:\cdot\:} \colon \tilde{C} \to \tilde{C} $ to be the unique $ k $-linear map which sends $ a_i $ to $ \conconj{a_i} $ and $ \conconj{a_i} $ to $ a_i $ for all $ i \in \{1, \ldots, p\} $ and such that $ \conconj{ab} = \conconj{b} \conconj{a} $ for all $ a,b \in A^* \subseteq \tilde{C} $. We also define $ \connorm \colon A^* \to \tilde{k} $ to be the unique homomorphism of magmas that maps $a \in A$ to the abstract symbol $\connorm(a)$. Choosing an arbitrary total order $<$ on $A^*$ (for example, the lexicographic order), we can extend $ \connorm $ to a map $ \connorm \colon \tilde{C} \to \tilde{k} $ (which depends on the choice of $<$) by putting
\begin{equation}\label{eq:free-norm-def}
	\connorm\Bigl( \sum_{a \in A^*} \lambda_a a \Bigr) \coloneq \sum_{a \in A^*} \lambda_a^2 \connorm(a) + \sum_{a<b \in A^*} \lambda_a \lambda_b \contr(a \conconj{b})
\end{equation}
for any family $ (\lambda_a)_{a \in A^*} $ of scalars in $ \tilde{k} $ of which only finitely many are non-zero. In other words, $ \connorm $ is the unique $ \tilde{k} $-quadratic map $ \tilde{C} \to \tilde{k} $ mapping $ a_i $ and $ \conconj{a_i} $ to $ \connorm(a_i)=\connorm(\conconj{a_i}) $ for all $ i \in \{1, \ldots, p\} $ and such that $\connorm(a+b) = \connorm(a) + \connorm(b) + \contr(a \conconj{b})$ for all $a<b \in A^*$. The objects and maps $ \tilde{k} $, $ \tilde{C} $, $ \contr $, $ \connorm $, $ \conconj{\:\cdot\:} $ are straightforward to implement in GAP. Hence we can also implement the maps $ N $, $ T $, $ \times $ and $ \sharp $ defined on $ \tilde{J} \coloneq \tilde{C}^3 \oplus \tilde{k}^3 $.

By construction, $\tilde{k}$ and $\tilde{C}$ satisfy the following universal property: For arbitrary $ b_1, \ldots, b_p \in C $ and $ s_1, \ldots, s_q \in k $, there exist unique homomorphisms $ \pi_k \colon \tilde{k} \to k $ and $ \pi_C \colon \tilde{C} \to C $ such that $ \pi_k(t_i) = s_i $ for all $ i \in \{1, \ldots, q\} $, $ \pi_C(a_j) = b_j $ for all $ j \in \{1, \ldots, p\} $ and 
\begin{align*}
	\pi_k(\contr(c)) = \contr_C\brackets[\big]{\pi_C(c)}, \quad \pi_k(\connorm(c)) = \connorm_C\brackets[\big]{\pi_C(c)}, \quad \pi_C(\conconj{c}) = \conconj{\pi_C(c)}
\end{align*}
for all $ c \in \tilde{C} $ where $\connorm_C$, $\contr_C$ denote the norm and trace on $C$. In particular, while $\connorm \colon \tilde{C} \to \tilde{k}$ depends on the choice of the total order $<$ on $A^*$, the map $ \pi_k \circ \connorm \colon \tilde{C} \to k $ is independent of this choice because it equals $\connorm_C \circ \pi_C$.
We say that a \emph{simplification map} is a map $ \alpha \colon \tilde{C} \to \tilde{C} $ (or $ \alpha \colon \tilde{k} \to \tilde{k} $) such that for all possible choices of $ k $, $ C $, $ b_1, \ldots, b_m \in C $ and $ s_1, \ldots, s_n \in k $, we have $ \alpha(c) -c \in \ker(\pi_C) $ for all $ c \in \tilde{C} $ (or $ \alpha(s) - s \in \ker(\pi_k) $ for all $ s \in \tilde{k} $). For example, for any total order $ \prec $ on $ A = \{a_1, \ldots, a_p, \conconj{a_1}, \ldots, \conconj{a_p}\} $, the map which replaces all occurrences of $ \contr(b c) $ for $ c \prec b \in A $ by $ \contr(cb) $ is a simplification map by \cref{rem:conic identities}\cref{rem:conic identities:tr-assoc}. In order to prove identities $ \ell = r $ such as those in \cref{le:D-formulas}, we compute both sides $ \ell $, $ r $ \enquote{in $ \tilde{J} $} (using one of the indeterminates $ a_1, \ldots, a_p $ for each variable in $ C $ that occurs, and similarly using $ t_1, \ldots, t_q $ for variables in $ k $) and apply a certain (fixed) sequence of simplification maps to the difference $ \ell - r $. If the result is $ 0 $, then we have proven that the identity holds for arbitrary $ C $ and $ k $. If not, we obtain no information. The main challenge now lies in finding and implementing simplification maps that are computationally feasible but still \enquote{sufficiently powerful} to cover all identities that we want to prove. The correctness of these simplification maps relies on the identities in \cref{rem:conic identities}.

From this point, the implementation of a suitable object $ \tilde{L} $ is straightforward except for the zero component $ \tilde{L}_{0,0} $. Again, we use (linear combinations of) abstract symbols $ \zeta $, $ \xi $ and $ \dd_{a,a'} $ for $ a,a' \in \tilde{J} $ to represent elements of $ \tilde{L}_{0,0} $ and suitable \enquote{simplification maps} to (try to) prove equality of terms. These simplification maps rely on the identities in \cref{le:dd rel,le:d-iji shift,le:dd sum}. Finally, we may define exponential automorphisms by the formulas in \cref{sec:G2 exp} and prove equality of products of such automorphisms by evaluating them on $ \tilde{L}_{-2} \cup \tilde{L}_1 $, whose image in $ L $ is a generating set of the Lie algebra.

In a few cases, the simplification maps do not reduce an element of $ \tilde{C} $ to $ 0 $ but instead to a rather short expression. In such cases, we simply prove by hand that the \enquote{simplified} expression lies in $ \ker(\pi_C) $ for all possible choices of $ C $ (instead of implementing a potentially complicated simplification map which would prove this automatically).

Note that the strategy described in this section applies only to cubic Jordan matrix algebras and not to arbitrary cubic norm pairs. While a similar strategy could certainly work for cubic norm pairs, it would be more complicated to implement due to the higher number of maps that are part of the structure of a cubic norm pair. Moreover, the fact that the norm $ N $ is cubic and not merely quadratic would complicate this endeavor even further. At the same time, there is less need for computer assistance because the number of root spaces in $ G_2 $ is more manageable.

We have also considered an approach using \enquote{normal forms} of elements of $ \tilde{k} $, $ \tilde{C} $ and $ \tilde{L}_{0,0} $ instead of using simplification maps. To some extent, the simplification maps that we use transform elements \enquote{of short (product) length} to something close to a normal form, but the task of finding a normal form for arbitrary elements seems much more challenging.

\section{Refining the \texorpdfstring{$ G_2 $}{G2}-grading of the Lie algebra}\label{sec:lie F4}

We keep up the notation from \labelcref{not:conic,not:CJMA def,not:CJMA pair} and denote by $ L \coloneq L(J,J') $ the Lie algebra from \cref{sec:lie const} with respect to the cubic norm pair $(J,J')$ coming from a cubic Jordan matrix algebra.
Our goal is to further refine the $G_2$-grading of $ L $ from \cref{pr:G2-graded} to an $F_4$-grading.

\subsection{Construction of the refinement}

In order to construct this refinement of the grading, we will first construct three new gradings.
They arise from the \enquote{hyperbolic pairs} $(e_i, e'_i)$, where the $e_i$ and $e'_i$ are as in \cref{sec:CJMA}.
When $k$ is a field, we can indeed directly follow the approach from \cite{DMM25} and construct the gradings from these hyperbolic pairs,
but in our setting, we will reverse the process by first describing the different pieces of the grading (\cref{def:3 new gradings}), and then verifying that this does indeed define a $5$-grading (\cref{pr:igrading}).

%
%

The hardest part to control is the ``middle'' subspace $Z$ (see \cref{def:dd and L0}). We rely on the multiple Peirce decomposition to introduce certain subspaces $Z_{i \to j}$ of~$Z$ and we examine how these subspaces interact with various components in \cref{le:Zitoj bracket Jpq,le:Zitoj bracket L1,le:Zitoj bracket}. In \cref{pr:Zitoj-param}, we will shows that $ Z_{i \to j} $ is isomorphic to the conic algebra $ C $.

\begin{definition}\label{def:Zijlm}
	\begin{enumerate}
		\item 
			For each subspace $A \leq J$ and each subspace $A' \leq J'$, we write
			\[ Z(A,A') := \left\langle \dd_{a,a'} \mid a \in A, a' \in A' \right\rangle \leq Z. \]
		\item
			Using the Peirce subspaces $J_{ij}$ from \cref{le:peirce}, we write
			\[ Z_{ij,lm} := Z(J_{ij}, J_{lm}') , \]
			for all $i,j,l,m \in \{ 1,2,3 \}$. (We allow $i=j$ or $l=m$.)
		\item
			Finally, for all $i,j \in \{ 1,2,3 \}$, not necessarily distinct, we write
			\[ Z_{i \to j} := Z_{i1,1j} + Z_{i2, 2j} + Z_{i3, 3j} . \]
	\end{enumerate}
\end{definition}

\begin{remark}\label{rem:Zijlm zero}
	Note that by \cref{le:peirce} and \cref{pr:Jd}, $Z_{ij,lm} = 0$ if (and only if) $\{ i,j \} \cap \{ l,m \} = \emptyset$.
\end{remark}


The following notation will be useful to avoid some case distinctions.
\begin{definition}\label{def:Z logical}
	Let $ I $ be a subspace of $J$, $J'$ or $Z$ and let $P$ be a logical statement. We define $ I^P \coloneq I $ if $ P $ is true and $ I^P \coloneq 0 $ if $ P $ is false.
\end{definition}

\begin{lemma}\label{le:less technical}
	Let $i,j,l,p,q \in \{ 1,2,3 \}$ be arbitrary.
	Let $a \in J_{il}$ and $a' \in J_{lj}'$.
	Then:
	\begin{enumerate}
		\item\label{le:less technical:il lj}
			If $i \neq j$, then
			\[ D_{a,a'}(J_{pq}) \subseteq J_{iq}^{p=j} + J_{ip}^{q=j}. \]
		\item\label{le:less technical:il li}
			If $i = j$, then
			\[ D_{a,a'}(J_{pq}) \subseteq J_{pq}^{\{p,q\} \cap \{i,l\} \neq \emptyset} . \]
	\end{enumerate}
	The same assertions hold if we interchange the roles of $ J $ and $ J' $.
\end{lemma}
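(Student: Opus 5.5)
This follows directly from the multiple Peirce decomposition in \cref{le:peirce}, applied to the triple $\{a, a', b\}$ with $b \in J_{pq}$. Recall $D_{a,a'}(b) = \{a,a',b\}$ with $a \in J_{il}$ and $a' \in J'_{lj}$. By \cref{le:peirce}, this triple product vanishes unless the Peirce triple $(il, lj, pq)$ is connected. In that case it lands in $J_{im}$, where $m$ is determined by chaining: the second index of $a$ matches the first index of $a'$, namely $l$; the second index of $a'$, namely $j$, must match one of the indices of $b$; and the output carries the remaining index of $b$. Since $J_{pq} = J_{qp}$, I would write $b$ either as an element of $J_{jq}$ (when $p = j$) or of $J_{jp}$ (when $q = j$). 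This gives $\{J_{il}, J'_{lj}, J_{jq}\} \subseteq J_{iq}$, and symmetrically $\subseteq J_{ip}$.

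The subtlety is that a Peirce triple can be connected in other ways, because the index pairs are unordered and $\{a,a',b\} = \{b,a',a\}$. For instance, reading $a$ as an element of $J_{li}$ and $a'$ as an element of $J'_{jl}$ creates new chaining possibilities. So the plan is to enumerate all reorderings.

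For (i), with $i \neq j$, I would list the possible readings of $a$ as $J_{il}$ or $J_{li}$, of $a'$ as $J'_{lj}$ or $J'_{jl}$, and of $b$ as $J_{pq}$ or $J_{qp}$. I then check which readings make the triple connected in the sense of \cite[32.14]{GPR24}, namely consecutive indices matching across the chain $a, a', b$ or the reversed chain $b, a', a$.
\begin{enumerate}
	\item If $l \neq i, j$, the only shared index between $J_{il}$ and $J'_{lj}$ is $l$. This forces the displayed chain, and the output is $J_{iq}$ if $p=j$ and $J_{ip}$ if $q=j$.
	\item If $l = i$ or $l = j$, the diagonal spaces $J_{ii}$ or $J'_{jj}$ appear. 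Then I would verify that the extra readings give nothing new. Alternatively, when the abstract bookkeeping is ambiguous, I would invoke the explicit formulas in \cref{le:D-formulas} (parts \cref{le:D-formulas:jji,le:D-formulas:jii,le:D-formulas:iii}).
\end{enumerate}

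For (ii), with $i = j$, we have $a \in J_{il}$ and $a' \in J'_{li}$. Then $D_{a,a'}$ preserves Peirce spaces: connectedness forces the output index set to equal $\{p,q\}$. It vanishes unless $\{p,q\}$ meets $\{i,l\}$, since otherwise no chaining is possible. This matches the shape of the formulas in \cref{le:D-formulas}\cref{le:D-formulas:iji,le:D-formulas:iii}.

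The step I expect to be the main obstacle is the case bookkeeping for (i) in the degenerate configurations, such as $l \in \{i, j\}$ or $p = q$. There the unordered nature of $J_{pq}$ and the symmetry $\{a,a',b\} = \{b,a',a\}$ could a priori allow a second connection pattern. In each such case I would check against the explicit formulas of \cref{le:D-formulas} that no component outside $J_{iq}^{p=j} + J_{ip}^{q=j}$ arises.

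Finally, the statement with $J$ and $J'$ interchanged follows by the same argument, since \cref{le:peirce} holds symmetrically.
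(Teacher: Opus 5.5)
Your proposal is correct and is the same argument as the paper's, which simply notes that the lemma is an immediate corollary of the multiple Peirce decomposition (\cref{le:peirce}); your case analysis only makes the bookkeeping explicit. The fallback to \cref{le:D-formulas} is unnecessary. The reversal $\{a,a',b\}=\{b,a',a\}$ is already covered by reordering indices within each pair in \cref{le:peirce}, and in the degenerate cases ($l\in\{i,j\}$ or $p=q$) every admissible chaining lands in the same target space, namely $J_{iq}^{p=j}+J_{ip}^{q=j}$ in (i) and $J_{pq}$ in (ii).
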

\begin{proof}
	This is an immediate corollary of \cref{le:peirce}.
\end{proof}

\begin{lemma}\label{le:Zitoj bracket Jpq}
	Let $ i,j,p,q \in \{1,2,3\} $ with $ i \ne j $. Then the following hold:
	\begin{enumerate}
		\item \label{le:Zitoj bracket Jpq:ij}$ [Z_{i \to j}, J_{pq}] \subseteq J_{iq}^{p=j} + J_{ip}^{q=j} $ and $ [Z_{i \to j}, J_{pq}'] \subseteq (J_{jq}')^{p=i} + (J_{jp}')^{q=i} $.
		
		\item \label{le:Zitoj bracket Jpq:ii}$ [Z_{i \to i}, J_{pq}] \subseteq J_{pq} $ and $ [Z_{i \to i}, J_{pq}'] \subseteq J_{pq}' $.
	\end{enumerate}
\end{lemma}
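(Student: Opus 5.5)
The plan is to reduce to generators and then read off the Peirce behaviour from \cref{le:less technical}. By \cref{def:Zijlm}, $Z_{i\to j}$ is spanned by the elements $\dd_{a,a'}$ with $a \in J_{il}$ and $a' \in J'_{lj}$ for some $l \in \{1,2,3\}$. By \cref{pr:Jd}, the bracket of such a generator with $J$ and $J'$ is
\[ [\dd_{a,a'}, c] = D_{a,a'}(c) \quad \text{and} \quad [\dd_{a,a'}, c'] = -D_{a',a}(c') \]
for $c \in J$ and $c' \in J'$. Since the target subspaces in the statement are $k$-submodules, it suffices to prove each inclusion for a single generator $\dd_{a,a'}$ and a single $c \in J_{pq}$ (respectively $c' \in J'_{pq}$).

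For the first inclusion in \ref{le:Zitoj bracket Jpq:ij}, where $i \ne j$, \cref{le:less technical}\cref{le:less technical:il lj} gives directly
\[ D_{a,a'}(J_{pq}) \subseteq J_{iq}^{p=j} + J_{ip}^{q=j}. \]
For the second inclusion, write $D_{a',a}$ with $a' \in J'_{lj} = J'_{jl}$ and $a \in J_{il} = J_{li}$. This is an operator of the same shape with the roles of $J$ and $J'$ interchanged and the outer indices now $(j,i)$. The $J'$-version of \cref{le:less technical}\cref{le:less technical:il lj} then yields
\[ D_{a',a}(J'_{pq}) \subseteq (J'_{jq})^{p=i} + (J'_{jp})^{q=i}, \]
which is exactly the claim. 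The only point requiring care is this index bookkeeping: the symmetry $J_{ij} = J_{ji}$ must be used so that the lemma applies with first index $j$ and last index $i$.

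Part \ref{le:Zitoj bracket Jpq:ii} follows in the same way from \cref{le:less technical}\cref{le:less technical:il li}. That result gives $D_{a,a'}(J_{pq}) \subseteq J_{pq}$, and its $J'$-analogue applied to $D_{a',a}$ (with $a' \in J'_{li}$ and $a \in J_{il}$) gives $D_{a',a}(J'_{pq}) \subseteq J'_{pq}$. Since \cref{le:less technical} already contains all the Peirce content, I expect no real obstacle beyond getting the index swap right in the second half of \ref{le:Zitoj bracket Jpq:ij}.
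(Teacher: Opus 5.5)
Your proof is correct and follows the paper's argument. You reduce to generators $\dd_{a,a'}$ with $a \in J_{il}$, $a' \in J'_{lj}$, use \cref{pr:Jd} to rewrite the brackets as $D_{a,a'}$ and $-D_{a',a}$, and then read off the Peirce behaviour from \cref{le:less technical}. Your explicit index swap for the $J'$-case, using $J'_{lj} = J'_{jl}$ and $J_{il} = J_{li}$, is exactly the step the paper leaves implicit.
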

\begin{proof}
	Let $ i,j,p,q,l \in \{1,2,3\} $. (We do not assume that $ i \ne j $.) Let $ a \in J_{il} $, $ a' \in J_{lj} $, $ c_{pq} \in J_{pq} $ and $ c_{pq}' \in J_{pq}' $. By \cref{pr:Jd},
	\[ [\dd_{a,a'}, c_{pq}] = D_{a,a'}(c_{pq}) \quad \text{and} \quad [\dd_{a,a'}, c_{pq}'] = -D_{a',a}(c_{pq}'). \]
	The assertion now follows from \cref{le:less technical}.
\end{proof}

\begin{lemma}\label{le:Zitoj bracket L1}
	Let $ i,j,p,q \in \{1,2,3\} $ with $ i \ne j $. Then the actions of $ Z_{i \to j} $ and $ Z_{i \to i} $ on $ L_{\pm 1} $ respect the following formulas:
	\begin{enumerate}
		\item \label{le:Zitoj bracket L1:ij}$ [Z_{i \to j}, (k, J_{pq}, J_{pq}', k)_\pm] \subseteq (0, J_{iq}^{p=j} + J_{ip}^{q=j}, (J_{jq}')^{p=i} + (J_{jp}')^{q=i}, 0)_\pm $.
		
		\item \label{le:Zitoj bracket L1:ii}$ [Z_{i \to i}, (0, J_{pq}, J_{pq}', 0)_\pm] \subseteq (0, J_{pq}, J_{pq}', 0)_\pm $.
	\end{enumerate}
\end{lemma}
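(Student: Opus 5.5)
By linearity it suffices to treat a single generator $\dd_{a,a'}$ of $Z_{i \to j}$, with $a \in J_{il}$ and $a' \in J_{lj}'$ for some $l \in \{1,2,3\}$, applied to a single element $(\nu, c, c', \rho)_\pm$ with $c \in J_{pq}$ and $c' \in J_{pq}'$. We then read off each component from the explicit formula in \cref{pr:delta z}:
\[ \bigl[ \dd_{a, a'}, (\nu, c, c', \rho)_\pm \bigr] = \bigl( - \nu T(a, a'),\ D_{a,a'}(c) - T(a, a')c,\ - D_{a',a}(c') + T(a, a')c',\ \rho T(a, a') \bigr)_\pm . \]
The proof reduces to two facts: controlling the scalar $T(a,a')$, and controlling the images of $D_{a,a'}$ and $D_{a',a}$ via \cref{le:less technical}.

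For \cref{le:Zitoj bracket L1:ij}, assume $i \neq j$. Then the Peirce components $J_{il}$ and $J_{lj}$ index different unordered pairs, since $\{i,l\} = \{l,j\}$ would force $i=j$. The orthogonality in \cref{le:T formulas}\cref{le:T formulas:ortho} therefore gives $T(a,a') = 0$. This kills the two $k$-components, and also the correction terms $-T(a,a')c$ and $T(a,a')c'$. What remains are $D_{a,a'}(c)$ and $-D_{a',a}(c')$. By \cref{le:less technical}\cref{le:less technical:il lj}, the first lies in $J_{iq}^{p=j} + J_{ip}^{q=j}$. For the second, we apply the same lemma with the roles of $J$ and $J'$ interchanged. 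Here $a' \in J'_{lj} = J'_{jl}$ and $a \in J_{li}$, so the operator $D_{a',a}$ has the shape ``$(jl),(li)$'' with endpoints $j \neq i$. The lemma then yields $D_{a',a}(c') \in (J_{jq}')^{p=i} + (J_{jp}')^{q=i}$, which is exactly the claimed bound.

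For \cref{le:Zitoj bracket L1:ii}, take $j = i$ and $\nu = \rho = 0$, as in the statement. The $k$-components then vanish automatically. The terms $T(a,a')c$ and $T(a,a')c'$ lie in $J_{pq}$ and $J_{pq}'$ respectively. By \cref{le:less technical}\cref{le:less technical:il li} and its $J'$-analogue, $D_{a,a'}(c) \in J_{pq}$ and $D_{a',a}(c') \in J'_{pq}$. This proves the containment.

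The only point needing care is the index bookkeeping in the interchanged version of \cref{le:less technical}: one has to recognise $D_{a',a}$ as an operator ``from $j$ to $i$'' and use the symmetry $J_{rs} = J_{sr}$ of Peirce spaces. Beyond that, the argument is just \cref{le:Zitoj bracket Jpq} combined with the vanishing of $T$ across distinct Peirce components.
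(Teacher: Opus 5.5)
Your proposal is correct and matches the paper's proof: you combine the explicit formula of \cref{pr:delta z} with the vanishing of $T(a,a')$ across distinct Peirce components (for $i \neq j$) and the Peirce bounds on $D_{a,a'}$ and $D_{a',a}$. The only cosmetic difference is that you apply \cref{le:less technical} directly to generators, whereas the paper cites its immediate consequence \cref{le:Zitoj bracket Jpq}.
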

\begin{proof}
	This follows from \cref{pr:delta z,le:Zitoj bracket Jpq}. For the first assertion, we also use that for all $ a \in J_{il} $, $ a' \in J_{lj} $ with $ l \in \{1,2,3\} $ (i.e., for the set of $ (a,a') $ for which the corresponding $ \dd_{a,a'} $ generate $ Z_{i \to j} $), we have $ T(a,a') = 0 $ by \cref{le:T formulas}\cref{le:T formulas:ortho}.
\end{proof}

\begin{lemma}\label{le:Zitoj bracket}
	Let $i,j,l \in \{ 1,2,3 \}$ be arbitrary.
	\begin{enumerate}
		\item \label{le:Zitoj bracket:subalg}$ Z_{i \to j} $ is a Lie subalgebra of $ Z $, which is abelian if $ i \ne j $.
		
		\item $ [Z_{i \to j}, Z_{j \to i}] \subseteq Z_{i \to i} + Z_{j \to j} $.
		
		\item The Lie algebra $ Z_{i \to i} $ normalizes $ Z_{j \to l} $, i.e., $ [Z_{i \to i}, Z_{j \to l}] \subseteq Z_{j \to l} $.
		
		\item $ [Z_{i \to j}, Z_{j \to l}] \subseteq Z_{i \to l} $ when $ i,j,l $ are pairwise distinct.
		
		\item $ [Z_{i \to j}, Z_{i \to l}] = 0 $ and $ [Z_{i \to j}, Z_{l \to j}] = 0 $ when $ i,j,l $ are pairwise distinct.
	\end{enumerate}
\end{lemma}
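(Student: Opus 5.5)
The main tool will be the bracket formula \cref{eq:d bracket} from \cref{cor:Z is sublie}:
\[ [\dd_{a,a'}, \dd_{b,b'}] = \dd_{D_{a,a'}(b), b'} - \dd_{b, D_{a',a}(b')}. \]
Combined with the Peirce behaviour of the operators $D_{a,a'}$ from \cref{le:less technical}, this reduces every assertion to bookkeeping on Peirce indices.

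Take generators $\dd_{a,a'}$ of $Z_{i\to j}$ with $a\in J_{im}$, $a'\in J'_{mj}$. Take generators $\dd_{b,b'}$ of the second space with $b\in J_{pr}$, $b'\in J'_{rq}$. By \cref{le:less technical}, $D_{a,a'}(b)$ lies in $J_{ir}^{p=j}+J_{ip}^{r=j}$ when $i\ne j$, and in $J_{pr}$ when $i=j$. Similarly, $D_{a',a}(b')$ lies in $(J'_{jq})^{r=i}+(J'_{jr})^{q=i}$ when $i\neq j$, and in $J'_{rq}$ when $i=j$. The $J'$-version of \cref{le:less technical} is needed here, applied with $a'\in J'_{mj}$ and $a\in J_{im}$, so the roles of $i$ and $j$ swap.

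Each resulting pair $(\text{element of }J_{st},\text{element of }J'_{uv})$ then gives an element of $Z_{st,uv}$. By \cref{rem:Zijlm zero}, this is zero unless $\{s,t\}\cap\{u,v\}\ne\emptyset$. It remains to check which $Z_{x\to y}$ contains it. Here I use that $Z_{st,tv}\subseteq Z_{s\to v}$ by definition, and also $Z_{st,uv}=Z_{ts,uv}$ etc., since $J_{st}=J_{ts}$. One subtlety arises: when the indices produce something like $Z_{ir,jq}$ with $r=j$, I get $Z_{ij,jq}\subseteq Z_{i\to q}$, which is what is wanted. But terms such as $Z_{ir,jq}$ with $\{i,r\}\cap\{j,q\}$ meeting only through $i=q$ must be rewritten as $Z_{ri,iq}$, landing in $Z_{r\to q}$. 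This has to be shown to vanish or to be consistent with the claim.

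With this, (iii) is immediate from the $i=j$ case of \cref{le:less technical}: the Peirce types are preserved, so $Z_{jl}$-type generators stay in $Z_{j\to l}$. This also gives closure of $Z_{i\to i}$ in (i). Part (ii) falls out by listing the possible output index pairs, which all have the form $Z_{\ast,\ast}$ with endpoints $\{i,i\}$ or $\{j,j\}$. Part (iv) follows from the same listing.

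For (v) and the abelianness in (i), the index bookkeeping should show that both terms vanish. Take for instance $[Z_{i\to j},Z_{i\to l}]$ with $i,j,l$ distinct. Then $D_{a,a'}(b)$ with $b\in J_{ir}$ requires $j\in\{i,r\}$, so $r=j$, producing $J_{ii}$ paired with $b'\in J'_{jl}$. This vanishes by \cref{rem:Zijlm zero}, and the second term is treated similarly.

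The main obstacle is the degenerate cases. These are where the generators use diagonal Peirce spaces ($m\in\{i,j\}$), and where \cref{le:less technical} only gives a containment rather than vanishing. In such cases the output can a priori lie in a ``wrong'' $Z_{st,uv}$. For these I would fall back on the explicit formulas of \cref{le:D-formulas}, or on the orthogonality $T(J_{pq},J'_{rs})=0$ from \cref{le:T formulas}\cref{le:T formulas:ortho}, to see that the offending terms are zero. If that fails, I would instead show directly via \cref{pr:delta z} that the offending $\dd$ acts trivially on $L_{\pm1}$ and is therefore zero in $M$.
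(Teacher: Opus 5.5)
Your proposal is the paper's proof: the bracket formula \cref{eq:d bracket} combined with \cref{le:less technical} and \cref{rem:Zijlm zero}, carried out as Peirce-index bookkeeping on generators. The degenerate cases you flag need no fallback to \cref{le:D-formulas} or \cref{pr:delta z}, since \cref{le:less technical} already covers every middle index (including $m\in\{i,j\}$), and in each claim the only stray outputs are of type $Z_{st,uv}$ with $\{s,t\}\cap\{u,v\}=\emptyset$ (namely $Z_{ii,jj}$, $Z_{ii,jl}$, $Z_{il,jj}$), which vanish.
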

\begin{proof}
	Recall from \cref{cor:Z is sublie} that
	\begin{equation}\label{eq:d bracket rep}
		[\dd_{a,a'}, \dd_{b,b'}] = \dd_{D_{a,a'}(b), b'} - \dd_{b, D_{a',a}(b')} .
	\end{equation}
	Throughout this proof, we will assume that $i,j,l,p,q \in \{ 1,2,3 \}$ are arbitrary.
	We first observe that by \cref{le:less technical}\cref{le:less technical:il li} and \cref{eq:d bracket rep},
	$Z_{i \to i}$ normalizes each $Z_{jl,pq}$, so in particular,
	\begin{equation}\label{eq:Zii}
		[Z_{i \to i}, Z_{p \to q}] \subseteq Z_{p \to q} .
	\end{equation}
	\begin{enumerate}
		\item
			Assume first that $i = j$.
			Then by \cref{eq:Zii} with $p=q=i$, $Z_{i \to i}$ is indeed a Lie subalgebra of $Z$.
		
			Assume next that $i \neq j$.
			We have to show that, for all $a \in J_{ip}$, $a' \in J_{pj}'$, $b \in J_{iq}$, $b' \in J_{qj}'$, we have $[\dd_{a,a'}, \dd_{b,b'}] = 0$.
			By \cref{le:less technical}\cref{le:less technical:il lj}, we have
			\[ D_{a,a'}(b) \in J_{ii}^{q=j} \quad \text{and} \quad D_{a',a}(b') \in {J_{jj}'}^{q=i} . \]
			By \cref{le:peirce}, however, we have $Z_{ii,jj} = 0$, which implies that $\dd_{D_{a,a'}(b), b'}$ and $\dd_{b, D_{a',a}(b')}$ are trivial also when $q=j$ or $q=i$.
		\item
			We may assume that $i \neq j$.
			Let $a \in J_{ip}$, $a' \in J_{pj}'$, $b \in J_{jq}$, $b' \in J_{qi}'$.
			Then by \cref{le:less technical}\cref{le:less technical:il lj} and \cref{eq:d bracket rep}, we have
			\[ [\dd_{a,a'}, \dd_{b,b'}] \in Z_{iq,qi} + Z_{jq,qj} \subseteq Z_{i \to i} + Z_{j \to j} . \]
		\item
			This is \cref{eq:Zii}.
		\item
			Let $a \in J_{ip}$, $a' \in J_{pj}'$, $b \in J_{jq}$, $b' \in J_{ql}'$.
			Then by \cref{le:less technical}\cref{le:less technical:il lj} and \cref{eq:d bracket rep}, we have
			\[ [\dd_{a,a'}, \dd_{b,b'}] \in Z_{iq,ql} + Z_{qj,jl}^{q=i} \subseteq Z_{i \to l} . \]
		\item
			Let $a \in J_{ip}$, $a' \in J_{pj}'$, $b \in J_{iq}$, $b' \in J_{ql}'$.
			Then by \cref{le:less technical}\cref{le:less technical:il lj} and \cref{eq:d bracket rep}, we have
			\[ [\dd_{a,a'}, \dd_{b,b'}] \in Z_{ii,ql}^{q=j} + Z_{iq,jl}^{q=i} \subseteq Z_{ii,jl} = 0 . \]
			The proof of the other identity is completely similar.
		\qedhere
	\end{enumerate}
\end{proof}

Our next goal is to show that each of the spaces $Z_{i \to j}$ is isomorphic to the conic algebra $C$ for $i \ne j$ (\cref{pr:Zitoj-param}). Clearly, we have a surjective homomorphism $C \to Z_{ii,ij} \colon a \mapsto \dd_{e_i,\cubel{a}{ij}^\iota}$ (using that $i \ne j$), so the main property to show is that $Z_{ii,ij} = Z_{i \to j}$.

\begin{lemma}\label{le:Djllj indep}
	Let $ i,j \in \{1,2,3\} $ be distinct and let $a \in C$. Then
	\[ D(\cubel{a}{ij}, \cubel{1_C}{ji}^\iota) = D(\cubel{1_C}{ij}, \cubel{a}{ji}^\iota) \quad \text{and} \quad D(\cubel{a}{ij}^\iota, \cubel{1_C}{ji}) = D(\cubel{1_C}{ij}^\iota, \cubel{a}{ji}). \]
\end{lemma}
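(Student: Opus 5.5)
Both sides of each identity are $k$-linear endomorphisms of $J$ (respectively $J'$). We therefore plan to compare them on each Peirce component $J_{pq}$ separately, using the explicit formulas of \cref{le:D-formulas}. By \cref{le:peirce}, the operator $D(\cubel{u}{ij}, \cubel{v}{ji}^\iota)$ kills every $J_{pq}$ for which the Peirce triple $(ij,ji,pq)$ is not connected. So only the components $J_{ii}$, $J_{jj}$, $J_{ij}$, $J_{il}$ and $J_{jl}$ need checking, where $l$ is the third index; every other component is killed by both sides.

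On each remaining component we use \cref{le:D-formulas}\cref{le:D-formulas:iji}, together with the symmetry $\{a,b',c\} = \{c,b',a\}$ and the relation $\cubel{u}{ij} = \cubel{\conconj{u}}{ji}$ for the components with indices interchanged. We substitute $(a_1,a_2) = (a,1_C)$ on one side and $(a_1,a_2) = (1_C,a)$ on the other.

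\begin{itemize}
\item On $J_{ii}$, both sides give $\gamma_i\gamma_j s_3\,\contr(a)e_i$.
\item On $J_{jj}$, the situation is the same with $i$ and $j$ interchanged: writing $\cubel{a}{ij} = \cubel{\conconj{a}}{ji}$ and $\cubel{1_C}{ji}^\iota = \cubel{1_C}{ij}^\iota$, both sides reduce to $\gamma_i\gamma_j s_3\,\contr(\conconj{a})e_j = \gamma_i\gamma_j s_3\,\contr(a)e_j$.
\item On $J_{il}$, both sides give $\cubel{\gamma_i\gamma_j a a_3}{il}$, because $a(1_C a_3) = 1_C(a a_3)$.
\item On $J_{jl}$, we pass to the index-reversed form and again obtain a product of the shape $\conconj{a}a_3$ on both sides.
\end{itemize}

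The component $J_{ij}$ is the only place where real work is needed. The left side is
\[ \gamma_i\gamma_j\bigl(\contr(a_3)a + \contr(a)a_3 - \contr(a\conconj{a_3})1_C\bigr), \]
and the right side is
\[ \gamma_i\gamma_j\bigl(\contr(aa_3)1_C + \contr(a)a_3 - \contr(\conconj{a_3})\conconj{a}\bigr). \]
We must therefore show
\[ \contr(a_3)a - \contr(a\conconj{a_3})1_C = \contr(aa_3)1_C - \contr(a_3)\conconj{a}. \]
Using $\conconj{a} = \contr(a)1_C - a$, this reduces to
\[ \contr(a)\contr(a_3) = \contr(aa_3) + \contr(a\conconj{a_3}). \]
This follows from $\contr(a\conconj{a_3}) = \contr(a)\contr(a_3) - \contr(aa_3)$, which is obtained by expanding $\conconj{a_3} = \contr(a_3)1_C - a_3$ and using linearity of $\contr$; alternatively one can use \cref{rem:conic identities}\cref{rem:conic identities:norm-lin} together with $\contr(a,b) = \connorm(a,\conconj{b})$. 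We expect this step to be the main obstacle, mostly in keeping track of conjugations. The other components could also be double-checked with the computer verification described in \cref{subsec:computer}.

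The second identity, with $J$ and $J'$ interchanged, follows by the same computations, since \cref{le:D-formulas} also holds after applying $\iota$ and $\iotainv$ throughout.
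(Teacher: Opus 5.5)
Your proposal is correct and follows the paper's proof. The paper likewise treats every Peirce component other than $J_{ij}$ as immediate from \cref{le:peirce} and \cref{le:D-formulas}\cref{le:D-formulas:iji}, and on $J_{ij}$ reduces the difference of the two expressions to $\contr(a\conconj{a_3}) = \contr(a)\contr(a_3) - \contr(aa_3)$ together with $\contr(a)1_C = a + \conconj{a}$. Your derivation of that trace identity, by expanding $\conconj{a_3} = \contr(a_3)1_C - a_3$, is slightly more direct than the paper's detour through $\connorm(a,a_3)$. Note that your reduction also uses $\contr(\conconj{a_3}) = \contr(a_3)$ without saying so, whereas the paper cites it explicitly.
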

\begin{proof}
	We only prove the first statement, the proof of the second one being similar. It is immediate from \cref{le:peirce} and \cref{le:D-formulas}\cref{le:D-formulas:iji} that both operators act in the same way on all components of $J$ except possibly on $J_{ij}$. For the action on $J_{ij}$, we compute for arbitrary $b \in C$:
	\begin{align*}
		\braces{\cubel{a}{ij}, \cubel{1_C}{ji}^\iota, \cubel{b}{ij}} &= \cubel{\gamma_i \gamma_j \brackets[\big]{\contr(b)a + \contr(a)b - \contr(a \conconj{b})1_C}}{ij}, \\
		\braces{\cubel{1_C}{ij}, \cubel{a}{ji}^\iota, \cubel{b}{ij}} &= \cubel{\gamma_i \gamma_j \brackets[\big]{\contr(ab)1_C + \contr(a)b - \contr(\conconj{b}) \conconj{a}}}{ij}.
	\end{align*}
	Recall from \cref{rem:conic identities}\cref{rem:conic identities:invar,rem:conic identities:norm-lin,rem:conic identities:norm-assoc} that $\contr(\conconj{b}) = \contr(b)$ and
	\[ \contr(a \conconj{b}) = \connorm(a,b) = \contr(a)\contr(\conconj{b}) - \connorm(a,\conconj{b}) = \contr(a) \contr(b) - \contr(ab). \]
	Hence the difference between the two expressions above is
	\begin{align*}
		\contr(b)a - \contr(a) \contr(b) 1_C +\contr(b) \conconj{a},
	\end{align*}
	which is zero because $\contr(a)1_C = a + \conconj{a}$.
\end{proof}

\begin{lemma}\label{le:d-iji shift}
	Let $i,j \in \{1,2,3\}$ be distinct and let $a \in C$. Then $\dd(\cubel{a}{ij}, \cubel{1_C}{ji}^\iota) = \dd(\cubel{1_C}{ij}, \cubel{a}{ji}^\iota)$.
\end{lemma}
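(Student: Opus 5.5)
Since $L_0$ sits inside $M = \bigoplus_{i \neq 0} \gl(L_i)$, two elements of $Z$ coincide exactly when they act identically on each $L_i$ with $i \neq 0$. The plan is to compare the actions of $\dd(\cubel{a}{ij}, \cubel{1_C}{ji}^\iota)$ and $\dd(\cubel{1_C}{ij}, \cubel{a}{ji}^\iota)$ using the explicit formula in \cref{pr:delta z}.

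First, both elements act trivially on $L_{\pm 2}$, since every $\dd_{b,b'}$ does. By \cref{pr:delta z}, the action of $\dd_{b,b'}$ on $(\lambda, c, c', \mu)_\pm$ is determined by three ingredients: the scalar $T(b,b')$, the operator $D_{b,b'}$ on $J$, and the operator $D_{b',b}$ on $J'$. It therefore suffices to prove three equalities.
\begin{enumerate}
	\item $T(\cubel{a}{ij}, \cubel{1_C}{ji}^\iota) = T(\cubel{1_C}{ij}, \cubel{a}{ji}^\iota)$.
	\item $D(\cubel{a}{ij}, \cubel{1_C}{ji}^\iota) = D(\cubel{1_C}{ij}, \cubel{a}{ji}^\iota)$ as operators on $J$.
	\item $D(\cubel{1_C}{ji}^\iota, \cubel{a}{ij}) = D(\cubel{a}{ji}^\iota, \cubel{1_C}{ij})$ as operators on $J'$.
\end{enumerate}

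For (i), write $\cubel{1_C}{ji} = \cubel{1_C}{ij}$ and $\cubel{a}{ji} = \cubel{\conconj{a}}{ij}$. Then \cref{le:T formulas}\cref{le:T formulas:dij} gives the values $\gamma_i\gamma_j \contr(\conconj{a})$ and $\gamma_i\gamma_j \contr(\conconj{1_C}\,\conconj{a}) = \gamma_i\gamma_j\contr(\conconj{a})$, which agree. (Both values in fact equal $\gamma_i\gamma_j\contr(a)$ by \cref{rem:conic identities}\cref{rem:conic identities:invar}.)

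Statement (ii) is exactly the first identity of \cref{le:Djllj indep}.

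Statement (iii) is the only step that takes some care, because it needs the ``primed'' version of \cref{le:Djllj indep}. We rewrite the arguments using $\cubel{u}{ji} = \cubel{\conconj{u}}{ij}$. Then $\cubel{1_C}{ji}^\iota = \cubel{1_C}{ij}^\iota$ and $\cubel{a}{ij} = \cubel{\conconj{a}}{ji}$, so the left-hand side of (iii) is $D(\cubel{1_C}{ij}^\iota, \cubel{\conconj{a}}{ji})$. Likewise, $\cubel{a}{ji}^\iota = \cubel{\conconj{a}}{ij}^\iota$ and $\cubel{1_C}{ij} = \cubel{1_C}{ji}$, so the right-hand side is $D(\cubel{\conconj{a}}{ij}^\iota, \cubel{1_C}{ji})$. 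These two operators coincide by the second identity of \cref{le:Djllj indep}, applied with $\conconj{a}$ in place of $a$.

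Having matched all three ingredients, \cref{pr:delta z} shows that the two elements act identically on $L_{\pm 1}$. Together with the trivial action on $L_{\pm 2}$, this proves the claimed equality in $L_0 \subseteq M$.
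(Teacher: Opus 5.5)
Your proof is correct and follows the paper's argument. The paper likewise notes that the two traces agree (both equal $\gamma_i\gamma_j\contr(a)$) and then concludes from \cref{pr:delta z} and \cref{le:Djllj indep}; your treatment of the $J'$-operators, rewriting via $\cubel{u}{ji} = \cubel{\conconj{u}}{ij}$ and applying the second identity of \cref{le:Djllj indep} to $\conconj{a}$, spells out a step the paper leaves implicit.
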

\begin{proof}
	Since $T(\cubel{a}{ij}, \cubel{1_C}{ji}^\iota) = \gamma_i \gamma_j \contr(a) = T(\cubel{1_C}{ij}, \cubel{a}{ji}^\iota)$, this follows from \cref{pr:delta z,le:Djllj indep}.
\end{proof}

\begin{lemma}\label{le:dd rel}
	Let $ i,j,l \in \{1,2,3\} $ be pairwise distinct and let $a,b \in C$. Then the following hold.
	\begin{enumerate}
		\item \label{le:dd rel:sum}$\dd_{e_1,e_1'} + \dd_{e_2,e_2'} + \dd_{e_3,e_3'} = 2\zeta - \xi$.
		\item \label{le:dd rel:jji}$\dd(e_j, \cubel{a}{ji}^\iota) = \dd(\cubel{a}{ji}, e_i')$.
		\item \label{le:dd rel:jli}$\begin{aligned}[t]
			\dd(\cubel{a}{jl}, \cubel{b}{li}^\iota) &= \dd(\cubel{1_C}{jl}, \cubel{ab}{li}^\iota) = \dd(\cubel{ab}{jl}, \cubel{1_C}{li}^\iota) \\
			&= \gamma_l \dd(e_j, \cubel{ab}{ji}^\iota) = \gamma_l \dd(\cubel{ab}{ji}, e_i').
		\end{aligned}$
		\item \label{le:dd rel:iji norm}$\dd(\cubel{a}{ij}, \cubel{\conconj{a}}{ji}^\iota) = \gamma_i \gamma_j \connorm(a) \brackets[\big]{\dd_{e_i, e_i'} + \dd_{e_j,e_j'}}$.
		
		\item \label{le:dd rel:iji trace}$\dd(\cubel{a}{ij}, \cubel{\conconj{b}}{ji}^\iota) + \dd(\cubel{b}{ij}, \cubel{\conconj{a}}{ji}^\iota) = \gamma_i \gamma_j \contr(a \conconj{b}) \brackets[\big]{\dd_{e_i, e_i'} + \dd_{e_j,e_j'}}$.
	\end{enumerate}
\end{lemma}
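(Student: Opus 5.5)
The plan is to derive every item from \cref{le:triple} by substituting suitable elements of the cubic Jordan matrix algebra $J$ and then evaluating the products with \cref{le:times formulas}. This is the route announced before \cref{le:D-formulas}. The only tool beyond that is \cref{le:d-iji shift}.

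For \cref{le:dd rel:sum}, I will apply \cref{le:triple}\cref{le:triple:r} to $(a,b,c)=(e_1,e_2,e_3)$. By \cref{le:times formulas}\cref{le:times formulas:ei} we have $e_2\times e_3=e_1'$, $e_3\times e_1=e_2'$ and $e_1\times e_2=e_3'$. Also $T(e_3,e_1\times e_2)=T(e_3,e_3')=1$ by \cref{le:T formulas}. The left-hand side is therefore $\sum_i \dd_{e_i,e_i'}$ and the right-hand side is $2\zeta-\xi$.

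For the remaining items I again use \cref{le:triple}\cref{le:triple:r}, each time with a triple from distinct Peirce spaces, so that most cross products vanish by \cref{le:times blocks} and all traces vanish by \cref{le:T formulas}\cref{le:T formulas:ortho}.
\begin{itemize}
\item For \cref{le:dd rel:jji}, take $(e_j,e_i,\cubel{a}{ji})$. Here $e_j\times e_i=e_l'$ and $e_l'$ pairs trivially with $J_{ji}$, so the right-hand side vanishes. The term $\dd(\cubel{a}{ji},e_l')$ is zero by \cref{rem:Zijlm zero}. The two surviving terms are $e_i\times\cubel{a}{ji}=-\cubel{a}{ji}^\iota$ and $\cubel{a}{ji}\times e_j=-\cubel{a}{ji}^\iota$. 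They give $-\dd(e_j,\cubel{a}{ji}^\iota)+\dd(\cubel{a}{ji},e_i')\cdot(\pm)$. I have to track signs carefully: the statement asks for equality, so the $\times$ formula yields a relation that matches only after a sign check. If the check fails, I will instead use \cref{le:triple}\cref{le:triple:norm r}, which expresses the same relation through a linearized sharp.
\item For the first equalities in \cref{le:dd rel:jli}, take $(\cubel{a}{jl},\cubel{b}{li},e_i)$. By \cref{le:times formulas}\cref{le:times formulas:ijl} we get $\cubel{a}{jl}\times\cubel{b}{li}=\gamma_l\cubel{ab}{ji}^\iota$, and $\cubel{b}{li}\times e_i=-\cubel{b}{li}^\iota$. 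The term $e_i\times\cubel{a}{jl}$ lies in $J'_{jl}$ and is paired with $\cubel{b}{li}$, so it gives $-\dd(\cubel{b}{li},\cubel{a}{jl}^\iota)$. That term lies in $Z_{li,jl}$, which is nonzero, so it has to be handled with a second instance of the triple identity. I will therefore combine two instances, $(\cubel{a}{jl},\cubel{b}{li},e_i)$ and $(\cubel{a}{jl},\cubel{b}{li},e_j)$, to isolate $\dd(\cubel{a}{jl},\cubel{b}{li}^\iota)=\gamma_l\dd(e_j,\cubel{ab}{ji}^\iota)$. The version with $\cubel{ab}{ji},e_i'$ then follows from \cref{le:dd rel:jji}. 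Since the resulting expression depends only on $ab$, specializing $a$ or $b$ to $1_C$ gives the other two forms.
\item For \cref{le:dd rel:iji norm} and \cref{le:dd rel:iji trace}, take $(\cubel{a}{ij},\cubel{b}{ij},e_l)$ in \cref{le:triple}\cref{le:triple:r}, or equivalently \cref{le:triple}\cref{le:triple:norm r} with $b=\cubel{a}{ij}$, $a=e_l$. The relevant products are $\cubel{a}{ij}\times\cubel{b}{ij}=-\gamma_i\gamma_j\connorm(a,b)e_l'$ and $\cubel{b}{ij}\times e_l=-\cubel{b}{ij}^\iota$. The trace on the right is $T(e_l,\cubel{a}{ij}\times\cubel{b}{ij})=-\gamma_i\gamma_j\connorm(a,b)$. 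This produces $\dd(\cubel{a}{ij},\cubel{b}{ij}^\iota)+\dd(\cubel{b}{ij},\cubel{a}{ij}^\iota)$ expressed via $\dd_{e_l,e_l'}$ and $2\zeta-\xi$. I then use \cref{le:dd rel:sum} to rewrite $2\zeta-\xi-\dd_{e_l,e_l'}$ as $\dd_{e_i,e_i'}+\dd_{e_j,e_j'}$, and convert with $\cubel{b}{ij}=\cubel{\conconj b}{ji}$ and $\connorm(a,b)=\contr(a\conconj b)$. This gives \cref{le:dd rel:iji trace}. Using \cref{le:triple}\cref{le:triple:norm r} in its unlinearized form, with $(\cubel{a}{ij})^\sharp=-\gamma_i\gamma_j\connorm(a)e_l'$, gives \cref{le:dd rel:iji norm} without dividing by $2$.
\end{itemize}

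I expect the main obstacle to be \cref{le:dd rel:jli}. The naive instance leaves extra nonzero terms, such as $\dd(\cubel{b}{li},\cubel{a}{jl}^\iota)$, that must be cancelled, and all signs and $\gamma$ factors have to be tracked. If the combination of instances becomes unwieldy, the fallback is to check the relation directly via \cref{pr:delta z}. Both sides have zero trace, so by \cref{le:lin comb} it suffices to compare the operators $D$ on $J$ and on $J'$, which can be done with \cref{le:D-formulas}\cref{le:D-formulas:jli,le:D-formulas:jji}.
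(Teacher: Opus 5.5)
Your overall route is the paper's: specialize \cref{le:triple} to Peirce elements and evaluate the products with \cref{le:times formulas,le:T formulas}. Items \cref{le:dd rel:sum}, \cref{le:dd rel:iji norm} and \cref{le:dd rel:iji trace} are correct and essentially as in the paper. Your triple $(\cubel{a}{ij},\cubel{b}{ij},e_l)$ is just the linearization of the paper's instance of \cref{le:triple}\cref{le:triple:norm r}.

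Item \cref{le:dd rel:jji}, however, fails as written. You use $\cubel{u}{pq}\times e_r=-\cubel{u}{pq}^\iota$ with $r\in\{p,q\}$. But \cref{le:times formulas}\cref{le:times formulas:ei} gives this only for $r\notin\{p,q\}$; for $r\in\{p,q\}$ the product vanishes by \cref{le:times blocks}. Thus $e_i\times\cubel{a}{ji}=0=\cubel{a}{ji}\times e_j$, and your triple $(e_j,e_i,\cubel{a}{ji})$ yields only the vacuous $\dd(\cubel{a}{ji},e_l')=0$. This is not a sign issue. In \cref{le:triple}\cref{le:triple:r}, the term $\dd(\cubel{a}{ji},e_i')$ can appear only if $e_i'$ is the cross product of the other two entries, which forces these to be $e_j$ and $e_l$. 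With the triple $(e_j,e_l,\cubel{a}{ji})$ one gets $e_l\times\cubel{a}{ji}=-\cubel{a}{ji}^\iota$, $\cubel{a}{ji}\times e_j=0$, $e_j\times e_l=e_i'$ and $T(\cubel{a}{ji},e_i')=0$. Hence $\dd(e_j,\cubel{a}{ji}^\iota)=\dd(\cubel{a}{ji},e_i')$. This is the unprimed counterpart of the paper's instance $(e_l',e_i',\cubel{a}{ji}^\iota)$ of \cref{le:triple}\cref{le:triple:l}. Your fallback to \cref{le:triple}\cref{le:triple:norm r} is left unspecified, so it does not close the gap.

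The same misreading distorts \cref{le:dd rel:jli}. There $\cubel{b}{li}\times e_i=0$, not $-\cubel{b}{li}^\iota$, so the instance $(\cubel{a}{jl},\cubel{b}{li},e_i)$ has no extra term to cancel. It is the two-term relation $\dd(\cubel{b}{li},\cubel{a}{jl}^\iota)=\gamma_l\dd(e_i,\cubel{ab}{ji}^\iota)$. The other instance you list, $(\cubel{a}{jl},\cubel{b}{li},e_j)$, gives the claim on its own: $\cubel{b}{li}\times e_j=-\cubel{b}{li}^\iota$, $e_j\times\cubel{a}{jl}=0$, $\cubel{a}{jl}\times\cubel{b}{li}=\gamma_l\cubel{ab}{ji}^\iota$, and the trace term vanishes. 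This is precisely the paper's last step for this item, so \cref{le:dd rel:jli} survives. Obtaining the two middle expressions by specializing $a$ or $b$ to $1_C$ is valid and slightly more economical than the paper, which derives them from two further primed triples. The final equality of \cref{le:dd rel:jli}, though, rests on \cref{le:dd rel:jji} and therefore needs the corrected triple above.
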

\begin{proof}
	Recall from \cref{le:triple}\cref{le:triple:r} that	for all $p', q', r' \in J'$, we have
	\begin{align*}
		\dd_{p' \times q', r'} + \dd_{q' \times r',p'} + \dd_{r' \times p', q'} = T(r', p' \times q') \cdot (2\zeta - \xi).
	\end{align*}
	We will denote this term by $ \varphi(p', q', r') $ and prove the assertions by specializing to well-chosen values of $ p' $, $ q' $, $ r' $.
	We will also use \cref{le:times formulas,le:T formulas} to compute terms such as $p' \times \tilde{b'}$ or $T(r', p' \times q')$. For $p' \coloneq e_1'$, $q' \coloneq e_2'$, and $r' \coloneq e_3'$, we obtain
	\begin{align*}
		\varphi(p', q', r') &= \dd(e_1' \times e_2', e_3') + \dd(e_2' \times e_3', e_1') + \dd(e_3' \times e_1', e_2') \\
		&= \dd_{e_1,e_1'} + \dd_{e_2,e_2'} + \dd_{e_3,e_3'} \quad \text{and} \\
		\varphi(p', q', r') &= T(e_3', e_1' \times e_2') (2\zeta-\xi) = T(e_3', e_3') (2\zeta-\xi) = 2\zeta-\xi.
	\end{align*}
	This implies \cref{le:dd rel:sum}. For $p' \coloneq e_l'$, $q' \coloneq e_i'$ and $r' \coloneq \cubel{a}{ji}^\iota$, we obtain
	\begin{align*}
		\varphi(p', q', r') &= \dd(e_l' \times e_i', \cubel{a}{ji}^\iota) + \dd(e_i' \times \cubel{a}{ji}^\iota, e_l') + \dd(\cubel{a}{ji}^\iota \times e_l', e_i') \\
		&= \dd(e_j, \cubel{a}{ji}^\iota) + \dd(0, e_l') - \dd(\cubel{a}{ji}^\iota, e_i') \quad \text{and} \\
		\varphi(p', q', r') &= T(\cubel{a}{ji}^\iota, e_l' \times e_i') (2\zeta - \xi) = T(\cubel{a}{ji}^\iota, e_j) (2\zeta - \xi) = 0.
	\end{align*}
	Identity \cref{le:dd rel:jji} follows, which also implies the final equality sign in \cref{le:dd rel:jli}. For $p' \coloneq e_i'$, $q' \coloneq \cubel{1}{jl}^\iota$, $r' \coloneq \cubel{ab}{li}^\iota$, we obtain
	\begin{align*}
		\varphi(p', q', r') &= \dd(e_i' \times \cubel{1}{jl}^\iota, \cubel{ab}{li}^\iota) + \dd(\cubel{1}{jl}^\iota \times \cubel{ab}{li}^\iota, e_i') + \dd(\cubel{ab}{li}^\iota \times e_i', \cubel{1}{jl}^\iota) \\
		&= -\dd(\cubel{1}{jl}, \cubel{ab}{li}^\iota) + \dd(\gamma_l \cubel{ab}{ji}, e_i') + \dd(0, \cubel{1}{jl}^\iota) \quad \text{and} \\
		\varphi(p', q', r') &= T(\cubel{ab}{li}^\iota, e_i' \times \cubel{1}{jl}^\iota)(2\zeta - \xi) = T(\cubel{ab}{li}^\iota, -\cubel{1}{jl})(2\zeta - \xi) = 0.
	\end{align*}
	It follows that $\dd(\cubel{1}{jl}, \cubel{ab}{li}^\iota) = \gamma_l \dd(\cubel{ab}{ji}, e_i')$. For $p' \coloneq e_i'$, $q' \coloneq \cubel{ab}{jl}^\iota$, $r' \coloneq \cubel{1}{li}^\iota$, we obtain
	\begin{align*}
		\varphi(p', q', r') &= \dd(e_i' \times \cubel{ab}{jl}^\iota, \cubel{1}{li}^\iota) + \dd(\cubel{ab}{jl}^\iota \times \cubel{1}{li}^\iota, e_i') + \dd(\cubel{1}{li}^\iota \times e_i', \cubel{ab}{jl}^\iota) \\
		&= -\dd(\cubel{ab}{jl}^\iota, \cubel{1}{li}) + \dd(\gamma_l \cubel{ab}{ji}, e_i') + \dd(0, \cubel{ab}{jl}) \quad \text{and} \\
		\varphi(p', q', r') &= T(\cubel{1}{li}^\iota, e_i' \times \cubel{ab}{jl}) (2\zeta - \xi) = T(\cubel{1}{li}^\iota, -\cubel{ab}{jl}^\iota) (2\zeta - \xi) = 0.
	\end{align*}
	It follows that $\dd(\cubel{ab}{jl}, \cubel{1}{li}) = \gamma_l \dd(\cubel{ab}{ji}, e_i')$. For $p \coloneq \cubel{a}{jl}$, $q \coloneq \cubel{b}{li}$, $r \coloneq e_j \in J$, we have
	\begin{align*}
		\dd_{p, q \times r} + \dd_{q, r \times p} + \dd_{r, p \times q} 
		&= -\dd(\cubel{a}{jl}, \cubel{b}{li}^\iota) + \dd(\cubel{b}{li}, 0) + \dd(e_j, \gamma_l \cubel{ab}{ji}), \\
		T(r, p \times q) (2\zeta - \xi) &= T(e_j, \cubel{a}{jl} \times \cubel{b}{li}) (2\zeta - \xi) = T(e_j, \gamma_l \cubel{ab}{ji}) (2\zeta - \xi) = 0.
	\end{align*}
	Thus it follows from \cref{le:triple}\cref{le:triple:l} that $\dd(\cubel{a}{jl}, \cubel{b}{li}^\iota) = \gamma_l \dd(e_j, \cubel{ab}{ji})$, which finishes the proof of \cref{le:dd rel:jli}.
	
	For \cref{le:dd rel:iji norm}, we put $p \coloneq e_l$ and $q \coloneq \cubel{a}{ij}$. Then
	\begin{align*}
		\dd_{p, q^\sharp} + \dd_{q, p \times q} &= \dd(e_l, -\gamma_i \gamma_j \connorm(a) e_l') + \dd(\cubel{a}{ij}, -\cubel{a}{ij}^\iota), \\
		T(p, q^\sharp) (2\zeta - \xi) &= T(e_l, -\gamma_i \gamma_j \connorm(a) e_l) (2\zeta - \xi) = -\gamma_i \gamma_j \connorm(a) (2\zeta - \xi) \\
		&= -\gamma_i \gamma_j \connorm(a) (\dd_{e_1,e_1} + \dd_{e_2,e_2} + \dd_{e_3,e_3})
	\end{align*}
	by \cref{le:dd rel:sum}. Now \cref{le:dd rel:iji norm} follows from \cref{le:triple}\cref{le:triple:norm r}. Finally, \cref{le:dd rel:iji trace} is the linearization of \cref{le:dd rel:iji norm}.
\end{proof}

\begin{remark}
	The identities in \cref{le:dd rel} also hold with $D$ in place of $\dd$ (and then as identities in $J$ or $J'$), except that~\cref{le:dd rel:sum} should be read as
	\[ \sum_{i=1}^3 D(e_i, e_i') = 2\id_{J} \quad \text{and} \quad \sum_{i=1}^3 D(e_i', e_i) = 2\id_{J'}. \]
\end{remark}

\begin{remark}
	Let $i,j,l \in \{1,2,3\}$ be pairwise distinct. We want to show that we do \emph{not} have $\dd(\cubel{a}{il}, \cubel{b}{li}^\iota) = \dd(\cubel{1}{il}, \cubel{ab}{li}^\iota)$ in general. Suppose for a contradiction that $\dd(\cubel{a}{il}, \cubel{b}{li}) = \dd(\cubel{1}{il}, \cubel{ab}{li}^\iota)$ for all $a,b \in C$, and let $a,b,c \in C$ be arbitrary. Since
	\begin{align*}
		T(\cubel{a}{il}, \cubel{b}{li}) = \gamma_i \gamma_j \contr(ab) = T(\cubel{ab}{il}, \cubel{1}{li})
	\end{align*}
	by \cref{le:T formulas}\cref{le:T formulas:dij}, it follows from \cref{pr:delta z} that
	\[ D(\cubel{a}{il}, \cubel{b}{li}) = D(\cubel{1}{il}, \cubel{ab}{li}). \]
	Thus
	\begin{align*}
		\braces{\cubel{a}{jl}, \cubel{b}{li}^\iota, \cubel{c}{il}} &= \braces{\cubel{c}{il}, \cubel{b}{li}^\iota, \cubel{a}{jl}} = \braces{\cubel{1}{il}, \cubel{cb}{li}^\iota, \cubel{a}{jl}} \\
		&= \braces{\cubel{a}{jl}, \cubel{cb}{li}^\iota, \cubel{1}{il}} = \cubel{\gamma_i \gamma_l a(cb)}{jl}.
	\end{align*}
	by \cref{le:D-formulas}\cref{le:D-formulas:jli}. On the other hand,
	\begin{align*}
		\braces{\cubel{a}{jl}, \cubel{b}{li}^\iota, \cubel{c}{il}} = \cubel{\gamma_i \gamma_l (ab)c}{jl}
	\end{align*}
	Assuming additionally that $\gamma_i$, $\gamma_j$ are invertible, it follows that $(ab)c = a(cb)$ for all $a,b,c \in C$. In other words, $C$ is commutative and associative, which is not the case for arbitrary multiplicative conic alternative algebras.
\end{remark}

\begin{lemma}\label{le:dd sum}
	Let $a_1,a_2,a_3 \in C$. Then
	\[ \sum \gamma_i \dd_{\cubel{a_i}{jl}, \cubel{\conconj{a_l} \conconj{a_j}}{jl}} = \gamma_1 \gamma_2 \gamma_3 \contr(a_1a_2a_3) \cdot (2\zeta - \xi) \]
	where the sum runs over all cyclic permutations $(i\,j\,l)$ of $(1\,2\,3)$.
\end{lemma}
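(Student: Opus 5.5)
The identity is a specialization of \cref{le:triple}\cref{le:triple:r}, in the same way that \cref{le:dd rel} was obtained. (I read the second argument of each $\dd$ as an element of $J'$, i.e.\ as $\cubel{\conconj{a_l}\conconj{a_j}}{jl}^\iota$.) I apply \cref{le:triple}\cref{le:triple:r} to the three off-diagonal elements
\[ p \coloneq \cubel{a_1}{23}, \qquad q \coloneq \cubel{a_2}{31}, \qquad r \coloneq \cubel{a_3}{12} \in J. \]
This gives
\[ \dd_{p, q\times r} + \dd_{q, r\times p} + \dd_{r, p\times q} = T(r, p\times q)\cdot(2\zeta-\xi). \]
It then remains to evaluate the cross products and the trace by \cref{le:times formulas} and \cref{le:T formulas}.

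\textbf{Cross products.} By \cref{le:times formulas}\cref{le:times formulas:ijl}, for a cyclic $(i\,j\,l)$ the cross product of $\cubel{a_j}{li}$ and $\cubel{a_l}{ij}$ lies in $J'_{jl}$. I will orient the factors as
\[ \cubel{a_j}{li} = \cubel{\conconj{a_j}}{il}, \qquad \cubel{a_l}{ij}, \]
so that the formula applies with middle index $i$. This yields
\[ \cubel{a_j}{li}\times\cubel{a_l}{ij} = \cubel{\gamma_i\, \conconj{a_j}\, a_l}{lj}^\iota = \cubel{\gamma_i\, \conconj{a_l}\,\conconj{a_j}}{jl}^\iota, \]
where the last step uses \cref{rem:conic identities}\cref{rem:conic identities:invo}. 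Hence the left-hand side above is exactly $\sum \gamma_i\, \dd(\cubel{a_i}{jl}, \cubel{\conconj{a_l}\conconj{a_j}}{jl}^\iota)$. The index bookkeeping, namely which element plays the role of $c_{l'}$ versus $d_{i'}$ and whether a conjugate appears, is the one place I expect to need care. If the orientation instead produces $\cubel{\gamma_i a_l a_j}{lj}$, I will rewrite it via $\cubel{u}{lj}=\cubel{\conconj u}{jl}$, which gives the same element.

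\textbf{Trace.} I compute $p\times q = \cubel{a_1}{23}\times\cubel{a_2}{31}$ in the same way; it is $\gamma_3$ times an element of $J'_{12}$ built from $\conconj{a_2}\conconj{a_1}$ (up to orientation). By \cref{le:T formulas}\cref{le:T formulas:dij}, its trace against $\cubel{a_3}{12}$ is
\[ \gamma_1\gamma_2\gamma_3\, \contr\bigl(\conconj{a_3}\;\overline{\conconj{a_2}\conconj{a_1}}\bigr) = \gamma_1\gamma_2\gamma_3\, \contr\bigl(\conconj{a_3}\, a_1 a_2\bigr) \]
or the conjugate variant. The conjugate variant is handled by \cref{rem:conic identities}\cref{rem:conic identities:invar,rem:conic identities:tr-assoc}, which give $\contr(\conconj{x})=\contr(x)$ and associativity and cyclicity of $\contr$. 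A cleaner route is to use $N(x)$ from \cref{not:CJMA def}: the term $\gamma_1\gamma_2\gamma_3\contr(u_1u_2u_3)$ is exactly the cubic part of $N$. Since $T(r,p\times q)$ is the full polarization of $N$ at $(p,q,r)$, and the diagonal entries vanish here, it equals $\gamma_1\gamma_2\gamma_3\,\contr(a_1a_2a_3)$ directly. I will use this shortcut to fix the trace, and check consistency against the direct computation.
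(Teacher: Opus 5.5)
Your strategy is the paper's: specialize \cref{le:triple}\cref{le:triple:r} at $p=\cubel{a_1}{23}$, $q=\cubel{a_2}{31}$, $r=\cubel{a_3}{12}$, then evaluate the three cross products and the trace. The final values you state are right. However, two intermediate computations are wrong as written and need fixing.

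\textbf{Cross products.} No re-orientation is needed. The product $\cubel{a_j}{li}\times\cubel{a_l}{ij}$ already has the shape of \cref{le:times formulas}\cref{le:times formulas:ijl} with $(i',j',l')=(l,i,j)$, so it equals $\cubel{\gamma_i a_j a_l}{lj}^\iota=\cubel{\gamma_i\conconj{a_l}\,\conconj{a_j}}{jl}^\iota$. Your intermediate $\cubel{\gamma_i\conconj{a_j}a_l}{lj}^\iota$ is incorrect: it equals $\cubel{\gamma_i\conconj{a_l}a_j}{jl}^\iota$, not the element you write next. Your fallback is also false. Indeed $\cubel{\gamma_i a_la_j}{lj}=\cubel{\gamma_i\conconj{a_j}\,\conconj{a_l}}{jl}$, which differs from $\cubel{\gamma_i\conconj{a_l}\,\conconj{a_j}}{jl}$ because $C$ need not be commutative.

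\textbf{Trace.} We have $p\times q=\cubel{\gamma_3\conconj{a_2}\,\conconj{a_1}}{12}^\iota$, so \cref{le:T formulas}\cref{le:T formulas:dij} gives $T(r,p\times q)=\gamma_1\gamma_2\gamma_3\contr(\conconj{a_3}\,\conconj{a_2}\,\conconj{a_1})=\gamma_1\gamma_2\gamma_3\contr(a_1a_2a_3)$. Your $\contr(\conconj{a_3}a_1a_2)$ carries a spurious conjugation and is not equal to the target in general. For example, in $k\times k$ with the switch involution, take $a_1=a_3=(1,0)$ and $a_2=1$: then $\contr(\conconj{a_3}a_1a_2)=0$ while $\contr(a_1a_2a_3)=1$.

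Your polarization shortcut is correct, and it is a genuinely different, cleaner route to the trace than the paper's direct evaluation. To make it rigorous over an arbitrary $k$ without extracting coefficients, use the gradient identity. It gives $T(r,p\times q)=N(p+q+r)-N(p+q)-N(q+r)-N(r+p)+N(p)+N(q)+N(r)$. By the formula for $N$ in \cref{not:CJMA def}, every term except $N(p+q+r)=\gamma_1\gamma_2\gamma_3\contr(a_1a_2a_3)$ vanishes, since the diagonal entries are zero and at least one off-diagonal entry is missing.
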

\begin{proof}
	By \cref{le:triple}, we have
	\[ \sum \dd_{\cubel{a_i}{jl}, \cubel{a_j}{li} \times \cubel{a_l}{ij}} = T(\cubel{a_2}{31}, \cubel{a_3}{12} \times \cubel{a_1}{23}) \cdot (2\zeta - \xi). \]
	Recall from \cref{le:times blocks}\cref{le:times blocks:ijl} that
	\begin{align*}
		\cubel{a_i}{jl} \times \cubel{a_j}{li} = \cubel{\gamma_l \conconj{a_j} \conconj{a_i}}{ij}
	\end{align*}
	for all cyclic permutations $(i\,j\,l)$ of $(1\,2\,3)$. Hence
	\[ \sum \dd_{\cubel{a_i}{jl}, \cubel{a_j}{li} \times \cubel{a_l}{ij}} = \sum \gamma_i \dd_{\cubel{a_i}{jl}, \cubel{\conconj{a_l} \conconj{a_j}}{jl}}. \]
	Further,
	\begin{align*}
		T(\cubel{a_2}{31}, \cubel{a_3}{12} \times \cubel{a_1}{23}) &= T(\cubel{a_2}{31}, \gamma_2 \cubel{\conconj{a_1} \conconj{a_3}}{31}) = \gamma_1 \gamma_2 \gamma_3 \contr(a_2 a_3 a_1) \\
		&= \gamma_1 \gamma_2 \gamma_3 \contr(a_1 a_2 a_3)
	\end{align*}
	by \cref{le:times blocks}\cref{le:times blocks:ijl} and \cref{le:T formulas}\cref{le:T formulas:dij}. The assertion follows.
\end{proof}

\begin{proposition}\label{pr:Zitoj-param}
	Let $ i \ne j \in \{1,2,3\} $. Then the map
	\[ \theta \colon C \to Z_{i \to j}, a \mapsto \dd_{e_i, \cubel{a}{ij}^\iota} \]
	is an isomorphism of $ k $-modules.
\end{proposition}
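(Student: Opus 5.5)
The map $\theta$ is clearly $k$-linear. For surjectivity and injectivity I would proceed separately. Fix $i \ne j$ and let $l$ be the third index.

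\emph{Surjectivity.} By definition, $Z_{i \to j} = Z_{ii,ij} + Z_{ij,jj} + Z_{il,lj}$. The first summand is spanned by the elements $\dd(s e_i, \cubel{a}{ij}^\iota) = \theta(sa)$, so it lies in the image of $\theta$. For the second summand, the generators are $\dd(\cubel{a}{ij}, s e_j')$. Writing $\cubel{a}{ij} = \cubel{\conconj{a}}{ji}$, \cref{le:dd rel}\cref{le:dd rel:jji} with the roles of the indices adjusted gives
\[ \dd(\cubel{a}{ij}, s e_j') = \dd(s e_i, \cubel{a}{ij}^\iota), \]
which again lies in the image. For the third summand, \cref{le:dd rel}\cref{le:dd rel:jli}, applied with the index triple arranged so that the first entry lies in $J_{il}$ and the second in $J_{lj}'$, gives
\[ \dd(\cubel{a}{il}, \cubel{b}{lj}^\iota) = \gamma_l\, \dd(e_i, \cubel{ab}{ij}^\iota) = \theta(\gamma_l ab). \]
Some care with conjugations is needed here, since $\cubel{a}{li} = \cubel{\conconj{a}}{il}$ and the lemma is stated for a particular orientation. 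Finally, the elements $\dd_{a,a'}$ with $a \in J_{il}$ and $a' \in J_{lj}'$ span $Z_{il,lj}$ by bilinearity of $\dd$, since $J_{il}$ and $J_{lj}'$ are spanned by the pure elements used above. So $\theta$ is surjective.

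\emph{Injectivity.} This is the main point, because elements of $Z$ are only determined by their action on $L_{\pm1}$. I would evaluate $\theta(a)$ on a well-chosen test element. Using \cref{pr:Jd}, $[\theta(a), c] = D(e_i, \cubel{a}{ij}^\iota)(c)$ for $c \in J$. By \cref{le:D-formulas}\cref{le:D-formulas:jji}, with $(j,i)$ there replaced by $(i,j)$, we have
\[ \braces{e_i, \cubel{a}{ij}^\iota, e_j} = \cubel{a}{ij}. \]
Alternatively, \cref{pr:delta z} can be used on $(0,e_j,0,0)_+$, using $T(e_i, \cubel{a}{ij}^\iota) = 0$ by \cref{le:T formulas}\cref{le:T formulas:ortho}, to obtain $[\theta(a), (0,e_j,0,0)_+] = (0, \cubel{a}{ij}, 0, 0)_+$. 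Hence $\theta(a) = 0$ forces $\cubel{a}{ij} = 0$, that is, $a = 0$.

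The main obstacle is checking the precise form of the $D$-formula for the pair $(e_i, \cubel{a}{ij}^\iota)$ acting on $e_j$, including orientation, conjugation and the absence of stray $\gamma$ factors. If the formula instead yields $\conconj{a}$, injectivity still follows, since conjugation is bijective. The same applies to keeping track of the index orientations in the applications of \cref{le:dd rel} in the surjectivity step.
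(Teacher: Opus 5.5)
Your proof is correct and follows the paper's argument. Surjectivity comes from \cref{le:dd rel}\cref{le:dd rel:jji,le:dd rel:jli} applied to the summands $Z_{ii,ij}$, $Z_{ij,jj}$, $Z_{il,lj}$, and injectivity comes from evaluating $\theta(a)$ on $(0,e_j,0,0)_+$ via \cref{pr:delta z}, $T(e_i,\cubel{a}{ij}^\iota)=0$ and \cref{le:D-formulas}\cref{le:D-formulas:jji}, which yields exactly $\cubel{a}{ij}$. The conjugation bookkeeping you flag is unnecessary: \cref{le:dd rel} holds for every ordering of the pairwise distinct indices, so plain relabeling gives $\dd(\cubel{a}{il},\cubel{b}{lj}^\iota)=\theta(\gamma_l ab)$ and $\dd(\cubel{a}{ij},se_j')=\theta(sa)$ directly.
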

\begin{proof}
	The map is clearly a homomorphism and surjective by \cref{le:dd rel}\cref{le:dd rel:jji,le:dd rel:jli}.
	Now let $ a \in C $ with $ \theta(a) = 0 $. Then by \cref{pr:delta z},
	\begin{align*}
		0 &=[\theta(a), (0, e_j, 0, 0)_+] = \brackets[\big]{0, D_{e_i, \cubel{a}{ij}^\iota}(e_j), 0, 0}_+.
	\end{align*}
	Since $D_{e_i, \cubel{a}{ij}^\iota}(e_j) = \cubel{a}{ij}$ by \cref{le:D-formulas}\cref{le:D-formulas:jji}, we infer that $a=0$. Thus $\theta$ is injective.
\end{proof}

We are now ready to define three new gradings on our Lie algebra $L$.
The verification that these are, in fact, gradings, will occupy us until \cref{pr:igrading}.

\begin{definition}\label{def:3 new gradings}
	Let $i \in \{ 1,2,3 \}$ and let $(i\, j\, l)$ be the unique cyclic permutation of~$(1\, 2\, 3)$. We will define a decomposition
	\[ L = L^{(i)}_{-2} \oplus L^{(i)}_{-1} \oplus L^{(i)}_{0} \oplus L^{(i)}_{1} \oplus L^{(i)}_{2} \]
	with $L^{(i)}_{-2} = k e'_i$ and $L^{(i)}_{2} = k e_i$.
	We set
	\begin{align*}
		L^{(i)}_{-1} &:=
		\begin{multlined}[t][.8\textwidth]
			0 \ \oplus\ (k,\ k e_j + k e_l + J_{jl},\ k e_i',\ 0)_- \\
				\oplus\ \Bigl( (J_{ij}' + J_{li}') \oplus Z^{(i)}_{-1} \oplus 0 \Bigr) \\
				\oplus\ (k,\ k e_j + k e_l + J_{jl},\ k e_i',\ 0)_+ \ \oplus\ 0 \,,
		\end{multlined} \\
		L^{(i)}_{0} &:=
		\begin{multlined}[t][.8\textwidth]
			kx \ \oplus\ (0,\ J_{ij} + J_{li},\ J_{ij}' + J_{li}',\ 0)_- \\
				\oplus\ \Bigl( (k e_j' + k e_l' + J_{jl}') \oplus Z^{(i)}_{0} \oplus (k e_j + k e_l + J_{jl}) \Bigr) \\
				\oplus\ (0,\ J_{ij} + J_{li},\ J_{ij}' + J_{li}',\ 0)_+ \ \oplus\ ky \,,
		\end{multlined} \\
		L^{(i)}_{1} &:=
		\begin{multlined}[t][.8\textwidth]
			 0 \ \oplus\ (0,\ k e_i,\ k e_j' + k e_l' + J_{jl}',\ k)_- \\
				\oplus\ \Bigl( 0 \oplus Z^{(i)}_{1} \oplus (J_{ij} + J_{li}) \Bigr) \\
				\oplus\ (0,\ k e_i,\ k e_j' + k e_l' + J_{jl}',\ k)_+ \ \oplus\ 0 \,,
		\end{multlined}
	\intertext{where}
		Z^{(i)}_{-1} &:= Z_{j \to i} + Z_{l \to i} \, , \\
		Z^{(i)}_{0} &:= Z_{i \to i} + Z_{j \to j} + Z_{l \to l} + Z_{j \to l} + Z_{l \to j} + k \xi + k \zeta , \\
		Z^{(i)}_{1} &:= Z_{i \to j} + Z_{i \to l} .
	\end{align*}	
\end{definition}

\subsection{Verification of the grading property}

\begin{proposition}\label{pr:Z-decomp}
	Let $ i \in \{1,2,3\} $. Then:
	\begin{enumerate}
		\item\label{item:Z:1} We have $ Z = Z_{-1}^{(i)} \oplus Z_0^{(i)} \oplus Z_1^{(i)} $ as $ k $\dash modules.
		\item\label{item:Z:2} The decomposition from \cref{item:Z:1} defines a $3$-grading on $Z$.
	\end{enumerate}
\end{proposition}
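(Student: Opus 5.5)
The plan is to split the statement into a sum part, a directness part and a grading part. The grading part is essentially \cref{le:Zitoj bracket}; the directness part is where the real work lies.

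\emph{The sum $Z = Z^{(i)}_{-1}+Z^{(i)}_0+Z^{(i)}_1$.} By definition $Z$ is spanned by $\xi$, $\zeta$ and the $\dd_{a,a'}$. By bilinearity and the Peirce decompositions of $J$ and $J'$, it suffices to treat $\dd_{a,a'}$ with $a\in J_{pq}$ and $a'\in J'_{rs}$. By \cref{rem:Zijlm zero} this vanishes unless $\{p,q\}\cap\{r,s\}\neq\emptyset$. Reordering the indices, which is allowed since $J_{pq}=J_{qp}$, we may write $a\in J_{il}$ and $a'\in J'_{lj}$ for suitable $i,j,l$, so that $\dd_{a,a'}\in Z_{i\to j}$. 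Hence $Z=\sum_{p,q}Z_{p\to q}+k\xi+k\zeta$. Each $Z_{p\to q}$ occurs in exactly one of the three pieces according to whether $q=i$, $p=i$, or neither (with $Z_{i\to i}$ placed in degree $0$). This gives the sum.

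\emph{Directness.} I would separate the pieces by their action on $L$, in the spirit of \cref{le:Zitoj bracket Jpq,le:Zitoj bracket L1}. Elements of $Z_{p\to q}$ with $p\neq q$ send $J_{pq'}$-type components into components with one index changed from $q$ to $p$ (or the reverse on $J'$), and they kill the $k$-components of $L_{\pm1}$. Take $f_{-1}+f_0+f_1=0$ with $f_t\in Z^{(i)}_t$, and evaluate on $(0,e_j,0,0)_+$ and $(0,e_l,0,0)_+$ and on the analogous elements built from $J'$.
\begin{itemize}
\item By \cref{le:Zitoj bracket L1}, the component in $J_{ij}+J_{il}$ of the result comes only from $f_1$.
\item By \cref{pr:Zitoj-param}, $Z_{i\to j}\cong C$ via $a\mapsto\dd_{e_i,\cubel{a}{ij}^\iota}$, and by \cref{le:D-formulas}\cref{le:D-formulas:jji} this element sends $e_j$ to $\cubel{a}{ij}$.
\item Since $Z_{i\to j}$ and $Z_{i\to l}$ act into the different components $J_{ij}$ and $J_{il}$, the Peirce components of $[f_1,(0,e_j,0,0)_+]$ and $[f_1,(0,e_l,0,0)_+]$ recover both parts of $f_1$. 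Hence $f_1=0$.
\item Symmetrically, $f_{-1}=0$, and then $f_0=0$.
\end{itemize}
The step needing the most care is checking that $Z^{(i)}_0$ contributes nothing to the $J_{ij}$-components when applied to $e_j$. For $Z_{j\to l}$, $Z_{l\to j}$ and the diagonal pieces this follows from \cref{le:less technical}. For $\xi$ and $\zeta$ it holds because they act by scalars. For $Z_{i\to i}$ it follows from \cref{le:Zitoj bracket Jpq}\cref{le:Zitoj bracket Jpq:ii}, which preserves $J_{jj}$.

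\emph{The grading property.} We must check $[Z^{(i)}_s,Z^{(i)}_t]\subseteq Z^{(i)}_{s+t}$, with the right side zero when $|s+t|=2$. Every case is covered by \cref{le:Zitoj bracket}, using that $\xi$ and $\zeta$ are central in $Z$ by \cref{rem:xi zeta formulas}.
\begin{itemize}
\item For $[Z^{(i)}_1,Z^{(i)}_1]=0$: each $Z_{i\to j}$ is abelian, and $[Z_{i\to j},Z_{i\to l}]=0$.
\item For $[Z^{(i)}_1,Z^{(i)}_{-1}]\subseteq Z^{(i)}_0$: we have $[Z_{i\to j},Z_{j\to i}]\subseteq Z_{i\to i}+Z_{j\to j}$ and $[Z_{i\to j},Z_{l\to i}]\subseteq Z_{l\to j}$, both of which lie in $Z^{(i)}_0$.
\item For $[Z^{(i)}_0,Z^{(i)}_1]\subseteq Z^{(i)}_1$: the diagonal pieces $Z_{p\to p}$ normalize every $Z_{q\to r}$, and $[Z_{j\to l},Z_{i\to j}]\subseteq Z_{i\to l}$.
\item The brackets $[Z^{(i)}_0,Z^{(i)}_0]$ and the negative-degree cases follow by the same arguments.
\end{itemize}
This last part is routine once the directness argument is in place.
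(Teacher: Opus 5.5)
Your proof is correct. The sum part and the grading part match the paper, which cites the definition of $Z$ for the first and \cref{le:Zitoj bracket,rem:xi zeta formulas} for the second. The directness argument, which is the real content, takes a different route.

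\emph{The paper's route.} The paper does not use \cref{pr:Zitoj-param} at this point. From $d_{-1}+d_0+d_1=0$ it first shows that all three pieces kill $L_{\pm2}$ and $(k,0,0,k)_\pm$. It then goes through the Peirce blocks $(0,J_{pq},J'_{pq},0)_\pm$ of $L_{\pm1}$ case by case. For each block it lists, via \cref{le:Zitoj bracket L1}, the three subspaces into which $d_{-1},d_0,d_1$ map it, and observes that these subspaces form a direct sum. Hence each $d_t$ acts trivially on every block, and so $d_t=0$ in $M$.

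\emph{Your route.} You use \cref{pr:Zitoj-param} instead: every element of $Z_{i\to j}$ is some $\dd_{e_i,\cubel{c}{ij}^\iota}$, and it is detected by its value $\cubel{c}{ij}$ on $e_j$. So one test vector per piece suffices. The vectors $(0,e_j,0,0)_+$ and $(0,e_l,0,0)_+$ kill $f_1$. Your word \enquote{symmetrically} should be made explicit: the test vector for $f_{-1}$ is $(0,e_i,0,0)_+$. This works because $Z_{j\to i}$ and $Z_{l\to i}$ send $e_i$ into $J_{ji}$ and $J_{li}$, while $Z^{(i)}_0$ keeps it in $J_{ii}$ and $Z^{(i)}_1$ annihilates it. Then $f_0=-f_1-f_{-1}=0$ follows without further work.

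\emph{Comparison.} Your route is shorter: it avoids the case list and never needs to test on $L_{\pm2}$ or on the $k$-components. The price is a dependence on \cref{pr:Zitoj-param}, whose surjectivity rests on the identities of \cref{le:dd rel}, which are specializations of \cref{le:triple}. The paper's route needs only the Peirce-shifting behaviour in \cref{le:Zitoj bracket L1} and the fact that elements of $M$ are determined by their action on $\bigoplus_{m\ne0}L_m$. It also gives slightly more: each homogeneous piece is shown to vanish block by block.
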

\begin{proof}
	Let $(i\, j\, l)$ denote the unique cyclic permutation of~$(1\, 2\, 3)$ starting from $ i $. By \cref{def:dd and L0}, we have $ Z = Z_{-1}^{(i)} + Z_0^{(i)} + Z_1^{(i)} $. Now let $ d_{-1} \in Z_{-1}^{(i)} $, $ d_0 \in Z_0^{(i)} $ and $ d_1 \in Z_1^{(i)} $ such that $ d_{-1} + d_0 + d_1 = 0 $; we have to show that $ d_{-1} = d_0 = d_1 = 0 $. Recall from \cref{def:L0 bar} that an element of $ M $ is zero if and only if it acts trivially on $ \bigoplus_{m \ne 0} L_m $. By \cref{pr:delta z}, $ d_{-1} $ and $ d_1 $ act trivially on $ L_{\pm 2} $, so $ d_0 = -d_{-1} - d_1 $ does the same. Similarly, $ d_{-1} $ and $ d_1 $ act trivially on the subspace $ (k, 0, 0, k)_\pm $ of $ L_{\pm 1} $ by \cref{le:Zitoj bracket L1}\cref{le:Zitoj bracket L1:ij}, so the same holds for $ d_0 $.
	
	It remains to show that for all $ p,q \in \{1,2,3\} $, $ d_{-1} $, $ d_1 $ and $ d_0 $ act trivially on the subspaces $ (0, J_{pq}, J_{pq}', 0)_\pm $ of $ L_{\pm 1} $. It follows from \cref{le:Zitoj bracket L1} and \cref{def:L0 bar}\cref{item:zeta and xi} that $ d_{-1} $, $ d_1 $ and $ d_0 $ map $ (0, J_{ii}, J_{ii}', 0)_\pm $ to
	\[ (0, J_{ij} + J_{il}, 0, 0)_\pm, \qquad (0, 0, J_{ij}' + J_{il}', 0)_\pm \quad \text{and} \quad (0, J_{ii}, J_{ii}', 0)_\pm, \]
	respectively. Since the sum of these three spaces is direct and $ d_{-1} + d_0 + d_1 = 0 $, we deduce that $ d_{-1} $, $ d_1 $ and $ d_0 $ all act trivially on $ (0, J_{ii}, J_{ii}', 0)_\pm $.
	
	Similarly, $ d_{-1} $, $ d_1 $ and $ d_0 $ map the spaces $ (0, J_{pp}, J_{pp}', 0)_\pm $, $ (0, J_{ip}, J_{ip}', 0)_\pm $ and $ (0, J_{pq}, J_{pq}', 0)_\pm $ (for any $ (p,q) \in \{ (j,l), (l,j)\} $) to
	\begin{align*}
		& (0, 0, J_{pi}', 0)_\pm, && (0, J_{pi}, 0, 0)_\pm, && (0, J_{pp} + J_{pq}, J_{pp}' + J_{pq}', 0)_\pm, \\
		& (0, J_{pp} + J_{pq}, J_{ii}', 0)_\pm, && (0, J_{ii}, J_{pp}' + J_{pq}', 0)_\pm, && (0, J_{ip} + J_{iq}, J_{ip}' + J_{iq}', 0)_\pm, \\
		& (0, 0, J_{iq}' + J_{pi}', 0), && (0, J_{iq} + J_{pi}, 0, 0), && (0, J_{pq}, J_{pq}', 0)_\pm,
	\end{align*}
	respectively. Again, the three subspaces in each row form a direct sum, and we conclude that $ d_{-1} $, $ d_1 $ and $ d_0 $ act trivially on each of the subspaces of the form $ (0, J_{pq}, J_{pq}', 0)_\pm $ with $p,q \in \{ 1,2,3 \}$.
	It follows that indeed $ d_{-1} = d_0 = d_1 = 0 $, and this proves \cref{item:Z:1}.
	Now \cref{item:Z:2} follows immediately from \cref{le:Zitoj bracket,rem:xi zeta formulas}.
\end{proof}

\begin{lemma}\label{le:ad ei}
	Let $i \in \{ 1,2,3 \}$ and let $(i\, j\, l)$ be the unique cyclic permutation of~$(1\, 2\, 3)$ starting from $ i $.
	Consider the $k$-module homomorphisms
	\[ \theta^{(i)}_\pm \colon L_{\pm 1} \to L_{\pm 1} \colon z \mapsto [e_i, z] . \]
	Let $z = (\lambda, b, b', \mu)_\pm \in L_{\pm 1}$ and write
	\begin{align*}
		b &= \xi_1 e_1 + \xi_2 e_2 + \xi_3 e_3 + c_{23} + c_{31} + c_{12} \in J, \\
		b' &= \xi_1' e_1' + \xi_2' e_2' + \xi_3' e_3' + c_{23}' + c_{31}' + c_{12}' \in J',
	\end{align*}
	with each $\xi_i, \xi_i' \in k$ and each $c_{jl} \in J_{jl}$, $c_{jl}' \in J_{jl}'$.
	Then
	\[ [e_i, z] = \bigl( 0,\ \lambda e_i,\ \xi_l e_j' + \xi_j e_l' - c_{jl}^\iota,\ \xi_i' \bigr)_\pm. \]
	In particular, the image and kernel of $\theta^{(i)}_\pm$ are given by
	\begin{align*}
		\im \theta^{(i)}_\pm &= \bigl( 0,\ k e_i,\ k e_j' + k e_l' + J_{jl}',\ k \bigr)_\pm \leq L_1^{(i)}, \\
		\ker \theta^{(i)}_\pm &= \bigl( 0,\ k e_i + J_{ij} + J_{li}, \ k e_j' + k e_l' + J_{ij}' + J_{jl}' + J_{li}',\ k \bigr)_\pm \leq L_0^{(i)} + L_1^{(i)} ,
	\end{align*}
	respectively.
\end{lemma}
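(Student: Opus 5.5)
The formula for $[e_i, z]$ comes straight from the action of $J$ on $L_{\pm 1}$ in \cref{def:Li as J-module}. Taking $a = e_i$ there gives
\[ [e_i, (\lambda, b, b', \mu)_\pm] = \bigl(0,\ \lambda e_i,\ e_i \times b,\ T(e_i, b')\bigr)_\pm . \]
This leaves two things to compute, $e_i \times b$ and $T(e_i, b')$, and both follow from earlier lemmas. By \cref{le:times formulas}\cref{le:times formulas:ei} applied to $a = b$, using symmetry of $\times$, we get $e_i \times b = \xi_l e_j' + \xi_j e_l' - c_{jl}^\iota$. Here $c_{jl} = \cubel{c_i}{jl}$ is the $J_{jl}$-component of $b$, and the other off-diagonal components $c_{ij}, c_{li}$ contribute nothing. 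By the $J'$-analogue of \cref{le:T formulas}\cref{le:T formulas:ei}, we get $T(e_i, b') = T'(b', e_i) = \xi_i'$. Together these give the displayed formula.

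For the image, the plan is to read the formula coordinatewise. The first coordinate is $0$. The second is $\lambda e_i$, which ranges over $k e_i$. The fourth is $\xi_i'$, which ranges over $k$. In the third coordinate, the map $c_{jl} \mapsto -c_{jl}^\iota$ is a bijection $J_{jl} \to J_{jl}'$, and $\xi_l, \xi_j$ are arbitrary. The coordinates $\lambda$, $(\xi_j, \xi_l, c_{jl})$ and $\xi_i'$ are independent, so the image is exactly $(0, k e_i, k e_j' + k e_l' + J_{jl}', k)_\pm$. Comparing with \cref{def:3 new gradings} shows this lies in $L_1^{(i)}$.

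For the kernel, $[e_i,z] = 0$ holds exactly when $\lambda = 0$, $\xi_l = \xi_j = 0$, $c_{jl} = 0$ and $\xi_i' = 0$. This uses directness of the Peirce decomposition of $J'$, so that $\xi_l e_j' + \xi_j e_l' - c_{jl}^\iota = 0$ forces each summand to vanish. What remains free is $b \in k e_i + J_{ij} + J_{li}$, $b' \in k e_j' + k e_l' + J_{ij}' + J_{jl}' + J_{li}'$ and $\mu \in k$, as claimed.

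The inclusion of the kernel in $L_0^{(i)} + L_1^{(i)}$ is then read off \cref{def:3 new gradings} piece by piece. The terms $(0, J_{ij} + J_{li}, J_{ij}' + J_{li}', 0)_\pm$ lie in $L_0^{(i)}$. The terms $(0, k e_i, k e_j' + k e_l' + J_{jl}', k)_\pm$ lie in $L_1^{(i)}$. No step is a real obstacle. The only care needed is in tracking index conventions ($\cubel{u}{lj} = \cubel{\conconj{u}}{jl}$ and cyclic versus non-cyclic orderings), to make sure the component killed by $e_i \times \cdot$ is exactly $J_{jl}$.
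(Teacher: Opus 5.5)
Your proposal is correct and follows the paper's proof. Both apply \cref{def:Li as J-module} with $a = e_i$, then evaluate $e_i \times b$ and $T(e_i,b')$ using the coordinate formulas for $\times$ and $T$ in $\Her_3(C,\Gamma)$. You also spell out the coordinatewise reading of the image and kernel, which rests on directness of the Peirce decomposition and on $k e_j'$ being free; the paper leaves this as ``the result follows''. Finally, you cite \cref{le:times formulas} for the formula for $e_i \times b$, which is the right reference; the paper cites \cref{le:T formulas} there.
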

\begin{proof}
	By \cref{le:T formulas}, we have
	\[ e_i \times b = \xi_l e_j' + \xi_j e_l' - c_{jl}^\iota \quad \text{and} \quad T(e_i, b') = \xi_i' . \]
	Therefore, by \cref{def:Li as J-module}, we have
	\begin{align*}
		[e_i, z]
		&= \bigl( 0,\ \lambda e_i,\ e_i \times b,\ T(e_i, b') \bigr)_\pm \\
		&= \bigl( 0,\ \lambda e_i,\ \xi_l e_j' + \xi_j e_l' - c_{jl}^\iota,\ \xi_i' \bigr)_\pm ,
	\end{align*}
	and the result follows.
\end{proof}

We now arrive at a sequence of lemmas dealing with some of the more involved cases required to check that \cref{def:3 new gradings} gives rise to $5$-gradings of $L$. We advise the reader to have a copy of \cref{def:3 new gradings} ready, to be able to follow our computations.
The different cases and subcases in our proofs correspond to the different $L_i$-components for each of the three subspaces occuring in \cref{def:3 new gradings}.

\begin{lemma}\label{lem:igrading:-11}
	Let $ i \in \{1,2,3\} $. Then $ [L_{-1}^{(i)}, L_1^{(i)}] \subseteq L_0^{(i)} $.
\end{lemma}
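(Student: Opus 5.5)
The proof is a case-by-case check, organised by the decomposition of $L^{(i)}_{\pm 1}$ in \cref{def:3 new gradings} into its pieces lying in $L_{-1}$, $L_0$ and $L_1$ (for the $\xi$-grading). Fix the cyclic permutation $(i\,j\,l)$. We write $L^{(i)}_{-1} = A_- \oplus A_0 \oplus A_+$ with
\begin{align*}
A_\pm &= (k, ke_j + ke_l + J_{jl}, ke_i', 0)_\pm, & A_0 &= J_{ij}' + J_{li}' + Z^{(i)}_{-1},
\end{align*}
and $L^{(i)}_1 = B_- \oplus B_0 \oplus B_+$ with
\begin{align*}
B_\pm &= (0, ke_i, ke_j' + ke_l' + J_{jl}', k)_\pm, & B_0 &= Z^{(i)}_1 + J_{ij} + J_{li}.
\end{align*}
By bilinearity and the $\xi$-grading, $[A_\epsilon, B_\delta] \subseteq L_{\epsilon+\delta}$, and this vanishes when $|\epsilon+\delta|>2$. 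This leaves nine brackets, and we check each lands in $L^{(i)}_0$ using \cref{ta:Lie}.

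\textbf{Brackets with both entries in the $L_0$-pieces.} For $[A_0, B_0]$ we use \cref{le:Zitoj bracket} and \cref{le:Zitoj bracket Jpq}, together with \cref{pr:Jd} for $[J_{pq}', J_{rs}] = \pm\dd(J_{rs}, J_{pq}')$. The bracket $[J_{ij}'+J_{li}', J_{ij}+J_{li}]$ gives $\dd$'s of the form $Z_{ij,ij}, Z_{li,ij}, \ldots$. By \cref{le:dd rel}, these lie in $Z_{i\to i}+Z_{j\to j}+Z_{l\to l}+Z_{j\to l}+Z_{l\to j}$, up to multiples of $2\zeta-\xi$. For instance, $\dd(J_{li}, J_{ij}') \subseteq Z_{l \to j}$. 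The remaining pairs are handled by the bracket relations:
\begin{itemize}
\item $[Z_{j\to i}, Z_{i\to j}] \subseteq Z_{i\to i}+Z_{j\to j}$;
\item $[Z_{j\to i}, Z_{i\to l}] \subseteq Z_{j\to l}$;
\item $[Z^{(i)}_{-1}, J_{ij}] \subseteq J_{jj}+J_{jl}$ and $[Z^{(i)}_1, J_{ij}'] \subseteq J_{jj}'+J_{jl}'$, by the index rules in \cref{le:Zitoj bracket Jpq};
\item the brackets with $k\xi+k\zeta$, which preserve each piece.
\end{itemize}

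\textbf{Brackets with one entry in an $L_0$-piece.} These are $[A_0, B_\pm]$ and $[A_\pm, B_0]$. Here we use \cref{le:Zitoj bracket L1} for the $Z$-parts and \cref{def:Li as J-module} for the $J, J'$-parts, tracking Peirce indices with \cref{le:times blocks} and \cref{le:T formulas}. For example, $[c', (0, e_i, b', \mu)_\pm]$ with $c' \in J_{ij}'$ equals $-(T(e_i,c'), c'\times b', 0,0)_\pm$. Since $T(e_i, J_{ij}')=0$ and $J_{ij}'\times(ke_j'+ke_l'+J_{jl}') \subseteq J_{ij}+J_{li}$, this bracket lands in $(0, J_{ij}+J_{li}, 0,0)_\pm \subseteq L^{(i)}_0$.

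\textbf{Brackets with both entries in $L_{\pm1}$-pieces.} For $[A_\pm, B_\pm]$ we get a multiple of $x$ or $y$, which lies in $L^{(i)}_0$. For $[A_+, B_+]$, for instance, the coefficient is $T(b,c')-T(c,b')+\mu\nu-\lambda\rho$, and we only need it to be well defined; $ky \subseteq L_0^{(i)}$ anyway. The cases $[A_-, B_+]$ and $[A_+, B_-]$ are the main obstacle, because they use formula \eqref{eq:L-1 L1}. The $J'$- and $J$-components come out as $-\lambda c' + b\times c - \nu b'$ and $-\rho b + b'\times c' - \mu c$. By \cref{le:times blocks} these lie in $ke_j'+ke_l'+J_{jl}'$ and $ke_j+ke_l+J_{jl}$, since $\rho=0$ in $A$ and $c \in ke_i$. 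The $Z$-component is
\[ (\lambda\rho - T(b,c'))\zeta + (T(c,b')-\mu\nu)(\xi-\zeta) + \dd_{b,c'} + \dd_{c,b'}, \]
with $b \in ke_j+ke_l+J_{jl}$, $c' \in ke_j'+ke_l'+J_{jl}'$, $c \in ke_i$ and $b' \in ke_i'$. Then $\dd_{c,b'} \in Z_{ii,ii} \subseteq Z_{i\to i}$. The term $\dd_{b,c'}$ is a sum of $Z_{pq,rs}$ with $p,q,r,s \in \{j,l\}$. Each such term lies in $Z_{j\to j}+Z_{l\to l}+Z_{j\to l}+Z_{l\to j}$ by the definition of $Z_{p\to q}$ and \cref{le:dd rel}\cref{le:dd rel:jli,le:dd rel:jji}. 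For example, $\dd(J_{jl}, J_{jl}')$ is handled via \cref{le:dd rel:iji trace} or directly, since it lies in $Z_{j\to l}$ after reindexing with $J_{jl}=J_{lj}$. Hence everything lies in $Z^{(i)}_0$. The delicate point is ensuring that terms such as $\dd(e_j, J_{jl}')$ or $\dd(J_{jl}, J_{jl}')$ are genuinely in the listed $Z_{p\to q}$ rather than spilling into $Z_{i\to\ast}$. I expect to settle this by writing $\dd(J_{pq}, J_{rs}')$ with a shared index and applying \cref{rem:Zijlm zero}.
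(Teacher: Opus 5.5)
Your proposal is correct and is essentially the paper's own proof: you split $L^{(i)}_{-1}$ and $L^{(i)}_1$ into their pieces lying in $L_{-1}$, $L_0$, $L_1$ and check each bracket by tracking Peirce indices with \cref{le:Zitoj bracket}, \cref{le:Zitoj bracket Jpq}, \cref{le:Zitoj bracket L1}, \cref{le:times blocks} and \cref{le:T formulas}. Only cosmetic fixes are needed.
\begin{itemize}
\item Your sample bracket $[c',(0,e_i,b',\mu)_\pm]$ also has the $J'$-component $-\mu c'\in J'_{ij}$, which is still inside $L^{(i)}_0$.
\item In $[A_-,B_+]$, what removes the terms $\mu c\in ke_i$ and $\nu b'\in ke_i'$ is the vanishing of the last coordinate of $A$ and the first coordinate of $B$, not ``$\rho=0$''.
\item $Z(J_{jl},J'_{jl})=Z_{jl,lj}$ lies in $Z_{j\to j}$, not in $Z_{j\to l}$.
\end{itemize}
In fact \cref{le:dd rel} is never needed. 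Each $Z(J_{pq},J'_{rs})$ is either $0$ by \cref{rem:Zijlm zero}, or, after writing the two Peirce pairs with a common middle index, one of the summands $Z_{pm,ms}$ of $Z_{p\to s}$ by \cref{def:Zijlm}. This settles the ``delicate point'' you anticipated.
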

\begin{proof}
	Let $(i\, j\, l)$ denote the unique cyclic permutation of~$(1\, 2\, 3)$ starting from $ i $. Let $ z_1 \in L_{-1}^{(i)} $ and $ z_2 \in L_1^{(i)} $. We will write
	\[ z_1 = (\lambda, b, \lambda_i' e_i', 0)_\pm \quad \text{or} \quad z_2 = (0, \mu_i e_i, c', \mu)_\pm \]
	if $ z_1 $ or $ z_2 $, respectively, lies in $ L_{\pm 1} $, where $ \lambda, \lambda_i', \mu_i, \mu \in k $, $ b \in ke_j + ke_l + J_{jl} $ and $ c' \in ke_j' + ke_l' + J_{jl}' $.
	
	First, we consider the case that $ z_1 \in L_{\pm 1} $. As the first subcase, we assume that $ z_2 \in L_{\pm 1} $ as well. If $ z_1 $ and $ z_2 $ are both in $ L_1 $ or both in $ L_{-1} $, then $ [z_1, z_2] \in k y \subseteq L_0^{(i)} $ or $ [z_1,z_2] \in kx \subseteq L_0^{(i)} $, respectively. If $ z_1 \in L_{-1} $ and $ z_2 \in L_1 $, then by \cref{def:L bracket},
	\begin{multline*}
		[z_1, z_2] = \brackets[\big]{-\lambda c' + b \times \mu_i e_i} \\
		\begin{aligned}
			& + \brackets[\big]{(\lambda \mu - T(b,c'))\zeta + T(\mu_i e_i, \lambda_i' e_i')(\xi - \zeta) + \dd_{b,c'} + \dd_{\mu_i e_i, \lambda_i' e_i'}} \\
			& + \brackets[\big]{- \mu b + \lambda_i' e_i' \times c'} .
		\end{aligned}
	\end{multline*}
	Now
	\begin{align*}
		b \times \mu_i e_i \in ke_l' + ke_j' + J_{jl}' \quad &\text{and} \quad \lambda_i' e_i' \times c' \in ke_l + ke_j + J_{jl}, \\
		\dd_{b,c'} \in Z_{j \to j} + Z_{j \to l} + Z_{l \to l} + Z_{l \to j} \quad &\text{and} \quad \dd_{\mu_i e_i, \lambda_i' e_i'} \in Z_{i \to i},
	\end{align*}
	by \cref{le:times formulas}\cref{le:times formulas:ei} and \cref{def:Zijlm}, respectively. Hence $ [z_1, z_2] \in L_0^{(i)} $ in this subcase. The subcase $ z_1 \in L_{1} $, $ z_2 \in L_{-1} $ is completely similar.
	
	We now turn to the subcase $ z_2 \in L_0 $. If $ z_2 \in J_{ij} + J_{li} $, then by \cref{def:Li as J-module},
	\begin{align*}
		[z_1, z_2] &= -[z_2, z_1] = -\brackets[\big]{0, \lambda z_2, z_2 \times b, T(z_2, \lambda_i' e_i')}
	\end{align*}
	where $ z_2 \times b \in J_{ij}' + J_{il}' $ by \cref{le:times blocks} and $ T(z_2, \lambda_i' e_i') = 0 $ by \cref{le:T formulas}\cref{le:T formulas:ortho}. Hence $ [z_1, z_2] \in L_0^{(i)} $ in this subcase. In the final subcase $ z_2 \in Z_1^{(i)} $, we have
	\[ [z_1, z_2] \in (0,J_{ij} + J_{il}, J_{ij}' + J_{il}', 0)_\pm \subseteq L_0^{(i)} \]
	by \cref{le:Zitoj bracket L1}\cref{le:Zitoj bracket L1:ij}.
	
	Second, we consider the case that $ z_1 \in J_{ij}' + J_{li}' $. If $ z_2 \in L_{\pm 1} $, then \cref{def:Li as J-module} yields
	\begin{align*}
		[z_1, z_2] &= -\brackets[\big]{T(\mu_i e_i, z_1), z_1 \times c', \mu z_1, 0}_\pm
	\end{align*}
	where $ T(\mu_i e_i, z_1) = 0 $ by \cref{le:T formulas}\cref{le:T formulas:ortho} and $ z_1 \times c' \in J_{ij} + J_{li} $ by \cref{le:times blocks}. If $ z_2 \in J_{ij} + J_{li} $, then
	\begin{align*}
		[z_1, z_2] &= \dd_{z_2, z_1} \in Z_{i \to i} + Z_{j \to l} + Z_{l \to j} \subseteq L_0^{(i)}.
	\end{align*}
	If $ z_2 \in Z_1^{(i)} $, then by \cref{le:Zitoj bracket Jpq}\cref{le:Zitoj bracket Jpq:ij}, $ [z_1, z_2] \in J_{jj}' + J_{ll}' + J_{jl}' \subseteq L_0^{(i)} $, which finishes the case $ z_1 \in J_{ij}' + J_{li}' $.
	
	Finally, we assume that $ z_1 \in Z_{-1}^{(i)} $. If $ z_2 \in L_{\pm 1} $, then we infer from \cref{le:Zitoj bracket L1}\cref{le:Zitoj bracket L1:ij} that
	\[ [z_1, z_2] \in (0, J_{ji} + J_{li}, J_{ji}' + J_{li}', 0)_\pm \subseteq L_0^{(i)}. \]
	If $ z_2 \in J_{ij} + J_{li} $, then $ [z_1, z_2] \in J_{jj} + J_{ll} + J_{lj} $ by \cref{le:Zitoj bracket Jpq}\cref{le:Zitoj bracket Jpq:ij}.
	If finally $ z_2 \in Z_1^{(i)} $, then
	\[ [z_1, z_2] \in Z_{i \to i} + Z_{j \to j} + Z_{j \to l} + Z_{l \to j} + Z_{l \to l} \subseteq L_0^{(i)} \]
	by \cref{le:Zitoj bracket} (as we already observed in \cref{pr:Z-decomp}\cref{item:Z:2}).
\end{proof}

\begin{lemma}\label{lem:igrading:01}
	Let $ i \in \{1,2,3\} $. Then $ [L_{0}^{(i)}, L_1^{(i)}] \subseteq L_1^{(i)} $.
\end{lemma}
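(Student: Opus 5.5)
The plan is to mirror the proof of \cref{lem:igrading:-11}: a case analysis over the $L_m$-components of $z_1 \in L_0^{(i)}$ and $z_2 \in L_1^{(i)}$. Let $(i\,j\,l)$ be the cyclic permutation starting from $i$. By bilinearity we may take $z_1$ in one of the summands
\[ kx, \quad (0, J_{ij}+J_{li}, J_{ij}'+J_{li}', 0)_\pm, \quad ke_j'+ke_l'+J_{jl}', \quad Z_0^{(i)}, \quad ke_j+ke_l+J_{jl}, \quad ky, \]
and $z_2$ in one of
\[ (0, ke_i, ke_j'+ke_l'+J_{jl}', k)_\pm, \quad Z_1^{(i)}, \quad J_{ij}+J_{li}. \]
For each pair we read off the bracket from \cref{ta:Lie} and locate its pieces using \cref{le:times formulas,le:times blocks,le:T formulas}. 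Brackets of elements of $J$ or $J'$ with elements of $Z$ are handled by \cref{le:Zitoj bracket Jpq}, brackets of $Z$ with $L_{\pm1}$ by \cref{le:Zitoj bracket L1}, and brackets inside $Z$ by \cref{le:Zitoj bracket}.

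Several cases are quick. For $z_1 \in kx$ or $ky$, the bracket either vanishes or carries $(\nu,c,c',\rho)_\pm$ to the same tuple with the opposite sign index. Since the $L_{\pm1}$-parts of $L_1^{(i)}$ are identical for $+$ and $-$, this preserves membership. For $z_1 \in Z_0^{(i)}$, the summands $k\xi$ and $k\zeta$ act diagonally on the $L_{m,n}$ and hence preserve every summand listed above. For $Z_{p\to p}$, \cref{le:Zitoj bracket Jpq}\cref{le:Zitoj bracket Jpq:ii} and \cref{le:Zitoj bracket L1}\cref{le:Zitoj bracket L1:ii} show that Peirce spaces are preserved. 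Together with \cref{pr:delta z}, whose scalar parts are $T(a,a')=0$ or multiples of the scalar coordinates, this keeps $(0,ke_i,\dots,k)_\pm$ inside itself. For $Z_{j\to l}$ and $Z_{l\to j}$, \cref{le:Zitoj bracket Jpq}\cref{le:Zitoj bracket Jpq:ij} sends $e_i$ to $0$, sends $J_{jl}'+ke_j'+ke_l'$ into the same space, and sends $J_{ij}+J_{li}$ into $J_{ij}+J_{li}$. On $Z$, the bracket $[Z_0^{(i)}, Z_1^{(i)}] \subseteq Z_1^{(i)}$ is the $3$-grading of \cref{pr:Z-decomp}\cref{item:Z:2}.

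The remaining cases are $z_1 \in J$ or $J'$ parts, or in $(0,J_{ij}+J_{li},J_{ij}'+J_{li}',0)_\pm$, paired with each type of $z_2$. The typical checks run as follows.
\begin{enumerate}
\item For $z_1 = b\in ke_j+ke_l+J_{jl}$ and $z_2=(0,\mu_ie_i,c',\mu)_\pm$, we get $(0,0,\mu_i b\times e_i, T(b,c'))_\pm$, and $b\times e_i \in ke_j'+ke_l'+J_{jl}'$ by \cref{le:times formulas}\cref{le:times formulas:ei}.
\item For $z_1 = b' \in ke_j'+ke_l'+J_{jl}'$, we get $-(T(\mu_ie_i,b'), b'\times c', \mu b',0)_\pm$. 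Here $T(e_i,b')=0$ and $b'\times c' \in ke_i$, so this lies in $L_1^{(i)}$.
\item For $z_1 \in (0,c,c'',0)_\pm$ with $c\in J_{ij}+J_{li}$ and $z_2$ in the $L_{\pm1}$-part, the bracket lands in $L_{\pm2}$. Its coefficient $T(c,c')-T(\mu_ie_i,c'')+0$ vanishes by Peirce orthogonality, which is required because $L_{\pm 2} \subseteq L_{\mp2}^{(i)}$ does not lie in $L_1^{(i)}$.
\item For the opposite signs, $(0,c,c'',0)_-$ paired with $(0,\mu_ie_i,c',\mu)_+$, we use \eqref{eq:L-1 L1}. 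The $J$-part $c\times \mu_ie_i$ lies in $J_{ij}'+J_{li}'$, and this is where care is needed. The $\zeta$- and $\xi$-coefficients vanish by \cref{le:T formulas}\cref{le:T formulas:ortho}. The terms $\dd_{c,c'}$ and $\dd_{\mu_ie_i,c''}$ lie in $Z_{i\to j}+Z_{i\to l}$ after using \cref{le:dd rel}\cref{le:dd rel:jji,le:dd rel:jli} to rewrite $Z(J_{ij},J_{jl}')$ and the like as $Z_{i\to l}$.
\end{enumerate}

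I expect the main obstacle to be this last bookkeeping of $\dd$-terms and of the $J'$- and $J$-components in the mixed $L_{-1}\times L_1$ brackets. There one must verify, for instance, that $c\times e_i$ for $c \in J_{ij}$ lands in $J_{ij}'$, and that the term $-\mu c + c''\times c'$ lands in $J_{ij}+J_{li}$, which places these terms in $L_1^{(i)}$. I would organize this step as a table over pairs of Peirce indices, using \cref{le:times blocks} and \cref{rem:Zijlm zero} to discard the zero terms.
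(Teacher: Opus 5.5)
Your case analysis over the summands of $L_0^{(i)}$ and $L_1^{(i)}$ is the paper's own approach. However, one step in the key mixed case is wrong as stated.

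For $z_1=(0,c,c'',0)_-$ with $c\in J_{ij}+J_{li}$ and $z_2=(0,\mu_ie_i,c',\mu)_+$, the $J'$-component of $[z_1,z_2]$ given by \eqref{eq:L-1 L1} is $c\times\mu_ie_i$. You propose to check that this lies in $J'_{ij}+J'_{li}$ and conclude that it lies in $L_1^{(i)}$. But by \cref{def:3 new gradings}, the summand $J'_{ij}+J'_{li}$ of $J'\le L_0$ belongs to $L_{-1}^{(i)}$, and $L_1^{(i)}\cap J'=0$. A nonzero element there would therefore contradict the lemma, and the membership you plan to verify proves nothing.

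What is actually true, and what the paper uses, is $c\times e_i=0$. This follows from \cref{le:times blocks}, since $J_{ji}\times J_{ii}=0=J_{li}\times J_{ii}$. Alternatively, it follows from \cref{le:times formulas}\cref{le:times formulas:ei}, because $c$ has no diagonal and no $J_{jl}$-component. The same correction is needed for the symmetric pairing $(0,c,c'',0)_+$ with $(0,\mu_ie_i,c',\mu)_-$, whose $J'$-component is $-c\times\mu_ie_i$. (You also call this the ``$J$-part''; it is the $J'$-part.)

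Two smaller points.
\begin{enumerate}
\item In your same-sign case, $L_{\pm2}$ (that is, $ky$ and $kx$) lies in $L_0^{(i)}$, not in $L_{\mp2}^{(i)}$, which are $ke_i$ and $ke_i'$. The coefficient must still vanish because $L_{\pm2}\cap L_1^{(i)}=0$, and it does vanish by orthogonality.
\item You do not need \cref{le:dd rel} for the $\dd$-terms. The spaces $Z(J_{ij},J'_{jj})=Z_{ij,jj}$, $Z(J_{il},J'_{lj})=Z_{il,lj}$ and $Z(J_{ii},J'_{ij})=Z_{ii,ij}$ are by \cref{def:Zijlm} literally summands of $Z_{i\to j}$. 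Likewise $Z(J_{ij},J'_{jl})$, $Z(J_{il},J'_{ll})$ and $Z(J_{ii},J'_{il})$ are summands of $Z_{i\to l}$. Pairs such as $(J_{ij},J'_{ll})$ give $0$ by \cref{rem:Zijlm zero}.
\end{enumerate}
With these fixes, the remaining cases you only sketch go through exactly as in the paper via \cref{le:Zitoj bracket Jpq,le:Zitoj bracket L1}. These are $z_1$ in the $L_{\pm1}$-, $J$- or $J'$-parts against $z_2\in Z_1^{(i)}$ or $z_2\in J_{ij}+J_{li}$.
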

\begin{proof}
	Let $(i\, j\, l)$ denote the unique cyclic permutation of~$(1\, 2\, 3)$ starting from $ i $. Let $ z_1 \in L_{0}^{(i)} $ and $ z_2 \in L_1^{(i)} $. We will write
	\[ z_1 = (0, b_{ij} + b_{li}, b_{ij}' + b_{li}', 0)_\pm \quad \text{or} \quad z_2 = (0, \mu_i e_i, c', \mu)_\pm \]
	if $ z_1 $ or $ z_2 $, respectively, lies in $ L_{\pm 1} $, where $ \mu_i, \mu \in k $, $ b_{ij} \in J_{ij} $, $ b_{li} \in J_{li} $, $ b_{ij}' \in J_{ij}' $, $ b_{li}' \in J_{li}' $ and $ c' \in ke_j' + ke_l' + J_{jl}' $.
	
	First assume that $ z_1 \in \{x,y\} $. If $ z_2 = (0, \mu_i e_i, c', \mu)_\pm \in L_{\pm 1} $, then
	\[ [z_1, z_2] \in \{ 0, (0, \mu_i e_i, c', \mu)_\mp, -(0, \mu_i e_i, c', \mu)_\mp\} \subseteq L_1^{(i)} \]
	by \cref{def:L bracket}. If $ z_2 \in J_{ij} + J_{li} $ or $ z_2 \in Z_1^{(i)} $, we have $ [z_1, z_2] = 0 $ by \cref{def:Li as J-module} or \cref{pr:delta z}, respectively.
	
	Second, we consider the case $ z_1 \in L_{\pm 1} $. We begin with the subcase $ z_2 \in L_{\pm 1} $. If $ z_1 $ and $ z_2 $ both lie in $ L_1 $, then by \cref{def:L bracket} and \cref{le:T formulas}\cref{le:T formulas:ortho},
	\[ [z_1, z_2] = \brackets[\big]{T(b_{ij} + b_{li}, c') - T(\mu_i e_i, b_{ij}' + b_{li}')}x = (0-0)x = 0. \]
	The same holds if $ z_1, z_2 $ both lie in $ L_{-1} $. If $ z_1 \in L_{-1} $ and $ z_2 \in L_{1} $, then \cref{def:L bracket} gives us
	\begin{multline*}
		[z_1, z_2] = (b_{ij} + b_{li}) \times \mu_i e_i \\
		\begin{aligned}
			&+ \brackets[\big]{-T(b_{ij} + b_{li}, c')\zeta + T(\mu_i e_i, b_{ij}' + b_{li}')(\xi - \zeta) + \dd_{b_{ij} + b_{li}, c'} + \dd_{\mu_i e_i, b_{ij}' + b_{li}'}} \\
			&+ \brackets[\big]{-\mu (b_{ij} + b_{li}) + (b_{ij}' + b_{li}') \times c'}.
		\end{aligned}
	\end{multline*}
	Now
	\begin{align*}
		(b_{ij} + b_{li}) \times \mu_i e_i = 0 \quad &\text{and} \quad (b_{ij}' + b_{li}') \times c' \in J_{ij} + J_{il}, \\
		T(b_{ij} + b_{li}, c') = 0 \quad &\text{and} \quad T(\mu_i e_i, b_{ij}' + b_{li}') = 0, \\
		\dd_{b_{ij} + b_{li}, c'} \in Z_{i \to j} + Z_{i \to l} \quad &\text{and} \quad \dd_{\mu_i e_i, b_{ij}' + b_{li}'} \in Z_{i \to j} + Z_{i \to l},
	\end{align*}
	by \cref{le:times blocks}, \cref{le:T formulas}\cref{le:T formulas:ortho} and \cref{def:Zijlm}, respectively, so $ [z_1, z_2] \in L_1^{(i)} $.
	The case $z_1 \in L_1$ and $z_2 \in L_{-1}$ is similar.
	
	We continue with the subcase $ z_2 \in J_{ij} + J_{li} $. By \cref{def:Li as J-module},
	\begin{align*}
		[z_1, z_2] &= -[z_2, (0, b_{ij} + b_{li}, b_{ij}' + b_{li}', 0)] = -\brackets[\big]{0, 0, z_2 \times (b_{ij} + b_{li}), T(z_2, b_{ij}' + b_{li}')}_\pm,
	\end{align*}
	which is an element of $ L_1^{(i)} $ because $ z_2 \times (b_{ij} + b_{li}) \in J_{ll}' + J_{jl}' + J_{jj}' $ by \cref{le:times blocks}.
	In the last subcase $ z_2 \in Z_1^{(i)} $, we have
	\[ [z_1, z_2] \in \brackets{0, J_{ii}, J_{jj}' + J_{ll}' + J_{jl}', 0}_\pm \subseteq L_1^{(i)} \]
	by \cref{le:Zitoj bracket L1}\cref{le:Zitoj bracket L1:ij}.
	
	Third, we turn to the case $ z_1 \in ke_j + ke_l + J_{jl} $, and we simultaneously consider the opposite case of an element $ z_1' \in ke_j' + ke_l' + J_{jl} $. If $ z_2 \in L_{\pm 1} $, then \cref{def:Li as J-module} yields
	\begin{align*}
		[z_1, z_2] &= \brackets[\big]{0, 0, z_1 \times \mu_i e_i, T(z_1, c')}_\pm, \quad [z_1', z_2] = -\brackets[\big]{T(\mu_i e_i, z_1'), z_1' \times c', \mu z_1', 0}_\pm
	\end{align*}
	where $ z_1 \times \mu_i e_i \in J_{ll}' + J_{jj}' + J_{jl}' $ and $ z_1' \times c' \in J_{ii} $ by \cref{le:times blocks} and $ T(\mu_i e_i, z_1') = 0 $ by \cref{le:T formulas}\cref{le:T formulas:ortho}. Hence $ [z_1, z_2], [z_1', z_2] \in L_1^{(i)} $ in this subcase. If $ z_2 \in J_{ij} + J_{li} $, then $ [z_1, z_2] = 0 $ because $ J $ is abelian and $ [z_1', z_2] = \dd_{z_2, z_1'} \in Z_{i \to j} + Z_{i \to l} \in L_1^{(i)} $. If $ z_2 \in Z_1^{(i)} $, then it follows from \cref{le:Zitoj bracket Jpq} that $ [z_1, z_2] \in J_{ij} + J_{il} $ and $ [z_1', z_2] = 0 $.
	
	Finally, we assume that $ z_1 \in Z_0^{(i)} $. By \cref{def:L0 bar}\cref{item:zeta and xi} and \cref{rem:xi zeta formulas}, the assertion is satisfied for $ z_1 \in \{\xi, \zeta\} $, so we may assume that
	\[ z_1 \in Z_{i \to i} + Z_{j \to j} + Z_{l \to l} + Z_{j \to l} + Z_{l \to j}. \]
	If $ z_2 \in L_{\pm 1} $, then
	\[ [z_1, z_2] \in (0, ke_i, ke_j' + ke_l' + J_{jl}', k)_\pm \subseteq L_1^{(i)} \]
	by \cref{le:Zitoj bracket L1}. If $ z_2 \in J_{ij} + J_{li} $, then $ [z_1, z_2] \in J_{ij} + J_{li} $ by \cref{le:Zitoj bracket Jpq}. In the final subcase $ z_2 \in Z_1^{(i)} $, we have $ [z_1, z_2] \in Z_1^{(i)} $ by \cref{le:Zitoj bracket} (as we already observed in \cref{pr:Z-decomp}\cref{item:Z:2}).
\end{proof}

\begin{lemma}\label{lem:igrading:00}
	Let $ i \in \{1,2,3\} $. Then $ [L_{0}^{(i)}, L_0^{(i)}] \subseteq L_0^{(i)} $.
\end{lemma}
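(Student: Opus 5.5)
We follow the same pattern as in the proofs of \cref{lem:igrading:-11,lem:igrading:01}: fix the cyclic permutation $(i\,j\,l)$ starting from $i$ and split $L_0^{(i)}$ into its seven summands from \cref{def:3 new gradings}, namely $kx$, $ky$, $(0, J_{ij}+J_{li}, J_{ij}'+J_{li}', 0)_\pm$, $ke_j'+ke_l'+J_{jl}'$, $ke_j+ke_l+J_{jl}$ and $Z_0^{(i)}$. We then check $[z_1,z_2] \in L_0^{(i)}$ for $z_1, z_2$ in each pair of summands. By skew-symmetry only unordered pairs are needed. The symmetry between $J$ and $J'$ (and between $L_{1}$ and $L_{-1}$) lets us treat the $J'$-cases alongside the $J$-cases, as in the third case of the proof of \cref{lem:igrading:01}.

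The easy cases come first. Brackets of $x,y$ with $L_0$, or with each other, give $0$, $\xi$ or $\pm$ the same $L_{\mp1}$-element, and these stay in $L_0^{(i)}$ by \cref{def:L bracket,def:Li as J-module}. The elements $\xi,\zeta$ act diagonally by \cref{rem:grading}. If $z_1 \in Z_0^{(i)}$, then \cref{le:Zitoj bracket L1}, \cref{le:Zitoj bracket Jpq} and \cref{pr:Z-decomp}\cref{item:Z:2} show that $Z_{p\to q}$ with $p\ne q$ shifts Peirce indices $p\mapsto q$ and $Z_{p\to p}$ preserves them. Since $Z_0^{(i)}$ is spanned by $Z_{p\to p}$ and $Z_{j\to l}, Z_{l\to j}$, it never creates or destroys an index $i$, so it preserves every summand. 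For two elements of $L_0 = J'\oplus Z\oplus J$, we have $[J,J]=0=[J',J']$. Moreover $[c',c] = -\dd_{c,c'}$ for $c\in ke_j+ke_l+J_{jl}$, $c'\in ke_j'+ke_l'+J_{jl}'$ lies in $Z_{j\to j}+Z_{l\to l}+Z_{j\to l}+Z_{l\to j}$ by \cref{def:Zijlm}, using \cref{rem:Zijlm zero}.

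The substantive cases are the mixed ones. For $z_1 \in ke_j+ke_l+J_{jl}$ acting on $z_2=(0,b_{ij}+b_{li},b'_{ij}+b'_{li},0)_\pm$, \cref{def:Li as J-module} gives $(0,0,z_1\times(b_{ij}+b_{li}),T(z_1,b'_{ij}+b'_{li}))_\pm$. Here \cref{le:times blocks} places $z_1\times(b_{ij}+b_{li})$ in $J_{ij}'+J_{li}'$, and \cref{le:T formulas}\cref{le:T formulas:ortho} kills the trace term. The $J'$ version is analogous: the $k$-component $T(b,z_1')$ vanishes by orthogonality and $z_1'\times b'\in J_{ij}+J_{li}$. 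The hardest case is the bracket of two elements of $(0,J_{ij}+J_{li},J_{ij}'+J_{li}',0)_\pm$. For equal signs this lands in $kx$ or $ky$, which is fine. For opposite signs we expand \eqref{eq:L-1 L1} and check each term. The terms $b\times c$ and $b'\times c'$ lie in $J_{jl}+J_{jj}+J_{ll}$ and their primed analogues by \cref{le:times blocks}, since $J_{ij}\times J_{ij}\subseteq J_{ll}'$ and $J_{ij}\times J_{li}\subseteq J_{jl}'$. The trace terms only produce multiples of $\zeta,\xi$. Finally, $\dd_{b,c'}$ and $\dd_{c,b'}$ with $b\in J_{ij}+J_{li}$, $c'\in J_{ij}'+J_{li}'$ lie in sums of $Z_{ij,ji}\subseteq Z_{i\to i}+Z_{j\to j}$ and $Z_{ij,il}\subseteq Z_{j\to l}$ (and similar), all inside $Z_0^{(i)}$. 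The main care needed is the bookkeeping that identifies each $Z_{pq,rs}$ with the correct $Z_{p'\to q'}$ via \cref{def:Zijlm}. In particular, one must confirm that no term of the form $Z_{p\to i}$ or $Z_{i\to p}$ with $p\neq i$ arises; this holds because every Peirce index $i$ appears once on each side.
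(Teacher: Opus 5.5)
Your proposal is correct and follows essentially the same route as the paper's proof: you split $L_0^{(i)}$ into the same summands and check the brackets pairwise using \cref{le:times blocks}, \cref{le:T formulas}\cref{le:T formulas:ortho}, \cref{le:Zitoj bracket Jpq,le:Zitoj bracket L1} and the closure of $Z_0^{(i)}$ under the bracket. The only slips are harmless ones: $[c',c]=\dd_{c,c'}$ carries no minus sign, and on $J$ an element of $Z_{p\to q}$ replaces the index $q$ by $p$ (it is on $J'$ that it replaces $p$ by $q$), which does not affect your count of indices equal to $i$.
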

\begin{proof}
	Let $(i\, j\, l)$ denote the unique cyclic permutation of~$(1\, 2\, 3)$ starting from $ i $. Let $ z_1, z_2 \in L_0^{(i)} $. We will write
	\[ z_1 = (0, b, b', 0)_\pm \quad \text{or} \quad z_2 = (0, c, c', 0)_\pm \]
	if $ z_1 $ or $ z_2 $, respectively, lies in $ L_{\pm 1} $, where $ b,c \in J_{ij} + J_{li} $ and $ b', c' \in J_{ij}' + J_{li}' $.
	
	First, we consider the case $ z_1 \in \{x,y\} $. We have $ [x,x] = 0 = [y,y] $ and $ [x,y] = \xi $ by \cref{def:L bracket}.
	By \cref{def:L0 bar}\cref{item:zeta and xi} and \cref{def:Li as J-module}, we have $ [x,L_0] \leq kx $ and $ [y,L_0] \leq ky $.
	This already covers the subcases $ z_2 \in \{x,y\} $ and $ z_2 \in L_0 $. For $ z_2 \in L_1 $, we have $ [y,z_2] = 0 $ and $ [x,z_2] = -(0, c, c', 0)_- \in L_0^{(i)} $ by \cref{def:L bracket}. The subcase $ z_2 \in L_{-1} $ is completely similar.
	
	Second, we assume that $ z_1 \in L_{\pm 1} $. We begin with the subcase $ z_2 \in L_{\pm 1} $. \Cref{def:L bracket} yields that $ [z_1, z_2] $ lies in $ kx $ or $ ky $ if $ z_1 $, $ z_2 $ are both in $ L_1 $ or both in $ L_{-1} $, and that 
	\begin{align*}
		[z_1, z_2] &= b \times c + \brackets[\big]{-T(b,c') \zeta + T(c,b') (\xi - \zeta) + \dd_{b,c'} + \dd_{c,b'}} + b' \times c'
	\end{align*}
	if $ z_1 \in L_{-1} $ and $ z_2 \in L_1 $. Now $ b \times c \in J_{jj}' + J_{ll}' + J_{jl}' $ and $ b' \times c' \in J_{jj} + J_{ll} + J_{jl} $ by \cref{le:times blocks} and $ \dd_{b,c'}, \dd_{c,b'} \in Z_{i \to i} + Z_{j \to l} + Z_{l \to j} $ by \cref{def:Zijlm}, so $ [z_1, z_2] \in L_0^{(i)} $. 
	
	We now turn to the subcase $ z_2 \in ke_j + ke_l + J_{jl} $ and we simultaneously consider the subcase $ z_2' \in ke_j' + ke_l' + J_{jl}' $. By \cref{def:Li as J-module},
	\begin{align*}
		[z_1, z_2] &= -\brackets[\big]{0, 0, z_2 \times b, T(z_2, b')}_\pm, \quad [z_1, z_2'] = \brackets[\big]{T(b,z_2'), z_2' \times b', 0, 0}_\pm
	\end{align*}
	where $ T(z_2, b') = 0 $, $ T(b,z_2') = 0 $ by \cref{le:T formulas} and $ z_2 \times b \in J_{ij}' + J_{il}' $, $ z_2' \times b' \in J_{ij} + J_{il} $ by \cref{le:times blocks}. Hence $ [z_1, z_2] \in L_0^{(i)} $. Finally, if $ z_2 \in Z_0^{(i)} $, then $ \ad_{z_2} $ leaves $ (0, J_{ij} + J_{li}, J_{ij}' + J_{li}', 0)_\pm $ invariant by \cref{def:L0 bar,le:Zitoj bracket L1}, so that $ [z_1, z_2] \in L_0^{(i)} $.
	
	Third, we consider the cases $ z_1 \in k e_j + ke_l + J_{jl} $ and $ z_1' \in ke_j' + ke_l' + J_{jl}' $. For $ z_2 \in ke_j + ke_l + J_{jl} $, we have $ [z_1, z_2] = 0 $ because $ J $ is abelian and
	\[ [z_1', z_2] = \dd_{z_2, z_1'} \in Z_{j \to j} + Z_{j \to l} + Z_{l \to j} + Z_{l \to l} \subseteq L_0^{(i)}. \]
	If $ z_2 \in Z_0^{(i)} $, then $ \ad_{z_2} $ leaves $ ke_j + ke_l + J_{jl} $ and $ ke_j' + ke_l' + J_{jl}' $ invariant by \cref{rem:xi zeta formulas,le:Zitoj bracket Jpq}, which implies $ [z_1, z_2] \in L_0^{(i)} $.
	
	Fourth, in the case $ z_1 \in \{\xi, \zeta\} $ and $ z_2 \in Z_0^{(i)} $, we have $ [z_1, z_2] = 0 $ by \cref{rem:xi zeta formulas}. Finally, $ Z_{i \to i} + Z_{j \to j} + Z_{l \to l} + Z_{j \to l} + Z_{l \to j} $ is a Lie subalgebra of $ L $ by \cref{le:Zitoj bracket}, so $ [z_1, z_2] \in L_0^{(i)} $ in each possible case.
\end{proof}

\begin{proposition}\label{pr:igrading}
	For each $i \in \{ 1,2,3 \}$, the decomposition $L = \bigoplus_{p=-2}^{2} L^{(i)}_p$ defines a $5$-grading of $L$.
\end{proposition}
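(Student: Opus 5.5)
To prove that $(L^{(i)}_p)_{-2 \le p \le 2}$ is a $5$-grading, I will check two things: that $L$ is the direct sum of the five pieces, and that $[L^{(i)}_p, L^{(i)}_q] \subseteq L^{(i)}_{p+q}$ for all $p,q$, with $L^{(i)}_r \coloneq 0$ for $\lvert r\rvert > 2$.

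\textbf{Direct sum.} This is a bookkeeping matter. Each $G_2$-root space of $L$ is split among the $L^{(i)}_p$ according to the Peirce decomposition $J = \bigoplus J_{pq}$ and its analogue for $J'$. The summands $kx$, $ky$, $L_{\pm1,\pm2}$ and $L_{\pm 1,\mp1}$ are each assigned to exactly one piece. The middle part $Z$ is handled by \cref{pr:Z-decomp}\cref{item:Z:1}. Comparing with \cref{def:3 new gradings} shows every component appears exactly once.

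\textbf{Reduction of the bracket condition.} Skew-symmetry lets me assume $p \le q$. The involution $(\iota,\iotainv)$ together with the reflection $\phibs$ of \cref{rem:phibs formulas} interchanges $e_i$ and $e_i'$ and should send $L^{(i)}_p$ to $L^{(i)}_{-p}$. If I verify this on each summand, the negative cases follow from the positive ones. Otherwise I will simply repeat the arguments with the roles of $J$ and $J'$ interchanged, as the paper does throughout. The cases $(-1,1)$, $(0,1)$ and $(0,0)$ are \cref{lem:igrading:-11,lem:igrading:01,lem:igrading:00}. What remains is:
\begin{enumerate}
\item the cases involving $L^{(i)}_{\pm2} = ke_i, ke_i'$, namely $(0,2)$, $(1,2)$, $(2,2)$, $(-2,2)$ and $(-1,2)$;
\item the case $(1,1)$.
\end{enumerate}

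\textbf{Cases involving $L^{(i)}_{\pm2}$.} I will use $\ad_{e_i}$, which \cref{le:ad ei} computes on $L_{\pm1}$. On $J$ it is zero. On $J'$ it gives $[e_i, c'] = -\dd_{e_i,c'}$, and on $Z$ it is given by \cref{pr:Jd}. This yields the following.
\begin{itemize}
\item $[e_i, L^{(i)}_1] = 0$. On the $L_{\pm1}$-parts, \cref{le:ad ei} shows $(0,ke_i,ke_j'+ke_l'+J_{jl}',k)_\pm$ lies in $\ker\theta^{(i)}_\pm$. On $J_{ij}+J_{li}$ the bracket vanishes because $J$ is abelian. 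On $Z_1^{(i)}$ it vanishes because $D_{a,a'}(e_i)=0$ for $a\in J_{il}$, $a'\in J_{lj}'$ with $i\ne j$, by \cref{le:less technical}.
\item $[e_i, e_i] = 0$, and $[e_i, L^{(i)}_{-1}] \subseteq L^{(i)}_1$ and $[e_i, L^{(i)}_0] \subseteq ke_i + 0$ follow from the same lemmas. For $Z^{(i)}_0$ I use \cref{le:Zitoj bracket Jpq}. For $Z^{(i)}_{-1}$ I use \cref{le:less technical}, which sends $e_i$ into $J_{ji}+J_{li} \subseteq L^{(i)}_1$.
\item $[e_i', e_i] = \dd_{e_i,e_i'} \in Z_{i\to i} \subseteq L^{(i)}_0$.
\item For $(-1,2)$, \cref{le:ad ei} shows $\ad_{e_i}$ maps $(k, ke_j+ke_l+J_{jl}, ke_i', 0)_\pm$ into $(0,ke_i,ke_j'+ke_l'+J_{jl}',k)_\pm$. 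It maps $J_{ij}'+J_{li}'$ into $Z_{i\to j}+Z_{i\to l}$ via $\dd_{e_i,\cdot}$, using \cref{le:dd rel}. It maps $Z_{-1}^{(i)}$ into $J_{ij}+J_{li}$.
\end{itemize}

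\textbf{Case $(1,1)$: the main obstacle.} I must show $[L^{(i)}_1, L^{(i)}_1] \subseteq ke_i$. This requires a case split according to which $L_m$-component each factor lies in.
\begin{itemize}
\item For two $L_{\pm1}$-parts of the same sign, the bracket is a multiple of $x$ or $y$. The coefficient $T(\mu_ie_i, c') - \dots$ vanishes by Peirce orthogonality (\cref{le:T formulas}) together with $\mu\cdot 0$ terms.
\item For opposite signs, \cref{def:L bracket} gives $\mu_i e_i\times \nu_ie_i = 0$ and $c'\times d' \in J_{ii}$ by \cref{le:times blocks}. The $Z$-part is $\dd_{\mu_ie_i,d'}+\dots$ plus scalar multiples of $\zeta$ and $\xi-\zeta$. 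The scalars vanish by \cref{le:T formulas}, and the $\dd$-terms vanish since $Z_{ii,jl}=0$ etc. by \cref{rem:Zijlm zero}. The coefficients $\lambda\rho$-type terms are absent because the first coordinate is $0$. The output lies in $J_{ii} = ke_i$, as required.
\item Brackets of an $L_{\pm1}$-part with $J_{ij}+J_{li}$ land in $(0,0,J_{ii}'\text{-type},0)$; I need them to be zero, using \cref{le:times blocks} and Peirce orthogonality. This is the place where I am least sure the component bookkeeping closes.
\item For $Z^{(i)}_1$ against $L_{\pm1}$-parts I use \cref{le:Zitoj bracket L1}. For $Z^{(i)}_1$ against $J_{ij}+J_{li}$ I use \cref{le:Zitoj bracket Jpq}, which gives $J_{ii}$. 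For $Z_1^{(i)}$ against itself I use \cref{le:Zitoj bracket}, which gives $0$.
\end{itemize}
This long Peirce bookkeeping is the step I expect to require the most care, particularly the mixed brackets with $\zeta$ and $\xi$ coefficients.
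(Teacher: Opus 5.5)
Your proposal is essentially the paper's proof: the same reduction via the $J \leftrightarrow J'$ symmetry of the definitions to the cases $(2,2)$, $(1,2)$, $(0,2)$, $(-1,2)$, $(-2,2)$, $(1,1)$ on top of \cref{lem:igrading:-11,lem:igrading:01,lem:igrading:00}, settled by the same Peirce bookkeeping (\cref{le:ad ei,le:Zitoj bracket Jpq,le:Zitoj bracket L1,le:Zitoj bracket}); you also spell out the mixed $L_{-1}$/$L_1$ subcase of $(1,1)$, which the paper leaves implicit. Drop the $\phibs$ shortcut, though: $\phibs$ maps $J \subseteq L_0$ into $L_{-1}$ (e.g.\ $\phibs(e_i)=(0,0,-e_i',0)_- \in L^{(i)}_{-1}$), so it cannot send $L^{(i)}_2$ to $L^{(i)}_{-2}$, and your fallback of rerunning the arguments with $J$ and $J'$ interchanged is what is actually needed, exactly as in the paper.
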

\begin{proof}
	Denote the cyclic permutation of $ (1\, 2\, 3) $ starting with $ i $ by $ (i\, j \, l) $.
	We have to check that $[L^{(i)}_p, L^{(i)}_q] \subseteq L^{(i)}_{p+q}$ for all possible pairs $(p,q)$ with $-2 \leq p \leq q \leq 2$. This will, of course, require a case distinction. There is some obvious symmetry in the definitions, and the cases $ (-1,1) $, $ (0,1) $ and $ (0,0) $ are already covered by \cref{lem:igrading:-11,lem:igrading:01,lem:igrading:00}, leaving us with the following $6$ cases for $(p,q)$ to check:
	\begin{align*}
		&&&(2, 2), && (1, 2), && (0, 2), && (-1, 2), && (-2, 2) && (1,1).
	\end{align*}
	\begin{description}\setlength{\itemsep}{1.5ex}
        \item[Case $(2,2)$]
        	This case is trivial because $[e_i, e_i] = 0$.
        \item[Case $(1,2)$]
        	Let $ z \in L^{(i)}_1 $.
        	We have to check that $ [z,e_i] = 0 $.
        	If $ z \in L_{\pm 1} $, this holds by \cref{le:ad ei}.
        	If $ z \in J_{ij} + J_{li} $, it holds because $[J, J] = 0$.
        	Finally, if $ z \in Z_{i \to j} + Z_{i \to l} $, it holds by \cref{le:Zitoj bracket Jpq}\cref{le:Zitoj bracket Jpq:ij} with $p=q=i$.
        \item[Case $(0,2)$]
        	Let $ z \in L^{(i)}_{0} $.
        	In most cases, we have $ [z,e_i] = 0 $:
        	By \cref{def:Li as J-module} if $ z \in kx + ky $;
        	by \cref{le:ad ei} if $ z \in L_{\pm 1} $;
        	by \cref{rem:Zijlm zero} if $ z \in ke_j' + ke_l' + J_{jl}' $;
        	by \cref{rem:xi zeta formulas} if $ z = \xi $;
        	and finally, because of $ [J,J] = 0 $ if $ z \in ke_j + ke_l + J_{jl} $.
        	If $ z \in Z_{i \to i} + Z_{j \to j} + Z_{l \to l} + Z_{j \to l} + Z_{l \to j} $, then $[z,e_i] \in ke_i$ by \cref{le:Zitoj bracket Jpq}.
        	Finally, $ [\zeta, e_i] = e_i $ by \cref{rem:xi zeta formulas}, so $ [z,e_i] \in ke_i = L^{(i)}_2 $ in all cases.
        \item[Case $(-1,2)$]
        	Let $ z \in L^{(i)}_{-1} $.
        	If $ z \in L_{\pm 1} $, then $[z,e_i] \in L_1^{(i)}$ by \cref{le:ad ei}.
        	If $ z \in J_{ij}' + J_{li}' $, then
        	\[ [z,e_i] = \dd_{e_i, z} \in Z_{ii,ij} + Z_{ii,il} \subseteq Z_{i \to j} + Z_{i \to l} = Z_1^{(i)}. \]
        	Finally, in the case $ z \in Z^{(i)}_{-1} = Z_{j \to i} + Z_{l \to i} $, we have $ [z,e_i] \in J_{ij} + J_{li} \subseteq L^{(i)}_{1} $ by \cref{le:Zitoj bracket Jpq}\cref{le:Zitoj bracket Jpq:ij}.
        \item[Case $(-2,2)$]
        	This follows from the fact that $[e_i', e_i] = \dd_{e_i, e_i'} \in Z_{i \to i} \leq L^{(i)}_0$.
        	
        \item[Case $ (1,1) $]
        	Let $ z_1, z_2 \in L_1^{(i)} $. We will write
        	\[ z_1 = (0, \lambda_i e_i, b', \lambda)_\pm \quad \text{or} \quad z_2 = (0, \mu_i e_i, c', \mu)_\pm \]
        	if $ z_1 $ or $ z_2 $, respectively, lies in $ L_{\pm 1} $, where $ \lambda_i, \lambda, \mu_i, \mu \in k $ and $ b', c' \in ke_j' + ke_l' + J_{jl}' $. In the case $ z_1, z_2 \in L_1 $, we have
        	\[ [z_1, z_2] = \brackets[\big]{T(\lambda_i e_i, c') - T(\mu_i e_i, b')} x = 0 \]
        	by \cref{def:L bracket} and \cref{le:T formulas}\cref{le:T formulas:ortho}. The case $ z_1, z_2 \in L_{-1} $ is completely similar. If $ z_1 \in J_{ij} + J_{li} $ and $ z_2 \in L_{\pm 1} $, then
        	\[ [z_1, z_2] = \brackets[\big]{0, 0, z_1 \times \mu_i e_i, T(z_1, c')}_\pm = 0 \]
        	by \cref{def:Li as J-module}, \cref{le:times blocks} and \cref{le:T formulas}\cref{le:T formulas:ortho}. In the case $ z_1 \in Z_1^{(i)} $, $ z_2 \in L_{\pm 1} $, we have $ [z_1, z_2] = 0 $ by \cref{le:Zitoj bracket L1}\cref{le:Zitoj bracket Jpq:ij}. Finally, assume that $ z_1, z_2 \in L_0 $. Since $ J $ and $ Z_1^{(i)} $ are abelian, the latter by \cref{le:Zitoj bracket}\cref{le:Zitoj bracket:subalg}, we may assume that $ z_1 \in Z_1^{(i)} $ and $ z_2 \in J_{ij} + J_{li} $. Then $ [z_1, z_2] \in J_{ii} = L_2^{(i)} $ by \cref{le:Zitoj bracket Jpq}\cref{le:Zitoj bracket Jpq:ij}.
        	\qedhere
	\end{description}
\end{proof}

We now obtain our promised $F_4$-grading.

\begin{remark}\label{rem:F4 grid}
	Let $ e_1, e_2, e_3, e_4 $ denote the standard basis of $ \Z^4 $ and put
	\begin{align*}
		\Phi &\coloneq \{\pm 2e_i \mid i \in \{1,2,3,4\}\} \sqcup \{\epsilon_1 e_i + \epsilon_2 e_j \mid i \ne j \in \{1,2,3,4\}, \epsilon_1, \epsilon_2 \in \{\pm 1\}\} \\
		&\qquad{}\sqcup \{\epsilon_1 e_1 + \epsilon_2 e_2 + \epsilon_3 e_3 + \epsilon_4 e_4 \mid \epsilon_1, \ldots, \epsilon_4 \in \{\pm 1\}\}
	\end{align*}
	and $ \Phi^0 \coloneq \Phi \cup \{(0,0,0,0)\} $. Further, let $ (\delta_1, \ldots, \delta_4) $ be a standard system of simple roots in $ F_4 $ (in the sense of \cref{def:F4 basis order}). Then there exists a unique group homomorphism $ \omega \colon \Z F_4 \to \Z^4 $ mapping
	\[ \delta_1 \mapsto (1,1,-1,-1), \quad \delta_2 \mapsto (-2,0,0,0), \quad \delta_3 \mapsto (1, -1, 0, 0), \quad \delta_4 \mapsto (0,1,1,0). \]
	A straightforward computation shows that $ \omega(\Phi) = F_4 $. As we will now see in \cref{thm:lie F4 grading}, the set $ \Phi $ is the natural description of the root system $ F_4 $ in our context. Notice that this description is dual to the one in \cite[VI.4.9]{BourbakiLie46}, but since $ F_4 $ is a self-dual root system, this difference is irrelevant. We will occasionally identify $ F_4^0 $ with the set $ \Phi^0 \subseteq \Z^4 $ to simplify notation, similarly as we did for $ G_2 $ (see \cref{rem:G2 grid}).
\end{remark}

\begin{theorem}\label{thm:lie F4 grading}
	Let $ k $ be a commutative ring, let $C$ be a multiplicative conic alternative algebra, let $\Gamma \in k^ 3$ and let $ (J,J') $ be the cubic norm pair induced by the cubic Jordan matrix algebra $\Her_3(C, \Gamma)$. Let $ L = L(J,J') $ be the Lie algebra constructed in \cref{sec:lie const}, let $ \Phi,\Phi^0 \subseteq \Z^4 $ be as in \cref{rem:F4 grid} and define $ L_p^{(i)} $ for all $ i \in \{1,2,3\} $ and $ p \in \{-2,-1,0,1,2\} $ as in \cref{def:3 new gradings}.
	For all $p,i,j,l \in \{ -2,-1,0,1,2 \}$ (in particular, for all $ (p,i,j,l) \in \Phi^0 $), we put
	\[ L_{pijl} \coloneq L_p \cap L_i^{(1)} \cap L_j^{(2)} \cap L_l^{(3)}. \]
	Then $(L_\alpha)_{\alpha \in \Phi^0}$ is an $F_4$-grading of $ L $ with
	\[ L_\alpha \cong \begin{cases}
		(k,+) & \text{if } \alpha \text{ is long,} \\
		(C,+) & \text{if } \alpha \text{ is short}
	\end{cases} \]
	for all $\alpha \in F_4$. Further, if $ J \ne 0 \ne J' $, then
	\[ \Phi^0 = \{ (p,i,j,l) \in \{ -2,-1,0,1,2 \}^4 \mid L_{pijl} \neq 0 \}. \]
\end{theorem}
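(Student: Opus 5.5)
The plan is to simultaneously diagonalize the four commuting $5$-gradings. The ingredients are the $\xi$-grading $(L_p)$ from \cref{pr:L is Lie algebra} and the three gradings $(L^{(i)}_q)$ from \cref{pr:igrading}. The first task is to show that
\[ L = \bigoplus_{(p,i,j,l)} L_{pijl}. \]
For this I will not argue abstractly. Each $L^{(i)}_q$ in \cref{def:3 new gradings} is a sum of pieces of the fine decomposition
\[ L = kx \oplus (k,J_{pq},J'_{pq},k)_\pm \oplus J'_{pq} \oplus Z_{p\to q} \oplus k\xi\oplus k\zeta \oplus J_{pq} \oplus ky, \]
refined further by splitting $J$ and $J'$ into the Peirce pieces $J_{pq}$, $J'_{pq}$ of \cref{le:peirce}. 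By \cref{pr:Z-decomp}, $Z = \bigoplus_{p,q} Z_{p\to q} + k\xi + k\zeta$, and I would need to check that this is direct in the appropriate sense.

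Directness of $Z$ is the one subtle point. Applying \cref{pr:Z-decomp} for $i=1,2,3$ separates the off-diagonal pieces $Z_{p\to q}$ ($p\neq q$) from one another and from the diagonal part $D := \sum_p Z_{p\to p} + k\xi + k\zeta$. Any two distinct off-diagonal pieces lie in different degree combinations for the three gradings, so the $L_{0ijl}$ with $(i,j,l)\neq 0$ are exactly the $Z_{p\to q}$ with $p\neq q$, and $L_{0000}=D$ contributes nothing to the nonzero roots. Every fine piece is homogeneous for all four gradings. Hence each intersection $L_{pijl}$ is a sum of fine pieces, and the sum over all $(p,i,j,l)$ is direct and exhausts $L$. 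The grading property $[L_\alpha,L_\beta]\subseteq L_{\alpha+\beta}$ then follows at once, since each of the four coordinate gradings is a Lie grading.

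The main computational step is to tabulate the quadruple degree of every fine piece from \cref{def:3 new gradings}. For each fine piece I read off its $L^{(1)},L^{(2)},L^{(3)}$-degrees and its $\xi$-degree, and compare with $\Phi$ from \cref{rem:F4 grid}. Three kinds of piece arise.
\begin{itemize}
\item The long roots $\pm 2e_m$ and the $\pm e_m\pm e_n$ of long type correspond to $kx$, $ky$, the four $k$-slots of $L_{\pm1}$, the lines $ke_m$ and $ke'_m$ (in $L_0$ and in $L_{\pm1}$), and so on.
\item The short roots $(\pm1,\pm1,\pm1,\pm1)$ and the remaining $\pm e_m\pm e_n$ correspond to the $C$-pieces $J_{pq}$, $J'_{pq}$ (in $L_0$ and in $L_{\pm1}$) and $Z_{p\to q}$ for $p\neq q$.
\item Everything with a long and a short coordinate mismatch lands at $0$ or is empty.
\end{itemize}
For example, $x$ has degrees $(-2,0,0,0)$, and $e_1\in J$ has degrees $(0,2,0,0)$. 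The piece $J_{23}\subseteq L_0$ lies in $L^{(1)}_0$, $L^{(2)}_1$ and $L^{(3)}_{-1}$; the sign conventions here must be checked carefully. The count I expect is $24$ long roots from the $24$ $k$-lines and $24$ short roots from the $6+6+6+6=24$ $C$-pieces: $J_{pq}, J'_{pq}$ in $L_0$ give $6$, in $L_{\pm1}$ they give $12$, and the $Z_{p\to q}$ give $6$. This matches $F_4$.

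The isomorphism types then come from \cref{pr:Zitoj-param} for $Z_{p\to q}\cong C$ and from the obvious identifications $J_{pq}\cong C[pq]\cong C$ and $ke_m\cong k$. The last claim, that the set of quadruples with $L_{pijl}\neq 0$ is exactly $\Phi^0$, follows from the table. When $J\neq0$ we have $C\neq0$, since $J\supseteq ke_1$ is always nonzero and so $k\neq0$. Also $L_{0000}\ni\xi$ is nonzero, because $\xi$ acts as $2$ on $y$. I expect the bookkeeping of signs and degrees in the table to be the main obstacle. I would organize it as a single table indexed by the fine pieces, and use the $G_2$-refinement compatibility (\cref{le:index char}) as a consistency check.
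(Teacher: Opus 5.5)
Your approach is the paper's own. Its proof tabulates the joint degrees of the fine pieces (\cref{fig:F4 grading}), splits $Z$ into the six $Z_{i\to j}$ with $i\neq j$ plus the diagonal part $Z_{1\to1}+Z_{2\to2}+Z_{3\to3}+k\xi+k\zeta$ placed at $0000$, reads off that the occurring index set is $\Phi^0$, and gets $[L_\alpha,L_\beta]\subseteq L_{\alpha+\beta}$ from the four Lie gradings. Your appeal to \cref{pr:Z-decomp} for directness on $Z$ and to \cref{pr:Zitoj-param} for $Z_{i\to j}\cong C$ supplies what the paper leaves implicit. Keep the diagonal part as a single summand, as your second paragraph does. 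Your first displayed decomposition is not direct there: $Z_{12,21}\subseteq Z_{1\to1}\cap Z_{2\to2}$, and $\sum_m\dd_{e_m,e_m'}=2\zeta-\xi$.

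Several concrete claims in your bookkeeping are wrong, though, and would derail the table if taken literally.
\begin{itemize}
\item In the description of $\Phi$ from \cref{rem:F4 grid} (dual to Bourbaki's), the long roots are the eight $\pm2e_m$ and the sixteen $(\pm1,\pm1,\pm1,\pm1)$, and \emph{all} twenty-four $\pm e_m\pm e_n$ are short. Your two bullets have this backwards. The $24$ lines $kx$, $ky$, the four $k$-slots of $L_{\pm1}$ and the $ke_m$, $ke_m'$ sit exactly at the $\pm2e_m$ and the $(\pm1,\pm1,\pm1,\pm1)$; for instance $(0,0,0,k)_+=L_{1111}$ and $(0,e_1,0,0)_+\in L_{11\bar{1}\bar{1}}$. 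The $24$ $C$-pieces sit exactly at the $\pm e_m\pm e_n$. With your labels, the isomorphism-type claim would appear to fail at $L_{1111}$.
\item $J_{23}\subseteq L_0$ lies in $L^{(3)}_1$, not $L^{(3)}_{-1}$. For $(i\,j\,l)=(3\,1\,2)$ it is the summand $J_{li}$ of $L^{(3)}_1$, so its degree is $(0,0,1,1)$. Your $(0,0,1,-1)$ is the degree of $Z_{2\to3}$. The formula $\pi(p,i,j,l)=(p,(p+i+j+l)/2)$ from \cref{rem:pi in grid} catches such slips at once, since $J\subseteq L_{0,1}$ forces $i+j+l=2$.
\item $\xi\neq0$ should come from $[\xi,l]=l$ for $l\in L_1\neq0$. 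Your reason $[\xi,y]=2y$ breaks down when $2=0$ in $k$.
\item $C\neq0$ follows from $J\neq0\Rightarrow k\neq0$ together with $\connorm(1_C)=1_k$. It does not follow from $ke_1$ being \enquote{always nonzero}.
\end{itemize}
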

\begin{proof}
	By examining the gradings introduced in \cref{def:3 new gradings}, it is not difficult to compute their intersections.
	We have summarized the result in \cref{fig:F4 grading}, where we have written $\overline{2} := -2$ and $\overline{1} := -1$ to save some space, and where we have written $J$ and $J'$ in their ``matrix form''
	\[ J = \begin{pmatrix}
		k e_1 & J_{12} & J_{13} \\ & k e_2 & J_{23} \\ && k e_3
	\end{pmatrix} , \qquad
	J' = \begin{pmatrix}
		k e_1' & J_{12}' & J_{13}' \\ & k e_2' & J_{23}' \\ && k e_3'
	\end{pmatrix} .	\]
	The middle component $Z$ decomposes into $7$ parts: there are $6$ components of the form $Z_{i \to j} $, which we denote as $ Z_{0p_1p_2p_3}$ with $p_i = 1$, $p_j = -1$ and the remaining $p_l = 0$;
	the seventh component is $Z_{0000} = Z_{1 \to 1} + Z_{2 \to 2} + Z_{3 \to 3} + k\xi + k\zeta$.
	
	It is now a simple observation that $ \Phi^0 $ coincides with the set of index tuples $ (p,i,j,l) $ for which $ L_{pijl} $ is \enquote{generically non-trivial}, that is, non-zero except possibly in the degenerate case that $ J=0 $ or $ J'=0 $.
	
	Since each of the four gradings that we have used for the labelling is indeed a grading (by \cref{pr:L is Lie algebra,pr:igrading}), this automatically implies that the resulting decomposition of the whole Lie algebra $L$ is a $ \Phi $-grading and hence an $F_4$-grading.
\end{proof}
\begin{figure}[ht!]
\[
\scalebox{.7}{%
	\begin{tikzpicture}[x=36mm, y=25mm, 	label distance=-3pt]
		\node[mysquare=9mm] at (0,3) (N03) {$\bar{2}000$};
		\node[mysquare=9mm] at (1,1) (N11) {$\bar{1}111$};
		\node[mysquare=16mm] at (1,2) (N12) {$\begin{matrix} \bar{1}\bar{1}11 & \bar{1}001 & \bar{1}010 \\[.5ex] & \bar{1}1\bar{1}1 & \bar{1}100 \\[.5ex] && \bar{1}11\bar{1} \end{matrix}$};
		\node[mysquare=16mm] at (1,3) (N13) {$\begin{matrix} \bar{1}1\bar{1}\bar{1} & \bar{1}00\bar{1} & \bar{1}0\bar{1}0 \\[.5ex] & \bar{1}\bar{1}1\bar{1} & \bar{1}\bar{1}00 \\[.5ex] && \bar{1}\bar{1}\bar{1}1 \end{matrix}$};
		\node[mysquare=9mm] at (1,4) (N14) {$\bar{1}\bar{1}\bar{1}\bar{1}$};
		\node[mysquare=16mm] at (2,1) (N21) {$\begin{matrix} 0200 & 0110 & 0101 \\[.5ex] & 0020 & 0011 \\[.5ex] && 0002 \end{matrix}$};
		\node[mysquare=16mm] at (2,2) (N22) {$\begin{matrix} & 01\bar{1}0 & 010\bar{1} \\[.5ex] 0\bar{1}10 & \boxed{\mathbf{0000}} & 001\bar{1} \\[.5ex] 0\bar{1}01 & 00\bar{1}1 & \end{matrix}$};
		\node[mysquare=16mm] at (2,3) (N23) {$\begin{matrix} 0\bar{2}00 & 0\bar{1}\bar{1}0 & 0\bar{1}0\bar{1} \\[.5ex] & 00\bar{2}0 & 00\bar{1}\bar{1} \\[.5ex] && 000\bar{2} \end{matrix}$};
		\node[mysquare=9mm] at (3,0) (N30) {$1111$};
		\node[mysquare=16mm] at (3,1) (N31) {$\begin{matrix} 1\bar{1}11 & 1001 & 1010 \\[.5ex] & 11\bar{1}1 & 1100 \\[.5ex] && 111\bar{1} \end{matrix}$};
		\node[mysquare=16mm] at (3,2) (N32) {$\begin{matrix} 11\bar{1}\bar{1} & 100\bar{1} & 10\bar{1}0 \\[.5ex] & 1\bar{1}1\bar{1} & 1\bar{1}00 \\[.5ex] && 1\bar{1}\bar{1}1 \end{matrix}$};
		\node[mysquare=9mm] at (3,3) (N33) {$1\bar{1}\bar{1}\bar{1}$};
		\node[mysquare=9mm] at (4,1) (N41) {$2000$};
		\path[ugentred]
			(0,4.5) node (C0) {}
			(1,4.5) node (C1) {}
			(2,4.5) node (C2) {}
			(3,4.5) node (C3) {}
			(4,4.5) node (C4) {};
		\path[ugentblue]
			(4.5,0) node (R0) {}
			(4.5,1) node (R1) {}
			(4.5,2) node (R2) {}
			(4.5,3) node (R3) {}
			(4.5,4) node (R4) {};
		\draw[myedge,ugentblue]
			(N11) -- (N21) -- (N31) -- (N41)
			(N12) -- (N22) -- (N32)
			(N03) -- (N13) -- (N23) -- (N33);
		\draw[myedge,ugentblue,opacity=.15]
			(-.25,0) -- (N30) -- (R0)
			(-.25,1) -- (N11) (N41) -- (R1)
			(-.25,2) -- (N12) (N32) -- (R2)
			(-.25,3) -- (N03) (N33) -- (R3)
			(-.25,4) -- (N14) -- (R4);
		\draw[myedge,ugentred]
			(N11) -- (N12) -- (N13) -- (N14)
			(N21) -- (N22) -- (N23)
			(N30) -- (N31) -- (N32) -- (N33);
		\draw[myedge,ugentred,opacity=.15]
			(0,-.25) -- (N03) -- (C0)
			(1,-.25) -- (N11) (N14) -- (C1)
			(2,-.25) -- (N21) (N23) -- (C2)
			(3,-.25) -- (N30) (N33) -- (C3)
			(4,-.25) -- (N41) -- (C4);
		\draw[myedge,ugentyellow]
			(N03) --  (N12) -- (N21) -- (N30)
			(N13) -- (N22) -- (N31)
			(N14) --  (N23) -- (N32) -- (N41);
	\end{tikzpicture}
}
\]
\caption{Intersecting the four gradings}\label{fig:F4 grading}
\end{figure}
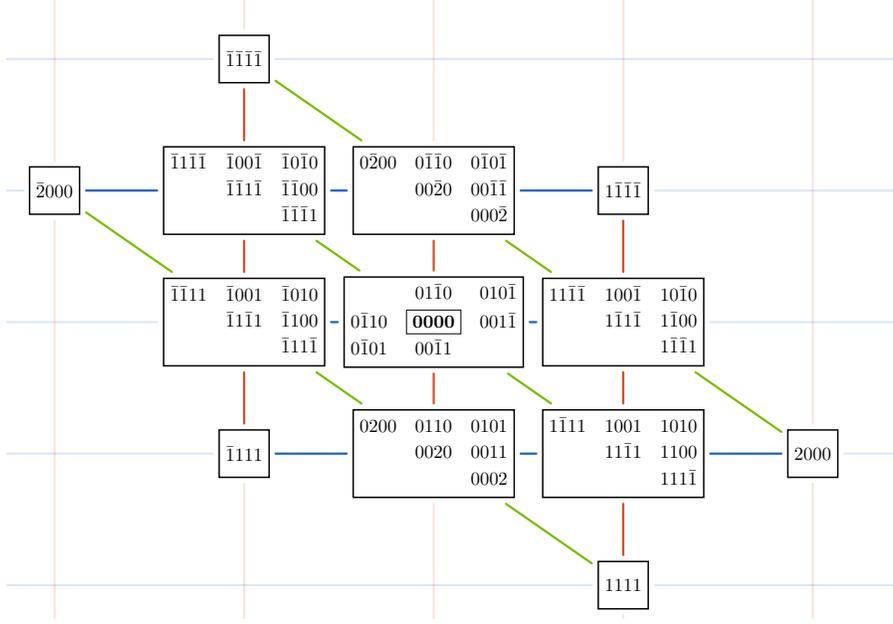

\begin{remark}\label{rem:pi in grid}
	As in \cref{rem:G2 grid,rem:F4 grid}, we identify $ G_2^0 $ and $ F_4^0 $ with subsets of $ \Z^2 $ and $ \Z^4 $, respectively, and we consider the systems of simple roots
	\begin{gather*}
		g_1 = (1,0), \qquad g_2 = (-2,-1) \in G_2, \\
		f_1 = (1,1,-1,-1), \quad f_2 = (-2,0,0,0), \quad f_3 = (1,-1,0,0), \quad f_4 = (0,1,1,0) \in F_4.
	\end{gather*}
	Then the map $\rootproj \colon F_4^0 \to G_2^0$ from \cref{le:index char} (defined with respect to the systems $ (f_1, f_2, f_3, f_4) $ and $ (g_1, g_2) $ of simple roots) is described by \cref{fig:F4 grading}: For any $ \alpha \in F_4^0 $, $ \pi(\alpha) $ is given by the coordinates in the $ \Z^2 $-grid of the rectangular block in which $ \alpha $ is contained. This implies that the $ G_2 $-graded constructed from the $ F_4 $-grading $ (L_\alpha)_{\alpha \in F_4^0} $ as in \cref{pr:lie index} is precisely the $ G_2 $-grading $ (L_\alpha)_{\alpha \in G_2^0} $ from \cref{pr:G2-graded}. Note that we can also describe $\rootproj$ by the formula
	\[ \rootproj\brackets[\big]{(p,i,j,l)} = \brackets*{p, \, \frac{p+i+j+l}{2}} \]
	for all $(p,i,j,l) \in F_4$.
\end{remark}

\begin{remark}[{cf. \ref{rem:3-graded rs}}]
	Keep the notation of \cref{rem:pi in grid} and let $\beta \in G_2$ be short. Then by \cref{le:index fibers}\cref{le:index fibers:short}, the preimage $C_\beta \coloneq \pi^{-1}(\{\beta, -\beta, 0\}) = C_{-\beta}$ is a subsystem of $F_4$ of type $C_3$. Thus $L' \coloneq \bigoplus_{\alpha \in C_\beta} L_\alpha$ is a $C_3$-graded subalgebra of $L$. Specifically, the three possibilities for $L'$ are
	\[ L_{-1,0} \oplus L_{0,0} \oplus L_{1,0}, \quad L_{-1,-1} \oplus L_{0,0} \oplus L_{1,1}, \quad L_{0,-1} \oplus L_{0,0} \oplus L_{0,1}. \]
\end{remark}

\subsection{A Chevalley-type basis of the Lie algebra}\label{subsec:chev}

By specializing the commutator formulas in \cref{ta:Lie} to the case that $ J $ is a cubic Jordan matrix algebra, we obtain commutator formulas for the $ F_4 $-graded Lie algebra from \cref{thm:lie F4 grading}. For example,
\[ [(0, \cubel{c}{12}, 0, 0)_-, (0, 0, \cubel{d}{12}^\iota, 0)_-] = T(\cubel{c}{12}, \cubel{d}{12}^\iota)x = \gamma_1 \gamma_2 \contr(\conconj{c} d) \]
for all $ c,d \in C $ by \cref{le:T formulas}\cref{le:T formulas:dij}. It is not surprising that the constants $ \gamma_1 $, $ \gamma_2 $, $ \gamma_3 $ may appear in these formulas because the structural maps of $ (J,J') $ depend on them. However, we have already observed in \cref{rem:gamma independent} that the isomorphism type of the Lie algebra $ L(J,J') $ does not depend on $ \gamma_1, \gamma_2, \gamma_3 $ as long as they are assumed to be invertible. Hence one should expect that, under this assumption, there exists a family $ (\rhomlie{\alpha})_{\alpha \in F_4} $ of root homomorphisms (each with domain $ k $ or $ C $ and codomain $ L_\alpha $) such that the commutator relations with respect to this family do not involve $ \gamma_1 $, $ \gamma_2 $, $ \gamma_3 $. The goal of this subsection is precisely to find such a family of root homomorphisms. We construct this family by multiplying the \enquote{natural} root homomorphisms in $ L $ by appropriate products of $ \gamma_1 $, $ \gamma_2 $, $ \gamma_3 $ and their inverses.

We will also observe that the elements $ (\rhomlie{\alpha}(1))_{\alpha \in F_4} $ defined by our root homomorphisms share several crucial properties with Chevalley bases of complex semisimple Lie algebras. To achieve this, some root homomorphisms will have to be multiplied not only with a product of $ \gamma_1 $, $ \gamma_2 $, $ \gamma_3 $ and their inverses, but with $ -1 $ as well. An immediate consequence of the existence of a \enquote{Chevalley-type basis} is that if $C$ is faithful as a $k$-algebra, then $L$ contains a copy of the split Chevalley Lie algebra of type $F_4$ over $k$ as a subalgebra, which implies that $L$ is $F_4$-graded in the stronger sense of \cite{BM92}. See \cref{rem:stronger grading,rem:is stronger grading} for details.

In \cref{sec:refining aut}, we will \enquote{exponentiate} the root homomorphisms from this subsection to obtain root homomorphisms in the corresponding group of automorphisms of $ L(J,J') $ (which will then proven to be $ F_4 $-graded as well). Consequently, the commutator formulas in this $ F_4 $-graded group will not involve $ \gamma_1, \gamma_2, \gamma_3 $ either (see \cref{pr:F4 comm}). This aligns with the fact that in the coordinatization theorem for arbitrary $ F_4 $-graded groups (\cref{thm:rgg-param}), the algebraic data coordinatising the group involves only a commutative ring and a multiplicative conic alternative algebra but no choice of constants in the commutative ring. Further, using a Chevalley-type basis has benefits concerning the description of Weyl elements in the automorphism group, which will become apparent in \cref{rem:F4 weyl def}.

\begin{notation}\label{not:F4 grid}
	From now on, we assume that $ \gamma_1, \gamma_2, \gamma_3 $ are invertible. In the following, we will always use the convention from \cref{rem:G2 grid,rem:F4 grid} to identify $ G_2^0 $ and $ F_4^0 $ with subsets of $ \Z^2 $ and $ \Z^4 $, respectively. We will sometimes use the convention from \cref{fig:F4 grading} to write, e.g., $ 11\bar{1} \bar{1} $ as an abbreviation of $ (1,1,-1,-1) $. Further, we denote by $ \pi \colon F_4^0 \to G_2^0 $ the map defined in \cref{rem:pi in grid}, so that $ L_\alpha \le L_{\pi(\alpha)_1} \cap L_{\pi(\alpha)_2} $ for all $ \alpha \in F_4^0 $.
	Finally, for all $\alpha \in F_4$, we put $\pargrp{\alpha} \coloneq k$ if $\alpha$ is long and $\pargrp{\alpha} \coloneq C$ if $\alpha$ is short, so that $L_\alpha \cong \pargrp{\alpha}$ as $k$-modules.
\end{notation}

We begin by defining the root homomorphisms $ (\rhomlie{\alpha})_{\alpha \in F_4} $. It turns out that they can be constructed step by step: We first define root homomorphisms in $ J $ and $ J' $, then root homomorphisms in $ (k,J,J') \cong L_{\pm 1} $ and then root homomorphisms in $ L $. Note that the root homomorphisms in $ J' $ are defined in a different way than those in $ J $ even though $ J \cong J' $. 

\begin{definition}[Root homomorphisms in $J$ and $J'$]
	For any cyclic permutation $(i\,j\,l)$ of $(1\,2\,3)$, we define homomorphisms
	\begin{align*}
		\rhomcub{l} \colon (k,+) \to J &\colon t \mapsto \gamma_i^{-1} \gamma_j t e_l, \\
		\rhomcubop{l} \colon (k,+) \to J' &\colon t \mapsto \gamma_i \gamma_j^{-1} t e_l', \\
		\rhomcub{l} \colon (C,+) \to J &\colon a \mapsto \cubel{\gamma_j^{-1} a}{ij}, \\
		\rhomcubop{l} \colon (C,+) \to J' &\colon a \mapsto \cubel{\gamma_i^{-1} a}{ij}.
	\end{align*}
	Note that we use the same name for the maps defined on $k$ as for the maps defined on $C$, but there will be no danger of confusion in the sequel.
\end{definition}

\begin{definition}[Root homomorphisms in $(k,J,J',k)$]
	Let $p_1,p_2,p_3 \in \{-1,0,1\}$ with $(1,p_1,p_2,p_3) \in F_4$ (or equivalently, $(-1,p_1,p_2,p_3) \in F_4$) and put $s \coloneq p_1+p_2+p_3$. We define a homomorphism $\rhombr{p_1p_2p_3}$ as follows:
	\begin{enumerate}
		\item If $s=-3$, define $\rhombr{p_1p_2p_3} \colon k \to (k,J,J',k) \colon t \mapsto (t, 0, 0, 0)$.
		
		\item If $s=-1$, then there is a unique cyclic permutation $(i\,j\,l)$ of $(1\,2\,3)$ with $p_l \ne p_i = p_j$, and we define
		\[ \rhombr{p_1p_2p_3} \colon \pargrp{(1,p_1,p_2,p_3)} = \pargrp{(-1,p_1,p_2,p_3)} \to (k,J,J',k) \colon a \mapsto (0, \rhomcub{l}(a), 0, 0) \]
		
		\item If $s=1$, then there is a unique cyclic permutation $(i\,j\,l)$ of $(1\,2\,3)$ with $p_l \ne p_i = p_j$, and we define
		\[ \rhombr{p_1p_2p_3} \colon \pargrp{(1,p_1,p_2,p_3)} = \pargrp{(-1,p_1,p_2,p_3)} \to (k,J,J',k) \colon a \mapsto (0, 0, \rhomcubop{l}(a), 0) \]
		
		\item If $s=3$, define $\rhombr{p_1p_2p_3} \colon k \to (k,J,J',k) \colon t \mapsto (0, 0, 0, t)$.
	\end{enumerate}
\end{definition}

\begin{definition}[Root homomorphisms in $L$]\label{def:roothom lie}
	Let $\alpha = (p_0, p_1, p_2, p_3) \in F_4$ and put $ \mathcal{S} \coloneq \mathcal{S}^{\text{sh}} \cup \mathcal{S}^{\text{lo}} $ where
	\begin{align*}
		\mathcal{S}^{\text{lo}} &\coloneq \{\alpha \in F_4 \mid \alpha \text{ is long}\} \setminus \{\bar{1}\bar{1}\bar{1}1, 11\bar{1}1, 1\bar{1}11, \bar{1}111\} \\
		\mathcal{S}^{\text{sh}} &\coloneq \{10\bar{1}0, 0101, 0011, 0\bar{1}\bar{1}0, \bar{1}100, \bar{1}001\}.
	\end{align*}
	We first define an auxiliary map $\rhomlien{\alpha} \colon \pargrp{\alpha} \to L$ as follows.
	\begin{enumerate}
		\item If $\rootproj(\alpha)_1 = -2$, define $\rhomlien{\alpha} \colon k \to L \colon t \mapsto tx$.
		
		\item If $\rootproj(\alpha)_1 = \varepsilon 1$ for $\varepsilon \in \{\mathord{+}, \mathord{-}\}$, define $\rhomlien{\alpha} \colon \pargrp{\alpha} \to L \colon a \mapsto \rhombr{p_1p_2p_3}(a)_\varepsilon$.
		
		\item If $\rootproj(\alpha) = (0,\varepsilon 1)$ for $\varepsilon \in \{\mathord{+}, \mathord{-1}\}$, define $\rhomlien{\alpha} \colon \pargrp{\alpha} \to L \colon a \mapsto \rhomcubvar{l}{\varepsilon}(a) \in J+J' \le L_0$ where $l \in \{1,2,3\}$ is the unique index with $\vert p_l \vert=2$ if $\alpha$ is long and $p_l = 0$ if $\alpha$ is short.
		
		\item If $\rootproj(\alpha) = (0,0)$, then there exist unique $i,j,l \in \{1,2,3\}$ with $p_i = 1$, $p_j=-1$, $p_l=0$ and we define
		\begin{equation}\label{eq:roothom-lie-00part}
			\rhomlien{\alpha} \colon C \to L \colon a \mapsto \begin{cases}
				\lambda_{ij} \dd_{e_i,\cubel{a}{ij}} & \text{if } i<j, \\
				\lambda_{ij} \dd_{e_i, \cubel{\conconj{a}}{ij}} & \text{if } i>j
			\end{cases}
		\end{equation}
		where
		\begin{align*}
			\lambda_{12} &\coloneq \gamma_1^{-1} \gamma_2^{-1} \gamma_3, & \lambda_{13} &\coloneq \gamma_2^{-1}, & \lambda_{23} &\coloneq \gamma_1 \gamma_2^{-1} \gamma_3^{-1}, \\
			\lambda_{21} &\coloneq \gamma_3^{-1}, & \lambda_{31} &\coloneq \gamma_1^{-1} \gamma_2 \gamma_3^{-1}, & \lambda_{32} &\coloneq \gamma_1^{-1}.
		\end{align*}
		
		\item If $\rootproj(\alpha)_1 = 2$, define $\rhomlien{\alpha} \colon k \to L \colon t \mapsto ty$.
	\end{enumerate}
	Further, the \emph{root homomorphism in $L$ with respect to $\alpha$} is
	\[ \rhomlie{\alpha} \colon \pargrp{\alpha} \to L \colon a \mapsto \begin{cases}
		\rhomlien{\alpha}(a) & \text{if } \alpha \notin \mathcal{S}, \\
		\rhomlien{\alpha}(-a) & \text{if } \alpha \in \mathcal{S}.
	\end{cases} \]
\end{definition}

The reason to use the case distinction in \cref{eq:roothom-lie-00part} will only become apparent at a later point, see \cref{rem:parmap negative}.

The following definition is motivated by the classical notion of Chevalley bases in complex semisimple Lie algebras. See, for example, \cite[25.2]{HumphreysLie}.

\begin{definition}\label{def:chev basis}
	\begin{enumerate}
		\item \label{def:chev basis:h}Recall from \cref{def:L0 bar} that $ M $ denotes the Lie algebra $ \bigoplus_{i \ne 0} \gl(L_i) $ which contains $ L_0 $. For all $ \alpha \in F_4 $, we denote by $ h_\alpha $ the element of $ M $ which is defined by $ [h_\alpha, y_\beta] = \cartan{\beta}{\alpha} y_\beta $ for all $ y_\beta \in L_\beta $ and all $ \beta \in F_4 $ with $ \rootproj(\beta)_1 \ne 0 $. Here $ \cartan{\beta}{\alpha} \coloneq 2 \frac{\beta \cdot \alpha}{\alpha \cdot \alpha} $ denotes the Cartan integer.
		
		\item \label{def:chev basis:chev}A \emph{Chevalley-type basis of $L$} is a family $(\cbas{\alpha} \in L_\alpha)_{\alpha \in F_4}$ with the following properties:
		\begin{enumerate}[(a)]
			
			\item For all $\alpha \in F_4$, we have $ [\cbas{\alpha}, \cbas{-\alpha}] = h_\alpha $.
			
			\item For all $\alpha, \beta \in F_4$ with $\alpha+\beta \in F_4$, there exists $c_{\alpha,\beta} \in \Z$ such that $ [\cbas{\alpha}, \cbas{\beta}] = c_{\alpha, \beta} \cbas{\alpha+\beta} $.
			
			\item It is possible to choose the constants $ c_{\alpha,\beta} $ above so that for all $\alpha, \beta \in F_4$ with $\alpha+\beta \in F_4$, we have $c_{\alpha, \beta} = -c_{-\alpha, -\beta}$.
		\end{enumerate}
		A family of integers $ c_{\alpha,\beta} $ for $ \alpha,\beta \in F_4 $ with $ \alpha+\beta \in F_4 $ which satisfies the properties above is said to be \emph{conforming to $(\cbas{\alpha})_{\alpha \in F_4}$}.
	\end{enumerate}
\end{definition}

\begin{remark}
	\begin{enumerate}
		\item Note that a Chevalley-type basis $ (\cbas{\alpha})_{\alpha \in F_4} $ of $ L $ is not actually a basis of the $ k $-module $ L $: Its $ k $-linear span intersects $ L_{0000} $ trivially and for any short root $ \alpha \in F_4 $, the $ k $-span of $ \cbas{\alpha} $ is, in general, a proper submodule of $ L_\alpha \cong C $.
		
		\item Let $ L' $ be a complex semisimple Lie algebra, let $ H $ denote a maximal toral subalgebra of $ L' $, let $ \kappa $ denote the Killing form on $ L' $, let $ \Phi \subseteq H^* $ denote the root system of $ L' $ and let $ h_\alpha, t_\alpha \in L' $ for $ \alpha \in \Phi $ be defined as in \cite[8.2, 8.3]{HumphreysLie}. Then for all $ \alpha, \beta \in \Phi $ and all $ y_\beta $ in the $ \beta $-root space $ L'_\beta $, we have
		\[ [h_\alpha, y_\beta] = \frac{2}{\kappa(t_\alpha, t_\alpha)} [t_\alpha, y_\beta] = \frac{2}{\kappa(t_\alpha, t_\alpha)}\beta(t_\alpha) y_\beta = 2\frac{\kappa(t_\beta, t_\alpha)}{\kappa(t_\alpha, t_\alpha)} y_\beta = \cartan{\beta}{\alpha} y_\beta. \]
		Hence the elements $ h_\alpha $ in the classical theory have the same property as in \cref{def:chev basis}\cref{def:chev basis:h}. In fact, they are the unique elements of $H$ with this property because $ H $ acts faithfully on $ L' $.
		
		\item The numbers $ c_{\alpha, \beta} $ in \cref{def:chev basis}\cref{def:chev basis:chev} are not uniquely determined in general because $ L $ may not be faithful as a $ \Z $-module. For example, this is the case if $ k $ is a field of positive characteristic. Even the image of $ c_{\alpha,\beta} $ in $ k $ need not be uniquely determined because $ C $ (and hence $ L_\gamma $ for all short $ \gamma \in F_4 $) need not be faithful as a $ k $-module.
	\end{enumerate}
\end{remark}

\begin{remark}\label{rem:root-length-sum}
	Let $ \alpha,\beta \in F_4 $ such that $ \alpha+\beta \in F_4 $. If $ \beta $ and $ \alpha+\beta $ are long, then $ \alpha $ is long as well. If $ \beta $ is long and $ \alpha+\beta $ is short, then $ \alpha $ must be short. It follows that, unless $ \alpha, \beta $ are short and $ \alpha + \beta $ is long, we always have a multiplication map $ P_\alpha \times P_\beta \to P_{\alpha+\beta} $ which is one of the maps $ k \times k \to k $, $ C \times C \to C $, $ k \times C \to C $. In the situation that $ \alpha, \beta $ are short and $ \alpha + \beta $ is long (so that $ P_\alpha = P_\beta = C $ and $ P_{\alpha+\beta} = k $), however, the multiplication defined on $ P_\alpha \times P_\beta = C \times C $ has image in $ C $ and not in $ k = P_{\alpha+\beta} $.
\end{remark}

We can now state the crucial property of the root homomorphisms $ (\rhomlie{\alpha})_{\alpha \in F_4} $. We find not only that the commutator formulas in $ L $ are independent of $ \gamma_1, \gamma_2, \gamma_3 $ but also that \emph{all commutator formulas look the same} up to possibly \enquote{twisting} the short root spaces by the conjugation $ \conconj{\: \cdot \:} $ of $ C $. This is the same kind of \enquote{twisting} that we have already seen in \cref{def:twistgrp}.

\begin{proposition}\label{pr:chev-basis}
	There exists a unique family $ c=(c_{\alpha,\beta})_{\alpha, \beta \in F_4, \alpha+\beta \in F_4} $ of integers in $ \{-2,\ldots, 2\} $ such that for all commutative rings $ k $, all multiplicative conic alternative $ k $-algebras $ C $ and all invertible $ \Gamma = (\gamma_1, \gamma_2, \gamma_3) \in \Mat_3(k) $, the following hold:
	\begin{enumerate}
		\item \label{pr:chev-basis:comm}Let $ \alpha, \beta \in F_4 $ such that $ \alpha+\beta \in F_4 $. For each $ \gamma \in F_4 $, put $ T_\gamma \coloneq \{\id_C, \conconj{\:\cdot\:}\} \le \Aut(P_\gamma) $ if $ \gamma $ is short and $ T_\gamma \coloneq \{\id_k\} \le \Aut(P_\gamma) $ if $ \gamma $ is long. If $ \alpha, \beta $ are short and $ \alpha+\beta $ is long, then $ c_{\alpha,\beta} $ is even and there exists $ \mathord{*} \in T_\alpha = \{\id_C, \conconj{\:\cdot\:}\} $ such that the following commutator relation is satisfied for all $ c,d \in C $:
		\[ \bigl[ \rhomlie{\alpha}(c), \rhomlie{\beta}(d)\bigr] = \rhomlie{\alpha+\beta}\brackets[\big]{c_{\alpha,\beta}/2 \cdot \contr(c^*d)}. \]
		Otherwise, there exist $ \mathord{*} \in T_\alpha $, $ \mathord{\star} \in T_\beta $ and $ \sigma \in T_{\alpha+\beta} $ such that the following commutator relation is satisfied for all $ c \in P_\alpha $, $ d \in P_\beta $:
		\[ \bigl[ \rhomlie{\alpha}(c), \rhomlie{\beta}(d)\bigr] = \rhomlie{\alpha+\beta}\brackets[\big]{c_{\alpha,\beta} (c^* \cdot d^\star)^\sigma}. \]
		(Here $ c \cdot d \in P_{\alpha+\beta} $ by \cref{rem:root-length-sum}.)
		
		\item The family $ (\rhomlie{\alpha}(1))_{\alpha \in F_4} $ (where \enquote{$ 1 $} should be read as $ 1_k $ or as $ 1_C $, depending on $ \alpha $) is a Chevalley-type basis of $ L $ to which $ c $ conforms.
	\end{enumerate}
	Further, for all $ \alpha \in F_4 $, the element $ h_\alpha $ lies in $ L_0 $ and satisfies $ [h_\alpha, y_\beta] = \cartan{\beta}{\alpha} y_\beta $ for all $ y_\beta \in L_\beta $ and all $ \beta \in F_4 $.
\end{proposition}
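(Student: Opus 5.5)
The plan is to reduce everything to a finite set of universal computations, use the Weyl symmetries already available to keep that set small, and then verify it by the free-algebra method of \cref{subsec:computer}.

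\textbf{Reduction to universal identities.} The commutator $[\rhomlie{\alpha}(c), \rhomlie{\beta}(d)]$ can be read off from \cref{ta:Lie}, specialized to $J = \Her_3(C,\Gamma)$ via \cref{le:T formulas,le:times formulas,le:times blocks,le:D-formulas}. In every case the result lies in $L_{\alpha+\beta}$ by \cref{thm:lie F4 grading}, and it is an expression in $c,d$, the $\gamma_i^{\pm1}$, and the structural maps $\contr$, $\conconj{\:\cdot\:}$ and the product. Components in $L_{0000}$ never occur as $\alpha+\beta$, since $\alpha+\beta$ is a root. Whenever $\pi(\alpha+\beta)=(0,0)$, we rewrite the resulting $\dd$-terms into the normal form $\lambda_{ij}\dd_{e_i,\cubel{a}{ij}^\iota}$ using \cref{le:dd rel,le:d-iji shift}; by \cref{pr:Zitoj-param} this normal form is unique.

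\textbf{Why the constants are universal.} For each of the pairs $(\alpha,\beta)$ (finitely many) we then check two things. First, the scalar factor, after the rescalings built into $\rhomlie{\alpha}$, is an integer $c_{\alpha,\beta}\in\{-2,\dots,2\}$ independent of $\Gamma$. Second, the $C$-dependence has the asserted form $(c^*d^\star)^\sigma$, or $\tfrac{c_{\alpha,\beta}}{2}\contr(c^*d)$ when $\alpha,\beta$ are short and $\alpha+\beta$ is long. Since the computation is performed in the free objects $\tilde k,\tilde C$, with $\gamma_i$ treated as indeterminates inverted, the integers obtained are the same for every $k$, $C$ and $\Gamma$. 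Taking $C=k$ and $\Gamma=1$ over $\Z$, the elements $\rhomlie{\gamma}(1)$ are nonzero and free of infinite order, so $c_{\alpha,\beta}$ is uniquely determined. This gives existence and uniqueness of $c$ together with \cref{pr:chev-basis:comm}.

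\textbf{Chevalley-type basis properties.} For (b), specialize $c=d=1$; the case short$+$short$=$long gives $\tfrac{c_{\alpha,\beta}}{2}\contr(1)=c_{\alpha,\beta}$. For (c), check $c_{\alpha,\beta}=-c_{-\alpha,-\beta}$ directly from the computed table. This matches the sign choices encoded in $\mathcal{S}$ and in \cref{eq:roothom-lie-00part}, which were designed for exactly this purpose.

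\textbf{The elements $h_\alpha$.} For (a) and the last assertion, compute $[\rhomlie{\alpha}(1),\rhomlie{-\alpha}(1)]\in L_{0000}$ explicitly. Examples:
\begin{itemize}
\item for $\alpha$ with $\pi(\alpha)_1=\pm2$ this is $\pm\xi$;
\item for $\pi(\alpha)_1=\pm1$ it comes from \cref{eq:L-1 L1};
\item otherwise it is a $\dd$ or a $\dd_{e_i,e_i'}$-type element.
\end{itemize}
We then verify that its action on every $L_\beta$ is multiplication by $\cartan{\beta}{\alpha}$, using \cref{pr:delta z}, \cref{le:Zitoj bracket L1}, \cref{le:D-formulas} and \cref{rem:xi zeta formulas}. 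Because this element lies in $L_0$ and agrees with $h_\alpha$ on all $L_\beta$ with $\pi(\beta)_1\neq0$, it equals $h_\alpha$ in $M$. This shows $h_\alpha\in L_0$, and the Cartan-integer formula on all $L_\beta$ follows from the same computation.

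\textbf{Using symmetry and the expected obstacle.} To cut down the number of cases, we would like to use the reflections $\phihor$, $\phisl$, $\phibs^{(\iota,\iotainv,1)}$ from \cref{sec:reflections}, together with $\Weyl(F_4)$-transitivity on pairs as in \cref{rem:rgg F4 comrel}. However, these reflections twist the root homomorphisms by signs and conjugations, so tracking those twists is itself bookkeeping. Realistically, we will do the whole verification by computer as described in \cref{subsec:computer}. The main obstacle is the cases where $\alpha+\beta$ or $\pm\alpha$ lies in $\pi^{-1}(0,0)$, and in particular the computation of $h_\alpha$ for these roots. There the $\dd$-expressions must be brought into normal form with \cref{le:dd rel,le:dd sum}, and the non-associativity of $C$, which gives $\dd(\cubel{a}{il},\cubel{b}{li}^\iota)\neq\dd(\cubel{1}{il},\cubel{ab}{li}^\iota)$, means the simplification maps must be chosen carefully.
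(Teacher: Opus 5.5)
Your proposal matches the paper's proof. The paper likewise rests on a computer verification in the free setting of \cref{subsec:computer}, covering the brackets $[\rhomlie{\alpha}(1),\rhomlie{\beta}(1)]$ and the actions of $[\cbas{\alpha},\cbas{-\alpha}]$ on all root spaces, and it gets uniqueness of $c$ by specializing to a torsion-free instance ($k=C=\mathbb{C}$ there, $k=C=\Z$ with $\Gamma=1$ in your version). One caution, which applies equally to the paper's own proof: with $\mathcal{S}$ as literally defined in \cref{def:roothom lie}, both $\bar{2}000$ and $2000$ lie in $\mathcal{S}$, so $[\rhomlie{\bar{2}000}(1),\rhomlie{2000}(1)]=[-x,-y]=\xi$ acts on $L_{\bar{2}000}=kx$ by $-2$ instead of by $\cartan{\bar{2}000}{\bar{2}000}=2$, and your remark that $\mathcal{S}$ was \enquote{designed for exactly this purpose} therefore cannot be taken on faith — the sign set appears to need correcting (the paper's proof of \cref{pr:F4 comm} in fact uses $\rhomlie{\bar{2}000}(t)=tx$).
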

\begin{proof}
	A computer calculation (see \cref{subsec:computer}) shows that for all $ \alpha, \beta \in F_4 $ such that $ \alpha+\beta \in F_4 $, there exists $ c_{\alpha,\beta} \in \{-2, \ldots, 2\} $ such that $ [\rhomlie{\alpha}(1), \rhomlie{\beta}(1)] = \rhomlie{\alpha+\beta}(c_{\alpha,\beta}) $ for all possible choices of $ k $, $ C $, $ \Gamma $. In particular, the validity of this equation for $ k=C=\mathbb{C} $ (or any field of characteristic~$ 0 $) shows that the family $ c $ is uniquely determined by this property (because then $ L $ is free as a $ \Z $-module). A further computation shows that $ [[\cbas{\alpha}, \cbas{-\alpha}], y_\beta] = \cartan{\beta}{\alpha} y_\beta $ for all $ \alpha, \beta \in F_4 $ and $ y_\beta \in L_\beta $. This implies that $ h_\alpha = [\cbas{\alpha}, \cbas{-\alpha}] \in [L_\alpha, L_{-\alpha}] \subseteq L_0 $ for all $ \alpha \in L $. The remaining assertions also follow from a computation.
\end{proof}

\begin{remark}\label{rem:is stronger grading}
	Recall from \cref{rem:stronger grading} the stronger notion of root graded Lie algebras from \cite{BM92}. It follows from the existence of a Chevalley-type basis in $L$ that $L$ is $F_4$-graded in this stronger sense if $C$ is faithful as a $k$-algebra. This additional condition, which  by \cite[11.14]{GPR24} is always satisfied if $2$ is invertible in $k$, is need to ensure that the $k$-submodule $k\cbas{\alpha}$ for a short root $\alpha \in F_4$ is isomorphic to $k$ and not to a quotient of $k$.
	
	Since an arbitrary cubic norm pair $(\bar{J}, \bar{J}')$ need not contain a distinguished base point, there is no evident way to define a Chevalley-type basis (in the sense of \cref{def:chev basis}, but with $F_4$ replaced by $G_2$) of the corresponding Lie algebra $L(\bar{J}, \bar{J}')$. Hence there is no reason to think that $L(\bar{J}, \bar{J}')$ is always $G_2$-graded in the stronger sense. However, assume now that $(\bar{J}, \bar{J})$ is (induced by) a cubic norm structure and that its distinguished base point $1_{\bar{J}}$ is unimodular (see \cref{rem:CNP axioms}\cref{rem:CNP axioms:unimod}). Denote by $(\rhomlieG{\alpha})_{\alpha \in G_2}$ the root homomorphisms in $L(\bar{J}, \bar{J})$ from \cref{def:lie G2 natural rhom}. For all roots $\alpha, \beta \in G_2$ where $\alpha$ is long and $\beta$ is short, put
	\begin{align*}
		\mathcal{S}_1 &\coloneq \{(-2, -1), (-1,-2)\}, &\mathcal{S}_2 &\coloneq \{(-1, 0), (0, -1)\}, \\
		\cbas{\alpha} &\coloneq \begin{cases}
			\rhomlieG{\alpha}(-1_k) & \text{if } \alpha \in \mathcal{S}_1, \\
			\rhomlieG{\alpha}(1_k) & \text{otherwise,}
		\end{cases} & \cbas{\beta} &\coloneq \begin{cases}
			\rhomlieG{\beta}(-1_{\bar{J}}) & \text{if } \alpha \in \mathcal{S}_2, \\
			\rhomlieG{\beta}(1_{\bar{J}}) & \text{otherwise,}
		\end{cases}
	\end{align*}
	Using that $T(1_{\bar{J}}, 1_{\bar{J}}) = 3_k$ and $D(1_{\bar{J}}, 1_{\bar{J}}) = 2\id_{\bar{J}}$, one can show that $(\cbas{\alpha})_{\alpha \in G_2}$ is a Chevalley-type basis of $L(\bar{J}, \bar{J})$ of type $G_2$. Indeed, it is straightforward to see that there exist unique integers $c_{\alpha,\beta}$ for all $\alpha,\beta \in G_2$ with $\alpha+\beta \in G_2$ and unique integers $d_{\alpha,\beta}$ for all $\alpha,\beta \in G_2$ such that for all cubic norm structures $(\bar{J}, \bar{J})$, we have $[\cbas{\alpha}, \cbas{\beta}]=c_{\alpha,\beta} \cbas{\alpha+\beta}$ and $[[\cbas{\alpha}, \cbas{-\alpha}], y_\beta] = d_{\alpha,\beta} y_\beta$. Proving that $(\cbas{\alpha})_{\alpha \in G_2}$ is a Chevalley-type basis reduces to proving that these integers satisfy certain properties, so it suffices to show that it is a Chevalley-type basis for some cubic norm structure that is free as a $\Z$-module. This can be done for cubic Jordan matrix algebras with the strategy outlined in \cref{subsec:computer}.
\end{remark}

\section{Refining the \texorpdfstring{$ G_2 $}{G2}-grading of the automorphism group}\label{sec:refining aut}

We keep up the notation from \cref{sec:lie F4}, including \cref{not:F4 grid}. By \cref{thm:G2 graded group}, we have a $ G_2 $-grading $ (U_\alpha)_{\alpha \in G_2} $ in the automorphism group of the Lie algebra $ L=L(J,J') $ where $ J = J' = \Her_3(C, \Gamma) $ is a cubic Jordan matrix algebra over the conic $ k $-algebra $ (C, \connorm) $. We put $ G \coloneq \langle U_\alpha \mid \alpha \in G_2 \rangle \le \Aut(L) $. Our goal is to refine $ (U_\alpha)_{\alpha \in G_2} $ to an $ F_4 $-grading $ (U_\alpha)_{\alpha \in F_4} $ of $ G $, using the $ F_4 $-grading of $ L(J,J') $ constructed in \cref{sec:lie F4}.

The system $(f_1,f_2,f_3,f_4)$ of simple roots in $F_4$ that induces the map $\pi \colon F_4^0 \to G_2^0$ (see \cref{rem:pi in grid}) is impractical to work with because it contains two simple roots that map to zero. Instead, we will consider the following system of simple roots, which in addition has the property that three of its elements map to the positive system in $G_2$ for which we have determined the commutator relations in \cref{pr:G2 comm}.

\begin{notation}
	We denote by $\Delta_F=(\delta_1,\delta_2,\delta_3,\delta_4)$ the standard system of simple roots in $F_4$ given by
	\[ \delta_1 = 11 \bar{1} \bar{1}, \quad \delta_2 = \bar{2}000, \quad \delta_3 \coloneq 1\bar{1}00, \quad \delta_4 \coloneq 0110. \]
	Further, we denote by $T \coloneq \{\pm 1\} \times \{\pm 1\}$ the twisting group of $(k,C)$, which acts on $k$ and $C$ (see \cref{def:twistgrp}).
\end{notation}

For roots $ \alpha \in F_4 $ with $ \rootproj(\alpha) \ne (0,0) $, we can define root homomorphisms in $ \Aut(L) $ by exponentiating the root homomorphisms in $ L $ from \cref{subsec:chev}.

\begin{definition}\label{def:F4 nonzero rhom}
	Let $ \alpha \in F_4 $ such that $ \pi(\alpha) \ne (0,0) $ and let $ \rhomlie{\alpha} \colon P_\alpha \to L_\alpha \le L_{\pi(\alpha)} $ denote the root homomorphism from \cref{def:roothom lie}. Then the \emph{root homomorphism in $ G $ with respect to $ \alpha $} is
	\[ \rhomgrp{\alpha} \colon P_\alpha \to G \colon a \mapsto \exp_{\pi(\alpha)}\brackets[\big]{\rhomlie{\alpha}(a)} \]
	where $ \exp_{\pi(\alpha)} \colon L_{\pi(\alpha)} \to \Aut(L) $ is the $ \pi(\alpha) $-parametrization (in the $ G_2 $-graded Lie algebra $ L $) from \cref{pr:param}. Further, the \emph{root group associated to $ \alpha $} is $ U_\alpha \coloneq \rhomgrp{\alpha}(P_\alpha) $.
\end{definition}

For the remaining six roots that map to $ 0 $ in $ G_2 $, we define the corresponding root homomorphisms by conjugating root homomorphisms from \cref{def:F4 nonzero rhom} by appropriate reflections.

\begin{definition}\label{def:F4 weyl}
	For all $ \alpha \in F_4 $ such that $ \pi(\alpha) \ne (0,0) $, we define
	\[ w_\alpha \coloneq \rhomgrp{-\alpha}(-1_\alpha) \rhomgrp{\alpha}(1_\alpha) \rhomgrp{-\alpha}(-1_\alpha) \in G \le \Aut(L) \]
	where $ 1_\alpha = 1_k $ if $ \alpha $ is long and $ 1_\alpha = 1_C $ if $ \alpha $ is short.
\end{definition}

\begin{lemma}\label{le:weyl F4 refl}
	For all $ \alpha \in F_4 $ such that $ \pi(\alpha) \ne (0,0) $, the element $ w_\alpha \in \Aut(L) $ is an $ \alpha $-reflection.
\end{lemma}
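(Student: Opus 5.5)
We must show that $w_\alpha$ is an automorphism of $L$ which maps $L_\beta$ onto $L_{\beta^{\reflbr{\alpha}}}$ for every $\beta \in F_4^0$. Being a product of exponential automorphisms, $w_\alpha$ already lies in $\Aut(L)$ by \cref{pr:param}. Hence only the permutation of root spaces has to be checked. First, I reduce to finitely many roots up to symmetry. By \cref{le:index fibers}, $\pi^{-1}(\pi(\alpha))$ is either a single long root or the degree-one part of a $C_3$. The conjugation action of the reflections $\phihor$, $\phisl$, $\phibs$ from \cref{sec:reflections}, together with \cref{le:conj by refl}, relates the $\pi(\alpha)$-parametrizations for different $G_2$-roots. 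Some of these reflections may themselves be products of the $w_\alpha$ or may permute the $F_4$-root spaces. If they permute the $F_4$-grading compatibly, then conjugating $w_\alpha$ by such a reflection $\varphi$ gives $w_{\alpha'}$, possibly up to twisting by $T$. The $\alpha'$-reflection property then follows from the $\alpha$-one. However, verifying this compatibility is itself a computation. So the cleaner plan is a direct computation.

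The direct approach runs as follows. Since $\rhomgrp{\pm\alpha}(t) = \exp_{\pm\pi(\alpha)}(\rhomlie{\pm\alpha}(t))$ is given by the explicit formulas of \cref{def:exp xy,def:exp L1,def:exp L-1,def:exp JJ'}, each $w_\alpha$ is an explicit $k$-linear map. By \cref{rem:aut equality}, an automorphism is determined by its values on the generating set $\{x\}\cup L_1$ (or $\{y\} \cup L_{-1}$). Thus it suffices to compute $w_\alpha$ on $L_{-2,-1}$ and on each $L_\beta$ with $\pi(\beta)_1 = 1$, and to check that the image lies in $L_{\beta^{\reflbr{\alpha}}}$. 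The images of the remaining root spaces $L_\gamma$ then follow automatically. Indeed, every $L_\gamma$ with $\gamma \in F_4^0$ is spanned by iterated brackets of the generating root spaces, with each bracket respecting the $F_4$-grading by \cref{thm:lie F4 grading}. Moreover, $\sigma_\alpha$ is linear, so $w_\alpha[L_\beta,L_{\beta'}] = [w_\alpha L_\beta, w_\alpha L_{\beta'}] \subseteq L_{(\beta+\beta')^{\reflbr{\alpha}}}$. This yields $w_\alpha(L_\gamma) \subseteq L_{\gamma^{\reflbr{\alpha}}}$ for all $\gamma$. Equality follows by applying the same inclusion to $w_\alpha^{-1} = w_\alpha^{(-)}$, which has the same shape with $-1$ and $1$ exchanged. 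Equivalently, equality follows from bijectivity together with the directness of the decomposition.

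The key inclusion is proved via the grading, so no explicit Weyl element formula is needed. $w_\alpha$ is built from $\ad$-nilpotent pieces $\theta^{[i]}$ shifting $F_4$-degree by $i\alpha$ or $-i\alpha$. This holds provided the $\theta^{[i]}_{\rhomlie{\pm\alpha}(t)}$ shift $F_4$-degree by $\pm i\alpha$, and not merely $G_2$-degree by $\pm i\pi(\alpha)$. That property is the first lemma I would prove, by inspecting the formulas: each $\theta^{[i]}_a$ is a polynomial expression in $\ad_a$ "divided by $i!$", obtained from $\ad_a^i$ over $\Z$ (\cref{rem:param}). So $\theta^{[i]}_a(L_\beta) \subseteq L_{\beta+i\alpha}$ holds whenever $L$ is $\Z$-torsion free, and one passes to general $k$, $C$ via the universal model of \cref{subsec:computer}. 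Granting this, $w_\alpha(L_\beta) \subseteq \sum_{m} L_{\beta+m\alpha}$. The remaining task is to identify which component survives. Using the $A_1$ (or $\alpha$-string) decomposition of the span of $\bigoplus_m L_{\beta+m\alpha}$, the standard $SL_2$-type computation shows that the sign choices $-1,1,-1$ kill all components except $m = -\cartan{\beta}{\alpha}$. With $\alpha$-strings of length at most $3$ in $F_4$, this is a finite case check, which I would carry out with the Chevalley-type basis and the relations $[\cbas{\alpha},\cbas{-\alpha}] = h_\alpha$ from \cref{pr:chev-basis}, again confirmed by the computer procedure. The main obstacle is this last step for short $\alpha$. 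There $L_\alpha \cong C$ and the relevant products involve $\contr$, $\connorm$ and conjugation. For those cases I expect to rely on the GAP verification described in \cref{subsec:computer}.
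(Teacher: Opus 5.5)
Your plan is correct in outline. Like the paper, it ultimately rests on a machine check, but you reach that check by a different decomposition.

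The paper does no structural reduction. It evaluates the explicit product $\rhomgrp{-\alpha}(-1_\alpha)\rhomgrp{\alpha}(1_\alpha)\rhomgrp{-\alpha}(-1_\alpha)$ on every root space and verifies all $48\cdot 48$ equalities $w_\alpha(L_\beta)=L_{\beta^{\reflbr{\alpha}}}$ by computer.

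You proceed in three steps:
\begin{enumerate}
\item You use that $w_\alpha$ is an automorphism to reduce to the fifteen root spaces spanning $kx+L_1$.
\item You prove that $\theta^{[i]}_{\rhomlie{\pm\alpha}(t)}$ shifts $F_4$-degree by $\pm i\alpha$.
\item You run the $\alpha$-string argument with $[\cbas{\alpha},\cbas{-\alpha}]=h_\alpha$ from \cref{pr:chev-basis}.
\end{enumerate}
The third step is genuinely formal whenever $\beta\ne\pm\alpha$ and the $\alpha$-string through $\beta$ has length at most $2$. Using only $\theta^{[0]}$, $\theta^{[1]}$ and $[h_\alpha,v]=\cartan{\beta}{\alpha}v$, one gets $w_\alpha(v)=-[\cbas{-\alpha},v]$, $v$ or $[\cbas{\alpha},v]$ according as $\cartan{\beta}{\alpha}=1,0,-1$. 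So only $\beta=\pm\alpha$ and, for short $\alpha$, the $\alpha$-strings of length $3$ remain; this is exactly where $\theta^{[2]}$ enters.

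Your route buys a conceptual explanation and a much smaller verification. It also makes explicit the $F_4$-degree-shift property of the restricted $G_2$-parametrizations, which the paper uses only tacitly (for instance in \cref{rem:F4 0 rhom}).

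Two points need repair.

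First, your justification of the degree-shift lemma via torsion-freeness plus \enquote{the universal model of \cref{subsec:computer}} does not work as stated. That model is a free nonassociative algebra handled by simplification maps. It is not a $\Z$-torsion-free multiplicative conic alternative algebra with torsion-free $L$ that specializes onto every $(k,C)$. Prove the lemma directly instead, from \cref{def:exp xy,def:exp L1,def:exp L-1,def:exp JJ'} and the Peirce rules \cref{le:peirce,le:times blocks,le:T formulas}. For instance, for $b\in J_{ij}$ one has $b^\sharp\in J'_{ll}$, $N(b)=0$ and $b^\sharp\times J'_{jl}=0$.

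Second, in the remaining cases the \enquote{standard $SL_2$ computation} is not automatic over arbitrary $k$, because $\theta^{[2]}$ is determined by $\ad$ only modulo $2$-torsion. These cases need the explicit formulas together with conic identities such as $c+\conconj{c}=\contr(c)1_C$. That is the same kind of computation the paper performs, as you yourself anticipate.
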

\begin{proof}
	For all $ \alpha, \beta \in F_4 $, we have to show that $ w_\alpha(L_\beta) = L_{\beta^{\reflbr{\alpha}}} $. We have verified these $ 48 \cdot 48 $ assertions on a computer. See \cref{subsec:computer} for details.
\end{proof}

\begin{remark}
	We will only need the statement of \cref{le:weyl F4 refl} for $ \alpha \in \{\delta_1, \delta_4\} $.
\end{remark}

\begin{remark}\label{rem:F4 weyl def}
	Let $\alpha \in F_4$ with $\rootproj(\alpha) \ne (0,0)$. While the root group $U_\alpha = \exp_{\pi(\alpha)}(L_\alpha)$ does not depend on the choice of the root homomorphism $\rhomlie{\alpha} \colon P_\alpha \to L_\alpha$ in $L$, the root homomorphism $\rhomgrp{\alpha} \colon P_\alpha \to U_\alpha$ in $G$ does. In fact, the \enquote{twisting} of the $\rhomlie{\alpha}$ by products of $\gamma_1$, $\gamma_2$, $\gamma_3$, $-1$ in \cref{def:roothom lie} is precisely what ensures that $w_\alpha$ is indeed a reflection. If we had chosen a different map $\tilde{\vartheta}_\alpha \colon P_\alpha \to L_\alpha$ and defined $\tilde{\theta}_\alpha \colon P_\alpha \to G \colon a \mapsto \exp_{\rootproj(\alpha)}(\tilde{\vartheta}_\alpha(a))$ accordingly, then the desired reflections might, for example, have the form $ w_\alpha = \tilde{\theta}_{-\alpha}(1_\alpha) \tilde{\theta}_\alpha(1_\alpha) \tilde{\theta}_{-\alpha}(1_\alpha) $ (as in \cref{le:phihor factor} for the $ G_2 $-grading). Even worse, their expression in terms of $ \tilde{\theta}_\alpha $, $\tilde{\theta}_{-\alpha}$ may involve the structure constants $ \gamma_1, \gamma_2, \gamma_3 $. It is this obstruction that originally prompted our search for a Chevalley-type basis of $ L $.
\end{remark}

\begin{remark}\label{rem:F4 0 rhom}
	Each of the six roots $ \alpha \in F_4 $ with $ \pi(\alpha) = (0,0) $ can be expressed as $ \alpha = \beta^{\reflbr{\delta_i}} $ for some $ i \in \{1,4\} $ and some $ \beta \in F_4 $ with $ \pi(\alpha) \ne (0,0) $. Explicitly, we may choose $ \beta $ and $ i $ as in \cref{ta:0rhom choice}.
	\begin{table}
		\centering\begin{tabular}{ccccccc}
			\toprule
			$ \alpha $ & $ 01\bar{1}0 $ & $ 0\bar{1}10 $ & $ 010\bar{1} $ & $ 0\bar{1}01 $ & $ 001\bar{1} $ & $ 00\bar{1}1 $ \\
			\midrule
			$ \beta $ & $ \bar{1} 001 $ & $ 100\bar{1} $ & $ \bar{1}010 $ & $ 10\bar{1}0 $ & $ 0\bar{1}0\bar{1} $ & $ 0101 $ \\
			\midrule
			$ i $ & 1 & 1 & 1 & 1 & 4 & 4 \\
			\midrule
			$\eta_\alpha$ & $(-1,1)$ & $(-1,1)$ & $(1,-1)$ & $(1,-1)$ & $(1,-1)$ & $(1,-1)$ \\
			\bottomrule
		\end{tabular}
		\bigskip
		\caption{The choice of $ \beta $ and $ i $ in \cref{rem:F4 0 rhom}, in dependence of $ \alpha $}
		\label{ta:0rhom choice}
	\end{table}
	Hence for each such $ \alpha $, it follows from \cref{le:conj by refl,le:weyl F4 refl} that
	\[ \exp_\alpha \colon L_\alpha \to \Aut(L) \colon b \mapsto w_i^{-1} \circ \exp_{\beta}\brackets[\big]{w_i(b)} \circ w_i \]
	is an $ \alpha $-parametrization where $ \beta \in F_4 $ and $ i \in \{1,4\} $ are chosen as in \cref{ta:0rhom choice} (in dependence of $ \alpha $) and $ w_i \coloneq w_{\delta_i} $. Hence we may define
	\[ \rhomgrp{\alpha} \colon C \to \Aut(L) \colon a \mapsto \exp_\alpha\brackets[\big]{\rhomlie{\alpha}(a)} = w_i^{-1} \circ \exp_\beta\brackets[\big]{w_i(\rhomlie{\alpha}(a))} \circ w_i. \]
	Further, a computer calculation (see \cref{subsec:computer}) shows that $ w_i(\rhomlie{\alpha}(a)) = \rhomlie{\beta}(\eta_\alpha.a) $ for all $ a \in C $ where $ \eta_\alpha \in T $ is given by the last row in \cref{ta:0rhom choice}.
\end{remark}

\begin{definition}\label{def:F4 0 rhom}
	Let $ \alpha \in F_4 $ with $ \pi(\alpha) = (0,0) $. Then the map $ \rhomgrp{\alpha} \colon C \to \Aut(L) $ from \cref{rem:F4 0 rhom} (with $ \beta $ and $ i $ chosen as in \cref{ta:0rhom choice}) is called the \emph{root homomorphism with respect to $ \alpha $}, and its image $ U_\alpha $ is called the \emph{root group associated to~$ \alpha $}.
\end{definition}

\begin{remark}\label{pr:F4 gen}
	For all $ \alpha \in F_4 $ with $ \pi(\alpha) = (0,0) $, we have that $ w_i $ and $ U_\beta $ are contained in $ G $, the group generated by $ (U_\gamma)_{\gamma \in G_2} $. Hence the group generated by $ (U_\alpha)_{\alpha \in F_4} $ is exactly $ G $ (and not a bigger group).
\end{remark}

Having defined all root homomorphisms $ (\rhomgrp{\alpha})_{\alpha \in F_4} $, we now verify that $ (U_\alpha)_{\alpha \in F_4} $ satisfies the axioms of an $ F_4 $-grading of $G$. We begin with the existence of Weyl elements.

\begin{notation}[cf.~\ref{rem:coordinatization parmap}]\label{def:existence parmap}
	We denote by $\etae \colon F_4 \times \Delta_F \to T$ the $\Delta_F$-parity map with values in $T$ (in the sense of \cref{def:parmap}) that is given by the values in \cref{ta:existence parmap}. Note that the definition of $\etae$ does not involve the actions of $T$ on $k$ and $C$, so $\etae$ is independent of $k$, $C$ and $\Gamma$.
\end{notation}

\begin{proposition}\label{pr:F4 weyl}
	For all $ \delta \in \Delta_F $, $ \alpha \in F_4 $ and $ a \in P_\alpha $, we have
	\begin{equation}\label{eq:F4 weyl conj}
		w_\delta^{-1} \circ \rhomgrp{\alpha}(a) \circ w_\delta = \rhomgrp{\alpha^{\reflbr{\delta}}}(\etae_{\alpha, \delta}.a).
	\end{equation}
	In particular, $ w_\delta $ is a $ \delta $-Weyl element with respect to $ (U_\beta)_{\beta \in F_4} $, $ (\rhomgrp{\alpha})_{\alpha \in F_4} $ is a parametrization of $ (U_\beta)_{\beta \in F_4} $ by $ (k,C) $ with respect to $ (w_\delta)_{\delta \in \Delta_F} $ and $ \etae $ (in the sense of \cref{def:rgg-param}) and $ (U_\beta)_{\beta \in F_4} $ satisfies Axiom~\ref{def:rgg}\cref{def:rgg:weyl}.
\end{proposition}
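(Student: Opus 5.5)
Identity~\eqref{eq:F4 weyl conj} is an equality of automorphisms of $L$. By \cref{rem:aut equality}, two endomorphisms of $L$ coincide as soon as they agree on a generating set such as $\{x\} \cup L_1$. So I would reduce each instance of~\eqref{eq:F4 weyl conj} to a finite list of identities in $L$. These would then be checked with the computer-algebra framework of \cref{subsec:computer}, working in the free objects $\tilde{k}$, $\tilde{C}$, $\tilde{L}$. There are $4 \cdot 48$ pairs $(\delta,\alpha)$, far too many to do by hand, so machine verification is the backbone of the argument.

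To make the computations feasible, I would first rewrite the conjugate in a form that avoids composing many exponentials. By \cref{le:weyl F4 refl}, $w_\delta$ is a $\delta$-reflection. By \cref{le:conj by refl}, the map $b \mapsto w_\delta^{-1} \circ \exp(w_\delta(b)) \circ w_\delta$ is then an $\alpha^{\reflbr{\delta}}$-parametrization.

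For $\pi(\alpha) \ne (0,0)$ and $\pi(\alpha^{\reflbr{\delta}}) \ne (0,0)$, I would proceed as follows.
\begin{enumerate}
\item Compute $w_\delta(\rhomlie{\alpha}(a))$ and check that it equals $\rhomlie{\alpha^{\reflbr{\delta}}}(\eta.a)$ for the tabulated $\eta = \etae_{\alpha,\delta}$. This is a Lie-algebra identity, hence cheap to check.
\item Show that the conjugated parametrization agrees with $\exp_{\pi(\alpha^{\reflbr{\delta}})}$ on $L_{\alpha^{\reflbr{\delta}}}$.
\end{enumerate}
Step 2 is not automatic, because $\alpha$-parametrizations are not unique. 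I would verify it directly by evaluating both sides on $\{x\} \cup L_1$. When $w_\delta$ comes from $\phihor$ or $\phibs$-type maps (the long $G_2$ reflections), \cref{pr:exp conj aut summary} may already give this. For the roots with $\pi = (0,0)$, where $\rhomgrp{\alpha}$ is itself defined by conjugation via \cref{def:F4 0 rhom}, I would rewrite both sides as conjugates of non-zero-fiber root homomorphisms by words in $w_1, w_4, w_\delta$ and reduce to the previous case.

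The main obstacle is the pair $\delta \in \{\delta_3,\delta_4\}$ with $\alpha$ in the zero fiber, or mapped into it. Here the conjugation involves two reflections in the zero fiber, and the identity amounts to a relation among Weyl elements. For these cases I would check directly, on $\{x\} \cup L_1$, that $w_\delta^{-1} w_i^{-1} \exp_\beta(\cdot)\, w_i w_\delta$ equals the defining expression for $\rhomgrp{\alpha^{\reflbr{\delta}}}$. This uses the free simplification maps together with \cref{le:dd rel,le:dd sum} to normalize elements of $\tilde{L}_{0,0}$. I expect this to be the place where simplification may fail to reach $0$, requiring short hand arguments via \cref{rem:conic identities}.

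Once~\eqref{eq:F4 weyl conj} holds, the remaining claims follow formally. It gives $U_\alpha^{w_\delta} = U_{\alpha^{\reflbr{\delta}}}$, and $w_\delta \in U_{-\delta}U_\delta U_{-\delta}$ by \cref{def:F4 weyl}, so $w_\delta \in M_\delta$. The parametrization property is exactly \cref{def:rgg-param}. Axiom~\ref{def:rgg}\cref{def:rgg:weyl} then follows for all roots from \cref{rem:weyl basic}.
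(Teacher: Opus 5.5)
Your overall route is the paper's. The paper proves \eqref{eq:F4 weyl conj} by evaluating both sides on the generating set $L_{-2}\cup L_1$ by computer, for every $\delta\in\Delta_F$ and $\alpha\in F_4$; this is also how the table of $\etae$ was obtained. It then draws exactly your formal conclusions from \cref{def:F4 weyl} and \cref{rem:weyl basic}. Your detour through \cref{le:conj by refl} buys little beyond a cheap sanity check on $\etae$, because your Step~2 is again a full comparison of two automorphisms on generators.

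The detour also contains a slip you should correct. In your setup, write $\exp'(b)\coloneq w_\delta^{-1}\circ\exp_{\rootproj(\alpha)}(w_\delta(b))\circ w_\delta$ for the conjugated parametrization. Then $w_\delta^{-1}\circ\rhomgrp{\alpha}(a)\circ w_\delta=\exp'(b)$ with $b=w_\delta^{-1}(\rhomlie{\alpha}(a))$. So Step~1 must show $w_\delta^{-1}(\rhomlie{\alpha}(a))=\rhomlie{\alpha^{\reflbr{\delta}}}(\etae_{\alpha,\delta}.a)$, or equivalently $w_\delta(\rhomlie{\alpha^{\reflbr{\delta}}}(c))=\rhomlie{\alpha}(\etae_{\alpha,\delta}.c)$.

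What you wrote, $w_\delta(\rhomlie{\alpha}(a))=\rhomlie{\alpha^{\reflbr{\delta}}}(\etae_{\alpha,\delta}.a)$, does not combine with Step~2. It is in fact the identity that belongs to $\etae_{\alpha^{\reflbr{\delta}},\delta}$. The two versions differ by the action of $w_\delta^2$, and they genuinely disagree: the table gives $\etae_{\delta_2,\delta_1}=(-1,1)$ but $\etae_{\delta_1+\delta_2,\delta_1}=(1,1)$.

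Your ``main obstacle'' is also misplaced. None of $\delta_1,\dots,\delta_4$ lies in $\rootproj^{-1}(0)$; for instance $\rootproj(\delta_3)=(1,0)$ and $\rootproj(\delta_4)=(0,1)$. Avoiding this was precisely why $(f_1,\dots,f_4)$ was replaced by $\Delta_F$, so every $w_\delta$ is given directly by \cref{def:F4 weyl} and no ``reflections in the zero fiber'' occur. The extra cases are rather those in which $\alpha$ or $\alpha^{\reflbr{\delta}}$ lies in the zero fiber, for any $\delta$. There the root homomorphism is defined by conjugation with $w_{\delta_1}$ or $w_{\delta_4}$, and the paper handles these cases by the same direct check on generators.
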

\begin{proof}
	The first assertion is a straightforward computation, but a very long one: For all $ \alpha \in F_4 $ and for arbitrary $ a \in P_\alpha $, we have to compute the action of $ w_\delta^{-1} \circ \rhomgrp{\alpha}(a) \circ w_\delta $ and $ \rhomgrp{\alpha^{\reflbr{\delta}}}(\etae_{\alpha, \delta}.a) $ on a generating set of $ L $ (such as $ L_{-2} \cup L_1 $, which is spanned by $ 15 $ $ F_4 $-root spaces) and show that the resulting expressions are identical. (This procedure is precisely how we obtained the values of $ \etae$.) We have performed these calculations on a computer (see \cref{subsec:computer} for details). By the definition of the root groups, we infer that $ w_\delta $ is a $ \delta $-Weyl element. Since $ \Delta_F $ is a system of simple roots, it follows from \cref{rem:weyl basic} that Axiom~\ref{def:rgg}\cref{def:rgg:weyl} is satisfied.
\end{proof}

\premidfigure
\begin{table}[htb]
	\centering\begin{tabular}{ccccc}
		\toprule
		$\alpha$ & $\etae(\alpha, \delta_1)$ & $\etae(\alpha, \delta_2)$ & $\etae(\alpha, \delta_3)$ & $\etae(\alpha, \delta_4)$ \\
		\midrule
		1000 & $ (-1, 1) $ & $ (1, 1) $ & $ (1, 1) $ & $ (1, 1) $\\
		0100 & $ (-1, 1) $ & $ (-1, 1) $ & $ (1, 1) $ & $ (1, 1) $\\
		0010 & $ (1, 1) $ & $ (1, 1) $ & $ (-1, -1) $ & $ (-1, -1) $\\
		0001 & $ (1, 1) $ & $ (1, 1) $ & $ (1, -1) $ & $ (-1, -1) $\\
		1100 & $ (1, 1) $ & $ (-1, 1) $ & $ (1, 1) $ & $ (1, 1) $\\
		\midrule
		0110 & $ (-1, 1) $ & $ (-1, 1) $ & $ (-1, -1) $ & $ (-1, -1) $\\
		0011 & $ (1, 1) $ & $ (1, 1) $ & $ (-1, -1) $ & $ (1, -1) $\\
		1110 & $ (1, 1) $ & $ (1, 1) $ & $ (-1, -1) $ & $ (1, 1) $\\
		0120 & $ (-1, 1) $ & $ (1, 1) $ & $ (1, 1) $ & $ (1, 1) $\\
		0111 & $ (1, -1) $ & $ (-1, 1) $ & $ (1, -1) $ & $ (1, -1) $\\
		\midrule
		1120 & $ (1, 1) $ & $ (1, 1) $ & $ (1, 1) $ & $ (-1, 1) $\\
		1111 & $ (-1, -1) $ & $ (1, 1) $ & $ (1, 1) $ & $ (-1, 1) $\\
		0121 & $ (1, 1) $ & $ (1, 1) $ & $ (-1, -1) $ & $ (-1, -1) $\\
		1220 & $ (1, 1) $ & $ (-1, 1) $ & $ (1, 1) $ & $ (-1, 1) $\\
		1121 & $ (-1, 1) $ & $ (1, 1) $ & $ (-1, 1) $ & $ (-1, -1) $\\
		\midrule
		0122 & $ (1, 1) $ & $ (1, 1) $ & $ (1, 1) $ & $ (1, 1) $\\
		1221 & $ (1, 1) $ & $ (-1, 1) $ & $ (1, -1) $ & $ (-1, -1) $\\
		1122 & $ (-1, 1) $ & $ (1, 1) $ & $ (1, 1) $ & $ (-1, 1) $\\
		1231 & $ (1, 1) $ & $ (1, 1) $ & $ (-1, -1) $ & $ (1, -1) $\\
		1222 & $ (1, 1) $ & $ (-1, 1) $ & $ (1, 1) $ & $ (-1, 1) $\\
		\midrule
		1232 & $ (1, 1) $ & $ (1, 1) $ & $ (-1, -1) $ & $ (-1, -1) $\\
		1242 & $ (1, 1) $ & $ (1, 1) $ & $ (1, 1) $ & $ (1, 1) $\\
		1342 & $ (1, 1) $ & $ (-1, 1) $ & $ (1, 1) $ & $ (1, 1) $\\
		2342 & $ (-1, 1) $ & $ (1, 1) $ & $ (1, 1) $ & $ (1, 1) $\\
		\bottomrule
	\end{tabular}
	\bigskip
	\caption{The values of $\etae \colon F_4 \times \Delta \to T$ in \cref{pr:F4 weyl}. In the first column, a sequence $a_1 a_2 a_3 a_4$ represents $\sum_{i=1}^4 a_i \delta_i$. Further, $\etae(\alpha, \delta) = \etae(-\alpha, \delta)$ for all $\alpha \in F_4$ and $\delta \in \Delta$.}
	\label{ta:existence parmap}
\end{table}
\postmidfigure

\begin{remark}\label{rem:parmap negative}
	The map $ \etae $ in \cref{pr:F4 weyl} has the property that $\etae(\alpha, \delta) = \etae(-\alpha, \delta)$ for all $\alpha \in F_4$ and $\delta \in \Delta$. As in \cref{rem:F4 weyl def}, this property relies on our careful choice of the root homomorphisms $ (\rhomgrp{\alpha})_{\alpha \in F_4} $ in \cref{def:roothom lie}. In particular, the validity of this property for $\alpha \in F_4$ with $\rootproj(\alpha) = (0,0)$ relies on the case distinction in \cref{eq:roothom-lie-00part}.
\end{remark}

\begin{proposition}\label{pr:F4 comm}
	The following commutator relations hold for all $ t,s \in k $ and $ c,d \in C $:
	\begin{align*}
		[\rhomgrp{\delta_1}(t), \rhomgrp{\delta_2}(s)] &= \rhomgrp{\delta_1 + \delta_2}(ts), \\
		[\rhomgrp{\delta_2}(t), \rhomgrp{\delta_3}(c)] &= \rhomgrp{\delta_2 + \delta_3}(-tc) \rhomgrp{\delta_2 + 2\delta_3}\brackets[\big]{t \connorm(c)}, \\
		[\rhomgrp{\delta_2 + \delta_3}(c), \rhomgrp{\delta_3}(d)] &= \rhomgrp{\delta_2 + 2\delta_3}\brackets[\big]{-\contr(c \conconj{d})}, \\
		[\rhomgrp{\delta_2}(t), \rhomgrp{\delta_2 + 2\delta_3}(s)] &= 1_G, \\
		[\rhomgrp{\delta_4}(c), \rhomgrp{\delta_3}(d)] &= \rhomgrp{\delta_3+\delta_4}(\conconj{d} \conconj{c}).
	\end{align*}
	In particular, Axiom~\ref{def:rgg}\cref{def:rgg:comm} is satisfied.
\end{proposition}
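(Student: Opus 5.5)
The plan is to reduce all five relations to commutator relations that are already proven in the $G_2$-graded group, namely \cref{pr:G2 comm}. For this I first locate the roots involved under $\rootproj$. Using $\rootproj((p,i,j,l)) = (p, (p+i+j+l)/2)$ from \cref{rem:pi in grid}, the images are as follows.
\begin{itemize}
\item $\delta_1 = 11\bar{1}\bar{1}$ maps to $(1,0)$ and $\delta_2 = \bar{2}000$ maps to $(-2,-1)$.
\item $\delta_3 = 1\bar{1}00$ maps to $(1,0)$ and $\delta_4 = 0110$ maps to $(0,1)$.
\item $\delta_1+\delta_2$ maps to $(-1,-1)$, $\delta_2+\delta_3$ to $(-1,-1)$, $\delta_2+2\delta_3$ to $(0,-1)$, and $\delta_3+\delta_4$ to $(1,1)$.
\end{itemize}
Each relation therefore lives inside the $G_2$-root groups $U_{(1,0)}$, $U_{(-2,-1)}$, $U_{(-1,-1)}$, $U_{(0,-1)}$, $U_{(0,1)}$ and $U_{(1,1)}$. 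All $\theta_\alpha$ appearing are restrictions of the $G_2$-exponentials $\exp_{\rootproj(\alpha)}$ composed with $\rhomlie{\alpha}$ from \cref{def:roothom lie}, because no root involved maps to $(0,0)$. So the relations can be read off from \cref{pr:G2 comm}, or from its analogues for other pairs of $G_2$-roots obtained by conjugating with the reflections of \cref{sec:reflections}. One then expands the $J$-, $J'$- and $k$-arguments into their Peirce components using \cref{le:T formulas,le:times formulas} and tracks the $\gamma_i$-factors and signs built into $\rhomlie{\alpha}$.

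The relations break into cases by which $G_2$-relation they come from.
\begin{enumerate}
\item For $[\theta_{\delta_1}(t),\theta_{\delta_2}(s)]$, $\theta_{\delta_1}(t)$ is $x_1(b)$ with $b$ a scalar multiple of some $e_l$, and $\theta_{\delta_2}(s)$ is $x_6(\pm s)$. Relation \eqref{G2 comm 5} gives the product $x_2(-\lambda N(b))\,x_3(\lambda b^\sharp)\,x_4(\lambda^2N(b))\,x_5(\lambda b)$. Since $N(e_l)=0$ and $e_l^\sharp=0$, only $x_5(\lambda b)$ survives, which is $\theta_{\delta_1+\delta_2}$ of the right scalar.
\item For $[\theta_{\delta_2}(t),\theta_{\delta_3}(c)]$, the same relation \eqref{G2 comm 5} applies with $b=\rhomcub{l}(c)$ an off-diagonal element. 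Now $N(b)=0$, $b^\sharp$ is a multiple of $\connorm(c)$ times a diagonal $e'$, and $x_5(\lambda b)$ gives the $\theta_{\delta_2+\delta_3}$ factor. I then check that $x_3(\lambda b^\sharp)$ lies in $U_{\delta_2+2\delta_3}$ with parameter $t\connorm(c)$.
\item For $[\theta_{\delta_2+\delta_3}(c),\theta_{\delta_3}(d)]$, this is $[x_5,x_1]$, i.e. the inverse of \eqref{G2 comm 4}. With $b,d$ off-diagonal in the same Peirce space, $T(d,b^\sharp)$ and $T(b,d^\sharp)$ vanish by \cref{le:T formulas}\cref{le:T formulas:ortho}. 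So only $x_3(-b\times d)$ remains, and $b\times d$ is a multiple of $\contr(c\conconj{d})$ by \cref{le:times formulas}.
\item For $[\theta_{\delta_2},\theta_{\delta_2+2\delta_3}]$, this is $[x_6,x_3]$, which is trivial since these $G_2$-roots are adjacent (\cref{le:trivial comrels}).
\item For $[\theta_{\delta_4},\theta_{\delta_3}]$, this is $[x_{(0,1)},x_{(1,0)}]$. It is a relation between $U_{(1,0)}$ and $U_{(0,1)}$, obtained by conjugating \eqref{G2 comm 1} by a suitable reflection, or checked directly on the generating set $\{x\}\cup L_1$ via \cref{rem:aut equality}. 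It gives $\exp$ of $[b,(0,c,0,0)_+]=(0,0,b\times c,0)_+$, and \cref{le:times formulas}\cref{le:times formulas:ijl} produces the product $\conconj{d}\conconj{c}$.
\end{enumerate}
Finally, Axiom~\ref{def:rgg}\cref{def:rgg:comm} for all pairs follows as in \cref{rem:rgg F4 comrel}. By Weyl-group transitivity every pair of non-proportional roots is conjugate to one in the list. Conjugating by products of the $w_\delta$ and using \cref{pr:F4 weyl} transports these relations to arbitrary pairs, and each transported commutator lands in $U_{\rootint{\alpha}{\beta}}$.

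The main obstacle is bookkeeping. One has to get the exact signs, the $\gamma_i$-normalizations in $\rhomlie{\alpha}$ (including the set $\mathcal{S}$), and the placement of conjugations right, so that the $\gamma$'s cancel and the stated signs appear. This is best verified by the computer method of \cref{subsec:computer}, with the structural reduction above as the conceptual justification.
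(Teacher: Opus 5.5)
Your approach is essentially the paper's proof. The first four relations are read off from \cref{pr:G2 comm} after unwinding $\rhomlie{\alpha}$; the paper writes out your first case exactly, via \eqref{G2 comm 5} with $N(b)=0=b^\sharp$. The fifth relation is a direct verification, which the paper does by computer. The axiom then follows as in \cref{rem:rgg F4 comrel}.

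One justification in your fifth case is wrong. The relation between $U_{(0,1)}$ and $U_{(1,0)}$ cannot be obtained by conjugating \eqref{G2 comm 1} by a reflection, because the Weyl group preserves angles:
\begin{itemize}
\item $(1,0)$ and $(0,-1)$ are short roots at $60^\circ$ with long sum $(1,-1)$;
\item $(0,1)$ and $(1,0)$ are short roots at $120^\circ$, and their root interval is $\{(1,1),(1,2),(2,1)\}$.
\end{itemize}
The correct $G_2$-model for this pair is \eqref{G2 comm 4}. The two long-root factors drop out only because $b\in J_{12}$ and $c\in J_{23}$ give $T(c,b^\sharp)=0=T(b,c^\sharp)$ by Peirce orthogonality. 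This matches the fact that $2\delta_3+\delta_4$ and $\delta_3+2\delta_4$ are not roots of $F_4$.

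Your other option, a direct check on the generating set $\{x\}\cup L_1$, does go through. On $x$, for instance, it reduces to
\[
\dd_{b,c^\sharp}+\dd_{c,b\times c}=T(b,c^\sharp)(2\zeta-\xi)=0
\quad\text{and}\quad
(b\times c)^\sharp=-b^\sharp\times c^\sharp .
\]
These are \cref{le:triple}\cref{le:triple:norm r} and \cref{le:CNP identities}\cref{le:CNP identities:10}, with right-hand sides that vanish in this Peirce configuration.
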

\begin{proof}
	The first four commutator formulas follow from the ones established in \cref{pr:G2 comm} for arbitrary cubic norm pairs. For example, for $t,s \in k$:
	\begin{align*}
		[\rhomgrp{\delta_1}(t), \rhomgrp{\delta_2}(s)] &= \bigl[\exp_{1,0}\brackets[\big]{\rhomlie{11\bar{1} \bar{1}}(s)}, \exp_{-2,-1}\brackets[\big]{\rhomlie{\bar{2}000}(t)}\bigr] \\
		&= \bigl[\exp_{1,0}\brackets[\big]{(0, -\gamma_2^{-1} \gamma_3 se_1, 0, 0)_+}, \exp_{-2,-1}(tx)\bigr] \\
		&= \bigl[x_1\brackets[\big]{-\gamma_2^{-1} \gamma_3 se_1}, x_6(t)\bigr] = x_5\brackets[\big]{-t\gamma_2^{-1} \gamma_3 se_1}.
	\end{align*}
	Here we have used that $b \coloneq -\gamma_2^{-1} \gamma_3 se_1$ satisfies $N(b)=0$ and $b^\sharp = 0$. This yields the desired commutator relation because
	\begin{align*}
		\rhomgrp{\delta_1+\delta_2}(ts) &= \exp_{-1,-1}\brackets[\big]{\rhomlie{\bar{1}1\bar{1}\bar{1}}(ts)} = x_5\brackets[\big]{-t\gamma_2^{-1} \gamma_3 se_1}.
	\end{align*}
	Alternatively, the commutator formulas follow from a direct computer calculation (see \cref{subsec:computer} for details), and this is also how establish the last formula.
	
	It follows from the above that the commutator relation $ [U_\alpha, U_\beta] \subseteq U_{\rootint{\alpha}{\beta}} $ is satisfied for all
	\[ (\alpha, \beta) \in X \coloneq \{(\delta_1, \delta_2), (\delta_2, \delta_3), (\delta_2 + \delta_3, \delta_3), (\delta_2, \delta_2 + 2\delta_3), (\delta_3, \delta_4)\}. \]
	By similar arguments as in \cref{rem:rgg F4 comrel}, we infer that $ [U_\alpha, U_\beta] \subseteq U_{\rootint{\alpha}{\beta}} $ holds for all non-proportional $ \alpha, \beta \in F_4 $. Hence Axiom~\ref{def:rgg}\cref{def:rgg:comm} is satisfied.
\end{proof}

\begin{proposition}\label{pr:F4 nondeg}
	Let $ \Pi $ be any positive system of roots in $ F_4 $ and denote by $ U_+ $ and $ U_- $ the groups generated by $ (U_\alpha)_{\alpha \in \Pi} $ and $ (U_\alpha)_{\alpha \in -\Pi} $, respectively. Then $ U_+ \cap U_- = 1 $. In particular, the root groups $ (U_\alpha)_{\alpha \in G_2} $ satisfy Axiom~\ref{def:rgg}\cref{def:rgg:nondeg}
\end{proposition}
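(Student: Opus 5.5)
The plan is to apply \cref{pr:G2 nondeg} directly, with $\Phi = F_4$ and $\bar{L} = L$ equipped with the $F_4$-grading $(L_\alpha)_{\alpha \in F_4^0}$ from \cref{thm:lie F4 grading}. For this, all that has to be checked is that for each $\alpha \in F_4$ the root group $U_\alpha$ is the image of an $\alpha$-parametrization in the sense of \cref{def:param}, taken with respect to the $F_4$-grading rather than the $G_2$-grading. Once this is established, \cref{pr:G2 nondeg} yields $U_+ \cap U_- = 1$ for every positive system $\Pi$ of $F_4$. Axiom~\ref{def:rgg}\cref{def:rgg:nondeg} then follows because $U_{-\alpha} \le U_-$ for $\alpha \in \Pi$.

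For roots $\alpha$ with $\pi(\alpha) = (0,0)$, the argument is already available. \Cref{rem:F4 0 rhom} states that the map $b \mapsto w_i^{-1} \circ \exp_\beta(w_i(b)) \circ w_i$ is an $\alpha$-parametrization, by \cref{le:conj by refl,le:weyl F4 refl}. This presupposes that $\exp_\beta$ is a $\beta$-parametrization for the $F_4$-grading, so the case $\pi(\alpha) = (0,0)$ reduces to the case $\pi(\alpha) \neq (0,0)$.

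The case $\pi(\alpha) \ne (0,0)$ is where the real check lies. Here $U_\alpha = \exp_{\pi(\alpha)}(L_\alpha)$, where $\exp_{\pi(\alpha)}$ is the $G_2$-parametrization of \cref{pr:param}, restricted to the summand $L_\alpha \le L_{\pi(\alpha)}$. Write $e_a = \sum_i \theta^{[i]}_a$ with $a \in L_\alpha$. Properties \cref{item:param b,item:param c,item:param sum} of \cref{def:param} are inherited from the $G_2$-parametrization, since $L_\alpha$ is a $k$-submodule. The only property requiring work is \cref{item:param a}: we need $\theta^{[i]}_a(L_\gamma) \subseteq L_{\gamma + i\alpha}$ for all $\gamma \in F_4^0$, where the containment is now for $F_4$-root spaces.

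Over a ring where $2$ and $3$ are invertible, \cref{item:param a} is immediate, since $\theta^{[i]}_a = \frac{1}{i!}\ad_a^i$ and $L$ is $F_4$-graded. In general I would argue by a universality reduction in the spirit of \cref{subsec:computer}. By \cref{rem:param}, each $\theta^{[i]}_a$ is given by an integral formula obtained from $\frac{1}{i!}\ad_a^i$. That formula is built from $N$, $\sharp$, $\times$, $T$, $U$ and $\dd$, and these respect the Peirce decomposition by \cref{le:peirce,le:times blocks,le:T formulas}. The terms that are genuinely non-linear are $b^\sharp$, $N(b)$, $U_b(c')$ and $T(c, b^\sharp)$, and these occur only when $b$ is a homogeneous Peirce element. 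For such $b$, one checks directly from the explicit formulas in \cref{not:CJMA def} that these terms land in the Peirce component predicted by the root combinatorics. For example, $b \in J_{ij}$ gives $b^\sharp \in J'_{ll}$, and $b \in ke_i$ gives $b^\sharp = 0$.

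The main obstacle is this component-wise verification for $i = 2, 3, 4$. I would first try to discharge it by computer, following \cref{subsec:computer}. The alternative is a hand check organised by the three $5$-gradings of \cref{def:3 new gradings} together with the $\xi$-grading, exploiting the fact that each grading is defined over $\Z$. One could also try a cleaner route via base change: work with a free $\Z$-form and compare with the case $\mathbb{Q} \otimes_\Z$. However, $C$ need not be torsion-free, so I would not rely on this route.
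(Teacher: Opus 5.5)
Your proposal is the paper's argument: the proof there consists solely of applying \cref{pr:G2 nondeg} with $\Phi = F_4$, and your treatment of the six roots with $\pi(\alpha) = (0,0)$ via \cref{le:conj by refl,le:weyl F4 refl} is exactly how \cref{rem:F4 0 rhom} sets them up. The hypothesis you single out is used tacitly in the paper (already in \cref{rem:F4 0 rhom}) and never verified there: for $\pi(\alpha) \neq (0,0)$, the restriction of $\exp_{\pi(\alpha)}$ to $L_\alpha$ must satisfy property \cref{item:param a} with respect to the $F_4$-grading. So your version is more careful than the original, although that Peirce-wise check still has to be carried out for either argument to be complete.
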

\begin{proof}
	This follows from \cref{pr:G2 nondeg}.
\end{proof}

We conclude with our final main result.

\begin{theorem}\label{thm:F4 graded group}
	Let $ k $ be a commutative ring and let $ C $ be a multiplicative conic alternative algebra over $ k $. Let $ \Gamma = \operatorname{diag}(\gamma_1, \gamma_2, \gamma_3) \in \Mat_3(k)$ be invertible, let $ J \coloneq \Her_3(C, \Gamma) $ denote the associated cubic Jordan matrix algebra (see \cref{not:CJMA def}) and let $ L \coloneq L(J,J) $ denote the Lie algebra constructed in \cref{sec:lie const}. Then the root groups $ (U_\alpha)_{\alpha \in F_4} $ from \cref{def:F4 0 rhom,def:F4 nonzero rhom} constitute an $ F_4 $-grading of the group $ G \le \Aut(L) $ that they generate. Further, $ (\rhomgrp{\alpha})_{\alpha \in F_4} $ is a parametrization of $(G, (U_\alpha)_{\alpha \in F_4})$ by $(k,C)$ with respect to $(w_\delta)_{\delta \in \Delta_F}$ and $\etae$ which satisfies the commutator formulas in \cref{pr:F4 comm}.
\end{theorem}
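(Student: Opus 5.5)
The plan is to verify the four axioms of \cref{def:rgg} for the family $(U_\alpha)_{\alpha \in F_4}$ inside $G = \langle U_\alpha \mid \alpha \in G_2\rangle \le \Aut(L)$. Almost everything has already been assembled in this section, so the proof should mostly collect earlier results. The final assertion about the parametrization and the commutator formulas is then read off from \cref{pr:F4 weyl} and \cref{pr:F4 comm}.

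\textbf{Axiom \cref{def:rgg:gen}.} First, $G$ is generated by $(U_\alpha)_{\alpha \in F_4}$ by \cref{pr:F4 gen}. For roots with $\pi(\alpha)\neq(0,0)$, each $U_\alpha$ is contained in $U_{\pi(\alpha)}$ of \cref{thm:G2 graded group}. Moreover, for every $\beta\in G_2$ we have $U_\beta = \exp_\beta(L_\beta)$ with $L_\beta = \bigoplus_{\alpha\in\pi^{-1}(\beta)} L_\alpha$. Additivity of $\exp_\beta$ therefore gives $U_\beta = \prod_{\alpha \in \pi^{-1}(\beta)} U_\alpha$, so the $F_4$-root groups generate every $G_2$-root group and hence $G$. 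Nontriviality $U_\alpha\neq 1$ holds because $\rhomgrp{\alpha}$ is injective: its linear term is $\ad_{\rhomlie{\alpha}(a)}$, which is nonzero by \cref{le:ad rootspace isom}, since $L$ has trivial center. For $\pi(\alpha)=(0,0)$ we use that $U_\alpha$ is a conjugate of some $U_\beta$.

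\textbf{Axioms \cref{def:rgg:comm,def:rgg:weyl,def:rgg:nondeg}.} Axiom \cref{def:rgg:weyl} is \cref{pr:F4 weyl}, via \cref{rem:weyl basic}. Axiom \cref{def:rgg:comm} is \cref{pr:F4 comm}: the five listed relations, together with conjugation by the Weyl elements $w_\delta$ as in \cref{rem:rgg F4 comrel}, give $[U_\alpha,U_\beta]\subseteq U_{\rootint{\alpha}{\beta}}$ for all non-proportional pairs. Axiom \cref{def:rgg:nondeg} is \cref{pr:F4 nondeg}, which comes from \cref{pr:G2 nondeg}. To apply the latter we must know that every $\rhomgrp{\alpha}$ arises from an $\alpha$-parametrization of the $F_4$-graded Lie algebra of \cref{thm:lie F4 grading}. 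This requires checking the degree condition in \cref{def:param} with respect to the $F_4$-grading, rather than only the $G_2$-grading.

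\textbf{Main obstacle.} The delicate point is exactly this refinement of the degree condition. For $\pi(\alpha)\neq(0,0)$, the map $\theta^{[i]}_a$ has degree $i\,\pi(\alpha)$ in the $G_2$-grading. We also need degree $i\alpha$ with respect to the three gradings $L^{(j)}$.

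I would try to obtain this from the explicit formulas of \cref{def:exp xy,def:exp L1,def:exp L-1,def:exp JJ'}. Each $\theta^{[i]}_a$ is an integral form of $\tfrac{1}{i!}\ad_a^i$, built from $\times$, $\sharp$, $N$, $T$, $U$, $D$. For $a$ homogeneous in a Peirce component, these operations respect the Peirce grading by \cref{le:peirce} and \cref{le:times blocks}. Hence $\theta^{[i]}_a$ shifts the $L^{(j)}$-degree by $i$ times the degree of $a$, and this suffices to get degree $i\alpha$.

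For $\pi(\alpha)=(0,0)$ the statement follows from \cref{le:conj by refl} together with \cref{le:weyl F4 refl}. Should this direct check be cumbersome, a fallback is to verify the triangularity argument of \cref{pr:G2 nondeg} directly. That argument only needs each element of $U_\alpha$, for $\alpha\in\Pi$, to be unipotent with respect to a height ordering of the $F_4$-root spaces, which can be checked on the generating set $L_{-2}\cup L_1$ by the computer method of \cref{subsec:computer}.
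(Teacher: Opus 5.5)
Your proposal is correct and follows the paper's proof, which simply combines \cref{pr:F4 comm,pr:F4 weyl,pr:F4 nondeg} with the remark \cref{pr:F4 gen}. You also single out a step the paper leaves implicit: applying \cref{pr:G2 nondeg}, and likewise \cref{le:conj by refl} in \cref{rem:F4 0 rhom}, requires the restricted exponentials to satisfy the degree condition of \cref{def:param} for the $F_4$-grading and not only the $G_2$-grading, and your Peirce-weight homogeneity check (or the computer fallback on a generating set) is a sensible way to supply it.
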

\begin{proof}
	This is a consequence of \cref{pr:F4 comm,pr:F4 weyl,pr:F4 nondeg,pr:F4 gen}.
\end{proof}

\begin{remark}[cf. \ref{rem:different signs}]\label{rem:sign twist}
	The commutator formulas in \cref{pr:F4 comm} are the same as in \cref{thm:rgg-param} only \enquote{up to twisting}, that is, \enquote{up to signs and conjugation}. Similarly, the parity map $ \etae $ in this section is not the same as the parity map $ \etac $ from \cref{rem:coordinatization parmap} that arises in \cref{thm:rgg-param}. In this remark, we illustrate how to modify the root homomorphisms $(\rhomgrp{\alpha})_{\alpha \in F_4}$ to obtain a parametrization of $G$ which looks just like the one in \cref{thm:rgg-param}. 
	Put $ \Delta \coloneq \Delta_F $. Since we have shown in \cref{thm:F4 graded group} that $ (G, (U_\alpha)_{\alpha \in F_4}) $ is an $ F_4 $-graded group, we can apply \cref{thm:rgg-param}. Hence there exist a commutative ring $k'$, a multiplicative conic alternative $k'$-algebra $ (C',n') $ and a parametrization $ (\rhomgrp{\alpha}')_{\alpha \in F_4} $ of $(U_\alpha)_{\alpha \in F_4}$ by $(k',C')$ with respect to $(w_\delta)_{\delta\in \Delta}$ (the Weyl elements from \cref{def:F4 weyl}) and $\etac$. We aim to show that $k=k'$ (as rings) and $(C,n) = (C',n')$ (as conic algebras).

	By \cite[4.5.17]{WiedemannPhD}, we can achieve that $ k=k' $, $ C=C' $ as additive groups and $ \rhomgrp{\delta_i}' = \rhomgrp{\delta_i} $ for $ i \in \{2,3\} $. Our main tool is the following observation: For all $i \in \{2,3\}$, $\delta_1, \ldots, \delta_\ell \in \Delta$, $p \in P_{\delta_i}$ and $\alpha \coloneq \delta_i^{\reflbr{\delta_1} \cdots \reflbr{\delta_\ell}}$, we have (in the notation of \cref{def:parmap,rem:parmap prod}):
	\begin{equation}\label{eq:param compare}
		\rhomgrp{\alpha}(\etae_{\delta_i,\delta_1 \cdots \delta_\ell}.p) = \rhomgrp{\delta_i}(p)^{w_{\delta_1} \cdots w_{\delta_\ell}} = \rhomgrp{\delta_i}'(p)^{w_{\delta_1} \cdots w_{\delta_\ell}} = \rhomgrp{\alpha}'(\etac_{\delta_i,\delta_1 \cdots \delta_\ell}.p).
	\end{equation}
	Note that, while the twistings groups $T$ and $T'$ of $(k,C)$ and $(k',C')$ are the same as abstracts groups and act on the same sets $k=k'$ and $C=C'$, their actions on $C=C'$ are not (yet proven to be) the same because they involve the conjugation of $C$ and of $C'$, respectively. It is the action of $T$ that appears on the left side of \cref{eq:param compare} while the action of $T'$ appears on the right side. In the following, we will write $\conconj{\:\cdot\:}$ for the conjugation of $C$ and never (need to) refer to the conjugation of $C'$.

	We first show that $k=k'$ as rings. We will frequently use some explicit values of $\etac$, such as
	\begin{align*}
		\etae_{\delta_2,\delta_1} = (-1,1) = \etae_{\delta_1+\delta_2,\delta_2} \quad \text{and} \quad \etac_{\delta_2,\delta_1} = (1,1) = \etac_{\delta_1+\delta_2,\delta_2},
	\end{align*}
	which can be read of from \cref{ta:existence parmap} and \cite[Figure~10.4]{WiedemannPhD}, respectively. Using these values and \cref{def:parmap}, we compute that $\etae_{\delta_2,\delta_1 \delta_2} = (1,1) = \etac_{\delta_2, \delta_1 \delta_2}$. Hence by \cref{eq:param compare}, we have
	\[ \rhomgrp{\delta_1+\delta_2}(t) = \rhomgrp{\delta_1+\delta_2}'(-t) \quad \text{and} \quad \rhomgrp{\delta_1}(t) = \rhomgrp{\delta_1}(t) \]
	for all $t \in k$. This implies that
	\begin{align*}
		\rhomgrp{\delta_1+\delta_2}(t \cdot_k s) &= [\rhomgrp{\delta_1}(t), \rhomgrp{\delta_2}(s)] = [\rhomgrp{\delta_1}'(t), \rhomgrp{\delta_2}'(s)] = \rhomgrp{\delta_1+\delta_2}'(-t \cdot_{k'} s) = \rhomgrp{\delta_1+\delta_2}(t \cdot_{k'} s)
	\end{align*}
	for all $t,s \in k$. Hence $k'=k$.
	
	Similarly, to show that $C=C'$ as (nonassociative) rings, we observe that
	\[ \etae_{\delta_3,\delta_4} = (-1,-1) = \etae_{\delta_3+\delta_4,\delta_3} \quad \text{and} \quad \etac_{\delta_3,\delta_4} = (-1,1) = \etac_{\delta_3+\delta_4,\delta_3}. \]
	This implies that $\etae_{\delta_3,\delta_4 \delta_3} = (1,1) = \etac_{\delta_3,\delta_4 \delta_3}$ and, by \cref{eq:param compare},
	\[ \rhomgrp{\delta_3+\delta_4}(\conconj{c}) = \rhomgrp{\delta_3+\delta_4}'(c) \quad \text{and} \quad \rhomgrp{\delta_4}(c) = \rhomgrp{\delta_4}'(c) \]
	for all $c \in C$. Thus
	\begin{align*}
		\rhomgrp{\delta_3+\delta_4}'(c \cdot_C d) &= [\rhomgrp{\delta_4}'(c), \rhomgrp{\delta_3}'(d)] = [\rhomgrp{\delta_4}(c), \rhomgrp{\delta_3}(d)] = \rhomgrp{\delta_3+\delta_4}(\conconj{d} \cdot_C \conconj{c}) = \rhomgrp{\delta_3+\delta_4}'(c \cdot_C d)
	\end{align*}
	for all $c,d \in C$, so $C=C'$ as (nonassociative) rings.
	
	Finally, we have $\etae_{\delta_2,\delta_3} = (1,1)$, $\etac_{\delta_2,\delta_3} = (-1,1)$, $\etac_{\delta_3,\delta_2}=(1,1)=\etae_{\delta_3,\delta_2}$ and thus
	\[ \rhomgrp{\delta_2 + 2\delta_3}(t) = \rhomgrp{\delta_2+2\delta_3}'(-t) \quad \text{and} \quad \rhomgrp{\delta_2+\delta_3}(c) = \rhomgrp{\delta_2+\delta_3}'(c) \]
	for all $t \in k$, $c \in C$, which implies that
	\begin{align*}
		\rhomgrp{\delta_2+\delta_3}(-t \cdot_C c) \rhomgrp{\delta_2 + 2\delta_3}\brackets[\big]{t n(c)} &= [\rhomgrp{\delta_2}(t), \rhomgrp{\delta_3}(c)] = [\rhomgrp{\delta_2}'(t), \rhomgrp{\delta_3}'(c)] \\
		&= \rhomgrp{\delta_2+\delta_3}'(- t \cdot_{C'} c) \rhomgrp{\delta_2+2\delta_3}'\brackets[\big]{-t n'(c)} \\
		&= \rhomgrp{\delta_2+\delta_3}(- t \cdot_{C'} c) \rhomgrp{\delta_2+2\delta_3}\brackets[\big]{t n'(c)}
	\end{align*}
	for all $t \in k$, $c \in C$. Hence $n=n'$ and the $k$-algebra structures on $C$ and on $C'$ coincide. Altogether, we conclude the following: Starting from an arbitrary multiplicative conic alternative algebra $C$ over a commutative ring $k$, we can construct an $F_4$-graded group with a parametrization $(\rhomgrp{\alpha}')_{\alpha \in F_4}$ that behaves exactly like the one in \cref{thm:rgg-param}.
\end{remark}

\appendix

\section{Higher Leibniz rules}\label{sec:leibniz}

In this appendix, we develop the machinery needed to verify that certain ``exponential endomorphisms'' preserve the Lie bracket, over arbitrary rings. We will apply this machinery to show that the maps $e_{(0,b,0,0)_+}$ for $b \in J$ and $e_{\rho x}$ for $\rho \in k$ from \cref{def:exp L1,def:exp xy} are endomorphisms of the Lie algebra $L$.
\begin{lemma}\label{le:leibniz endo}
	Let $L$ be a Lie algebra over an arbitrary commutative ring $k$.
	Assume that $\delta_0,\delta_1,\dots,\delta_s$ are $k$-linear endomorphisms of $L$ such that
	\begin{equation}\label{eq:star l}
		\delta_\ell([v,w]) = \sum_{i+j = \ell} [\delta_i(v), \delta_j(w)]
		\tag*{$(*)_\ell$}
	\end{equation}
	for all $v,w \in L$ and all $\ell \in \mathbb{N}$, where $\delta_\ell = 0$ for all $\ell > s$.
	
	Then the map $\varphi \coloneq \sum_{\ell=0}^s \delta_\ell$ is a Lie algebra endomorphism, i.e., $\varphi([v,w]) = [\varphi(v), \varphi(w)]$ for all $v,w \in L$.
\end{lemma}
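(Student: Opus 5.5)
The plan is to expand both sides bilinearly and regroup terms by total degree. Fix $v,w \in L$. Since the bracket is $k$-bilinear, we have
\[ [\varphi(v), \varphi(w)] = \sum_{i=0}^s \sum_{j=0}^s [\delta_i(v), \delta_j(w)]. \]
We will reorganize this double sum according to $\ell \coloneq i+j$, which ranges over $0, \ldots, 2s$. This gives
\[ [\varphi(v), \varphi(w)] = \sum_{\ell=0}^{2s} \sum_{\substack{i+j=\ell \\ 0 \le i,j \le s}} [\delta_i(v), \delta_j(w)]. \]

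Next we compare with the left-hand side. Using the convention $\delta_m = 0$ for $m > s$, the inner sum over $0 \le i,j \le s$ with $i+j=\ell$ is the same as the full sum $\sum_{i+j=\ell}$ over all $i,j \in \mathbb{N}$, since any extra term contains a factor $\delta_m$ with $m>s$ and therefore vanishes. Applying \ref{eq:star l} for each $\ell$, the inner sum equals $\delta_\ell([v,w])$. Hence
\[ [\varphi(v),\varphi(w)] = \sum_{\ell=0}^{2s} \delta_\ell([v,w]) = \sum_{\ell=0}^{s} \delta_\ell([v,w]) = \varphi([v,w]), \]
where the middle equality holds because $\delta_\ell = 0$ for $\ell > s$. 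Note that we need \ref{eq:star l} for $s < \ell \le 2s$ as well, and the hypothesis provides it for all $\ell \in \mathbb{N}$.

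There is no real obstacle. The only point needing care is the bookkeeping of indices beyond $s$: the hypothesis for large $\ell$ is what forces the "overflow" terms $\sum_{i+j=\ell}[\delta_i(v),\delta_j(w)]$ with $\ell>s$ to cancel. Finiteness of all sums means no convergence issues arise over an arbitrary ring $k$.
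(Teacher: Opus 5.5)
Your proof is correct and is essentially the paper's argument: expand $[\varphi(v),\varphi(w)]$ bilinearly, group terms by $\ell = i+j$, and apply \ref{eq:star l} for each $\ell$ (including $s < \ell \le 2s$, where the left side vanishes). Your explicit handling of the indices beyond $s$ is just a more careful write-up of the same computation.
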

\begin{proof}
	We have $\varphi([v,w]) = \sum_\ell \delta_\ell([v,w])$, so by invoking \ref{eq:star l} for each $\ell$, we get
	\[ \varphi([v,w]) = \sum_\ell \sum_{i+j = \ell} [\delta_i(v), \delta_j(w)] = \Bigl[ \sum_i \delta_i(v), \sum_j \delta_j(w) \Bigr] = [\varphi(v), \varphi(w)] . \qedhere \] 
\end{proof}

In the case that the maps $ \delta_0, \ldots, \delta_s $ are related to a grading of $ L $ (an assumption that will always be satisfied in our situation), we have the following converse of \cref{le:leibniz endo}.

\begin{lemma}\label{le:leibniz endo converse}
	Let $ (A,+) $ be an abelian group and let $ L = \bigoplus_{a \in A} L_a $ be an $ A $-graded Lie algebra over a commutative ring $ k $. Assume that $ \delta_0, \delta_1, \ldots, \delta_s $ are $ k $-linear endomorphisms of $ L $ such that the map $ \varphi \coloneq \sum_{\ell = 0}^s \delta_\ell $ is a Lie algebra endomorphism and put $ \delta_\ell \coloneq 0 $ for all $ \ell > s $. Assume further that there exists $ a \in A $ such that $ \delta_\ell(L_b) \subseteq L_{b+\ell a} $ for all $ b \in A $ and $ \ell \in \mathbb{N} $. Then \ref{eq:star l} holds for all $ v,w \in L $ and all $ \ell \in \mathbb{N} $.
\end{lemma}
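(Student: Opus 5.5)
The plan is to compare homogeneous components with respect to the $A$-grading. Since both sides of \ref{eq:star l} are $k$-bilinear in $(v,w)$ and $L = \bigoplus_{b \in A} L_b$, it suffices to treat homogeneous elements $v \in L_b$ and $w \in L_c$ for arbitrary $b,c \in A$. We then expand $\varphi([v,w]) = [\varphi(v), \varphi(w)]$, which holds because $\varphi$ is a Lie algebra endomorphism:
\[ \sum_{\ell \ge 0} \delta_\ell([v,w]) = \sum_{i,j \ge 0} [\delta_i(v), \delta_j(w)] = \sum_{\ell \ge 0} \sum_{i+j=\ell} [\delta_i(v), \delta_j(w)]. \]
All sums are finite, since $\delta_\ell = 0$ for $\ell > s$.

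Next we locate each summand in the grading. By hypothesis, $\delta_\ell([v,w]) \in L_{b+c+\ell a}$. For $i+j=\ell$ we have $[\delta_i(v),\delta_j(w)] \in [L_{b+ia}, L_{c+ja}] \subseteq L_{b+c+\ell a}$. So for each fixed $\ell$, both sides of \ref{eq:star l} lie in the single component $L_{b+c+\ell a}$.

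If the elements $b+c+\ell a$ for $\ell = 0,1,\dots,2s$ are pairwise distinct, we simply project the displayed identity onto $L_{b+c+\ell a}$ and read off \ref{eq:star l} degree by degree. This is the case when $a$ has infinite order, or order larger than $2s$.

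The main obstacle is torsion of $a$. If $a$ has finite order $m$, distinct values of $\ell$ can land in the same component, and projection alone does not separate them. To handle this, first note that \ref{eq:star l} for $\ell > 2s$ is trivial, since both sides vanish. The step I would try first is to base-change the grading group: replace $A$ by $A' \coloneq A \times \Z$ and refine the decomposition using the auxiliary degree $\ell$. However, this only works if the $\delta_\ell$ are themselves determined by graded data, which the hypothesis does not give in general.

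More realistically, in every application in the paper (the $G_2$-, $F_4$- or $\Z^2$-gradings) the group $A$ is torsion-free, or $a$ is a nonzero root in a root-lattice grading, so $a$ has infinite order. If the torsion case turns out to be genuinely false, the fallback is to state and use the lemma with the implicit assumption that $\ell \mapsto \ell a$ is injective. I would therefore write the proof via projection onto $L_{b+c+\ell a}$ and add this remark on the order of $a$.
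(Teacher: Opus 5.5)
Your argument is the same as the paper's proof. The paper also reduces to homogeneous $v \in L_b$, $w \in L_c$, expands $\varphi([v,w]) = [\varphi(v),\varphi(w)]$, notes that the degree-$\ell$ terms on both sides lie in $L_{b+c+\ell a}$, and reads off $(*)_\ell$ from $L = \bigoplus_{d \in A} L_d$.

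Your caveat is justified, and you can settle it. That last step silently needs $b+c+\ell a$ for $0 \le \ell \le 2s$ to be pairwise distinct, and without this the lemma is false. Here is a counterexample:
\begin{itemize}
\item Take $a = 0$, any grading, and a Lie algebra $L$ with some $[v,w] \neq 0$.
\item Put $s = 2$ and $\delta_0 = \delta_1 = \id$, $\delta_2 = -\id$.
\item Then $\varphi = \id$ is an endomorphism and every hypothesis holds.
\item But $(*)_1$ would read $[v,w] = 2[v,w]$, which is false.
\end{itemize}
So drop the base-change idea: no argument can rescue the torsion case. Instead, add the hypothesis that $a$ has order greater than $2s$ (e.g.\ infinite order), which is exactly the condition you identified.

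This change is harmless for the paper. In \cref{le:trivial comrels}, where the lemma is applied, the grading group is the root lattice $\langle \Phi \rangle_\Z$, which is free abelian. The element $a = \beta$ is a nonzero root there, so it has infinite order.
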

\begin{proof}
	It suffices to prove \ref{eq:star l} for homogeneous elements $ u,v $, so let $ b,c \in A $ and let $ v \in L_b $, $ w \in L_c $. Since $ \varphi $ is an endomorphism of $ L $, a slight rearrangement of the computation in \cref{le:leibniz endo} yields that
	\begin{align*}
		\sum_{\ell=0}^s \delta_\ell([v,w]) = \varphi([v,w]) = [\varphi(v), \varphi(w)] = \sum_\ell \sum_{i+j=\ell} [\delta_i(v), \delta_j(w)].
	\end{align*}
	Note that $ \delta_\ell([v,w]) $ and $ \sum_{i+j=\ell} [\delta_i(v), \delta_j(w)] $ both lie in $ L_{b+c+\ell a} $ for all $ \ell \in \mathbb{N} $. Since $ L = \bigoplus_{d \in A} L_d $, it follows that $ \delta_\ell([v,w]) = \sum_{i+j=\ell} [\delta_i(v), \delta_j(w)] $ for all $ \ell \in \mathbb{N} $.
\end{proof}

The guiding example we have in mind, is the case where $\delta$ is a derivation of $L$ with $\delta^{s+1}=0$ for some $s \in \mathbb{N}_0$, and where $\delta_\ell \coloneq \frac{1}{\ell!} \delta^\ell$ for each $\ell$, provided each $\ell!$ is invertible in $k$. Notice that
\[ \varphi = \sum_{\ell=0}^s \delta_\ell = \sum_{\ell=0}^s \frac{1}{\ell!} \delta^\ell \]
is then the usual exponential map corresponding to $\delta$.
In this case, $(*)_0$ is trivially satisfied (since $\delta_0 = \id$) and $(*)_1$ is simply the Leibniz rule that defines derivations.
By applying the Leibniz rule $\ell$ times, we get that
\[ \ell! \cdot \delta_\ell([v,w]) = \ell! \cdot \sum_{i+j = \ell} [\delta_i(v), \delta_j(w)] \]
for all $\ell$, so if each $\ell!$ is invertible in $k$, then \ref{eq:star l} follows for each $\ell$.

The key point now is that the maps we care about, are exponential maps of derivations defined over arbitrary rings.
We will still have that $\delta_0 = \id$ and $\delta_1 = \delta$ is a derivation, but we cannot deduce the ``higher Leibniz rules'' \ref{eq:star l} from the fact that $\delta$ is a derivation. Proving each of these \ref{eq:star l} is a computational and rather elaborate process, that we will describe below, providing enough details so that the courageous reader can reconstruct the full proof without too much effort.

\begin{remark}
	We have been wondering whether there is some general principle that would allow us to pass from $\Z$ to an arbitrary ring $k$ (e.g. by tensoring), so that we would be able to omit the lengthy proofs that follow. However, the fact that we need a lot of additional identities and lemmas in the process indicates that this seems unlikely.
	
	A related example from the literature is that in the appendix of \cite{Faulkner1989}, Faulkner constructs an $A_2$-graded Lie algebra and an $A_2$-graded group from an alternative ring. He faces the same problems as we do in \cite[(A.7)]{Faulkner1989} (except that his grading is a 3-grading, so only the invertibility of 2 is a problem). He proceeds precisely by proving the identities \ref{eq:star l} by hand, so apparently he did not see a way around this.
\end{remark}	

\subsection{The exponential maps \texorpdfstring{$e_{(0, b, 0, 0)_+}$}{e\_{(0,b,0,0)\_+}}}

We now proceed to show that the maps $e_{(0, b, 0, 0)_+}$ from \cref{def:exp L1} are indeed endomorphisms of the Lie algebra $L$.
So fix $b \in J$, let $\varphi \coloneq e_{(0, b, 0, 0)_+}$ and decompose $\varphi = \sum_{\ell = 0}^3 \delta_\ell$ according to the degree.
In particular, $\delta_0 \coloneq \id$ and $\delta_1 \coloneq \ad_{(0, b, 0, 0)_+}$. Explicitly, the maps $\delta_1$, $\delta_2$ and $\delta_3$ are as follows.
\begin{align}
	\delta_1 &\colon
	\begin{cases}
		x &\mapsto (0, b, 0, 0)_- \\
		(\nu, c, c', \rho)_- &\mapsto -b \times c + T(b, c') (\zeta - \xi) - \dd_{b, c'} + \rho b \\
		c' &\mapsto \bigl( T(b,c'), 0, 0, 0 \bigr)_+ \\
		\dd_{e, e'} &\mapsto \bigl( 0, -D_{e, e'}(b) + T(e, e')b, 0, 0 \bigr)_+ \\
		\xi &\mapsto (0, -b, 0, 0)_+ \\
		\zeta &\mapsto 0 \\
		c &\mapsto (0, 0, -b \times c, 0)_+ \\
		(\nu, c, c', \rho)_+ &\mapsto T(b, c') y \\
		y &\mapsto 0
	\end{cases} \\
	\delta_2 &\colon
	\begin{cases}
		x &\mapsto -b^\sharp \\
		(\nu, c, c', \rho)_- &\mapsto \bigl( -T(c, b^\sharp), \ T(b,c')b - b^\sharp \times c', \ -\rho b^\sharp, \ 0 \bigr)_+ \\
		c' &\mapsto 0 \\
		\dd_{e, e'} &\mapsto 0 \\
		\xi &\mapsto 0 \\
		\zeta &\mapsto 0 \\
		c &\mapsto - T(c, b^\sharp) y \\
		(\nu, c, c', \rho)_+ &\mapsto 0 \\
		y &\mapsto 0
	\end{cases} \\
	\delta_3 &\colon
	\begin{cases}
		x &\mapsto \bigl( -N(b), 0, 0, 0 \bigr)_+ \\
		(\nu, c, c', \rho)_- &\mapsto - \rho N(b) y \\
		c' &\mapsto 0 \\
		\dd_{e, e'} &\mapsto 0 \\
		\xi &\mapsto 0 \\
		\zeta &\mapsto 0 \\
		c &\mapsto 0 \\
		(\nu, c, c', \rho)_+ &\mapsto 0 \\
		y &\mapsto 0
	\end{cases}
\end{align}

\begin{remark}\label{rem:delta grading}
	\begin{enumerate}
		\item 
			To make the expressions more readable, we will usually omit parentheses and write $\delta_i v$ in place of $\delta_i(v)$.
		\item
			Observe that $\delta_\ell(L_{i,j}) \subseteq L_{i+\ell,j}$ for all $i,j,\ell$.
			In particular, \ref{eq:star l} is trivially valid if $v \in L_{ij}$ and $w \in L_{rs}$ when $L_{i+r+\ell, j+s} = 0$.
			This will greatly reduce the number of cases we have to verify.
	\end{enumerate}
\end{remark}

The following little trick will severely reduce the required computations.
\begin{lemma}\label{le:Leibniz trick}
	Let $n \geq 2$ and let $a, b, c \in L$.
	Assume that \ref{eq:star l} holds for all $\ell \leq n$ for each of the following five pairs:
	\begin{enumerate}
		\item $v = a$ and $w = b$;
		\item $v = a$ and $w = c$;
		\item $v = b$ and $w = c$;
		\item $v = b$ and $w = [a, c]$;
		\item $v = c$ and $w = [a, b]$.
	\end{enumerate}
	Then \ref{eq:star l} also holds for $v = a$ and $w = [b,c]$, for all $\ell \leq n$.
\end{lemma}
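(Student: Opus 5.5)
We argue by induction on $\ell \le n$, using the Jacobi identity to rewrite $[a,[b,c]] = [[a,b],c] + [b,[a,c]]$ and then expanding $\delta_\ell$ of each term with the hypotheses. The key point is that the left side $\delta_\ell([a,[b,c]])$ equals $\delta_\ell([[a,b],c]) + \delta_\ell([b,[a,c]])$. By hypotheses (v) and (iv), with $v=c$, $w=[a,b]$ and $v=b$, $w=[a,c]$ (skew-symmetry is harmless since \ref{eq:star l} is symmetric under swapping $v,w$ up to sign), this equals
\[ \sum_{i+j=\ell} [\delta_i [a,b], \delta_j c] + \sum_{i+j=\ell} [\delta_i b, \delta_j [a,c]]. \]

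Next we expand the inner $\delta_i[a,b]$ and $\delta_j[a,c]$. Since $i,j \le \ell \le n$, hypotheses (i) and (ii) apply and give $\delta_i[a,b] = \sum_{p+q=i}[\delta_p a, \delta_q b]$, and similarly for $[a,c]$. Substituting, the whole expression becomes
\[ \sum_{p+q+r=\ell} \bigl( [[\delta_p a, \delta_q b], \delta_r c] + [\delta_q b, [\delta_p a, \delta_r c]] \bigr). \]
We then apply the Jacobi identity termwise to get $\sum_{p+q+r=\ell} [\delta_p a, [\delta_q b, \delta_r c]]$.

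Finally, hypothesis (iii) regroups this as $\sum_{p+m=\ell} [\delta_p a, \delta_m [b,c]]$, which is exactly the right-hand side of \ref{eq:star l} for $v=a$, $w=[b,c]$. We do not actually need induction on $\ell$; each $\ell\le n$ is handled directly, and this is why the hypotheses are required for all $\ell \le n$ rather than only $\ell = n$.

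The main obstacle is bookkeeping: the sign conventions under skew-symmetry when invoking (iv) and (v) with arguments in the stated order, and the reindexing of the triple sums. No genuine difficulty is expected, since the proof only uses bilinearity and the Jacobi identity in $L$.
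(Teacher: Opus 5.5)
Your proof is correct and is essentially the paper's argument. The paper likewise uses only the Jacobi identity and hypotheses (i)--(v) to reduce both sides of $(*)_\ell$ for $v=a$, $w=[b,c]$ to the common expression $\sum_{i+j+m=\ell}\bigl[[\delta_i a,\delta_j b],\delta_m c\bigr]-\sum_{i+j+m=\ell}\bigl[[\delta_i a,\delta_m c],\delta_j b\bigr]$, whereas you chain directly from the left-hand side to the right-hand side; as you note yourself, no induction on $\ell$ is needed, so your opening mention of it can be dropped.
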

\begin{proof}
	By the Jacobi identity, both sides of \ref{eq:star l} for $v = a$ and $w = [b, c]$ can be simplified to the same expression
	\[ \sum_{i+j+m = \ell} \bigl[ [ \delta_i a, \delta_j b], \delta_m c \bigr] - \sum_{i+j+m = \ell} \bigl[ [ \delta_i a, \delta_m c], \delta_j b \bigr] . \qedhere \]
\end{proof}

Here is one specific instance that we will use several times.
\begin{corollary}\label{le:dd trick}
	Let $r \in \{ -2, -1, 0, 1, 2 \}$, let $n \geq 2$, let $e \in J$ and $e' \in J'$.
	Assume that \ref{eq:star l} holds for all $\ell \leq n$ in each of the following situations:
	\begin{enumerate}
		\item $v \in L_r$ and $w = e$;
		\item $v \in L_r$ and $w = e'$.
	\end{enumerate}
	Then \ref{eq:star l} also holds for all $v \in L_r$ and $w = \dd_{e, e'}$, for all $\ell \leq n$.
\end{corollary}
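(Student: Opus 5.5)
The plan is to apply \cref{le:Leibniz trick} directly, after writing $\dd_{e,e'}$ as a bracket. By \cref{def:dd and L0} we have $\dd_{e,e'} = [\ad_{e'}, \ad_e] = [e', e]$ in $L$, since $L_0 \le M$ and the bracket on $L_0$ is the commutator in $M$. Take $a \coloneq v \in L_r$, $b \coloneq e'$ and $c \coloneq e$ in \cref{le:Leibniz trick}. Its conclusion is that \ref{eq:star l} holds for $v$ and $w = [e',e] = \dd_{e,e'}$ for all $\ell \le n$, which is exactly the assertion. It remains to check the five hypotheses of \cref{le:Leibniz trick} for all $\ell \le n$.

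Hypotheses (i) and (ii), for the pairs $(v,e')$ and $(v,e)$, are precisely the two assumptions of the corollary.

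Hypothesis (iii) is the pair $(e',e)$. Both elements lie in $L_0$. Every term $[\delta_i e', \delta_j e]$ and $\delta_\ell([e',e])$ is computed from the explicit tables of $\delta_1,\delta_2,\delta_3$. Most of these terms vanish by degree considerations (\cref{rem:delta grading}), because $\delta_\ell$ raises the $\xi$-degree by $\ell$ and both inputs have $\xi$-degree $0$. So only a short check is needed, using \cref{pr:Jd} and \cref{pr:delta z}. If one prefers, the pair $(e',e)$ can instead be covered by the assumptions with $r = 0$, provided we are willing to specialize $v$.

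Hypotheses (iv) and (v) are the pairs $(e', [v,e])$ and $(e, [v,e'])$. This is where the main obstacle lies. Since $L$ is graded and $e,e'$ lie in $L_0$, both $[v,e]$ and $[v,e']$ lie in $L_r$. Hence, for every $u \in L_r$, the pairs $(e', u)$ and $(e,u)$ are instances of the assumptions (ii) and (i), after swapping the roles of $v$ and $w$. The swap is harmless: \ref{eq:star l} is antisymmetric in $v$ and $w$, because both sides change sign when $v$ and $w$ are exchanged. So (iv) and (v) also follow from the assumptions.

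The only step needing any care is therefore hypothesis (iii). If it is not covered by the assumptions, I would verify it directly from the degree bound together with the explicit formulas for $\delta_1$ and $\delta_2$ on $J$ and $J'$.
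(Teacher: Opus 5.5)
Your proposal is correct and is essentially the paper's proof: write $\dd_{e,e'}=[e',e]$, apply \cref{le:Leibniz trick} with $(a,b,c)=(v,e',e)$, and note that $[v,e],[v,e']\in L_r$, so the pairs in (iv) and (v) reduce to the assumptions via antisymmetry. Hypothesis (iii) needs no explicit computation: $(*)_0$ is trivial, $(*)_1$ is the Jacobi identity because $\delta_1$ is inner, and for $\ell\ge 2$ both sides lie in $L_{\ell,0}=0$ by \cref{rem:delta grading}. That last step uses the $\zeta$-degree as well as the $\xi$-degree; your side remark about covering (iii) by the assumptions only works when $r=0$.
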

\begin{proof}
	By definition, we have $\dd_{e, e'} = [e', e]$.
	Observe that for each $v \in L_r$, the elements $[v, e']$ and $[v, e]$ also belong to $L_r$.
	Moreover, \ref{eq:star l} also holds for $v=e$ and $w=e'$: it holds trivially for $\ell = 0$ and $\ell = 1$, and once $\ell \geq 2$, it holds by \cref{rem:delta grading} because $L_{\ell,0} = 0$.
	The result now follows from \cref{le:Leibniz trick}.
\end{proof}

We are now ready to prove \ref{eq:star l} for each $\ell$, starting with $\ell = 2$.
(Recall that $(*)_1$ is just the Jacobi identity since $\delta_0 = \id$ and $\delta_1$ is an inner derivation.)
For each $\ell$, this will require a large number of case distinctions depending on $v$ and $w$, where we will go over each of the different cases as in \cref{ta:Lie}.

\subsection*{The case \texorpdfstring{$ \ell=2 $}{l=2}.}

We have to show that
\begin{equation}\label{eq:star 2}
	\delta_2 [v,w] = [\delta_2 v, w] + [\delta_1 v, \delta_1 w] + [v, \delta_2 w]
		\tag*{$(*)_2$}
\end{equation}
for all $v,w \in L$.
We first list the different cases we have to consider.
\begin{enumerate}[(a)]
	\item\label{2a} $v = x$, $w = (\nu, c, c', \rho)_-$\,,
	\item\label{2b} $v = x$, $w = \dd_{e,e'}$\,,
	\item\label{2c} $v = x$, $w = \xi$\,,
	\item\label{2d} $v = x$, $w = \zeta$\,,
	\item\label{2e} $v = x$, $w = c \in J$\,,
	\item\label{2f} $v = x$, $w = (\nu, c, c', \rho)_+$\,,
	\item\label{2g} $v = (\lambda, a, a', \mu)_-$, $w = (\nu, c, c', \rho)_-$\,,
	\item\label{2h} $v = (\lambda, a, a', \mu)_-$, $w = c' \in J'$\,,
	\item\label{2i} $v = (\lambda, a, a', \mu)_-$, $w = \dd_{e,e'}$\,,
	\item\label{2j} $v = (\lambda, a, a', \mu)_-$, $w = \xi$\,,
	\item\label{2k} $v = (\lambda, a, a', \mu)_-$, $w = \zeta$\,,
	\item\label{2l} $v = (\lambda, a, a', \mu)_-$, $w = c \in J$\,,
	\item\label{2m} $v = (\lambda, a, a', \mu)_-$, $w = (\nu, c, c', \rho)_+$\,,
	\item\label{2n} $v = \dd_{e,e'}$, $w = c \in J$\,,
	\item\label{2o} $v = \xi$, $w = c \in J$\,,
	\item\label{2p} $v = \zeta$, $w = c \in J$\,.
\end{enumerate}
Observe that several cases have been omitted because of \cref{rem:delta grading}.
For example, when $v=x$ and $w=c' \in J'$, we have $v \in L_{-2, -1}$ and $w \in L_{0, -1}$, but $L_{-2 + 0 + 2, -1 -1} = L_{0, -2} = 0$.
As another example, note that when $v \in L_{0,i}$ and $w \in L_{0,j}$, then we end up in $L_{2,i+j}$, which is $0$ unless $i+j=1$; this implies that only cases \cref{2n,2o,2p} arise (up to symmetry) for $v,w \in L_0$.

We will now deal with each of these cases separately. In each case, we will simply give the result of the computation of each of the four terms involved in \ref{eq:star 2} and mention if the computation or the verification of \ref{eq:star 2} involves additional identities.
\begin{enumerate}[(a)]
	\item $v = x$, $w = (\nu, c, c', \rho)_-$\,. Then
		\begin{align*}
			\delta_2 [v,w] &= 0 \\
			[\delta_2 v, w] &= \bigl( T(c, b^\sharp),\ b^\sharp \times c',\ \rho b^\sharp,\ 0 \bigr)_- \\
			[\delta_1 v, \delta_1 w] &= \bigl( -2 T(c, b^\sharp),\ T(b, c')b - 2 b^\sharp \times c',\ -2 \rho b^\sharp,\ 0 \bigr)_- \\
			[v, \delta_2 w] &= \bigl( T(c, b^\sharp),\ -T(b, c')b + b^\sharp \times c',\ \rho b^\sharp,\ 0 \bigr)_-.
		\end{align*}
	\item $v = x$, $w = \dd_{e,e'}$\,. By \cref{le:dd trick}, we can skip this case, as it will follow from~\cref{2e}. (Notice that the case $v=x$, $w=e' \in J'$ holds by \cref{rem:delta grading}.)
	\item $v = x$, $w = \xi$\,. Then
		\begin{align*}
			\delta_2 [v,w] &= -2 b^\sharp \\
			[\delta_2 v, w] &= 0 \\
			[\delta_1 v, \delta_1 w] &= -2 b^\sharp \\
			[v, \delta_2 w] &= 0.
		\end{align*}
	\item $v = x$, $w = \zeta$\,. Then
		\begin{align*}
			\delta_2 [v,w] &= -b^\sharp \\
			[\delta_2 v, w] &= -b^\sharp \\
			[\delta_1 v, \delta_1 w] &= 0 \\
			[v, \delta_2 w] &= 0.
		\end{align*}
	\item $v = x$, $w = c \in J$\,. Then
		\begin{align*}
			\delta_2 [v,w] &= 0 \\
			[\delta_2 v, w] &= -\dd_{c, b^\sharp} \\
			[\delta_1 v, \delta_1 w] &= 2 T(c, b^\sharp) \zeta - \dd_{b, b \times c} \\
			[v, \delta_2 w] &= - T(c, b^\sharp) \xi.
		\end{align*}
		For the computation of $[\delta_1 v, \delta_1 w]$, we used the fact that $T(b, b \times c) = T(c, b \times b) = 2 T(c, b^\sharp)$.
		Moreover, to verify \ref{eq:star 2}, we require \cref{le:triple}\cref{le:triple:norm r}.
	\item $v = x$, $w = (\nu, c, c', \rho)_+$\,. Then
		\begin{align*}
			\delta_2 [v,w] &= \bigl( T(c, b^\sharp),\ b^\sharp \times c' - T(b, c')b,\ \rho b^\sharp,\ 0 \bigr)_+ \\
			[\delta_2 v, w] &= \bigl( T(c, b^\sharp),\ b^\sharp \times c',\ \rho b^\sharp,\ 0 \bigr)_+ \\
			[\delta_1 v, \delta_1 w] &= \bigl( 0,\ - T(b, c')b,\ 0,\ 0 \bigr)_+ \\
			[v, \delta_2 w] &= 0.
		\end{align*}
	\item $v = (\lambda, a, a', \mu)_-$, $w = (\nu, c, c', \rho)_-$\,.
		A direct computation is possible but quite involved, and it is much more efficient to invoke \cref{le:Leibniz trick}.
		So write $w = [z, x]$ with $z = (\nu, c, c', \rho)_+$.
		Then \ref{eq:star 2} for the given pair $(v, w)$ will follow once we know it for each of the five pairs $(v, z)$, $(v, x)$, $(z, x)$, $(x, [v, z])$ and $(z, [v, x])$.
		For $(v,z)$, this is case \cref{2m} below. For $(v,x)$ and $(z,x)$, this has been dealt with in the respective cases \cref{2a,2f} above.
		Since $[v,z] \in L_0$, the case $(x, [v,z])$ follows from \cref{2b,2c,2d,2e} above. The case $(z, [v,x])$, finally, is trivial because $[v,x] = 0$.
	\item $v = (\lambda, a, a', \mu)_-$, $w = c' \in J'$\,. Then
		\begin{align*}
			\delta_2 [v,w] &= \bigl( -T(c' \times a', b^\sharp),\ \mu T(b, c') b - \mu b^\sharp \times c',\ 0,\ 0 \bigr)_+ \\
			[\delta_2 v, w] &= \bigl( T(b, a') T(b, c') - T(b^\sharp \times a', c'),\ -\mu c' \times b^\sharp,\ 0,\ 0 \bigr)_+ \\
			[\delta_1 v, \delta_1 w] &= \bigl( - T(b, a') T(b, c'),\ \mu T(b, c') b,\ 0,\ 0 \bigr)_+ \\
			[v, \delta_2 w] &= 0.
		\end{align*}
	\item $v = (\lambda, a, a', \mu)_-$, $w = \dd_{e,e'}$\,.
		By \cref{le:dd trick}, we can skip this case, as it will follow from~\cref{2h,2l}.
	\item $v = (\lambda, a, a', \mu)_-$, $w = \xi$\,. Then
		\begin{align*}
			\delta_2 [v,w] &= \bigl( -T(a, b^\sharp),\ T(b, a')b - b^\sharp \times a',\ -\mu b^\sharp,\ 0 \bigr)_+ \\
			[\delta_2 v, w] &= \bigl( T(a, b^\sharp),\ -T(b, a')b + b^\sharp \times a',\ \mu b^\sharp,\ 0 \bigr)_+ \\
			[\delta_1 v, \delta_1 w] &= \bigl( -2 T(a, b^\sharp),\ D_{b, a'}(b),\ -2 \mu b^\sharp,\ 0 \bigr)_+ \\
			[v, \delta_2 w] &= 0.
		\end{align*}
		By definition, $D_{b,a'}(b) = 2 T(b, a') b - 2 b^\sharp \times a'$.
	\item $v = (\lambda, a, a', \mu)_-$, $w = \zeta$\,. Then
		\begin{align*}
			\delta_2 [v,w] &= \bigl( -T(a, b^\sharp),\ 0,\ \mu b^\sharp,\ 0 \bigr)_+ \\
			[\delta_2 v, w] &= \bigl( -T(a, b^\sharp),\ 0,\ \mu b^\sharp,\ 0 \bigr)_+ \\
			[\delta_1 v, \delta_1 w] &= 0 \\
			[v, \delta_2 w] &= 0.
		\end{align*}
	\item $v = (\lambda, a, a', \mu)_-$, $w = c \in J$\,. We can again invoke \cref{le:Leibniz trick} to avoid computations.
		Indeed, again write $v = [z, x]$ with $z = (\lambda, a, a', \mu)_+$.
		Then \ref{eq:star 2} for the given pair $(v, w)$ will follow once we know it for each of the five pairs $(x, z)$, $(c, z)$, $(x, c)$, $(z, [x, c])$ and $(x, [z, c])$.
		For $(x, c)$, this is case \cref{2e} above. For $(x, z)$ and $(x, [z,c])$, this is case \cref{2f} above.
		For $(c, z)$, this is trivial because of \cref{rem:delta grading}. The case $(z, [x,c])$, finally, is trivial because $[x,c] = 0$.
	\item $v = (\lambda, a, a', \mu)_-$, $w = (\nu, c, c', \rho)_+$\,. Then
		\begin{align*}
			\delta_2 [v,w] &= T(\rho a - a' \times c' + \mu c,\ b^\sharp) y \\
			[\delta_2 v, w] &= \rho T(a, b^\sharp) y + T(b, a') T(b, c') y - T(b^\sharp \times a', c') y + \mu T(c, b^\sharp) y \\
			[\delta_1 v, \delta_1 w] &= - T(b, a') T(b, c') y \\
			[v, \delta_2 w] &= 0.
		\end{align*}
	\item $v = \dd_{e,e'}$, $w = c \in J$\,. Then
		\begin{align*}
			\delta_2 [v,w] &= -T\bigl( D_{e,e'}(c), b^\sharp \bigr) y \\
			[\delta_2 v, w] &= 0 \\
			[\delta_1 v, \delta_1 w] &= T\bigl( D_{e,e'}(b) - T(e, e')b, b \times c \bigr) y \\
			[v, \delta_2 w] &= 0.
		\end{align*}
		To prove equality of the two non-zero right hand sides, we expand
		\begin{gather*}
			-T\bigl( D_{e,e'}(c), b^\sharp \bigr) = - T(e, e') T(c, b^\sharp) - T(c, e') T(e, b^\sharp) + T\bigl( (c \times e) \times e', b^\sharp \bigr), \\
			T\bigl( D_{e,e'}(b) - T(e, e')b, b \times c \bigr) = T(b, e') T(e, b \times c) - T \bigl( (b \times e) \times e', b \times c \bigr) .
		\end{gather*}
		Now invoke \cref{le:CNP identities}\cref{le:CNP identities:9} to rewrite
		\begin{align*}
			&T \bigl( (b \times e) \times e', b \times c \bigr)
				= T \bigl( (b \times e) \times (b \times c), e' \bigr) \\
			&= T(c, b \times e) T(b, e') + T(c, b^\sharp) T(e, e') + T(e, b^\sharp) T(c, e') - T\bigl( (c \times e) \times b^\sharp, e' \bigr)
		\end{align*}
		and the result follows.
		
		(Notice that we could not apply \cref{le:Leibniz trick} or \cref{le:dd trick} since we would run into a circular argument.)
	\item $v = \xi$, $w = c \in J$\,. Then
		\begin{align*}
			\delta_2 [v,w] &= 0 \\
			[\delta_2 v, w] &= 0 \\
			[\delta_1 v, \delta_1 w] &= 2 T(c, b^\sharp) y \\
			[v, \delta_2 w] &= -2 T(c, b^\sharp) y.
		\end{align*}		
	\item $v = \zeta$, $w = c \in J$\,. Then
		\begin{align*}
			\delta_2 [v,w] &= -T(c, b^\sharp) y \\
			[\delta_2 v, w] &= 0 \\
			[\delta_1 v, \delta_1 w] &= 0 \\
			[v, \delta_2 w] &= -T(c, b^\sharp) y.
		\end{align*}	
\end{enumerate}

\subsection*{The case \texorpdfstring{$ \ell=3 $}{l=3}.}

We have to show that
\begin{equation}\label{eq:star 3}
	\delta_3 [v,w] = [\delta_3 v, w] + [\delta_2 v, \delta_1 w] + [\delta_1 v, \delta_2 w] + [v, \delta_3 w]
		\tag*{$(*)_3$}
\end{equation}
for all $v,w \in L$.
Again, we first list the different cases we have to consider.
\begin{enumerate}[(a)]
	\item\label{3a} $v = x$, $w = (\nu, c, c', \rho)_-$\,,
	\item\label{3b} $v = x$, $w = \dd_{e,e'}$\,,
	\item\label{3c} $v = x$, $w = \xi$\,,
	\item\label{3d} $v = x$, $w = \zeta$\,,
	\item\label{3e} $v = x$, $w = c \in J$\,,
	\item\label{3f} $v = x$, $w = (0, 0, 0, \rho)_+$\,,
	\item\label{3g} $v = (\lambda, a, a', \mu)_-$, $w = (\nu, c, c', \rho)_-$\,,
	\item\label{3h} $v = (\lambda, a, a', \mu)_-$, $w = \dd_{e,e'}$\,,
	\item\label{3i} $v = (\lambda, a, a', \mu)_-$, $w = \xi$\,,
	\item\label{3j} $v = (\lambda, a, a', \mu)_-$, $w = \zeta$\,,
	\item\label{3k} $v = (\lambda, a, a', \mu)_-$, $w = c \in J$\,.
\end{enumerate}
Of course, we have again applied \cref{rem:delta grading}.
In particular, notice that for case~\cref{3f}, where $v=x \in L_{-2,-1}$ and $w \in L_1$, \ref{eq:star 3} is trivial when $w \in L_{1j}$ with $j \neq 1$, which is why we can restrict to the case $w = (0,0,0,\rho)_+$.

\begin{enumerate}[(a)]
	\item $v = x$, $w = (\nu, c, c', \rho)_-$\,. Then
		\begin{align*}
			\delta_3 [v,w] &= 0 \\
			[\delta_3 v, w] &= -N(b)c' - \rho N(b) (\xi - \zeta) \\
			[\delta_2 v, \delta_1 w] &= -T(b,c') b^\sharp + D_{c',b}(b^\sharp) - \rho \dd_{b, b^\sharp} \\
			[\delta_1 v, \delta_2 w] &= T(b, c') b^\sharp - N(b) c' + 3 \rho N(b) \zeta - \rho \dd_{b, b^\sharp} \\
			[v, \delta_3 w] &= - \rho N(b) \xi ,
		\end{align*}	
		where we have used \cref{def:CNP new}\cref{eq:CNP cubic,eq:CNP 19} in the computation of $[\delta_1 v, \delta_2 w]$.
		Now these two identities also imply that $D_{c',b}(b^\sharp) = 2N(b) c'$.
		Together with the fact that $\dd_{b, b^\sharp} = N(b)(2\zeta - \xi)$ by \cref{le:triple}\cref{le:triple:aasharp}, \ref{eq:star 3} follows.
	\item $v = x$, $w = \dd_{e,e'}$\,. By \cref{le:dd trick}, we can skip this case, as it will follow from~\cref{3e}. (Notice that the case $v=x$, $w=e' \in J'$ holds by \cref{rem:delta grading}.)
	\item $v = x$, $w = \xi$\,. Then
		\begin{align*}
			\delta_3 [v,w] &= \bigl( -2N(b),\ 0,\ 0,\ 0 \bigr)_+ \\
			[\delta_3 v, w] &= \bigl( N(b),\ 0,\ 0,\ 0 \bigr)_+ \\
			[\delta_2 v, \delta_1 w] &= \bigl( -3N(b),\ 0,\ 0,\ 0 \bigr)_+ \\
			[\delta_1 v, \delta_2 w] &= 0 \\
			[v, \delta_3 w] &= 0 .
		\end{align*}	
	\item $v = x$, $w = \zeta$\,. Then
		\begin{align*}
			\delta_3 [v,w] &= \bigl( -N(b),\ 0,\ 0,\ 0 \bigr)_+ \\
			[\delta_3 v, w] &= \bigl( -N(b),\ 0,\ 0,\ 0 \bigr)_+ \\
			[\delta_2 v, \delta_1 w] &= 0 \\
			[\delta_1 v, \delta_2 w] &= 0 \\
			[v, \delta_3 w] &= 0 .
		\end{align*}	
	\item $v = x$, $w = c \in J$\,. Then
		\begin{align*}
			\delta_3 [v,w] &= 0 \\
			[\delta_3 v, w] &= \bigl( 0,\ N(b)c,\ 0,\ 0 \bigr)_+ \\
			[\delta_2 v, \delta_1 w] &= \bigl( 0,\ - b^\sharp \times (b \times c),\ 0,\ 0 \bigr)_+ \\
			[\delta_1 v, \delta_2 w] &= \bigl( 0,\ T(c, b^\sharp)b,\ 0,\ 0 \bigr)_+ \\
			[v, \delta_3 w] &= 0 .
		\end{align*}	
		The result now follows from \cref{le:CNP identities}\cref{le:CNP identities:8}.
	\item $v = x$, $w = (0, 0, 0, \rho)_+$\,. Then
		\begin{align*}
			\delta_3 [v,w] &= \rho N(b) y \\
			[\delta_3 v, w] &= \rho N(b) y \\
			[\delta_2 v, \delta_1 w] &= 0 \\
			[\delta_1 v, \delta_2 w] &= 0 \\
			[v, \delta_3 w] &= 0 .
		\end{align*}	
	\item $v = (\lambda, a, a', \mu)_-$, $w = (\nu, c, c', \rho)_-$\,. Again, we will invoke \cref{le:Leibniz trick}, writing $w = [z, x]$ with $z = (\nu, c, c', \rho)_+$.
		Then \ref{eq:star 3} for the given pair $(v, w)$ will follow once we know it for each of the five pairs $(v, z)$, $(v, x)$, $(z, x)$, $(x, [v, z])$ and $(z, [v, x])$.
		For $(v,z)$, this is trivial by \cref{rem:delta grading}. For $(v,x)$ and $(z,x)$, this has been dealt with in the respective cases \cref{3a,3f} above.
		Since $[v,z] \in L_0$, the case $(x, [v,z])$ follows from \cref{3b,3c,3d,3e} above. The case $(z, [v,x])$, finally, is trivial because $[v,x] = 0$.
	\item $v = (\lambda, a, a', \mu)_-$, $w = \dd_{e,e'}$\,.
		By \cref{le:dd trick}, we can skip this case, as it will follow from~\cref{3k}.
		(Notice that the case $v = (\lambda, a, a', \mu)_-$, $w=e' \in J'$ holds by \cref{rem:delta grading}.)
	\item $v = (\lambda, a, a', \mu)_-$, $w = \xi$\,. Then
		\begin{align*}
			\delta_3 [v,w] &= -\mu N(b) y \\
			[\delta_3 v, w] &= 2 \mu N(b) y \\
			[\delta_2 v, \delta_1 w] &= -3 \mu N(b) y \\
			[\delta_1 v, \delta_2 w] &= 0 \\
			[v, \delta_3 w] &= 0 .
		\end{align*}	
	\item $v = (\lambda, a, a', \mu)_-$, $w = \zeta$\,. Then
		\begin{align*}
			\delta_3 [v,w] &= \mu N(b) y \\
			[\delta_3 v, w] &= \mu N(b) y \\
			[\delta_2 v, \delta_1 w] &= 0 \\
			[\delta_1 v, \delta_2 w] &= 0 \\
			[v, \delta_3 w] &= 0 .
		\end{align*}	
	\item $v = (\lambda, a, a', \mu)_-$, $w = c \in J$\,. Then
		\begin{align*}
			\delta_3 [v,w] &= T(c, a') N(b) y \\
			[\delta_3 v, w] &= 0 \\
			[\delta_2 v, \delta_1 w] &= T(c, a') N(b) y - T(b, a') T(c, b^\sharp) y \\
			[\delta_1 v, \delta_2 w] &= T(b, a') T(c, b^\sharp) y \\
			[v, \delta_3 w] &= 0 ,
		\end{align*}	
		where we have used \cref{le:CNP identities}\cref{le:CNP identities:8} in the computation of $[\delta_2 v, \delta_1 w]$.
\end{enumerate}

\subsection*{The case \texorpdfstring{$ \ell=4 $}{l=4}.}

Notice that $\delta_4 = 0$, so we have to show that
\begin{equation}\label{eq:star 4}
	0 = [\delta_3 v, \delta_1 w] + [\delta_2 v, \delta_2 w] + [\delta_1 v, \delta_3 w]
		\tag*{$(*)_4$}
\end{equation}
for all $v,w \in L$.
By \cref{rem:delta grading}, there are not many cases to consider.
\begin{enumerate}[(a)]
	\item\label{4a} $v = x$, $w = (\nu, c, c', \rho)_-$\,,
	\item\label{4b} $v = (\lambda, a, a', \mu)_-$, $w = (\nu, c, c', \rho)_-$\,.
\end{enumerate}
Notice that when $v = x \in L_{-2,-1}$ and $w \in L_{0j}$, \cref{rem:delta grading} always applies because $L_{2,-1+j}=0$ unless $j=2$, but then $L_{0j} = L_{02} = 0$.
\begin{enumerate}[(a)]
	\item $v = x$, $w = (\nu, c, c', \rho)_-$\,. Then
		\begin{align*}
			[\delta_3 v, \delta_1 w] &= \bigl( -N(b) T(b, c'),\ \rho N(b) b,\ 0,\ 0 \bigr)_+ \\
			[\delta_2 v, \delta_2 w] &= \bigl( T(b, c') T(b, b^\sharp) - T(b^\sharp \times c, b^\sharp),\ -\rho b^\sharp \times b^\sharp,\ 0,\ 0 \bigr)_+ \\
			[\delta_1 v, \delta_3 w] &= \bigl( 0,\ \rho N(b) b,\ 0,\ 0 \bigr)_+ .
		\end{align*}	
		The result now follows since $b^\sharp \times b^\sharp = 2 b^{\sharp\sharp} = 2 N(b) b$ and $T(b, b^\sharp) = 3 N(b)$.
	\item $v = (\lambda, a, a', \mu)_-$, $w = (\nu, c, c', \rho)_-$\,. We could, of course, directly compute that \ref{eq:star 4} holds, but this is not necessary by the same argument as before, invoking \cref{le:Leibniz trick}.
\end{enumerate}

\subsection*{The case \texorpdfstring{$ \ell \ge 5 $}{l≥5}.}

Since $\delta_j = 0$ for all $j \geq 4$, the identities \ref{eq:star l} become trivial as soon as $\ell \geq 7$. For $\ell = 5$ and $\ell = 6$, they simplify to
\begin{align*}
	0 &= [\delta_3 v, \delta_2 w] + [\delta_2 v, \delta_3 w] \quad \text{and}
		\tag*{$(*)_5$} \\
	0 &= [\delta_3 v, \delta_3 w] ,
		\tag*{$(*)_6$}
\end{align*}
respectively. By \cref{rem:delta grading}, however, they are trivially satisfied if $v \in L_i$ and $w \in L_j$ with $i + j > -3$.
So the only remaining case is $v = x$ and $w \in L_{-1}$, but since $L_{-1,2} = 0$, this is again dealt with by the same \cref{rem:delta grading}.

\medskip

This finishes the proof that the maps $e_{(0,b,0,0)_+}$ are automorphisms of the Lie algebra~$L$.

\subsection{The exponential maps \texorpdfstring{$e_{\rho x}$}{e\_{ρx}}}

We now show that the maps $e_{\rho x}$ from \cref{def:exp xy} are automorphisms of the Lie algebra $L$. This works in the same way as for the maps $e_{(0,b,0,0)_+}$, but the number of cases we have to verify will be much lower. We fix $\rho \in k$, put $\varphi \coloneq e_{\rho x} $ and decompose $ \varphi = \sum_{\ell = 0}^2 \delta_\ell$. Here $\delta_0 = \id$, $\delta_1 = \ad_{\rho x}$ and $\delta_2$ is defined by $\delta_2(y) = \rho^2 x$ and $\delta_2 (L_{ij}) = 0$ for all $(i,j) \ne (2,1)$. Further, $\delta_1$ is trivial on all $L_{ij}$ except for the following cases:
\begin{align*}
	\delta_1(\xi) = 2\rho x, \quad \delta_1(\zeta) = \rho x, \quad \delta_1\brackets[\big]{(\lambda,b, b', \mu)_+} = -\rho(\lambda, b, b', \mu)_, \quad \delta_1(y) = \rho \xi.
\end{align*}

\begin{remark}\label{rem:delta grading x}
	Similarly to \cref{rem:delta grading}, we have $\delta_\ell(L_{i,j}) = L_{i-2\ell, j-\ell}$ for all $i,j,\ell$. Hence \ref{eq:star l} is trivially satisfied for all $v \in L_{ij}$, $w \in L_{rs}$ with $L_{i+r-2\ell, j+s-\ell} = 0$.
\end{remark}

In order to verify
\begin{equation*}
	\delta_2 [v,w] = [\delta_2 v, w] + [\delta_1 v, \delta_1 w] + [v, \delta_2 w],
			\tag*{$(*)_2$}
\end{equation*}
only the following cases have to be considered by \cref{rem:delta grading x}.
\begin{enumerate}[(a)]
	\item $v \in L_0$, $w = y$.
	\item $v = (0,0,0,\mu)_+$, $w=y$.
	\item $v = (0,0,a',0)_+$, $w = (0,0,b',0)_+$.
	\item\label{xd} $v = (\lambda, b, b', \mu)_+$, $w = y$.
	\item \label{xe}$v=y$, $w=y$.
\end{enumerate}
In case~\cref{xd}, we compute that
\begin{align*}
	[\delta_1 v, \delta_1 w] &= [-\rho (\lambda, b, b', \mu)_-, \rho \xi] = -\rho^2 (\lambda, b, b', \mu)_-, \\
	[v, \delta_2 w] &=[(\lambda, b, b', \mu)_+, \rho^2 x] = \rho^2 (\lambda, b, b', \mu)_-
\end{align*}
and $\delta_2[v,w]=[\delta_2 v,w] = [v, \delta_2 w] = 0$. In case~\cref{xe}, we have $\delta_2[v,w] = 0 = [\delta_1 v, \delta_1 w]$ and $[\delta_2 v, w] = -[v, \delta_2 w]$ because $v=w$. In all remaining cases,
\[ \delta_2 [v,w] = [\delta_2 v, w] = [\delta_1 v, \delta_1 w] = [v, \delta_2 w] = 0. \]
Thus $(*)_2$ is satisfied.

We now turn to $(*)_3$. Using \cref{rem:delta grading x}, we can omit all cases except for $v=y=w$, and this case is trivial by the same arguments as for $(*)_2$. The relations $(*)_\ell$ for $\ell \ge 4$ hold by \cref{rem:delta grading x} as well. We conclude that $e_{\rho x}$ is indeed an automorphism of $L$.

\section{Weyl elements in the \texorpdfstring{$ G_2 $}{G2}-grading}\label{sec:weyl G2}

We can summarize the results of \cref{sec:leibniz} as follows.

\begin{lemma}\label{le:exp x auto}
	We have $e_{\lambda x} \in \End(L)$ for all $\lambda \in k$.
\end{lemma}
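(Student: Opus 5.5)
The plan is to apply \cref{le:leibniz endo} to the homogeneous decomposition $e_{\lambda x} = \delta_0 + \delta_1 + \delta_2$. Here $\delta_0 = \id$ and $\delta_1 = \ad_{\lambda x}$. The map $\delta_2$ is the degree-two part, given by $\delta_2(y) = \lambda^2 x$ and $\delta_2 = 0$ on every other $L_{ij}$. This decomposition is read off directly from \cref{def:exp xy}: the $\xi$- and $\zeta$-terms $2\lambda x$, $\lambda x$, the term $-\lambda(\nu,c,c',\rho)_-$ and the term $\lambda\xi$ in $e_{\lambda x}(y)$ are exactly $[\lambda x,\cdot\,]$ by \cref{ta:Lie}, and the only remaining term is $\lambda^2 x$. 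Well-definedness on $L_{0,0}$ is already settled by \cref{le:exp well-defined}. It therefore suffices to verify the higher Leibniz rules $(*)_\ell$ for all $\ell$.

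The rule $(*)_0$ is trivial, and $(*)_1$ is the Jacobi identity (\cref{pr:L is Lie algebra}), since $\delta_1$ is inner. For $\ell \ge 2$, I would use the degree shift $\delta_\ell(L_{i,j}) \subseteq L_{i-2\ell,\,j-\ell}$ (\cref{rem:delta grading x}). It means $(*)_\ell$ holds trivially for $v\in L_{ij}$, $w\in L_{rs}$ whenever $L_{i+r-2\ell,\,j+s-\ell}=0$. For $\ell=2$ the target is $L_{i+r-4,\,j+s-2}$, which is nonzero only if $i+r\ge 2$, so one of $v,w$ lies in $L_1\cup L_2$. Checking this against the $G_2$ grid in \cref{fig:grading} leaves only a handful of cases: $v\in L_0$ with $w=y$; $v\in L_1$ with $w=y$; pairs in $L_1\times L_1$ landing in $L_{-2,-1}$, namely $(0,0,a',0)_+,(0,0,b',0)_+$ and their mixed companions; and $v=w=y$.

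These cases are then handled one at a time. For $v=(\lambda',b,b',\mu)_+$ and $w=y$, we have $[v,w]=0$ and $\delta_2 v=0$, while $[\delta_1 v,\delta_1 w]=[-\lambda v_-,\lambda\xi]=-\lambda^2 v_-$ and $[v,\delta_2 w]=[v_+,\lambda^2x]=\lambda^2 v_-$, so the two nonzero terms cancel. For $v=w=y$ the identity follows by antisymmetry. For $v\in L_0$ and $w=y$, the bracket $[v,y]\in ky$ and the $\delta_1$-images must be compared using \cref{ta:Lie}; for instance $v=\xi$ gives $\delta_2(2y)=2\lambda^2x$ on the left, and on the right $[\delta_1\xi,\delta_1y]=[2\lambda x,\lambda\xi]=4\lambda^2x$ and $[\xi,\lambda^2x]=-2\lambda^2x$, which also agree. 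For the $L_1\times L_1$ pairs, $[v,w]\in ky$, and the check reduces to the $L_{-1}\times L_{-1}$ bracket formula. For $\ell=3$ the shift allows only $v=w=y$, again trivial by antisymmetry, and for $\ell\ge4$ every case vanishes by the same degree argument.

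The main obstacle is not any individual computation but getting the case enumeration right: one has to confirm, root space by root space in $G_2$, that no surviving pair has been overlooked, especially the pairs involving $\xi$, $\zeta$ and $\dd_{e,e'}$. If needed, the $\dd_{e,e'}$ cases can be reduced to $J$ and $J'$ via \cref{le:dd trick}. Once the $(*)_\ell$ are established, \cref{le:leibniz endo} gives $e_{\lambda x}\in\End(L)$.
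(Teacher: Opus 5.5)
Your proposal is correct and follows the paper's proof essentially step for step: you decompose $e_{\lambda x}=\id+\ad_{\lambda x}+\delta_2$, verify the higher Leibniz rules so that \cref{le:leibniz endo} applies, and use the degree shift of \cref{rem:delta grading x} to reduce $(*)_2$ to the pairs $(L_0\cup L_1,\,y)$, $L_1\times L_1$ and $(y,y)$, and $(*)_3$ to $(y,y)$ alone. One small slip is harmless: the pair $(0,0,a',0)_+,(0,0,b',0)_+$ actually lands in $L_{-2,0}=0$, so only the mixed $L_1\times L_1$ pairs are nontrivial, and your uniform reduction to the $L_{-1}\times L_{-1}$ bracket formula already handles all of them.
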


\begin{lemma}\label{le:exp L10 auto}
	We have $e_{(0, b, 0, 0)_+} \in \End(L)$ for all $b \in J$.
\end{lemma}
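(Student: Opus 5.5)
The plan is to assemble this lemma from the higher Leibniz machinery of Appendix~\ref{sec:leibniz}; no new identities beyond those already verified there should be needed. Fix $b \in J$ and write $\varphi \coloneq e_{(0,b,0,0)_+} = \delta_0 + \delta_1 + \delta_2 + \delta_3$, splitting $\varphi$ by degree in the grading $(L_i)$. Thus $\delta_\ell(L_{i,j}) \subseteq L_{i+\ell,j}$, with $\delta_0 = \id$ and $\delta_1 = \ad_{(0,b,0,0)_+}$, and $\delta_\ell = 0$ for $\ell \ge 4$. By \cref{le:leibniz endo}, it suffices to verify the higher Leibniz rules \ref{eq:star l} for all $\ell$ and all $v,w$ in a spanning set of $L$. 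I would take as spanning set the elements of the root spaces $L_{i,j}$ together with $\xi$, $\zeta$ and the $\dd_{e,e'}$. Bilinearity of both sides of \ref{eq:star l} in $(v,w)$ makes this reduction legitimate; note that it requires no well-definedness issue beyond \cref{le:exp well-defined}.

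The cases $\ell = 0,1$ are immediate. For $\ell=0$ the rule is trivial, and for $\ell=1$ it is the Jacobi identity, because $\delta_1$ is inner. For $\ell \ge 2$, I would first discard all pairs $(v,w) \in L_{i,j} \times L_{r,s}$ with $L_{i+r+\ell,j+s} = 0$, using \cref{rem:delta grading}. The remaining pairs are exactly the lists of cases worked out in the appendix for $\ell = 2,3,4$. For $\ell = 5,6$ the grading argument disposes of everything, and for $\ell \ge 7$ the rule is vacuous since all terms vanish.

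Three reduction devices cut the remaining work down.
\begin{enumerate}
	\item \cref{le:dd trick} handles every pair whose second entry is $\dd_{e,e'}$, provided the corresponding pairs with $e \in J$ and with $e' \in J'$ are settled first.
	\item \cref{le:Leibniz trick}, applied via $(\lambda,a,a',\mu)_- = [(\lambda,a,a',\mu)_+, x]$, reduces pairs of two $L_{-1}$-elements and pairs in $L_{-1}\times J$ to cases already done.
	\item \cref{rem:grading}-type arguments dispose of the pairs involving $\xi$ and $\zeta$, where both sides are scalar multiples of each other.
\end{enumerate}
The order of the verifications matters: I must settle the base cases before invoking a reduction that depends on them, so as not to create a circularity. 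This is why the pair $(\dd_{e,e'}, c)$ at $\ell=2$ has to be computed directly.

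The main obstacle is that direct verification of $(\dd_{e,e'},c)$ at $\ell=2$. It amounts to showing
\[
	-T(D_{e,e'}(c),b^\sharp) = T\bigl(D_{e,e'}(b) - T(e,e')b,\ b\times c\bigr).
\]
I would expand both $D$'s via \cref{def:U-operators} and then rewrite $T((b\times e)\times e', b\times c)$ using \cref{le:CNP identities}\cref{le:CNP identities:9}, together with the symmetry \cref{le:CNP identities}\cref{le:CNP identities:7}. The other delicate inputs are the following.
\begin{enumerate}
	\item The identity $\dd_{b,b^\sharp} = N(b)(2\zeta-\xi)$ from \cref{le:triple}, needed in case $(x,(\nu,c,c',\rho)_-)$ at $\ell=3$.
	\item The relations $D_{c',b}(b^\sharp) = 2N(b)c'$, which follows from \cref{def:CNP new}\cref{eq:CNP 19}, and $b^\sharp\times b^\sharp = 2N(b)b$, which is used at $\ell = 4$.
\end{enumerate}
These are precisely the points where the cubic structure $N$, $\sharp$ enters and the argument fails for merely linear cubic norm pairs.

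Once all \ref{eq:star l} hold, \cref{le:leibniz endo} gives $\varphi \in \End(L)$.
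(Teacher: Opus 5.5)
Your outline is the paper's own proof from Appendix~\ref{sec:leibniz}. It uses the same splitting $\varphi=\delta_0+\delta_1+\delta_2+\delta_3$, \cref{le:leibniz endo}, pruning by \cref{rem:delta grading}, the reductions of \cref{le:dd trick,le:Leibniz trick}, a direct computation of the pair $(\dd_{e,e'},c)$ at $\ell=2$ via \cref{le:CNP identities}\cref{le:CNP identities:9}, and the same cubic inputs.

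One step fails as stated: your third reduction device. It is correct for $\zeta$. There $\delta_a\zeta=0$ for all $a\ge 1$ and each $\delta_\ell$ preserves the $\zeta$-degree, so both sides of $(*)_\ell$ equal $j\,\delta_\ell(w)$ for $w\in L'_j$. It is not correct for $\xi$, because $\delta_1\xi=-(0,b,0,0)_+\neq 0$. For $w\in L_i$, the rule $(*)_\ell$ with $v=\xi$ reads
\[
i\,\delta_\ell(w)=(i+\ell)\,\delta_\ell(w)-\delta_1\bigl(\delta_{\ell-1}(w)\bigr), \quad\text{that is,}\quad \ell\,\delta_\ell(w)=\delta_1\bigl(\delta_{\ell-1}(w)\bigr).
\]
This is not a formal consequence of the grading. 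It is the integral shadow of $\delta_\ell=\tfrac{1}{\ell!}\delta_1^\ell$, which cannot be taken for granted over an arbitrary $k$, so it has to be verified from the explicit formulas.

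After pruning by \cref{rem:delta grading}, the surviving $\xi$-pairs are:
\begin{itemize}
\item for $\ell=2$: $(x,\xi)$, $(L_{-1},\xi)$ and $(\xi,J)$, the paper's cases (c), (j), (o);
\item for $\ell=3$: $(x,\xi)$ and $(L_{-1},\xi)$, the paper's cases (c), (i).
\end{itemize}
They amount to $2\delta_2=\delta_1^2$ on $kx+L_{-1}+J$ and $3\delta_3=\delta_1\delta_2$ on $kx+L_{-1}$. Both identities hold and are quick to check with $b\times b=2b^\sharp$, $T(b,b^\sharp)=3N(b)$ and $D_{b,c'}(b)=2U_b(c')$. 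For instance, $\delta_1^2(x)=-b\times b=-2b^\sharp=2\delta_2(x)$ and $\delta_1\delta_2(x)=-\bigl(T(b,b^\sharp),0,0,0\bigr)_+=3\delta_3(x)$. The paper simply computes these cases directly. Once you add them, your argument is complete.
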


The goal of this appendix is to show that all the remaining maps defined in \cref{def:exp xy,def:exp L1,def:exp L-1,def:exp JJ'} can be expressed as the conjugate of $ e_{\lambda x} $ or $ e_{(0, b, 0, 0)_+} $ by a suitable product of the reflections defined in \cref{sec:reflections}. In particular, we infer that all these maps are compatible with the Lie bracket, and hence lie in $ \End(L) $. Once this is done, only a few additional computations are required to show that the reflections are actually Weyl elements in the sense of \cref{def:rgg}\cref{def:rgg:weyl}.

\begin{lemma}\label{le:exp y conj}
	For all $\lambda \in k$, we have $\phihor^{-1} \circ e_{\lambda x} \circ \phihor = e_{\lambda y}$. In particular, $e_{\lambda y} \in \End(L)$.
\end{lemma}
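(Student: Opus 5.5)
The strategy is to show that the two endomorphisms $\phihor^{-1} \circ e_{\lambda x} \circ \phihor$ and $e_{\lambda y}$ of the $k$-module $L$ agree. The second assertion then follows formally. By \cref{le:exp x auto}, $e_{\lambda x}$ is compatible with the Lie bracket, and by \cref{le:reflections auto}, $\phihor$ and $\phihor^{-1}$ are automorphisms of $L$. Hence the composite $\phihor^{-1} \circ e_{\lambda x} \circ \phihor$ lies in $\End(L)$, and so $e_{\lambda y} \in \End(L)$.

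Since we do not yet know that $e_{\lambda y}$ preserves the bracket, we cannot simply check agreement on the generating set of \cref{rem:aut equality}. Instead, I would compare the two maps directly on every component of $L$ listed in \cref{def:exp xy}: $x$, $(\nu,c,c',\rho)_\pm$, $c'$, $\dd_{c,c'}$, $\xi$, $\zeta$, $c$ and $y$. These components span $L$. Well-definedness on $L_{0,0}$ is no issue, because both sides are honest maps: the left side is a composite of well-defined maps by \cref{le:reflections well-defined,le:exp well-defined}.

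The computation uses the explicit formulas for $\phihor$ from \cref{def:reflections} and for $\phihor^{-1}$ from \cref{rem:reflections inv}. I expect the following results.
\begin{itemize}
	\item \textbf{On $x$.} We have $\phihor(x)=y$, then $e_{\lambda x}(y) = y+\lambda\xi+\lambda^2x$, and applying $\phihor^{-1}$ gives $x - \lambda\xi + \lambda^2 y$. This equals $e_{\lambda y}(x)$.
	\item \textbf{On $(\nu,c,c',\rho)_-$.} Its image under $\phihor$ is $(\nu,c,c',\rho)_+$, which $e_{\lambda x}$ sends to $(\nu,c,c',\rho)_+ - \lambda(\nu,c,c',\rho)_-$. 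Applying $\phihor^{-1}$ gives $(\nu,c,c',\rho)_- + \lambda(\nu,c,c',\rho)_+$, as required.
	\item \textbf{On $\xi$.} We get $\phihor(\xi)=-\xi$, then $-\xi - 2\lambda x$, and after $\phihor^{-1}$ the result is $\xi - 2\lambda y$.
	\item \textbf{On $\zeta$.} We get $\phihor(\zeta)=\zeta-\xi$, then $\zeta-\xi+\lambda x - 2\lambda x$, and after $\phihor^{-1}$ the result is $\zeta - \lambda y$.
	\item \textbf{On the remaining generators.} The elements $c,c',\dd_{c,c'}$, $(\nu,c,c',\rho)_+$ and $y$ are fixed by both sides. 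This is because $e_{\lambda x}$ fixes $x$, $L_{-1}$ and $L_0$ apart from $\xi$ and $\zeta$.
\end{itemize}

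This is entirely routine bookkeeping. The only delicate point is tracking signs in $\phihor^{\pm 1}$ on $L_{\pm 1}$ and on $\zeta$, so there is no genuine obstacle.
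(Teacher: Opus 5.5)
Your proposal is correct and is essentially the paper's proof. Like you, the paper evaluates $\phihor^{-1}\circ e_{\lambda x}\circ\phihor$ on all $k$-module generators of $L$, rather than on a Lie generating set, since $e_{\lambda y}$ is not yet known to be an endomorphism; it obtains exactly your formulas on $x$, $L_{-1}$, $\xi$, $\zeta$ and the identity elsewhere, and then deduces $e_{\lambda y}\in\End(L)$ from \cref{le:exp x auto,le:reflections auto}. As a small bonus, the fact that the assignments defining $e_{\lambda y}$ agree with a genuine linear map on a spanning set already shows that these assignments are consistent on $L_{0,0}$, so you do not even need \cref{le:exp well-defined} for $e_{\lambda y}$.
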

\begin{proof}
	Let $\lambda \in k$. Using the formulas in \cref{def:reflections,rem:reflections inv,def:exp xy}, we can compute that $\phihor^{-1} \circ e_{\lambda x} \circ \phihor$ acts as the identity on $ L_1+L_2 $ and on $\dd_{c,c'}$ for all $c \in J$, $c' \in J'$, and that it acts on the remaining generators of the $ k $-modules $ L $ as follows:
	\begin{align*}
		x &\mapsto x-\lambda \xi + \lambda^2 y, & (\nu, c, c', \rho)_- &\mapsto  (\nu, c, c', \rho)_- + \lambda (\nu, c, c', \rho)_+, \\
		\xi &\mapsto \xi - 2\lambda y, & \zeta &\mapsto \zeta - \lambda y.
	\end{align*}
	Comparing the results of our computation to the definition of $e_{\lambda y}$, we conclude that $\phihor^{-1} \circ e_{\lambda x} \circ \phihor = e_{\lambda y}$. By \cref{le:exp x auto,le:reflections auto}, it follows that $e_{\lambda y} \in \End(L)$.
\end{proof}

\begin{lemma}\label{le:exp y conj inv}
	For all $\lambda \in k$, we have $\phihor^{-1} \circ e_{\lambda y} \circ \phihor = e_{\lambda x}$.
\end{lemma}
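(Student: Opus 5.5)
The plan is to deduce this from \cref{le:exp y conj} together with the fact that $\phihor^2$ is the parity automorphism of the $5$-grading $(L_i)_{-2 \le i \le 2}$ (\cref{rem:reflection parity}), using \cref{le:exp parity conj}. First I would solve \cref{le:exp y conj} for $e_{\lambda x}$, which gives $e_{\lambda x} = \phihor \circ e_{\lambda y} \circ \phihor^{-1}$. Hence
\[ \phihor^{-1} \circ e_{\lambda y} \circ \phihor = \phihor^{-1} \circ \phihor^{-1} \circ e_{\lambda x} \circ \phihor \circ \phihor = \varphi^{-1} \circ e_{\lambda x} \circ \varphi, \]
where $\varphi \coloneq \phihor^2$.

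Next I would check directly from the formulas in \cref{def:reflections} that $\varphi$ acts as $(-1)^i$ on $L_i$. For instance, $\phihor^2(x) = \phihor(y) = x$, $\phihor^2((\nu,c,c',\rho)_-) = \phihor((\nu,c,c',\rho)_+) = -(\nu,c,c',\rho)_-$, and on $L_0$ we get $\phihor^2(\zeta) = \phihor(\zeta - \xi) = \zeta - \xi + \xi = \zeta$. Since $\lambda x \in L_{-2}$, we have $\varphi(\lambda x) = \lambda x$.

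The conjugation formula itself requires a mild justification, because $e_{\lambda x}$ is only known to be an endomorphism (\cref{le:exp x auto}) and not yet an $\alpha$-parametrization. However, the proof of \cref{le:exp parity conj} uses only the degree property \cref{item:param a} and the scaling property \cref{item:param c} of the components $\theta^{[i]}$, and both hold for $e_{\lambda x}$ by \cref{rem:param}. The same computation therefore yields $\varphi^{-1} \circ e_{\lambda x} \circ \varphi = e_{\varphi(\lambda x)} = e_{\lambda x}$.

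Alternatively, and as a sanity check, one can verify this directly. The map $\varphi$ multiplies $L_i$ by $(-1)^i$, and $e_{\lambda x}$ sends $L_i$ into $\sum_m L_{i-2m}$, which are homogeneous pieces of the same parity. So conjugation by $\varphi$ changes nothing. This step is the only point needing care, and it is immediate.
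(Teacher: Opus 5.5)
Your proposal is correct and is essentially the paper's proof: use \cref{le:exp y conj} to rewrite the conjugate as $\phihor^{-2} \circ e_{\lambda x} \circ \phihor^{2}$, note that $\phihor^2$ is the parity automorphism of the grading $(L_i)_{-2 \le i \le 2}$, and conclude with \cref{le:exp parity conj}. Your extra remark is valid. It observes that only the degree and scaling properties are needed, or more directly that $e_{\lambda x}$ only shifts the $\xi$-degree by even amounts and therefore commutes with $\phihor^2$.
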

\begin{proof}
	Let $\lambda \in k$. It follows from \cref{le:exp y conj} that $\phihor^{-1} \circ e_{\lambda y} \circ \phihor = \phihor^{-2} \circ e_{\lambda x} \circ \phihor^2$. Since $\phihor^2$ is the parity automorphism of the grading $(L_i)_{-2 \le i \le 2}$ by \cref{rem:reflection grading}, the assertion follows from \cref{le:exp parity conj}.
\end{proof}

\begin{lemma}\label{le:exp JJ' phihor}
	For all $a \in J$ and $a' \in J'$, we have $\phihor^{-1} \circ e_a \circ \phihor = e_a$ and $\phihor^{-1} \circ e_{a'} \circ \phihor = e_{a'}$.
\end{lemma}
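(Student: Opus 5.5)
The approach is to check the identity $\phihor^{-1} \circ e_a \circ \phihor = e_a$ directly on a spanning set of the $k$-module $L$, as in the proof of \cref{le:exp y conj}. Two observations make this manageable. First, $\phihor$ fixes $J \oplus J' \subseteq L_0$ pointwise and acts on $L_{\pm 1}$ by the identity on coordinates, up to the sign and the interchange of $\pm$ in \cref{def:reflections}. Second, the formulas in \cref{def:exp JJ'} for $e_a$ and $e_{a'}$ treat $(\nu,c,c',\rho)_+$ and $(\nu,c,c',\rho)_-$ by the same expression with the same subscript $\pm$.

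We cannot yet use that $e_a$ is a Lie algebra endomorphism, so \cref{rem:aut equality} is unavailable. Instead we compare both sides on every generator: $x$, $y$, $(\nu,c,c',\rho)_\pm$, $c$, $c'$, $\dd_{c,c'}$, $\xi$ and $\zeta$. We only carry out the case of $e_a$ with $a \in J$; the case of $e_{a'}$ is entirely analogous.

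\begin{itemize}
\item On $x$ and $y$: $\phihor$ interchanges them, and $e_a$ fixes both, so both sides agree.
\item On $(\nu,c,c',\rho)_-$: we have $\phihor$ sending it to $(\nu,c,c',\rho)_+$, then $e_a$ applies the $\pm$-uniform formula, and $\phihor^{-1}$ sends the result back to the $-$ version. Both sides agree; the same holds for the $+$ case.
\item On $c \in J$: all three maps fix $c$.
\item On $c'$: $e_a(c') = c' - \dd_{a,c'} + U_a(c')$, and $\phihor$ fixes $J$, $J'$ and all $\dd$'s, so the value is unchanged.
\item On $\dd_{c,c'}$: $e_a(\dd_{c,c'}) = \dd_{c,c'} - D_{c,c'}(a)$, which is again fixed by $\phihor$.
\item On $\xi$: $\phihor(\xi) = -\xi$, $e_a(-\xi) = -\xi$, and $\phihor^{-1}(-\xi) = \xi$.
\item On $\zeta$: $\phihor(\zeta) = \zeta - \xi$, then $e_a(\zeta - \xi) = \zeta - a - \xi$, and $\phihor^{-1}(\zeta - a - \xi) = (\zeta - \xi) - a + \xi = \zeta - a = e_a(\zeta)$, using $\phihor^{-1}(\zeta) = \zeta - \xi$ and $\phihor^{-1}(\xi) = -\xi$ from \cref{rem:reflections inv}.
\end{itemize}

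Since the two sides agree on a spanning set and are $k$-linear, they are equal. Well-definedness on $L_{0,0}$ is taken from \cref{le:reflections well-defined} and \cref{le:exp well-defined}, with the omitted cases handled analogously. The only step requiring care is the bookkeeping of signs on $L_{\pm1}$ and on $\zeta$; no identities of cubic norm pairs are needed.
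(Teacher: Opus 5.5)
Your proposal is correct and takes the same approach as the paper, which checks on $k$-module generators that $e_a$ and $e_{a'}$ commute with $\phihor$. The key point there, as in your argument, is that the formulas for $e_a$ and $e_{a'}$ on $L_1$ and $L_{-1}$ are identical. Your generator-by-generator computation spells that check out; the sign bookkeeping on $L_{\pm 1}$ and on $\zeta$ is right, and so is your caution not to invoke \cref{rem:aut equality} before $e_a$ is known to be an endomorphism.
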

\begin{proof}
	It suffices to check that $e_a \circ \phihor = \phihor \circ e_a$ and $e_{a'} \circ \phihor = \phihor \circ e_{a'}$, which is straightforward. The main argument is the observation that the formulas for the action of $e_a$ on $L_1$ and $L_{-1}$ are identical, and similarly for $e_{a'}$.
\end{proof}

\begin{lemma}\label{le:phihor factor}
	We have $\phihor = e_y \circ e_x \circ e_y = e_x \circ e_y \circ e_x$.
\end{lemma}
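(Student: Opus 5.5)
All three maps $\phihor$, $e_y\circ e_x\circ e_y$ and $e_x\circ e_y\circ e_x$ are Lie algebra endomorphisms. For $\phihor$ this is \cref{le:reflections auto}, and for the exponentials it is \cref{le:exp x auto,le:exp y conj}. So by \cref{rem:aut equality} it suffices to compare them on a generating set of $L$. I will use $\{x\}\cup L_1$ for both identities; alternatively one can use $\{y\}\cup L_{-1}$ where that is more convenient.

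For the first identity, $\phihor = e_y\circ e_x\circ e_y$, I compute the right-hand side on $x$ with $\lambda=1$ in \cref{def:exp xy}.
\begin{itemize}
\item First, $e_y(x) = x-\xi+y$.
\item Next, using $e_x(y)=y+\xi+x$ and $e_x(\xi)=\xi+2x$, we get $e_x(x-\xi+y) = x-\xi-2x+y+\xi+x = y$.
\item Finally, $e_y(y)=y$.
\end{itemize}
This matches $\phihor(x)=y$.

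On $z=(\nu,c,c',\rho)_+\in L_1$ the computation runs as follows.
\begin{itemize}
\item First, $e_y(z)=z$.
\item Next, $e_x(z) = z - z_-$, where $z_-:=(\nu,c,c',\rho)_-$.
\item Finally, $e_y(z) - e_y(z_-) = z - (z_- + z) = -z_-$.
\end{itemize}
This matches $\phihor(z) = -(\nu,c,c',\rho)_-$.

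The second identity, $\phihor = e_x\circ e_y\circ e_x$, is checked in the same way. On $x$:
\begin{itemize}
\item $e_x(x)=x$.
\item $e_y(x)=x-\xi+y$.
\item $e_x(x-\xi+y) = y$, by the computation above.
\end{itemize}
On $z\in L_1$:
\begin{itemize}
\item $e_x(z)=z-z_-$.
\item $e_y(z-z_-) = z - z_- - z = -z_-$.
\item $e_x(-z_-)=-z_-$.
\end{itemize}
Both values agree with $\phihor$.

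Alternatively, once the first identity is known, the second follows from \cref{le:exp y conj,le:exp y conj inv}. Write $\phihor^{-1}e_x\phihor=e_y$, so $e_x\phihor=\phihor e_y$, and $e_y\phihor=\phihor e_x$. Then
\[ e_x e_y e_x = e_x\,(\phihor e_x^{-1}e_y^{-1})\,e_x, \]
and we simplify using these two commutation rules. This route is slightly roundabout, though, and the direct check is shorter.

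The only subtle point is making sure the conventions are used consistently. Automorphisms act from the left, the sign in $e_{\lambda x}$ on $L_1$ is $-\lambda(\nu,c,c',\rho)_-$, and the sign in $e_{\lambda y}$ on $L_{-1}$ is $+\lambda(\nu,c,c',\rho)_+$. These signs must be tracked carefully, since they produce exactly the minus sign in $\phihor$ on $L_1$. No identities of cubic norm pairs are needed.
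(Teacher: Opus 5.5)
Your proof is correct. For the first identity it is exactly the paper's argument: evaluate $e_y\circ e_x\circ e_y$ on $x$ and on $(\nu,c,c',\rho)_+$, then invoke \cref{rem:aut equality}, using that all three maps lie in $\End(L)$.

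For the second identity the paper does not recompute. It conjugates the first identity by $\phihor$, using $\phihor^{-1}e_x\phihor=e_y$ (\cref{le:exp y conj}) and $\phihor^{-1}e_y\phihor=e_x$ (\cref{le:exp y conj inv}), to get $\phihor=\phihor^{-1}\phihor\phihor=e_x\circ e_y\circ e_x$. Your direct evaluation is equally valid and a bit more elementary, since it needs only the formulas in \cref{def:exp xy} and not \cref{le:exp y conj inv}, which rests on the parity argument of \cref{le:exp parity conj}. The paper's version instead saves the second computation.

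Your sketched alternative contains a slip. From $\phihor=e_ye_xe_y$ one gets $e_y=\phihor e_y^{-1}e_x^{-1}$, not $\phihor e_x^{-1}e_y^{-1}$, and with your ordering the expression does not simplify. With the correct order it closes in one line using only $e_x\phihor=\phihor e_y$:
\[ e_xe_ye_x=e_x\phihor e_y^{-1}=\phihor e_ye_y^{-1}=\phihor. \]
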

\begin{proof}
	As in \cref{le:exp y conj}, we can compute that
	\begin{align*}
		e_y \circ e_x \circ e_y(x) &= y, \qquad e_y \circ e_x \circ e_y\brackets[\big]{(\nu, c, c', \rho)_+} = -(\nu, c, c', \rho)_-
	\end{align*}
	for all $\nu, \rho \in k$, $c \in J$ and $c' \in J'$. Since all the maps $ \phihor $, $ e_x $ and $ e_y $ lie in $ \End(L) $, it follows from \cref{rem:aut equality} that $\phihor = e_y \circ e_x \circ e_y$. This implies that
	\[ \phihor = \phihor^{\phihor} = e_y^{\phihor} \circ e_x^{\phihor} \circ e_y^{\phihor} = e_x \circ e_y \circ e_x \]
	by \cref{le:exp y conj,le:exp y conj inv}.
\end{proof}

\begin{lemma}\label{le:exp L-1-1 conj}
	For all $ \lambda \in k $, we have $ \phisl^{-1} \circ e_{\lambda x} \circ \phisl = e_{(-\lambda, 0, 0, 0)_-} $. In particular, $e_{(\lambda, 0, 0, 0)_-} \in \End(L)$.
\end{lemma}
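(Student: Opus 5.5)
The plan mirrors the proof of \cref{le:exp y conj}. Both sides lie in $\End(L)$: the conjugate does by \cref{le:exp x auto} and \cref{le:reflections auto}, while $e_{(-\lambda,0,0,0)_-}$ has not yet been shown to be compatible with the bracket. So the criterion of \cref{rem:aut equality} cannot be applied directly. Instead, we will compute $\phisl^{-1}\circ e_{\lambda x}\circ\phisl$ on every generator of the $k$-module $L$ and compare the result with \cref{def:exp L-1} for $a=(-\lambda,0,0,0)_-$. The generators are $x$, $(\nu,c,c',\rho)_\pm$, $c'$, $\dd_{c,c'}$, $\xi$, $\zeta$, $c$ and $y$. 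Well-definedness of the composite on $L_{0,0}$ is automatic, since it is a composition of well-defined maps by \cref{le:reflections well-defined} and \cref{le:exp well-defined}. Once the identity is established, the ``in particular'' follows by replacing $\lambda$ with $-\lambda$.

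For each generator $g$, we first compute $\phisl(g)$ from \cref{def:reflections}. Next we apply $e_{\lambda x}$ from \cref{def:exp xy}, which is nontrivial only on $\xi$, $\zeta$, $L_1$ and $y$. Finally we apply $\phisl^{-1}$ using the formulas in \cref{rem:reflections inv}. Here is a sample. We have $\phisl(x)=(1,0,0,0)_-$, which is fixed by $e_{\lambda x}$, so $g=x$ returns $x$, matching $e_a(x)=x$. Next, $\phisl(\xi)=\zeta\mapsto\zeta+\lambda x$, and $\phisl^{-1}(\zeta+\lambda x)=\xi-\lambda(1,0,0,0)_-$. This matches $e_a(\xi)=\xi+(-\lambda,0,0,0)_-$. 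The other generators are handled the same way. On $(\nu,c,c',\rho)_-$, the only piece moved by $e_{\lambda x}$ is $-(\rho,0,0,0)_+\mapsto-(\rho,0,0,0)_++\lambda(\rho,0,0,0)_-$. Pulling this back should produce the $x$-coefficient $-(-\lambda)\rho=\lambda\rho$. On $(\nu,c,c',\rho)_+$, the moved pieces are $-\rho y$, the term $-(0,0,0,\nu)_-$, and $(0,0,c',0)_+$. Their images under $e_{\lambda x}$ and then $\phisl^{-1}$ should assemble into the long formula in \cref{def:exp L-1} with $b=b'=0$, $\mu=0$ and $\lambda$ replaced by $-\lambda$. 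The expected nonzero terms are $\lambda c'$, $-\lambda\rho\,\zeta$ and $(\rho\lambda^2,0,0,0)_-$.

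The main obstacle is bookkeeping in two places. The first is the $L_{0,0}$-components, that is, how $\xi$ and $\zeta$ get interchanged by $\phisl$. The second is the signs in $\phisl^{\pm1}$ on $L_{\pm1}$, in particular on the $J'$-coordinate and on the $\dd_{c,c'}$ generators. For $\dd_{c,c'}$, we have $\phisl(\dd_{c,c'})=\dd_{c,c'}+T(c,c')(\xi-\zeta)$. Its image under $e_{\lambda x}$ picks up $T(c,c')\lambda x$, which pulls back to $-\lambda T(c,c')(1,0,0,0)_-$, as \cref{def:exp L-1} requires. I expect no cubic norm pair identities to be needed: the component $\lambda$ only interacts through scalars and through $c'\mapsto\lambda c'$-type terms, so every verification is a direct sign check.
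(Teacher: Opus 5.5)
Your proposal is correct and follows the paper's proof. You compute $\phisl^{-1}\circ e_{\lambda x}\circ\phisl$ on the $k$-module generators of $L$ rather than on a Lie generating set, precisely because $e_{(-\lambda,0,0,0)_-}$ is not yet known to be a Lie endomorphism; you compare the result with \cref{def:exp L-1}; and you deduce the endomorphism property from \cref{le:exp x auto} and \cref{le:reflections auto}. Your sample values, including $(\nu,c,c',\rho)_+\mapsto(\nu,c,c',\rho)_++\lambda c'-\lambda\rho\,\zeta+(\rho\lambda^2,0,0,0)_-$, are all right.

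There are two small slips. Your opening sentence should say that only the conjugate is known to lie in $\End(L)$. Also, $-(0,0,0,\nu)_-\in L_{-1}$ is fixed by $e_{\lambda x}$, so it is not one of the moved pieces; this does not affect your final expression.
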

\begin{proof}
	Let $\lambda \in k$. Similarly as in the proof of \cref{le:exp y conj}, we can compute that $\phisl^{-1} \circ e_{\lambda x} \circ \phisl$ acts on the generators of the $ k $-module $ L $ as follows:
	\begin{align*}
		y &\mapsto y+(\lambda,0,0,0)_-, \qquad x \mapsto x, & c'  &\mapsto c', \\
		(\nu, c, c', \rho)_- &\mapsto (\nu, c, c', \rho)_- + \lambda \rho x, & \xi &\mapsto \xi + (-\lambda, 0, 0, 0)_-, \\
		\dd_{c,c'} &\mapsto \dd_{c,c'} + \brackets[\big]{-T(c,c')\lambda, 0, 0, 0}_-, & \zeta &\mapsto \zeta + (-2\lambda, 0, 0, 0)_-, \\
		(\nu, c, c', \rho)_+ &\mapsto (\nu, c, c', \rho)_+ + \lambda c' - \lambda \rho \zeta + (\lambda^2 \rho, 0, 0, 0)_-, & c & \mapsto c + (0,\lambda c, 0, 0)_-
	\end{align*}
	for all $\nu, \rho \in k$, $c \in J$, $c' \in J'$. We conclude that $ \phisl^{-1} \circ e_{\lambda x} \circ \phisl = e_{(-\lambda, 0, 0, 0)_-} $. By \cref{le:exp x auto,le:reflections auto}, it follows that $e_{(\lambda, 0, 0, 0)_-} \in \End(L)$.
\end{proof}

\begin{lemma}\label{le:exp L-1-2 conj}
	For all $ \lambda \in k $, we have $ \phisl^{-1} \circ e_{\lambda y} \circ \phisl = e_{(0, 0, 0, -\lambda)_+} $. In particular, $e_{(0, 0, 0, \lambda)_+} \in \End(L)$.
\end{lemma}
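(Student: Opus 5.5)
The plan follows the proof of \cref{le:exp L-1-1 conj} exactly. Fix $\lambda \in k$. Using the formulas for $\phisl$ in \cref{def:reflections}, for $\phisl^{-1}$ in \cref{rem:reflections inv}, and for $e_{\lambda y}$ in \cref{def:exp xy}, I will compute the action of $\phisl^{-1} \circ e_{\lambda y} \circ \phisl$ on the standard generators of the $k$-module $L$. These generators are $x$, $(\nu,c,c',\rho)_\pm$, $c'$, $\dd_{c,c'}$, $\xi$, $\zeta$, $c$ and $y$. I will then compare each result with the defining formulas of $e_{(0,0,0,-\lambda)_+}$ in \cref{def:exp L1}, where $b = 0$, $b' = 0$ and the first coordinate is $0$, with $\mu = -\lambda$.

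With $b = b' = 0$, the target formulas simplify considerably. We get
\begin{align*}
 x &\mapsto x + (0,0,0,-\lambda)_-, \\
 (\nu,c,c',\rho)_- &\mapsto (\nu,c,c',\rho)_- + \lambda\nu\zeta - \lambda c + (0,0,0,\lambda^2\nu)_+, \\
 c' &\mapsto c' + (0,0,-\lambda c',0)_+, \\
 \dd_{c,c'} &\mapsto \dd_{c,c'} + (0,0,0,\lambda T(c,c'))_+, \\
 \xi &\mapsto \xi - (0,0,0,-\lambda)_+, \\
 \zeta &\mapsto \zeta + (0,0,0,2\lambda)_+, \\
 (\nu,c,c',\rho)_+ &\mapsto (\nu,c,c',\rho)_+ - \lambda\nu y,
\end{align*}
while $c$ and $y$ are fixed. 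I will check these one generator at a time. The reflection $\phisl$ sends each block to the block reflected along $(-1,1)$, where $e_{\lambda y}$ acts by a simple formula, and $\phisl^{-1}$ then sends the result back.

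Two points need care. The first is the $\dd_{c,c'}$ component. Here $\phisl(\dd_{c,c'}) = \dd_{c,c'} + T(c,c')(\xi-\zeta)$, and $e_{\lambda y}$ fixes $\dd_{c,c'}$ while sending $\xi - \zeta \mapsto \xi - \zeta - \lambda y$. Applying $\phisl^{-1}$ to $-\lambda T(c,c') y$ should give $\lambda T(c,c')(0,0,0,1)_+$, as required. The second is the $(\nu,c,c',\rho)_-$ generator, which involves the most bookkeeping of signs. I expect this sign tracking to be the main obstacle, though it is only bookkeeping. Well-definedness of both sides on $L_{0,0}$ is already covered by \cref{le:reflections well-defined,le:exp well-defined}.

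Alternatively, the check can be shortened. Once $\phisl^{-1}\circ e_{\lambda y}\circ\phisl \in \End(L)$ is known, it suffices by \cref{rem:aut equality} to compare the two maps on $\{x\}\cup L_1$. The left side is in $\End(L)$ by \cref{le:exp y conj,le:reflections auto}. However, the right side is not yet known to be in $\End(L)$, so a full comparison on all generators is the safer route.

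Having established the identity, I will deduce the final claim as follows. By \cref{le:exp y conj}, $e_{\lambda y} \in \End(L)$, and $\phisl$ is an automorphism by \cref{le:reflections auto}. Hence the conjugate $e_{(0,0,0,-\lambda)_+}$ lies in $\End(L)$ for every $\lambda \in k$. Replacing $\lambda$ by $-\lambda$ gives $e_{(0,0,0,\lambda)_+} \in \End(L)$.
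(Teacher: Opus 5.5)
Your proposal is correct and matches the paper's proof. The paper likewise computes $\phisl^{-1}\circ e_{\lambda y}\circ\phisl$ on all the standard generators, obtains exactly the formulas you list for $e_{(0,0,0,-\lambda)_+}$, and then deduces $e_{(0,0,0,\lambda)_+}\in\End(L)$ from \cref{le:exp y conj,le:reflections auto}. You are also right that \cref{rem:aut equality} cannot shorten the check, because $e_{(0,0,0,-\lambda)_+}$ is not yet known to be an endomorphism.
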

\begin{proof}
	Let $\lambda \in k$. Then $\phisl^{-1} \circ e_{\lambda y} \circ \phisl = e_{(0, 0, 0, -\lambda)_+}$ acts on $ L $ as follows:
	\begin{align*}
		x & \mapsto x + (0,0,0,-\lambda)_-, & (\nu, c, c', \rho)_- & \mapsto (\nu, c, c', \rho)_- + \lambda \nu \zeta + (0,0,0,\lambda^2 \nu)_+ - \lambda c, \\
		c' &\mapsto c' + (0,0,-\lambda c',0)_+, & (\nu, c, c', \rho)_+ &\mapsto (\nu, c, c', \rho)_+ - \lambda \nu y, \\
		\xi &\mapsto \xi + (0,0,0,\lambda)_+, & \dd_{c,c'} &\mapsto \dd_{c,c'} + \brackets[\big]{0,0,0, T(c,c') \lambda}_+, \\
		\zeta &\mapsto \zeta + (0,0,0,2\lambda)_+, & c &\mapsto c, \qquad y \mapsto y 
	\end{align*}
	for all $\nu, \rho \in k$, $c \in J$, $c' \in J'$. We conclude that $ \phisl^{-1} \circ e_{\lambda y} \circ \phisl = e_{(0, 0, 0, -\lambda)_+} $. By \cref{le:exp y conj,le:reflections auto}, it follows that $e_{(0, 0, 0, \lambda)_+} \in \End(L)$.
\end{proof}

\begin{lemma}\label{le:exp L-1 conj}
	\begin{enumerate}
		\item Let $\lambda, \mu \in k$, $b \in J$, $b' \in J'$ such that at most one of $\lambda, b, b', \mu$ is non-zero. Then $\phihor^{-1} \circ e_{(\lambda, b, b', \mu)_+} \circ \phihor = e_{(\lambda, b, b', \mu)_-}$.
		
		\item We have $e_{(\lambda,0,0,0)_\varepsilon}, e_{(0,b,0,0)_\varepsilon}, e_{(0,0,0,\lambda)_\varepsilon} \in \End(L)$ for all $\lambda \in k$, $b \in J$ and $\varepsilon \in \{\mathord{\pm}\}$.
	\end{enumerate}
\end{lemma}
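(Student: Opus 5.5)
For part (i), I will argue exactly as in \cref{le:exp y conj}: compute the composite $\phihor^{-1} \circ e_{(\lambda, b, b', \mu)_+} \circ \phihor$ on the standard generators of the $k$-module $L$ and compare the result with \cref{def:exp L-1}. I will use the formulas for $\phihor$ in \cref{def:reflections} and for $\phihor^{-1}$ in \cref{rem:reflections inv}. The map $\phihor$ fixes $J$, $J'$ and every $\dd_{c,c'}$. It sends $\xi \mapsto -\xi$, $\zeta \mapsto \zeta - \xi$, $x \mapsto y$, $y \mapsto x$, $(\cdot)_- \mapsto (\cdot)_+$ and $(\cdot)_+ \mapsto -(\cdot)_-$.

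The comparison is then a term-by-term substitution, which I will carry out in the following order.
\begin{enumerate}
	\item Evaluate on $y$. We have $\phihor(y) = x$, and $e_a(x)$ is given by \cref{def:exp L1}. Applying $\phihor^{-1}$ turns $(\lambda,b,b',\mu)_-$ into $-(\lambda,b,b',\mu)_+$, fixes $b^\sharp$ and $(b')^\sharp$, and turns $(-N(b),0,0,N(b'))_+$ into $(-N(b),0,0,N(b'))_-$. This is exactly $e_{(\lambda,b,b',\mu)_-}(y)$.
	\item Evaluate on $(\nu,c,c',\rho)_+$. Its image under $\phihor$ is $-(\nu,c,c',\rho)_-$, and $e_a$ of that is given by the long formula in \cref{def:exp L1}. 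Under $\phihor^{-1}$, the $\xi$- and $\zeta$-terms transform as $\xi-\zeta \mapsto -\zeta$ and $\zeta \mapsto \zeta-\xi$. Hence $-(\rho\lambda - T(b,c'))(\xi-\zeta) - (T(c,b')-\nu\mu)\zeta$ becomes $(\lambda\rho - T(b,c'))\zeta + (T(c,b')-\mu\nu)(\xi-\zeta)$, matching \cref{def:exp L-1}. The $L_1$-part goes to $L_{-1}$ with a sign flip, and the $y$-term goes to an $x$-term. Both agree with \cref{def:exp L-1}.
	\item Evaluate on the remaining generators ($x$, $L_{-1}$, $J$, $J'$, $\dd_{c,c'}$, $\xi$, $\zeta$) in the same way. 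The only point needing care is $\xi$ and $\zeta$. For example, $\phihor^{-1}(e_a(-\xi)) = \phihor^{-1}(-\xi + a) = \xi + (\lambda,b,b',\mu)_-$, which agrees with \cref{def:exp L-1}. Similarly, $\zeta \mapsto \zeta-\xi \mapsto \zeta - \xi + (\lambda,0,-b',-2\mu)_+ + a$, which pulls back to $\zeta + (2\lambda,b,0,-\mu)_-$.
\end{enumerate}

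Checking all generators avoids needing to know a priori that $e_{(\lambda,b,b',\mu)_-}$ is a Lie endomorphism; that $e_{(\lambda,b,b',\mu)_-}$ is well-defined on $L_{0,0}$ is deduced rather than checked. Since $\phihor$ is a well-defined automorphism by \cref{le:reflections well-defined,le:reflections auto}, the composite is a well-defined map, and once it agrees with the assigned values on the spanning set, those assignments are consistent.

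For part (ii), I will combine the results already established. By \cref{le:exp L-1-1 conj,le:exp L10 auto,le:exp L-1-2 conj}, the maps $e_{(\lambda,0,0,0)_-}$, $e_{(0,b,0,0)_+}$ and $e_{(0,0,0,\lambda)_+}$ lie in $\End(L)$.

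For the remaining signs, I will use $e_{(\lambda,0,0,0)_+} = \phihor \circ e_{(\lambda,0,0,0)_-} \circ \phihor^{-1}$. This follows from part (i) applied together with conjugation-invertibility, and it lies in $\End(L)$ because $\phihor$ is an automorphism. Part (i) also shows directly that $e_{(0,b,0,0)_-}$ and $e_{(0,0,0,\lambda)_-}$ are conjugates of endomorphisms by $\phihor$, so they lie in $\End(L)$ as well.

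The main obstacle is purely bookkeeping: tracking signs in step 2, where the middle $L_0$-component and the $U$-operator terms must match exactly.
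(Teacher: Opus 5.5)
Your proposal is correct and follows the paper's proof. Part (i) evaluates $\phihor^{-1} \circ e_{(\lambda,b,b',\mu)_+} \circ \phihor$ on the standard $k$-module generators of $L$ and compares the result with \cref{def:exp L-1}, and part (ii) combines part (i) with \cref{le:exp L10 auto,le:exp L-1-1 conj,le:exp L-1-2 conj}. Your well-definedness argument is sound, but it also needs $e_{(\lambda,b,b',\mu)_+}$ itself to be well-defined, which is \cref{le:exp well-defined}.
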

\begin{proof}
	Let $\lambda, \mu \in k$, $b \in J$, $b' \in J'$ such that at most one of $\lambda, b, b', \mu$ is non-zero. Then a similar computation as before shows that $\phihor^{-1} \circ e_{(\lambda, b, b', \mu)_+} \circ \phihor$ agrees with $ e_{(\lambda, b, b', \mu)_-} $ on the usual set of generators of the $ k $-module $ L $.
	The final assertion follows from \cref{le:exp x auto,le:exp L10 auto,le:exp L-1-1 conj,le:exp L-1-2 conj}.
\end{proof}

\begin{lemma}\label{le:exp L-1 conj inv}
	Let $\lambda, \mu \in k$, $b \in J$, $b' \in J'$ such that at most one of $\lambda, b, b', \mu$ is non-zero. Then $\phihor^{-1} \circ e_{(\lambda, b, b', \mu)_-} \circ \phihor = e_{-(\lambda, b, b', \mu)_+}$.
\end{lemma}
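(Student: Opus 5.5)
The cleanest route mirrors the proof of \cref{le:exp y conj inv}: reduce to \cref{le:exp L-1 conj} together with the parity lemma \cref{le:exp parity conj}. Put $a \coloneq (\lambda, b, b', \mu)_+$. By \cref{le:exp L-1 conj}, $e_{(\lambda,b,b',\mu)_-} = \phihor^{-1} \circ e_a \circ \phihor$. Hence
\[ \phihor^{-1} \circ e_{(\lambda, b, b', \mu)_-} \circ \phihor = \phihor^{-2} \circ e_a \circ \phihor^{2}. \]
By \cref{rem:reflection grading} (see also \cref{rem:reflection parity}), $\phihor^2$ is the parity automorphism of the $5$-grading $(L_i)_{-2 \le i \le 2}$.

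Next apply \cref{le:exp parity conj} with $\varphi \coloneq \phihor^2$ and the $5$-grading of $G_2$ whose pieces are the roots with first coordinate $i$. The hypotheses are met: this grading is induced by a linear functional, so sums of roots land in the right degree and $\Phi_{-i} = -\Phi_i$. This gives
\[ \phihor^{-2} \circ e_a \circ \phihor^2 = e_{\phihor^2(a)} = e_{-a}, \]
since $a \in L_1$ is odd. Here $-a = (-\lambda,-b,-b',-\mu)_+$ still has at most one nonzero entry, so $e_{-a}$ is one of the maps of \cref{def:exp L1}.

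The one point needing care is that \cref{le:exp parity conj} is stated for $\alpha$-parametrizations, while for general $b'$ we do not yet know that $\exp_{(1,1)}$ is one. We only need the formal part of its proof, which uses only properties \cref{item:param a} and \cref{item:param c} of \cref{def:param}. Both hold for all the maps $a \mapsto e_a$ of \cref{def:exp L1} by construction (\cref{rem:param}). Accordingly, the intended argument runs the computation of that proof directly on the homogeneous pieces $\theta^{[i]}_a$: the parity twist multiplies $\theta^{[i]}_a$ by $(-1)^i$, which matches $\theta^{[i]}_{-a}$. The same reasoning is needed for the input from \cref{le:exp L-1 conj}, which was likewise a direct computation on generators, so no additivity or invertibility is required.

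I expect no real obstacle. The only subtlety is the well-definedness of $e_a$ on $L_{0,0}$, already handled by \cref{le:exp well-defined}. Beyond that, the argument just confirms that only the degree structure of the $\theta^{[i]}_a$ is used.
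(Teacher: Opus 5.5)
Your proof is correct and is essentially the paper's argument: use \cref{le:exp L-1 conj} to rewrite the left-hand side as the conjugate of $e_{(\lambda,b,b',\mu)_+}$ by the parity automorphism $\phihor^2$, and then apply \cref{le:exp parity conj}. Your extra observation is a legitimate refinement the paper leaves implicit. At this stage the maps $e_{(0,0,b',0)_\pm}$ are not yet known to form a parametrization, or even to be endomorphisms. The proof of \cref{le:exp parity conj} nonetheless goes through, because it uses only properties (a) and (c) of \cref{def:param}.
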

\begin{proof}
	We have $\phihor^{-1} \circ e_{(\lambda, b, b', \mu)_-} \circ \phihor = \phihor^{-2} \circ e_{(\lambda, b, b', \mu)_+} \circ \phihor^2$ by \cref{le:exp L-1 conj}. Thus the assertion follows from \cref{rem:reflection parity,le:exp parity conj}.
\end{proof}

The following \cref{le:exp J conj,le:exp J' conj} follow from the same kind of computations as before.

\begin{lemma}\label{le:exp J conj}
	For all $a \in J$, we have $\phisl^{-1} \circ e_{(0,a,0,0)_+} \circ \phisl = e_{-a}$. In particular, $e_a \in \End(L)$.
\end{lemma}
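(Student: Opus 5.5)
The identity $\phisl^{-1} \circ e_{(0,a,0,0)_+} \circ \phisl = e_{-a}$ will be checked by direct evaluation, exactly as in \cref{le:exp y conj,le:exp L-1-1 conj}. By \cref{le:exp L10 auto} and \cref{le:reflections auto}, the left-hand side is a Lie algebra endomorphism. Hence it suffices, by \cref{rem:aut equality}, to compare both sides on the generating set $\{y\} \cup L_{-1}$; the map $e_{-a}$ need not be known to be an endomorphism beforehand. To be safe against the well-definedness issue on $L_{0,0}$ (\cref{rem:refl wd}), I will instead compare the two sides on all standard $k$-module generators $x$, $(\nu,c,c',\rho)_\pm$, $c'$, $\dd_{c,c'}$, $\xi$, $\zeta$, $c$ and $y$. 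This gives equality as $k$-linear maps outright, with no prior knowledge of $e_{-a}$. The final assertion $e_a \in \End(L)$ then follows because $e_a = \phisl^{-1}\circ e_{(0,-a,0,0)_+}\circ\phisl$ is a composition of endomorphisms, using \cref{le:exp L10 auto} and \cref{le:reflections auto}.

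For each generator $g$, I first apply $\phisl$ using \cref{def:reflections}. Then I apply $e_{(0,a,0,0)_+}$ using \cref{def:exp L1} with $\lambda=\mu=0$, $b' = 0$, $b=a$. In this case the only nonlinear terms are $a^\sharp$, $N(a)$, $U_a$, $T(a^\sharp,\cdot)$ and $D_{\cdot,\cdot}(a)$. Finally I apply $\phisl^{-1}$ using \cref{rem:reflections inv}, and compare with \cref{def:exp JJ'} for $e_{-a}$, where $(-a)^\sharp = a^\sharp$ and $N(-a) = -N(a)$.

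As an example, $\phisl(y) = (0,0,0,1)_+$. Since $e_{(0,a,0,0)_+}$ on $L_1$ only adds a multiple of $y$, and $T(a,0)=0$, this element is fixed. Applying $\phisl^{-1}$ returns $y$, in agreement with $e_{-a}(y) = y$. Similarly, for $c \in J$ one has $\phisl(c) = -(0,c,0,0)_+$, which is fixed by $e_{(0,a,0,0)_+}$ because $T(a,0)=0$, so $c \mapsto c$ as required.

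The main bookkeeping lies in the generators $x$, $(\nu,c,c',\rho)_-$, $c'$, $\dd_{c,c'}$ and $\zeta$. Their images under $\phisl$ contain components in $L_{-2}$ and $L_{-1}$, on which $e_{(0,a,0,0)_+}$ produces its longest formulas, with terms in up to four root spaces. Pulling these back by $\phisl^{-1}$ must collapse onto the formula for $e_{-a}$ on $L_{\pm1}$, namely
\[ (\nu, c+\nu(-a), c' - a\times c + \nu a^\sharp, \rho - T(a,c') + T(c,a^\sharp) - \nu N(a))_\pm . \]
In this comparison the $L_{0,0}$ terms produced by $\phisl^{-1}(\dd_{a,c'})$, namely $\dd_{a,c'} + T(a,c')(\xi-\zeta)$, must cancel against the $T(a,c')(\xi-\zeta)$ terms that $e_{(0,a,0,0)_+}$ creates. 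I expect this to work with only \cref{def:U-operators} and the definition of $D$, and possibly \cref{le:triple}\cref{le:triple:norm r} for the $\dd_{c,c'}$ case.

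The step I expect to be hardest is the image of $(\nu,c,c',\rho)_-$. There, $\phisl$ yields $-\nu x + (0,c,0,0)_- + c' - (\rho,0,0,0)_+$, and the $x$-component generates $a^\sharp$, $N(a)$ and $-\nu\,(0,a,0,0)_-$ terms that must reassemble into the $\nu a^\sharp$ and $\nu N(a)$ entries. I would handle this case by splitting along the four summands separately and tracking signs carefully.
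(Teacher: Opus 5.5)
Your proposal is correct and is essentially the paper's proof. The paper likewise evaluates $\phisl^{-1}\circ e_{(0,a,0,0)_+}\circ\phisl$ on all $k$-module generators of $L$ and compares the result with \cref{def:exp JJ'}. Every case is in fact a short direct computation: for instance, $x$ and $\zeta$ are sent to $x$ and $\zeta+a$ immediately, and the $\dd_{c,c'}$ case needs nothing from \cref{le:triple}.

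The only flaw is your aside that comparing on $\{y\}\cup L_{-1}$ would suffice without knowing $e_{-a}\in\End(L)$. \cref{rem:aut equality} requires both maps to be endomorphisms, so that shortcut is not valid. Since you ultimately check all $k$-module generators, this does not affect the argument.
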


\begin{lemma}\label{le:exp J' conj}
	For all $ a' \in J' $, we have $\phisl^{-1} \circ e_{(0,0,a',0)_-} \circ \phisl = e_{-a'}$.
\end{lemma}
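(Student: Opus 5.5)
We prove the identity $\phisl^{-1} \circ e_{(0,0,a',0)_-} \circ \phisl = e_{-a'}$ in the same way as the neighbouring conjugation lemmas. Both sides lie in $\End(L)$. Indeed, $e_{(0,0,a',0)_-} = \phihor^{-1} \circ e_{(0,0,a',0)_+} \circ \phihor$ by \cref{le:exp L-1 conj}. The root $(1,1)$ is carried by $\phisl$ to $(0,1)$, and $(0,1)$ is handled by \cref{le:exp J conj}. The cleanest route therefore avoids checking both sides on a full $k$-module basis. By \cref{rem:aut equality}, it suffices to compare the two endomorphisms on the generating set $\{y\} \cup L_{-1}$ of the Lie algebra $L$, provided we first know that both sides are Lie algebra endomorphisms.

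To secure this, we use that $e_{(0,0,a',0)_-} = \psibs$-type conjugates, and similarly for the remaining maps, are compatible with the bracket. If that is not yet available at this point, we fall back on a direct comparison on all $k$-module generators $x$, $(\nu,c,c',\rho)_\pm$, $c'$, $\dd_{c,c'}$, $\xi$, $\zeta$, $c$, $y$. This comparison needs no prior knowledge of endomorphism properties, and it also yields $e_{a'} \in \End(L)$ as a corollary once compatibility of the left side is known.

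Concretely, for each generator $g$ we compute $\phisl(g)$ using \cref{def:reflections}. We then apply $e_{(0,0,a',0)_-}$ via \cref{def:exp L-1} with $\lambda = \mu = 0$ and $b = 0$, so that only the $b'=a'$ terms survive, and $N$, $\sharp$ enter only through $(a')^\sharp$ and $N(a')$. Finally we apply $\phisl^{-1}$ from \cref{rem:reflections inv} and compare the result with \cref{def:exp JJ'} for $e_{-a'}$. The quadratic and cubic terms match because $(-a')^\sharp = (a')^\sharp$ and $N(-a') = -N(a')$.

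For instance, $x \mapsto (1,0,0,0)_-$ is fixed by $e_{(0,0,a',0)_-}$, and both sides give $x$. For $(\nu,c,c',\rho)_+$, the image under $\phisl$ has components in $L_{-1,1}$, $J$, $L_{1,1}$ and $L_{2,1}$. Applying the exponential to these produces the terms $-T(c,a')$, $a'\times c'$, $\rho(a')^\sharp$ and $\rho N(a')$, and after applying $\phisl^{-1}$ these should reproduce the formula for $e_{-a'}$ on $L_{\pm1}$.

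We expect the main obstacle to be the $L_{0,0}$-component. Here $\phisl(\dd_{c,c'}) = \dd_{c,c'} + T(c,c')(\xi-\zeta)$, and $e_{(0,0,a',0)_-}$ sends $\xi$ and $\zeta$ to $\xi + (0,0,a',0)_-$ and $\zeta$, respectively. Collecting terms requires $D_{c',c}(a') - T(c,c')a' + T(c,c')a' = D_{c',c}(a')$, which should match $e_{-a'}(\dd_{c,c'}) = \dd_{c,c'} - D_{c',c}(a')$ up to the sign coming from $\phisl^{-1}$ on $L_{-1,0}$. Well-definedness on $L_{0,0}$ follows as in \cref{le:exp well-defined}. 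All remaining cases are sign bookkeeping of the same kind as in \cref{le:exp L-1-1 conj,le:exp L-1-2 conj}.
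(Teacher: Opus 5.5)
Your fallback is exactly the paper's argument: evaluate both sides on all $k$-module generators $x$, $(\nu,c,c',\rho)_\pm$, $c'$, $\dd_{c,c'}$, $\xi$, $\zeta$, $c$, $y$ and compare with the formulas for $e_{-a'}$ (the paper's ``same kind of computations as before''). Your sample checks are correct, including the $L_{0,0}$ bookkeeping that yields $\dd_{c,c'} - D_{c',c}(a')$.

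Discard the opening ``cleanest route'', though. First, $\phisl$ fixes $L_{1,1}$; it swaps the roots $(1,0)$ and $(0,1)$, and as a long-root reflection it can never move a $J'$-type root to a $J$-type one. Second, $e_{(0,0,a',0)_\pm}$ is not yet known to lie in $\End(L)$ at this stage. The paper only deduces this afterwards, from this very lemma together with the $\psibs$-conjugation of \cref{le:exp J' psi conj}. So comparing the two sides only on Lie-algebra generators would be circular here.
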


\begin{lemma}\label{le:exp J' psi conj}
	Using the notation from \cref{rem:exp CNP notation}, we have
	\[ (\psibs^{(J',J)})^{-1} \circ e_{(0,a,0,0)_+}^{(J,J')} \circ \psibs^{(J',J)} = e_{-a}^{(J', J)} \]
	for all $a \in J$. In particular, $e_{a'}^{(J,J')} \in \End(L(J,J'))$ for all $a' \in J'$.
\end{lemma}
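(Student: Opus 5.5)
The plan is the same as for \cref{le:exp J conj,le:exp J' conj}: compare both sides on the generators of $L(J',J)$. We may then use \cref{rem:aut equality} once we know both sides are Lie algebra endomorphisms.

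Three maps are involved. The map $\psibs^{(J',J)} \colon L(J',J) \to L(J,J')$ is an isomorphism by \cref{le:reflections auto}. The map $e^{(J,J')}_{(0,a,0,0)_+}$ lies in $\End(L(J,J'))$ by \cref{le:exp L10 auto}. Its conjugate is therefore an endomorphism of $L(J',J)$. The candidate right-hand side $e^{(J',J)}_{-a}$ is the map of \cref{def:exp JJ'} for the cubic norm pair $(J',J)$ with $a \in J$, and $J$ now plays the role of the ``primed'' module. It is not yet known to be an endomorphism. So I will not invoke \cref{rem:aut equality}, and will instead verify equality on the full standard generating set of the $k$-module $L(J',J)$, namely $x$, $(\nu,c',c,\rho)_\pm$, $c$, $\dd_{c',c}$, $\xi$, $\zeta$, $c'$, $y$ with $c \in J$, $c' \in J'$.

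For each generator $g$ I will compute $\psibs^{(J',J)}(g)$ using the formulas of \cref{def:psibs} with the roles of $J$ and $J'$ swapped. Next I apply $e^{(J,J')}_{(0,a,0,0)_+}$ from \cref{def:exp L1}, with only the $b=a$ entry nonzero. Finally I apply $(\psibs^{(J',J)})^{-1}$. This inverse can be read off by inverting the explicit formulas, or, since $\psibs^2$ is the parity automorphism of the $7$-grading by \cref{rem:reflection parity}, written as $\psibs$ composed with a sign change. The result should match the formulas for $e_{b'}$ in \cref{def:exp JJ'} with $b' = -a$, interpreted in $L(J',J)$. Typical ingredients are identities such as $U_a(c')$ versus $T(a,c')a - a^\sharp \times c'$, the definition of $D$, and $T(a,a^\sharp)=3N(a)$.

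On $L_{0,0}$ I must keep track of the primed and unprimed $\xi$, $\zeta$. The map $\psibs$ sends $\dd_{c',c}$ to $-\dd_{c,c'}+T(c,c')(\zeta-\xi)$. The correction terms in $\xi$ and $\zeta$ must cancel against $e(\xi)=\xi-(0,a,0,0)_+$ and $e(\zeta)=\zeta$. Well-definedness on $L_{0,0}$ is automatic on the left-hand side, since it is a composite of well-defined maps.

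Once the identity holds, $e^{(J',J)}_{-a}$ is a composite of endomorphisms, so it lies in $\End(L(J',J))$. Replacing $a$ by $-a$ and applying this to the cubic norm pair $(J',J)$ in place of $(J,J')$ gives $e^{(J,J')}_{a'} \in \End(L(J,J'))$ for all $a' \in J'$. The main obstacle is the bookkeeping of the generator $(\nu,c',c,\rho)_\pm$, since $\psibs$ maps it to a sum across four root spaces and $e_{(0,a,0,0)_+}$ produces terms in up to four degrees. Here I expect to need \cref{le:CNP identities}\cref{le:CNP identities:8} and \cref{le:triple}, and I would start by tracking each $G_2$-root component separately.
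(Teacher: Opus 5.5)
Your proposal is correct and follows the paper's route. The paper likewise compares $(\psibs^{(J',J)})^{-1}\circ e^{(J,J')}_{(0,a,0,0)_+}\circ\psibs^{(J',J)}$ with $e^{(J',J)}_{-a}$ directly on the $k$-module generators of $L(J',J)$, then applies the resulting endomorphism statement to the swapped pair $(J',J)$ via \cref{le:exp L10 auto,le:reflections auto}. The generator check in fact needs only the definitions of $U_a$ and $D$ and the symmetry $T'(a',a)=T(a,a')$; you will not need \cref{le:CNP identities}\cref{le:CNP identities:8} or \cref{le:triple}.
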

\begin{proof}
	The first assertion follows from the same kind of computation as before. By \cref{le:exp L10 auto,le:reflections auto}, it follows that for all cubic norm pairs $(J,J')$ and all $a \in J$, the map $e_a^{(J',J)}$ is an endomorphism of $L(J',J)$. Applying this observation to the cubic norm pair $(J',J)$, the second assertion follows.
\end{proof}

The results in this section up to this point can be summarized as follows.

\begin{proposition}\label{pr:exp conj aut summary}
	\begin{enumerate}
		\item Let $ \lambda \in k $ and
		\[ e \in \{e_{\lambda x}, e_{(\lambda, 0, 0, 0)_\pm}, e_{(0,0,0,\lambda)_\pm}, e_{\lambda y}\}. \]
		Then there exist $ n \in \{0,1,2,3\} $, $ \epsilon \in \{\pm 1\} $ and $ \varphi_1, \ldots, \varphi_n \in \{\phihor, \phisl\} $ such that $ e = \varphi^{-1} \circ e_{\epsilon \lambda x} \circ \varphi $ where $ \varphi \coloneq \varphi_1 \cdots \varphi_n $ (which should be interpreted as $ \id_L $ if $ n=0 $). In particular, $ e \in \End(L) $.
		
		\item Let $ a \in J $ and $ e \in \{e_a, e_{(0,a,0,0)_\pm}\} $. Then there exist $ \varphi \in \{\phihor, \phisl, \id_L\} $ and $ \epsilon \in \{\pm 1\} $ such that $ e = \varphi^{-1} \circ e_{(0,\epsilon a,0,0)_+} \circ \varphi $. In particular, $ e \in \End(L) $.
		
		\item Let $ a' \in J' $ and
		\[ e \in \{e_{a'}^{(J,J')}, e_{(0,0,a',0)_\pm}^{(J,J')}\}. \]
		Then there exist
		\[ \varphi^{(J',J)} \in \{\id_L, \phisl^{(J,J')} \circ \phihor^{(J,J')}\} \]
		and $ \epsilon \in \{\pm 1\} $ such that
		\[ (\varphi^{(J,J')})^{-1} \circ e \circ \varphi^{(J,J')} = (\psibs^{(J,J')})^{-1} \circ e_{(0,\epsilon a',0,0)}^{(J',J)} \circ \psibs^{(J,J')}. \]
		In particular, $ e \in \End(L) $.
	\end{enumerate}
\end{proposition}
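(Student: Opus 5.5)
The plan is to collect, item by item, the conjugation identities proved in this appendix, composing them where a single reflection does not suffice. For each identity the ``in particular'' claim then follows: $e_{\epsilon\lambda x}$ and $e_{(0,\epsilon a,0,0)_+}$ are endomorphisms by \cref{le:exp x auto,le:exp L10 auto}, and the reflections involved are isomorphisms by \cref{le:reflections auto}. So each map $e$ is a composition of Lie algebra homomorphisms and lies in $\End(L)$.

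For part (1), we go through the list. The case $e_{\lambda x}$ is $n=0$. By \cref{le:exp L-1-1 conj}, $e_{(\lambda,0,0,0)_-}=\phisl^{-1}e_{-\lambda x}\phisl$, so $n=1$ and $\epsilon=-1$. By \cref{le:exp y conj}, $e_{\lambda y}=\phihor^{-1}e_{\lambda x}\phihor$. Combining this with \cref{le:exp L-1-2 conj} gives
\[ e_{(0,0,0,\lambda)_+}=\phisl^{-1}e_{-\lambda y}\phisl=(\phihor\phisl)^{-1}e_{-\lambda x}(\phihor\phisl), \]
so $n=2$. For the $\mp$ versions we use \cref{le:exp L-1 conj,le:exp L-1 conj inv}, which state that conjugation by $\phihor$ swaps $e_{(\dots)_+}$ and $e_{\pm(\dots)_-}$. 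Thus $e_{(\lambda,0,0,0)_+}=\phihor\, e_{(\lambda,0,0,0)_-}\phihor^{-1}$. Since $\phihor^{-1}=\phihor^{3}$ up to the parity automorphism, and that automorphism only changes signs by \cref{le:exp parity conj}, this can be rewritten as a conjugation by a product of $\phihor$'s and $\phisl$ with $n\le 3$. Similarly $e_{(0,0,0,\lambda)_-}=\phihor^{-1}e_{(0,0,0,\lambda)_+}\phihor$ is a conjugation by $\phihor\phisl\phihor$, so $n=3$. The only bookkeeping is tracking the sign $\epsilon$ and checking that $n\le3$ holds in each case.

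For part (2), the case $e_{(0,a,0,0)_+}$ is $\varphi=\id_L$. \cref{le:exp L-1 conj} gives $e_{(0,a,0,0)_-}=\phihor^{-1}e_{(0,a,0,0)_+}\phihor$, and \cref{le:exp J conj} gives $e_a=\phisl^{-1}e_{(0,-a,0,0)_+}\phisl$ after replacing $a$ by $-a$.

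For part (3), \cref{le:exp J' psi conj}, applied with the roles of $(J,J')$ and $(J',J)$ arranged appropriately, expresses $e_{a'}^{(J,J')}$ as a $\psibs$-conjugate of $e^{(J',J)}_{(0,\epsilon a',0,0)_+}$; this is the case $\varphi=\id_L$. \cref{le:exp J' conj} relates $e_{(0,0,a',0)_-}$ to $e_{-a'}$ via $\phisl$. \cref{le:exp L-1 conj} relates $e_{(0,0,a',0)_+}$ to $e_{(0,0,a',0)_-}$ via $\phihor$. Composing, the conjugating element is $\phisl\circ\phihor$, or just the $\phisl$ part, with signs again absorbed via \cref{le:exp L-1 conj inv}.

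The main obstacle is part (3). There I must check that the composition order matches the statement's convention $(\varphi)^{-1}e\varphi$. I also have to be careful with superscripts, distinguishing maps on $L(J,J')$ from maps on $L(J',J)$. If the stated set $\{\id_L,\phisl\circ\phihor\}$ does not literally cover $e_{(0,0,a',0)_-}$, I would allow $\phisl$ alone, or use the parity trick of \cref{le:exp parity conj} to replace $\phisl$ by $\phisl\phihor$ up to sign.
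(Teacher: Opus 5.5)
Your strategy is the paper's own: its proof just cites the conjugation lemmas of this appendix, and your parts (1) and (2) assemble them correctly. In part (1), for $e_{(\lambda,0,0,0)_+}$ the detour through $\phihor^{3}$ is unnecessary: \cref{le:exp L-1 conj inv,le:exp L-1-1 conj} give directly $e_{(\lambda,0,0,0)_+}=(\phisl\phihor)^{-1}e_{\lambda x}(\phisl\phihor)$, so $n=2$ and $\epsilon=1$.

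The problem is part (3), exactly at the step you flagged. With automorphisms acting from the left, \cref{le:exp L-1 conj,le:exp J' conj} give
\[ (\phihor\circ\phisl)^{-1}\circ e_{(0,0,a',0)_+}\circ(\phihor\circ\phisl)=\phisl^{-1}\circ e_{(0,0,a',0)_-}\circ\phisl=e_{-a'}. \]
So the conjugator for $e_{(0,0,a',0)_+}$ is $\phihor\circ\phisl$ (with $\epsilon=1$), not $\phisl\circ\phihor$ as you assert. The other order genuinely fails. By \cref{rem:reflections inv}, $\phisl^{-1}$ fixes $(0,0,a',0)_+$, and $\phihor^{-1}$ sends it to $(0,0,a',0)_-$. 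Hence $(\phisl\circ\phihor)^{-1}\circ e_{(0,0,a',0)_+}\circ(\phisl\circ\phihor)$ has the component $\ad_{(0,0,a',0)_-}$, which shifts the $G_2$-grading by $(-1,0)$ and is nonzero for $a'\neq 0$. By contrast, every component of $e_{\pm a'}$ shifts the grading by a multiple of $(0,-1)$.

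Your proposed repair targets the wrong case and uses the wrong lemma. The parity trick of \cref{le:exp parity conj} only lets you insert $\phihor^{2}$ or $\phisl^{2}$. It cannot supply the single extra $\phihor$ that distinguishes $\phisl\circ\phihor$ from $\phisl$; moreover $\phihor^2$ is the identity on $L_0\supseteq J'$. What actually makes $\phisl\circ\phihor$ work for $e_{(0,0,a',0)_-}$ is \cref{le:exp JJ' phihor}:
\[ (\phisl\circ\phihor)^{-1}\circ e_{(0,0,a',0)_-}\circ(\phisl\circ\phihor)=\phihor^{-1}\circ e_{-a'}\circ\phihor=e_{-a'}. \]
Read literally, the set $\{\id_L,\phisl\circ\phihor\}$ in the statement therefore covers two of the three cases:
\begin{enumerate}
\item $e_{a'}$, with $\varphi=\id_L$ and $\epsilon=-1$, via \cref{le:exp J' psi conj} for the swapped pair;
\item $e_{(0,0,a',0)_-}$, with $\varphi=\phisl\circ\phihor$, as just shown.
\end{enumerate}
It does not cover $e_{(0,0,a',0)_+}$. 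The two lemmas cited in the paper's proof actually produce $\varphi\in\{\id_L,\phisl,\phihor\circ\phisl\}$, and that is the version you should state and prove.

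None of this affects the conclusion $e\in\End(L)$. In part (3) that conclusion needs \cref{le:exp L10 auto} applied to the pair $(J',J)$, together with \cref{le:reflections auto} for $\psibs$, $\phihor$ and $\phisl$.
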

\begin{proof}
	The first assertion follows from \cref{le:exp x auto,le:exp y conj,le:exp L-1-1 conj,le:exp L-1-2 conj,le:exp L-1 conj,le:exp L-1 conj inv}. The second assertion follows from \cref{le:exp L10 auto,le:exp L-1 conj,le:exp J conj}. Finally, note that
	\[ (\psibs^{(J,J')})^{-1} \circ e_{(0,\epsilon a',0,0)}^{(J',J)} \circ \psibs^{(J,J')} = e_{-\epsilon a'}^{(J,J')} \]
	by \cref{le:exp J' psi conj} and that the left-hand map lies in $ \End(L) $ by the second assertion. Thus the third assertion follows from \cref{le:exp J' conj,le:exp L-1 conj}.
\end{proof}

With \cref{pr:exp conj aut summary} established, it now follows from \cref{pr:param} that $ \exp_\alpha \colon L_\alpha \to \Aut(L) \colon a \mapsto e_a $ is a well-defined $ \alpha $-parametrization in $ L $, and we can define root groups $ (U_\alpha)_{\alpha \in G_2} $ in $ \Aut(L) $ as in \cref{def:G2 rootgr}. In the remaining part of this appendix, we will show that these root groups have Weyl elements in the sense of \cref{def:rgg}\cref{def:rgg:weyl}. In fact, one half of the work is already done.

\begin{proposition}\label{le:phihor weyl}
	$\phihor$ is a $(2,1)$-Weyl element with respect to $ (U_\alpha)_{\alpha \in G_2} $.
\end{proposition}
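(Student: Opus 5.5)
To show that $\phihor$ is a $(2,1)$-Weyl element we must verify two things. First, $\phihor$ lies in $U_{(-2,-1)}U_{(2,1)}U_{(-2,-1)}$. Second, conjugation by $\phihor$ permutes the root groups according to the reflection $\refl{(2,1)}$.

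The first point is immediate from \cref{le:phihor factor}. Since $L_{(-2,-1)}=kx$ and $L_{(2,1)}=ky$, we have $\phihor = e_x\circ e_y\circ e_x$, which lies in $U_{(-2,-1)}U_{(2,1)}U_{(-2,-1)}$.

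For the second point we use \cref{le:conj by refl}. Because $\phihor$ is a reflection of the relevant type (\cref{le:reflections auto}), the map $a\mapsto \phihor^{-1}\circ e_{\phihor(a)}\circ\phihor$ is again a parametrization. It therefore suffices to show that for every root $\alpha\in G_2$ and every $a\in L_\alpha$ we have
\[ \phihor^{-1}\circ e_a\circ \phihor = e_{\pm a'} \qquad\text{with } a'\in L_{\alpha^{\reflbr{(2,1)}}} \]
a suitable image of $a$. This gives $U_\alpha^{\phihor}=U_{\alpha^{\reflbr{(2,1)}}}$ for every $\alpha$.

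The twelve roots fall into three groups, each already handled by earlier lemmas.
\begin{enumerate}
\item The long roots $\pm(2,1)$: \cref{le:exp y conj,le:exp y conj inv} show that conjugation by $\phihor$ interchanges $U_{(-2,-1)}$ and $U_{(2,1)}$.
\item The roots in $L_{\pm1}$, namely the four roots with first coordinate $\pm1$ in each of the two lines: $\phihor$ maps $L_{1,j}$ to $L_{-1,j}$, and \cref{le:exp L-1 conj,le:exp L-1 conj inv} show $U_{(1,j)}^{\phihor}=U_{(-1,j)}$ and $U_{(-1,j)}^{\phihor}=U_{(1,j)}$. These roots are exactly the ones on which $\refl{(2,1)}$ acts by $(1,j)\leftrightarrow(-1,j)$. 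To confirm this, note that $\refl{(2,1)}$ maps $(i,j)$ to $(i,j)-c(2,1)$ for the appropriate integer $c$, which keeps the index set of \cref{rem:G2 grid} invariant. This agrees with \cref{rem:reflections on subspaces}.
\item The roots $(0,\pm1)$: these are orthogonal to $(2,1)$, so they are fixed by $\refl{(2,1)}$. Correspondingly, \cref{le:exp JJ' phihor} shows that $\phihor$ centralizes $U_{(0,1)}$ and $U_{(0,-1)}$.
\end{enumerate}

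Since every root of $G_2$ lies in exactly one of these groups, the conjugation condition holds for all $\beta$, and so $\phihor\in M_{(2,1)}$. The only step that needs any care is the bookkeeping that the reflection $\refl{(2,1)}$ in the $\Z^2$-coordinates really matches $L_{i,j}\mapsto L_{-i,j}$ on the short roots and the $L_{\pm1}$ long roots, together with the sign conventions. All the genuine computations are already contained in the cited lemmas.
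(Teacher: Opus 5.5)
Your overall strategy is the paper's. Its proof just combines \cref{le:phihor factor}, \cref{le:exp y conj}, \cref{le:exp y conj inv}, \cref{le:exp L-1 conj} and \cref{le:exp L-1 conj inv}, and you are right to add \cref{le:exp JJ' phihor} for the roots $(0,\pm1)$, which those lemmas do not cover. However, the bookkeeping step you yourself flag as the delicate one is wrong as written. In the coordinates of \cref{rem:G2 grid}, $\refl{(2,1)}$ is the linear map fixing $(0,1)$ (the roots $(0,\pm1)$ are orthogonal to $(2,1)$) and negating $(2,1)$, i.e.\ $(i,j)\mapsto(-i,\,j-i)$. Thus the correct pairing is $(1,j)\leftrightarrow(-1,j-1)$, not $(1,j)\leftrightarrow(-1,j)$. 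Your own formula $(i,j)-c(2,1)$ forces $c=1$ here and gives $(-1,j-1)$. The pairing you state is not even a map on $G_2$: it would send the long root $(1,-1)$ to the short root $(-1,-1)$, and $(1,2)$ to the non-root $(-1,2)$. Correspondingly, $\phihor(L_{1,j})=L_{-1,j-1}$ rather than $L_{-1,j}$. The map $\phihor$ sends $(\nu,c,c',\rho)_+$ to $-(\nu,c,c',\rho)_-$, and each of the four slots has $\zeta$-degree one lower in $L_{-1}$ than in $L_1$; for instance $(\nu,0,0,0)_+\in L_{1,-1}$ but $(\nu,0,0,0)_-\in L_{-1,-2}$.

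So \cref{le:exp L-1 conj,le:exp L-1 conj inv} do not give $U_{(1,j)}^{\phihor}=U_{(-1,j)}$, which is false. What they give is $U_{(1,j)}^{\phihor}=U_{(-1,j-1)}$ and $U_{(-1,j)}^{\phihor}=U_{(1,j+1)}$, and this is exactly $U_\beta^{\phihor}=U_{\beta^{\reflbr{(2,1)}}}$. Once the indices are corrected, your argument is complete. Note also that the appeal to \cref{le:conj by refl} does no work. It only produces \emph{some} parametrization of $L_{\alpha^{\reflbr{(2,1)}}}$, and parametrizations are not unique, so it cannot identify the conjugated group with $U_{\alpha^{\reflbr{(2,1)}}}$. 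What you actually use, and what the cited lemmas together provide, is the explicit identity $\phihor^{-1}\circ e_a\circ\phihor=e_{\phihor^{-1}(a)}$ for $a$ in every root space, combined with $\phihor^{-1}(L_\alpha)=L_{\alpha^{\reflbr{(2,1)}}}$.
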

\begin{proof}
	This follows from \cref{le:phihor factor,le:exp y conj,le:exp y conj inv,le:exp L-1 conj,le:exp L-1 conj inv}.
\end{proof}

By \cref{rem:weyl basic}, it remains to find a $ \delta $-Weyl element for a root $ \delta $ such that $ \{(2, 1), \delta\} $ is a system of simple roots in $ G_2 $. We opt to find a $ (-1,-1) $-Weyl element.

\begin{convention}\label{conv:J invertible}
	From now on, we assume that there exists $b \in J$ for which $N(b)$ is invertible. We fix such a $b$ and put $b' \coloneq N(b)^{-1} b^\sharp$. By \cref{rem:isotopes}, the pair $(N(b) U_{b'}, N(b)^{-1} U_b)$ is then a $t$-involution of $(J,J')$ for $t \coloneq N(b)$, which we denote by $(\iota, \iotainv)$.
\end{convention}

%

We aim to show that the reflection $ \phibs \coloneq \phibs^{(\iota, \iotainv,t)} = \psibs^{(J',J)} \circ L(\iota, \iotainv,t) $ from \cref{def:phibs} is a $ (-1,-1) $-Weyl element. We begin by showing that it has the desired product decomposition.

\begin{lemma}\label{le:phibs prod}
	We have $\phibs^{-1} = e_{(0,0,-b',0)_+} \circ e_{(0,b,0,0)_-} \circ e_{(0,0,-b',0)_+} $.
\end{lemma}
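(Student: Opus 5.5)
The plan is to check the identity on a generating set of $L$, exactly as in the proof of \cref{le:phihor factor}. All maps involved are automorphisms of $L$. The three exponentials lie in $\Aut(L)$ by \cref{pr:exp conj aut summary}, and $\phibs$ is an automorphism by \cref{le:reflections auto} and \cref{def:phibs}. So by \cref{rem:aut equality} it suffices to show that $e_{(0,0,-b',0)_+} \circ e_{(0,b,0,0)_-} \circ e_{(0,0,-b',0)_+}$ agrees with $\phibs^{-1}$ on $\{x\} \cup L_1$. Here $\phibs^{-1}$ is given by the explicit formulas in \cref{rem:phibs formulas}, and these formulas apply because $\iotainv = \iota^{-1}$ for the involution from \cref{conv:J invertible}. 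With $t = N(b)$, $\iota = N(b)U_{b'}$ and $\iotainv = N(b)^{-1}U_b$, the targets are
\[ \phibs^{-1}(x) = (0,0,0,-N(b)^{-1})_+, \qquad \phibs^{-1}\brackets[\big]{(\nu,c,c',\rho)_+} = (\nu,0,0,0)_+ - N(b)^{-1}U_b(c') + \brackets[\big]{0,-U_b(c'),0,0}_- + N(b)\rho x. \]

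Next I evaluate the triple product on $x$ using \cref{def:exp L1} and \cref{def:exp L-1}. The first application gives $e_{(0,0,-b',0)_+}(x) = x - (0,0,b',0)_- - (b')^\sharp + (0,0,0,-N(b'))_+$. I then apply $e_{(0,b,0,0)_-}$ and $e_{(0,0,-b',0)_+}$ term by term. To simplify, I use $(b')^\sharp = N(b)^{-2}(b^\sharp)^\sharp = N(b)^{-1}b$ and $N(b') = N(b)^{-3}N(b^\sharp) = N(b)^{-1}$, which follow from \cref{def:CNP new}\cref{eq:sharp twice,eq:CNP 18}. I also use $T(b,b') = 3$, $b \times b' = 2N(b)^{-1}(b^\sharp)^\sharp/N(b)\cdot$ type reductions via \cref{le:CNP identities}\cref{le:CNP identities:8}, $U_b(b') = b$ from \cref{le:CNP new}\cref{eq:new 4}, $D_{b,b'} = 2\id$ and $\dd_{b,b'} = 2\zeta - \xi$ from \cref{le:triple D} and \cref{le:triple}. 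With these, most terms should cancel and leave $(0,0,0,-N(b)^{-1})_+$.

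Then I do the same for a general $(\nu,c,c',\rho)_+ \in L_1$. I split by components, treating $\nu$, $c$, $c'$ and $\rho$ separately, since everything is linear. Each component passes through all three maps and produces terms in up to all five $L_i$. The cancellations here require the deeper identities \cref{le:CNP identities}\cref{le:CNP identities:16,le:CNP identities:26,le:CNP identities:29} and \cref{le:CNP new}\cref{eq:new 1,eq:new 2,eq:new 3,eq:new 6}, all specialized to $a = b$ and $b^\sharp = N(b)b'$. Where it helps, I also use that $U_{b'} = U_b^{-1}$ by \cref{rem:isotopes}.

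I expect the main obstacle to be the $c$- and $c'$-components, especially the $L_0$ part. That part mixes $\dd$-terms with $\xi$ and $\zeta$, and it must be reduced with \cref{le:triple}\cref{le:triple:norm r,le:triple:r} until it vanishes, or until it becomes $-N(b)^{-1}U_b(c')$ in $J$. To keep the bookkeeping manageable, I would first compute $e_{(0,b,0,0)_-} \circ e_{(0,0,-b',0)_+}$ and compare it with $e_{(0,0,b',0)_+} \circ \phibs^{-1}$, where the rearrangement uses $e_a^{-1} = e_{-a}$. This moves one factor to the other side and shortens the expressions, in the same spirit as the trick used for \eqref{G2 comm 4}.
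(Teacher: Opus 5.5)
Your strategy matches the paper's: both sides are automorphisms, so by \cref{rem:aut equality} it suffices to compare them on a Lie-generating set, using the explicit inverse from \cref{rem:phibs formulas}. The paper uses the generating set $\phihor\circ\phisl(kx+L_1)=kx+(0,0,J',0)_-+J+(k,0,0,k)_+$ instead of $\{x\}\cup L_1$, but your choice also works; the $x$-, $\nu$- and $\rho$-parts are short. The actual problem is the target you wrote for $\phibs^{-1}$ on $L_1$. From \cref{rem:phibs formulas}, with $t=N(b)$ and $\iota=N(b)U_{b'}$, one gets
\[ \phibs^{-1}\bigl((\nu,c,c',\rho)_+\bigr)=(\nu,0,0,0)_+ - c^\iota+\bigl(0,-U_b(c'),0,0\bigr)_- + N(b)\rho x, \qquad -c^\iota = b\times c - T(c,b')\,b^\sharp\in J'. \]
Your middle term $-N(b)^{-1}U_b(c')$ is wrong in three ways:
\begin{itemize}
\item it lies in $J$ rather than $J'$;
\item it depends on $c'$ rather than $c$;
\item it is actually the value $\phibs^{-1}((0,0,c',0)_-)=-(c')^\iotainv$ of an $L_{-1}$-element.
\end{itemize}
It is also excluded on degree grounds, since the $(-1,-1)$-reflection $\phibs^{-1}$ maps $L_{1,0}\cong J$ onto $L_{0,-1}=J'$. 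So the check fails as written. For $g=e_{(0,0,-b',0)_+}\circ e_{(0,b,0,0)_-}\circ e_{(0,0,-b',0)_+}$, a direct computation gives $g((0,c,0,0)_+)=b\times c-T(c,b')b^\sharp$, which is nonzero in general, while your target predicts $0$ for this input.

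Once the target is corrected, your plan goes through, and much less machinery is needed than you list.
\begin{itemize}
\item For $(0,c,0,0)_+$ you only need $(b\times c)\times b'=c+T(c,b')b$ from \cref{le:CNP identities}\cref{le:CNP identities:8}, together with $b^\sharp\times b'=2b$ and $T(b,b\times c)=2T(b^\sharp,c)$.
\item For $(0,0,c',0)_+$ the output is $(0,-U_b(c'),0,0)_-$. The $J'$-part of $L_1$ cancels because $D_{b',b}=2\id_{J'}$ and $U_{b'}U_b=\id$.
\item The $\zeta$-terms cancel because $T(U_b(c'),b')=T(b,c')$.
\item The $L_0$-part $\dd_{b,c'}-\dd_{U_b(c'),b'}$ vanishes by the $J'$-analogue of \cref{le:triple}\cref{le:triple:norm r}, applied to $b^\sharp$ and $c'$, together with $\dd_{b,b'}=2\zeta-\xi$.
\end{itemize}
Finally, drop the expression ``$b\times b'$'': $\times$ is only defined on $J\times J$ and on $J'\times J'$.
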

\begin{proof}
	We compute the action of $g \coloneq e_{(0,0,-b',0)_+} \circ e_{(0,b,0,0)_-} \circ e_{(0,0,-b',0)_+}$ on the generating set
	\[ \phihor \circ \phisl(kx+ L_1) = kx + (0,0,J',0)_- + J + (k,0,0,k)_+ \]
	of $L$. Since this computation relies on several identities, we perform it in full detail, with each arrow \enquote{$ \mapsto $} denoting the application of the next consecutive factor of $ g $. We will frequently use that $T(b,b^\sharp) = 3N(b) = 3t$ by Axiom~\ref{def:CNP new}\cref{eq:CNP cubic}, that $(b')^\sharp = t^{-1}b$ and $N(b') = -t^{-1}$ and that $b \times b = 2b^\sharp$ because $\times$ is the linearization of $\sharp$. We will also apply \cref{le:CNP identities}\cref{le:CNP identities:7,le:CNP identities:8} and Axioms~\ref{def:CNP new}\cref{eq:CNP 18,eq:CNP 19} in many cases.
	
	Each factor of $g$ fixes $(k,0,0,0)_+$ pointwise. The action of $ g $ on $x$ is given by
	\begin{align*}
		x &\mapsto x + (0,0,-t^{-1}b^\sharp, 0)_- - t^{-1}b + (0,0,0,-t^{-1})_+ \\
		&\mapsto x + (0,0,-t^{-1}b^\sharp, 0)_- - t^{-1} T(b,b^\sharp)x - t^{-1}b + (0,0, t^{-1} b \times b, 0)_- \\
		&\qquad{}+ t^{-1} T(b,b^\sharp)x + (0,0,0,-t^{-1})_+ + t^{-1}b + (0,0,-t^{-1}b^\sharp, 0)_- - t^{-1}N(b)x \\
		&= (0,0,0,-t^{-1})_+ \mapsto (0,0,0,-t^{-1})_+.
	\end{align*}
	The action of $ g $ on $(0,0,c',0)_-$ for an arbitrary $c' \in J'$ is given by
	\begin{align*}
		(0,0,c',0)_- &\mapsto (0,0,c',0)_- + t^{-1} c' \times b^\sharp + \brackets[\big]{0,0,0,t^{-1} T(b,c')}_+ \\
		&\mapsto (0,0,c',0)_- + T(b,c')x + t^{-1} c' \times b^\sharp + \brackets[\big]{0,0,-t^{-1} (c' \times b^\sharp) \times b, 0}_- \\
		&\qquad{}- t^{-1} T(c' \times b^\sharp, b^\sharp)x + \brackets[\big]{0,0,0,t^{-1}T(b,c')}_+ - t^{-1}T(b,c')b \\
		&\qquad{}+ \brackets[\big]{0,0, t^{-1}T(b,c') b^\sharp, 0}_- + T(b,c')x \\
		&=t^{-1} c' \times b^\sharp -t^{-1}T(b,c')b + \brackets[\big]{0,0,0,t^{-1}T(b,c')}_+ \\
		&\mapsto t^{-1} c' \times b^\sharp + \brackets[\big]{0,0,0, t^{-2} T(c' \times b^\sharp, b^\sharp)}_+ -t^{-1} T(b,c')b \\
		&\qquad{}+ \brackets[\big]{0,0,0, -t^{-2} T(b,c') T(b,b^\sharp)}_+ + \brackets[\big]{0,0,0,t^{-1} T(b,c')}_+ \\
		&=-t^{-1} \brackets[\big]{T(b,c')b - c' \times b^\sharp} = -t^{-1} U_b c'
	\end{align*}
	Here we have used twice that $T(c' \times b^\sharp, b^\sharp) = T(c', b^\sharp \times b^\sharp) = 2 t T(c',b)$, and also that
	\[ -t^{-1} (c' \times b^\sharp) \times b = -c' - t^{-1}T(b,c')b^\sharp \]
	by Axiom~\ref{def:CNP new}\cref{eq:CNP 19}. For arbitrary $c \in J$, the action of $ g $ on $c$ is given by
	\begin{align*}
		c &\mapsto c + \brackets[\big]{0,0,0, t^{-1} T(c,b^\sharp)}_+ \\
		&\mapsto c + (0,0,-c \times b, 0)_- - T(c,b^\sharp)x + \brackets[\big]{0,0,0,t^{-1} T(c,b^\sharp)}_+ - t^{-1} T(c,b^\sharp)b \\
		&\qquad{}+ \brackets[\big]{0,0, t^{-1} T(c,b^\sharp)b^\sharp, 0}_- + T(c,b^\sharp)x \\
		&= \brackets[\big]{0,0,t^{-1} U_{b^\sharp}c, 0}_- + c - t^{-1} T(c,b^\sharp) b + \brackets[\big]{0,0,0,t^{-1} T(c,b^\sharp)}_+ \\
		&\mapsto \brackets[\big]{0,0,t^{-1} U_{b^\sharp}c, 0}_- + t^{-2} U_{b^\sharp}(c) \times b^\sharp + \brackets[\big]{0,0,0, t^{-2} T(b, U_{b^\sharp} c)}_+ + c \\
		&\qquad{}+ \brackets[\big]{0,0,0, t^{-1} T(c,b^\sharp)}_+ - t^{-1} T(c,b^\sharp)b + \brackets[\big]{0,0,0,-t^{-2} T(c,b^\sharp) T(b,b^\sharp)}_+ \\
		&\qquad{} +\brackets[\big]{0,0,0,t^{-1} T(c,b^\sharp)}_+ \\
		&= (0,0, t^{-1} U_{b^\sharp}c, 0)_-.
	\end{align*}
	Here we have used that $U_{b^\sharp}(c) \times b^\sharp = T(b^\sharp, c) b^{\sharp \sharp} - N(b^\sharp)c = t T(b^\sharp,c)b - t^2c$ by \eqref{eq:new 6} in \cref{le:CNP new} and $T(b,U_{b^\sharp}c) = t T(b^\sharp, c)$ by \eqref{eq:new 5} in \cref{le:CNP new} (applied to $b^\sharp$, $c$ in place of $b$, $c'$ and using that $t=N(b)$ is invertible). Finally, the action of $ g $ on $(0,0,0,1)_+$ is given by
	\begin{align*}
		(0,0,0,1)_+ &\mapsto (0,0,0,1)_+ \mapsto (0,0,0,1)_+ - b + (0,0,b^\sharp, 0)_- + tx \\
		&\mapsto (0,0,0,1)_+ - b + \brackets[\big]{0,0,0,-t^{-1}T(b,b^\sharp)}_+ + (0,0,b^\sharp, 0)_- + t^{-1} b^\sharp \times b^\sharp \\*
		&\qquad{}+ \brackets[\big]{0,0,0, t^{-1} T(b,b^\sharp)}_+ + tx + (0,0,-b^\sharp, 0)_- - b + (0,0,0,-1)_+ \\
		&=tx.
	\end{align*}
	By \cref{rem:phibs formulas}, we conclude that $\phibs^{-1} = e_{(0,0,-b',0)_+} \circ e_{(0,b,0,0)_-} \circ e_{(0,0,-b',0)_+} $.
\end{proof}

It remains to show that conjugation by $\phibs$ induces the desired permutation of the root groups. We first verify this on the short root groups.

\begin{lemma}\label{le:phibs short 1}
	We have $ \phibs^{-1} \circ e_{(0,a,0,0)_+} \circ \phibs = e_{-a^\iota} $ for all $ a \in J $ and $\phibs^{-1} \circ e_{a'} \circ \phibs = e_{(0,(a')^\iotainv, 0, 0)_+}$ for all $a' \in J'$.
\end{lemma}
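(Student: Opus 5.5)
The plan is to reduce both identities to comparisons of endomorphisms on a generating set, using the functoriality of $\phibs$ to turn them into conjugation statements we have already proven.

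First, recall from \cref{def:phibs} that $\phibs = \psibs^{(J',J)} \circ L(\iota,\iotainv,t)$. By \cref{rem:exp CNP notation}, conjugating an exponential automorphism of $L(J,J')$ by $L(\iota,\iotainv,t)$ yields an exponential automorphism of $L(J',J)$. The remark is stated only for homomorphisms, so I would first redo its verification for a $t$-homotopy. The expected outcome is
\[ L(\iota,\iotainv,t)\circ e^{(J,J')}_{(0,a,0,0)_+}\circ L(\iota,\iotainv,t)^{-1} = e^{(J',J)}_{(0,a^\iota,0,0)_+}, \]
where the scaling of \cref{def:L functorial} is trivial on $L_{1,0}$. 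This should be a routine check on the formulas of \cref{def:exp L1}. It uses $N(a^\iota)=tN(a)$, $(a^\iota)^\sharp = t(a^\sharp)^{\iotainv}$ and the fact that $(\iota,\iotainv)$ is a homomorphism of Jordan pairs (\cref{rem:CNP hom jordan}).

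Second, I would apply \cref{le:exp J' psi conj} to the pair $(J',J)$ with the element $a^\iota \in J'$. This gives
\[ (\psibs^{(J,J')})^{-1}\circ e^{(J',J)}_{(0,a^\iota,0,0)_+}\circ\psibs^{(J,J')} = e^{(J,J')}_{-a^\iota}. \]
We need the version with $\psibs^{(J',J)}$ on the outside, so I would invoke the analogous identity obtained by interchanging the roles of $J$ and $J'$, which \cref{le:exp J' psi conj} provides since $(J,J')$ was arbitrary there. Composing the two conjugations then gives the first claim,
\[ \phibs^{-1}\circ e_{(0,a,0,0)_+}\circ\phibs = e_{-a^\iota}. \]
If the bookkeeping of which $\psibs$ appears becomes confusing, the fallback is a direct computation. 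Both sides lie in $\End(L)$ by \cref{pr:exp conj aut summary}, so by \cref{rem:aut equality} it suffices to evaluate them on $\{x\}\cup L_1$, using the explicit formulas for $\phibs$ and $\phibs^{-1}$ in \cref{rem:phibs formulas}.

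For the second identity, the cleanest route is the parity argument of \cref{le:exp parity conj}. Write $a \coloneq (a')^{\iotainv}$, so that $a' = a^\iota$ because $\iotainv=\iota^{-1}$ under \cref{conv:J invertible}. The first identity then yields
\[ \phibs^{-1}\circ e_{a'}\circ\phibs = \phibs^{-1}\circ e_{a'}^{-1}\circ\phibs\ \text{inverted} = \phibs^{-2}\circ e_{(0,-a,0,0)_+}^{-1}\circ\phibs^{2}\ \text{up to sign}. \]
Here $\phibs^2$ is the parity automorphism of the $7$-grading, as noted in \cref{rem:phibs formulas}, and $L_{1,0}$ has odd degree in that grading. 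So \cref{le:exp parity conj} lets us replace $\phibs^{-2}\,e_{(0,c,0,0)_+}\,\phibs^{2}$ by $e_{(0,-c,0,0)_+}$, and tracking the signs should give exactly $e_{(0,(a')^\iotainv,0,0)_+}$.

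The main obstacle is this sign bookkeeping: the two minus signs, the conventions $e_{-c} = e_c^{-1}$, and the precise form of the parity automorphism. If they fail to match, I would fall back on evaluating both sides of the second identity directly on $\{x\}\cup L_1$.
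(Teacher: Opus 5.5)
Your second half is the paper's argument, and your step~2 is a correct application of \cref{le:exp J' psi conj} to the pair $(J',J)$. The gap is the sentence ``composing the two conjugations then gives the first claim''. Substituting step~1 into step~2 gives
\[ \chi^{-1}\circ e_{(0,a,0,0)_+}\circ\chi = e_{-a^\iota}, \qquad \chi \coloneq L(\iota,\iotainv,t)^{-1}\circ\psibs^{(J,J')}, \]
whereas $\phibs$ is defined as $\psibs^{(J',J)}\circ L(\iota,\iotainv,t)$. Nothing in your plan identifies these two maps.

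Your suggested remedy does not close this gap. The ``interchanged'' version of \cref{le:exp J' psi conj} is exactly what you already used in step~2. The original version conjugates $e^{(J,J')}_{(0,a,0,0)_+}$ by $\psibs^{(J',J)}$, so it does not compose with the output $e^{(J',J)}_{(0,a^\iota,0,0)_+}$ of your step~1.

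What is missing is the naturality identity
\[ \psibs^{(J',J)}\circ L(\iota,\iotainv,t) = L(\iotainv,\iota,t^{-1})\circ\psibs^{(J,J')} = L(\iota,\iotainv,t)^{-1}\circ\psibs^{(J,J')}, \]
where the last equality is \cref{rem:functor inverse}. It is true, and can be checked generator by generator against \cref{rem:phibs formulas}. For example, both sides send $x\mapsto(0,0,0,t^{-1})_+$, $y\mapsto(t,0,0,0)_-$, $c\mapsto(0,0,-c^\iota,0)_-$ and $(\nu,c,c',\rho)_+\mapsto(\nu,0,0,0)_+ + c^\iota + (0,-t(c')^\iotainv,0,0)_- - t\rho x$. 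But it has to be stated and verified.

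The alternative, which is the paper's route, keeps the given factorization. Applying \cref{le:exp J' psi conj} as stated gives
\[ \phibs^{-1}\circ e_{(0,a,0,0)_+}\circ\phibs = L(\iota,\iotainv,t)^{-1}\circ e^{(J',J)}_{-a}\circ L(\iota,\iotainv,t). \]
One then checks directly that this map fixes $x$ and acts on $L_1$ as $e_{-a^\iota}$, using the $t$-involution identities such as $N(a^\iota)=tN(a)$ and $(a^\iota)^\sharp = t(a^\sharp)^\iotainv$. In this version the element transported through $L(\iota,\iotainv,t)$ is the degree-$(0,-1)$ exponential $e^{(J',J)}_{-a}$, not $e_{(0,a,0,0)_+}$ as in your step~1. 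Your fallback of evaluating both sides on $\{x\}\cup L_1$ would also work.

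Your displayed chain for the second identity is garbled: no inversions are needed, and there is no sign ambiguity. Apply the first identity to $-(a')^\iotainv$ to get $e_{a'}=\phibs^{-1}\circ e_{(0,-(a')^\iotainv,0,0)_+}\circ\phibs$. Hence
\[ \phibs^{-1}\circ e_{a'}\circ\phibs = \phibs^{-2}\circ e_{(0,-(a')^\iotainv,0,0)_+}\circ\phibs^{2} = e_{(0,(a')^\iotainv,0,0)_+} \]
by \cref{le:exp parity conj}. This holds because $\phibs^2$ is the parity automorphism of the $7$-grading and $L_{1,0}$ has odd degree $1$ there.
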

\begin{proof}
	It follows from \cref{le:exp J' psi conj} that
	\[ \phibs^{-1} \circ e_{(0,a,0,0)_+} \circ \phibs = L(\iota, \iotainv, t)^{-1} \circ e_{-a}^{(J',J)} \circ L(\iota, \iotainv, t). \]
	The map on the right-hand side fixes $ x $ and maps $ (\nu, c, c', \rho)_+ $ to
	\begin{align*}
		\bigl(\nu + T(c^\iota, a) + t T(a^\sharp, (c')^\iotainv) + t\rho N(a),c + a^\iota \times c' + t\rho (a^\sharp)^\iotainv, \, c' + \rho a^\iota, \, \rho \bigr)
	\end{align*}
	Here, since $(\iota, \iotainv)$ is a $t$-involution,
	\begin{gather*}
		T(c^\iota, a) = T(c, a^\iota), \qquad tT\brackets[\big]{a^\sharp, (c')^\iotainv} = T\brackets[\big]{(a^\iota)^\sharp, c'}, \\
		t\rho N(a) = \rho N(a^\iota), \qquad t\rho (a^\sharp)^\iotainv = \rho (a^\iota)^\sharp.
	\end{gather*}
	The first assertion follows. This implies that
	\[ \phibs^{-1} \circ e_{a'} \circ \phibs = \phibs^{-2} \circ e_{(0,-(a')^\iotainv,0,0)_+} \circ \phibs^2 \]
	for all $a' \in J'$, so the second assertion follows from \cref{rem:phibs formulas,le:exp parity conj}.
\end{proof}

\begin{lemma}\label{le:phibs short 2}
	We have $\phibs^{-1} \circ e_{(0,0,a',0)_+} \circ \phibs = e_{(0,-t(a')^\iotainv, 0, 0)_-}$ for all $a' \in J$ and $\phibs^{-1} \circ e_{(0,a, 0, 0)_-} \circ \phibs = e_{(0,0,-t^{-1} a^\iota,0)_+}$ for all $a \in J$.
\end{lemma}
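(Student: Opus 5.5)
The plan mirrors the proof of \cref{le:phibs short 1}. We reduce to a conjugation by $L(\iota,\iotainv,t)$ alone, check the first identity on a generating set, and deduce the second from the parity trick.

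First we rewrite $\phibs = \psibs^{(J',J)} \circ L(\iota,\iotainv,t)$. We then look for an identity of the form
\[ (\psibs^{(J',J)})^{-1} \circ e^{(J,J')}_{(0,0,a',0)_+} \circ \psibs^{(J',J)} = e^{(J',J)}_{(0,-a',0,0)_-} \qquad (a' \in J'), \]
where the sign is to be fixed by the computation. The reflection $\psibs$ sends $L_{1,1}$ to $L_{-1,-1}$ of the swapped pair; this can be read off from \cref{def:psibs}, since $\psibs^{(J',J)}$ maps $(0,c',0,0)_-$ to $(0,0,-c',0)_+$ in $L(J,J')$. So such an identity is plausible. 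To verify it, we compare both sides on the generating set $\{y\} \cup L_{-1}$ (or $\{x\}\cup L_1$) from \cref{rem:aut equality}, using the explicit formulas of \cref{def:exp L1,def:exp L-1,def:psibs}. Both sides are already known to be endomorphisms by \cref{pr:exp conj aut summary}, so agreement on generators suffices.

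Next we conjugate by $L(\iota,\iotainv,t)$. By \cref{def:L functorial}, this map scales the $J$-slot of $L_{-1}$ by $t^{-1}$ and applies $\iota$. Conjugating $e^{(J',J)}_{(0,-a',0,0)_-}$ back therefore gives $e^{(J,J')}_{(0,-t(a')^{\iotainv},0,0)_-}$. To confirm this we again evaluate on generators, exactly as in \cref{le:phibs short 1}. Throughout we use the $t$-homotopy identities
\[ T(c^\iota, a') = T(c,(a')^\iotainv), \qquad (a^\iota)^\sharp = t(a^\sharp)^\iotainv, \qquad N(a^\iota) = tN(a), \]
together with \cref{rem:homotopy times} for $\times$ and \cref{rem:CNP hom jordan} for $U$-operators and $D$-operators. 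These identities absorb all powers of $t$ coming from the nonlinear terms $N$ and $\sharp$ in \cref{def:exp L-1}. Alternatively, one may use \cref{rem:exp CNP notation}, suitably extended to $t$-homotopies.

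The second statement then follows formally. Applying the first identity with $a' = -t^{-1}a^\iota$ gives $(a')^\iotainv = -t^{-1}a$, so that
\[ \phibs^{-1} \circ e_{(0,0,-t^{-1}a^\iota,0)_+} \circ \phibs = e_{(0,a,0,0)_-}. \]
Hence
\[ \phibs^{-1} \circ e_{(0,a,0,0)_-} \circ \phibs = \phibs^{-2} \circ e_{(0,0,-t^{-1}a^\iota,0)_+} \circ \phibs^{2}. \]
By \cref{rem:phibs formulas}, $\phibs^2$ is the parity automorphism of the $7$-grading. The space $L_{1,1}$ has degree $2$ there, so \cref{le:exp parity conj} leaves the exponential unchanged, which gives the claim.

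The main obstacle is the first verification. We have to track the nonlinear components $U_{b'}$, $(b')^\sharp$ and $N(b')$ and check that they match under $\psibs$ and the $t$-scaling. The key inputs are $N'(\iota a)=t^{-1}$-type relations and the fact that $\iotainv = \iota^{-1}$. If the sign turns out to be off, it would be corrected by the sign conventions in \cref{def:psibs}.
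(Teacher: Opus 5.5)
Your proposal is correct, but for the first identity it takes a different route from the paper.

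\textbf{What the paper does.} The paper does not factor $\phibs$. It evaluates $\phibs^{-1}\circ e_{(0,0,a',0)_+}\circ\phibs$ directly on the generating set $\phiver(kx+L_1)$, using the explicit formulas for $\phibs^{\pm 1}$ from \cref{rem:phibs formulas}. Only at the end does it apply the $t$-homotopy identities to recognise the result as $e_{(0,-t(a')^\iotainv,0,0)_-}$. The second identity is then deduced exactly as you do, via \cref{rem:phibs formulas,le:exp parity conj}.

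\textbf{Your first step.} Your factorization through $\psibs^{(J',J)}$ and $L(\iota,\iotainv,t)$ splits the computation into two simpler checks, and both go through. The $t$-free identity $e_{(0,0,a',0)_+}\circ\psibs^{(J',J)}=\psibs^{(J',J)}\circ e^{(J',J)}_{(0,-a',0,0)_-}$ holds for every cubic norm pair, with exactly the sign you guessed. So there is nothing to ``correct by conventions'': the conventions are fixed and the computation confirms your sign. On the generating set $\{y\}\cup L(J',J)_{-1}$:
\begin{itemize}
\item on $y$, both sides give $(1,0,0,0)_-+a'+(0,(a')^\sharp,0,0)_+-N(a')y$;
\item on $w=(\nu,c,c',\rho)_-$ (with $c\in J'$, $c'\in J$), both sides give $\psibs^{(J',J)}(w)-T(c',a')(0,0,0,1)_+$.
\end{itemize}

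\textbf{Your second step.} This is not literally \cref{rem:exp CNP notation}, which only covers homomorphisms, so the $t$-version must actually be checked. It does hold for the root $(-1,-1)$, since $L(\iota,\iotainv,t)$ sends $y\mapsto ty$ and $(0,b,0,0)_-\mapsto(0,t^{-1}b^\iota,0,0)_-$.
\begin{itemize}
\item On $y$, the nonlinear terms match because $(t^{-1}b^\iota)^\sharp=t^{-1}(b^\sharp)^\iotainv$ and $t\,N(t^{-1}b^\iota)=t^{-1}N(b)$.
\item On $L_{-1}$, one only needs $T(t^{-1}b^\iota,(c')^\iotainv)=t^{-1}T(b,c')$.
\end{itemize}
Pulling $(0,-a',0,0)_-$ back to $(0,-t(a')^\iotainv,0,0)_-$ then uses $\iotainv=\iota^{-1}$, which holds for the $t$-involution of \cref{conv:J invertible}.

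\textbf{Comparison.} Your route isolates the sign bookkeeping in a short computation valid for all cubic norm pairs. It then handles all $t$-scaling through one naturality statement, the same mechanism the paper uses in \cref{le:phibs short 1}. The paper's route is a single self-contained check that needs no auxiliary naturality lemma, at the cost of mixing the sign and $t$ bookkeeping in one computation.
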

\begin{proof}
	Let $ a' \in J $. We compute the action of $\phibs^{-1} \circ e_{(0, 0. a', 0)_-} \circ \phibs$ on the generating set
	\[ \phiver(kx + L_1) = (k,0,0,k)_- + J' + (0,J,0,0)_+ + ky \]
	of $L$. The module $(0,0,0,k)_-$ is fixed pointwise and the action on the remaining generators is given as follows, where the final equality for each generator holds because $ (\iota, \iotainv) $ is a $ t $-homotopy:
	\begin{align*}
		(1,0,0,0)_- &\mapsto (1,0,0,0)_-, \\
		c' &\mapsto c' + \brackets[\big]{-T((c')^\iotainv, a')t, 0,0,0}_- = c' + \brackets[\big]{T((-ta')^\iotainv, c'), 0, 0, 0}_-, \\
		(0,c,0,0)_+ &\mapsto (0,c,0,0)_+ - (c^\iota \times a')^\iota + \brackets[\big]{tT((a')^\sharp, c^\iota), 0, 0, 0}_- \\
		&\qquad \text{where} \quad tT\brackets[\big]{(a')^\sharp, c^\iota} = T\brackets[\big]{((-ta')^\iotainv)^\sharp, c}, \\
		y &\mapsto y + \brackets[\big]{0, t(a')^\iotainv, 0, 0}_+ - t \brackets[\big]{(a')^\sharp}^\iota + \brackets[\big]{t^2 N(a'), 0, 0, 0}_- \\
		&\qquad \text{where} \quad - t \brackets[\big]{(a')^\sharp}^\iota = -\brackets[\big]{(-ta')^\iotainv}^\sharp, \quad t^2 N(a') = -N\brackets[\big]{(-ta')^\iotainv}.
	\end{align*}
	where $ c \in J $ and $ c' \in J' $.
	The first assertion follows. This implies that
	\begin{align*}
		\phibs^{-1} \circ e_{(0,a, 0, 0)_-} \circ \phibs = \phibs^{-2} \circ e_{(0,0,-t^{-1}a^\iota,0)_+} \circ \phibs^2
	\end{align*}
	for all $a \in J$, so the second assertion follows from \cref{rem:phibs formulas,le:exp parity conj}.
\end{proof}

\begin{lemma}\label{le:phibs short 3}
	We have $ \phibs^{-1} \circ e_a \circ \phibs = e_{(0,0,a^\iota, 0)_-} $ for all $ a \in J $ and $ \phibs^{-1} \circ e_{(0,0,a',0)_-} \circ \phibs = e_{-(a')^\iotainv} $ for all $ a' \in J' $.
\end{lemma}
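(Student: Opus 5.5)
We follow the pattern of \cref{le:phibs short 1,le:phibs short 2}: first prove the identity for $e_a$ with $a \in J$ by a direct evaluation on a generating set. We then deduce the second identity from the first via the parity automorphism $\phibs^2$.

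For the first assertion, fix $a \in J$ and choose the generating set $kx \cup L_1$ of $L$ from \cref{rem:aut equality}. Using the explicit formulas for $\phibs$ and $\phibs^{-1}$ in \cref{rem:phibs formulas} and for $e_a$ in \cref{def:exp JJ'}, we compute $\phibs^{-1} \circ e_a \circ \phibs$ on $x$ and on $(\nu,c,c',\rho)_+$. We then compare the results with the formulas for $e_{(0,0,a^\iota,0)_-}$ from \cref{def:exp L-1}. The relevant data are as follows.
\begin{itemize}
\item[] $\phibs(x) = (0,0,0,t^{-1})_+$ is moved by $e_a$ to $(0,0,0,t^{-1})_+$, since $e_a$ fixes this element (only the $\nu$-entry produces corrections). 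Applying $\phibs^{-1}$ returns $x$, matching $e_{(0,0,a^\iota,0)_-}(x) = x$.
\item[] $\phibs((\nu,c,c',\rho)_+)$ has components in $L_{1,-1}$, $J$, $L_{-1,-1}$ and $L_{-2}$. Applying $e_a$ creates new terms only from the $(\nu,0,0,0)_+$ and $(0,-t(c')^\iotainv,0,0)_-$ parts, namely $(\nu,\nu a, \nu a^\sharp, \nu N(a))_+$ and the analogous terms in $L_{-1}$.
\end{itemize}
Transporting these terms back with $\phibs^{-1}$, we must match them against the expression for $e_{(0,0,a^\iota,0)_-}((\nu,c,c',\rho)_+)$. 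That expression contains $\nu (a^\iota)$ in $J'$, $\dd_{c,a^\iota}$, a $T(c,a^\iota)(\xi-\zeta)$-term, $a^\iota \times c'$ in $J$, the $L_{-1}$-term $(0, -\nu (a^{\iota})^\sharp, U_{a^\iota}(c), \cdot)_-$ and $\nu N(a^\iota)x$.

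The conversions are supplied by $(\iota,\iotainv)$ being a $t$-involution (\cref{def:CNP hom,rem:homotopy times,rem:CNP hom jordan}):
\begin{gather*}
(a^\iota)^\sharp = t^{-1}(a^\sharp)^{\iotainv\,-1}, \qquad N(a^\iota) = tN(a), \qquad T(c,a^\iota)=T(c^\iota,a),\\
\iota(U_a c') = U_{a^\iota}\iota(c'), \qquad (a\times b)^\iota = t^{-1}\, a^\iota\times b^\iota,
\end{gather*}
where the first identity is read in the appropriate direction. The expected obstacle is the $L_{0,0}$-component. Here $\phibs^{-1}(\dd_{(c')^\iotainv{}\ldots})$ produces $-\dd_{\ldots}+T(\cdot,\cdot)(\zeta-\xi)$, and this must be matched with $\dd_{c,a^\iota} + T(c,a^\iota)(\xi-\zeta)$ and the $\zeta$-terms of $e_a$. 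Matching requires tracking signs through $\phibs(\dd_{c,c'}) = -\dd_{(c')^\iotainv,c^\iota} + T(c,c')(\zeta-\xi)$ together with $\phibs(\xi)=-\zeta$ and $\phibs(\zeta)=-\xi$. We will handle this by verifying the equality of the two $L_{0,0}$-elements through their action on $L_{\pm1}$ (\cref{pr:delta z}), so that, as in \cref{le:lin comb}, it reduces to an identity of $D$-operators and traces, which holds by \cref{rem:CNP hom jordan}.

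For the second assertion, the first one gives
\[ \phibs^{-1}\circ e_{(0,0,a',0)_-}\circ\phibs = \phibs^{-2}\circ e_{-(a')^\iotainv}\circ\phibs^{2}, \]
obtained by applying the first identity to $a = -(a')^\iotainv$ (using $\iota^{-1}=\iotainv$) and conjugating. Since $\phibs^2$ is the parity automorphism of the $7$-grading (\cref{rem:phibs formulas}) and $J=L_{0,1}$ lies in odd degree $1$, \cref{le:exp parity conj} turns the right-hand side into $e_{(a')^\iotainv}$ composed with the sign. Carefully fixing the parity sign of $L_{0,1}$ yields $e_{-(a')^\iotainv}$.
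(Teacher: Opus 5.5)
Your overall plan matches the paper's. You verify $\phibs^{-1}\circ e_a\circ\phibs=e_{(0,0,a^\iota,0)_-}$ on a Lie-generating set using the $t$-involution identities. You then obtain the second identity by conjugating once more with $\phibs$ and invoking \cref{le:exp parity conj}. However, two steps fail as written.

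\textbf{The first identity.} We have $\phibs((\nu,c,c',\rho)_+)=(\nu,0,0,0)_+ + c^\iota + (0,-t(c')^\iotainv,0,0)_- - t\rho x$, and the term $c^\iota$ lies in $J'=L_{0,-1}$, not in $J$. Moreover, $e_a$ does not fix $J'$: $e_a(c^\iota)=c^\iota-\dd_{a,c^\iota}+U_a(c^\iota)$. Your bookkeeping discards exactly these two terms. Without them, the summands $\dd_{c,a^\iota}$, $T(c,a^\iota)(\xi-\zeta)$ and $(0,0,U_{a^\iota}(c),0)_-$ of $e_{(0,0,a^\iota,0)_-}((\nu,c,c',\rho)_+)$ have no source, so the comparison cannot close.

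Once these terms are kept, the matching is immediate:
\begin{itemize}
\item $-\phibs^{-1}(\dd_{a,c^\iota})=\dd_{c,a^\iota}+T(a,c^\iota)(\xi-\zeta)$, with $T(a,c^\iota)=T(c,a^\iota)$;
\item $\phibs^{-1}(U_a(c^\iota))=(0,0,\iota(U_a(c^\iota)),0)_-$, with $\iota(U_a(c^\iota))=U_{a^\iota}(c)$ by \cref{rem:CNP hom jordan}.
\end{itemize}
So the $L_{0,0}$-component needs no detour through the action on $L_{\pm1}$ or through $D$-operator identities. The paper avoids this component altogether by evaluating on the generating set $\phibs(kx+L_1)$. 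Since $\phibs^2$ is a parity automorphism, $\phibs$ sends this set back into $kx+L_1$, where $e_a$ creates no $L_0$-terms.

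Your conversion formulas should also read:
\begin{itemize}
\item $(a^\iota)^\sharp=t\,(a^\sharp)^\iotainv$;
\item $\iota(U_a(a'))=U_{a^\iota}((a')^\iotainv)$;
\item $(a\times b)^\iotainv=t^{-1}\,a^\iota\times b^\iota$.
\end{itemize}

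\textbf{The second identity.} The sign in your deduction is wrong. Substituting $a=-(a')^\iotainv$ into the first identity gives the statement for $-a'$, not for $a'$. The correct substitution is $a=(a')^\iotainv$, which gives $\phibs^{-1}\circ e_{(0,0,a',0)_-}\circ\phibs=\phibs^{-2}\circ e_{(a')^\iotainv}\circ\phibs^{2}$. Since $\phibs^2=-\id$ on $J=L_{0,1}$ (odd $7$-degree), \cref{le:exp parity conj} then yields $e_{-(a')^\iotainv}$. Starting from your displayed intermediate step, the same parity argument would yield $e_{(a')^\iotainv}$, which has the wrong sign. Your closing appeal to carefully fixing the parity sign therefore hides an error rather than resolving it.
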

\begin{proof}
	Let $ a \in J $. We compute the action of $\phibs^{-1} \circ e_a \circ \phibs$ on the generating set
	\[ \phibs(kx + L_1) = (k,0, 0, k)_- + kx + (0,J,0,0)_- + J' \]
	of $ L $. The module $ (0,0,0,k)_+ $ is fixed pointwise.
	The action on the remaining part is given by
	\begin{align*}
		&\rho x + (0,c,0,0)_- + c' + (\nu, 0, 0, 0)_+ \\
		&\qquad{} \mapsto (\nu, 0, 0, 0)_+ + \brackets[\big]{c' - \nu a^\iota} + \brackets[\big]{0, c + t (a \times (c')^\iotainv)^\iota - t \nu (a^\sharp)^\iotainv, 0, 0}_- \\
		&\qquad\qquad{} + \brackets[\big]{\rho - T(a,c^\iota) - t T\brackets[\big]{(c')^\iotainv, a^\sharp} + t\nu N(a)}
	\end{align*}
	for all $ \rho, \nu \in k $, $ c \in J $ and $ c' \in J' $. Here, since $ (\iota, \iotainv) $ is a $ t $-homotopy,
	\begin{gather*}
		t \brackets[\big]{a \times (c')^\iotainv}^\iota = a^\iota \times c', \qquad -t\nu (a^\sharp)^\iotainv = -\nu (a^\iota)^\sharp, \\
		-T(a,c^\iota) = -T(a^\iota, c), \quad -tT\brackets[\big]{(c')^\iotainv, a^\sharp} = -T\brackets[\big]{(a^\iota)^\sharp, c'}, \quad t\nu N(a) = \nu N(a^\iota).
	\end{gather*}
	The first assertion follows. Hence
	\begin{align*}
		\phibs^{-1} \circ e_{(0,0,a',0)} \circ \phibs = \phibs^{-2} \circ e_{(a')^\iotainv} \circ \phibs^2 = e_{-(a')^\iotainv}
	\end{align*}
	for all $ a' \in J' $ by \cref{rem:phibs formulas,le:exp parity conj}.
\end{proof}

On the long root groups, the required computations are much shorter.

\begin{lemma}\label{le:phibs long 1}
	For all $\nu, \rho \in k$, we have
	\begin{align*}
		\phibs^{-1} \circ e_{(0,0,0,\rho)_-} \circ \phibs = e_{(0,0,0,\rho)_-} \quad \text{and} \quad \phibs^{-1} \circ e_{(\nu,0,0,0)_+} \circ \phibs = e_{(\nu,0,0,0)_+}.
	\end{align*}
\end{lemma}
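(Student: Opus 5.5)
We prove the first identity directly and derive the second from it by the parity trick, exactly as in \cref{le:phibs short 1,le:phibs short 2,le:phibs short 3}. By \cref{rem:aut equality}, two endomorphisms of $L$ coincide once they agree on a generating set. Since $\phibs$ is a $(-1,-1)$-reflection, it is convenient to evaluate both sides on the generating set $\phibs(kx+L_1)$, which by \cref{rem:phibs formulas} equals $(k,0,0,k)_- + kx + (0,J,0,0)_- + J'$. On this set the conjugated map $\phibs^{-1}\circ e_{(0,0,0,\rho)_-}\circ\phibs$ acts as $\phibs^{-1}\circ e_{(0,0,0,\rho)_-}$ applied to $x$ and to $L_1$. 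So the plan is to compute $e_{(0,0,0,\rho)_-}$ on $x$ and on $(\nu,c,c',\rho')_+$ using \cref{def:exp L-1} with $\lambda=b=b'=0$. Then we apply the explicit formula for $\phibs^{-1}$ from \cref{rem:phibs formulas}, which is available because $\iota^{-1}=\iotainv$ under \cref{conv:J invertible}, and compare the result with $e_{(0,0,0,\rho)_-}$ evaluated on the corresponding generators.

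The computation is short, because with only $\mu=\rho$ nonzero almost all terms in \cref{def:exp L-1} vanish:
\[ e_{(0,0,0,\rho)_-}(x)=x, \qquad e_{(0,0,0,\rho)_-}\bigl((\nu,c,c',\rho')_+\bigr) = (\nu,c,c',\rho')_+ - \rho\nu\,(\xi-\zeta) - \rho c - \rho^2\nu\,(0,0,0,1)_-. \]
Applying $\phibs^{-1}$ to these outputs, the scalar weights cancel: $t$ only ever enters through $(\iota,\iotainv,t)$, and $\phibs^{-1}$ sends the $(0,0,0,\ast)_-$ component to itself while exchanging $\xi$ and $\zeta$ up to sign. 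We expect this to match $e_{(0,0,0,\rho)_-}$ on $\phibs(x)$ and $\phibs(L_1)$ term by term. Here $\phibs(x) = (0,0,0,t^{-1})_+$, and $\phibs\bigl((\nu,c,c',\rho')_+\bigr)$ lies in $(k,0,0,0)_+ + J + (0,J,0,0)_- + kx$. This fits the geometry: the root $(-1,1)$ carrying $(0,0,0,\ast)_-$ is orthogonal to $(-1,-1)$, so it is fixed by $\refl{(-1,-1)}$, which is why no sign or $t$ appears.

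The second identity concerns $(\nu,0,0,0)_+\in L_{1,-1}$, the negative of the root $(-1,1)$. It is handled the same way, or more quickly by a symmetry. Conjugating the first identity by $\phihor$ and using \cref{le:exp L-1 conj inv} gives $\phihor^{-1}\circ e_{(0,0,0,\rho)_-}\circ\phihor = e_{-(0,0,0,\rho)_+}$, but that moves to the wrong root, so the symmetry route is less direct. We will therefore simply repeat the direct generator computation with \cref{def:exp L1}, where $\lambda=\nu$ is the only nonzero entry, evaluating on the same generating set. The main obstacle is pure bookkeeping: making sure that the $\xi$- and $\zeta$-terms and the $t$-factors from $\phibs^{\pm1}$ cancel exactly. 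No cubic norm pair identities are needed, since $N$, $\sharp$ and $U$ never appear for these long roots.
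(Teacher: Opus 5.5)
Your route is correct, but it differs from the paper's. The paper does no computation. By \cref{le:phibs prod}, $\phibs^{-1}=e_{(0,0,-b',0)_+}\circ e_{(0,b,0,0)_-}\circ e_{(0,0,-b',0)_+}$ is a product of elements of $U_{(1,1)}$ and $U_{(-1,-1)}$. The roots $(-1,1)$ and $(1,-1)$ are adjacent to both $(1,1)$ and $(-1,-1)$, so \cref{le:trivial comrels} shows that each factor commutes with $e_{(0,0,0,\rho)_-}$ and with $e_{(\nu,0,0,0)_+}$. This is also the precise form of your orthogonality heuristic: it is the commuting of adjacent root groups that rules out any sign or factor of $t$.

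Your direct check does go through. It is cleanest to verify $\phibs\circ e=e\circ\phibs$ on the generating set $kx+L_1$, which needs neither the formula for $\phibs^{-1}$ nor $\iota^{-1}=\iotainv$. For $z=(\nu,c,c',\rho')_+$ and the first identity, both sides give $\phibs(z)-\rho\nu(\xi-\zeta)+(0,0,\rho c^\iota,0)_--\rho^2\nu(0,0,0,1)_-$. For the second identity, writing $e=e_{(\nu_0,0,0,0)_+}$, both sides give $\phibs(z)-t\rho'\nu_0(1,0,0,0)_-$. On $x$ the two sides agree by a one-line check.

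Your version does not depend on \cref{le:phibs prod} or on the specific involution of \cref{conv:J invertible}. It works for $\phibs^{(\iota,\iotainv,t)}$ with any $t$-isotopy $(\iota,\iotainv)\colon(J,J')\to(J',J)$. The paper's version is shorter and explains why the identities hold, given the factorization it has already proved.

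Three slips in the write-up. First, your opening sentence says the second identity follows from the first by the parity trick. It cannot: $\refl{(-1,-1)}$ fixes $(-1,1)$, so \cref{le:exp parity conj} only relates $U_{(-1,1)}$ to itself and never reaches $U_{(1,-1)}$. You rightly drop this idea later. Second, $\phibs(kx+L_1)=(k,0,0,k)_++kx+(0,J,0,0)_-+J'$, not $(k,0,0,k)_-+\dots$, because the images of $x$ and of $(\nu,0,0,0)_+$ lie in $L_1$. Third, $\phibs(L_1)$ involves $J'$ rather than $J$, since $c^\iota\in J'$.
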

\begin{proof}
	This follows from \cref{le:phibs prod,le:trivial comrels} because the roots $(-1,1)$ and $(1,-1)$ are adjacent to $(1,1)$ and $(-1,-1)$.
\end{proof}

\begin{lemma}\label{le:phibs long 2}
	We have $ \phibs^{-1} \circ e_{\lambda x} \circ \phibs = e_{(0,0,0,-t^{-1} \lambda)_+} $ and $ \phibs^{-1} \circ e_{(0,0,0,\lambda)_+} \circ \phibs = e_{t\lambda x} $ for all $ \lambda \in k $.
\end{lemma}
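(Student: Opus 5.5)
The plan mirrors the proofs of \cref{le:phibs short 1,le:phibs short 2,le:phibs short 3}. We verify the first identity directly on a generating set of $L$. Then we deduce the second identity from the first by the parity argument with $\phibs^2$.

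For the first identity, fix $\lambda \in k$ and compute the action of $\phibs^{-1} \circ e_{\lambda x} \circ \phibs$ on a generating set adapted to $\phibs$. By \cref{rem:aut equality} it suffices to work with the image under $\phibs$ (or $\phibs^{-1}$) of $\{y\} \cup L_{-1}$ or of $kx + L_1$. Using \cref{rem:phibs formulas}, $\phibs^{-1}(kx+L_1)$ consists of elements of $(0,0,0,k)_+$, $ky$, $(0,0,J,0)_+$, $J'$ and $(k,0,0,0)_-$-type pieces. Concretely, $\phibs(x) = (0,0,0,t^{-1})_+$, and $\phibs$ sends $(\nu,c,c',\rho)_+$ to $(\nu,0,0,0)_+ + c^\iota + (0,-t(c')^\iotainv,0,0)_- - t\rho x$.

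Most components are fixed by $e_{\lambda x}$. By \cref{def:exp xy}, $e_{\lambda x}$ acts non-trivially only on $\xi$, $\zeta$, $L_1$ and $y$. So along the chain $\phibs$, then $e_{\lambda x}$, then $\phibs^{-1}$ only a few terms survive: those from $(\nu,0,0,0)_+$ and those producing $y$. We push these through with the inverse formulas in \cref{rem:phibs formulas} and compare with the explicit formula for $e_{(0,0,0,-t^{-1}\lambda)_+}$ in \cref{def:exp L1}. With $b=b'=0$ and $\lambda=0$, that formula reduces to
\begin{itemize}
\item $x \mapsto x + (0,0,0,\mu)_-$,
\item $(\nu,c,c',\rho)_- \mapsto (\cdots) - \nu\mu\zeta + \nu\mu^2(0,0,0,1)_+ + \mu c$,
\item $\zeta \mapsto \zeta - 2\mu(0,0,0,1)_+$,
\end{itemize}
and so on, where $\mu = -t^{-1}\lambda$.

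The only real work is tracking the scalars $t^{\pm1}$ and the signs. The identities needed are only that $(\iota,\iotainv)$ is a $t$-involution, plus elementary facts such as $T(c^\iota,(c')^\iotainv) = T(c,c')$. No deeper identities of cubic norm pairs should be required, since long root groups involve no $\sharp$ or $N$.

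For the second identity, apply the first with $-t\lambda$ in place of $\lambda$:
\[
\phibs^{-1} \circ e_{(0,0,0,\lambda)_+} \circ \phibs = \phibs^{-2} \circ e_{-t\lambda x} \circ \phibs^{2}.
\]
By \cref{rem:phibs formulas}, $\phibs^2$ is the parity automorphism of the $7$-grading $(L^\backslasharrow_i)$. The root $(-2,-1)$ lies in degree $-3$, so $\phibs^2(x) = -x$. \Cref{le:exp parity conj} then gives $\phibs^{-2} \circ e_{-t\lambda x} \circ \phibs^2 = e_{t\lambda x}$, which is the claim.

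The main obstacle is the bookkeeping in the first computation. One has to choose the generating set so that each generator's path through the three maps is short, and keep the signs of $\phibs$ versus $\phibs^{-1}$ straight. I would first try the generating set $\phibs^{-1}(kx + L_1)$, so that the middle step applies $e_{\lambda x}$ to the simple elements $x$ and $L_1$.
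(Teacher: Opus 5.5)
Your proposal is correct and follows the paper's route. You verify the first identity on a generating set, then obtain the second by conjugating with $\phibs^2$, the parity automorphism of the $7$-grading, via \cref{le:exp parity conj}. The paper keeps the check minimal by using the generating set $\phibs^{-1}(ky+L_{-1})$: there $\phibs^{-1}\circ e_{\lambda x}\circ\phibs$ is the identity except on $\phibs^{-1}(ky)=(k,0,0,0)_-$, so only one element has to be computed. Note also that $\phibs^{-1}(kx+L_1)$ is $kx+(0,J,0,0)_-+J'+(k,0,0,k)_+$, not the set you listed; this does not affect the method, since any image of a generating set under an automorphism still generates.
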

\begin{proof}
	Let $ \lambda \in k $. Since $ e_{\lambda x} $ is the identity on $ L_{-1} $, $ \phibs^{-1} \circ e_{\lambda x} \circ \phibs $ is the identity on $ \phibs^{-1}(L_{-1}) $. Further, $ \phibs^{-1} \circ e_{\lambda x} \circ \phibs $ acts on $ (k,0,0,0)_- = \phibs^{-1}(ky) $ by
	\begin{align*}
		(1,0,0,0)_- &\mapsto (1,0,0,0)_- + t^{-1} \lambda \zeta + (0,0,0,t^{-2} \lambda^2)_+.
	\end{align*}
	Since $ \phibs^{-1}(ky + L_{-1}) $ generates $ L $, the first assertion follows. Hence
	\begin{align*}
		\phibs^{-1} \circ e_{(0,0,0,\lambda)_+} \circ \phibs &= \phibs^{-2} \circ e_{-t\lambda x} \circ \phibs^2 = e_{t\lambda x}.
	\end{align*}
	by \cref{rem:phibs formulas,le:exp parity conj}.
\end{proof}

\begin{lemma}\label{le:phibs long 3}
	We have $ \phibs^{-1} \circ e_{\lambda y} \circ \phibs = e_{(-t\lambda, 0, 0, 0)_-} $ and $ \phibs^{-1} \circ e_{(\lambda, 0, 0, 0)_-} \circ \phibs = e_{t^{-1} \lambda y} $ for all $ \lambda \in k $.
\end{lemma}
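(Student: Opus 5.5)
The plan mirrors the proof of \cref{le:phibs long 2}. First I prove the formula for $e_{\lambda y}$ directly, and then I deduce the formula for $e_{(\lambda,0,0,0)_-}$ from it using the parity automorphism $\phibs^2$.

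\emph{First assertion.} Fix $\lambda \in k$. By \cref{def:exp xy}, $e_{\lambda y}$ is the identity on $L_1 + L_2$. Hence $\phibs^{-1} \circ e_{\lambda y} \circ \phibs$ is the identity on $\phibs^{-1}(L_1 + L_2)$. Reading off \cref{rem:phibs formulas}, $\phibs^{-1}(L_1)$ is spanned by $(k,0,0,0)_+$, $J$, $(0,J,0,0)_-$ and $kx$, and $\phibs^{-1}(y) = (-t,0,0,0)_-$. I would choose the generating set $\phibs^{-1}(kx + L_1)$ of $L$ (this is valid by \cref{rem:aut equality}). On this set only the element $\phibs^{-1}(x) = (0,0,0,-t^{-1})_+$ moves, so only its image needs to be computed.

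I apply $\phibs$ to get $x$, then $e_{\lambda y}$ to get $x - \lambda\xi + \lambda^2 y$, then $\phibs^{-1}$. Using $\phibs^{-1}(\xi) = -\zeta$ and $\phibs^{-1}(y) = (-t,0,0,0)_-$, the result is
\[
(0,0,0,-t^{-1})_+ + \lambda \zeta + (-t\lambda^2,0,0,0)_- .
\]
I then compare this with \cref{def:exp L-1} for $a = (-t\lambda,0,0,0)_-$. On $(\nu,c,c',\rho)_+$ with $\nu = c = c' = 0$, $\rho = -t^{-1}$, that map gives
\[
(0,0,0,\rho)_+ + (-t\lambda)\rho\,\zeta + (\rho t^2\lambda^2,0,0,0)_- ,
\]
which agrees with the element above. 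It also fixes every other generator: it fixes $x$, $(0,c,0,0)_-$ and $J$ by inspection (the $\lambda$-slot terms vanish), and it fixes $(\nu,0,0,0)_+$ because $\mu = \rho = 0$ there. This proves $\phibs^{-1} \circ e_{\lambda y} \circ \phibs = e_{(-t\lambda,0,0,0)_-}$. The only delicate point is sign bookkeeping in \cref{def:exp L-1} (the $(\lambda\rho - T(b,c'))\zeta$ term and the $\rho\lambda^2$ first coordinate), which I would double-check.

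\emph{Second assertion.} Applying the first assertion with $-t^{-1}\lambda$ in place of $\lambda$ gives
\[
\phibs^{-1} \circ e_{(\lambda,0,0,0)_-} \circ \phibs = \phibs^{-2} \circ e_{-t^{-1}\lambda y} \circ \phibs^{2}.
\]
By \cref{rem:phibs formulas}, $\phibs^2$ is the parity automorphism of the $7$-grading. In that grading, $y \in L_{2,1}$ has degree $3$, so $\phibs^2(y) = -y$. \Cref{le:exp parity conj} then turns the right-hand side into $e_{t^{-1}\lambda y}$, as claimed.
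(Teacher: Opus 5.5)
Your argument is the same as the paper's: the conjugate is the identity on $\phibs^{-1}(L_1)$, and a single computation on $\phibs^{-1}(x)=(0,0,0,-t^{-1})_+$ identifies it with $e_{(-t\lambda,0,0,0)_-}$. The second formula then follows from the first via the parity automorphism $\phibs^2$, using $\phibs^2(y)=-y$. Your computation on $\phibs^{-1}(x)$ and your derivation of the second assertion are both correct.

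There is one bookkeeping slip. By the formula for $\phibs^{-1}$ on $(\nu,c,c',\rho)_+$, the part $\phibs^{-1}(L_{1,0})$ consists of the elements $-c^\iota$ with $c\in J$. So it is $J'=L_{0,-1}$, not $J$. Your claim that $e_{(-t\lambda,0,0,0)_-}$ fixes $J$ is in fact false: it sends $c\mapsto c+(0,t\lambda c,0,0)_-$. However, it does fix every $c'\in J'$, because $b=b'=0$ and $\mu=0$. So the verification goes through once you replace $J$ by $J'$ in your list of generators.
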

\begin{proof}
	Let $ \lambda \in k $. Since $ e_{\lambda y} $ is the identity on $ L_1 $, $ \phibs^{-1} \circ e_{\lambda y} \circ \phibs $ is the identity on $ \phibs^{-1}(L_1) $. Further, $ \phibs^{-1} \circ e_{\lambda y} \circ \phibs $ acts on $ (0,0,0,k) = \phibs^{-1}(kx) $ by
	\begin{align*}
		(0,0,0,1)_+ &\mapsto (0,0,0,1) - t\lambda \zeta + (t^2 \lambda^2, 0, 0, 0)_-.
	\end{align*}
	Since $ \phibs^{-1}(ky+L_{-1}) $ generates $ L $, the first assertion follows. Thus
	\begin{align*}
		\phibs^{-1} \circ e_{(\lambda, 0, 0, 0)_-} \circ \phibs &= \phibs^{-2} \circ e_{-t^{-1} \lambda y} \circ \phibs^{2} = e_{t^{-1} \lambda y}
	\end{align*}
	by \cref{rem:phibs formulas,le:exp parity conj}.
\end{proof}

\begin{proposition}\label{le:phibs weyl}
	Assume that there exists $ b \in J $ such that $ N(b) $ is invertible and define $(\iota, \iotainv, t)$ as in \cref{conv:J invertible}. Then $ \phibs^{(\iota, \iotainv, t)} $ is a $(-1,-1)$-Weyl element with respect to $ (U_\alpha)_{\alpha \in G_2} $.
\end{proposition}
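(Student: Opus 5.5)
We have to check the two conditions in Definition~\ref{def:rgg}\cref{def:rgg:weyl} for the root $(-1,-1)$: that $\phibs$ lies in $U_{(1,1)} U_{(-1,-1)} U_{(1,1)}$, and that $U_\beta^{\phibs} = U_{\beta^{\reflbr{(-1,-1)}}}$ for every $\beta \in G_2$. The first condition comes from \cref{le:phibs prod}. It gives
\[ \phibs^{-1} = e_{(0,0,-b',0)_+} \circ e_{(0,b,0,0)_-} \circ e_{(0,0,-b',0)_+}, \]
whose outer factors lie in $U_{(1,1)}$ and whose middle factor lies in $U_{(-1,-1)}$. Taking inverses, and using $e_a^{-1} = e_{-a}$ (additivity of the parametrizations from \cref{pr:param}), we get $\phibs \in U_{(1,1)} U_{(-1,-1)} U_{(1,1)}$. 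Since $M_\alpha = M_{-\alpha}$ by \cref{rem:weyl basic}, and since the defining permutation condition is the same for $\alpha$ and $-\alpha$, this membership suffices. We may also note $M_\alpha = M_\alpha^{-1}$, so it is enough to treat $\phibs^{-1}$.

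For the second condition, we recall that conjugation acts on the right, $U^w = w^{-1} U w$, and that automorphisms act from the left. Thus $U_\beta^{\phibs} = \phibs^{-1} \circ U_\beta \circ \phibs$, which is exactly the form of the preceding lemmas. We go through the twelve roots of $G_2$ and read off the image root group in each case. \Cref{le:phibs short 1,le:phibs short 2,le:phibs short 3} handle the six short roots: $L_{1,0} \leftrightarrow L_{0,-1}$, $L_{0,1} \leftrightarrow L_{1,0}$, $L_{1,1} \leftrightarrow L_{-1,-1}$ and so on. \Cref{le:phibs long 1,le:phibs long 2,le:phibs long 3} handle the six long roots, namely $L_{\pm(-1,1)}$ fixed and $L_{-2,-1} \leftrightarrow L_{1,2}$, $L_{2,1} \leftrightarrow L_{-1,-2}$.

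In each case the conjugated map is $e_{a}$ for an element $a$ obtained from the original parameter by a bijection: multiplication by $\pm t^{\pm1}$, or application of $\iota$ or $\iotainv$, possibly combined with a sign. So conjugation maps $U_\beta$ onto the root group of the target root, with equality rather than mere inclusion.

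It remains to confirm that each target root is $\beta^{\reflbr{(-1,-1)}}$. \Cref{rem:reflections on subspaces} shows that $\psibs$ realizes the reflection $\refl{(1,1)} = \refl{(-1,-1)}$ on root spaces, and $L(\iota,\iotainv,t)$ preserves each $L_\alpha$. Hence $\phibs(L_\alpha) = L_{\alpha^{\reflbr{(1,1)}}}$, and the indices in the lemmas agree with this reflection. This is routine bookkeeping, and we expect the only real care to be needed in keeping the left/right conventions straight, together with the identity $M_\alpha = M_{-\alpha}$. With both conditions verified, $\phibs \in M_{(-1,-1)}$.
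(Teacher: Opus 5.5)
Your proposal is correct and follows the paper's proof: \cref{le:phibs prod} gives $\phibs \in U_{(1,1)}U_{(-1,-1)}U_{(1,1)}$, and \cref{le:phibs short 1,le:phibs short 2,le:phibs short 3,le:phibs long 1,le:phibs long 2,le:phibs long 3} give the required permutation of the root groups. Two small corrections: $U_{(1,1)}U_{(-1,-1)}U_{(1,1)}$ is already $U_{-\alpha}U_\alpha U_{-\alpha}$ for $\alpha=(-1,-1)$, so the detour through $M_\alpha = M_{-\alpha}$ is unnecessary; and \cref{le:phibs short 3} pairs $L_{0,1}$ with $L_{-1,0}$, not with $L_{1,0}$, which is already paired with $L_{0,-1}$.
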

\begin{proof}
	This follows from \cref{le:phibs prod,le:phibs long 1,le:phibs long 2,le:phibs long 3,le:phibs short 1,le:phibs short 2,le:phibs short 3}.
\end{proof}

\begin{proposition}\label{pr:G2 weyl}
	Assume that there exists $p \in J$ such that $N(p)$ is invertible. Then for all $ \alpha \in G_2 $, there exists an $ \alpha $-Weyl element with respect to $ (U_\alpha)_{\alpha \in G_2} $.
\end{proposition}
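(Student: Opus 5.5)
The plan is to reduce everything to two explicit Weyl elements and then spread them over all of $G_2$ with \cref{rem:weyl basic}. By \cref{conv:J invertible}, applied with $b \coloneq p$, we have $t = N(p)$ invertible and the $t$-involution $(\iota, \iotainv) = (N(p) U_{p'}, N(p)^{-1} U_p)$ of $(J,J')$ from \cref{rem:isotopes}, where $p' = N(p)^{-1}p^\sharp$. Hence the reflection $\phibs = \phibs^{(\iota,\iotainv,t)}$ of \cref{def:phibs} is defined.

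\begin{enumerate}
\item Two Weyl elements are already available:
\begin{itemize}
\item $\phihor$ is a $(2,1)$-Weyl element by \cref{le:phihor weyl};
\item $\phibs$ is a $(-1,-1)$-Weyl element by \cref{le:phibs weyl}.
\end{itemize}
By \cref{rem:weyl basic}, $M_\alpha = M_{-\alpha}$, so $\phibs$ also serves as a $(1,1)$-Weyl element.

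\item The pair $\{(2,1),(-1,-1)\}$, equivalently $\{(2,1),(1,1)\}$ up to sign, should form a system of simple roots of $G_2$. To check this, I use the identification $\omega$ of \cref{rem:G2 grid}, which sends $\gamma \mapsto (-2,-1)$ and $\delta \mapsto (1,0)$. Then $(2,1)=-\gamma$ and $(-1,-1)=\gamma+\delta$.
\begin{itemize}
\item $-\gamma$ is long and $\gamma+\delta$ is short.
\item Their inner product is $-\gamma\cdot\gamma - \gamma\cdot\delta$. With short roots of squared length $2$, so that $\gamma\cdot\gamma = 6$ and $\gamma\cdot\delta = -3$, this equals $-6+3 = -3$.
\end{itemize}
This angle of $150^\circ$ between a long and a short root is exactly the configuration of a simple system in $G_2$. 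Alternatively, one can verify directly that every root in $\omega(G_2)$ is an integer combination of $(2,1)$ and $(-1,-1)$ with coefficients of constant sign.

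\item Apply the final statement of \cref{rem:weyl basic}. Its hypothesis is only that the family $(U_\alpha)_{\alpha\in G_2}$ consists of subgroups, and these are defined via \cref{def:G2 rootgr} and \cref{pr:param}. From $M_\delta \neq \emptyset$ for both simple roots $\delta$ it then follows that $M_\alpha \neq \emptyset$ for all $\alpha \in G_2$.
\end{enumerate}

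The only step requiring any care is confirming that this pair of roots is a simple system, as opposed to some other pair of roots at $150^\circ$; the inner-product computation above handles this. All the genuine computational work was already done in \cref{le:phibs prod} and the lemmas \cref{le:phibs short 1,le:phibs long 2} and their companions.
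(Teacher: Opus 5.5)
Your proposal is correct and follows the paper's proof exactly: \cref{le:phihor weyl,le:phibs weyl} give Weyl elements for the simple system $\{(2,1),(-1,-1)\}$, and \cref{rem:weyl basic} extends this to all of $G_2$. Your explicit check that this pair is a simple system of $G_2$ is a detail the paper leaves implicit, and your check is sound.
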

\begin{proof}
	By \cref{le:phibs weyl,le:phihor weyl}, there exist Weyl elements for the system $\{(2,1), (-1,-1)\}$ of simple roots in $G_2$. The assertion now follows from \cref{rem:weyl basic}.
\end{proof}

\begin{sidewaystable}
\vspace{70ex}
\resizebox{\textwidth}{!}{	
	
\renewcommand{\arraystretch}{2.5}

$
\begin{array}{c||c|c|ccccc|c|c|}
	\varphi_{\leftrightarrow} & x & (\nu, c, c', \rho)_- & c' & \dd_{e,e'} & \xi & \zeta & c & (\nu, c, c', \rho)_+ & y \\
	\hline \hline
	x & 0 &  &  &  &  &  &  &  &  \\
	\hline
	(\lambda, b, b', \mu)_- & 0 & \bigl( T(b, c') - T(c, b') + \mu \nu - \lambda \rho \bigr) y &  &	&  &  &  &	&  \\
	\hline
	b' & 0 & - \bigl( T(c, b'), \ b' \times c', \ \rho b', \ 0 \bigr)_+ & 0 &  &  &  &  &  &  \\
	\dd_{a,a'} & 0 &
			\makecell*[l]{\bigl( - \nu T(a, a'), \\[.3ex] \quad D_{a,a'}(c) - T(a, a')c, \\[.3ex] \quad - D_{a',a}(c') + T(a, a')c', \\[.3ex] \hspace*{15ex} \rho T(a, a') \bigr)_+}
		& -D_{a', a}(c') & \dd_{D_{a,a'}(e), e'} - \dd_{e, D_{a',a}(e')} &  &  &  & &  \\
	\xi & -2y & -(\nu, c, c', \rho)_- & 0 & 0 & 0 &  &  &  & \\
	\zeta & -y & (-2 \nu, - c, 0, \rho)_+ & -c' & 0 & 0 & 0 & &  &  \\
	b & 0 & \bigl( 0, \ \nu b, \ b \times c, \ T(b, c') \bigr)_+ & - \dd_{b, c'} & -D_{e,e'}(b) & 0 & -b & 0 &  & \\
	\hline
	(\lambda, b, b', \mu)_+ & (\lambda, b, b', \mu)_+ &
		\makecell*[l]{\ \bigl( \lambda c' - b \times c + \nu b' \bigr) \\[.8ex]
		\quad {} + (T(c, b') - \mu \nu) \cdot (\zeta-\xi) \\
		\quad {} + (\lambda \rho - T(b, c')) \cdot \zeta \\
		\quad {} - \dd_{b, c'} - \dd_{c, b'} \\[.6ex]
		\hspace*{9ex} {}+ \bigl( \rho b - b' \times c' + \mu c \bigr) }
		& \bigl( -T(b, c'), \ -c' \times b', \ -\mu c', \ 0 \bigr)_- &
		\makecell*[l]{\bigl( -\lambda T(e, e'), \\[.3ex] \quad D_{e,e'}(b) - T(e, e')b, \\[.3ex] \quad -D_{e',e}(b') + T(e, e')b', \\[.3ex] \hspace*{12ex} \mu T(e, e') \bigr)_-}

		& (\lambda, b, b', \mu)_- & (-\lambda, 0, b', 2 \mu)_- & \bigl( 0, \ \lambda c, \ c \times b, \ T(c, b') \bigr)_- & \bigl( T(b, c') - T(c, b') + \mu \nu - \lambda \rho \bigr) x & \\
	\hline
	y & \xi & -(\nu, c, c', \rho)_- & 0 & 0 & -2x & -x & 0 & 0 & 0 \\
	\hline
\end{array}
$

}
\bigskip
\caption{Computation in \cref{le:reflections auto}: The entry at row $a$ and column $b$ shows $\phihor([a,b])=[\phihor(a), \phihor(b)]$}\label{ta:phihor auto}
\end{sidewaystable}

\begin{sidewaystable}
\vspace{70ex}
\resizebox{\textwidth}{!}{	
	
\renewcommand{\arraystretch}{2.5}

$
\begin{array}{c||c|c|ccccc|c|c|}
	\varphi_{\leftrightarrow} & x & (\nu, c, c', \rho)_- & c' & \dd_{e,e'} & \xi & \zeta & c & (\nu, c, c', \rho)_+ & y \\
	\hline \hline
	x & 0 &  &  &  &  &  &  &  &  \\
	\hline
	(\lambda, b, b', \mu)_- & 0 & \bigl( T(b, c') - T(c, b') + \mu \nu - \lambda \rho, 0 ,0,0 \bigr)_- &  &	&  &  &  &	&  \\
	\hline
	b' & 0 & T(c,b')x + \bigl( 0, \ -b' \times c', \ 0, \ 0 \bigr)_- - \rho b' & 0 &  &  &  &  &  &  \\
	\dd_{a,a'} & 0 &
			\makecell*[l]{\nu T(a, a')x \\[.3ex] \mathord{}+\bigl( 0, \ D_{a,a'}(c) - T(a,a')c, \ 0, \ 0 \bigr)_- \\[.3ex] \mathord{} - D_{a',a}(c') + T(a, a')c' \\[.3ex] \mathord{} + \bigl( -\rho T(a, a'), \ 0, \ 0, \ 0 \bigr)_+}
		& \bigl( 0, \ 0, \ D_{a', a}(c'), \ 0 \bigr)_- & \makecell*[c]{\dd_{D_{a,a'}(e), e'} \\[.3ex] \quad {}- \dd_{e, D_{a',a}(e')}} &  &  &  & &  \\
	\xi & (-2,0,0,0)_- & \nu x - (0,c,0,0)_- -c' + (\rho, 0, 0, 0)_+ & 0 & 0 & 0 &  &  &  & \\
	\zeta & (-1,0,0,0)_- & 2\nu x - (0,c,0,0)_-  - (\rho,0,0,0)_+ & (0,0,c',0)_- & 0 & 0 & 0 & &  &  \\
	b & 0 & (0,\nu b, 0, 0)_- + b \times c - \bigl(T(b,c'),0,0,0\bigr)_+ & - \dd_{b, c'} + T(b,c') (\zeta-\xi) & \bigl(0,-D_{e,e'}(b),0,0\bigr)_+ & 0 & (0,b,0,0)_+ & 0 &  & \\
	\hline
	(\lambda, b, b', \mu)_+ & \makecell*[l]{-\lambda x + (0,b,0,0)_- \\[.3ex] \mathord{}+ b' - (\mu, 0, 0, 0)_+} &
		\makecell*[l]{\bigl(0, 0, -\lambda c' + b \times c - \nu b', 0\bigr)_- \\[.3ex]
		\quad {} + (T(c, b') - \mu \nu) \cdot \xi \\
		\quad {} + (\lambda \rho + T(c,b')) \cdot (\zeta-\xi) \\
		\quad {} - \dd_{b, c'} - \dd_{c, b'} \\[.6ex]
		\quad {}+ \bigl(0,  -\rho b + b' \times c' - \mu c, 0, 0 \bigr)_+ }
		& \makecell*[l]{\bigl(0,0,0,-T(b,c')\bigr)_- \\[.3ex] \quad {}+c' \times b' \\[.3ex] \quad {}+ (0,0,\mu c', 0)_+} &
		\makecell*[l]{\bigl(0,0,0, -\lambda T(e,e')\bigr)_- \\ \quad {} -D_{e,e'}(b) + T(e,e')b \\ \quad {}+ \bigl(0,0, D_{e',e}(b') - T(e,e')b', 0\bigr)_+ \\ \quad {} + \mu T(e,e') y}
		& \makecell*[l]{(0,0,0,\lambda)_- \\ \; {} -b \\ \; {} + (0,0,-b',0)_+ \\ \; {} +\mu y} & \makecell*[l]{(0,0,0,-\lambda)_- \\ \; {} + (0,0,-b',0)_+ \\ \; {} + 2\mu y} & \makecell*[l]{(0,0,-c \times b, 0)_+ \\ \; {} -\lambda c \\ \; {} -T(c,b') y} & \makecell*[l]{\bigl( 0,0,0, \\ \quad T(b,c') - T(c,b') \\ \quad {} + \mu\nu - \lambda\rho\bigr)_+} & \\
	\hline
	y & -\zeta & -(0,0,0,\nu)_- + c  + (0,0,c',0)_+ - \rho y & 0 & 0 & (0,0,0,-2)_+ & (0,0,0,-1)_+ & 0 & 0 & 0 \\
	\hline
\end{array}
$

}
\bigskip
\caption{Computation in \cref{le:reflections auto}: The entry at row $a$ and column $b$ shows $\phisl([a,b])=[\phisl(a), \phisl(b)]$}\label{ta:phisl auto}
\end{sidewaystable}

\begin{sidewaystable}
\vspace{70ex}
\resizebox{\textwidth}{!}{	
	
\renewcommand{\arraystretch}{2.5}

$
\begin{array}{c||c|c|ccccc|c|c|}
	\psi_{\ensuremath\operatorname{\rotatebox{120}{\!$_\leftrightarrow$}}} & x & (\nu, c, c', \rho)_- & c' & \dd_{e,e'} & \xi & \zeta & c & (\nu, c, c', \rho)_+ & y \\
	\hline \hline
	x & 0 &  &  &  &  &  &  &  &  \\
	\hline
	(\lambda, b, b', \mu)_- & 0 & \bigl( 0,0,0,T(b, c') - T(c, b') + \mu \nu - \lambda \rho \bigr)_+ &  &	&  &  &  &	&  \\
	\hline
	b' & 0 & T(c,b')y + \bigl( 0, \ 0, \ b' \times c', \ 0 \bigr)_+ - \rho b' & 0 &  &  &  &  &  &  \\
	\dd_{a,a'} & 0 &
			\makecell*[l]{\nu T(a, a')y \\[.3ex] \mathord{}+\bigl( 0, \ 0, \ -D_{a,a'}(c) + T(a,a')c, \ 0 \bigr)_+ \\[.3ex] \mathord{} - D_{a',a}(c') + T(a, a')c' \\[.3ex] \mathord{} + \bigl( 0, \ 0, \ 0, \ \rho T(a, a') \bigr)_-}
		& \bigl( 0, \ D_{a', a}(c'), \ 0,\ 0 \bigr)_- & -\dd_{e', D_{a,a'}(e)} + \dd_{D_{a',a}(e'), e} &  &  &  & &  \\
	\xi & (0,0,0,-2)_+ & \nu y + (0,0,-c,0)_+ -c' + (0, 0, 0, -\rho)_- & 0 & 0 & 0 &  &  &  & \\
	\zeta & (0,0,0,-1)_+ & 2\nu y + (0,0,-c,0)_+  + (0,0,0,\rho)_- & (0,c',0,0)_+ & 0 & 0 & 0 & &  &  \\
	b & 0 & (0,0, -\nu b, 0)_+ + b \times c + \bigl(0,0,0,T(b,c')\bigr)_- & \dd_{c', b} - T(b,c') (\zeta-\xi) & \bigl(0,0,D_{e,e'}(b),0\bigr)_- & 0 & (0,0,b,0)_- & 0 &  & \\
	\hline
	(\lambda, b, b', \mu)_+ & \makecell*[l]{-\lambda y + (0,0,-b,0)_+ \\[.3ex] \mathord{}+ b' + (0, 0, 0, \mu)_-} &
		\makecell*[l]{\bigl(0, -\lambda c' + b \times c - \nu b', 0, 0\bigr)_+ \\[.3ex]
		\quad {} + (-T(c, b') + \mu \nu) \cdot \xi \\
		\quad {} + (\lambda \rho + T(c,b')) \cdot (\xi-\zeta) \\
		\quad {} + \dd_{c', b} + \dd_{b', c} \\[.6ex]
		\quad {}+ \bigl(0, 0,  -\rho b + b' \times c' - \mu c, 0 \bigr)_- }
		& \makecell*[l]{\bigl(T(b,c'),0,0,0\bigr)_+ \\[.3ex] \quad {}+c' \times b' \\[.3ex] \quad {}+ (0,-\mu c', 0, 0)_-} &
		\makecell*[l]{\bigl(\lambda T(e,e'),0,0, 0\bigr)_+ \\ \quad {} -D_{e,e'}(b) + T(e,e')b \\ \quad {}+ \bigl(0, -D_{e',e}(b') + T(e,e')b', 0, 0\bigr)_- \\ \quad {} + \mu T(e,e') x}
		& \makecell*[l]{(-\lambda,0,0,0,)_+ \\ \; {} -b \\ \; {} + (0,b',0,0)_- \\ \; {} +\mu x} & \makecell*[l]{(\lambda,0,0,0)_+ \\ \; {} + (0,b',0,0)_- \\ \; {} + 2\mu x} & \makecell*[l]{\bigl(0,c \times b, 0, 0\bigr)_- \\ \; {} -\lambda c \\ \; {} +T(c,b') x} & \makecell*[l]{\bigl(T(b,c') - T(c,b') \\ \quad {} + \mu\nu - \lambda\rho, \\ \qquad 0,0,0\bigr)_-} & \\
	\hline
	y & \zeta & (\nu, 0, 0, 0)_+ + c + \bigl(0, -c', 0, 0\bigr)_- - \rho x & 0 & 0 & (-2,0,0,0)_- & (-1,0,0,0)_- & 0 & 0 & 0 \\
	\hline
\end{array}
$

}
\bigskip
\caption{Computation in \cref{le:reflections auto}: The entry at row $a$ and column $b$ shows $\psibs([a,b])=[\psibs(a), \psibs(b)]$}\label{ta:psibs auto}
\end{sidewaystable}

\clearpage 

\bibliographystyle{alpha}
\bibliography{F4}

\end{document}